\providecommand{\tabularnewline}{\\}
\numberwithin{table}{section}
\providecommand{\tabularnewline}{\\}
\newlength{\lyxlabelwidth}      
\providecommand{\tabularnewline}{\\}
\numberwithin{equation}{section}
\numberwithin{figure}{section}
\theoremstyle{plain}
\newtheorem{thm}{\protect\theoremname}[section]
\theoremstyle{definition}
\newtheorem{defn}[thm]{\protect\definitionname}
\theoremstyle{plain}
\newtheorem{cor}[thm]{\protect\corollaryname}
\theoremstyle{plain}
\newtheorem{prop}[thm]{\protect\propositionname}
\theoremstyle{plain}
\newtheorem{lem}[thm]{\protect\lemmaname}
\theoremstyle{definition}
\newtheorem{xca}[thm]{\protect\exercisename}
\theoremstyle{remark}
\newtheorem{rem}[thm]{\protect\remarkname}
\theoremstyle{definition}
\newtheorem*{defn*}{\protect\definitionname}
\theoremstyle{plain}
\newtheorem*{thm*}{\protect\theoremname}
\theoremstyle{plain}
\newtheorem*{prop*}{\protect\propositionname}
\theoremstyle{plain}
\newtheorem*{question*}{\protect\questionname}
\setlist[enumerate]{leftmargin=*,widest=0}
\newcommand{\lyxmathsym}[1]{\ifmmode\begingroup\def\b@ld{bold}
  \text{\ifx\math@version\b@ld\bfseries\fi#1}\endgroup\else#1\fi}
\providecommand{\tabularnewline}{\\}
\numberwithin{equation}{section}
\numberwithin{figure}{section}
\theoremstyle{plain}
\theoremstyle{definition}
\theoremstyle{remark}
\theoremstyle{plain}
\theoremstyle{definition}
\newtheorem{example}[thm]{\protect\examplename}
\theoremstyle{plain}
\theoremstyle{plain}
\theoremstyle{plain}
\newtheorem{conjecture}[thm]{\protect\conjecturename}
\theoremstyle{remark}
\newtheorem*{acknowledgement*}{\protect\acknowledgementname}
\newcommand\blfootnote[1]{%
  \begingroup
  \renewcommand\thefootnote{}\footnote{#1}%
  \addtocounter{footnote}{-1}%
  \endgroup
}
\newcommand{\bmx}{\left( \begin{matrix}}
\newcommand{\emx}{\end{matrix} \right)}
\newcommand{\bsmx}{\left( \begin{smallmatrix}}
\newcommand{\esmx}{\end{smallmatrix} \right)}
\providecommand{\keywords}[1]{\textbf{\textit{Keywords: }} #1}
\providecommand{\msc}[1]{\textbf{\textit{2020 Mathematics Subject Classification: }} #1}
\DeclareMathOperator{\Aut}{Aut}
\DeclareMathOperator{\rank}{rank}
\DeclareMathOperator{\Span}{Span}
\DeclareMathOperator{\dist}{dist}
\DeclareMathOperator{\diag}{diag}
\DeclareMathOperator{\stab}{Stab}
\DeclareMathOperator{\Hom}{Hom}
\DeclareMathOperator{\Spec}{Spec}
\DeclareMathOperator{\ord}{ord}
\DeclareMathOperator{\vol}{vol}
\DeclareMathOperator{\tr}{trace}
\DeclareMathOperator{\col}{col}
\definecolor{red}{rgb}{1,0,0}
\definecolor{orange}{rgb}{1,0.5,0}
\definecolor{purple}{rgb}{.5,.2,.9}
\definecolor{blue}{rgb}{.1,.1,.8}
\definecolor{green}{rgb}{.4,.6,.4}
\definecolor{brown}{rgb}{.6,.3,.3}
\newcommand{\p}{\mathfrak{p}}
\newcommand{\e}{\varepsilon}
\newcommand{\Q}{\mathbb{Q}}
\newcommand{\R}{\mathbb{R}}
\newcommand{\C}{\mathbb{C}}
\newcommand{\Z}{\mathbb{Z}}
\newcommand{\A}{\mathbb{A}}
\newcommand{\N}{\mathbb{N}}
\newcommand{\F}{\mathbb{F}}
\newcommand{\E}{\mathbb{E}}
\newcommand{\B}{\mathcal{B}}
\newcommand{\T}{\mathcal{T}}
\newcommand{\mO}{\mathcal{O}}
\newcommand{\mP}{\mathcal{P}}
\newcommand{\mL}{\mathcal{L}}
\newcommand{\mA}{\mathcal{A}}
\newcommand{\mS}{\mathcal{S}}
\newcommand{\mK}{\mathcal{K}}
\newcommand{\mX}{\mathcal{X}}
\newcommand{\mE}{\mathcal{E}}
\newcommand{\GG}{\mathscr{G}}
\newcommand{\EE}{\mathscr{E}}
\newcommand{\MM}{\mathscr{M}}
\newcommand{\CC}{\mathscr{C}}
\newcommand{\FF}{\mathscr{F}}
\newcommand{\tG}{\tilde{G}}
\newcommand{\tB}{\tilde{B}}
\newcommand{\tK}{\tilde{K}}
\newcommand{\wtG}{\widetilde{G}}
\newcommand{\wtB}{\widetilde{B}}
\newcommand{\wtK}{\widetilde{K}}
\newcommand{\one}{\mathbbm1}
\DeclareMathOperator{\b1}{\mathbf1}
\newcommand{\Mod}[1]{\,\left(\textup{mod}\;#1\right)}
\theoremstyle{remark}
\newtheorem*{rems*}{Remarks}
\providecommand{\corollaryname}{Corollary}
\providecommand{\definitionname}{Definition}
\providecommand{\examplename}{Example}
\providecommand{\lemmaname}{Lemma}
\providecommand{\exercisename}{Exercise}
\providecommand{\propositionname}{Proposition}
\providecommand{\remarkname}{Remark}
\providecommand{\theoremname}{Theorem}
\providecommand{\conjecturename}{Conjecture}
\providecommand{\questionname}{Question}
\providecommand{\corollaryname}{Corollary}
\providecommand{\definitionname}{Definition}
\providecommand{\lemmaname}{Lemma}
\providecommand{\propositionname}{Proposition}
\providecommand{\remarkname}{Remark}
\providecommand{\theoremname}{Theorem}
\providecommand{\acknowledgementname}{Acknowledgement}
\def\l@subsection{\@tocline0{2pt}{2pc}{}{}}
\begin{document}

\title{Ramanujan bigraphs}
\author{Shai Evra, Brooke Feigon, Kathrin Maurischat, Ori Parzanchevski}

\address{Shai Evra: Einstein Institute of Mathematics, The Hebrew University of Jerusalem, Israel}
\email{shai.evra@mail.huji.ac.il}
\address{Brooke Feigon: Department of Mathematics,
The City College of New York,
CUNY,
New York, NY 10031} \email{bfeigon@ccny.cuny.edu }
\address{Kathrin Maurischat: Lehrstuhl  f\"ur Agebra und Darstellungstheorie,
RWTH Aachen University, 
Pontdriesch 12-16, D-52056 Aachen} \email{kathrin.maurischat@art.rwth-aachen.de }
\address{Ori Parzanchevski: Einstein Institute of Mathematics, The Hebrew University of Jerusalem, Israel}
\email{ori.parzan@mail.huji.ac.il}

\maketitle

\begin{abstract}
In their seminal paper, Lubotzky, Phillips and Sarnak (LPS) defined the notion of regular Ramanujan graphs and gave an explicit construction of infinite families of $(p+1)$-regular Ramanujan Cayley graphs, for infinitely many primes $p$.
In this paper we extend the work of LPS and its successors to bigraphs (biregular bipartite graphs), in several aspects: we investigate the combinatorial properties of various generalizations of the notion of Ramanujan graphs, define a notion of Cayley bigraphs, and give explicit constructions of infinite families of $(p^3+1,p+1)$-regular Ramanujan Cayley bigraphs, for infinitely many primes $p$.

Both the LPS graphs and our ones are arithmetic, arising as quotients of Bruhat-Tits trees by congruence subgroups of arithmetic lattices in a $p$-adic group, $PGL_2(\Q_p)$ for LPS and $PU_3(\Q_p)$ for us. 
In both cases the Ramanujan property relates to the  Ramanujan Conjecture (RC), on the respective groups. 
But while for $PGL_2$ the RC holds unconditionally, this is not so in the case of $PU_3$. 
We find explicit cases where the RC does and does not hold, and use this to construct arithmetic non-Ramanujan $(p^3+1,p+1)$-Cayley bigraphs as well, and prove that nevertheless they satisfy the Sarnak-Xue density hypothesis.

On the combinatorial side, we present a pseudorandomness characterization of Ramanujan bigraphs, and a more general notion of biexpanders. 
We also show that the graphs we construct exhibit the cutoff phenomenon with bounded window size for the mixing time of non-backtracking random walks, either as a consequence of the Ramanujan property, or of the Sarnak-Xue density hypothesis.
Finally, we present some other applications which follow from our work: golden and super golden gates for $PU_3$, Ramanujan and non-Ramanujan complexes of type $\widetilde{A}_2$, optimal strong approximation for $p$-arithmetic subgroups of $PSU_3$ and vanishing of the first Betti numbers of Picard modular surfaces.
\end{abstract}

\blfootnote{
\keywords{ \textbf{\textit{Keywords:}} Ramanujan graphs, bigraphs, Cayley bigraphs, non-backtracking spectrum, pseudorandomness, graph sparsification, cutoff, simply-transitive lattices, Ramanujan conjecture, automorphic representations of $U(3)$, A-packet, Iwahori-spherical, Sarnak-Xue density hypothesis, Golden Gates, Picard modular surfaces}

\msc{Primary: 11F70 Number Theory (math.NT); Secondary: 05C48 
Combinatorics (math.CO); Group Theory (math.GR)}
}

\tableofcontents

\newpage
\section{Introduction} \label{sec:intro}

A connected $(k\!+\!1)$-regular graph $X$ was defined in \cite{LPS88} to be a \emph{Ramanujan graph} if every eigenvalue $\lambda$ of its adjacency matrix satisfies either
\begin{equation}\label{eq:reg-ram}
\lambda = \pm(k+1), \quad \text{or} \quad |\lambda| \leq 2\sqrt{k}.    
\end{equation}
The main result of \cite{LPS88} is an explicit construction of infinite families of regular Ramanujan graphs, which are furthermore Cayley graphs, now called the LPS graphs. 

A natural question that arises is how to define, and how to construct non-regular Ramanujan graphs. In this paper we commence a systematic study of this problem, building a complete theory for the case of biregular bipartite graphs (\textit{bigraphs}, for short).

To see where Definition \eqref{eq:reg-ram} comes from, we recall that $2\sqrt{k}$ is the spectral radius of the infinite $(k\!+\!1)$-regular tree. The eigenvalues $\pm(k\!+\!1)$ are usually called \textit{trivial}, thus $X$ is Ramanujan when its nontrivial eigenvalues are bounded by the spectral radius of its covering tree. 

However, an equivalent definition is that $X$ is Ramanujan if its nontrivial eigenvalues all \textit{belong} to the adjacency spectrum of the covering tree, which was shown to be $[-2\sqrt{k},2\sqrt{k}]$ by Kesten \cite{Kesten1959}. Already in the case of bigraphs, the natural generalizations of these definitions do not agree: The spectrum of the $(K\!+\!1,k\!+\!1)$-biregular tree, for $k<K$, is
\begin{equation*}
\left[-\sqrt{K}-\sqrt{k},-\sqrt{K}+\sqrt{k}\right]\cup\big\{0\big\}\cup\left[\sqrt{K}-\sqrt{k},\sqrt{K}+\sqrt{k}\right],\label{eq:spec-bireg-tree}
\end{equation*}
so being bounded by the spectral radius $\sqrt{K}+\sqrt{k}$ is not the same as being in the spectrum. We call these two definitions \textit{Weakly-Ramanujan} and \textit{adj-Ramanujan} respectively.\footnote{We remark that \cite{marcus2013interlacing} proves the existence of weakly-Ramanujan bigraphs of every degrees, but their method cannot guarantee adj-Ramanujanness.}

In fact, we introduce an even stronger definition, for which we reserve the term \textit{Ramanujan graph}. For regular graphs it is still equivalent to the standard definition, but in general it is stronger, and has the benefit of controlling the spectrum of other operators on the graph, notably the \textit{non-backtracking} operator. We argue that this is the "right" definition, as we show that it is spectrally optimal, and that it yields stronger combinatorial and probabilistic expansion properties, such as Diaconis' total-variation cutoff phenomenon.

In this paper we construct bigraphs which are Ramanujan in the strongest sense, and which have an explicit Cayley-like description à la LPS. 

The LPS graphs arise as quotients of the regular tree by congruence arithmetic lattices in $PGL_2(\Q_p)$ (see also \cite{margulis1988explicit}), and it is clear that using other $p$-adic Lie groups of rank one, one can obtain biregular graphs. The story becomes complicated since the Ramanujan conjecture, which underlies the graph Ramanujan property, and is true for $PGL_2(\Q_p)$ by the work of Deligne, is simply false in general. In this work we find specific congruence arithmetic lattices in the group $PU_3(\Q_p)$ for which the Ramanujan conjecture does hold, and use them to construct infinite families of Ramanujan bigraphs. On the other hand, we present other arithmetic lattices which violate the Ramanujan conjecture, and devise from these lattices infinite families of non-Ramanujan graphs, which are nonetheless adj-Ramanujan.

All this illustrates the delicate connection between the automorphic representations of $PU_3$ corresponding to congruence lattices, and the combinatorial properties of their quotient graphs. Under this relation, the graphs are Ramanujan when the \textit{Naive Ramanujan Conjecture} holds for these representations of $PU_3$. On the other hand, for the adj-Ramanujan property it suffices that the \textit{Generalized Ramanujan Conjecture} holds.
Note that in the LPS case the NRC holds for all congruence lattices in $PGL_2$, while in our case of $PU_3$, the NRC is not true in general, and we have to work harder to find specific lattices for which the NRC holds.

Another challenge is to give an explicit description of the graphs one obtains as quotients of the tree. The presentation of the LPS graphs as Cayley graphs arises from a special feature of the lattices they use, which is a simply-transitive action on the associated tree. This machinery does not serve us as Cayley graphs are always regular. We introduce a new notion of Cayley bigraphs, which is strongly explicit, and find special lattices that allow us to present our Ramanujan and non-Ramanujan graphs as Cayley bigraphs -- see Example \ref{exa:X2q} for such a family.

\subsection{Non-regular Ramanujan graphs} \label{subsec:sub:ram-def}

Ramanujan graphs were motivated by the search for \textit{expanders}, graphs with nontrivial eigenvalues of small magnitude, which have many applications in mathematics and computer science (see \cite{HLW06,Lub12}). The Alon-Boppana theorem \cite{LPS88,Nil91} states that (regular) Ramanujan graphs are spectrally optimal: it says that for any $\varepsilon>0$, there is no infinite family of $(k\!+\!1)$-regular graphs with all nontrivial eigenvalues bounded by $2\sqrt{k}-\varepsilon$. 

Turning to the general case, let $X$ be a finite (undirected) graph, $\T_{X}$ its universal covering tree, $\rho(A_{\T_X})$ the spectral radius of $A_{\T_X}$, 
and $\mathfrak{pf}_{X}$ the Perron-Frobenius eigenvalue of $A_{X}$. We call $\Spec_0(A_{X}) = \Spec(A_{X})\backslash\left\{ \pm\mathfrak{pf}_{X}\right\}$ the \emph{nontrivial spectrum} of $X$. 

A generalized Alon-Boppana Theorem due to \cite{greenberg1995spectrum} asserts that no infinite family of graphs with a common covering tree $\T$ can have all of its nontrivial eigenvalues bounded by $\rho(A_\T)-\varepsilon$. This leads to the first definition of Ramanujan graphs:

\begin{defn*}[1] The graph $X$ is \textbf{\emph{Weakly Ramanujan}} if every nontrivial eigenvalue of $A_X$ is bounded (in absolute value) by $\rho\left(A_{\T_{X}}\right)$. 
\end{defn*}
This definition goes back to \cite{greenberg1995spectrum}, and is the one used in \cite{marcus2013interlacing}, where the authors prove the existence of infinite families of weakly Ramanujan bigraphs of any degrees. It turns out however, that one can do better, insisting that the nontrivial eigenvalues actually belong to the spectrum of the covering tree. 

\begin{defn*}[2] The graph $X$ is \textbf{\emph{adj-Ramanujan}} if every nontrivial eigenvalue of $A_{X}$
belongs to $\Spec(A_{\T_{X}})$.
\end{defn*}

This definition is motivated by two fundamental perspectives:

{\uline{Extremal~behavior}}: If $X_i$ is a sequence of finite graphs covered by a common tree $\T$ and $\sup\left\{ \mathrm{girth}(X_i)\right\} =\infty$, then every $\lambda\in\mathrm{Spec}\left(A_{\T}\right)$ is a limit point of $\bigcup_i\mathrm{Spec}(A_{X_i})$. This is an extension of the Alon-Boppana theorem, realized independently by various researchers (going back at least to \cite{McKay1981expectedeigenvaluedistribution}, \cite[Thm.\ 2]{grigorchuk1999asymptotic}). It follows from convergence of moments of the spectral measure of $A_{X_i}$, and in fact holds more generally, for a sequence of quotients of any infinite graph whose injectivity radii are unbounded.

{\uline{Random\ behavior}\ \cite{Bordenave2019Eigenvaluesrandomlifts}}: For any $\varepsilon>0$, the non-trivial spectrum of a random cover of $X$ is contained in an $\varepsilon$-neighborhood of $\Spec(A_{\T_{X}})$ asymptotically almost surely (as the cover size grows to $\infty$).\smallskip{}

The notion of Ramanujan graphs that we propose is stronger than the two above, and originates in the study of the generalization of Ramanujan graphs to simplicial complexes of general dimension.

\begin{defn*}[3] Let $X$ be a finite graph or simplicial complex with universal cover $\mathcal{U}$. Denoting $G=\mathrm{Aut}\left(\mathcal{U}\right)$, we say that an operator $T$ on (all, or some) cells of $\mathcal{U}$ is \emph{geometric} if it commutes with $G$, and an eigenvalue of $T$ acting on $X$ is considered \emph{trivial} if its eigenfunction, lifted
to $\mathcal{U}$, is constant on each orbit of the derived group
$G'=\left[G,G\right]$. 
We call $X$ a \textbf{\emph{Ramanujan\ (graph/complex)}} if the nontrivial spectrum of \textit{every} geometric operator $T$ acting on $X$ is contained in $\Spec\left(T_{\mathcal{U}}\right)$.
\end{defn*}

Definition (3), for which we reserve the term Ramanujan, is somewhat harder to digest than the previous ones. It may also be harder to verify as it asks for “Ramanujanness” of every geometric operator, but for this reason it is also the most useful, as the combinatorial results in Section \ref{sec:Combinatorics} demonstrate. Nevertheless, for regular graphs it can be shown to be equivalent to the the standard definition \eqref{eq:reg-ram} which takes into account only the adjacency operator, and we will show that for the graphs studied in this paper it is equivalent to a statement regarding the adjacency spectrum, which takes into account eigenvalue multiplicity, and not only magnitude (see Theorem \ref{thm:ram-local-crit}(2) and Corollary \ref{cor:ram-by-theta}(1)). 

While definition (3) arose from the study of higher-dimensional simplicial complexes\footnote{See \cite{ballantine2000ramanujan,cartwright2003ramanujan,li2004ramanujan,Lubotzky2005a,sarveniazi2007explicit,first2016ramanujan,kamber2016lp,kang2016riemann,Lubetzky2017RandomWalks}; the definition we propose here agrees with \cite{Evra2018RamanujancomplexesGolden, Chapman2019CutoffRamanujancomplexes} for quotients of Bruhat-Tits buildings. From the number-theoretic perspective, it is equivalent to all Iwahori-spherical local factors of the associated infinite-dimensional automorphic representations being tempered.}, already for graphs it enables the study of the ``non-backtracking'' (NB) operator $B=B_{X}$, which acts on functions on the \emph{directed} edges in $X$ by
\begin{equation}
(Bf)(v\!\rightarrow\!u)=\sum_{v\ne w\sim u}f(u\!\rightarrow\!w).\label{eq:NB-op}
\end{equation}
This operator indeed commutes with $\mathrm{Aut}\left(\T_{X}\right)$,
and it is highly useful in graph theory. 
Denoting $\Spec_0(B_{X})=\Spec B_{X}\backslash\{\pm\mathfrak{pf}_{B_X}\}$ (where $\mathfrak{pf}_{B_X}$ is the Perron-Frobenius eigenvalue of $B_{X}$), we are naturally led to consider two more ``Ramanujanness'' definitions for a graph $X$:

\textbf{(4) \emph{NB-Ramanujan}}: every $\mu\in\Spec_0(B_{X})$ satisfies $\mu\in\Spec B_{\T_{X}}$.\smallskip{}

\textbf{(5) \emph{Riemann Hypothesis}}: every $\mu\in\Spec_0(B_{X})$ satisfies $\left|\mu\right| \leq \rho\left(B_{\T_{X}}\right)$. This could also be called ``weakly NB-Ramanujan''. The chosen name reflects the fact that $\rho\left(B_{\T_{X}}\right)=\sqrt{\mathfrak{pf}_{B_{X}}}$ \cite{Angel2015nonbacktrackingspectrum}, which implies that $\left|\mu\right|\leq\rho\left(B_{\T_{X}}\right)$ is precisely the Riemann Hypothesis for the Ihara zeta function of the graph $X$ -- see Section \ref{subsec:Geodesic-prime-number} for more about this perspective.

Observing our five Ramanujan definitions, it is clear that $(5)\Leftarrow(4)\Leftarrow(3)\Rightarrow(2)\Rightarrow(1)$ tautologically for any graph. It turns out that for regular graphs the five definitions are actually equivalent, so there is little meaning in asking which is the right one. In fact, there are papers which consider (regular) Ramanujan graphs, which actually exploit Ramanujanness of $B$, such as \cite{lubetzky2016cutoff}, and this is not guaranteed by Ramanujanness of $A$ in general.

In this paper we study the case of biregular bipartite graphs, which we call \emph{bigraphs} for short, and where the various definitions no longer agree. We suggest a notion of \textit{biexpanders} (Definition \ref{def:biexp}), which is a weakened version of the (full) Ramanujan property for bigraphs, and which seems to capture bigraph pseudorandomness better than the classical definition of expanders, which relates to the adjacency spectrum alone.

The trivial adjacency eigenvalues of a finite $(K\!+\!1,k\!+\!1)$-regular bigraph $X$ are $\pm\mathfrak{pf}_{X}=\pm\sqrt{(K\!+\!1)(k\!+\!1)}$, and its covering tree is the biregular tree $\T_{K\!+\!1,k\!+\!1}$. 
Throughout the paper we shall assume $K>k$, as the regular case $K=k$ is well understood. 
The spectrum of $\T_{K\!+\!1,k\!+\!1}$ was shown in \cite{Godsil1988Walkgeneratingfunctions} to be
\begin{equation}
\Spec(A_{\T_{K\!+\!1,k\!+\!1}})=\left[-\sqrt{K}-\sqrt{k},-\sqrt{K}+\sqrt{k}\right]\cup\big\{0\big\}\cup\left[\sqrt{K}-\sqrt{k},\sqrt{K}+\sqrt{k}\right].\label{eq:spec-bireg-tree}
\end{equation}
This means that that $X$ is weakly/adj-Ramanujan if and only if every nontrivial eigenvalue $\lambda\in\Spec_0(A_{X})$ satisfies
\begin{align}
\left|\lambda\right| & \leq\sqrt{K}+\sqrt{k}\qquad & \text{(weakly Ramanujan bigraph),}\nonumber \\
\lambda=0\quad\text{or}\quad & \left|\lambda^2-K-k\right|\leq2\sqrt{Kk} & \text{(adj-Ramanujan bigraph).}\label{eq:aram-def}
\end{align}

One can show by examples that $(1)\nRightarrow(2)\nRightarrow(3)$ for bigraphs, but it turns out that $(3),(4),(5)$ are equivalent (see Corollary \ref{cor:ram-by-theta}, Theorem \ref{thm:ram-local-crit} and Remark \ref{rem:Ram-NBRam}). 
The trivial eigenvalues of $B_X$ are $\pm\mathfrak{pf}_{B_{X}} = \pm\sqrt{Kk}$, and $X$ is NB-Ramanujan (equivalently, Ramanujan) if and only if every $\mu\in\Spec_0(B_{X})$ satisfies
\begin{align}
\mu\in\Spec(B_{\T_{K\!+\!1,k\!+\!1}})=\left\{ z\in\C\,\middle|\,\left|z\right|=\sqrt[4]{Kk}\right\} \cup\left\{ \pm i\sqrt{k},\pm1\right\} &  & \text{(Ramanujan bigraph)}.\label{eq:ram-bg-def}
\end{align}

\subsection{Ramanujan bigraphs and applications}

The main results of this paper are a study of the extremal combinatorial properties of Ramanujan bigraphs, some of which turn out to be different from their regular counterparts, and giving an explicit construction of such graphs. 
The question of giving an efficient description for biregular graphs which is obtained as a quotient of an infinite tree by an infinite group is already interesting: our goal is to give a bigraph analogue of the famous LPS (Lubotzky-Philips-Sarnak) graphs \cite{LPS88}.
These are regular Ramanujan graphs which have a neat description as Cayley graphs of finite groups, but by definition, Cayley graphs are always regular. 
In order to describe explicitly our Ramanujan bigraphs, we introduce in Section \ref{sec:bicayley} a new construction which we call a \emph{Cayley bigraph}.
Let us give an example to convey the flavor of our construction:

\begin{example}
\label{exa:X2q}
Let $q$ be a prime with $q\equiv1\Mod3$. 
Fix $\rho\in\F_q$ with $\rho^2=-3$ (one always exists). 
Consider the following set of $9$ elements of order $3$ in the group $PSL_3(\F_q)$:
\[
S_q=\left\{ \tfrac{1}{2}\!\left(\begin{smallmatrix}\vphantom{-}2\\
 & -1 & -a\\
 & 3/a & -1
\end{smallmatrix}\right),\tfrac{1}{2}\!\left(\begin{smallmatrix}-1 &  & -a\\
 & \vphantom{-}2\\
3/a &  & -1
\end{smallmatrix}\right),\tfrac{1}{2}\!\left(\begin{smallmatrix}-1 & -a\\
3/a & -1\\
 &  & \vphantom{-}2
\end{smallmatrix}\right)\,\middle|\,{a\in\F_q\atop a^{3}=\rho^{3}}\right\} \subseteq PSL_{3}\left(\F_q\right).
\]
The bigraph $X_{\EE}^{2,q}$ is defined as follows: its left side is $L=PSL_3(\F_q)$, its right side is
\[
R=\bigcup_{s\in S_q}\nicefrac{PSL_3(\F_q)}{\left\langle s\right\rangle }=\left\{ \left\{ \ell,\ell s,\ell s^2\right\} \,\middle|\,\ell\in L,s\in S_q\right\} 
\]
(each right vertex is a coset of $\left\langle s\right\rangle=\{I,s,s^2\}$), and the edges are defined by membership, i.e.\ $\ell\sim r$ if and only if $\ell\in r$ (for $\ell\in L$ and $r\in R$).

The graphs $X_{\EE}^{2,q}$ are studied in Section \ref{subsec:eisenstein-lattice-explicit} (the set $S_q$ is the reduction of $\{A^{+}_{a,b}\,|\,0\le a,b \le 2\}$ from \eqref{eq:Eisnetin-stars} modulo $q$). 
They are $(9,3)$-biregular, and form a special case of $(p^3\!+\!1,p\!+\!1$)-regular Cayley bigraphs we denote $X_{\EE}^{p,q}$, described in Theorem \ref{thm:main-Eis} below. The non-backtracking spectrum of  some $X_{\EE}^{p,q}$ graphs is shown in Figure \ref{fig:Eis-graphs}. They are all adj-Ramanujan, but \textit{not} Ramanujan: the eigenvalues $\pm ip^{3/2}$ appear in their non-backtracking spectrum, but not in that of their covering tree.

One can obtain proper Ramanujan bigraphs from $X_{\EE}^{p,q}$ as well: When $q\equiv1\Mod3$, the group $PSL_3\left(\F_q\right)$ naturally acts on $X_{\EE}^{p,q}$, and the quotient of $X_{\EE}^{p,q}$ by the subgroup $\left(\begin{smallmatrix}* & * & *\\
0 & * & *\\
0 & * & *
\end{smallmatrix}\right)\leq PSL_3(\F_q)$ is Ramanujan.
This quotient can also be described as a \textit{Schreier bigraph} (see Definition \ref{def:biSch}), associated with the action of $PSL_3(\F_q)$ on the projective plane $\mathbb{P}^2(\F_q)$. 
\end{example}

The graphs $X_{\EE}^{p,q}$ arise from an arithmetic lattice $\Lambda^p_{\EE}$ described in (\ref{eq:Eis-lat}), which we named ``Eisenstein''. 
In Section \ref{sec:lattices} we construct three more lattices, giving rise to more families $X_{\GG}^{p,q},X_{\MM}^{p,q},X_{\CC}^{p,q}$. We give now a simplified version of our main Theorem (\ref{thm:main-Eis}) for the Eisenstein graphs $X_\EE^{p,q}$. 
The results for the other lattices from Section \ref{sec:lattices} are similar, with the exception that the graphs $X_{\MM}^{p,q}$, $X_{\CC}^{p,q}$ are fully Ramanujan, and so is $X_{\GG}^{p,q}$, assuming a conjecture regarding levels in $A$-packets (Conjecture \ref{conj:level-packet}). Figure \ref{fig:Gau-graphs} shows the non-backtracking spectrum of some $X_{\GG}^{p,q}, X_{\MM}^{p,q}$ graphs, which is indeed contained in the spectrum of the tree.

\begin{thm*}[\ref{thm:main-Eis}, simplified]
Let $p,q$ be primes with $p\equiv2\Mod3$, $q\notin \{ 3,p\} $, and $\omega=\tfrac{-1+\sqrt{-3}}2$.
Let
\begin{equation}
S_p:=\Bigg\{ g\in M_3\left(\Z\left[\omega\right]\right)\,\Bigg|\,{g^{*}g=p^2I,\;g\text{ is not scalar},\atop g\equiv\left(\begin{smallmatrix}1 & * & *\\
* & 1 & *\\
* & * & 1
\end{smallmatrix}\right)\Mod3}\Bigg\},\label{eq:set-Sp}
\end{equation}
and let $S_p=\bigsqcup_iS_p^i$ be the partition induced by the equivalence relation 
\[
g\sim h\text{ if and only if } g^{*}h\in pM_3(\Z[\omega]).
\]
Denote 
\[
\mathbf{G}_q:=\begin{cases}
PSL_3\left(\F_q\right) & q\equiv1\Mod3\\
PSU_3\left(\F_q\right) & q\equiv2\Mod3,
\end{cases}\quad\text{and}\quad S_{p,q}^i:=S_p^i\Mod{q}\overset{{\scriptscriptstyle (\star)}}{\subseteq}\mathbf{G}_q
\]
where $(\star)$ implies mapping $\omega$ to a root of $x^2+x+1$ in $\F_q$ or in $\F_{q^2}$ according to $q\Mod3$.
The Cayley bigraphs
\[
X_{\EE}^{p,q}=CayB\left(\mathbf{G}_q,\left\{ S_{p,q}^i\right\} _i\right)
\]
(see Definition \ref{def:biCay}) satisfy:

\begin{enumerate}
\item $X_{\EE}^{p,q}$ is an adj-Ramanujan $(p^3\!+\!1,p\!+\!1)$-regular bigraph, with left side of size $|\mathbf{G}_q|\approx q^8/3$.
\item $X_{\EE}^{p,q}$ is non-Ramanujan.
\item The family $\left\{ X_{\EE}^{p,q}\right\} _q$ satisfies the Sarnak-Xue density hypothesis (see details in Theorem \ref{thm:main-Eis}).
\item The group $\mathbf{G}_q$ acts on the set
\[
Y_q:=\begin{cases}
\mathbb{P}^2(\F_q) & q\equiv1\Mod3\\
\left\{ v\in\mathbb{P}^2(\F_q[\omega])\,\middle|\,v^{*}\!\cdot\!v=0\right\}  & q\equiv2\Mod3
\end{cases},
\]
and the Schreier bigraphs $Y_{\EE}^{p,q}=SchB(Y^{q},\left\{ S_{p,q}^i\right\} _i)$ are (fully) Ramanujan.

\item The girth of $X_{\EE}^{p,q}$ is larger than $2\log_pq$.

\item \label{enu:X_E-Bounded-cutoff}The family $\left\{ X_{\EE}^{p,q}\right\} _q$ exhibits bounded cutoff: the non-backtracking random walk on $X_{\EE}^{p,q}$ goes from $(1-\varepsilon)$-mixing to $\varepsilon$-mixing (in total-variation) in a number of steps which does not depend on $q$.

\item (Diameter) For small enough $\varepsilon>0$, and $\ell\geq\tfrac12\log_p|E_{X_{\EE}^{p,q}}|+2\log_p\left(\tfrac1{\varepsilon}\right)+3$, for any $e\in E_{X_{\EE}^{p,q}}$ we have
\[
\left|\left\{ e'\in E_{X_{\EE}^{p,q}}\,\middle|\,{\text{there is a non-backtracking path}\atop \text{of length \ensuremath{\ell} from \ensuremath{e} to \ensuremath{e'}}}\right\} \right|\geq\left(1-\varepsilon\right)|E_{X_{\EE}^{p,q}}|.
\]
Furthermore, for any two directed edges $e_1,e_2$ in $X_{\EE}^{p,q}$ there is a non-backtracking path from $e_1$ to $e_2$ of length at most $\log_p|E_{X_{\EE}^{p,q}}|+10$.
\end{enumerate}
\end{thm*}

The cutoff result which appears in the theorem is a good example of the extremal expansion quality of Ramanujan graphs: we show in Section \ref{subsec:Cutoff} that this property holds both for Ramanujan bigraphs, and for adj-Ramanujan bigraphs which satisfy the Sarnak-Xue density hypothesis. Other combinatorial expansion results are obtained in Sections \ref{subsec:Clash-counting} and \ref{subsec:sparse}: a pseudorandomness result which we call "Clash counting", which can be viewed a bigraph substitute for the Expander Mixing Lemma for regular graphs, and a sparsification result which relates Ramanujan bigraphs to finite projective geometries. Another classical perspective on Ramanujan graphs is given by the Riemann Hypothesis for the Ihara zeta functions, which counts prime cycles in the graph: this is explored in Section \ref{subsec:Geodesic-prime-number}.

In addition to bigraphs, the lattices constructed in this paper also give new examples of Golden Gates (optimal topological generators) for the compact group $PU(3)$, and explicit constructions of Ramanujan complexes of type $\widetilde{A}_2$. 
These applications were the focus of the paper \cite{Evra2018RamanujancomplexesGolden}, and we comment on them briefly in Sections \ref{subsec:complexes} and \ref{subsec:Golden-gates}. 
A new feature we encounter here is the failure of the Ramanujan conjecture for the Eisenstein lattices, from which we obtain the first explicit examples of \emph{non-Ramanujan} (Cayley) complexes of type $\widetilde{A}_2$, namely, the Cayley complexes $X_{\EE}^{p,q}$ when $p\equiv1\Mod3$. 
Several examples are shown in Figure \ref{fig:A2-complexes}, with the ``endoscopic'' (non-Ramanujan) spectrum marked in red. 
Sections \ref{subsec:optimal-SA} and \ref{subsec:Picard} explore two other applications of our work, to optimal strong approximation and to the Betti numbers of certain Picard surfaces.

It should be noted that all of the Ramanujan bigraphs which we construct in this paper are $(p^3+1,p+1)$-regular for some prime $p$. 
A natural, and likely challenging question is: 
\begin{question*}
For which values of $K,k$ do there exist infinite families of Ramanujan $(K\!+\!1,k\!+\!1)$-bigraphs?
\end{question*}
This question is open for \textbf{every} pair which is not of the form $K=k^3$ with $k$ being a prime-power\footnote{In this paper we restrict to prime $k$. We work with imaginary quadratic extensions of $\Q$, and by replacing these with more general CM-fields one can make $p$ a prime-power, at the cost of losing the explicit Cayley bigraph description.}. It is also open for the weaker notion of adj-Ramanujan! Only for weakly-Ramanujan we know that such families exist for all $K,k$, by the pioneering work of \cite{marcus2013interlacing}.

\begin{figure}
\begin{centering}
\begin{minipage}[c][1\totalheight][t]{0.4\columnwidth}%
\begin{center}
\includegraphics[scale=0.4]{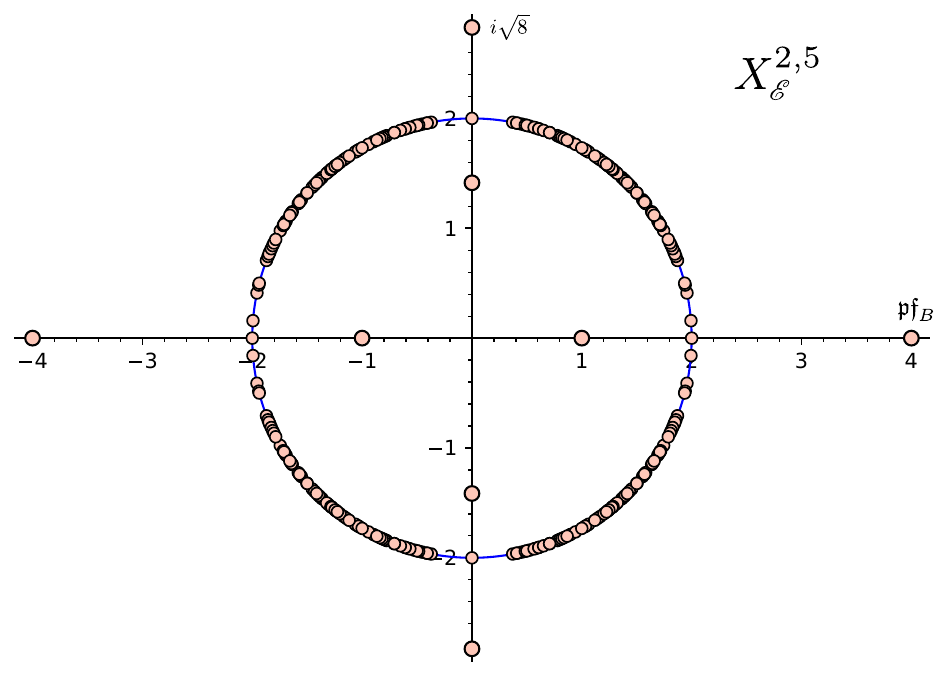}\hfill{}
\par\end{center}%
\end{minipage}%
\begin{minipage}[c][1\totalheight][t]{0.59\columnwidth}%
\begin{center}
\hfill{}\includegraphics[scale=0.5]{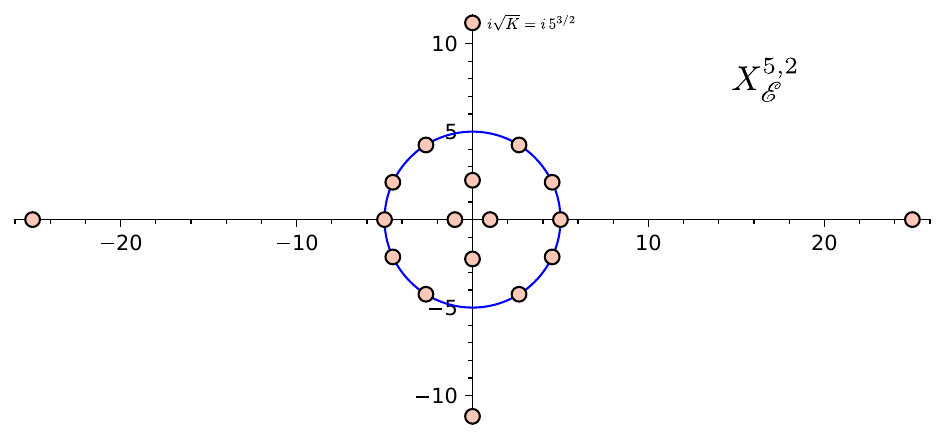}
\par\end{center}
\begin{center}
\hfill{}\includegraphics[scale=0.5]{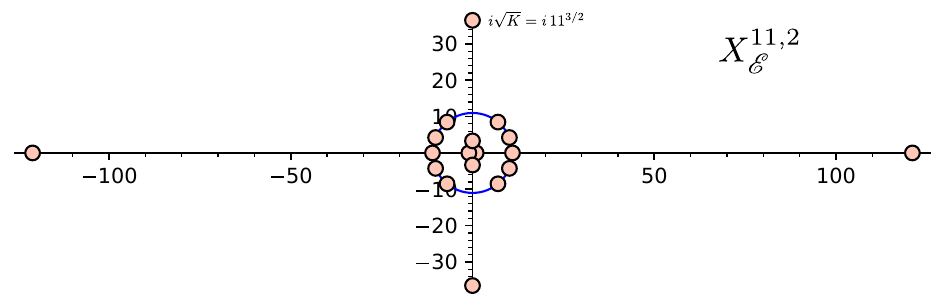}
\par\end{center}%
\end{minipage}
\par\end{centering}
\caption{\label{fig:Eis-graphs}The non-backtracking spectrum of some of the $(p^3\!+\!1,p\!+\!1)$-regular bigraphs $X_{\EE}^{p,q}$. These are Cayley bigraphs of $PSL_3(q)$ for $q\equiv1\left(3\right)$, and
of $PSU_3(q)$ for $q\equiv2\left(3\right)$. They are adj-Ramanujan,
but not Ramanujan: the eigenvalues $\pm i\sqrt{K}=\pm ip^{3/2}$ are neither trivial nor in the spectrum of the covering tree $\T_{p^3+1,p+1}$.}
\end{figure}

\begin{figure}
\begin{centering}
\includegraphics[scale=0.6]{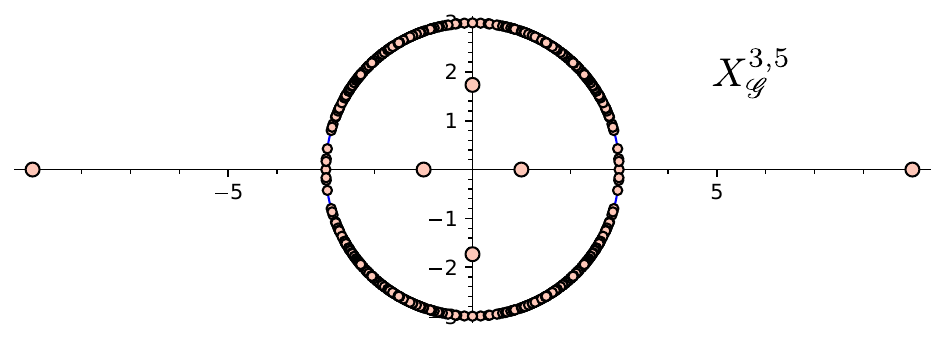}
\par\end{centering}
\begin{centering}
\includegraphics[scale=0.7]{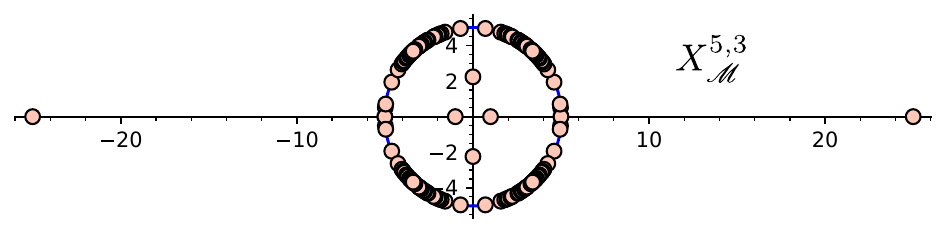}
\par\end{centering}
\begin{centering}
\includegraphics[scale=0.8]{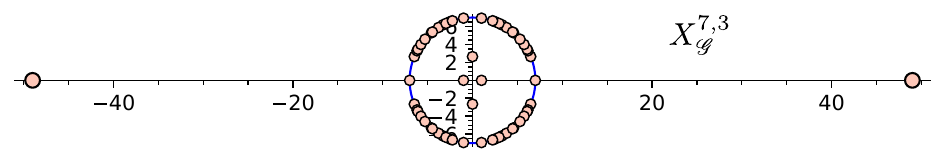}
\par\end{centering}
\begin{centering}
\includegraphics[scale=0.8]{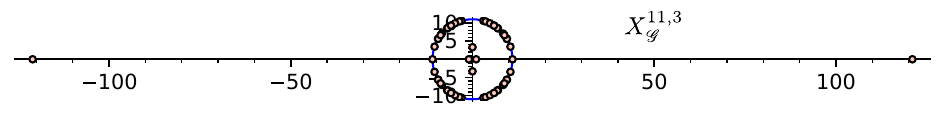}
\par\end{centering}
\caption{\label{fig:Gau-graphs} The non-backtracking spectrum of some Ramanujan bigraphs $X_{\GG}^{p,q}$ and $X_{\MM}^{p,q}$. These are $(p^3\!+\!1,p\!+\!1)$-regular Cayley bigraphs of either $PSL_3(q)$ or $PSU_3(q)$, according to $q$ (see Section \ref{subsec:Mum-CMSZ} for details).}
\end{figure}


\subsection{Number theory}

Let us briefly describe the number theory which lies behind our construction, and the main challenges which arise in trying to generalize LPS to the biregular case. The LPS graphs are finite Cayley graphs obtained as quotients of the $(p\!+\!1)$-regular Bruhat-Tits tree associated with $SL_2(\Q_p)$, by congruence subgroups of a special arithmetic lattice, which acts simply-transitively on the tree. 
Several $p$-adic algebraic groups have biregular Bruhat-Tits
trees (see \cite{Tits1979Reductivegroupsover,Carbone2001classificationrank1} for a complete list), and we focus on the case of $U_3(\Q_p)$, whose tree is $(p^3\!+\!1,p\!+\!1)$-biregular. 
By \cite{Tits1966Classificationalgebraicsemisimple}, all co-compact arithmetic lattices in $U_3$ come from either:

(I)\label{type-I-unitary} Classical matrix unitary groups (isometry groups of three-dimensional Hermitian spaces).

(II) Unitary groups of division algebras with an involution of the
second kind.

The graphs we study are the quotients of the tree by congruence subgroups of such lattices. 
In order for them to be Ramanujan, the infinite-dimensional automorphic representations associated with these subgroups should have tempered local factors. 
This holds for arithmetic lattices in $SL_2(\Q_p)$ by the work of Deligne (the Ramanujan-Petersson conjecture) and the Jacquet-Langlands correspondence, but in $U_3$ the story becomes more complicated. 
By the works of Harris-Taylor \cite[Thm.\ C]{harris2001geometry} and Rogawski \cite[Thm.\ 14.6.3]{Rogawski1990Automorphicrepresentationsunitary} (see also \cite[Prop.\ 1.4]{Clozel2002Automorphicformsand}), this is true for lattices of type (II); this is used in \cite{Ballantine2011Ramanujanbigraphsassociated,ballantine2015explicit} to present Ramanujan bigraphs as quotients of trees by congruence subgroups of lattices of type (II). 
This however does not give an explicit description in the spirit of LPS. 
The explicit Cayley graph description in LPS, and Cayley bigraph description in this paper, require that the lattice we begin with act simply-transitively on the Bruhat-Tits tree\footnote{By transitivity on a biregular tree we always mean transitivity on one side of the tree.}, which is not the case in \cite{Ballantine2011Ramanujanbigraphsassociated,ballantine2015explicit}.

In order to give an explicit description of Ramanujan bigraphs, in Section \ref{sec:lattices} we construct several lattices of type (I) which act simply-transitively on the tree of $U_3(\Q_p)$. 
But switching to type (I) has its cost -- the Ramanujan property does not necessarily hold anymore! This is the failure of the \emph{Naive Ramanujan Conjecture}, first observed in \cite{Howe1979} (see also the surveys \cite{Sarnak2005NotesgeneralizedRamanujan, winnie2020ramanujan}). Its refined version, called the \textit{Generalized Ramanujan Conjecture} only asserts that generic cuspidal automorphic representations have tempered local factors. This was proved for cohomological representations of $U(1,n)$ by Shin \cite{Shin2011Galoisrepresentationsarising}. Combining this with Rogawski's work \cite{Rogawski1990Automorphicrepresentationsunitary} we deduce the GRC for definite $U(3)$, but we are left with the considerable task of understanding the non-generic representations which may (and sometimes do) arise for our lattices.

To relate the Ramanujan conjecture to Ramanujan bigraphs, we study in Section \ref{sec:local-rep} the representation theory of $U_3(\Q_p)$, and give representation-theoretic conditions for a quotient of its Bruhat-Tits tree to be Ramanujan and adj-Ramanujan. 
In Section \ref{section:automorphic} we specialize to quotients by congruence subgroups of arithmetic lattices, and examine the possible failure of the Ramanujan property for the automorphic representations associated with them. 
The implications of our results to bigraphs are the following:

\begin{enumerate}
\item Congruence subgroups of type (I) lattices (and thus all congruence lattices) give adj-Ramanujan bigraphs.
\item Some of the lattices constructed in Section \ref{sec:lattices} give (fully) Ramanujan bigraphs.
\item The principal congruence subgroups of the Eisenstein lattice $\Lambda^p_{\EE}$ (see Section \ref{sec:lattices}) give \emph{non-Ramanujan} bigraphs.
\item The principal congruence subgroups of type (I) lattices give bigraphs which satisfy the Sarnak-Xue density hypothesis.
\end{enumerate}

For the proof of (1) we need Rogawski's work \cite{Rogawski1990Automorphicrepresentationsunitary} on $U_3$ (both the classification of the automorphic spectrum, and various instances of Langlands functoriality), as well as the resolution of the Ramanujan-Petersson conjecture for cohomological self-dual representations of $GL_n$ by Shin \cite{Shin2011Galoisrepresentationsarising}, which itself uses the Fundamental Lemma \cite{Ngo2010Lelemmefondamental}, and previous works by Harris-Taylor, Clozel, Kottwitz and ultimately Deligne (see the survey \cite{shin2020construction}). 
For (2), we establish several criteria under which one obtains Ramanujan graphs, and not only adj-Ramanujan, and we show that some of the lattices from Section \ref{sec:lattices} satisfy one of them. 
For (3), we construct explicit automorphic representations of $U_2\times U_1$ whose endoscopic lifts to $U_3$ appear in every principal congruence quotient of the lattice $\Lambda^p_{\EE}$; for this we need to compute $\varepsilon$-factors of Hecke characters, verify depth preservation of the theta correspondence, and non-vanishing of $p$-adic periods by orbital integrals over $U_3$.
Finally, for (4) we adjust to the definite case the analysis of endoscopic character relations carried out by Marshall for indefinite unitary groups in \cite{Marshall2014Endoscopycohomologygrowth}.

Section \ref{sec:applications} ties everything together to construct explicit Ramanujan bigraphs, and some other applications. 
The results of Sections \ref{sec:local-rep} and \ref{section:automorphic} are used to show that the congruence subgroups of the lattices constructed in Section \ref{sec:lattices} give rise to bigraphs which are adj-Ramanujan and satisfy the Sarnak-Xue density hypothesis, and that some of them are (fully) Ramanujan while others are not.
The results of Sections \ref{sec:bicayley} and \ref{sec:lattices} are used, on the other hand, to give an explicit Cayley or Schreier description of the quotients of the biregular tree by these subgroups. 
Finally, the results of Sections \ref{sec:Spectral-analysis} and \ref{sec:Combinatorics} are used to study the combinatorial properties of these graphs. The structure of the paper, with the internal dependencies between sections is depicted in Figure \ref{fig:flowchart}.

\begin{figure}[h]
    \centering
    \includegraphics[width=0.95\linewidth]{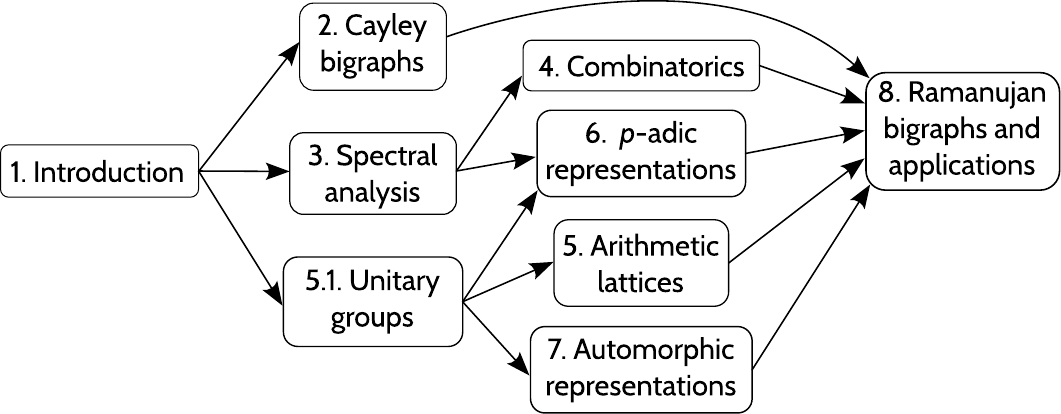}
    \caption{A schematic view of section dependency. While Section \ref{sec:applications} makes use of all previous ones, it can also be read beforehand, relying on the previous results as black boxes.} 
    \label{fig:flowchart}
\end{figure}

\subsection{Summary of results} \label{subsec:Summary-of-results}

In this section we give a quick summary of all the main results of the paper. Throughout the paper $X=\left(L\sqcup R,E\right)$ denotes a connected, undirected, $(K\!+\!1,k\!+\!1)$-biregular bipartite graph (namely, $\deg|_{L}\equiv K+1$ and $\deg|_{R}\equiv k+1$), with $K>k$. 
Here $L$ and $R$ denote the left and right vertices, respectively, and $E$ denotes the directed edges in $X$ (each edge appears with both directions). 
We denote $\left|L\right|=n$, so that $\left|R\right|=\frac{K+1}{k+1}n$ and $\left|E\right|=2n(K+1)$.
We now briefly describe the main results of each section of the paper.

\subsubsection*{\uline{\mbox{$\mathsection2$} Cayley Bigraphs}}

Let $G$ be a group, and $S^1,\ldots,S^{K+1}\subseteq G$ be disjoint subsets of size $k$, such that $S=\bigsqcup_iS^i$ is symmetric ($\left\{ s^{-1}\,\middle|\,s\in S\right\} =S$), $1\notin S$, and if $s,t\in S^i$ with $s\neq t$ then $s^{-1}$ and $s^{-1}t$ belong to the same $S^{j}$. 

\begin{defn*}[\ref{def:biCay}]
The \emph{Cayley bigraph} $CayB\left(G,\left\{ S^i\right\} \right)$ is a $(K\!+\!1,k\!+\!1)$-bigraph defined by $L=G$, $R=\nicefrac{G\times\left\{ 1, \dots, K+1\right\} }{\sim}$ where 
\[
(g,i)\sim(h,j)\quad\iff\quad\begin{array}{l}
\mbox{either }\quad(g,i)=(h,j),\\
\mbox{or }\quad g^{-1}h\in S^i\ \mbox{and}\ h^{-1}g\in S^{j},
\end{array}
\]
and $E=\left\{ \left\{ g,[g,i]\right\} \,\middle|\,g\in G,i\in\left\{ 1,\dots, K+1\right\}\right\} $, where $[g,i]$ is the equivalence class of $\left(g,i\right)\in R$.
\end{defn*}


When $G$ acts on a set $X$, one can similarly define a Schreier bigraph $SchB\left(X,\left\{ S^i\right\} \right)$ (see Section \ref{sec:bicayley} for the details). 
These constructions allow us to describe efficiently quotients of groups which act simply-transitively on one side of a biregular tree:

\begin{thm*}[\ref{thm:biCay-tree}]
Let $\Lambda$ be a group which acts simply-transitively on the left side of $\T=\T_{K\!+\!1,k\!+\!1}$, and $v_0\in L_{\T}$.
If $v_1,\ldots,v_{K+1}$ are the neighbors of $v_0$ and $S^i=\left\{ 1\neq g\in\Lambda\,\middle|\,gv_0\sim v_i\right\} $, then $\T\cong CayB\left(\Lambda,\left\{ S^i\right\} \right)$.
Furthermore, if $G=N\backslash\Lambda$ for some $N\trianglelefteq\Lambda$, then the quotient graph $X=N\backslash\T$ is isometric to $CayB\left(G,\left\{ S^i\right\} \right)$ (where $S^i$ denotes the image of $S^i$ in $G$). 
If $N\leq\Lambda$ is not normal, $X$ is isomorphic to the Schreier bigraph $SchB\left(N\backslash\Lambda,\left\{ S^i\right\} \right)$.
\end{thm*}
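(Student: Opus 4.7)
The plan is to first establish $\T \cong CayB(\Lambda, \{S^i\})$, and then descend to the quotients. Simple transitivity immediately identifies $L_{\T}$ with $\Lambda$ via $g \mapsto gv_0$. Next, I would verify that $\{S^i\}$ satisfies the axioms of Definition \ref{def:biCay} (disjointness, size $k$, symmetry, and the star condition), and that the right side of $\T$ is naturally identified with $(\Lambda \times \{1,\ldots,K\!+\!1\})/\sim$ through the map $[g,i]\mapsto gv_i$.

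Essentially all of these verifications hinge on the absence of $4$-cycles in the tree. For instance, disjointness of $S^i$ and $S^j$ (for $i \neq j$) follows because $g \in S^i \cap S^j$ would produce a $4$-cycle $v_0\!-\!v_i\!-\!gv_0\!-\!v_j\!-\!v_0$; biregularity at $v_i$ gives $|S^i| = k$; symmetry is obtained by observing that if $g\in S^i$ then $g^{-1}v_i$ is a right neighbor of $v_0$, hence equals some $v_j$, and then $g^{-1}v_0 \sim v_j$; the star condition follows by applying $s^{-1}$ to the star at $v_i$ containing both $sv_0$ and $tv_0$. For the right-side identification, the relation $(g,i)\sim(h,j)$ with $g\neq h$ says precisely that $gv_0$ and $hv_0$ share two common right neighbors $gv_i$ and $hv_j$; the tree property forces $gv_i = hv_j$, yielding well-definedness and injectivity of $[g,i]\mapsto gv_i$. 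Surjectivity follows by choosing any left neighbor $gv_0$ of a right vertex $r$ and noting that $g^{-1}r$ is a right neighbor of $v_0$, hence equals some $v_i$; edge preservation is then immediate from $\{g,[g,i]\}\mapsto\{gv_0,gv_i\}$.

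For the quotient with $N \trianglelefteq \Lambda$ of finite index, one has $L_X \cong N \backslash \Lambda = G$. On the right side, $N$-orbits in $(\Lambda \times \{1,\ldots,K\!+\!1\})/\sim$ under left multiplication on the first factor translate, via the normality identity $g^{-1}Nh = (g^{-1}h)N$, into the condition $\overline{g^{-1}h} \in \overline{S^i}$ and $\overline{h^{-1}g} \in \overline{S^j}$ in $G$ (where $\overline{S^i}$ denotes the image in $G$). This is exactly the defining relation for the right side of $CayB(G, \{\overline{S^i}\})$, and edges descend accordingly, yielding the asserted isomorphism.

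When $N$ is not normal, the same orbit analysis applies on $L_X = N\backslash\Lambda$ viewed as a $\Lambda$-set under right multiplication, but one cannot pass to a quotient group; the orbit-equivalence on the right is encoded by the right-action of the $S^i$ on cosets, which is precisely the definition of $SchB(N\backslash\Lambda, \{S^i\})$. The main obstacle is the bookkeeping of the right-side equivalence under the $N$-action: normality of $N$ is exactly what allows coset-level relations $g^{-1}Nh \cap S^i \neq \emptyset$ to be converted into group-theoretic conditions in $G$, while in the non-normal case one is forced to remain at the coset level, giving rise to the Schreier rather than Cayley formalism. A minor subtlety worth addressing is that $N$ may contain torsion and hence fail to act freely on $\T$; this is absorbed harmlessly by the built-in equivalence on the right side of $CayB$/$SchB$, which identifies the corresponding right vertices that collapse under the $N$-action.
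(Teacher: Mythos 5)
Your proposal is correct and takes essentially the same route as the paper: the paper proves the tree identification via Lemma~\ref{lem:axioms} (the pair $(\Lambda,\smash{\vec S})$ satisfies the bi-Cayley axioms, using uniqueness of common neighbours in a tree) and Lemma~\ref{lem:iden} ($g_1v_{j_1}=g_2v_{j_2}\iff(g_1,j_1)\sim(g_2,j_2)$), then checks bijectivity on both sides and edge preservation of $g\mapsto gv_0$, $[g,i]\mapsto gv_i$, which is exactly your argument. The quotient assertions are likewise obtained by pushing this isomorphism through the $N$-action (Remark~\ref{rem:biCay=biSch} and Corollary~\ref{cor:biCay-quotient} in the paper), with the same caveats you flag: non-free action on the right side is handled there by weights (orbigraphs), and the literal Cayley description of the quotient uses that the projection is injective on $S\cup\{1\}$ (otherwise one keeps the inversion map of the original $S$, as in the paper's abuse-of-notation convention).
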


Example \ref{exa:X2q} above is obtained from the lattice $\Lambda^p_{\EE}$ defined in (\ref{eq:Eis-lat}), with $p=2$. 
This lattice acts simply-transitively on the left side of $\T_{9,3}$, and $X_{\EE}^{2,q}$ is the quotient of the tree $\T_{9,3}$ by $N=\Lambda^2_{\EE}\left(q\right)=\left\{ g\in\Lambda^2_{\EE}\,\middle|\,g\equiv I\Mod q\right\} $.
The Ramanujan bigraph described in Example \ref{exa:X2q} is the Schreier bigraph of $PSL_3(\F_q)$ acting on $\mathbb{P}^2(\F_q)$,
with respect to the same generators (the corresponding $N$ is $\Lambda^2_{\EE}\left[q\right]:=\left\{ g\in\Lambda^2_{\EE}\,\middle|\,g\equiv\left(\begin{smallmatrix}* & * & *\\
0 & * & *\\
0 & * & *
\end{smallmatrix}\right)\Mod q\right\} $).

\subsubsection*{\uline{\mbox{$\mathsection3$} Spectral Analysis}}

Let $A=A_{X}$ be the adjacency operator of $X$, which acts
on functions on $V=L\sqcup R$ by $Af(v)=\sum_{v\sim u}f(u)$. 
The spectrum of $A$ is symmetric around zero, and it is easy to see that $\ker A=\ker\left(A|_{R}\right)\oplus\ker\left(A|_{L}\right)$.
Since $A$ maps $L^2(R)$ to $L^2(L)$ and $\left|R\right|>\left|L\right|$, $A|_{R}$ must have a nontrivial kernel, but there is no reason for $A|_{L}$ to have one.
We call 
\[
\mE=\mE_X\overset{{\scriptscriptstyle def}}{=}\dim\ker\left(A|_{L}\right)
\]
the \emph{excessiveness }of $X$, and denote by 
\[
\mathfrak{pf}_{X}=\lambda_1>\lambda_2\geq\lambda_3\geq\ldots\geq\lambda_{n-\mE}
\]
the positive eigenvalues of $A$.
We parametrize both the adjacency and non-backtracking spectrum by the following union of three line segments in $\C$:
\[
\Theta_{K,k}=\left[-i\log\sqrt{Kk},0\right]\cup\left[0,\pi\right]\cup\left[\pi,\pi+i\log\sqrt{K/k}\right].
\]
With every $\lambda\in\left[0,\mathfrak{pf}\right]$ we associate
\[
\vartheta=\vartheta_{\lambda}=\arccos\left(\frac{\lambda^2-K-k}{2\sqrt{Kk}}\right)\in \Theta_{K,k},
\]
and observe that $\lambda\in\left[\sqrt{K}+\sqrt{k},\sqrt{K}-\sqrt{k}\right]$ if and only if $\vartheta\in\left[0,\pi\right]$, so that 
\[
X\text{ is adj-Ramanujan } \qquad \Leftrightarrow\qquad\vartheta_{\lambda_2},\ldots,\vartheta_{\lambda_{n-\mE}}\in\left[0,\pi\right]=\Theta_{K,k}\cap\R.
\]
Let $B=B_{X}$ be the non-backtracking operator acting on $L^2(E)$
by (\ref{eq:NB-op}). For $\lambda\in\left[0,\mathfrak{pf}\right]$,
we denote 
\[
\mu_{\lambda}^{\pm}=\sqrt{e^{\pm i\vartheta_{\lambda}}\sqrt{Kk}}.
\]
Given an adjacency eigenfunction $Af=\lambda f$, we construct in (\ref{eq:F-and-G-eigfun}) functions $F^{\pm},\widetilde{F}^{\pm}\in L^2(E)$ which satisfy $BF^{\pm}=\mu^{\pm}F^{\pm}$ and $B\widetilde{F}^{\pm}=-\mu^{\pm}\widetilde{F}^{\pm}$
(Proposition \ref{prop:F-B-ef}). 


As the operator $B$ is not normal, these eigenfunctions are not orthogonal, which makes spectral analysis difficult. 
The main result of this section is an orthonormal basis for $L^2(E)$, in which $B$ decomposes as a  block-diagonal matrix with blocks of size at most $4\times4$, and where each nontrivial block is itself block-anti-diagonal. 
This will be used in the combinatorial applications in Section \ref{sec:Combinatorics}.

\begin{thm*}[\ref{thm:B-decomp}]
Let $\mathfrak{pf}=\lambda_1>\lambda_2\geq\ldots\geq\lambda_{n-\mE_X}$ be the positive eigenvalues of $A_{X}$. 
Denoting $\mu_i^{\pm}=\mu_{\lambda_i}^{\pm}$, the operator $B_{X}$ is unitarily equivalent to a block-diagonal matrix composed of:
\begin{enumerate}
\item The block $\left(\begin{smallmatrix} & K\\ k \end{smallmatrix}\right)$.
\item \begin{enumerate}
    \item For each $2\leq j\leq n-\mE_X$ with $\vartheta_{\lambda_{j}}\in\left[0,\pi\right]$, the block
    \begin{equation}
    \left(\begin{smallmatrix}0 & 0 & \mu_{j}^{+}\sqrt[4]{\frac{k}{K}} & k-1\\ 0 & 0 & 0 & -\mu_{j}^{-}\sqrt[4]{\frac{k}{K}}\\ \mu^{+}\sqrt[4]{\frac{K}{k}} & K-1 & 0 & 0\\ 0 & -\mu_{j}^{-}\sqrt[4]{\frac{K}{k}} & 0 & 0 \end{smallmatrix}\right).\label{eq:B_W_Ram_case}
    \end{equation}
    \item For each $2\leq j\leq n-\mE_X$ with $\vartheta_{\lambda_{j}}\notin\left[0,\pi\right]$, a block of the form $\left(\begin{smallmatrix}0 & 0 & \mu_{j}^{+}\alpha_{j} & \gamma_{j}\\ 0 & 0 & 0 & \mu_{j}^{-}\beta_{j}\\ \mu_{j}^{+}/\alpha_{j} & \delta_{j} & 0 & 0\\ 0 & \mu_{j}^{-}/\beta_{j} & 0 & 0 \end{smallmatrix}\right)$, where $\alpha_{j},\beta_{j}\in\R_{>0}$ and $\gamma_{j},\delta_{j}\in\C$ are all bounded by $K$. 
    \end{enumerate}
\item $\mE_X$ times the block $\left(\begin{smallmatrix} & i\\ iK \end{smallmatrix}\right)$.
\item $\tfrac{K-k}{k+1}n+\mE_X$ times the block $\left(\begin{smallmatrix} & ik\\ i \end{smallmatrix}\right)$.
\item $\chi\left(X\right)$ times the diagonal block $\left(\begin{smallmatrix}1\\ & -1 \end{smallmatrix}\right)$, where $\chi\left(X\right)=\frac{|E|}2-|V|+1=\frac{Kk-1}{k+1}\cdot n + 1$.
\end{enumerate}
\end{thm*}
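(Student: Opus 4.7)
The strategy rests on the orientation grading $L^{2}(E)=L^{2}(E_{L\to R})\oplus L^{2}(E_{R\to L})$. Since $(Bf)(v\!\to\!u)$ only involves values of $f$ on edges leaving $u$, the operator $B$ interchanges these two summands, which is exactly why every block in the claimed decomposition is either $2\times 2$ anti-diagonal or $4\times 4$ anti-block-diagonal. The plan is to build an ON basis of $L^{2}(E)$ adapted to this grading by (i) lifting an ON eigenbasis of $A_{X}$ to edge functions via $F^{\pm},\widetilde F^{\pm}$ from (\ref{eq:F-and-G-eigfun}) and Proposition \ref{prop:F-B-ef}, and (ii) filling the orthogonal complement with the topological $\pm 1$-eigenfunctions furnished by Proposition \ref{prop:cycles}.

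First, I would fix an ON eigenbasis of $A_{X}$: the Perron--Frobenius pair $f_{1}^{\pm}$ with eigenvalues $\pm\mathfrak{pf}_{X}$; for each $2\le j\le n-\mathcal{E}_{X}$ the bipartite-symmetric pair $f_{j}^{\pm}$ with eigenvalues $\pm\lambda_{j}$; and ON bases of $\ker A|_{L}$ (of dimension $\mathcal{E}_{X}$) and of $\ker A|_{R}$ (of dimension $\tfrac{K-k}{k+1}n+\mathcal{E}_{X}$, from $\rank A|_{L}=n-\mathcal{E}_{X}$). The lift of the PF pair spans a 2-dim $B$-invariant subspace on which $B$ acts as $\bsmx 0 & K\\ k & 0\esmx$, the asymmetry reflecting that each edge $L\!\to\!R$ has $k$ non-backtracking continuations while each $R\!\to\!L$ has $K$. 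Each $g\in\ker A|_{L}$ lifts to a 2-dim invariant subspace on which $B$ acts as $\bsmx 0 & i\\ iK & 0\esmx$, and each $h\in\ker A|_{R}$ to one with matrix $\bsmx 0 & ik\\ i & 0\esmx$; these account for blocks (1), (3), and (4) with the stated multiplicities.

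For each non-trivial positive $\lambda_{j}$, the four lifts $F_{j}^{\pm},\widetilde F_{j}^{\pm}$ of the pair $f_{j}^{\pm}$ span a 4-dim $B$-invariant subspace $W_{j}$. Its intersection with the orientation grading is 2-dim on each side, so in any ON basis respecting the grading, $B|_{W_{j}}$ has the form $\bsmx 0 & M_{j}\\ N_{j} & 0\esmx$ with $M_{j}N_{j}$ having eigenvalues $(\mu_{j}^{\pm})^{2}$. Arranging the basis of each orientation summand so that one element is itself a $B$-eigenfunction makes $M_{j},N_{j}$ upper triangular, yielding the form in (2b) with free parameters $\alpha_{j},\beta_{j},\gamma_{j},\delta_{j}$. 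In the adj-Ramanujan case $\vartheta_{j}\in[0,\pi]$ one has $|\mu_{j}^{\pm}|=(Kk)^{1/4}$, and this isometric symmetry forces $\alpha_{j}=\beta_{j}=(k/K)^{1/4}$; the off-diagonal entries then specialize to $\gamma_{j}=k-1,\delta_{j}=K-1$, coming from the fact that the non-backtracking action on a star at a vertex of degree $k\!+\!1$ or $K\!+\!1$ has residue $k\!-\!1$ or $K\!-\!1$, giving (\ref{eq:B_W_Ram_case}). A dimension count finally shows that the orthogonal complement of all the lifted subspaces has dimension exactly $2\chi(X)$, and by Proposition \ref{prop:cycles} it splits into $\chi(X)$ copies of the $\bsmx 1 & 0\\ 0 & -1\esmx$ block.

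The main obstacle I anticipate is the explicit identification of the $4\times 4$ block in the non-Ramanujan case: showing that Gram--Schmidt in each orientation summand can always be arranged to leave the sub-blocks upper triangular, and establishing the uniform bound $|\alpha_{j}|,|\beta_{j}|,|\gamma_{j}|,|\delta_{j}|\le K$. Both should follow from explicit formulas for the pairwise inner products of $F_{j}^{\pm},\widetilde F_{j}^{\pm}$ inside each orientation summand, which depend only on $\vartheta_{j}$ and can be controlled by the crude bound that all matrix entries of $B$ are $0$ or $1$.
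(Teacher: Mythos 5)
Your overall architecture coincides with the paper's: lift an orthonormal $A$-eigenbasis to edge functions via \eqref{eq:F-and-G-eigfun} and Proposition \ref{prop:F-B-ef}, use the fact that $B$ swaps the two orientation summands to force anti-(block-)diagonal blocks, treat $\ker A|_L$ and $\ker A|_R$ separately, and fill the orthogonal complement with the cycle functions of Proposition \ref{prop:cycles} after a dimension count. Cases (1), (2b), (3), (4), (5) are handled essentially as in the paper.

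The genuine gap is in case (2a). First, your starting assertion that ``the four lifts $F_{j}^{\pm},\widetilde F_{j}^{\pm}$ span a 4-dim $B$-invariant subspace $W_{j}$'' fails exactly at the boundary values $\vartheta_{j}\in\{0,\pi\}$, i.e.\ $\lambda_{j}=\sqrt{K}\pm\sqrt{k}$: there $\mu_{j}^{+}=\mu_{j}^{-}$, hence $F_{j}^{+}=F_{j}^{-}$ and $\widetilde{F_{j}^{+}}=\widetilde{F_{j}^{-}}$, so these lifts span only a $2$-dimensional subspace of the $4$-dimensional $W_{f_j}$, and $B|_{W_{f_j}}$ is non-diagonalizable (this is precisely why the block \eqref{eq:B_W_Ram_case} is not diagonalizable in that case). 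Any basis built from eigenfunction restrictions therefore cannot exist at these values, and your Gram--Schmidt scheme breaks down. The paper's way around this is to abandon the eigenbasis in case (2a) and instead take the single eigenfunction $F=F_j^{+}$ together with its edge-reversal $\overleftrightarrow{F}$, each restricted to $\overrightarrow{LR}$ and $\overleftarrow{LR}$; one then computes directly $Bb_{2}=(K-1)b_{3}+\tfrac{K}{\mu}b_{4}$ and $Bb_{4}=(k-1)b_{1}+\tfrac{k}{\mu}b_{2}$, and checks via Lemma \ref{lem:inner-product} and the identity $\overline{\mu}=\sqrt{Kk}/\mu$ (valid exactly when $\vartheta_j\in[0,\pi]$) that this basis is orthogonal with norm ratios $\sqrt[4]{K/k}$, which after rescaling yields \eqref{eq:B_W_Ram_case} uniformly, including the degenerate endpoints. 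Second, even away from the endpoints, your derivation of the exact entries is only asserted: ``isometric symmetry forces $\alpha_{j}=\beta_{j}=(k/K)^{1/4}$'' and ``the off-diagonal entries specialize to $k-1,\,K-1$'' are not consequences of $|\mu_j^{\pm}|=\sqrt[4]{Kk}$ alone, and these precise constants (with their signs, via $\mu^{-}=-\overline{\mu}$) are the quantitative content of \eqref{eq:B_W_Ram_case} that Section \ref{sec:Combinatorics} later uses; they require the explicit computation above (or an equivalent one), not a symmetry appeal.
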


In particular, this shows that
\begin{equation}
\Spec B_X=\begin{cases}
\left\{ \pm\sqrt{Kk}\right\} \cup\left\{ \pm\mu_2^{\pm},\ldots,\pm\mu_{n}^{\pm}\right\} \cup\left\{ \pm i\sqrt{k},\pm1\right\}  & \mE_X=0\\
\left\{ \pm\sqrt{Kk},\pm i\sqrt{K}\right\} \cup\left\{ \pm\mu_2^{\pm},\ldots,\pm\mu_{n-\mE_X}^{\pm}\right\} \cup\left\{ \pm i\sqrt{k},\pm1\right\}  & \mE_X>0,
\end{cases}\label{eq:SpecB}
\end{equation}
which was already shown in \cite{Kempton2016NonBacktrackingRandom,brito2018spectral} by a detailed analysis of the Ihara-Bass-Hashimoto formula. 
The spectrum of $B$ on the tree is 
\[
\Spec(B_{\T_{K\!+\!1,k\!+\!1}})=\left\{ z\in\C\,\middle|\,\left|z\right|=\sqrt[4]{Kk}\right\} \cup\left\{ \pm i\sqrt{k},\pm1\right\} ,
\]
and upon verifying that $\lambda\in[\sqrt{K}+\sqrt{k},\sqrt{K}-\sqrt{k}]\ \Leftrightarrow |\mu_{\lambda}^{\pm}|=\sqrt[4]{Kk}$, we obtain the following Ramanujan criteria:
\begin{equation} \label{eq:NB-adj-ram}
{X\text{ is}\atop \text{NB-Ramanujan}} \quad \Leftrightarrow \quad {X\text{ satisfies the}\atop \text{Riemann Hypothesis}} \quad \Leftrightarrow \quad {X\text{ is adj-Ramanujan}\atop \text{and }\mE_X=0.}
\end{equation}

\subsubsection*{\uline{\mbox{$\mathsection4$} Combinatorics}}

In this section we explore some combinatorial properties of Ramanujan bigraphs. 
The importance of the strong definition of Ramanujan becomes clear here, as for most of the proofs the adj-Ramanujan property does not suffice. 

Our first result is in the spirit of the Expander Mixing Lemma: this classic theorem states that a graph with concentrated adjacency spectrum behaves pseudorandomly, in the sense that the number of edges between any two sets of vertices $S,T$ is roughly the expected number, in a random graph of the same edge density. 
For Ramanujan bigraphs with $K\gg k$, the Expander Mixing Lemma is not
very useful, since it only uses the fact that the spectrum is contained
in the rather large interval $[-\sqrt{K}-\sqrt{k},\sqrt{K}+\sqrt{k}]$ (comparing to $\mathfrak{pf}=\sqrt{(K+1)(k+1)}$), and not the concentration in the two small strips $\pm[\sqrt{K}-\sqrt{k},\sqrt{K}+\sqrt{k}]$. 
Looking for a pseudorandomness result which makes use of this, we think of the graph as mapping every vertex of $L$ to $K+1$ elements of $R$, and study the number of \emph{clashes} arising from two subsets $S,T\subseteq L$, namely, pairs of edges which leave $S$ and $T$ respectively, and have the same endpoint. 
Denoting by $\left|Cl\left(S,T\right)\right|$ the number of clashes, we present a ``Clash Counting Lemma'':

\begin{thm*}[\ref{thm:clash} for Ramanujan bigraphs]
If $X$ is Ramanujan, then for $S,T\subseteq L$
\begin{multline*}
\left|\vphantom{\frac{a}{b}}\left|Cl\left(S,T\right)\right|-\left(\tfrac{Kk+1}{n}\left|S\right|\left|T\right|+(k-1)\left|S\cap T\right|\right)\right|
\\\leq 2\sqrt{Kk}\sqrt{\left|S\right|\big(1-\tfrac{|S|}{n}\big)\left|T\right|\big(1-\tfrac{|T|}{n}\big)}.
\end{multline*}
\end{thm*}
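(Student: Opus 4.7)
The plan is to express $|Cl(S,T)|$ as a quadratic form in $A_X^2$ acting on $L^2(L)$, and then to exploit the sharp spectral bounds afforded by the Ramanujan hypothesis (in particular the vanishing $\mathcal{E}_X = 0$ it implies, via the equivalences stated earlier in the excerpt) to control the deviation from the main term.

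First I would set up the counting. A clash is naturally an ordered pair of distinct edges $(e_1,e_2)$ sharing a right endpoint, with $e_1$ issuing from a vertex of $S$ and $e_2$ from a vertex of $T$. Without the distinctness requirement one has $\sum_{r \in R} d_S(r) d_T(r) = \langle \mathbf{1}_S, A_X^2 \mathbf{1}_T\rangle_{L^2(L)}$, where $d_S(r) = \#\{u \in S : u \sim r\}$. The degenerate pairs $e_1 = e_2$ correspond bijectively to the edges incident to vertices of $S \cap T$, contributing $(K+1)|S \cap T|$; hence
\[
|Cl(S,T)| \;=\; \langle \mathbf{1}_S, A_X^2 \mathbf{1}_T\rangle - (K+1)|S \cap T|.
\]

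Second, I would apply the spectral decomposition of $A_X^2|_{L^2(L)}$. Since $X$ is Ramanujan, $\mathcal{E}_X = 0$, so the eigenvalues of $A_X^2|_{L^2(L)}$ are $\mathfrak{pf}_X^2 = (K+1)(k+1)$ on $\mathbf{1}_L$, together with $\lambda_2^2, \ldots, \lambda_n^2 \in [(\sqrt{K}-\sqrt{k})^2, (\sqrt{K}+\sqrt{k})^2]$, an interval centered at $K+k$ of radius $2\sqrt{Kk}$. Setting $M := A_X^2 - (K+k) I$ then gives $M\mathbf{1}_L = (Kk+1)\mathbf{1}_L$ and $\|M|_{L^2_0(L)}\| \leq 2\sqrt{Kk}$, where $L^2_0(L) = \mathbf{1}_L^\perp$. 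Decomposing $\mathbf{1}_S = \tfrac{|S|}{n}\mathbf{1}_L + \widehat{\mathbf{1}}_S$ with $\|\widehat{\mathbf{1}}_S\|^2 = |S|(1-|S|/n)$ (and analogously for $T$), and using $\langle \mathbf{1}_S, \mathbf{1}_T\rangle = |S\cap T|$, one gets
\[
\langle \mathbf{1}_S, A_X^2 \mathbf{1}_T\rangle \;=\; (K+k)|S\cap T| + \tfrac{(Kk+1)|S||T|}{n} + \langle \widehat{\mathbf{1}}_S, M\widehat{\mathbf{1}}_T\rangle,
\]
since $M$ preserves the splitting $L^2(L) = \C\mathbf{1}_L \oplus L^2_0(L)$. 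Subtracting $(K+1)|S\cap T|$ converts $(K+k)$ into $k-1$ and produces the stated main term, while Cauchy--Schwarz bounds the remainder by $2\sqrt{Kk}\sqrt{|S|(1-|S|/n)|T|(1-|T|/n)}$.

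The proof is short, but two points merit attention. First, centering at $K+k$ rather than at $0$ yields the favorable radius $2\sqrt{Kk}$ and also accounts for the unusual-looking coefficient $k-1$ of $|S\cap T|$, which emerges as $(K+k)-(K+1)$ from combining the diagonal contribution with the degenerate-pair correction. Second, the full Ramanujan hypothesis (and not merely adj-Ramanujan) is essential: if $\mathcal{E}_X > 0$ then $A_X^2|_{L^2(L)}$ has a zero eigenvalue, which lies outside $[(\sqrt{K}-\sqrt{k})^2, (\sqrt{K}+\sqrt{k})^2]$, so $\|M|_{L^2_0(L)}\|$ could be as large as $K+k$ and the clean bound would collapse.
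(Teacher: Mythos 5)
Your proof is correct and follows essentially the same route as the paper's: both express $\left|Cl(S,T)\right|$ as $\left\langle (A^{2}-(K+1))\one_{S},\one_{T}\right\rangle$, split off the Perron--Frobenius direction, use $\mathcal{E}_{X}=0$ to discard the kernel, and bound the remaining spectral part (centered at $K+k$, the paper's $K+\varepsilon^{2}$ with $\varepsilon=\sqrt{k}$) by $2\sqrt{Kk}$ via Cauchy--Schwarz. The only cosmetic difference is that you work directly with $A^{2}\big|_{L^{2}(L)}$ in the Ramanujan case, whereas the paper runs the identical argument in $L^{2}(V)$ for a general $\varepsilon$-biexpander and obtains your statement as the specialization $\varepsilon=\sqrt{k}$.
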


We stress that we need here Ramanujan and not only adj-Ramanujan, even though we study the combinatorics of the adjacency mapping. 
We suggest a notion of biexpander, which is a quantitative version of being (fully) Ramanujan: $X$ is an $\varepsilon$-biexpander if its nontrivial spectrum is contained in $\pm[\sqrt{K}-\varepsilon,\sqrt{K}+\varepsilon]$, and in addition $\mE_X=0$. 
The Clash Counting Lemma (Theorem \ref{thm:clash}) applies to biexpanders in general, and it is shown in \cite{Morovits2023DirectedExpanderGraphs} that the converse holds as well: a bigraph with a pseudorandom clash counting is a biexpander (see Theorem \ref{thm:convclash} for the quantitative details).

A related topic explored in Section \ref{subsec:sparse} is that of sparsification. 
Regular expanders are sparsifiers of the complete regular graph, but it turns out that biexpanders (and in particular Ramanujan bigraphs) are not good sparsifiers of the complete biregular graph, since the excessiveness $\mE$ of the latter is in fact maximal. 
In Proposition \ref{prop:inc-sparse} we show that what biexpanders do sparsify is incidence graphs in finite projective geometries, which are shown in Proposition \ref{prop:incidence} to form a family of zero-biexpanders. In Section \ref{subsec:biexp-example} we give an example of families of strong biexpanders arising from incidence relations between vertices and edges in high-dimensional Ramanujan complexes. These are in fact $O(\sqrt{k})$-biexpanders, which makes them ``almost Ramanujan'' (note that a $(K\!+\!1,k\!+\!1)$-bigraph is Ramanujan iff it is a $\sqrt{k}$-biexpander).

In Section \ref{subsec:Cutoff} we study the total-variation mixing time of the simple random walk on vertices (SRW), and the non-backtracking random walk on directed edges (NBRW). 
Both SRW and NBRW are 2-periodic, so we restrict our attention to the behavior at even times. 
We denote by $N=n(K+1)=\frac{\left|E\right|}2$ the number of edges going from $L$ to $R$. 

\begin{thm*}[Thm.\ \ref{thm:cutoff-NBRW}, Cor.\ \ref{cor:cutoff-SRW}]
A family of $(K\!+\!1,k\!+\!1)$-regular Ramanujan bigraphs exhibits:
\begin{enumerate}
\item Cutoff for SRW at time $\frac{(K+1)(k+1)}{Kk-1}\log_{\sqrt{Kk}}n$,
with a window of size $O\left(\sqrt{\log n}\right)$.
\item Cutoff for NBRW at time $\log_{\sqrt{Kk}}N$ with window size bounded
by $3\log_{\sqrt{Kk}}\log N$.
\end{enumerate}
\end{thm*}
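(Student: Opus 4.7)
I would prove the non-backtracking result~(2) first using the block decomposition of Theorem~\ref{thm:B-decomp}, and then deduce the simple random walk result~(1) as a corollary by coupling.

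For the NBRW upper bound, the key identity is
\[
(P^{2\ell}\delta_{e})(e')=(Kk)^{-\ell}\,B^{2\ell}(e,e'),
\]
valid whenever $e,e'$ lie in the same orientation class (LR or RL): along any length-$2\ell$ non-backtracking path the out-degrees alternate between $k$ and $K$, giving the total normalization $(Kk)^{-\ell}$. Expanding in the orthonormal eigenbasis furnished by Theorem~\ref{thm:B-decomp}, and using the Ramanujan hypothesis (which forces $\mathcal{E}_X=0$ and restricts every nontrivial eigenvalue $\mu$ of $B$ to either $|\mu|=\sqrt[4]{Kk}$ or $\mu\in\{\pm i\sqrt{k},\pm1\}$), every nontrivial eigenvalue $\nu$ of $P^{2}|_{LR}=(Kk)^{-1}B^{2}|_{LR}$ satisfies $|\nu|\le(Kk)^{-1/2}$ (the kernel and topological eigenvalues give the strictly smaller magnitudes $1/K$ and $1/(Kk)$, using $K>k$). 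Parseval with $\sum_{j}|\phi_{j}(e)|^{2}\le\|\delta_{e}\|_{2}^{2}=1$ then yields $\|P^{2\ell}\delta_{e}-\pi\|_{2}^{2}\le(Kk)^{-\ell}$, and the usual conversion $\|\cdot\|_{TV}\le\tfrac{1}{2}\sqrt{N}\|\cdot\|_{2}$ gives $\varepsilon$-mixing for $2\ell\ge\log_{\sqrt{Kk}}N+2\log_{\sqrt{Kk}}(1/\varepsilon)+O(1)$.

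The matching lower bound is combinatorial: the NBRW started at $e$ visits at most $(Kk)^{\ell}$ distinct edges in $2\ell$ steps, so $\|P^{2\ell}\delta_{e}-\pi\|_{TV}\ge 1-(Kk)^{\ell}/N$, which stays close to $1$ whenever $2\ell\le\log_{\sqrt{Kk}}N+\log_{\sqrt{Kk}}\varepsilon$. Matching the two bounds shows that the $\varepsilon\to 1-\varepsilon$ transition occupies an interval of width $3\log_{\sqrt{Kk}}(1/\varepsilon)+O(1)$, and choosing $\varepsilon=1/\log N$ recovers the stated window of $3\log_{\sqrt{Kk}}\log N$.

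For the SRW corollary the plan is to couple SRW with NBRW on the universal cover $\T_{K+1,k+1}$. An SRW step is, up to a backtracking probability of $1/(k+1)$ or $1/(K+1)$ depending on the current side, a NBRW step; a renewal-type calculation gives that the expected number of SRW steps per completed NBRW step equals $\tfrac{(K+1)(k+1)}{Kk-1}$, which explains the constant in part~(1). Transporting the NBRW cutoff through this renewal relation, and applying a central limit theorem to the $O(\sqrt{t})$ fluctuations of the renewal clock, yields cutoff for SRW at the stated time with window $O(\sqrt{\log n})$, in the spirit of the Lubetzky--Peres template for regular Ramanujan graphs.

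\textbf{Main obstacle.} The principal difficulty is the SRW window. A purely spectral $L^{2}$ bound analogous to the NBRW argument only yields an $O(\log n)$ window after normalization; sharpening to $O(\sqrt{\log n})$ requires replacing worst-case spectral estimates with a distributional CLT-type control on the number of NBRW steps completed during $t$ SRW steps, while still keeping track of the Ramanujan bounds on all non-Perron spectral components (including the kernel eigenvalues $\pm i\sqrt{k}$ and the topological $\pm1$), and ensuring the coupling with the biregular tree remains tight throughout.
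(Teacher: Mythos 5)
Your plan for part (2) has a genuine gap at the central spectral step. You write that Theorem~\ref{thm:B-decomp} furnishes an \emph{orthonormal eigenbasis} of $B$ and then apply Parseval to conclude $\Vert P^{2\ell}\delta_e-\pi\Vert_2^2\le (Kk)^{-\ell}$. But $B$ is not a normal operator: its eigenfunctions are not orthogonal, and it can even fail to be diagonalizable (when $\lambda_j=\sqrt{K}\pm\sqrt{k}$). What Theorem~\ref{thm:B-decomp} provides is an orthonormal basis in which $B$ is merely \emph{block}-diagonal, and after squaring, the nontrivial blocks are upper-triangular matrices $D_j=\left(\begin{smallmatrix}\mu_j^{+2}&\eta_j\\ 0&\mu_j^{-2}\end{smallmatrix}\right)$ with off-diagonal entries $|\eta_j|\le Kk$. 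For such non-normal blocks the operator norm of the $t$-th power is not controlled by the spectral radius: one gets $\Vert D_j^{t}\Vert_2\le \sqrt{(Kk)^{t}(2+Kkt^{2})}$ (Lemma~\ref{lem:Dt-norm}), i.e.\ an extra polynomial factor $t$, so the correct bound is $\Vert B^{2t}|_{L_0^2(E)}\Vert_2\le t\sqrt{2(Kk)^{t+1}}$ as in \eqref{eq:B2t_2norm}, not $\sqrt{2(Kk)^{t}}$. This factor of $t$ is precisely what forces the window $3\log_{\sqrt{Kk}}\log N$ in the paper's proof: two $\log\log$'s to beat $t^2$ and one more to drive the bound to zero. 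Your claimed $L^2$ decay would instead give a window of size $O(\log(1/\varepsilon))$, independent of $N$ — a bounded cutoff window with no hypotheses beyond Ramanujan — which would render Theorem~\ref{thm:bounded-cutoff} (where bounded window is obtained only under an additional logarithmic-girth assumption, and by a genuinely harder argument) superfluous; that is a strong sign the Parseval step cannot stand. Your derivation of the constant $3$ by taking $\varepsilon=1/\log N$ is therefore an artifact of the unjustified bound rather than a proof of the stated window. (Your eigenvalue bookkeeping — $|\mu|=\sqrt[4]{Kk}$ on the principal part, $\pm i\sqrt{k}$, $\pm1$, and $\mathcal{E}_X=0$ under Ramanujan — and your lower bound via the support of $B^{2t}\one_e$, which is Lemma~\ref{lem:mix-lower}, are both fine; the issue is purely the confusion of spectral radius with operator norm for the non-normal $B$.)

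For part (1), your route (lift the SRW to the biregular tree, a renewal/CLT analysis of the radial displacement with drift $\tfrac{Kk-1}{(K+1)(k+1)}$ per unit time, then reduce to NBRW on spheres by symmetry, in the Lubetzky--Peres style) is essentially the paper's own sketch for Corollary~\ref{cor:cutoff-SRW}, and the $O(\sqrt{\log n})$ window indeed comes from the CLT fluctuations of the radial clock, not from the NBRW window; so once the NBRW argument is repaired along the lines above, your deduction of (1) goes through.
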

These results are analogous to the ones in \cite{lubetzky2016cutoff}
for regular graphs. Note that $\log_{\sqrt{Kk}}N$ is optimal for
the NBRW, since  two consecutive steps take an edge to at most
$Kk$ edges. We point out again that even for the analysis of SRW
we require Ramanujan, and not only adj-Ramanujan; in \cite{lubetzky2016cutoff}
this issue does not arise, as for regular graphs the two notions coincide.
Our next result is a bigraph analogue of the main result of \cite{Nestoridi2021Boundedcutoffwindow},
which shows that regular Ramanujan graphs with logarithmic girth exhibit
NBRW cutoff with a bounded window size. We make
use of the strategy of \cite{Nestoridi2021Boundedcutoffwindow}, which
is bootstrapping expansion from the time before the girth "kicks in", but our methods are different, and have the advantage
of analyzing NBRW on edges and not only on vertices.

\begin{thm*}[\ref{thm:bounded-cutoff}]
If $\FF$ is a family of $(K\!+\!1,k\!+\!1)$-regular Ramanujan bigraphs, and for some $m\ge 2$ every $X\in\FF$ satisfies $\mathrm{girth}\,X\geq\frac2{m-1}\log_{\sqrt{Kk}}N$, then $\FF$ exhibits bounded NBRW cutoff. 
In fact, if $t_{\varepsilon}$ denotes the total-variation $\varepsilon$-mixing time of NBRW then all large enough $X\in\FF$ satisfy
\[
\log_{\sqrt{Kk}}N-\log_{\sqrt{Kk}}\left(\tfrac1{\varepsilon}\right)<t_{1-\varepsilon}<t_{\varepsilon}<\log_{\sqrt{Kk}}N+2\log_{\sqrt{Kk}}\left(\tfrac{7m^2}{\varepsilon}\right).
\]
\end{thm*}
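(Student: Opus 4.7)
The plan is to prove the lower bound by a support-size count, and the upper bound by bootstrapping from the girth together with the block-diagonal form of $B$ given in Theorem~\ref{thm:B-decomp}.

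For the \emph{lower bound}, I would observe that starting from any directed edge $e_0$, the NBRW of length $t$ has at most $(\sqrt{Kk})^t$ trajectories, since at each step there are $k$ or $K$ non-backtracking continuations alternately (and the edge case of odd $t$ is even smaller by a factor of $\sqrt{k/K}<1$). Since NBRW preserves parity and the uniform distribution on the $N$ edges of the relevant parity assigns mass $1/N$ to each, $\|p_t-\pi\|_{TV}\ge 1-(\sqrt{Kk})^t/N$, and requiring this to be $\ge 1-\varepsilon$ yields $t_{1-\varepsilon}>\log_{\sqrt{Kk}}N-\log_{\sqrt{Kk}}(1/\varepsilon)$.

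For the \emph{upper bound}, I would pick $t_0$ to be the largest even integer with $t_0\le \mathrm{girth}(X)/2$, so that the girth hypothesis gives $t_0\ge\tfrac{1}{m-1}\log_{\sqrt{Kk}}N-O(1)$. Below the girth, NBRW on $X$ is indistinguishable from NBRW on the universal cover $\T_{K+1,k+1}$; hence $p_{t_0}$ is uniform on its support of exactly $(Kk)^{t_0/2}$ edges, and
\[
\|p_{t_0}-\pi\|_2^2\ \le\ \|p_{t_0}\|_2^2\ =\ (Kk)^{-t_0/2}.
\]
Next I would invoke Theorem~\ref{thm:B-decomp}: in the orthonormal basis provided there, $B$ is block-diagonal with blocks of size $\le 4$, and Ramanujanness places every non-Perron block's spectrum on $\{|z|=\sqrt[4]{Kk}\}\cup\{\pm i\sqrt{k},\pm 1\}$. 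Passing to the NBRW transition matrix $P=D^{-1}B$ with $D(e)=\deg h(e)-1\in\{k,K\}$, each $4\times 4$ non-Perron block of $P^2$ splits into two $2\times 2$ upper-triangular matrices with eigenvalues $e^{\pm i\vartheta_j}/\sqrt{Kk}$, so the non-Perron spectral radius of $P$ equals $(Kk)^{-1/4}$. An explicit computation on the block~\eqref{eq:B_W_Ram_case} then yields, for every $g$ orthogonal to the Perron eigenvector,
\[
\|P^s g\|_2\ \le\ C_m\,(Kk)^{-s/4}\|g\|_2,
\]
where $C_m$ depends only on $m$ (and on $K,k$). Combining with the initial bound and Cauchy--Schwarz $\|p_t-\pi\|_{TV}\le\tfrac12\sqrt{N}\|p_t-\pi\|_2$ produces $t_\varepsilon<\log_{\sqrt{Kk}}N+2\log_{\sqrt{Kk}}(C_m/(2\varepsilon))$, and careful bookkeeping yields $C_m\le 7m^2$.

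The \emph{main obstacle} is establishing the bound $\|P^s g\|_2\le C_m(Kk)^{-s/4}\|g\|_2$ uniformly across the family $\FF$. Since $P$ is non-normal, its operator norm can in principle exceed the spectral radius to the $s$-th power by a polynomial correction depending on condition numbers of the $2\times 2$ sub-blocks of $P^2$; these condition numbers degrade as $\vartheta_j\to 0$ or $\pi$, i.e.\ when an adjacency eigenvalue approaches the boundary $\pm(\sqrt{K}\pm\sqrt{k})$ of the Ramanujan interval. Following the strategy of \cite{Nestoridi2021Boundedcutoffwindow} for regular graphs, the idea is to expand $(P^2)^n$ block-wise using the identity $\frac{a^n-b^n}{a-b}=\sum_{i=0}^{n-1}a^i b^{n-1-i}$, replacing the condition-number bound by a linear-in-$n$ factor which is then absorbed into the extra $L^2$-decay provided by the girth window. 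The resulting constant $C_m=O(m^2)$ captures the trade-off between the girth lower bound $\tfrac{2}{m-1}\log_{\sqrt{Kk}}N$ and the spectral contraction needed for the remaining $s=t-t_0$ steps; handling $4\times 4$ bigraph blocks in place of $2\times 2$ regular ones, while maintaining a prefactor independent of $N$, is the technical core of the argument.
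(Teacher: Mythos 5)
Your lower bound is fine (it is Lemma \ref{lem:mix-lower}), and your time-$t_0$ observation that below the girth the walk is uniform on $(Kk)^{t_0/2}$ edges is also how the paper starts (there it appears as $\left\Vert B^{2\tau}\one_{e}\right\Vert _{2}^{2}=(Kk)^{\tau}$). The gap is in the upper bound, and it sits exactly where you flag the ``main obstacle''. First, the uniform estimate you want, $\Vert P^{s}g\Vert_{2}\leq C_{m}(Kk)^{-s/4}\Vert g\Vert_{2}$ for \emph{every} $g$ orthogonal to the Perron eigenvector, is false: by Lemma \ref{lem:Dt-norm} the non-normal $2\times2$ blocks with $\vartheta_{j}$ near $0$ or $\pi$ (which the Ramanujan property does not exclude) force a factor $\left|\tfrac{\sin s\vartheta_{j}}{\sin\vartheta_{j}}\right|$ that can be as large as $s$, so the restricted operator norm genuinely grows like $s\,(Kk)^{s/2}$ for $B^{2s}$, not like a constant times $(Kk)^{s/2}$. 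Second, your proposed fallback --- accept a linear-in-$s$ factor and ``absorb'' it into the girth-phase $L^{2}$ decay --- does not yield a bounded window. With the additive split $t=t_{0}+s$ one gets $\Vert p_{2t}-\mathbf{u}\Vert_{2}^{2}\lesssim s^{2}(Kk)^{-s-t_{0}/2}$ and hence the TV requirement $N s^{2}(Kk)^{-t}\lesssim\varepsilon^{2}$; since the post-girth stretch still has $s=\Theta(\log N)$ steps and the girth-phase decay is exactly at the Ramanujan rate (so there is no slack), this forces $t\geq\log_{Kk}N+2\log_{Kk}\log N+O(1)$, i.e.\ you recover only the $\log\log N$ window of Theorem \ref{thm:cutoff-NBRW}, not a bounded one.

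The paper avoids this by never bounding an operator norm over the remaining steps. It takes the final time to be an exact \emph{multiple} of the girth-scale time, $t=m\tau$ (hence the footnote restricting to multiples of $m$), writes the initial vector blockwise as $\one_{e}^{j}=\bigl(\begin{smallmatrix}\alpha_{j}\\ \beta_{j}\end{smallmatrix}\bigr)$, and uses the inequality $\left|\tfrac{\sin m\tau\vartheta}{\sin\vartheta}\right|\leq m\left|\tfrac{\sin\tau\vartheta}{\sin\vartheta}\right|$ to compare the action of $D_{j}^{t}$ on this \emph{specific} vector with the action of $D_{j}^{\tau}$ on the same vector. A case split (the set $\boldsymbol{J}$, according to whether $\bigl|\tfrac{\eta_{j}}{\sqrt{Kk}}\tfrac{\sin\tau\vartheta_{j}}{\sin\vartheta_{j}}\beta_{j}\bigr|\leq c|\alpha_{j}|$) shows that each block's time-$t$ contribution is at most $O(m^{2})$ times its time-$\tau$ contribution plus $O(m^{2})\Vert\one_{e}^{j}\Vert^{2}$, and the aggregate time-$\tau$ contributions are controlled by the girth identity $\Vert B^{2\tau}\one_{e}\Vert^{2}=(Kk)^{\tau}$. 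This gives $\Vert\mathbf{p}_{e}^{2t}-\mathbf{u}_{e}\Vert_{2}^{2}\leq 26m^{2}(Kk)^{-t}$ with no polynomial-in-$t$ factor, which is what produces the bounded window $2\log_{\sqrt{Kk}}(7m^{2}/\varepsilon)$. To repair your argument you would need to replace your additive two-stage operator bound by this multiplicative, vector-specific comparison (or an equivalent device); as written, the proposal does not prove the theorem.
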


In Section \ref{sec:applications} we will show that our explicit Ramanujan Cayley bigraphs $X_{(*)}^{p,q}$ indeed have logarithmic girth. 
This is enough to prove bounded cutoff for the graphs $X_{\MM}^{p,q},X_{\CC}^{p,q}$ and possibly $X_{\GG}^{p,q}$, but not for the Eisenstein graphs $X_{\EE}^{p,q}$, which are only adj-Ramanujan. 
To prove that $X_{\EE}^{p,q}$ too exhibits bounded cutoff, as claimed in Theorem \ref{thm:main-Eis}(\ref{enu:X_E-Bounded-cutoff}) above, we establish cutoff results for adj-Ramanujan bigraphs which satisfy a Sarnak-Xue type density hypothesis:

\begin{thm*}[\ref{thm:cutoff-density}]
If $\FF$ is a family of left-transitive adj-Ramanujan $(K\!+\!1,k\!+\!1)$-bigraphs, which satisfy the density hypothesis $\mE_X < N^{\delta}$ with $\delta=\frac2{1+\log_k(K)}$, then $\FF$ exhibits NBRW-cutoff at time $\log_{\sqrt{Kk}}N$, with window size bounded by
\begin{enumerate}
\item $\left(\tfrac2{1-\delta}\right)\log_{\sqrt{Kk}}\log N$ in general, and
\item $\frac1{\delta}\log_{\sqrt{Kk}}\frac{K}{\varepsilon^2}$ if all $X\in\FF$ satisfy $\mathrm{girth}\,X\geq\frac2{m-1}\log_{\sqrt{Kk}}N$ and $\varepsilon\leq m^{\frac{\delta}{\delta-1}}\sqrt{K}$.
\end{enumerate}
\end{thm*}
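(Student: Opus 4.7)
The plan is to mirror the proof of the Ramanujan-case cutoff (Theorem \ref{thm:cutoff-NBRW} and Theorem \ref{thm:bounded-cutoff}), while carefully tracking the contribution from the ``excessive'' non-Ramanujan eigenvalues $\pm i\sqrt{K}$ of $B_{X}$, which appear in $\Spec B_{X}$ with multiplicity $\mathcal{E}_{X}$ according to the formula \eqref{eq:SpecB}. The specific exponent $\delta=2/(1+\log_{k}K)$ is calibrated so that at the cutoff time $\log_{\sqrt{Kk}}N$, the excessive contribution to the $\ell^{2}$-distance from uniformity matches (up to constants) the contribution of the Ramanujan spectrum; the density hypothesis $\mathcal{E}_{X}<N^{\delta}$ then guarantees no additional delay is incurred. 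Concretely, two steps of the NBRW transition kernel $P$ correspond (after side-dependent normalization) to $B^{2}/(Kk)$, so the eigenvalues $\pm i\sqrt{K}$ of $B$ yield a contribution of $k^{-t}$ per excessive eigenvalue, versus $(Kk)^{-t/2}$ per Ramanujan eigenvalue. The key identity $k^{-\log_{\sqrt{Kk}}N}=N^{-\delta}$ (equivalent to $\delta=2\log k/\log(Kk)$) shows that $\mathcal{E}_{X}\cdot k^{-\log_{\sqrt{Kk}}N}<N^{\delta}\cdot N^{-\delta}=1$, exactly balancing the Ramanujan bulk.

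For the quantitative bounds, I would first invoke the block decomposition of Theorem \ref{thm:B-decomp} and left-transitivity to obtain, via Cauchy--Schwarz,
\[
\|P^{2t}\delta_{e_{0}}-\pi\|_{TV}^{2}\leq\tfrac{|E|}{4}\|P^{2t}\delta_{e_{0}}-\pi\|_{2}^{2}\leq C\bigl(n\,(Kk)^{-t/2}+\mathcal{E}_{X}\,k^{-t}\bigr)+o(1),
\]
where the first term collects Ramanujan blocks, the second the excessive blocks of type (3), the $o(1)$ absorbs the lower-order eigenvalues of modulus $\sqrt{k}$ and $1$, and the constant $C$ arises from the bounded block entries (the off-diagonal $K-1$ and $k-1$ of type (2)). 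Setting $t=\log_{\sqrt{Kk}}N+w$ reduces this to $C((Kk)^{-w/2}+k^{-w})$, proving the upper bound on $t_{\varepsilon}$. For the matching lower bound, the support of $P^{2t}\delta_{e_{0}}$ has size at most $(Kk)^{t}$, giving $t_{1-\varepsilon}\geq\log_{\sqrt{Kk}}N-\log_{\sqrt{Kk}}(1/\varepsilon)$ and establishing cutoff at $\log_{\sqrt{Kk}}N$. The window (1) of size $\tfrac{2}{1-\delta}\log_{\sqrt{Kk}}\log N$ then emerges from balancing the two exponential decay rates $(Kk)^{-1/2}$ and $k^{-1}$ against each other in the regime when $\mathcal{E}_{X}$ nearly saturates the bound $N^{\delta}$ and $\varepsilon$ shrinks with $N$; the algebraic factor $2/(1-\delta)$ is precisely the ratio controlling which term dominates.

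For the refined window under girth assumption (2), I would adapt the bootstrap of \cite{Nestoridi2021Boundedcutoffwindow} used in the Ramanujan case of Theorem \ref{thm:bounded-cutoff}: up to time $\tfrac{1}{m-1}\log_{\sqrt{Kk}}N$ the walk is indistinguishable from the NBRW on the biregular tree $\T_{K+1,k+1}$, whose transition probabilities on edges can be computed explicitly, and the spectral bound above handles the remaining time. Balancing the excessive $k^{-t}$ contribution against these tree estimates produces the stated window $\tfrac{1}{\delta}\log_{\sqrt{Kk}}(K/\varepsilon^{2})$, with the factor $1/\delta$ arising from the ratio of the excessive decay exponent to the Ramanujan one. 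The main technical obstacles I anticipate are (i) the non-normality of $B$ on the $4\times4$ blocks of type (2), which rules out a bare spectral-radius bound and forces one to use the explicit block form of Theorem \ref{thm:B-decomp} together with the orthonormality of its block basis to estimate $\|B^{2t}v\|$; and (ii) in the refined step, adapting the Nestoridi--Sarkar ``bootstrap'' to the biregular setting, where the coexistence of two distinct decay rates $(Kk)^{-1/2}$ and $k^{-1}$ must be reconciled with the tree-walk analysis within the girth.
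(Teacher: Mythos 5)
Your route is the paper's route — left-transitivity to average over the starting edge and reduce to a Frobenius-norm estimate for $B^{2t}$ on $\one^{\bot}$ via the block decomposition of Theorem \ref{thm:B-decomp}, the calibration identity $k^{-\log_{\sqrt{Kk}}N}=N^{-\delta}$, the support-counting lower bound, and a Nestoridi--Sarnak bootstrap for the girth case — but the central displayed estimate is not correct as written, and the error sits exactly where the theorem's window comes from. The type-(2) blocks $D_{j}=\left(\begin{smallmatrix}\mu_{j}^{+2} & \eta_{j}\\ 0 & \mu_{j}^{-2}\end{smallmatrix}\right)$ are non-normal, and by Lemma \ref{lem:Dt-norm} one only has $\left\Vert D_{j}^{t}\right\Vert _{F}^{2}\leq(Kk)^{t}\left(2+Kk\,t^{2}\right)$, with the factor $\left(\tfrac{\sin t\vartheta_{j}}{\sin\vartheta_{j}}\right)^{2}\approx t^{2}$ genuinely attained when $\vartheta_{j}$ is near $0$ or $\pi$. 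So the Ramanujan part of the averaged $\ell^{2}$-distance carries an unavoidable factor $t^{2}\asymp(\log N)^{2}$, not a constant $C$; your claimed reduction to $C\bigl((Kk)^{-w/2}+k^{-w}\bigr)$ therefore fails, and it is precisely this polynomial factor that forces the window in part (1) to be of order $\log_{\sqrt{Kk}}\log N$, with the coefficient $\tfrac{2}{1-\delta}$ chosen so that $(Kk)^{-w}$ simultaneously dominates the excessive term, whose bound grows like $(K/k)^{w}$ past the cutoff time. A sanity check: if your constant-$C$ bound were true, the general window would already be $O_{\varepsilon}(1)$ and the girth hypothesis in part (2) would be superfluous. (Minor additional slips: the left side has $P^{2t}$ while your decay exponents correspond to $t$ NBRW steps; and the term $\mathcal{E}_{X}k^{-t}$ only makes sense after the left-transitivity averaging — for a single starting edge the excessive projection is merely bounded by $1$ — so that averaging is doing real work and should be made explicit.)

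For part (2), "balancing the excessive contribution against tree estimates produces the stated window" is an assertion, not an argument. The actual mechanism in the paper is that the girth gives the exact Frobenius identity $\left\Vert B^{2\tau}\right\Vert _{F}^{2}=N(Kk)^{\tau}$ for $2\tau\leq\tfrac{1}{2}\mathrm{girth}\,X$ (each column of $B^{2\tau}$ is a $0/1$ vector with $(Kk)^{\tau}$ ones), and the bound $\left|\tfrac{\sin m\tau\vartheta}{\sin\vartheta}\right|\leq m\left|\tfrac{\sin\tau\vartheta}{\sin\vartheta}\right|$ then bootstraps this to $\left\Vert D_{j}^{t}\right\Vert _{F}^{2}\leq m^{2}(Kk)^{t-\tau}\left\Vert D_{j}^{\tau}\right\Vert _{F}^{2}$ at $t=m\tau$, which is what replaces the worst-case $t^{2}$ by the bounded factor $m^{2}$; "explicit tree transition probabilities within the girth" does not by itself give this. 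You also never derive the specific window $\tfrac{1}{\delta}\log_{\sqrt{Kk}}\tfrac{K}{\varepsilon^{2}}$ or the constraint $\varepsilon\leq m^{\delta/(\delta-1)}\sqrt{K}$: these come from evaluating $\tfrac{\mathcal{E}_{X}}{N}\bigl(\tfrac{K}{k}\bigr)^{t}<\bigl(\tfrac{K}{\varepsilon^{2}}\bigr)^{(1-\delta)/\delta}$ at $t=\log_{Kk}N+\tfrac{1}{\delta}\log_{Kk}\tfrac{K}{\varepsilon^{2}}$, and from requiring $\bigl(\tfrac{K}{\varepsilon^{2}}\bigr)^{(1-\delta)/\delta}\geq m^{2}$ so that the excessive term dominates the $m^{2}$ term. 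Filling in these two quantitative steps (the $t^{2}$ factor and its removal via the girth bootstrap) is exactly what separates your sketch from a proof.
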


The theorem assumes that each $X$ is left-transitive, namely that $\mathrm{Aut}\left(X\right)$ acts transitively on its left side.
This is always the case for Cayley bigraphs, and in particular for $X_{\EE}^{p,q}$. 
Since $\log_{k}K=3$ for $X_{\EE}^{p,q}$, we need a density of $\mE_{X}<N_{X}^{1/2}$ for the non-Ramanujan eigenvalues.
In Section \ref{section:automorphic} we show that principal congruence lattices in $U_3$
give rise to bigraphs which satisfy $\mE_{X}\ll_{\varepsilon}N_{X}^{3/8+\varepsilon}$.

We end the section with a short exposition of zeta functions of graphs and their relation to prime cycle counting, obtaining a version of the Prime Number Theorem for Ramanujan and adj-Ramanujan bigraphs (recall from \eqref{eq:NB-adj-ram} that an adj-Ramanujan bigraph is Ramanujan iff $\mE_X=0$):

\begin{thm*}[\ref{thm:PNT}]
If $\pi(m)$ is the number of prime cycles of length $m$ in an adj-Ramanujan $(K\!+\!1,k\!+\!1)$-bigraph $X$ with $N$ edges, then
\[
\left|\pi(2m)-\frac{(Kk)^{m}}{m}-\mE_{X}\frac{\left(-K\right)^{m}}{2m}\right|\leq2N(Kk)^{m/2}.
\]
\end{thm*}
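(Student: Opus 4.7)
The plan is to convert $\pi(2m)$ into a trace computation via Möbius inversion, evaluate the trace using the block decomposition from Theorem \ref{thm:B-decomp}, and then collect the error terms.

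First I would invoke the standard dictionary from Ihara-zeta theory: each prime of length $d$ gives rise to exactly $d$ closed non-backtracking walks of length $m$ with a distinguished starting edge (the cyclic rotations of its $(m/d)$-th power), so $\tr(B^m)=\sum_{d\mid m}d\cdot\pi(d)$. Since $X$ is bipartite, both $\pi(d)$ and $\tr(B^d)$ vanish for odd $d$; writing $N_{2\ell}:=\tr(B^{2\ell})$ and applying Möbius inversion on $\ell\mid m$ then gives
\[
2m\cdot\pi(2m)=\sum_{\ell\mid m}\mu(m/\ell)\,N_{2\ell}.
\]

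Next I would compute each $N_{2\ell}$ block by block, exploiting the observation that every $2\times 2$ block in Theorem \ref{thm:B-decomp} squares to a scalar matrix---for instance $\left(\begin{smallmatrix}&K\\k&\end{smallmatrix}\right)^{2}=Kk\cdot I$ and $\left(\begin{smallmatrix}&i\\iK&\end{smallmatrix}\right)^{2}=-K\cdot I$---while each nontrivial $4\times 4$ block of type (2a) has eigenvalues $\pm\mu_j^{\pm}$ of modulus exactly $(Kk)^{1/4}$ under the adj-Ramanujan hypothesis. This yields the leading contributions $2(Kk)^{\ell}$ and $2\mathcal{E}_X(-K)^{\ell}$ (matching the two main terms in the statement), together with $O(nk^{\ell})$ from the $\pm i\sqrt k$ blocks, $O(n)$ from the $\pm 1$ blocks, and a total nontrivial contribution bounded by $4(n-\mathcal{E}_X-1)(Kk)^{\ell/2}$.

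Finally I would assemble the error bounds. For $\ell<m$ the crude bound $|N_{2\ell}|\le Cn(Kk)^{\ell}$ gives $|N_{2\ell}|\le Cn(Kk)^{m/2}$ whenever $\ell\le m/2$, and since the number of divisors of $m$ grows subpolynomially the full Möbius correction $\sum_{\ell\mid m,\,\ell<m}|\mu(m/\ell)N_{2\ell}|$ is also $O(n(Kk)^{m/2})$. Likewise the subdominant trivial eigenvalues at $\ell=m$, together with the nontrivial spectrum, each contribute $O(n(Kk)^{m/2})$. Dividing by $2m$ and absorbing all the constants and subpolynomial factors into the target $2N(Kk)^{m/2}=2n(K+1)(Kk)^{m/2}$ then produces the claimed inequality.

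The main obstacle is precisely this tight bookkeeping: the statement permits only the constant $2$, so the Möbius sum over $\ell<m$, every trivial eigenvalue other than $\pm\sqrt{Kk}$ and $\pm i\sqrt K$, and the entire nontrivial spectrum must all be absorbed simultaneously. The normalization $N=n(K+1)$ is generous enough for this to work, but the verification is essentially a careful accounting rather than a deeper argument; the real input is the block decomposition of Theorem \ref{thm:B-decomp} combined with the elementary Möbius-inversion identity from Ihara-zeta theory.
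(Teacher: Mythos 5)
Your proposal follows the paper's own proof essentially step for step: the identity $\mathrm{tr}(B^{2m})=\sum_{d\mid 2m}d\,\pi(d)$ plus Möbius inversion, evaluation of $\mathrm{tr}(B^{2\ell})$ through the spectral decomposition of Theorem \ref{thm:B-decomp} (with the nontrivial eigenvalues of modulus $\sqrt[4]{Kk}$ under adj-Ramanujan), a crude count of $N_{2\ell}$ for the proper divisors $\ell<m$, and absorption of everything into $2N(Kk)^{m/2}$. The only differences are cosmetic: you invert over $\ell\mid m$ after discarding odd lengths, while the paper inverts over $d\mid 2m$ directly, and you work block-by-block rather than summing over $\mathrm{Spec}(B)$.

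One point should be flagged rather than asserted as a ``match.'' Your block computation gives the contribution $2\mathcal{E}_X(-K)^{\ell}$ from the $\mathcal{E}_X$ blocks $\left(\begin{smallmatrix} & i\\ iK & \end{smallmatrix}\right)$ (each such block squares to $-K\cdot I_2$, so its $2\ell$-th power has trace $2(-K)^{\ell}$), and this is the count consistent with Theorem \ref{thm:B-decomp}. After dividing by $2m$, the secondary main term is therefore $\mathcal{E}_X(-K)^m/m$, not the $\mathcal{E}_X(-K)^m/(2m)$ appearing in the statement, and the difference $\mathcal{E}_X K^m/(2m)$ is not absorbable into $2N(Kk)^{m/2}$ once $K>k$ and $m$ grows. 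So as written your argument proves the theorem with centering $\mathcal{E}_X(-K)^m/m$. The paper's proof exhibits the same tension: its display for $N_{2m}$ records the $A$-type contribution as $\mathcal{E}_X(-K)^m$, which disagrees by the same factor of $2$ with the multiplicity count of Theorem \ref{thm:B-decomp}. In short, your method is the paper's method and your trace bookkeeping is the internally consistent one; just do not claim the literal match with the stated constant, but note the discrepancy in the coefficient of the $\mathcal{E}_X$ term.
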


\subsubsection*{\uline{\mbox{$\mathsection5$} Simply-transitive Lattices in Unitary Groups}}

In this section we introduce the unitary group $U_3$, which plays a central role in our explicit construction of Ramanujan bigraphs. 
Subsection \ref{subsec:unitarygroups} defines this group in the language of algebraic group schemes, and describes the Bruhat-Tits tree on which the $p$-adic group $U_3(\Q_p)$ acts. 
The rest of the section is devoted to constructing several arithmetic lattices which act simply-transitively on the hyperspecial vertices of this tree, except for a finite number of ``ramified'' primes $p$ (these are the places at which the associated arithmetic group is ramified, or where a congruence condition is imposed to give a simply transitive action). 
We list these lattices here, while the full and precise statement is given in Theorem \ref{thm-simply-transitive} using the notion of a strong unitary root datum defined in Section \ref{subsec:lattices-main}.

\begin{thm*}[\ref{thm-simply-transitive}]
The following $p$-arithmetic lattices act simply-transitively on the hyperspecial vertices of the Bruhat-Tits building of the corresponding $p$-adic group $PU_3(E,\Phi)(\Q_p)$:
\begin{description}

\item [{($\EE$) Eisenstein}] $p\neq3$,
\begin{equation}
\Lambda_{\EE}^p=\left\{ A\in PU_3(\Q[\sqrt{-3}],I)\left(\Z\left[\tfrac1{p}\right]\right)\,\middle|\,A\equiv\left(\begin{smallmatrix}1 & * & *\\
* & 1 & *\\
* & * & 1
\end{smallmatrix}\right)\Mod3\right\} .\label{eq:Eis-lat}
\end{equation}
This lattice is new, and unlike the three that follow, gives rise both to Ramanujan and non-Ramanujan bigraphs.

\item [{($\GG$) Gauss}] $p\neq2$,
\[
\Lambda_{\GG}^p=\left\{ A\in PU_3(\Q[i],I)\left(\Z\left[\tfrac1{p}\right]\right)\,\middle|\,A\equiv\left(\begin{smallmatrix}1 & * & *\\
* & 1 & *\\
* & * & 1
\end{smallmatrix}\right)\Mod{2+2i}\right\} .
\]
This lattice was constructed in \cite{Evra2018RamanujancomplexesGolden}.
\item [{($\MM$) Mumford}] $p\neq2,7$, $\Phi=\left(\begin{smallmatrix}3 & \overline{\lambda} & \overline{\lambda}\\
\lambda & 3 & \overline{\lambda}\\
\lambda & \lambda & 3
\end{smallmatrix}\right)$ where $\lambda=\tfrac{-1+\sqrt{-7}}2$
\begin{equation}\label{eq:MM_lambda}
\Lambda_{\MM}^p=\left\{ A\in PU_3(\Q[\sqrt{-7}],\Phi)\left(\Z\left[\tfrac1{p}\right]\right)\,\middle|\,A\equiv\left(\begin{smallmatrix}* & * & *\\
0 & * & *\\
0 & 0 & *
\end{smallmatrix}\right)\Mod{\lambda}\right\} .
\end{equation}
This lattice is a variation of  Mumford's lattice \cite{mumford1979algebraic}; we take a congruence condition different from Mumford's -- our condition is chosen in order to exclude endoscopic lifts, thus obtaining Ramanujan bigraphs.

\item [{($\CC$) CMSZ}] $p\neq3,5$, $\Phi=\left(\begin{smallmatrix}10 & -2(\eta+2) & \eta+2\\ -2(\bar{\eta}+2) & 10 & -2(\eta+2)\\ \bar{\eta}+2 & -2(\bar{\eta}+2) & 10 \end{smallmatrix}\right)$, where $\eta=\frac{1-\sqrt{-15}}2$. 
Here $\Lambda_{\CC}^p$ is a sublattice of $PU_3(\Q[\sqrt{-15}],\Phi)\left(\Z\left[1/p]\right]\right)$ defined by a rather complicated congruence condition -- see Theorem \ref{thm-simply-transitive}. 
This lattice is also a variation on a lattice constructed by Cartwright-Mantero-Steger-Zappa in \cite{Cartwright1993Groupsactingsimply} (see also \cite{Kato2006ArithmeticstructureCMSZ}), using a different congruence condition.
\end{description}
\end{thm*}

For primes $p$ which are inert in the quadratic imaginary field, the Bruhat-Tits building is a $\left(p^3+1,p+1\right)$-biregular tree, and each of the above lattices acts simply-transitively on the left (hyperspecial) vertices in the tree. 
For $p$ which are split, we have $PU_3\left(\Q_p\right)\cong PGL_3\left(\Q_p\right)$, and the lattice acts simply-transitively on \emph{all} the vertices of the two-dimensional building of $PGL_3\left(\Q_p\right)$.
This gives new examples of arithmetic $\widetilde{A}_2$-groups in characteristic zero, for infinitely many $p$. 
This expands on the work of Cartwright-Mantero-Steger-Zappa \cite{Cartwright1993Groupsactingsimply}, which classifies arithmetic $\widetilde{A}_2$-groups in $PGL_3\left(\Q_p\right)$ for $p=2$ and $3$ (by Margulis arithmeticity, any $\widetilde{A}_2$-group in $PGL_3\left(\Q_p\right)$ is arithmetic, but giving an explicit arithmetic presentation is non trivial). 
The Eisenstein and Gauss lattices do not appear in \cite{Cartwright1993Groupsactingsimply}, since $2$ and $3$ are ramified or inert for them.

Our methods and proofs are different from those in \cite{Cartwright1993Groupsactingsimply}.
Any group acting simply-transitively on a contractible simplicial complex has a nice presentation coming from the complex \cite{Brown1984Presentationsgroupsacting}, and \cite{Cartwright1993Groupsactingsimply} look for matrix subgroups of $PGL_3\left(\Q_p\right)$ having a presentation coming from the $\widetilde{A}_2$-building. 
In contrast, in Section \ref{subsec:lattices-transitive} we use the Mass formula of \cite{gan2001exact}, which is an explicit form of Prasad's volume formula (which is in itself an extension of Siegel's formula), to show that the groups in cases ($\EE$), ($\GG$), ($\MM$), ($\CC$) above are of class number one with respect to natural choices of adelic open compact subgroups $K$, i.e.
\[
G(\A) = G(\Q)\cdot K.
\]
This implies that the principal $S$-arithmetic lattices $\Gamma^p_{(\ast)}$ in the corresponding  unitary groups act transitively on the hyperspecial vertices of the building. 
To apply the Mass formula, we need a close examination of the parahoric subgroups, which we do in Section \ref{subsec:lattices-parahorics}.

In Section \ref{subsec:lattices-simply} we  show that the congruence conditions on the lattices $\Lambda^p_{(\ast)}$ given in Theorem \ref{thm-simply-transitive} remove the non-trivial stabilizers of hyperspecial vertices, without losing the transitivity property. 
For our Mumford ($\MM$) and CMSZ ($\CC$) lattices, we manage to find congruence conditions which additionally ensure that no non-Ramanujan eigenvalues appear in the quotient of the building/tree by congruence subgroups of the lattice (see Section \ref{section:automorphic}).

The various statements of strong approximation used in this paper are summarized in Section \ref{subsec:lattices-strong-approx}. 
There, we also give the proof of Example \ref{exa:X2q} and include a characterization of hyperspecial maximal compact subgroups.

\subsubsection*{\uline{\mbox{$\mathsection6$} Local Representation Theory}}

This section serves as a bridge between the spectral theory of bigraphs
and the representation theory of $p$-adic unitary groups. 

We begin by classifying the Iwahori-spherical (I.S.) irreducible representations $W$ of $G=U_3(E,\Phi)$ (where $E$ is an inert quadratic extension of a local field of residue order $q$), in terms of their \emph{Satake parameters}. 
The number $z\in\C^{\times}$ is called a Satake parameter for $W$ if $W$ embeds in the normalized parabolic induction of a certain character $\chi_z$ of a Borel subgroup of $G$. One can assume without loss of generality that $\Phi=\scalebox{0.7}{\ensuremath{\left(\begin{smallmatrix} &  & 1\\ & 1\\ 1 \end{smallmatrix}\right)}}$, in which case the upper-triangular matrices form a Borel subgroup, and $\chi_z$ is defined by $\chi_z\left(\left(\begin{smallmatrix}\alpha & * & *\\  & * & *\\ &  & * \end{smallmatrix}\right)\right)=z^{\ord_q\alpha}$. 
Every I.S.~representation has either one or two Satake parameters, and we obtain:

\begin{prop*}[\ref{prop:satake}]
The Satake parametrization identifies the I.S.\ dual of $G$ with
the non-Hausdorff space 
\[
\frac{\C\backslash\left\{ -q^{\pm1},0,q^{\pm2}\right\} }{z\sim1/z}\cup\left\{ -q^{\pm1},q^{\pm2}\right\} ,
\]
and the unitary ones are those with Satake parameter in $S^1\cup\left[-q,-\tfrac1{q}\right]\cup\left[\tfrac1{q^2},q^2\right]$.
\end{prop*}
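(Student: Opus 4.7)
The plan is to use the Borel-Casselman equivalence, under which irreducible Iwahori-spherical representations of $G$ correspond bijectively to the irreducible subquotients of unramified principal series $I(\chi_z)$, with $I(\chi_z)\cong I(\chi_{1/z})$ generically via the standard intertwining operator. Concretely, I would first observe that the diagonal torus of $U_3$ is $T\cong E^\times\times U_1(E/\Q_q)$, that $U_1(E/\Q_q)$ is compact in the unramified case, and hence that unramified characters of $T$ are indexed by a single $z\in\C^\times$ via $\chi_z$. The nontrivial Weyl element acts on $T$ by $a\mapsto\bar a^{-1}$, inducing $z\mapsto z^{-1}$ on Satake parameters, which sets up the quotient $\C^\times/(z\sim 1/z)$ as the generic part of the parameter space.

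The essential content is then locating the reducibility locus of $I(\chi_z)$. Since $G$ has relative rank one of type $BC_1$, I would compute the Langlands-Shahidi $\mu$-function, which for $U_3$ factors according to the two layers of the unipotent radical of the Borel: a $2$-dimensional piece contributing an Asai-type $L$-factor of $\chi_z$, and a $1$-dimensional center contributing an $L$-factor of $\chi_z\cdot\omega_{E/\Q_q}$, where $\omega_{E/\Q_q}$ is the unramified quadratic character (with Satake value $-1$). Equivalently, one may work directly in the Iwahori-Hecke algebra of affine type $\widetilde{BC}_1$ with unequal parameters $(q,q^3)$ coming from the two types of special vertices in the tree $\T_{q^3+1,q+1}$. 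Either approach shows that the normalized intertwining operator $M(z):I(\chi_z)\to I(\chi_{1/z})$ fails to be an isomorphism exactly at the four points $z\in\{q^{\pm2},-q^{\pm1}\}$, and that $I(\chi_z)$ has length two at each (the twist by $\omega_{E/\Q_q}$ accounts for the negative reducibility points). At a reducibility point $z_0$, $I(\chi_{z_0})$ is a nonsplit extension whose sub $W_1(z_0)$ embeds into $I(\chi_{z_0})$ but not into $I(\chi_{1/z_0})$, while its quotient $W_2(z_0)\cong W_1(1/z_0)$ embeds into $I(\chi_{1/z_0})$; hence these four points break the Weyl identification and contribute as four singletons $\{-q^{\pm1},q^{\pm2}\}$ to the I.S.\ dual, producing the asserted non-Hausdorff topology.

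For the unitary classification, I would treat three regimes separately. When $z\in S^1$, the character $\chi_z$ is unitary, $I(\chi_z)$ is irreducible (no reducibility on the unit circle) and tempered, hence unitarizable. For $z\in\R^\times$, I would write down the standard Hermitian form on the two-dimensional space $I(\chi_z)^I$ using the normalized intertwiner and check positivity explicitly: on the positive axis the form is positive definite throughout the complementary series interval $(q^{-2},q^2)$ and extends continuously to unitary structures on both subquotients at the endpoints (namely the trivial and Steinberg representations, manifestly unitary); analogously on the negative axis it is positive on $(-q,-q^{-1})$ with unitary isolated subquotients at $z=-q^{\pm1}$. Outside these two intervals the form becomes indefinite. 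Finally, for $z\in\C\setminus(\R\cup S^1)$, the Hermitian symmetry $\overline{\chi_z}=\chi_{1/z}$ fails, so $I(\chi_z)$ is not isomorphic to its Hermitian dual and cannot be unitary. Combining the three regimes yields the asserted unitary dual $S^1\cup[-q,-q^{-1}]\cup[q^{-2},q^2]$ modulo $z\sim 1/z$.

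The main obstacle is the rank-two Hecke algebra / intertwining operator calculation pinning down the four reducibility points, especially the asymmetric pair $z=-q^{\pm1}$ arising from the non-split $BC_1$ structure (these are absent in the split $GL_3$ case and require careful bookkeeping of the quadratic twist $\omega_{E/\Q_q}$), together with the positivity verification of the Hermitian form across the full complementary intervals up to and including the boundary subquotients.
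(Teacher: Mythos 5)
Your proposal is correct in substance and reaches the same parameter space and unitary dual, but it routes the two key computations through general machinery where the paper is deliberately elementary and explicit. The paper works entirely inside the two-dimensional space $V_z^{\boldsymbol{I}}$: it writes the Iwahori--Hecke generators as explicit matrices $\rho_z(\tau)=\left(\begin{smallmatrix}0 & q^{3/2}\\ q^{3/2} & q^{3}-1\end{smallmatrix}\right)$, $\rho_z(\sigma)=\left(\begin{smallmatrix}q-1 & \sqrt{q}/z\\ z\sqrt{q} & 0\end{smallmatrix}\right)$, reads off reducibility precisely at $z\in\{q^{\pm2},-q^{\pm1}\}$ by comparing eigenvectors ($f^{K}$ versus $f^{K'}$), exhibits an explicit $2\times2$ intertwiner $Q$ realizing $V_z\cong V_{1/z}$, and gets the unitarity classification by noting $\tau^*=\tau$, $\sigma^*=\sigma$, using self-adjointness of $\tau\sigma+\sigma\tau$ to force $z\in S^1\cup\R^\times$, and then solving for the invariant Hermitian form (which is again $Q$) and checking definiteness via $\det Q=-q^{3}(z-q^{2})(z-q^{-2})(z+q)(z+q^{-1})$, which hands you the complementary intervals and their endpoints in one line. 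You instead invoke the Langlands--Shahidi $\mu$-function (or the unequal-parameter $\widetilde{BC}_1$ Hecke algebra) to locate the four reducibility points including the quadratic-twist pair $-q^{\pm1}$, and normalized intertwining operators plus a continuity/limit argument for positivity on the complementary series and unitarity of the boundary subquotients; these steps are standard and correct (Keys/Shahidi/Rogawski cover exactly this case), but in your write-up they remain ``to be computed,'' whereas they are the actual content of the proof. What your approach buys is generality and a conceptual explanation of why $-q^{\pm1}$ appears (the twist by $\omega_{E/\Q_q}$); what the paper's approach buys is a short self-contained verification that also produces the explicit bases, eigenvectors, and the matrix $Q$, which are reused later for the spectral analysis of $A_X$ and $B_X$ (Table \ref{tab:rep-spec} and Propositions \ref{prop:Bspec_reps}--\ref{prop:Aspec_reps}); if you flesh out your plan, make sure the boundary cases $z=-q^{\pm1}$ are argued concretely (e.g.\ via the one-dimensionality of $W_z^{\boldsymbol{I}}$ and reality of the Hecke eigenvalues) rather than only by a limiting argument.
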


We denote by $\mathrm{Sat}\left(W\right)$ the Satake parameters of $W$, and say that $W$ is of \textit{A-type} if $\mathrm{Sat}\left(W\right)={-q}$. If $\Lambda$ is a cocompact lattice in $G$ which acts on its Bruhat-Tits tree $\B$ without fixed points, then $X=X_{\Lambda}=\Lambda\backslash\B$ is a finite $(q^3+1,q+1)$-bigraph, whose spectrum is intimately linked to the representation $L^2\left(\Lambda\backslash G\right)$ of $G$. 
This connection is described in detail in Table \ref{tab:rep-spec},
and we obtain the following:

\begin{prop*}[\ref{prop:Bspec_reps}, \ref{prop:Aspec_reps}]
If $L^2\left(\Lambda\backslash G\right)=\bigoplus_iW_i\oplus\widehat{\bigoplus}_iU_i$ is the decomposition of $L^2\left(\Lambda\backslash G\right)$ as
a $G$-rep., where $W_i$ are the I.S.\ components and
the $U_i$ are the rest, then
\begin{align*}
\Spec\left(B_{X}\right) & =\bigcup\nolimits _i\left\{ \pm q\sqrt{z}\,\middle|\,z\in\mathrm{Sat}\left(W_i\right)\right\} ,\\
\Spec\left(A_{X}\right) & =\{0\}^{\#\left\{ i\,\middle|\,-q^{\pm1}\in\mathrm{Sat}(W_i)\right\} }\cup\biguplus\nolimits _i\left\{ \pm\sqrt{q^3+\big(z_i+\tfrac1{z_i}\big)q^2+q}\right\} 
\end{align*}
where the $\biguplus$ union is over all $i$ such that $q^{-2},-q^{\pm1}\notin\mathrm{Sat}\left(W_i\right)$, and $z_i$ is any choice of parameter in $\mathrm{Sat}(W_i)$. 
Furthermore, $\mE_X$ equals the number of $W_i$ of A-type.
\end{prop*}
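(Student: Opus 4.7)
The approach is to translate both spectra into the action of Iwahori--Hecke operators on the Iwahori-fixed vectors of the $G$-representation $L^{2}(\Lambda\backslash G)$, and then read off the eigenvalues from the Satake parameters via Proposition \ref{prop:satake}.

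First, I would fix adjacent base vertices $v_{0}\in L_{\mathcal{B}}$, $w_{0}\in R_{\mathcal{B}}$, set $K_{L}=\mathrm{Stab}_{G}(v_{0})$, $K_{R}=\mathrm{Stab}_{G}(w_{0})$, and $I=K_{L}\cap K_{R}$. Using $G$-equivariance of the quotient $\mathcal{B}\to X$, this yields
\[
L^{2}(L_{X})\cong L^{2}(\Lambda\backslash G)^{K_{L}},\quad L^{2}(R_{X})\cong L^{2}(\Lambda\backslash G)^{K_{R}},\quad L^{2}(E_{X})\cong L^{2}(\Lambda\backslash G)^{I}\oplus L^{2}(\Lambda\backslash G)^{I'},
\]
where the two edge summands correspond to the two $G$-orbits on directed edges, and $I'$ is a $G$-conjugate of $I$ obtained by reversing the reference orientation. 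Under this dictionary, $A$ is convolution by the indicator of a single $K_{L}K_{R}$ (or $K_{R}K_{L}$) double coset, and $B$ is a specific element of $\mathcal{H}(G,I)$ that anti-block-interchanges the two orientation components. Decomposing $L^{2}(\Lambda\backslash G)=\widehat{\bigoplus}_{\pi}m(\pi)\pi$, each isotypic summand is stable under $A$ and $B$, and only the I.S.\ irreducibles $W_{i}$ (those with $\pi^{I}\ne 0$) contribute to either spectrum.

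Next, for each I.S.\ representation $W$ with Satake parameter set $\mathrm{Sat}(W)$, I would compute the Hecke action on its parahoric-fixed subspaces. For a generic principal series $W=\mathrm{Ind}(\chi_{z})$ one has $\dim W^{I}=2$ and $\dim W^{K_{L}}=\dim W^{K_{R}}=1$; the Satake formula gives that $A_{R}A_{L}$ acts on $W^{K_{L}}$ by the scalar $q^{3}+(z+z^{-1})q^{2}+q$, so $A$ contributes $\pm\sqrt{q^{3}+(z+z^{-1})q^{2}+q}$, matching the claimed formula. A two-by-two Iwahori--Hecke computation in the standard Iwahori basis of $W^{I}$, combined with the anti-block structure of $B$, diagonalizes the $B$-action on $W^{I}\oplus W^{I'}$ to the four eigenvalues $\pm q\sqrt{z}$ and $\pm q\sqrt{z^{-1}}$, jointly forming $\{\pm q\sqrt{z}:z\in\mathrm{Sat}(W)\}$.

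Finally I would handle the exceptional Satake parameters. For $z\in\{q^{\pm 2}\}$ the $A$-formula returns $\pm\mathfrak{pf}_{X}$ and the $B$-formula returns $\pm\sqrt{Kk}$ (from $z=q^{2}$) and $\pm 1$ (from $z=q^{-2}$), i.e.\ exactly the trivial eigenvalues, justifying the omission of these reps from the generic disjoint $A$-union. For $z=-q$ the $A$-formula returns $0$, and the reducibility analysis of $\mathrm{Ind}(\chi_{-q})$ together with Proposition \ref{prop:satake} shows that $W_{-q}$ carries a $K_{L}$-fixed but no $K_{R}$-fixed line while $W_{-1/q}$ is the mirror image; hence each copy of $W_{-q}$ contributes exactly one zero to $\Spec A|_{L}$ and none to $\Spec A|_{R}$ beyond the automatic excess, symmetrically for $W_{-1/q}$. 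This identifies $\mathcal{E}_{X}=m(W_{-q})$ and $\mathcal{N}_{X}=m(W_{-1/q})$, and the values $\pm q\sqrt{-q^{\pm 1}}=\pm i\sqrt{K},\pm i\sqrt{k}$ recover the remaining trivial $B$-eigenvalues. The main obstacle is the Iwahori--Hecke computation of $B$ on $W^{I}\oplus W^{I'}$, especially at the reducibility points $z\in\{-q^{\pm 1},q^{\pm 2}\}$ where $\dim W^{I}$ drops and one must use the standard intertwiner $\mathrm{Ind}(\chi_{z})\to\mathrm{Ind}(\chi_{z^{-1}})$ (and its residue at these points) to track which irreducible subquotient contains each parahoric-fixed line; once this is done, all remaining assertions are assembly.
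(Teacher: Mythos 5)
Your proposal is correct and follows essentially the same route as the paper: identify the vertex and edge function spaces with the $K$-, $K'$- and Iwahori-fixed vectors of $L^{2}(\Lambda\backslash G)$, realize $A$ and $B$ as $2\times2$ matrices over the Iwahori--Hecke algebra acting on each Iwahori-spherical component, and read off the eigenvalues from the Satake data of Proposition \ref{prop:satake} and Table \ref{tab:subreps}, treating the degenerate parameters $-q^{\pm1}$, $q^{\pm2}$ separately to get $\mathcal{E}_{X}=m(W_{-q})$ and $\mathcal{N}_{X}=m(W_{-1/q})$. Two cosmetic corrections: the stabilizer of the reversed base edge is the same Iwahori $\boldsymbol{I}$ (not a nontrivial conjugate), and in the adjacency formula only the Steinberg parameter $q^{-2}$ and $-q^{\pm1}$ are excluded from the union --- the trivial representation ($z=q^{2}$) remains and supplies $\pm\mathfrak{pf}$, exactly as your computation produces.
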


Denoting by $K$ the maximal compact subgroup $U_3(\mO_E,\Phi)$ of $G$, we obtain some representation theoretic criteria for Ramanujanness, which will be useful in the final Sections:

\begin{thm*}[\ref{thm:ram-local-crit}]
Let $\Lambda\leq G$ be a cocompact lattice, and $X=X_{\Lambda}=\Lambda\backslash\B$.
\begin{enumerate}
\item $X$ is adj-Ramanujan if and only if every $K$-spherical irreducible representation $W\leq L^2\left(\Lambda\backslash G\right)$ is one-dimensional, tempered or of A-type.
\item The following are equivalent:
\begin{enumerate}
    \item $X$ is Ramanujan.
    \item $X$ is NB-Ramanujan.
    \item $X$ satisfies the Riemann Hypothesis.
    \item Every $K$-spherical irreducible representation $W\leq L^2\left(\Lambda\backslash G\right)$ is one-dimensional or tempered.
    \end{enumerate}
\end{enumerate}
\end{thm*}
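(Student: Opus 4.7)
My plan is to translate the spectral conditions in the theorem into conditions on Satake parameters via the spectral dictionary of Propositions \ref{prop:Aspec_reps} and \ref{prop:Bspec_reps}, which express $\Spec(A_X)$ and $\Spec(B_X)$ entirely in terms of the Iwahori-spherical components of $L^{2}(\Lambda\backslash G)$ together with their Satake parameters. With this dictionary in hand, each of the four conditions becomes a statement about which Satake parameters are allowed to appear.

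For part (1), Proposition \ref{prop:Aspec_reps} shows that the non-trivial adjacency spectrum of $X$ splits into a $\lambda=0$ contribution, whose multiplicity counts $K$-spherical components with Satake parameter in $\{-q^{\pm1}\}$ (i.e.\ type $A$ representations), and non-zero $\lambda$ satisfying $\lambda^{2}=q^{3}+(z+z^{-1})q^{2}+q$ for the corresponding Satake parameter $z$. The adj-Ramanujan condition \eqref{eq:aram-def} says that every non-trivial $\lambda$ is either zero or satisfies $|\lambda^{2}-K-k|\le 2\sqrt{Kk}$, which under this identification reads $|z+z^{-1}|\le 2$. Since $|z+z^{-1}|\le 2$ holds precisely when $|z|=1$, i.e.\ when the parameter lies on the tempered locus $S^{1}$ of Proposition \ref{prop:satake}, the adj-Ramanujan condition is equivalent to saying that every $K$-spherical component $W_{i}$ is either tempered, of type $A$, or the trivial/one-dimensional representation (Satake parameter $q^{\pm2}$), which contributes only the Perron--Frobenius eigenvalue.

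For part (2), the equivalences $(a)\Leftrightarrow(b)\Leftrightarrow(c)$ are already built into the combinatorial framework: $(a)\Leftrightarrow(b)$ is Remark \ref{rem:Ram-NBRam}, and $(b)\Leftrightarrow(c)$ follows from comparing \eqref{eq:ram-bg-def} with \eqref{eq:SpecB}, as any non-trivial $\mu\in\Spec(B_X)$ of magnitude at most $\sqrt[4]{Kk}$ is forced, by the discrete nature of the non-circle part of $\Spec B_{\T}$, to either lie on the circle of radius $\sqrt[4]{Kk}$ or coincide with one of the isolated eigenvalues $\pm i\sqrt{k},\pm 1$. It therefore suffices to prove $(b)\Leftrightarrow(d)$. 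By Proposition \ref{prop:Bspec_reps}, $\Spec_0(B_X)=\bigcup_i\{\pm q\sqrt{z}\,|\,z\in\mathrm{Sat}(W_i)\}$ (together with the topological $\pm i\sqrt{k},\pm 1$ accounted for by Proposition \ref{prop:cycles} and Theorem \ref{thm:B-decomp}), and $\pm q\sqrt{z}$ lies on the critical circle $|\mu|=q=\sqrt[4]{Kk}$ precisely when $|z|=1$, i.e.\ when $W_i$ is tempered. When every $K$-spherical component is tempered or one-dimensional, every non-trivial NB eigenvalue therefore lands in $\Spec B_{\T}$, yielding NB-Ramanujan. Conversely, if some $K$-spherical $W_i$ is neither one-dimensional nor tempered, it must have Satake parameter on the non-tempered unitary locus $[-q,-1/q]\cup[1/q^{2},q^{2}]$ minus the isolated one-dimensional points; the corresponding $\mu=\pm q\sqrt{z}$ then has $|\mu|\neq q$, and in particular a type $A$ component (Satake $-q$) contributes $\pm i\sqrt{K}\notin\Spec B_{\T}$, breaking the NB-Ramanujan property.

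The main subtlety I expect is the careful bookkeeping around the isolated points $-q^{\pm 1}$ and $q^{\pm 2}$ of the I.S.\ dual: one must verify that type $A$ representations (Satake parameter $-q^{\pm 1}$) are precisely what generates $\mathcal{E}_X>0$ and the unwanted $\pm i\sqrt{K}$ eigenvalues in $\Spec B_X$, so that they are admissible for adj-Ramanujan but obstruct NB-Ramanujan; and that the topological eigenvalues $\pm i\sqrt{k},\pm 1$ appearing in \eqref{eq:SpecB} come from cycles and kernels rather than from any spurious non-tempered $K$-spherical contribution. This dichotomy between type $A$ and tempered is exactly what accounts for the extra clause ``\emph{or of type }$A$'' in (1) being absent in (2).
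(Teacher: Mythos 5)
Your part (1) and your handling of the equivalences (b)$\Leftrightarrow$(c)$\Leftrightarrow$(d) in part (2) are essentially sound and follow the same route as the paper: translate everything into Satake parameters via Propositions \ref{prop:Aspec_reps} and \ref{prop:Bspec_reps} (equivalently Table \ref{tab:rep-spec}) and read off which parameters produce admissible eigenvalues. (One small imprecision: $|z+z^{-1}|\le 2$ is not equivalent to $|z|=1$ for arbitrary complex $z$; it is only because the unitary I.S.\ dual has Satake parameters in $S^{1}\cup\left[-q,-\tfrac1q\right]\cup\left[\tfrac1{q^{2}},q^{2}\right]$, by Proposition \ref{prop:satake}, that the implication you need holds.)

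The genuine gap is in how you obtain (a), the full Ramanujan property. You dispose of (a)$\Leftrightarrow$(b) by citing Remark \ref{rem:Ram-NBRam}, but that remark is only a sketch placed \emph{after} this theorem, and the remark preceding Corollary \ref{cor:ram-by-theta} states explicitly that the implication ``NB-Ramanujan $\Rightarrow$ Ramanujan'' requires the Hecke-algebra machinery of Section \ref{sec:local-rep} --- i.e.\ it is precisely the content of the present theorem, so invoking the remark is circular. Recall that ``Ramanujan'' (Definition (3) of the introduction) constrains the nontrivial spectrum of \emph{every} geometric operator $T$, not just $B$, and ``trivial'' eigenvalues are defined via constancy on orbits of the derived group. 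The substantive step you are missing, which is the paper's $(d)\Rightarrow(a)$ argument, is: (i) if every $K$-spherical component is tempered or one-dimensional, then so is every I.S.\ component, since the only I.S.\ non-$K$-spherical representations are Steinberg and B-type, which are tempered (Table \ref{tab:subreps}); (ii) any geometric operator on cells of $\mathcal{B}=Gv_{0}\sqcup Gv_{1}\sqcup Ge_{0}\sqcup Ge_{1}$ can be realized inside a subalgebra of $M_{4}\left(\mathcal{H}_{\boldsymbol{I}}\right)$ acting on $\bigoplus_{i}W_{i}^{K}\oplus W_{i}^{K'}\oplus W_{i}^{\boldsymbol{I}}\oplus W_{i}^{\boldsymbol{I}}$; (iii) for one-dimensional $W_{i}$ the resulting eigenfunctions are $G'$-invariant, hence the eigenvalues are trivial, while for tempered $W_{i}$ weak containment in $L^{2}(G)$ (via matrix coefficients in $L^{2+\varepsilon}$) makes the $T$-eigenfunctions approximate $L^{2}(\mathcal{B})$-eigenfunctions, so $\Spec_{0}\left(T|_{W_{i}}\right)\subseteq\Spec\left(T|_{L^{2}(\mathcal{B})}\right)$. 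Without some version of this weak-containment argument, your proposal only establishes NB-Ramanujan (and the Riemann Hypothesis), not the full Ramanujan property, so the chain of equivalences in (2) is not closed.
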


In Section \ref{subsec:Ramanujan-global-criterion} we specialize to the case that $\Lambda$ is a congruence arithmetic lattice. 
In this case, the graph $X=\Lambda\backslash\B$ can also be identified with a locally symmetric adelic space $G(\Q)\backslash G(\A)/K'$ for an appropriate compact open subgroup $K'\leq G(\A)$. 
The spectral analysis can then be carried out in terms of automorphic representations, whose local factors at $p$ are the representations of $G(\Q_p)$ encountered earlier. 
Theorem \ref{thm:ram-global-crit} gives global Ramanujan criteria in these settings, which will be used in Sections \ref{section:automorphic} and \ref{sec:applications}.

\subsubsection*{\uline{\mbox{$\mathsection7$} Automorphic Representation Theory}}

In Section \ref{section:automorphic} we study the automorphic representations of a definite inner form $G$ of $U_3$ and their invariant vectors under certain compact subgroups $K'=\prod_v K_v' \leq G(\A)$. 
By Section \ref{sec:local-rep}, this will lead to results about the existence of both Ramanujan and non-Ramanujan bigraphs. 
 
An automorphic representation $\pi$ of $G$ is said to be Ramanujan if $\pi_p$ is tempered for all $p$, and it is said to be of A-type if it belongs to a global A-packet (see Definition \ref{defn:A-packet}).
Let $\mA_G(K')$ denote the set of automorphic representations of $G$ of level $K'$. 
We say that $\mA_G(K')$ is {\it Ramanujan} (resp.\ {\it A-Ramanujan}) if for any $\pi \in \mA_G(K')$, either $\pi$ is one-dimensional or $\pi$ is Ramanujan (resp.\ or $\pi$ is of A-type). 

We now outline the main results of this section and then summarize the methods we use to prove them. 
First we prove the following statement.

\begin{thm*} [\ref{thm:A-Ram}]
$\mA_G(K')$ is A-Ramanujan for any $K'$.
\end{thm*}

Next we show that $\mA_G(K')$ is Ramanujan for $K'$ satisfying one of the following two conditions. This result strengthens the Ramanujan-type Theorems of \cite[Thm.~1.4 and 1.5]{Evra2018RamanujancomplexesGolden}, in two manners: the assumptions are relaxed, and the result is stronger, as it guarantees temperedness also at the ramified local factors, thus confirming a conjecture suggested in \cite[Rmk.~5.8]{Evra2018RamanujancomplexesGolden}.

\begin{thm*} [\ref{thm:Ram-gen}]
$\mA_G(K')$ is Ramanujan if at least one of the following holds:
\begin{enumerate}
\item there exists a prime $p$ which ramifies in $E$, such that $K'_p$ contains an Iwahori subgroup, or
\item there exists a compact open $K''$ such that  $K' \leq K''$, $\mA(K'')$ is Ramanujan, and for any prime $p$ for which $K'_p \neq K''_p$, $K'_p$ contains an Iwahori subgroup.
\end{enumerate}
\end{thm*}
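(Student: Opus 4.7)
The approach begins by invoking Theorem \ref{thm:A-Ram}: any non-Ramanujan $\pi \in \mA_G(K')$ that is not one-dimensional must belong to a global A-packet, which by Rogawski's classification for $U_3$ arises from endoscopic lifting of pairs of characters on $U_2 \times U_1$. This gives an explicit description of the local structure of $\pi$ at each place in terms of the characters being lifted, and the plan is to derive a contradiction under each of the three hypotheses by exhibiting an obstruction to the existence of a nonzero $K'$-fixed vector.

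For (1), the plan is to show that at a prime $p$ ramifying in $E$, the local component $\pi_p$ of an A-packet member is forced by the endoscopic structure (together with the local Langlands correspondence for ramified $U_3$) to be a specific non-tempered representation that carries no Iwahori-fixed vector. Once this is established, the hypothesis that $K'_p$ contains an Iwahori subgroup rules out any $K'_p$-invariant, hence any $K'$-fixed vector of $\pi$ globally. The key technical content here is the explicit local invariant-vector computation at a ramified prime, which requires extending the Iwahori-spherical classification of Section \ref{sec:local-rep} from the unramified setting into the ramified one.

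For (2), the argument proceeds by compatibility with the known Ramanujaness of $\mA(K'')$. Suppose, for contradiction, that $\pi \in \mA_G(K')$ is non-Ramanujan. At each prime $p$ where $K'_p \neq K''_p$, the local component $\pi_p$ is Iwahori-spherical (by the Iwahori-containment of $K'_p$); invoking the Section \ref{sec:local-rep} classification of Iwahori-spherical representations of $U_3(\Q_p)$ together with the endoscopic description of A-packet local factors, one shows that the non-Ramanujan Iwahori-spherical representations that can occur in an A-packet are in fact $K$-spherical (hyperspecial), so their $K'_p$-fixed vectors are already $K''_p$-fixed. Combined with the equality $K'_p = K''_p$ at the remaining primes, this upgrades a $K'$-fixed vector in $\pi$ to a $K''$-fixed vector, forcing $\pi \in \mA_G(K'')$ and contradicting the hypothesis that $\mA(K'')$ is Ramanujan.

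For (3), the Eisenstein specialization, the prime $3$ is ramified in $\Q[\sqrt{-3}]$, so we cannot directly appeal to (1), since the congruence subgroup $K_3(C)$ need not contain an Iwahori of the ramified $U_3(\Q_3)$. Instead, $K_3(C)$ is designed precisely to kill the A-packet contributions at $3$: the plan is to enumerate the possible local A-packet components at $3$ via Rogawski's explicit parametrization (in terms of characters of $U_2(E_3) \times U_1(E_3)$) and verify by direct computation that none of them carries a nonzero $K_3(C)$-fixed vector. Together with the Iwahori-condition at the remaining ramified primes (handled by (1)), this yields the conclusion. This explicit local computation of $K_3(C)$-invariants --- which requires understanding both the ramified local Langlands parameters at $3$ and the combinatorial structure of $K_3(C)$ --- is expected to be the main obstacle throughout the proof.
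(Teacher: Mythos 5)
Your plans for parts (1) and (2) are essentially the paper's: (1) is simply quoted from the earlier Evra--Parzanchevski paper (and note that at a non-split ramified prime the local packet also has a supercuspidal member, which is excluded by Lemma \ref{lem:level-supercuspidal}, not by the analysis of the non-tempered member alone), and your argument for (2) is the paper's argument --- an Iwahori-fixed vector in $\pi^n(\rho_p)$ at a prime unramified in $E$ forces, via Casselman and Proposition \ref{prop:level-spherical}, an unramified inducing character and hence a nonzero $K_p$-fixed vector, so the level cannot sit strictly between $K''_p$ and $K'_p$ at such a prime.

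The gap is in (3). You propose to verify by direct local computation that no member of any local A-packet at $3$ has a nonzero $K_3(C)$-fixed vector. That is false: Proposition \ref{prop:level-K3C} together with Lemma \ref{lem:K3C} shows that the non-tempered member $\pi^n(\rho_3)$ \emph{does} have nonzero $K_3(C)$-invariants whenever $\rho_3$ is trivial on $U_1(\Z_3,3)^2$ and $b(\rho_{1,3})\in\{1,4\}$, and these are exactly the local characters produced by the global characters relevant to case (3) (for instance $\rho=(\mu_1,\mu_1^{-1})$, the character used in Theorem \ref{thm:NonRam-Eis} to manufacture non-Ramanujan levels). So there is no local obstruction at $3$. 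What actually eliminates these candidates under the hypotheses of (3) is global: for $\rho$ unramified away from $3$ one has $\varepsilon(1/2,\phi)=1$ (Proposition \ref{prop:phi-global-unram}), so by Rogawski's multiplicity formula (Theorem \ref{thm:Rogawski-A-packets}) the packet member with $\pi_3=\pi^n(\rho_3)$ and $\pi^n$ at every other finite place has multiplicity $0$, while the automorphic member must have $\pi_3=\pi^s(\rho_3)$; and the vanishing $\pi^s(\rho_3)^{K_3(C)}=0$ is not obtained in the paper by any direct local computation --- it is \emph{deduced} (Proposition \ref{cor:supercuspidal}) from the very global statement you are trying to prove, so your route is circular as it stands, or would require genuinely new local input of the type of Conjecture \ref{conj:level-packet}.

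The paper's actual proof of (3) sidesteps all of this with a purely global trick: take $K''$ with $K''_3=K_3(C)$ and $K''_v=K_v$ for every $v\neq 3$. By Corollary \ref{cor:eisenstein-compact}, $G(\A)=G(\Q)\cdot K''$, so any $K''$-invariant function on $G(\Q)\backslash G(\A)$ is constant and $\mA_G(K'')=\{\mathbf{1}\}$, which is trivially Ramanujan; then part (2) applied with this $K''$ (the primes where $K'_p\neq K''_p$ being exactly the primes $p\neq 3$ of $\mathrm{Ram}(K')$, where $K'_p$ contains an Iwahori) yields the conclusion. If you want to keep a representation-theoretic argument at $3$, you must replace your claimed local vanishing by the multiplicity formula for the $\pi^n$-member and by an independent proof that the supercuspidal member has no $K_3(C)$-invariants; the class-number-one argument is how the paper avoids having to do either.
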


Specializing to the Eisenstein case ($\EE$), we prove the following result.

\begin{thm*} [\ref{cor:Ram-Eis}]
Let $G=U_3(\Q[\sqrt{-3}],I)$, $K'_p$ contains an Iwahori subgroup for every prime $p \ne 3$, and
\begin{equation*} 
K'_3 = K_3(C) :=  \left\lbrace g \in G(\Z_3) \;:\; g \equiv \left(\begin{smallmatrix}1 & * & *\\
* & 1 & *\\
* & * & 1
\end{smallmatrix}\right)\mod3 \right\rbrace.
\end{equation*}
Then $\mA_G(K')$ is Ramanujan.
\end{thm*}

Our final positive Ramanujan-type result is conditional on Conjecture \ref{conj:level-packet}, which stipulates that for an inert or ramified prime $p$ and compact subgroup $K_p'$, if the non-tempered representation in the A-packet does not have $K_p'$-invariant vectors, then neither does the supercuspidal representation from the same A-packet.
For full details see Section \ref{section:automorphic}. 
In Proposition \ref{prop:conj} we prove the conjecture holds for inert primes $p>10$ and principal congruence subgroups as well as Iwahori congruence subgroups.

\begin{thm*} [\ref{thm:Ram-Eis-kq}]
Let $G=U_3(\Q[\sqrt{-3}],I)$ and $K'\leq G(\A)$, where $K_3' = K_3(C)$, $K'_q=\{g\in K_q : \det(g)^6\equiv1\pmod{q}\}$ for some prime $q\equiv 1 \pmod{12}$,  and $K'_p = G(\Z_p)$ for any $p\ne 3,q$. 
Assume Conjecture \ref{conj:level-packet} holds for $G$, $K'_3$ and $p=3$.
Then $\mA_G(K')$ is Ramanujan.  
\end{thm*}

In the other direction we show the existence of  non-Ramanujan representations in the Eisenstein case ($\EE$). 
This, joined with Theorem \ref{thm:A-Ram}, gives the first known construction of an infinite family of adj-Ramanujan bigraphs which are non-Ramanujan.


\begin{thm*} [\ref{thm:NonRam-Eis}]
Let $G=U_3(\Q[\sqrt{-3}],I)$. 
Then $\mA_G(K')$ is non-Ramanujan if either
\begin{enumerate}
\item $K'_p = G(\Z_p)$ for any $p\ne 3$ and $K'_3 = \boldsymbol{I}_3(3)$ (see $\mathsection7$ for the definition), or
\item $K'_q = \{g \in G(\Z_q) \,:\, g \equiv I \mod{q}\}$ for some prime $q \geq 5$, $K'_3 = K_3(C)$ and $K'_p = G(\Z_p)$ for any $p\ne 3,q$.
\end{enumerate}
\end{thm*}

Finally, we prove the Sarnak-Xue Density Hypothesis (SXDH) for definite unitary $3\times 3$ groups $G=U_3(E,\Phi)$. 
Let $S$ be a finite set of places of $\Q$ which contains $\{\infty, 2, 3, 5, 7\}$ and the primes in which $G$ ramifies. Fix $K'_\ell \leq G(\Z_\ell)$ a finite index subgroup for any $\ell \in S$, where $K'_\infty = G(\R)$.
For any integer $N  = \prod_i p_i^{e_i}$, such that $p_i \not\in S$, denote $K_{p_i}(p_i^{e_i}) = \{g\in K_{p_i} \,:\, g \equiv I \mod{p_i^{e_i}} \}$, and define $K'(N) = \prod_{v \in S} K'_v \prod_{p_i \ne v \not\in S} K_v \prod_i K_{p_i}(p_i^{e_i}) \leq G(\A)$.
Let $\mA_{G,\b1}$ be the set of $\pi \in \mA_G$ with trivial central character, $\mA^A_{G,\b1}$ be the set of $\pi \in \mA_{G,\b1}$ which are A-type, $V(N) := \bigoplus_{\pi \in \mA_{G,\b1}} \pi^{K'(N)}$ and $V_A(N) := \bigoplus_{\pi \in \mA^A_{G,\b1}} \pi^{K'(N)}$. 
The SXDH for $G$ reads as follows.

\begin{conjecture}[SXDH]
For any $\e >0$ there exists $C_\e > 0$, such that for any $N$ coprime to $S$,
\[
\dim V_A(N) \leq C_\e \cdot \dim V(N)^{\frac12 + \e}.
\]
\end{conjecture}

Adjusting Marshall's endoscopic arguments \cite{Marshall2014Endoscopycohomologygrowth} from the cohomological to the definite settings, we get the following stronger result: 

\begin{thm*} [\ref{thm:SXDH}]
For any $\e >0$ there exists $C_\e > 0$, such that for any $N$ coprime to $S$,
\[
\dim V_A(N)  \leq C_\e \cdot \dim V(N)^{\frac3{8} + \e}.
\]
\end{thm*}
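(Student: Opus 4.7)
The plan is to adapt Marshall's endoscopic counting argument \cite{Marshall2014Endoscopycohomologygrowth} from the cohomological setting of indefinite unitary groups to our definite setting. I would proceed in three stages, leveraging the A-packet classification of Section \ref{section:automorphic} and the stable trace formula for $U_3$.

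First, I would invoke Theorem \ref{thm:A-Ram}: every $\pi \in \mA^A_{G,\1}$ lies in a global A-packet $\Pi_\psi$ whose A-parameter $\psi$ factors through the endoscopic group $H$ of $U_3$ (either $H = U_2 \times U_1$ or, for the elementary type, $H = U_1 \times U_1 \times U_1$). Such a $\psi$ is thus determined by discrete automorphic data on $H$—a cuspidal representation of $U_2$ together with a Hecke character on $U_1$, or a triple of characters satisfying a product condition—together with a choice of local constituent in each local A-packet $\Pi_{\psi_v}$.

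Second, I would count. If $\pi \in \Pi_\psi$ contributes to $V_A(N)$, then at each $p \mid N$ the local A-packet $\Pi_{\psi_p}$ must contain a $K'_p(p^{e_p})$-spherical constituent. By the depth-preservation properties of the local endoscopic transfer established in Section \ref{section:automorphic} (which in turn rely on the Iwahori-spherical classification of Section \ref{sec:local-rep}, theta correspondence, and $\varepsilon$-factor computations), this forces each component of $\psi$ to have controlled conductor. Combined with the known counts of Hecke characters on $E$ and of cuspidal representations on the rank-one endoscopic $U_2$—the latter controlled by the Jacquet--Langlands-type dimension formula of $O_\e(N^{1+\e})$ newforms of level $N$—one obtains $O_\e(N^{2+\e})$ candidate A-parameters of conductor dividing $N$. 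Accounting for the bounded cardinality of each global A-packet, together with the fact that the non-tempered local constituent has $K'_p(p^{e_p})$-invariants of dimension at most $O(p^{e_p})$ (by the explicit Jacquet-module analysis of the non-tempered I.S.\ representations from Section \ref{sec:local-rep}), yields $\dim V_A(N) = O_\e(N^{3+\e})$. Comparing with $\dim V(N) \asymp N^{8}$—which follows because, after imposing trivial central character, the relevant object is the $8$-dimensional $PU_3$ with $[K_p : K'_p(p^{e_p})] \asymp p^{8 e_p}$, combined with the class-number-one statement of Section \ref{subsec:lattices-strong-approx}—gives the ratio $\tfrac{3}{8} + \e$.

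The main obstacle will be verifying that Marshall's endoscopic character identities, which he uses to isolate the A-type contribution from the tempered bulk, transfer intact to the definite setting. Marshall's analysis exploits $L^2$-cohomology of locally symmetric spaces and Vogan--Zuckerman modules at the archimedean place; in our case $G(\R)$ is compact and the archimedean factor is simply a finite-dimensional representation, so the cohomological complications disappear. What is needed instead is the stable trace formula for compact inner forms of $U_3$ as established by Rogawski \cite{Rogawski1990Automorphicrepresentationsunitary} and refined through the KMSW/Mok endoscopic classification, together with careful bookkeeping of the local character identities at the finitely many ramified primes of $K'$. Once this compatibility is in place, the combinatorial count above proceeds cleanly in the definite case, without the archimedean machinery that dominates Marshall's original treatment.
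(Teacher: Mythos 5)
Your high-level strategy (reduce to A-packets via Theorem \ref{thm:A-Ram}, count endoscopic data, compare with $\dim V(N)\asymp N^{8}$) is the right one, but the quantitative core of your argument contains two errors that happen to cancel, so the proof as written is not correct. First, the parameter set is wrong: by Rogawski's classification (Definition \ref{defn:A-packet}, Theorem \ref{thm:Rogawski-A-packets}), the A-packets of $U_3$ that produce non-tempered members are attached to \emph{one-dimensional} automorphic data on the endoscopic group $H=U_2\times U_1$, i.e.\ to pairs of $U_1$-Hecke characters; packets built from genuinely cuspidal representations of $U_2$ are tempered (this is exactly the GRPC input in Theorem \ref{thm:A-Ram}) and contribute to the Ramanujan part, not to $V_A(N)$. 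The correct count of relevant parameters of level dividing $N$ (central character fixed) is $\asymp N$, not your $O_\e(N^{2+\e})$. Second, your local bound is false: the non-tempered constituent $\pi^n(\rho_p)$ does \emph{not} satisfy $\dim\pi^n(\rho_p)^{K_p(p^{e})}=O(p^{e})$. Its invariants grow like $p^{2e}$ (its wavefront set is the subregular orbit, Gelfand--Kirillov dimension $2$; equivalently this is forced by the endoscopic character identity below, and the Iwahori-level analysis of Section \ref{sec:local-rep} only controls $e=0$). Your two mistakes offset each other ($N^{2+\e}\cdot N$ versus the correct $N\cdot N^{2}$), which is why you land on the exponent $3/8$; fixing only the local bound while keeping your inflated parameter count would give no better than the exponent $1/2$.

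What actually makes the argument work — and what your sketch does not isolate — is the quantitative transfer statement of Ferrari's lemma: the pair $\bigl(\1_{K_p(p^{e})},\,p^{-c e}\,\1_{K^H_p(p^{e})}\bigr)$ is a transfer pair, so Rogawski's endoscopic character relation gives, for each $p\mid N$,
\[
\sum_{\pi_p\in\Pi(\rho_p)}\mathrm{tr}\,\pi_p\bigl(\1_{K_p(p^{e})}\bigr)\;=\;p^{-ce}\,\mathrm{tr}\,\rho_p\bigl(\1_{K^H_p(p^{e})}\bigr),
\]
with all terms nonnegative. Summing over the $\asymp N$ characters $\rho$ of $H$ and converting traces to dimensions via $\mathrm{vol}(K(N))\asymp N^{-8}$ and $\mathrm{vol}(K^H(N))\asymp N^{-4}$ yields $\dim V_A(N)\lesssim N^{3}\asymp\dim V(N)^{3/8}$ directly, with no need for a separate estimate on $\dim\pi^{n,K(N)}$ or $\dim\pi^{s,K(N)}$. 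If you want to keep your ``count parameters times bound local invariants'' formulation, you must both restrict the parameters to characters of $H$ and prove the $p^{2e}$ (not $p^{e}$) local growth, at which point you have essentially reconstructed the trace-identity computation.
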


In Section \ref{automorphic:rogawski} we begin our proof of these theorems by outlining how we can use Rogawski's work on $U_3$ to  transfer the automorphic representations we wish to study to automorphic representations on the quasi-split form of $U_3$. 
Rogawski's classification of the automorphic representations on the latter group, along with results towards the Generalized Ramanujan-Petersson Conjecture (GRPC) by \cite{Shin2011Galoisrepresentationsarising} imply that if $\pi$ is non-tempered at $p$ then it must be from an ``A-packet'' (for the full definition of Rogawski's local and global A-packets see Definition \ref{defn:A-packet}). 
To prove representations of a certain level are Ramanujan we show they cannot be  in any A-packets and to prove other representations are non-Ramanujan  we  show the existence of particular non-tempered $K_p'$-invariant representations. 
From the work of Rogawski we know these representations occur in the A-packets associated to certain Hecke characters. 

In Section  \ref{automorphic:CFT} we begin a more detailed analysis of the representations occurring in A-packets by computing their multiplicity in particular cases (a priori we know it is either zero or one).
We use a multiplicity formula of Rogawski for representations that belong to A-packets (see Theorem \ref{thm:Rogawski-A-packets}) that is dependent on the epsilon factor of the character associated to the A-packet.
We build an explicit family of Hecke characters and compute their epsilon-factors via class field theory, $p$-adic integrals and formulas from \cite{Tate1979Numbertheoreticbackground}, \cite{Rogawski1992Analyticexpressionnumber} and \cite{kudla2004tate}.
We also keep track of the conductor of these characters  for computing the level of the representation in the corresponding A-packet in the next subsection.

In Section \ref{automorphic:level}  we compute the levels of given automorphic representations that belong to A-packets via their local factors. For non-tempered representations, $\pi^n$, we describe an explicit relationship between the level of the character that induces the representation that $\pi^n$ is a component of and the  level of $\pi^n$.
We also prove ${\pi^n}$ has a non-vanishing $K'$-invariant vector if an explicit $p$-adic period integral is non-vanishing. 
We then  evaluate this integral for various $K'$ to analyze the level of $\pi^n$ in terms of the level of the associated character. 
Finally, we use depth preservation of the theta correspondence (both using known results by \cite{pan2001splittings}, \cite{pan2002depth} and \cite{Gelbart1991LfunctionsFourier}, as well as extending these results) to study the level of the supercuspidal representations that are in A-packets. 

In Section \ref{automorphic:proofs} we combine the results of Sections \ref{automorphic:rogawski}, \ref{automorphic:CFT} and  \ref{automorphic:level} to prove the main (global) theorems of this section.
We also prove two new local results -- first that at $3$  two particular supercuspidal representations do not have any non-zero   $K_3(C)$-invariant vectors (Prop \ref{cor:supercuspidal}), and second, some special cases of Conjecture \ref{conj:level-packet} (Prop \ref{prop:conj}).
The first result is proved by embedding the supercuspidal representation in a global representation and using our global results. 
The first case of the conjecture is proved by combining Rogawski's trace formula and Ferrari's generalization of the fundamental lemma \cite{Ferrari2007Theoremedelindice}, as used in \cite{Marshall2014Endoscopycohomologygrowth} and \cite{Gerbelli-Gauthier2019Growthcohomologyarithmetic}, with our analysis of the relationship between the level of a character and the level of the non-tempered representation that the character induces. 
The other case of the conjecture is proved using the theta correspondence.

In Section \ref{automorphic:SXDH} we modify Marshall's argument in \cite{Marshall2014Endoscopycohomologygrowth} to the definite case to prove the Sarnak-Xue Density Hypothesis for $U_3$ associated to a definite form. 

\subsubsection*{\uline{\mbox{$\mathsection8$} Ramanujan Bigraphs and Applications}}

This section combines the results of all previous ones to give explicit constructions of Ramanujan bigraphs, and several other applications. 
The main theorem was already stated in Section \ref{sec:intro} for the case of the Eisenstein lattice, so we skip it here. 
Section \ref{subsec:Mum-CMSZ} proves an analogue theorem for the Mumford and CMSZ lattices. 
The main difference is that here our choice of congruence conditions yields the full Ramanujan property for the Cayley bigraphs and not only the Schreier ones. 
The Gauss lattice gives Ramanujan Cayley bigraphs under Conjecture \ref{conj:level-packet}, and some examples of these are presented in Figure \ref{fig:Gau-graphs}.

In Section \ref{subsec:complexes} we explore another application
of our work -- new constructions of Ramanujan complexes as Cayley
complexes of finite groups. 
When $p$ splits in $E$, we have $PU_3(\Q_p)\cong PGL_3(\Q_p)$, whose Bruhat-Tits building $\B_p$ is two-dimensional. 
Ramanujan complexes, defined in \cite{li2004ramanujan,Lubotzky2005a}, are quotients of such buildings whose spectral theory mimic that
of the building, which is their universal cover. 
This application was the focus of \cite{Evra2018RamanujancomplexesGolden}, but here we obtain a new phenomenon: the endoscopic lifts afforded by the Eisenstein
lattice give us the first explicit examples of finite \emph{non-Ramanujan} $\widetilde{A}_2$-complexes. 

The vertices of the Bruhat-Tits building of $PGL_d(\Q_p)$ are colored by $\nicefrac{\Z}{d\Z}$, and the $i$-th \emph{Hecke operator} $A_i$ acts on them by $(A_if)(v)=\sum f(w)$ where the sum is over all neighbors $w$ of $v$ such that $\col(w)-\col(v)=i$. 
This is a geometric operator and we denote its non-trivial spectrum by $\Spec_0$. In the case of $PGL_3(\Q_p)$, the spectrum of $A_1$ already determines whether a quotient $X=\Lambda\backslash\B_p$ is Ramanujan, which occurs when
\[
\Spec_0(A_1|_X)\subseteq\Spec(A_1|_{\B_p})=\left\{ p\left(\alpha+\beta+\overline{\alpha\beta}\right)\,\middle|\,\alpha,\beta\in\C,\ |\alpha|=|\beta|=1\right\} 
\]
(shown in blue in Figure \ref{fig:A2-complexes}). 
Unlike in the bigraph case, $PGL_3(\Q_p)$ admits a continuum of non-tempered representations which appear as local factors of endoscopic lifts. 
The $A_1$-eigenvalues they give rise to form the closed curve:
\[
\mathfrak{E}_p=\left\{ \alpha p^{3/2}+\tfrac{p}{\alpha^2}+\alpha\sqrt{p}\,\middle|\,\alpha\in\C,\ |\alpha|=1\right\} .
\]
Together, the Ramanujan spectrum $\Spec(A_1|_{\B_p})$ and the endoscopic spectrum $\mathfrak{E}_p$ form the Automorphic Hamantash depicted in Figure \ref{fig:A2-complexes}, along with the $A_1$-spectrum of several Cayley complexes $X_{\EE}^{p,q}$ arising from the Eisenstein lattice. 
It is interesting to remark that in positive characteristic, the entire Hamantash is obtained as the nontrivial $A_1$-spectrum of the quotient of the building of $PGL_3(\F_p((t)))$ by the non-uniform arithmetic lattice $PGL_3(\F_p[t])$ \cite{hong2021spectrum}.

\begin{figure}[H]
\begin{centering}
\begin{minipage}[c][1\totalheight][t]{0.49\columnwidth}%
\begin{center}
\hfill{}\includegraphics[scale=0.4]{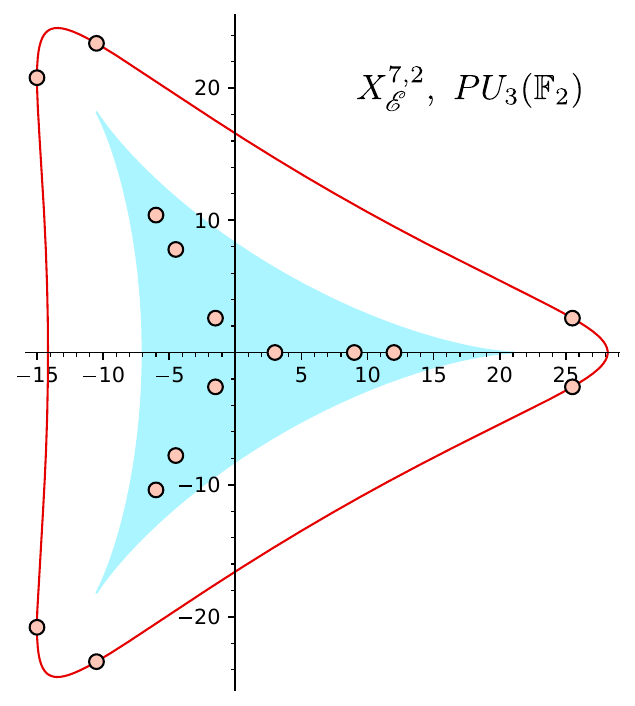}\hfill{}
\par\end{center}%
\end{minipage}%
\begin{minipage}[c][1\totalheight][t]{0.49\columnwidth}%
\begin{center}
\hfill{}\includegraphics[scale=0.4]{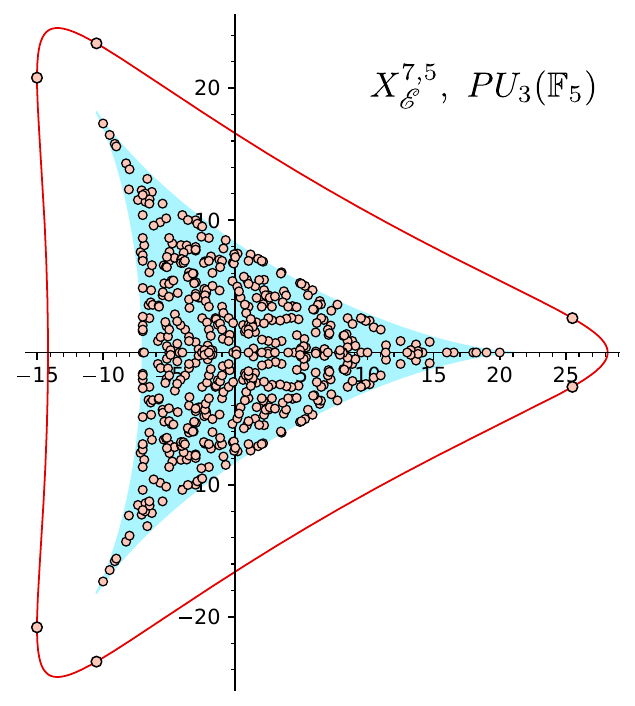}\hfill{}
\par\end{center}%
\end{minipage}
\par\end{centering}
\begin{centering}
\begin{minipage}[c][1\totalheight][t]{0.49\columnwidth}%
\begin{center}
\hfill{}\includegraphics[scale=0.4]{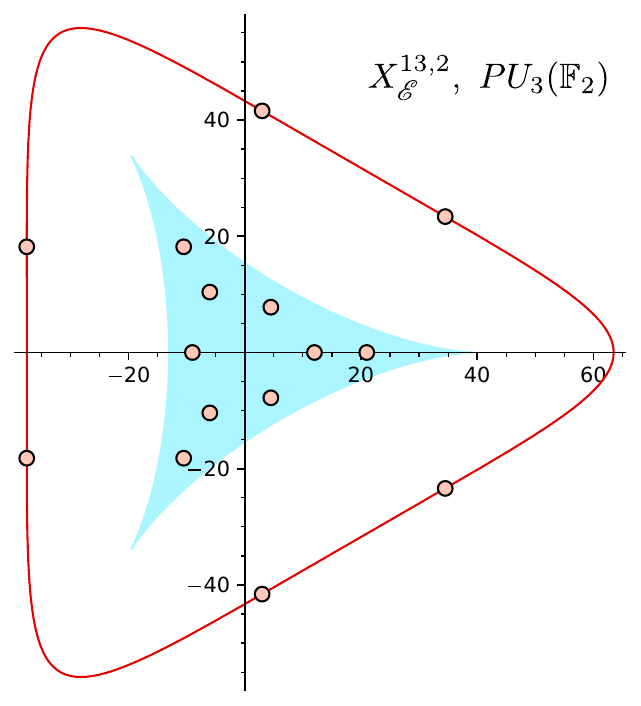}\hfill{}
\par\end{center}%
\end{minipage}%
\begin{minipage}[c][1\totalheight][t]{0.49\columnwidth}%
\begin{center}
\hfill{}\includegraphics[scale=0.4]{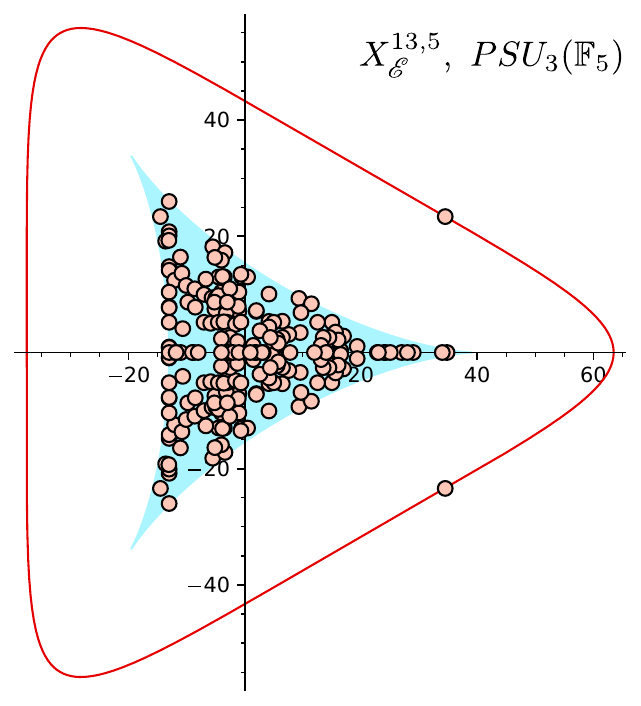}\hfill{}
\par\end{center}%
\end{minipage}
\par\end{centering}
\caption{\label{fig:A2-complexes}
The automorphic Hamantash depicts the spectrum of the Hecke operator $A_1$ on an $\widetilde{A}_2$-complex; it is composed of the endoscopic crust $\mathfrak{E}_p$ (the red curve), and the Ramanujan filling, which is the spectrum of $A_1$ on the building of $PGL_3\left(\Q_p\right)$ (the blue shade). The dots show the nontrivial $A_1$-spectrum of a few $X_{\EE}^{p,q}$ Cayley complexes, with the underlying group $\mathbf{G}_q$ specified.}
\end{figure}

Our next application (Section \ref{subsec:Golden-gates}) is a construction of new golden gate and super golden gate sets for the compact Lie group $PU(3)$.
These notions were introduced in \cite{sarnak2015letter, Parzanchevski2018SuperGoldenGates}, in relation to problems in quantum computation.
A finite set $\Sigma \subset PU(n)$ is called a golden gate set if it  exhibits an almost optimal covering rate as well as an efficient approximation and navigation algorithms (see \cite[Sec.\ 2.2]{Evra2018RamanujancomplexesGolden} for the precise definition).
If the almost optimal covering rate and the algorithms hold only up to an action of a finite subgroup $D \leq PU(n)$, then we say that $\Sigma$ is a golden gate set for $PU(n)/D$.
Finally, a golden gate set whose elements are of finite order is called a super golden gate set.

Super golden gate sets were constructed in \cite{Parzanchevski2018SuperGoldenGates} for $PU(2)$ and in \cite{Evra2018RamanujancomplexesGolden} for $PU(3)$. 
In both cases the constructions are achieved by finding generating sets of congruence $p$-arithmetic subgroups $\langle \Sigma \rangle = \Lambda\leq G(\Z[1/p])$, where $G(\R)\cong PU(n)$, such that $\Lambda$ acts simply-transitively on the edges of Bruhat-Tits trees of $G(\Q_p)$ and also satisfies the Ramanujan property (see Section \ref{subsec:Golden-gates}).
We construct some new super golden gate sets for $PU(3)$, coming from our $p$-arithmetic lattices. 
Here is one such example:
 
\begin{thm*} [\ref{thm:super-GG-Eis}]
Let $G = PU_3(\Q[\omega],I)$, $\omega = \frac{-1+\sqrt{-3}}2$, and denote $\Sigma = \{ A, \sigma, \tau\}$, where
\begin{equation} \label{eq:Eisentein-2-mats}
A := \tfrac12 \bsmx -1 & 0 & \sqrt{-3}\\ 0 & 2 & 0\\ \sqrt{-3} & 0 & -1 \esmx,\; \sigma := \bsmx 1 \\ & \omega\\ &  & \omega^2 \esmx, \;
\tau := \bsmx & 1\\ & & 1\\ 1 \esmx \in G(\Z[1/2]). 
\end{equation}
Then $\langle \Sigma \rangle$ is a congruence subgroup which acts simply transitive on the edges of the Bruhat-Tits tree of $G(\Q_2)$, and $\Sigma$ is a super golden set for $PU(3)/D$, where $D = G(\Z)\cap \mathbf{I}_2$ and $\mathbf{I}_2\leq G(\Z_2)$ is an Iwahori subgroup.
\end{thm*}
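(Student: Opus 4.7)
My plan is to identify $\langle \Sigma \rangle$ as the semidirect product $\Lambda^2_\EE \rtimes \langle \sigma, \tau\rangle$, thereby upgrading the simply-transitive action of the Eisenstein lattice on the left (hyperspecial) vertices of the Bruhat--Tits tree $\T = \T_{9,3}$ of $G(\Q_2)$ (where $2$ is inert in $\Q[\omega]$; see Theorem \ref{thm-simply-transitive}) to a simply-transitive action on directed edges.

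First I would verify $A \in \Lambda^2_\EE$. Multiplying $A$ by $-2$ to clear denominators gives $-2A \in M_3(\Z[\omega])$, and reducing modulo $3$ in the ring $\Z[\omega]/3 \cong \F_3[\epsilon]/\epsilon^2$ (using $\sqrt{-3} = 2\omega + 1 \equiv 2\epsilon$ and $-2 \equiv 1$) yields
\[
-2A \equiv \bsmx 1 & 0 & \epsilon \\ 0 & 1 & 0 \\ \epsilon & 0 & 1 \esmx \pmod{3},
\]
which satisfies the defining congruence of $\Lambda^2_\EE$. I would then analyze $\langle \sigma, \tau \rangle$: a direct computation gives $A^3 = \sigma^3 = \tau^3 = I$, so all elements of $\Sigma$ are of order $3$ (this is the ``super'' part of the theorem); the commutator relation $\sigma\tau = \omega^2 \tau\sigma$ in $GL_3$ descends to $\sigma\tau = \tau\sigma$ in $PU_3$; and since $\sigma, \tau$ are non-scalar with $\langle\sigma\rangle \cap \langle \tau\rangle = 1$, we have $\langle \sigma, \tau\rangle \cong (\Z/3)^2$ of order $9$. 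Both $\sigma, \tau \in G(\Z) \subseteq K_2 = G(\Z_2)$, so both fix $v_0 = [K_2]$.

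Since $|\langle \sigma, \tau\rangle|$ equals the number of directed edges emanating from $v_0$, transitivity and freeness of the action of $\langle \sigma, \tau \rangle$ on this star are equivalent. The verification that no nontrivial $\sigma^a \tau^b$ fixes an edge at $v_0$ can be carried out by a direct computation on the $9$ almost-self-dual neighboring lattices in $\Z_2[\omega]^3$, exploiting the monomial nature of $\sigma$ and $\tau$. The mod-$3$ analysis also gives $\Lambda^2_\EE \cap \langle \sigma, \tau\rangle = 1$, so $\Lambda^2_\EE \cdot \langle \sigma, \tau\rangle \subseteq \langle \Sigma \rangle$ acts simply transitively on directed edges: any directed edge is reached uniquely as $\lambda \pi \cdot e_0$ with $\lambda \in \Lambda^2_\EE$ (determining the left endpoint) and $\pi \in \langle \sigma, \tau\rangle$ (determining the edge from that endpoint). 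Since $\Lambda^2_\EE$ is a principal congruence subgroup, it is normal in $G(\Z[1/2])$ and hence in $\langle \Sigma \rangle$; this, together with the simple transitivity, forces equality $\langle \Sigma \rangle = \Lambda^2_\EE \rtimes \langle \sigma, \tau\rangle$, which is the congruence subgroup defined by the condition $\bar g \in H \leq G(\Z[\omega]/3)$ for $H$ the image of $\langle \sigma, \tau\rangle$.

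Finally, for the super-golden-set property: the efficient factorization algorithm is immediate from the simply-transitive edge action, since every $g \in \langle \Sigma \rangle$ corresponds uniquely to a non-backtracking path in $\T$ from $e_0$ to $ge_0$, and reading off this path gives a canonical word in $\Sigma$. The optimal covering rate on $PU(3)/D$ (for $D = G(\Z) \cap \mathbf{I}_2$) follows from the full Ramanujan property of the associated edge quotients via the standard translation between Ramanujan congruence-quotients of the $p$-adic tree and optimal spectral gap for $\Sigma$-random walks on the compact group. The main obstacle is that the Cayley bigraphs $X^{2,q}_\EE$ themselves are only adj-Ramanujan rather than Ramanujan (Theorem \ref{thm:main-Eis}(2)), due to the endoscopic eigenfunctions arising from the A-packet contributions analyzed in Section \ref{section:automorphic}. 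These eigenfunctions are precisely those that become trivial after quotienting by the finite subgroup $D = G(\Z) \cap \mathbf{I}_2$, leaving the Schreier bigraphs $Y^{2,q}_\EE$ of Theorem \ref{thm:main-Eis}(4), which are fully Ramanujan and therefore control the relevant spectral gap on $PU(3)/D$.
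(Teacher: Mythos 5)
Your first half has the right skeleton (identify $\langle\Sigma\rangle$ with $C\ltimes\Lambda_{\EE}^{2}$, $C=\langle\sigma,\tau\rangle$, and deduce the edge action), but it contains a genuine gap: you never prove the inclusion $\Lambda_{\EE}^{2}\subseteq\langle\Sigma\rangle$. You simply assert ``$\Lambda_{\EE}^{2}\cdot\langle\sigma,\tau\rangle\subseteq\langle\Sigma\rangle$'' and then say normality plus simple transitivity ``forces equality''. Normality (which, incidentally, does not follow from $\Lambda_{\EE}^{2}$ being a ``principal congruence subgroup'' --- it is not principal; it follows from $H\trianglelefteq G[3]$, Lemma \ref{lem-(E)}, or from the fact that monomial matrices preserve the diagonal congruence) only yields the \emph{easy} inclusion $\langle\Sigma\rangle\subseteq C\ltimes\Lambda_{\EE}^{2}$, and simple transitivity of the larger group $C\ltimes\Lambda_{\EE}^{2}$ cannot force a subgroup of it to be everything unless you first show that $\langle\Sigma\rangle$ itself acts transitively on edges. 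The missing step is exactly the paper's key observation: $C\leq G(\Z)$ fixes $v_0$ and conjugation by $C$ preserves the level $\ell$, so the $18$ elements $cA^{\pm1}c^{-1}$, $c\in C$, are distinct elements of level $2$, hence exhaust $S_2$; since $S_2$ generates $\Lambda_{\EE}^{2}$ (simple transitivity on hyperspecial vertices), this gives $\Lambda_{\EE}^{2}\leq\langle\Sigma\rangle$ and only then $\langle\Sigma\rangle=C\ltimes\Lambda_{\EE}^{2}$. Two smaller slips: the action is simply transitive on \emph{undirected} edges (a type-preserving action has two orbits of directed edges), and $\langle\Sigma\rangle$ is the preimage mod $3$ of $\bar{C}\cdot H$ with $H$ as in \eqref{eq:Eis_H}, not of the image of $\langle\sigma,\tau\rangle$ alone.

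The second half argues the covering rate by the wrong mechanism. The Schreier bigraphs $Y_{\EE}^{2,q}$ of Theorem \ref{thm:main-Eis}(4) are congruence quotients at an auxiliary prime $q$, with level $K_3(C)$ at $3$ and trivial archimedean type; they are unrelated to $D=G(\Z)\cap\mathbf{I}_2$, and their Ramanujanness does not control the spectrum of the $\Sigma$-Hecke operators on $L^2(PU(3)/D)$, which involves \emph{all} archimedean types and no congruence at any $q$. Nor is it true in any proved sense that ``quotienting by $D$ kills the endoscopic eigenfunctions of $X_{\EE}^{2,q}$''. The correct route, as in the paper, is the identification
\[
L^2\left(PU(3)/D\right)\;\cong\;L^2\left(\Gamma_{\EE}^{2}\backslash\left(G(\R)\times G(\Q_2)\right)\right)^{\mathbf{I}_2}\;\cong\;L^2\left(G(\Q)\backslash G(\A)\right)^{G(\widehat{\Z}^{2})\mathbf{I}_2},
\]
i.e.\ passing to the $D$-quotient replaces the congruence condition at the ramified prime $3$ by the full maximal compact $K_3$. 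At such a level the A-packet members have no invariant vectors at $3$ (Proposition \ref{prop:level-spherical}(ii)), so Theorem \ref{thm:Ram-gen}(1) gives the Ramanujan property unconditionally, and this --- not the graphs $Y_{\EE}^{2,q}$ --- is what yields the almost-optimal covering; navigation then comes from the simply transitive edge action. Without this automorphic input at level $G(\widehat{\Z}^{2})\mathbf{I}_2$, your covering claim is unsubstantiated.
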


Another application of our results is new instances of the optimal strong approximation phenomenon (or optimal lifting) for $p$-arithmetic groups (see \cite{Golubev2022CutoffgraphsSarnakXue}).
This phenomenon was first proved by Sarnak in \cite{sarnak2015letter} for the arithmetic group $SL_2(\Z)$ and its modulo homomorphisms $\mod{q}\,:\, SL_2(\Z) \rightarrow  SL_2(\Z/q\Z)$, $q\in \N$.
By the classical strong approximation property these homomorphisms are surjective. The optimal strong approximation property for $SL_2(\Z)$ is the claim that for any $\varepsilon >0$, if $q$ is large enough then for all but an $\varepsilon$-fraction of the elements $g \in SL_2(\Z/q\Z)$, there is a lift $\gamma \in SL_2(\Z)$, $\gamma \mod{q} \equiv g$, such that $\|\gamma\| \leq |SL_2(\Z/q\Z)|^{\frac12 +\varepsilon}$, where $\|\cdot\|$ is the Euclidean norm $\|g\|^2 = \sum_{i,j} g_{ij}^2$.
This is indeed optimal since by \cite[(11)]{sarnak2015letter}, the number of elements $\gamma \in SL_2(\Z)$ of norm bounded by $T$ is $O(T^2)$. 

Consider the $p$-arithmetic analogue, where $SL_2(\Z)$ is replaced by a $p$-arithmetic lattice $\Lambda \leq PSU_3(\Q_p)$, where $p$ is an inert prime, endowed with the modulo homomorphisms $\mod{q}\,:\,\Lambda \rightarrow \mathbf{G}_q$, $q$ a prime, where $\mathbf{G}_q = PSL_3(\F_q)$ when $q$ split and $\mathbf{G}_q = PSU_3(\F_q)$ when $q$ inert.
By the strong approximation property this map is surjective (see Corollary \ref{cor:SA-classical}).
Consider the level function, $\ell\,:\,\Lambda \rightarrow \N$, $\ell(g) = -2\min_{i,j}\ord_p g_{ij}$. 
The optimal strong approximation property for $\Lambda$ is the claim that for any $\varepsilon >0$, if $q$ is large enough then for all but an $\varepsilon$-fraction of the elements $g \in \mathbf{G}_q$, there is a lift $\gamma \in \Lambda$, $\gamma \mod{q} \equiv g$, of level bounded by $\ell(\gamma) \leq (1+\varepsilon) \log_{p^2}|\mathbf{G}_q|$.
This is indeed optimal since the number of elements $\gamma \in \Lambda$ of level bounded by $r$ is $O(p^{2r})$.

\begin{prop*} [\ref{prop:OSA}]
Let $\Lambda = \langle S_p \rangle \leq PSU_3(\Q_p)$ be the $p$-arithmetic group, where $S_p$ is as in Theorem \ref{thm:main-Eis} or \ref{thm:main-Mum} and $p$ is an inert prime.
Then $\Lambda$ has the optimal strong approximation property.
\end{prop*}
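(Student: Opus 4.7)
The plan is to translate optimal strong approximation into a diameter statement for the Cayley bigraphs $X^{p,q}$, which is then an immediate consequence of Theorem \ref{thm:main-Eis}(7) (or its analogue in the Mumford case, established in Section \ref{subsec:Mum-CMSZ}). The key ingredient is to identify the level function with the tree distance. Since $\Lambda$ acts simply-transitively on the left-hyperspecial vertices of the Bruhat-Tits tree $\mathcal{B}_p$ of $PSU_3(\Q_p)$, the orbit map $\gamma \mapsto \gamma v_0$ is a bijection between $\Lambda$ and this set. A direct computation with the Cartan decomposition $PSU_3(\Q_p) = K_p \cdot \{\mathrm{diag}(p^a,1,p^{-a}) \,:\, a\geq 0\} \cdot K_p$ shows that the level $\ell(\gamma) = -2\min_{i,j}\ord_p \gamma_{ij}$ (for a suitably normalized matrix representative) equals the tree distance $d_{\mathcal{B}_p}(v_0, \gamma v_0) = 2a$. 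This is consistent with the count $|\{\gamma \in \Lambda \,:\, \ell(\gamma) \leq r\}| = O(p^{2r})$ noted in the excerpt, since left-hyperspecial vertices at tree distance $\leq r$ number $O((Kk)^{r/2}) = O(p^{2r})$.

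Next I would reinterpret lifts of $g \in \mathbf{G}_q$ in terms of non-backtracking paths in $X^{p,q}$. Any non-backtracking walk in $X^{p,q}$ of length $\ell$ from the base vertex to $g$ lifts uniquely, starting at the chosen basepoint of $\mathcal{B}_p$, to a geodesic of length $\ell$ whose endpoint equals $\gamma v_0$ for a unique $\gamma \in \Lambda$ (by simply-transitivity on left vertices). This $\gamma$ satisfies $\gamma \equiv g \pmod q$ and $\ell(\gamma) = \ell$ by the identification above, and conversely every lift of $g$ arises this way from the tree geodesic between $v_0$ and $\gamma v_0$. Producing a lift of small level is therefore equivalent to producing a short non-backtracking path to $g$ in the quotient bigraph.

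The conclusion now follows from the diameter portion of Theorem \ref{thm:main-Eis}(7): for all sufficiently large $q$, any two vertices of $X^{p,q}$ are joined by a non-backtracking path of length at most $\log_{\sqrt{Kk}}|E^{p,q}| + O(1)$, with $\sqrt{Kk} = p^2$. Since $|E^{p,q}| = 2(p^3+1)|\mathbf{G}_q|$, this length is bounded by $\log_{p^2}|\mathbf{G}_q| + C_p$ for some constant $C_p$ depending only on $p$. For any fixed $\varepsilon>0$, once $q$ is large enough that $\varepsilon \log_{p^2}|\mathbf{G}_q| > C_p$, every $g \in \mathbf{G}_q$ admits a lift of level at most $(1+\varepsilon)\log_{p^2}|\mathbf{G}_q|$ -- in fact slightly stronger than the $(1-\varepsilon)$-fraction statement of the proposition, which alternatively can be obtained from the weaker density portion of Theorem \ref{thm:main-Eis}(7). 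The Mumford case is handled identically using the analogous main theorem from Section \ref{subsec:Mum-CMSZ}.

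The main obstacle is the careful bookkeeping needed for the identification $\ell(\gamma) = d_{\mathcal{B}_p}(v_0, \gamma v_0)$. While this is a standard consequence of the Cartan decomposition together with the description of the $PSU_3(\Q_p)$-action on its tree developed in Section \ref{subsec:unitarygroups}, one must verify that the base vertex $v_0$ is stabilized by the standard hyperspecial maximal compact subgroup $K_p$ (using the characterization in Section \ref{subsec:lattices-strong-approx}), and that the projective scaling of matrix representatives is compatible with the formula $\ell(\gamma) = -2\min_{i,j}\ord_p \gamma_{ij}$. Once this is settled, the rest of the argument is a direct reduction to the previously established diameter/cutoff results, with no additional automorphic input required.
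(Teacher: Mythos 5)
Your overall strategy is the paper's: identify the level with tree distance ($\ell(\gamma)=\dist_{\mathcal{B}_p}(v_0,\gamma v_0)$, which the paper records as equation (8.1) at the start of Section 8), identify lifts of $g\in\mathbf{G}_q$ of level $\ell$ with (non-backtracking) paths of length $\ell$ from the identity vertex in the Cayley bigraph $X^{p,q}$, and then invoke the diameter property of Theorem \ref{thm:main-Eis}(7) (resp.\ \ref{thm:main-Mum}). That reduction is sound, and your remark that the $(1-\varepsilon)$-fraction statement "alternatively can be obtained from the weaker density portion" of item (7) is in fact exactly the paper's proof: the first half of (7) says that for $\ell\geq\log_{\sqrt{Kk}}N+4\log_{\sqrt{Kk}}(\tfrac{1}{\varepsilon})+3$ a $(1-\varepsilon)$-fraction of directed edges is reached, hence a $(1-\varepsilon)$-fraction of left vertices, and $B(r)\Mod q$ is precisely the set of elements of $\mathbf{G}_q$ reachable from the identity vertex by a path of length at most $r$; since $\log_{p^2}|\mathbf{G}_q|$ dominates, this gives $\kappa_\mu(\Lambda)=1$.

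However, your primary route contains a genuine misquote with real consequences. The "furthermore" (full diameter) part of Theorem \ref{thm:main-Eis}(7) gives a non-backtracking path between any two directed edges of length at most $2\log_{\sqrt{Kk}}N+12$, not $\log_{\sqrt{Kk}}|E^{p,q}|+O(1)$ as you assert. With the correct bound your argument yields only that \emph{every} $g\in\mathbf{G}_q$ has a lift of level about $2\log_{p^2}|\mathbf{G}_q|$, i.e.\ the covering exponent satisfies $\kappa(\Lambda)\leq 2$; it does not yield your claimed conclusion that every element admits a lift of level $(1+\varepsilon)\log_{p^2}|\mathbf{G}_q|$. That stronger statement would mean $\kappa(\Lambda)=1$, which the paper does not prove and explicitly leaves open (it only notes $1\leq\kappa_\mu(\Lambda)\leq\kappa(\Lambda)$ and $\kappa(\Lambda)\leq 2\kappa_\mu(\Lambda)$, and poses determining $\kappa(\Lambda)$ as a question; indeed, by analogy with Sarnak's $SL_2(\Z)$ letter one should not expect full covering at the optimal exponent). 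So you should drop the "in fact slightly stronger" claim and promote your fallback — the $(1-\varepsilon)$ density part of (7) — to the main argument; with that change the proof is correct and coincides with the paper's.
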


Our last application is a new vanishing of first cohomology result for certain Picard modular surfaces associated to an indefinite unitary group of type (I), which are the real analogues of the arithmetic quotients of Bruhat-Tits trees constructed in this paper.  

Let $G$ be an indefinite projective unitary group scheme over $\Z$, i.e.\ $G(\R)\cong PU(2,1)$, whose symmetric space is the complex unit ball $\mathbb{B}\leq \C^2$. 
Then $G(\Z)$ and all of its finite index subgroups are lattices in $G(\R)$.
Let $\Lambda(q) = \{g\in G(\Z)\,:\, g\mod{q}\equiv I\}$, and call a lattice $\Gamma \leq G(\Z)$ a level $q$ congruence subgroup if $\Lambda(q) \leq \Gamma$. 
Denote its corresponding Picard modular surface (over $\C$) by $X(\Gamma) = \Gamma \backslash \mathbb{B}$ and denote its first Betti number (also known as its irregularity) by $b_1\left( X(\Gamma)\right) := \dim H^1\left( X(\Gamma),\C \right)$.

As mentioned before a unitary group scheme is either of type (I), i.e. $G=PU_3(E,\Phi)$ where $E$ is a quadratic imaginary field and $\Phi$ an indefinite Hermitian form, or of type (II), i.e. $G=PU(D,\sigma)$ where $D$ is a division algebra and $\sigma$ an involution of the second type.
Say that $\Gamma \leq G(\Z)$ and $X(\Gamma)$ are of type (I) or (II) if $G$ is.

In \cite[Thm.15.3.1]{Rogawski1990Automorphicrepresentationsunitary}, Rogawski proved that for Picard modular surfaces of type (II), their first Betti number vanishes.
This vanishing of first Betti number is the archimedean analogue of the Ramanujan property studied in this paper.
Using our analysis of the automorphic spectrum of unitary matrix groups, we are able to generalize this result of Rogawski to certain Picard modular surfaces of type (I).

\begin{thm*} [\ref{thm:Picard}] 
Let $G=PU_3(E,\Phi)$ be an indefinite projective unitary group scheme over $\Z$ of type (I), $q \in \N$ coprime to the discriminant of $E$ and $\Gamma\leq G(\Z)$  a level $q$ congruence subgroup.
Then $b_1\left( X(\Gamma) \right) = 0$.
\end{thm*}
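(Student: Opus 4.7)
The plan is to reduce the vanishing of $b_1(X(\Gamma))$ to a statement about cohomological automorphic representations of $G$, and then apply the endoscopic classification developed in Section \ref{section:automorphic} to rule out any nontrivial contribution. By Matsushima's formula, if $K_f \leq G(\mathbb{A}_f)$ is the compact open associated to $\Gamma$, then
\[
H^1(X(\Gamma), \C) \;\cong\; \bigoplus_\pi m(\pi)\, H^1(\mathfrak{g}, K_\infty; \pi_\infty) \otimes \pi_f^{K_f},
\]
where $\pi$ ranges over irreducible automorphic representations of $G(\mathbb{A})$. It therefore suffices to show that whenever $\pi_f^{K_f}\neq 0$, the factor $H^1(\mathfrak{g}, K_\infty; \pi_\infty)$ vanishes.

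First I would invoke the Vogan-Zuckerman classification: the irreducible unitary $(\mathfrak{g}, K_\infty)$-modules of $G(\R) \cong PU(2,1)$ with nonvanishing $H^1(\mathfrak{g}, K_\infty; \pi_\infty)$ are exactly the non-tempered cohomological representations lying in the archimedean A-packet with trivial infinitesimal character (the trivial representation contributes only in degrees $0$ and $4$; tempered representations contribute only in degree $2$). In particular, any $\pi$ which contributes to $b_1$ is forced to be non-tempered at infinity.

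Second, I would invoke the A-Ramanujan theorem (Theorem \ref{thm:A-Ram}), whose proof in the definite case rests on Rogawski's classification \cite{Rogawski1990Automorphicrepresentationsunitary} together with the Generalized Ramanujan-Petersson Conjecture for cohomological self-dual representations of $GL_n$, and which adapts to the indefinite type (I) setting with only cosmetic changes. Since $\pi_\infty$ is non-tempered, $\pi$ must lie in a global A-packet $\Pi_\chi$ obtained by endoscopic transfer from $U(1)\times U(2)$ to $U_3$, attached to a Hecke character $\chi$ of $E$. I would then apply Rogawski's multiplicity formula (Theorem \ref{thm:Rogawski-A-packets}),
\[
m(\pi) \;=\; \tfrac{1}{2}\Big(1 + \varepsilon(\tfrac{1}{2},\chi)\prod_v \eta_v(\pi_v)\Big),
\]
with $\varepsilon(\tfrac{1}{2},\chi)\in\{\pm 1\}$ the global root number and $\eta_v\in\{\pm 1\}$ explicit local signs. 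The archimedean sign $\eta_\infty(\pi_\infty)$ is pinned down by the requirement that $\pi_\infty$ be the non-tempered cohomological member of its local packet; the hypothesis $\gcd(q,\mathrm{disc}(E))=1$ combined with the level-$q$ condition on $K_f$ forces $\chi$ to be unramified at every finite prime dividing $q$, and the corresponding local signs $\eta_v(\pi_v)$ and local $\varepsilon$-factors are then read off via the computations of Section \ref{automorphic:level} and \ref{automorphic:CFT}, building on \cite{Tate1979Numbertheoreticbackground} and \cite{Rogawski1992Analyticexpressionnumber}.

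The main obstacle will be verifying that the resulting global parity is always $-1$, so that $m(\pi)=0$ for every candidate $\pi$. This is precisely the step that exploits $\gcd(q,\mathrm{disc}(E))=1$: at ramified primes of $E$ the local signs could in principle take either value, but the coprimality hypothesis ensures that every finite place at which $\chi$ is nontrivial is unramified in $E$, where the sign computation is explicit and parallels the one appearing in Rogawski's proof of \cite[Thm.\ 15.3.1]{Rogawski1990Automorphicrepresentationsunitary} in the type (II) case. Once this cancellation is established, $m(\pi)\cdot\pi_f^{K_f}=0$ for every cohomological $\pi$, and the conclusion $b_1(X(\Gamma))=0$ follows.
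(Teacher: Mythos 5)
Your reduction via Matsushima's formula and the identification of the possible archimedean components agrees with the paper, and so does the appeal to Rogawski's classification to place any non-tempered contributor inside an A-packet. The gap is in your final step. You propose to show $b_1=0$ by proving that the global sign in Rogawski's multiplicity formula (Theorem \ref{thm:Rogawski-A-packets}) comes out $-1$ for every candidate, so that $m(\pi)=0$. This cannot work: for a fixed character $\rho$ the formula only ties the parity of the set of places where $\pi_v$ is the supercuspidal member to the root number $\varepsilon(1/2,\phi)$, so for every packet roughly half of its members \emph{are} automorphic, whatever $\varepsilon(1/2,\phi)$ is. There is no global parity cancellation that kills all of them, and indeed automorphic A-packet members contributing non-tempered local factors do occur once one allows deeper level at the primes ramified in $E$ --- this is precisely the phenomenon behind Theorem \ref{thm:NonRam-Eis}, and it is why the hypothesis $\gcd(q,\mathrm{disc}(E))=1$ is needed at all. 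Relatedly, your reading of that hypothesis is off: it constrains the level at the primes \emph{dividing $\mathrm{disc}(E)$} (forcing $K_p(\Gamma)=G(\Z_p)$ there), not the ramification of $\chi$ at primes dividing $q$, where the level is deep and $\chi$ may well be ramified.

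The argument that actually closes the proof is local, not global. Since $q$ is coprime to $\mathrm{disc}(E)$, at each prime $p$ ramified in $E$ the $p$-component of the level is the full maximal compact $G(\Z_p)$. At such a ramified prime neither member of a local A-packet has a nonzero $G(\Z_p)$-fixed vector: the non-tempered member $\pi^n(\rho_p)$ has none by Proposition \ref{prop:level-spherical}(ii), and the supercuspidal member $\pi^s(\rho_p)$ has no Iwahori-fixed (hence no $G(\Z_p)$-fixed) vector by Lemma \ref{lem:level-supercuspidal}. Consequently $\pi_f^{K_f}=0$ for \emph{every} representation lying in an A-packet, irrespective of its multiplicity, and each term of the Matsushima decomposition vanishes. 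Replacing your root-number computation by this local vanishing at the ramified primes yields the paper's proof; as written, your multiplicity-formula step is both unprovable and unnecessary.
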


As mentioned above, the vanishing of first cohomology is the real analogue of the Ramanujan property for the congruence subgroup of $p$-arithmetic groups studied in the paper.
We end with the following open question: what is the real analogue of the simple-transitivity property?

\begin{acknowledgement*}
The authors wish to thank Cristina Ballantine who helped initiate this project and made fundamental contributions to it. 
The authors thank Sug Woo Shin for pointing out a strengthening of our results in Section \ref{section:automorphic}.
The authors thank Alexander Lubotzky for his valuable input on an earlier draft of the paper.
This research was supported by the IAS Summer Collaborators Program, and the authors are grateful for the hospitality of the Institute for Advanced Study. 
They are also thankful to the American Institute of Mathematics, where this project was promoted during a workshop on Arithmetic Golden Gates. 
The authors were supported by NSF grant DMS-1638352 and ISF grant 1577/23 (S.E.), Simons Foundation Collaboration Grant 635835 (B.F.), and ISF grant 2990/21 (O.P.).
\end{acknowledgement*}

\section{Cayley Bigraphs} \label{sec:bicayley}

In this section we introduce a group-theoretic construction of bigraphs, which we call \emph{Cayley bigraphs}. 
This will allow us to give explicit constructions of infinite families of Ramanujan bigraphs, in a similar fashion to the regular Ramanujan Cayley graphs in \cite{LPS88}.

Let $S$ be a symmetric subset of a group $G$ (i.e.\ $S^{-1}=S$).
Recall that the Cayley graph $Cay(G,S)$ is an $|S|$-regular graph with vertices $V=G$ and edges $E=\{\{g,gs\}\,|\,g\in G,s\in S\}$.
It is connected if and only if $S$ generates $G$, and loop-free if and only if $1\notin S$.
Note that $G$ acts on this graph by left multiplication, and the action is simply transitive on the vertices.


\begin{defn} \label{def:axioms} 
Let $0\leq k\leq K$, and let $\vec{S}=\{S^1,\ldots,S^{K+1}\}$, where $S^1, \dots, S^{K+1}$ are pairwise disjoint subsets of a group $G$, each of size $k$. 
Let $S=\bigsqcup_{i=1}^{K+1}S^i$ and define $i\colon S\rightarrow\N$ by $s\in S^{i(s)}$. 
The pair $(G,\vec{S})$ satisfies the \emph{bi-Cayley axioms} if 
\begin{enumerate}
\item $S$ is symmetric ($\{s^{-1}|s\in S\}=S$), and $1\notin S$;
\item if $s,t\in S^i$ then $s=t$ or $s^{-1}t \in S^{i(s^{-1})}$.
\end{enumerate}
\end{defn}

\begin{defn} \label{def:eqrel}
If $\left(G,\smash{\vec{S}}\right)$ satisfies the bi-Cayley axioms, define the following relation $\sim$ on $G\times[K\!+\!1]$ (where $\left[n\right]=\{1,\ldots,n\}$): 
\begin{equation} \label{eq:eq-class-def}
(g_1,j_1)\sim(g_2,j_2)\quad\iff\quad\begin{array}{l}
\mbox{either }\quad(g_1,j_1)=(g_2,j_2),\\
\mbox{or }\quad g_1^{-1}g_2\in S^{j_1}\ \mbox{and}\ g_2^{-1}g_1\in S^{j_2}
\end{array}.
\end{equation}
Denote by $\left[g,j\right]$ the equivalence class of $(g,j)\in G\times[K\!+\!1]$, and observe that 
\[
[g,j]=\{(g,j)\}\cup\{(gs,i(s^{-1}))\mid s\in S^{j}\}.
\]
\end{defn}

\begin{lem}
\label{lem:eqrel}The relation $\sim$ is an equivalence relation. 
\end{lem}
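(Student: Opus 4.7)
The plan is to verify reflexivity and symmetry directly from the definition, and then to derive transitivity from axiom (2) of Definition \ref{def:axioms} by a symmetric application to two chained pairs. Reflexivity is built into the first clause of (\ref{eq:eq-class-def}), and symmetry is manifest from the fact that the defining conditions ``$g_1^{-1}g_2\in S^{j_1}$ and $g_2^{-1}g_1\in S^{j_2}$'' are preserved under swapping $(g_1,j_1)$ with $(g_2,j_2)$; these two checks require no use of the axioms.

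For transitivity I would suppose $(g_1,j_1)\sim(g_2,j_2)\sim(g_3,j_3)$ and may assume all three pairs distinct (else the claim collapses to one of the chained relations). Setting $s=g_2^{-1}g_1$ and $t=g_2^{-1}g_3$, the hypotheses give $s,t\in S^{j_2}$, together with $s^{-1}=g_1^{-1}g_2\in S^{j_1}$ and $t^{-1}=g_3^{-1}g_2\in S^{j_3}$. Applying axiom (2) to the pair $s,t\in S^{j_2}$ produces two cases: either $s=t$, or $s^{-1}t\in S^{i(s^{-1})}=S^{j_1}$. In the second case $g_1^{-1}g_3=s^{-1}t\in S^{j_1}$, and applying axiom (2) to the pair $t,s\in S^{j_2}$ symmetrically yields $g_3^{-1}g_1=t^{-1}s\in S^{i(t^{-1})}=S^{j_3}$, which is exactly $(g_1,j_1)\sim(g_3,j_3)$.

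The only remaining subtle case is $s=t$, which forces $g_1=g_3$. Then $s^{-1}=t^{-1}$ lies in both $S^{j_1}$ and $S^{j_3}$; since the subsets $S^{1},\dots,S^{K+1}$ are pairwise disjoint, this forces $j_1=j_3$, so $(g_1,j_1)=(g_3,j_3)$ and the pair is related by reflexivity. (Note that the hypothesis $1\notin S$ from axiom (1) ensures $s^{-1}\in S$, so the appeal to disjointness is legitimate.)

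The only real content lies in the transitivity step, and the main subtlety is simply keeping track of which set each expression belongs to: the function $i(\cdot)$ interacts with the inverse map in the exact way needed for axiom (2) to bridge the two halves of the chain. I do not expect any genuine obstacle, as the axioms were tailored precisely to make this argument go through.
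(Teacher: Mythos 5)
Your proof is correct and follows essentially the same route as the paper's: reflexivity and symmetry are immediate, and transitivity comes from applying axiom (2) to $s=g_2^{-1}g_1,\,t=g_2^{-1}g_3\in S^{j_2}$ in both orders. Your explicit treatment of the $s=t$ case (using disjointness of the $S^i$ to force $j_1=j_3$) is a small point the paper passes over by assuming the three pairs distinct, so if anything your write-up is slightly more complete.
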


\begin{proof}
It is clear that $\sim$ is reflexive and symmetric. 
Suppose $(g_1,j_1)\sim(g_2,j_2)\sim(g_3,j_3)$; we can assume that the three are distinct, for otherwise $(g_1,j_1)\sim(g_3,j_3)$ follows.
It then follows from \eqref{eq:eq-class-def} that $s:=g_2^{-1}g_1\in S^{j_2}$ and $t:=g_2^{-1}g_3\in S^{j_2}$, so by the bi-Cayley axioms $g_1^{-1}g_3=s^{-1}t\in S^{i(g_1^{-1}g_2)}=S^{j_1}$. 
The same argument shows that $g_3^{-1}g_1\in S^{j_3}$, hence $(g_1,j_1)\sim(g_3,j_3)$.
\end{proof}

\begin{defn} \label{def:biCay} 
For $(G,\smash{\vec{S}})$ which satisfies the bi-Cayley axioms, we define the \textit{Cayley bigraph} $CayB\left(G,\smash{\vec{S}}\right)=\left(L\sqcup R,E\right)$ to be the $(K\!+\!1,k\!+\!1)$-bigraph with vertices and edges 
\[
L=G,\quad R=\nicefrac{G\times[K\!+\!1]\,}{\,\sim},\quad E=\left\{ \,\left\{ g,[g,i]\right\} \,\middle|\,g\in G,\;i\in[K\!+\!1]\right\} .
\]
\end{defn}

We note that $G$ acts on the graph by 
\begin{equation}\label{eq:G-act-cay}
\forall g\in G,\;h\in L,\;[h,i]\in R:\quad g.h=gh,\;g.[h,i]=[gh,i],
\end{equation}
and the action is simply transitive on the left side. Let us illustrate
this definition with a special example.

\begin{example} \label{exa:bc-invol}
Let $G$ be a group and $S=\{s_1,\ldots,s_{K+1}\}\subseteq G$
a set of involutions, i.e.\ elements of order two (for example, $G=\mbox{Sym}(n)$ and $S=\{(1,2),(2,3),\ldots,(n\!-\!1,n)\}$ or all transpositions -- see Figure \ref{fig:S3_star}). 
Defining $S^i=\{s_i\}$ for $i\in[K\!+\!1]$, the pair $(G,\smash{\vec{S}})$ satisfies the bi-Cayley axioms, and the Cayley bigraph $CayB\left(G,\smash{\vec{S}}\right)$ coincides with the barycentric subdivision of the Cayley graph $Cay(G,S)$.
Namely, upon placing a new vertex in the middle of each edge of $Cay(G,S)$, the old (resp.\ new) vertices form the left (resp.\ right) vertices of the Cayley bigraph $CayB\left(G,\smash{\vec{S}}\right)$. 
\end{example}

\begin{figure}[h]
\parbox[c]{1\columnwidth}{%
\begin{center}
\includegraphics[scale=2]{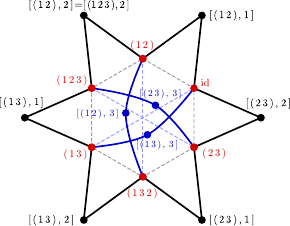}
\par\end{center}%
}
\caption{\label{fig:S3_star}Two Cayley bigraphs arising from involutions in $G=Sym(3)$, as in Example \ref{exa:bc-invol}. In black and red is the $(2,2)$-bigraph obtained from $S=\left\{ \left(1\,2\right),\left(2\,3\right)\right\} $, with the left (resp.\ right) vertices drawn in red (resp.\ black); the dashed edges are those of the \textquotedblleft original\textquotedblright{} Cayley graph $Cay(G,S)$. Adding the blue edges and vertices gives the $\left(3,2\right)$-bigraph obtained from $S=\left\{ \left(1\,2\right),\left(2\,3\right),\left(1\,3\right)\right\}$, and now the right side consists of the black and blue vertices together.}
\end{figure}

The bigraph analogue of a Schreier graph is slightly more involved.
Recall that for a (right) $G$-set $X$ the Schreier graph $Sch(G,X,S)$ has vertices $V=X$ and edges $E=\{\{x,xs\}\,|\,x\in X,s\in S\}$.
Even when $1\notin S$, this graph may have loops as $s\neq1$ may fix some $x\in X$, and multiple edges arise similarly. 
Every $G$-set is a disjoint union of transitive ones, and every transitive $G$-set $X$ is isomorphic to $H\backslash G$ for some $H\leq G$ (unique up to conjugation). 
The graph $Sch(G,H\backslash G,S)$ is the same as the quotient of $Cay(G,S)$ by $H$, and this can be used to define Schreier bigraphs. 
Namely, define the Schreier bigraph $SchB\left(G,H\backslash G,\smash{\vec{S}}\right)$ to be the quotient of $CayB\left(G,\smash{\vec{S}}\right)=\left(L\sqcup R,E\right)$ by the group $H$, acting as in \eqref{eq:G-act-cay}. 
The technical issue which arises is that while $H$ acts freely on the left side $L$ and thus also on $E$, its action on $R$ may have non-trivial stabilizers. 
In this case the quotient is a weighted graph (or \emph{orbigraph}).
The weights ensure that the correspondence between functions on the quotient graph, and $H$-invariant functions on the covering graph, intertwines with geometric operators (e.g.\ the adjacency operator). 
This leads to the following:

\begin{defn} \label{def:biSch}
For $(G,\vec{S})$ satisfying the bi-Cayley axioms and a right $G$-set $X$, the Schreier bigraph $SchB(G,X,\vec{S}) = (L\sqcup R,E)$ is defined by 
\[
L=X,\quad R=\nicefrac{X\times[K+1]\,}{\sim},\quad E=\left\{ \{ x,[x,i]\} \,\middle|\,x\in X,i\in[K+1]\right\} ,
\]
where $(x_1,j_1)\sim(x_2,j_2)$ if either $(x_1,j_1)=(x_2,j_2)$,
or there exists $s\in S^{j_1}$ with $x_1s=x_2$ and $s^{-1}\in S^{j_2}$.
Again $\sim$ is an equivalence relation, with equivalence classes
\[
[x,j]=\{(x,j)\}\cup\{(xs,i(s^{-1}))\mid s\in S^{j}\}.
\]
The \emph{weight }of $r=[x,j]\in R$ is 
\[
w_{r}:=\frac{\left|r\right|}{k+1} =\frac{1+|\stab_G(x)\cap S^j\cap (S^j)^{-1}|}{k+1},
\]
and the adjacency operator of $SchB(G,X,\vec{S})$ is given by
\[
A_{\ell,r}=w_{r}\cdot\left|\left\{ j\,\middle|\,[\ell,j]=r\right\} \right|,\quad A_{r,\ell}=\left|\left\{ j\,\middle|\,[\ell,j]=r\right\} \right|\qquad\forall\ell\in L,r\in R
\]
(the term $\left|\left\{ j\,\middle|\,[\ell,j]=r\right\} \right|$
counts the possibly multiple edges, and the weight $w_{r}$ accounts for the stabilizers).
Note that $A$ is not symmetric, but it is self-adjoint with respect
to the $w_{r}$-weighted inner product. 
\end{defn}


\begin{rem} \label{rem:R-by-neighbors}
In the Cayley case, we could have also labeled the right vertices by their neighbor-sets in the left side.
This would give 
\begin{equation}
R=\left\{ \left\{ gs\,\middle|\,s\in S^i\cup\{1\}\right\} \,\middle|\,g\in G,i\in[K\!+\!1]\right\} ,\label{eq:right-by-neighbors}
\end{equation}
and then edges are given by membership, i.e.\ $\ell\sim r$ if and only if $\ell\in r$ for $\ell\in L,r\in R$ (this was used in Example \ref{exa:X2q}).
In the Schreier case, however, such a presentation is not always possible since two different right vertices might have the same set of neighbors.
\end{rem}

\begin{rem} \label{rem:biCay=00003DbiSch}
If $H\trianglelefteq G$ and the quotient map $\phi\colon G\rightarrow G/H$ is injective on $\{1\}\cup\bigcup_iS^i$, then by construction 
\[
SchB\left(G,H\backslash G,\smash{\vec{S}}\right)\cong H\backslash CayB\left(G,\smash{\vec{S}}\right)\cong CayB\left(H\backslash G,\{\phi(S^i)\}_i\right).
\]
With some abuse of notation, we shall allow ourselves to refer to
this graph as $CayB\left(H\backslash G,\{\phi(S^i)\}_i\right)$
even when $\phi$ is not injective; the only caveat is that one must
compute the map $i\colon S\rightarrow \N$ from Definition
\ref{def:axioms} for the original $S$ and not for its image under
$\phi$.
\end{rem}

Next, we show that when a group acts on a biregular tree in a nice manner, the tree can be identified with a Cayley bigraph of the group.

\begin{thm} \label{thm:biCay-tree}
Let $\Lambda$ be a group acting on the $(K\!+\!1,k\!+\!1)$-biregular tree $\T=\T_{K\!+\!1,k\!+\!1}=\left(L_{\T}\sqcup R_{\T},E_{\T}\right)$, so that the action is simply-transitive on $L_{\T}$. 
Let $v_0\in L_{\T}$ be a fixed vertex and $v_1,\dots,v_{K+1}\in R_{\T}$ its neighbors. 
Denoting 
\[
\forall i\in[K\!+\!1]:\qquad S^i=\left\{ 1\ne s\in\Lambda\,\middle|\,\mbox{dist}(sv_0,v_i)=1\right\} ,
\]
the Cayley bigraph $CayB(\Lambda,\vec{S})=(L\sqcup R,E)$ is isomorphic to $\T$, via 
\[
f(v)=\begin{cases}
gv_0 & v=g\in L\\
gv_i & v=[g,i]\in R
\end{cases}.
\] 
\end{thm}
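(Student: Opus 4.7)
The plan is to first verify that the pair $(\Lambda, \vec{S})$ satisfies the bi-Cayley axioms of Definition \ref{def:axioms}, and then to show that $f$ is a well-defined graph isomorphism. The key tool throughout is simple transitivity of $\Lambda$ on $L_{\T}$ together with the fact that $\T$ is a tree, so any two vertices of $\T$ have a unique shortest path between them.

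For the axioms, the size $|S^i|=k$ is immediate: $v_i$ has exactly $k$ left neighbors besides $v_0$, and each is uniquely of the form $gv_0$ by simple transitivity. For disjointness, if $s\in S^i\cap S^j$ with $i\ne j$, then $sv_0\ne v_0$ would be a common left neighbor of $v_i$ and $v_j$, producing a $4$-cycle $v_0\text{-}v_i\text{-}sv_0\text{-}v_j\text{-}v_0$, contradicting that $\T$ is a tree. For symmetry, if $s\in S^i$ then applying $s^{-1}$ to the edge $\{v_0, v_i\}$ shows $s^{-1}v_i$ is a right-neighbor of $v_0$, so $s^{-1}v_i=v_j$ for a unique $j$; then applying $s^{-1}$ to the edge $\{sv_0, v_i\}$ shows $v_0$ is adjacent to $v_j$, and applying $s^{-1}$ to $\{v_0,v_i\}$ shows $s^{-1}v_0$ is adjacent to $s^{-1}v_i=v_j$, so $s^{-1}\in S^j$. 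The closure axiom, $s,t\in S^i$ with $s\ne t$ implies $s^{-1}t\in S^{i(s^{-1})}$, follows similarly: since $tv_0$ is adjacent to $v_i$, applying $s^{-1}$ gives $s^{-1}tv_0$ adjacent to $s^{-1}v_i=v_{i(s^{-1})}$, as computed in the symmetry step.

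Next I would check that $f$ is well-defined on $R$. If $[g,i]=[h,j]$ and $(g,i)\ne(h,j)$, then $s:=g^{-1}h\in S^i$ with $s^{-1}\in S^j$, and by the symmetry computation $s^{-1}v_i=v_j$, hence $gv_i=hs^{-1}v_i=hv_j$. The map $f$ is injective on $L$ by simple transitivity, and injective on $R$ by the converse of this same argument: if $gv_i=hv_j$ with $(g,i)\ne(h,j)$, set $s=g^{-1}h$; then $s\ne 1$ and $sv_j=v_i$, so applying $s$ to $\{v_0,v_j\}$ shows $sv_0$ is adjacent to $v_i$ (hence $s\in S^i$), and applying $s^{-1}$ to $\{v_0,v_i\}$ shows $s^{-1}v_0$ is adjacent to $v_j$ (hence $s^{-1}\in S^j$), so $[g,i]=[h,j]$. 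Surjectivity onto $L_\T$ is simple transitivity; surjectivity onto $R_\T$ follows since any right vertex $u$ has some left neighbor $gv_0$, and then $g^{-1}u$ is a right neighbor of $v_0$, so $g^{-1}u=v_i$ for some $i$ and $u=f([g,i])$.

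Finally, edge preservation is immediate: the edge $\{g,[g,i]\}$ in $CayB(\Lambda,\vec{S})$ maps to $\{gv_0, gv_i\}$, which is an edge of $\T$ since $\{v_0,v_i\}$ is, and the surjectivity argument above shows every edge of $\T$ arises this way. The main technical obstacle is making the symmetry axiom and the well-definedness of $f$ work together — both hinge on the key identity $s^{-1}v_i = v_{i(s^{-1})}$ for $s\in S^i$, which must be established carefully from the tree structure before it can be invoked in the other verifications.
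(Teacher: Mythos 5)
Your proof is correct and follows essentially the same route as the paper: verify the bi-Cayley axioms from the tree structure, establish the key identity $s^{-1}v_i=v_{i(s^{-1})}$, and then check that $f$ is well defined, bijective and edge-preserving (the paper packages the middle step as the equivalence $g_1v_{j_1}=g_2v_{j_2}\Leftrightarrow(g_1,j_1)\sim(g_2,j_2)$). Two small remarks: in your symmetry step the adjacency of $s^{-1}v_i$ to $v_0$ comes from translating the edge $\{sv_0,v_i\}$ by $s^{-1}$ (as you in fact do in the next clause), not from translating $\{v_0,v_i\}$; and your uniform treatment of the case $gv_i=hv_j$ with $i=j$ and $g\ne h$ is welcome, since stabilizers of right vertices in $\Lambda$ can be nontrivial, so this subcase genuinely yields equivalent pairs rather than forcing $g=h$.
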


Combining this theorem with the observations in Remark \ref{rem:biCay=00003DbiSch} (for $G=\Lambda$) yields a useful corollary:

\begin{cor} \label{cor:biCay-quotient}
Let $\left(\Lambda,\smash{\vec{S}}\right)$ be as in Theorem \ref{thm:biCay-tree} and let $\phi\colon\Lambda\rightarrow G_{\phi}$ be a surjective homomorphism. 
Then the quotient of $\T$ by $\ker\Lambda_{\phi}$ is isomorphic to $SchB(\Lambda,G_{\phi},\smash{\vec{S}})$, where $\Lambda$ acts on $G_{\phi}$ via $\phi$. 
Furthermore, when $\phi$ is injective on $\{1\}\cup\bigcup_iS^i$, this is precisely $CayB(G_{\phi},\{\phi(S^i)\}_{i=1}^{K+1})$.
\end{cor}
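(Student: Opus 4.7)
The plan is to derive the corollary directly from Theorem \ref{thm:biCay-tree} combined with the definitions of the Schreier and Cayley bigraph constructions. First I would apply Theorem \ref{thm:biCay-tree} to identify $\T$ with $CayB(\Lambda,\smash{\vec{S}})$, under which the $\Lambda$-action on $\T$ corresponds to the left-multiplication action \eqref{eq:G-act-cay}. Setting $N:=\ker\phi$, the quotient $N\backslash\T$ then becomes the quotient of $CayB(\Lambda,\smash{\vec{S}})$ by left multiplication by $N$, and the question reduces to identifying this combinatorial quotient with $SchB(\Lambda,G_\phi,\smash{\vec{S}})$.

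Next I would build an explicit isomorphism from $N\backslash CayB(\Lambda,\smash{\vec{S}})$ to $SchB(\Lambda,G_\phi,\smash{\vec{S}})$, where $\Lambda$ acts on $G_\phi$ through $\phi$. On left vertices the map is the canonical bijection $N\backslash\Lambda\xrightarrow{\phi}G_\phi$. On right vertices, I would send the $N$-orbit of $[g,j]\in R_{CayB}$ to $[\phi(g),j]\in R_{Sch}$. Well-definedness reduces to checking that the equivalence relation on $\Lambda\times[K\!+\!1]$ descends under $\phi$: if $(g_1,j_1)\sim(g_2,j_2)$ in $CayB$ witnessed by $s:=g_1^{-1}g_2\in S^{j_1}$ with $s^{-1}\in S^{j_2}$, then the same $s$ witnesses $(\phi(g_1),j_1)\sim(\phi(g_2),j_2)$ in $SchB$. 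Injectivity of the induced map uses a lifting argument: given $\phi(g_1)\phi(s)=\phi(g_2)$ with $s\in S^{j_1}$ and $s^{-1}\in S^{j_2}$, the element $n:=g_2 s^{-1} g_1^{-1}$ lies in $N$ and satisfies $(ng_1,j_1)\sim(g_2,j_2)$ in $R_{CayB}$, so the $N$-orbits coincide. Surjectivity is immediate since $\phi$ is onto and the residues $j\in[K\!+\!1]$ are preserved. Edges correspond via $\{g,[g,j]\}\leftrightarrow\{\phi(g),[\phi(g),j]\}$, so the edge structure matches tautologically.

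The mild subtlety is matching weights: in $SchB$ the weight $w_r=|r|/(k\!+\!1)$ records the collapse of the class $[\phi(g),j]$, and under the bijection above this equals $(k\!+\!1)^{-1}$ times the cardinality of the $N$-orbit of $(g,j)$ inside $[g,j]\subseteq R_{CayB}$, which is exactly the weighting produced by the quotient $N\backslash CayB(\Lambda,\smash{\vec{S}})$. This is routine and is the only step that requires any care; it is the main (very minor) obstacle. Finally, the second assertion follows from Remark \ref{rem:biCay=00003DbiSch}: under the injectivity hypothesis on $\{1\}\cup\bigsqcup_i S^i$, the images $\phi(S^i)$ remain pairwise disjoint of size $k$ and the pair $(G_\phi,\{\phi(S^i)\}_i)$ again satisfies the bi-Cayley axioms (with the inversion map $i\colon S\to\N$ imported from the original $S$ as flagged in that remark), so the Schreier bigraph coincides with $CayB(G_\phi,\{\phi(S^i)\}_i)$.
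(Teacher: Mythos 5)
Your proposal is correct and follows essentially the same route as the paper, which obtains the corollary by combining the ($\Lambda$-equivariant) identification $\T\cong CayB(\Lambda,\smash{\vec{S}})$ from Theorem \ref{thm:biCay-tree} with the fact that $SchB$ is by construction the quotient of $CayB$ by the subgroup, and then invoking Remark \ref{rem:biCay=00003DbiSch} for the Cayley case. The only difference is that you explicitly verify (vertices, edges, weights) that the quotient $N\backslash CayB(\Lambda,\smash{\vec{S}})$ matches the standalone combinatorial description in Definition \ref{def:biSch}, a step the paper treats as definitional; your checks there are accurate.
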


The proof of Theorem \ref{thm:biCay-tree} requires the following
two Lemmas.

\begin{lem} \label{lem:axioms} 
The pair $\left(\Lambda,\smash{\vec{S}}\right)$ in Theorem \ref{thm:biCay-tree} satisfies the bi-Cayley axioms. 
\end{lem}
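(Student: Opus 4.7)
My plan is to verify the three parts of the bi-Cayley axioms (disjointness, correct cardinality, symmetry with $1 \notin S$, and the closure property) directly from two facts: that $\Lambda$ acts simply-transitively on $L_\T$, and that $\T$ is a tree (so any two distinct right vertices share at most one neighbor in $L_\T$).

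The first step is to argue that each $S^i$ has size exactly $k$ and that the sets are pairwise disjoint. The vertex $v_i$ has $k+1$ neighbors in $L_\T$, one of them being $v_0$; call the remaining $k$ neighbors $w_1,\ldots,w_k$. By simple transitivity, there is a unique $s_j \in \Lambda$ with $s_j v_0 = w_j$, and none of these equal $1$ (since $w_j \neq v_0$). This shows $S^i = \{s_1,\ldots,s_k\}$ has cardinality $k$, and also that $1 \notin S$. For disjointness, if some $s \in S^i \cap S^j$ with $i \neq j$, then $sv_0$ would be a common neighbor in $L_\T$ of the distinct right vertices $v_i$ and $v_j$. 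Since $\T$ is a tree, $v_i$ and $v_j$ have exactly one common neighbor, namely $v_0$, so $sv_0 = v_0$, forcing $s = 1$, a contradiction.

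For symmetry, let $s \in S^i$, so that $sv_0 \sim v_i$ in $\T$. Applying the automorphism $s^{-1}$ to both sides, we get $v_0 \sim s^{-1}v_i$, so $s^{-1}v_i$ is a neighbor of $v_0$, hence equals some $v_j$. Applying $s^{-1}$ to $v_0 \sim v_i$ gives $s^{-1}v_0 \sim v_j$, and $s^{-1}v_0 \neq v_0$ (else $s=1$), so $s^{-1} \in S^j$. This shows $S$ is symmetric, and it also records the useful formula $s^{-1}v_i = v_{i(s^{-1})}$.

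The remaining axiom is: if $s,t \in S^i$ with $s \neq t$, then $s^{-1}t \in S^{i(s^{-1})}$. Note first that $s^{-1}t \neq 1$ because $s \neq t$. Setting $j = i(s^{-1})$, the identity established in the previous paragraph gives $s^{-1}v_i = v_j$. Since $t \in S^i$ we have $tv_0 \sim v_i$, and applying the automorphism $s^{-1}$ yields $(s^{-1}t)v_0 \sim s^{-1}v_i = v_j$, which by definition means $s^{-1}t \in S^j = S^{i(s^{-1})}$, as required.

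The main potential obstacle is bookkeeping: one has to be careful in distinguishing the role of $s^{-1}$ acting on the vertex $v_0$ (which tells us in which $S^j$ the element $s^{-1}$ lies) from the role of $s^{-1}$ acting on $v_i$ (which supplies the index $j$ by moving $v_i$ to another neighbor of $v_0$). These two coincide precisely because $s^{-1}$ is an automorphism of $\T$, and this is the single nontrivial content of the argument; everything else is a routine consequence of simple transitivity and the tree property.
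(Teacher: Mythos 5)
Your proof is correct and follows essentially the same route as the paper: disjointness via the unique common neighbor $v_0$ of $v_i,v_j$ in the tree, and the symmetry and closure axioms via the key identity $s^{-1}v_{i(s)}=v_{i(s^{-1})}$ obtained by letting the automorphism $s^{-1}$ act (the paper phrases these steps with distances rather than adjacencies, which is only cosmetic). Your explicit verification that each $S^i$ has exactly $k$ elements is a point the paper leaves implicit, but otherwise the arguments coincide.
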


\begin{proof}
Since $v_0$ is the unique common neighbor of $v_i,v_{j}$ ($i\neq j$),
$S^i$ and $S^{j}$ are disjoint. 
We note that $S:=\bigsqcup_{i=1}^{K+1}S^i=\{g\in\Lambda\mid\mbox{dist}(gv_0,v_0)=2\}$, from which follows that $S^{-1}=S$ and $1\not\in S$. 
Let $s\in S$, $i=i(s)$, $j=i(s^{-1})$ and $s\ne t\in S^i$. 
Note that $v_i$ is the unique vertex connecting $v_0$ and $sv_0$ and similarly $v_{j}$ is the unique vertex connecting $v_0$ and $s^{-1}v_0$.
By applying $s$ we get that $sv_{j}$ is the unique vertex connecting $sv_0$ and $v_0$, hence $sv_{j}=v_i$. 
Then $\mbox{dist}(s^{-1}tv_0,v_{j})=\mbox{dist}(tv_0,sv_{j})=\mbox{dist}(tv_0,v_i)=1$, i.e. $s^{-1}t\in S^{j}$. 
\end{proof}

\begin{lem} \label{lem:iden} 
Let $\left(\Lambda,\smash{\vec{S}}\right)$ be as
in Theorem \ref{thm:biCay-tree}. Then for any $g_1,g_2\in\Lambda$
and $j_1,j_2\in[K\!+\!1]$, 
\[
g_1v_{j_1}=g_2v_{j_2}\quad\iff\quad(g_1,j_1)\sim(g_2,j_2).
\]
\end{lem}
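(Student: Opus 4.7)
The plan is to translate the combinatorial condition defining $\sim$ in terms of the tree action, using two ingredients: (a) the simply-transitive action of $\Lambda$ on $L_\T$, which promotes $s=g_1^{-1}g_2\neq 1$ to $sv_0\neq v_0$, and (b) the tree property that two distinct left vertices have at most one common right neighbor (a common neighbor would give a 4-cycle).

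For the forward direction, I would first dispose of the case $(g_1,j_1)=(g_2,j_2)$. Otherwise, observe that the $v_i$ are pairwise distinct (as distinct neighbors of $v_0$), so the equality $g_1v_{j_1}=g_2v_{j_2}$ together with $(g_1,j_1)\neq(g_2,j_2)$ forces $g_1\neq g_2$; set $s=g_1^{-1}g_2\neq 1$. Rewriting the equality as $v_{j_1}=sv_{j_2}$, I then note that $v_{j_2}\sim v_0$ in $\T$, so applying the automorphism $s$ gives $v_{j_1}=sv_{j_2}\sim sv_0$. Since $s\neq 1$ and $\Lambda$ acts simply-transitively on $L_\T$, we have $sv_0\neq v_0$; as $sv_0\in L_\T$ and $v_{j_1}\in R_\T$, in fact $\mathrm{dist}(sv_0,v_{j_1})=1$, so $s\in S^{j_1}$. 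The symmetric argument, applied to $s^{-1}v_{j_1}=v_{j_2}$, yields $s^{-1}\in S^{j_2}$, hence $(g_1,j_1)\sim(g_2,j_2)$.

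For the reverse direction, again the case of equality is trivial. Otherwise, set $s=g_1^{-1}g_2$ with $s\in S^{j_1}$ and $s^{-1}\in S^{j_2}$. The condition $s\in S^{j_1}$ says $sv_0$ is a neighbor of $v_{j_1}$, so $v_{j_1}$ is a common neighbor of $v_0$ and $sv_0$. The condition $s^{-1}\in S^{j_2}$ says $\mathrm{dist}(s^{-1}v_0,v_{j_2})=1$, equivalently (applying $s$) $v_0$ is a neighbor of $sv_{j_2}$; together with the obvious $sv_0\sim sv_{j_2}$, this makes $sv_{j_2}$ another common neighbor of $v_0$ and $sv_0$. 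Simple-transitivity on $L_\T$ and $s\neq 1$ give $v_0\neq sv_0$, so a tree-argument (two distinct vertices in a tree have at most one common neighbor, else a 4-cycle arises) forces $v_{j_1}=sv_{j_2}$, i.e. $g_1v_{j_1}=g_2v_{j_2}$.

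There is no real obstacle here; the only subtlety worth highlighting is the trivial case $s=1$, where one must verify that $g_1v_{j_1}=g_2v_{j_2}$ already implies $j_1=j_2$ (using distinctness of the $v_i$), so it coincides with $(g_1,j_1)=(g_2,j_2)$. The bi-Cayley axioms themselves (Lemma \ref{lem:axioms}) are not needed for this lemma, only the geometric setup of Theorem \ref{thm:biCay-tree}.
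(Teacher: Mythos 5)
Your proof is correct, and it is essentially the paper's argument: pass to $s=g_1^{-1}g_2$, use that the $\Lambda$-action preserves adjacency to detect membership in $S^{j_1}$ and $S^{j_2}$, and for the converse use that $v_0\neq sv_0$ (free action on $L_{\T}$) have a unique common neighbor in the tree, so $sv_{j_2}=v_{j_1}$. The one genuine difference is the case analysis, and it works in your favor: you split on $(g_1,j_1)=(g_2,j_2)$ versus not, which treats the subcase $j_1=j_2$, $g_1\neq g_2$ uniformly through the nontrivial branch of $\sim$ (here $s\neq1$ stabilizes $v_{j_1}$, and one still gets $s\in S^{j_1}$, $s^{-1}\in S^{j_2}$). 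The paper instead splits on $j_1=j_2$ versus $j_1\neq j_2$ and in the former asserts that simple transitivity forces $g_1=g_2$; that inference is not justified, since simple transitivity is only on $L_{\T}$ and stabilizers of right vertices can be nontrivial -- indeed in the Eisenstein example each $S^i\cup\{1\}$ is an order-three subgroup fixing $v_i$. So your version is the same proof with a cleaner, and in that subcase more careful, bookkeeping; your closing remarks (the trivial-case check via distinctness of the $v_i$, and the fact that the bi-Cayley axioms are not needed here) are also accurate.
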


\begin{proof}
Assume $g_1v_{j_1}=g_2v_{j_2}$. If $g_1=g_2$, then $v_{j_1}=v_{j_2}$ implies $j_1=j_2$ as well. If $g_1\neq g_2$, then 
\[ 
\dist(g_1^{-1}g_2v_0,v_{j_1})=\dist(g_2v_0,g_1v_{j_1})=\dist(g_2v_0,g_2v_{j_2})=\dist(v_0,v_{j_2})=1
\]
implies $g_1^{-1}g_2\in S^{j_1}$. 
Arguing similarly gives $g_2^{-1}g_1\in S^{j_2}$, so that $(g_1,j_1)\sim(g_2,j_2)$. 
Conversely, let $(g_1,j_1)\sim(g_2,j_2)$. 
If $(g_1,j_1)=(g_2,j_2)$, then $g_1v_{j_1}=g_2v_{j_2}$, and otherwise, $g_1^{-1}g_2\in S^{j_1}$ and $g_2^{-1}g_1\in S^{j_2}$.
Therefore $v_{j_1}$ is the unique vertex connecting $v_0$ and $g_1^{-1}g_2v_0$ and similarly $v_{j_2}$ connects $v_0$ and $g_2^{-1}g_1v_0$. 
Applying $g_1^{-1}g_2$ to the latter shows that $g_1^{-1}g_2v_{j_2}$ connects $g_1^{-1}g_2v_0$ and $v_0$, hence $g_1^{-1}g_2v_{j_2}=v_{j_1}$, which proves $g_1v_{j_1}=g_2v_{j_2}$. 
\end{proof}
\begin{proof}[Proof of Theorem \ref{thm:biCay-tree}]
The map $f|_{L}$ is bijective since $\Lambda$ acts simply-transitively on $L_{\T}$, and $f|_{R}$ is well defined and injective by Lemma \ref{lem:iden}. 
It is also surjective, since the $\Lambda$-orbits of $\{v_1,\ldots,v_{K+1}\}$ cover $R_{\T}$. 
Finally, each edge $\{g,[g,i]\}\in E$ is sent to an edge in $\T$, namely $\{gv_0,gv_i\}$, and each edge in $\T$ is of this form for some $g\in\Lambda$ and $i\in[K\!+\!1]$, so that $f$ is a graph isomorphism.
\end{proof}

\subsection{\label{subsec:eisenstein-lattice-explicit}Explicit example}

In Section \ref{sec:lattices} we construct several arithmetic groups acting simply-transitively on (the left side of) biregular trees.
For each such group Corollary \ref{cor:biCay-quotient} gives infinitely many explicit Cayley bigraphs, for example taking congruence quotients.
In Sections \ref{section:automorphic} and \ref{sec:applications} we show that all of these graphs are adj-Ramanujan, and that some are Ramanujan whereas some are not. We end this section by describing a special example of Cayley (resp.\ Schreier) bigraphs, which arise from one of these groups (the Eisentein lattice $\Lambda_{\EE}^p$
with $p=2$).

Let $E=\Q[\sqrt{-3}]$ and $\omega=\frac{-1+\sqrt{-3}}2$ (a primitive
third root of unity), and denote (as in \eqref{eq:Eisentein-2-mats})
\begin{align}
A & := \tfrac12 \bsmx -1 & 0 & \sqrt{-3}\\ 0 & 2 & 0\\ \sqrt{-3} & 0 & -1 \esmx,\; \sigma := \bsmx 1 \\ & \omega\\ &  & \omega^2 \esmx, \;
\tau := \bsmx & 1\\ & & 1\\ 1 \esmx, \text{ and} \\
S^{3a+b+1} & :=\left\{ A_{a,b}^{\pm}=\sigma^{a}\tau^{b}A^{\pm1}\tau^{-b}\sigma^{-a}\right\} \qquad0\leq a,b\leq2.\label{eq:Eisnetin-stars}
\end{align}
Each of $S^1,\ldots,S^{9}$ is a symmetric set of size two, so that $S=\sqcup S^i$ is symmetric and $i\colon S\rightarrow \N$ from Definition \ref{def:axioms} is given by $i(j)=j$. 
Let $\Lambda$ be the group generated by $S$ in $PGL_3(E)$. 
This group (which is denoted by $\Lambda_{\EE}^2$ in Sections 5-8) is a subgroup of $PU_3(\Z[1/2])$, which acts naturally on a certain $(9,3)$-biregular tree $\T$ described in Section \ref{subsec:unitarygroups}. 
It is shown in Section \ref{sec:lattices} that $\Lambda$ acts simply-transitively on the left vertices of $\T$ (Theorem \ref{thm-simply-transitive}) and explicit computation using Proposition \ref{prop:nbs-crit} shows that for a certain left vertex $v_0$ with neighbors $v_1,\ldots,v_9$, each $S^i$ sends $v_0$ to the two other neighbors of $v_i$. 
Thus, $CayB\left(\Lambda,\smash{\vec{S}}\right)$ coincides with $\T$ by Theorem \ref{thm:biCay-tree}. 
A special feature of this example is that each $S^i\cup\{I\}$ is in fact a subgroup of $\Lambda$. 
Thus, in the presentation of $CayB$ suggested in Remark \ref{rem:R-by-neighbors}, each vertex in the right side of $\T$ is represented by a coset of $S^i\cup\{I\}$ in $\Lambda$. 
Corollary \ref{cor:biCay-quotient} gives us examples of finite Cayley bigraphs: Let $q\ne2,3$ be a prime, $\F=\begin{cases}
\F_q & q\equiv1\Mod3\\
\F_{q^2} & q\equiv2\Mod3
\end{cases}$ and $\overline{\omega}$ a primitive third root of unity in $\F^{\times}$.
Let $\phi_q\colon PGL_3(\Z[\omega])\rightarrow PGL_3(\F)$ be the modulo $q$ homomorphism with $\phi_q(\omega)=\overline{\omega}$, let $\mathbf{G}_q=\phi_q(\Lambda)$ and $\Lambda_q=\Lambda\cap\ker\phi_q$, and let 
\[
X_{\EE}^{2,q}=CayB\left(\mathbf{G}_q,\smash{\{\phi_q(S^i)\}_{i=1}^{9}}\right)\cong\Lambda_q\backslash\T,
\]
where the isomorphism is by Corollary \ref{cor:biCay-quotient}, and Lemma \ref{lem:injective} below shows this is an honest Cayley bigraph. Using
again the labeling from Remark \ref{rem:R-by-neighbors}, each right
vertex in $X_{\EE}^{2,q}$ is labeled by a coset of some subgroup $\phi_q(S^i)\cup\{1\}\leq \mathbf{G}_q$, and we obtain the graphs described in Example \ref{exa:X2q}.
Theorem \ref{thm:main-Eis} shows that $\mathbf{G}_q$ are classical
groups over finite fields, and that $X_{\EE}^{2,q}$ are adj-Ramanujan,
but not Ramanujan. Furthermore, the group $\mathbf{G}_q$ acts transitively
on the projective plane over $\F$, or a certain subset of
it, as follows: 
\[
Y_q:=\begin{cases}
\mathbb{P}^2(\F_q) & q\equiv1\Mod3\\
\left\{ v\in\mathbb{P}^2(\F_q[\omega])\,\middle|\,v^{*}\!\cdot\!v=0\right\}  & q\equiv2\Mod3,
\end{cases}
\]
and the corresponding Schreier bigraphs $Y_{\EE}^{2,q}=SchB(\mathbf{G}_q,Y_q,\smash{\{\phi_q(S^i)\}_{i=1}^{9}})$
are Ramanujan (Theorem \ref{thm:main-Eis}).

\begin{lem}
\label{lem:injective} In the notations above, $\phi_q$ (with $q\neq2,3$)
is injective on $S\sqcup\left\{ I\right\} $ .
\end{lem}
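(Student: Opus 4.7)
The plan is to scale every generator to have entries in $\Z[\omega]$ and exploit the fact that those entries are all of very small norm, so that they cannot be absorbed by any prime $q\geq 5$. Concretely, for $g\in S\cup\{I\}$ I set $\tilde g:=2g\in M_3(\Z[\omega])$. Negating the conclusion, one gets distinct $g_1,g_2\in S\sqcup\{I\}$ and $\mu\in\Z[\omega]$ with $\phi_q(\mu)\neq 0$ such that $\tilde g_1 - \mu\tilde g_2 \equiv 0\pmod q$ entrywise in $\Z[\omega]$, and the goal is to contradict this. The only arithmetic inputs I will need are that $q\mid\alpha$ in $\Z[\omega]$ forces $q^2\mid N(\alpha)$ for the field norm $N$, and that for $q\neq 2,3$ the ideals $(q)$ and $(\sqrt{-3})$ are coprime, so $q\mid\mu\sqrt{-3}$ in $\Z[\omega]$ implies $q\mid\mu$.

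I would split the argument into three cases. If one of the $g_i$ is $I$, the other side is some $2A_{a,b}^{\pm}$ whose off-diagonal entries are all either $0$ or $\pm\omega^k\sqrt{-3}$ (the nonzero ones sitting at two positions determined by the permutation $\tau^b$); comparing one such nonzero entry against the $0$ of $\tilde I$ yields $q\mid\mu\sqrt{-3}$, hence $q\mid\mu$, contradicting $\phi_q(\mu)\neq 0$. If both $g_i = A_{a_i,b_i}^{\epsilon_i}$ with $b_1\neq b_2$, the supports of nonzero off-diagonal entries depend only on $b_i$ and differ, so a position exists where one of $\tilde g_1,\tilde g_2$ vanishes and the other does not, and the same argument reproduces $q\mid\mu$.

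The substantial case is $b_1=b_2 = b$, which I would reduce to $b=0$ by conjugating the congruence on both sides with the integer matrix $\tau^b$. A direct check gives $\tau^{-1}\sigma\tau = \omega^2\sigma$, whence $\tau^{-b}\sigma^a\tau^b = \omega^{2ab}\sigma^a$; the scalars coming from $\sigma^a$ and $\sigma^{-a}$ cancel, so $\tau^{-b}\tilde g_i\tau^b = 2\sigma^{a_i}A^{\epsilon_i}\sigma^{-a_i}$ without altering $\mu$. Since $A$ fixes $e_2$, the $(2,2)$-entry of each conjugated matrix equals $2$, forcing $q\mid 2(1-\mu)$ and hence $\mu\equiv 1\pmod q$ (as $q\neq 2$). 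Comparing $(1,3)$-entries via $(\sigma^a M\sigma^{-a})_{ij}=\omega^{a(i-j)}M_{ij}$ and $\omega^{-2a}=\omega^a$ then gives
\[
\sqrt{-3}\bigl(\omega^{a_1}\epsilon_1 - \omega^{a_2}\epsilon_2\bigr) \equiv 0\pmod q,
\]
and an enumeration of differences in the six-element set $\{\pm 1,\pm\omega,\pm\omega^2\}$ shows the left-hand side has norm at most $12$; since $q\ge 5$ gives $q^2\ge 25$, this is impossible.

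The main obstacle is simply keeping the case analysis organized; nothing individually is hard. The two algebraic features that make it all work are that $A$ fixes $e_2$, which pins down $\mu\equiv 1$ once the off-diagonal supports agree, and that every off-diagonal entry of $2A^{\pm 1}$ has norm exactly $3$, too small ever to be divisible by a prime $q\ge 5$.
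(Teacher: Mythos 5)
Your proof is correct and is essentially the paper's argument: both proceed by entrywise comparison of the reduced matrices, pinning down the projective scalar via diagonal entries (your $(2,2)$-entry, the paper's diagonal coefficients) and then forcing equality from the off-diagonal $\sqrt{-3}$-entries, using that $2$, $3$ and differences of sixth roots of unity have norms coprime to every prime $q\ge 5$. One small correction to your setup: when $q\equiv 1\pmod 3$ the kernel of $\phi_q$ on $\Z[\omega]$ is a prime ideal $\mathfrak{q}$ of norm $q$, not the ideal $(q)$, so your congruences must be read modulo $\ker\phi_q$ and the arithmetic input weakens from ``$q\mid\alpha$ hence $q^{2}\mid N(\alpha)$'' to ``$\phi_q(\alpha)=0$ hence $q\mid N(\alpha)$'' --- which still produces all of your contradictions, since the norms involved ($3$, $4$, $9$, $12$) are coprime to $q$.
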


\begin{proof}
The diagonal coefficients of any $s\in S$ cannot all agree modulo
$q$ for $q>3$, hence $\phi_q(s)\ne\phi_q(I)$. Similarly, if
$s,s'\in S$ and $\phi_q(s)=\phi_q(s')$, then by examining their
diagonal we see that $s=\sigma^{j}s'\sigma^{-j}$ for some $0\leq j\leq2$.
From the non-zero off-diagonal coordinates we get $\frac{\sqrt{-3}}2=\frac{\sqrt{-3}}2\omega^{2j}$
in $\F$, which implies $j=0$ and $s=s'$.
\end{proof}
\begin{rem}
\label{ex:eisenstein-mod3} For $q=3$ one can still define $X_{\EE}^{2,3}$,
using $\phi_3\colon PGL_3(\Z[\omega])\rightarrow PGL_3(\nicefrac{\F_3[x]}{(x^2)})$
with $\phi_3(\omega)=1-x$, and show that it is non-Ramanujan. This
might be proved similarly to Theorem \ref{thm:main-Eis}, though in
this specific case it is easier to check it with a computer. Moreover,
the subgroup $\mathbf{G}_3=\left\langle \phi_3(S)\right\rangle $
is isomorphic to $(\Z/3\Z)^3$. To see this, first note that $\phi_3(A_{*,*}^{\pm})\in I+xM_3(\F_3)$
which implies that $\mathbf{G}_3$ is Abelian. Second note that
$\phi_3(\sigma)$ commutes with $\phi_3(A)$ since $x(1-x)^{j}\equiv x(1-x)^{j+2}\Mod{3,x^2}$,
so that $\mathbf{G}_3$ is generated by $\phi_3(A_{0,*})$. Finally,
verify that $A^3=I$ and that $\phi_3(A_{0,*}^{\pm})$ are linearly
independent over $\F_3$. (In the language of Section \ref{sec:lattices},
$\mathbf{G}_3$ is a subgroup of the projective unitary group of
the non-étale extension $\nicefrac{\F_3[x]}{(x^2)}$ of
$\F_3$, and the latter can be described by $\left\lbrace A+xB\in M_3(\nicefrac{\F_3[x]}{(x^2)})\;|\;A^{t}A=I,\;AB^{t}=BA^{t}\right\rbrace /\{\pm I\}$).
\end{rem}

\begin{figure}[h]
\parbox[c]{1\columnwidth}{%
\begin{center}
\hspace*{\fill}\hspace*{\fill}\includegraphics[scale=0.2]{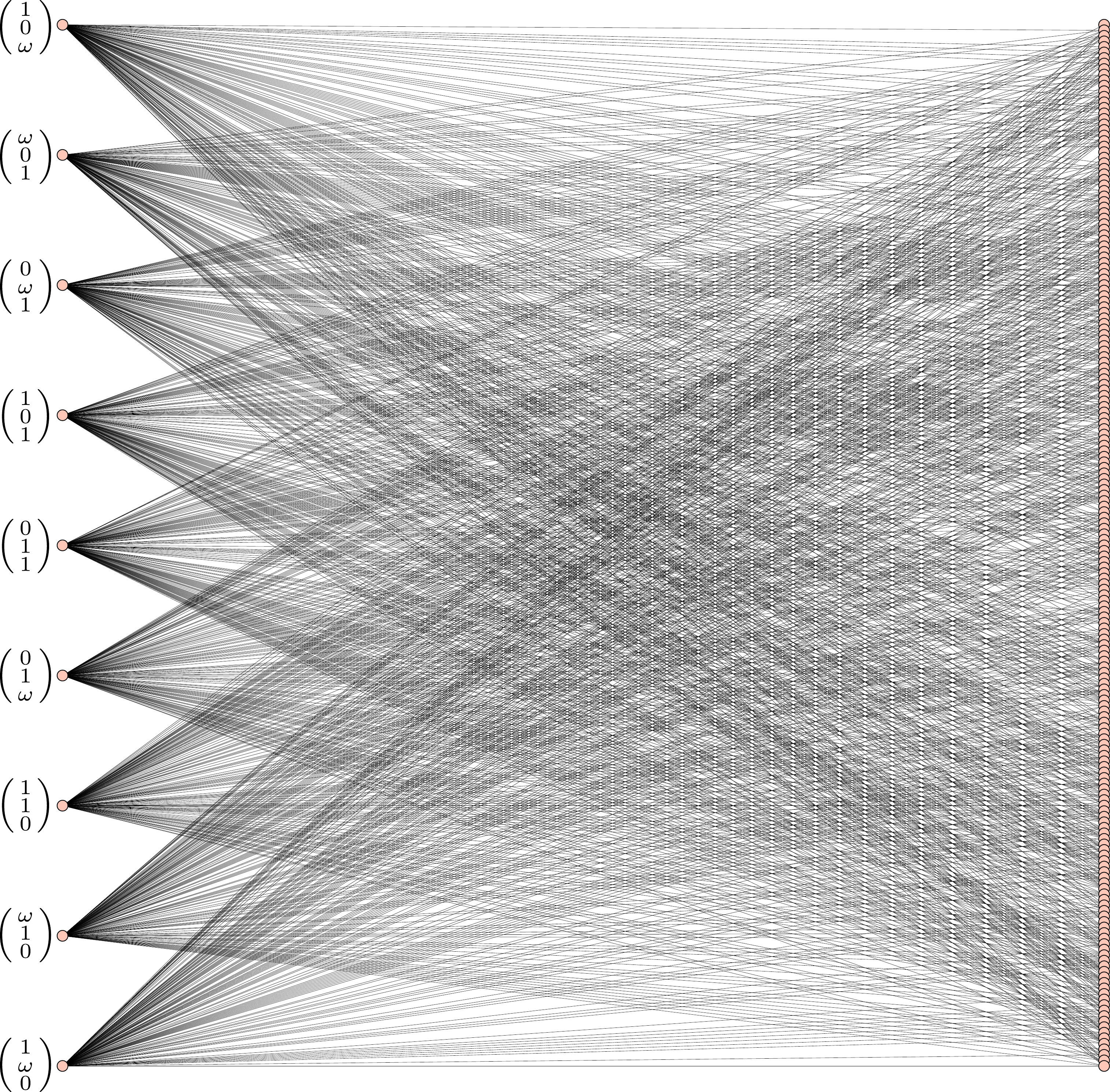}\hspace*{\fill}
\par\end{center}%
}\caption{The (Ramanujan) Schreier bigraph $Y^{5,2}$ arising from the Eisenstein lattice $\Lambda_{\EE}^{5}$ acting on the vectors of norm zero in $\F_{4}^3$.}
\end{figure}

\section{Spectral Analysis} \label{sec:Spectral-analysis}

Let $X=\left(V=L\sqcup R,E\right)$ be a connected undirected $(K\!+\!1,k\!+\!1)$-biregular
bipartite graph with $K>k$ and $\left|L\right|=n$, so that $\left|R\right|=\frac{K+1}{k+1}n$.
Throughout this section we denote by $E$ the \emph{directed }edges
of $X$; each edge appears with both directions, i.e.\ $\left|E\right|=2n(K+1)$.
We denote by $A=A_{X}$ the adjacency operator on $L^2\left(V\right)$.
It is easy to see that $\ker A=\ker\left(A|_{R}\right)\oplus\ker\left(A|_{L}\right)$,
and since $A$ maps $L^2(R)$ to $L^2(L)$ and $\left|R\right|>\left|L\right|$,
$A|_{R}$ must have a nontrivial kernel. There is however no reason
for $A|_{L}$ to have one (and generically it does not - see \cite{brito2018spectral}).
We define the \emph{excessiveness} of $X$ by 
\[
\mE=\mE_{X}\overset{{\scriptscriptstyle def}}{=}\dim\ker\left(A|_{L}\right),
\]
and observe that $\left(A|_{R}\right)=(A|_{L})^{T}$ implies $\rank(A|_R)=n-\mE$, so that
\[
\mathcal{N}_{X}\overset{{\scriptscriptstyle def}}{=}\dim\ker\left(A|_{R}\right)=|R|-n+\mE=\tfrac{K-k}{k+1}n+\mE,
\]
and in addition that $\rank A=\rank A|_{L}+\rank A|_{R}=2\left(n-\mE\right)$.
The spectrum of $A$ is symmetric: if for $f\in L^2(V)$ we denote
$\tilde{f}:=f|_{L}-f|_{R}$, then $Af=\lambda f$ implies $A\tilde{f}=-\lambda\tilde{f}$,
so in total $A$ has $n-\mE$ positive eigenvalues (counting with
multiplicities), which we denote by 
\[
\sqrt{(K+1)(k+1)}=\mathfrak{pf}=\lambda_1>\lambda_2\geq\ldots\geq\lambda_{n-\mE}>0.
\]
We define the following union of three line segment in $\C$:
\[
\Theta_{K,k}=\left[-i\log\sqrt{Kk},0\right]\cup\left[0,\pi\right]\cup\left[\pi,\pi+i\log\sqrt{K/k}\right],
\]
and with every $\lambda\in\left[0,\mathfrak{pf}\right]$ we associate
\[
\vartheta=\vartheta_{\lambda}=\arccos\left(\frac{\lambda^2-K-k}{2\sqrt{Kk}}\right).
\]
We note that $\lambda\mapsto\vartheta_{\lambda}$ maps $\left[0,\mathfrak{pf}\right]$
bijectively onto $\Theta_{K,k}$, and takes $\left[\sqrt{K}+\sqrt{k},\sqrt{K}-\sqrt{k}\right]$
onto $\left[0,\pi\right]$. Thus, writing $\vartheta_{j}=\vartheta_{\lambda_{j}}$,
$X$ is adj-Ramanujan (see \eqref{eq:aram-def}) if and only if $\vartheta_2,\ldots,\vartheta_{n-\mE}\in\left[0,\pi\right]$.
The $\vartheta$-parametrization will be useful in analyzing walks
on the graph in Section \ref{sec:Combinatorics}, and is inspired by a well-known parametrization used for the regular case (see e.g.\ \cite{davidoff2003elementary,Nestoridi2021Boundedcutoffwindow}).

We now turn to the non-backtracking $B=B_{X}$, which acts on $L^2(E)$
by $Bf(v\!\rightarrow\!u)=\sum_{v\ne w\sim u}f(u\!\rightarrow\!w)$.
Its Perron-Frobenius eigenvalue is $\mathfrak{pf}_{B}=\sqrt{Kk}$,
and its spectrum is also symmetric: decomposing $E=\overrightarrow{LR}\sqcup\overleftarrow{LR}$,
where $\overrightarrow{LR}=\left\{ \ell\!\rightarrow\!r\,\middle|\,\begin{smallmatrix}\ell\in L,r\in R\\
\ell\sim r
\end{smallmatrix}\right\} $ and $\overleftarrow{LR}=\left\{ \ell\!\leftarrow\!r\,\middle|\,\begin{smallmatrix}\ell\in L,r\in R\\
\ell\sim r
\end{smallmatrix}\right\} $, we define $\widetilde{F}:=F|_{\overrightarrow{LR}}-F|_{\overleftarrow{LR}}$
and observe that 
\begin{equation}
BF=\mu F\qquad\Rightarrow\qquad B\widetilde{F}=-\mu\widetilde{F}.\label{eq:Btilde}
\end{equation}

We associate with every $\lambda\in\left[0,\mathfrak{pf}\right]$
the two complex numbers
\begin{equation}
\mu^{\pm}=\mu_{\lambda}^{\pm}=\sqrt{\tfrac12\big(\lambda^2-K-k\pm\sqrt{(\lambda^2-K-k)^2-4Kk}\big)}=\sqrt{e^{\pm i\vartheta_{\lambda}}\sqrt{Kk}},\label{eq:mu-def}
\end{equation}
where the external square root is chosen so that $\mathrm{arg}(\mu^\pm)\in [0,\pi)$. We will show in Proposition \ref{prop:F-B-ef}
that if $\lambda\in\Spec A$ then $\pm\mu_{\lambda}^{\pm}\in\Spec B$,
save for some singular cases in which the corresponding eigenfunction
vanishes. However, the eigenvalues alone are not enough for spectral analysis, since
their corresponding eigenfunctions are not orthogonal, as $B$ is
not normal. The main goal of this section is:
\begin{thm}
\label{thm:B-decomp}If $\lambda_1>\lambda_2\geq\ldots\geq\lambda_{n-\mE_{X}}>0$
are the positive eigenvalues of $A$, then $B$ is unitarily equivalent
to a block-diagonal matrix composed of:
\begin{enumerate}
\item The block $\left(\begin{smallmatrix} & K\\
k
\end{smallmatrix}\right)$.
\item 
\begin{enumerate}
\item For each $2\leq j\leq n-\mE_{X}$ with $\lambda_{j}\in\left[\sqrt{K}-\sqrt{k},\sqrt{K}+\sqrt{k}\right]$,
the block
\begin{equation}
\left(\begin{smallmatrix}0 & 0 & \mu_{j}^{+}\sqrt[4]{\frac{k}{K}} & k-1\\
0 & 0 & 0 & -\mu_{j}^{-}\sqrt[4]{\frac{k}{K}}\\
\mu^{+}\sqrt[4]{\frac{K}{k}} & K-1 & 0 & 0\\
0 & -\mu_{j}^{-}\sqrt[4]{\frac{K}{k}} & 0 & 0
\end{smallmatrix}\right).\label{eq:B_W_Ram_case}
\end{equation}
\item For each $2\leq j\leq n-\mE_{X}$ with $\lambda_{j}\notin\left[\sqrt{K}-\sqrt{k},\sqrt{K}+\sqrt{k}\right]$,
a block of the form $\left(\begin{smallmatrix}0 & 0 & \mu_{j}^{+}\alpha_{j} & \gamma_{j}\\
0 & 0 & 0 & \mu_{j}^{-}\beta_{j}\\
\mu_{j}^{+}/\alpha_{j} & \delta_{j} & 0 & 0\\
0 & \mu_{j}^{-}/\beta_{j} & 0 & 0
\end{smallmatrix}\right)$, where $\alpha_{j},\beta_{j}\in\R_{>0}$ and $\gamma_{j},\delta_{j}\in\C$
are all bounded by $K$.\footnote{With some effort one can compute $\alpha_{j},\beta_{j},\gamma_{j},\delta_{j}$
explicitly from the proof, but they are not pleasant and will not
be needed for our combinatorial purposes in Section \ref{sec:Combinatorics}.
We also remark that case 2(b) does not occur in arithmetic congruence
quotients of $U_3$-buildings -- see Sections \ref{sec:local-rep},
\ref{section:automorphic}.} 
\end{enumerate}
\item $\mE_{X}$ times the block $\left(\begin{smallmatrix} & i\\
iK
\end{smallmatrix}\right)$.
\item $\mathcal{N}_{X}$ times the block $\left(\begin{smallmatrix} & ik\\
i
\end{smallmatrix}\right)$.
\item $\chi(X)$
times the diagonal block $\left(\begin{smallmatrix}1\\
 & -1
\end{smallmatrix}\right)$, where $\chi(X)=\frac{|E|}2-|V|+1=\frac{Kk-1}{k+1}\cdot n+1$.
\end{enumerate}
\end{thm}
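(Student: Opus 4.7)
The plan is to build an orthogonal decomposition of $L^2(E)$ into small $B$-invariant subspaces matching each block of the theorem. The guiding observation is that every non-backtracking step turns an $L\!\to\!R$ edge into an $R\!\to\!L$ edge and vice versa, so with respect to the splitting $L^2(E)=L^2(\overrightarrow{LR})\oplus L^2(\overleftarrow{LR})$ the operator $B$ has a global block-anti-diagonal form; this structural fact forces every block in the theorem to be itself anti-diagonal. For each adjacency eigenfunction I will produce a $4$-dimensional $B$-invariant subspace respecting the splitting, orthonormalize inside it, and finally dispose of the residual $\pm 1$ eigenvalues via the cycle space.

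Concretely, for each $f\in L^2(V)$ with $Af=\lambda f$ I introduce four edge functions defined on $v\!\to\!u$ by $\phi_1=f(u)\mathbf{1}_{\overrightarrow{LR}}$, $\phi_2=f(v)\mathbf{1}_{\overrightarrow{LR}}$, $\phi_3=f(u)\mathbf{1}_{\overleftarrow{LR}}$, $\phi_4=f(v)\mathbf{1}_{\overleftarrow{LR}}$. Using $\sum_{w\sim u,\,w\neq v}f(w)=\lambda f(u)-f(v)$, a direct calculation yields
\[
B\phi_1=\lambda\phi_3-\phi_4,\quad B\phi_2=K\phi_3,\quad B\phi_3=\lambda\phi_1-\phi_2,\quad B\phi_4=k\phi_1.
\]
Thus their span is $B$-invariant; $B^2|_{\mathrm{span}(\phi_3,\phi_4)}$ has characteristic polynomial $t^2-(\lambda^2-K-k)t+Kk$, and the $B$-eigenvalues recovered are precisely $\pm\mu_\lambda^\pm$ of \eqref{eq:mu-def} (matching the eigenfunctions $F^\pm,\widetilde F^\pm$ of Prop.~\ref{prop:F-B-ef}). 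Edge counts give $\|\phi_1\|^2=\|\phi_4\|^2=(k+1)\|f|_R\|^2$, $\|\phi_2\|^2=\|\phi_3\|^2=(K+1)\|f|_L\|^2$, $\langle\phi_1,\phi_2\rangle=\langle\phi_3,\phi_4\rangle=\lambda\|f|_L\|^2$, with $\|f|_L\|=\|f|_R\|$ whenever $\lambda\neq 0$.

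For the Perron-Frobenius eigenvalue $\lambda_1=\mathfrak{pf}$ with $f$ constant on each side, the four functions collapse to $\mathbf{1}_{\overrightarrow{LR}},\mathbf{1}_{\overleftarrow{LR}}$, on which $B$ acts as $\left(\begin{smallmatrix}&K\\k&\end{smallmatrix}\right)$ -- block $(1)$. For $2\leq j\leq n-\mathcal{E}_X$ the Gram matrix is positive definite, so Gram-Schmidt combined with the block anti-diagonal shape of $B$ produces an orthonormal $4\times 4$ block. In the adj-Ramanujan regime $\vartheta_j\in[0,\pi]$ the clean coefficients $\sqrt[4]{K/k},\sqrt[4]{k/K},K-1,k-1$ in \eqref{eq:B_W_Ram_case} emerge from the identities $|\mu^\pm|^2=\sqrt{Kk}$ and $\mu^+\overline{\mu^-}=\sqrt{Kk}$; outside this regime the same procedure works formally but these identities fail, leaving the shape of $(2b)$ with entries bounded by $K$ via $|\lambda|\leq\mathfrak{pf}$ and $|\mu^\pm|^2\leq K$. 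The degenerate cases $f|_R\equiv 0$ ($f\in\ker A|_L$) and $f|_L\equiv 0$ ($f\in\ker A|_R$) collapse the four-function construction to $2\times 2$ anti-diagonal blocks on which $B$ has eigenvalues $\pm i\sqrt{K}$ and $\pm i\sqrt{k}$ respectively, yielding blocks $(3)$ and $(4)$ after unitary rescaling.

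Finally, using $\mathcal{N}_X=\mathcal{E}_X+\tfrac{K-k}{k+1}n$ and $\chi(X)=\tfrac{Kk-1}{k+1}n+1$, one verifies $2+4(n-1-\mathcal{E}_X)+2\mathcal{E}_X+2\mathcal{N}_X=|E|-2\chi(X)$, so the orthogonal complement of all vertex-lifted subspaces has dimension exactly $2\chi(X)$. This complement is characterized as the intersection of the kernels of the four adjoint "lift" maps, which amount to vertex-summing conditions on the edge function; it is identified with the directed cycle space of $X$ on which $B^2=I$, and splitting by orientation reversal yields the $\pm 1$ eigenspaces of dimension $\chi(X)$ each, producing $\chi(X)$ copies of block $(5)$. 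The main obstacle is the precise Gram-Schmidt computation in the adj-Ramanujan case producing the exact coefficients of \eqref{eq:B_W_Ram_case}; the rest is direct linear-algebraic bookkeeping.
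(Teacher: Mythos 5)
Your overall architecture is the same as the paper's: you lift each adjacency eigenfunction to the four edge functions (your $\phi_1,\dots,\phi_4$ are exactly the paper's $f_{ri},f_{\ell o},f_{\ell i},f_{ro}$), observe that their span $W_f$ is $B$-invariant with $B$ anti-diagonal with respect to the orientation splitting, handle the degenerate kernels as $2\times2$ blocks, and sweep the orthogonal complement into the cycle space carrying the $\pm1$ eigenvalues. Blocks (1), (3), (4) and (5) go through essentially as in the paper (your real $2\times2$ blocks for (3),(4) differ from $\left(\begin{smallmatrix}&i\\iK\end{smallmatrix}\right)$ only by a diagonal phase conjugation, as you say).

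The genuine gap is in case (2a), which is the block the theorem actually pins down and the one Section \ref{sec:Combinatorics} uses. Gram--Schmidt applied to your raw basis $\phi_1,\dots,\phi_4$ cannot produce the matrix \eqref{eq:B_W_Ram_case}: all inner products among the $\phi_i$ and all coefficients in $B\phi_1=\lambda\phi_3-\phi_4$, $B\phi_2=K\phi_3$, $B\phi_3=\lambda\phi_1-\phi_2$, $B\phi_4=k\phi_1$ are real, so the orthonormal basis you obtain has real coordinates in the $\phi$'s and the resulting matrix of $B|_{W_j}$ has \emph{real} entries and generically \emph{full} $2\times2$ anti-diagonal blocks, whereas \eqref{eq:B_W_Ram_case} has non-real entries $\mu_j^{\pm}\sqrt[4]{k/K}$ and upper-triangular $2\times2$ blocks. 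Asserting that the clean coefficients ``emerge from $|\mu^\pm|^2=\sqrt{Kk}$ and $\mu^+\overline{\mu^-}=\sqrt{Kk}$'' does not bridge this, and the second identity is false: in the tempered regime one has $\mu^-=-\overline{\mu^+}$ and $\mu^+\mu^-=-\sqrt{Kk}$. What the paper does at exactly this point is change basis \emph{before} orthonormalizing: fix the $B$-eigenfunction $F=F_j^+$ from \eqref{eq:F-and-G-eigfun} and its edge-reversal $\overleftrightarrow{F}$, and take $b_1=F|_{\overrightarrow{LR}},\,b_2=\overleftrightarrow{F}|_{\overrightarrow{LR}},\,b_3=F|_{\overleftarrow{LR}},\,b_4=\overleftrightarrow{F}|_{\overleftarrow{LR}}$; then $Bb_1=\mu b_3$, $Bb_3=\mu b_1$, $Bb_2=(K-1)b_3+\tfrac{K}{\mu}b_4$, $Bb_4=(k-1)b_1+\tfrac{k}{\mu}b_2$, and precisely when $\vartheta_j\in[0,\pi]$ (i.e.\ $\overline{\mu}=\sqrt{Kk}/\mu$) this basis is already orthogonal with $\|b_1\|=\|b_4\|=\sqrt[4]{k/K}\,\|b_2\|=\sqrt[4]{k/K}\,\|b_3\|$, so a mere rescaling by $\sqrt[4]{K},\sqrt[4]{k}$ yields exactly \eqref{eq:B_W_Ram_case}. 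Without this (or an explicit unitary identifying your real matrix with \eqref{eq:B_W_Ram_case}), case (2a) — and likewise the triangular shape and entry bounds in (2b), where the paper Gram--Schmidts the eigenfunction basis $F_j^\pm$ and bounds entries by $\|B\|_2=K$ together with $|\mu_j^\pm|\ge1$ — is not established; you flag it as ``the main obstacle'' but it is precisely the content of the proof rather than bookkeeping.
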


Note that since $\mathrm{Spec}\left(\begin{smallmatrix}0 & \alpha\\
\beta & 0
\end{smallmatrix}\right)=\left\{ \pm\sqrt{\alpha\beta}\right\} $ and $\mathrm{Spec}\left(\begin{smallmatrix}0 & 0 & \alpha\mu^{+} & *\\
0 & 0 & 0 & \beta\mu^{-}\\
\mu^{+}/\alpha & * & 0 & 0\\
0 & \mu^{-}/\beta & 0 & 0
\end{smallmatrix}\right)=\left\{ \pm\mu^{+},\pm\mu^{-}\right\} $, in particular we obtain the following:
\begin{cor}[\cite{Kempton2016NonBacktrackingRandom,brito2018spectral}]
\label{cor:B-spec}
The spectrum of $B_X$ is
\begin{align*}
&\mathrm{Spec}(B_X)
\\&=\begin{cases}
\left\{ \pm\sqrt{Kk},\pm i\sqrt{k},\pm1\right\} \cup\left\{ \pm\mu_{\lambda}^{\pm}\,\middle|\,0<\lambda\in\mathrm{Spec}(A_X)\right\}  & \mE_{X}=0\\
\left\{ \pm\sqrt{Kk},\pm i\sqrt{K},\pm i\sqrt{k},\pm1\right\} \cup\left\{ \pm\mu_{\lambda}^{\pm}\,\middle|\,0<\lambda\in\mathrm{Spec}(A_X)\right\}  & \mE_{X}>0.
\end{cases}
\end{align*}
\end{cor}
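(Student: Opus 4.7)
The plan is to read off the spectrum of $B_X$ directly from the block decomposition in Theorem~\ref{thm:B-decomp}. Since unitary equivalence preserves spectra (as multisets), it suffices to compute the eigenvalues of each block type and then union them up, keeping track of which blocks disappear when $\mathcal{E}_X=0$ versus $\mathcal{E}_X>0$.

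The three $2\times 2$ blocks are immediate: block~(1) gives $\pm\sqrt{Kk}$; block~(3) (with multiplicity $\mathcal{E}_X$) gives $\pm i\sqrt{K}$; block~(4) (with multiplicity $\mathcal{N}_X$) gives $\pm i\sqrt{k}$; and block~(5) contributes $\pm 1$ with multiplicity $\chi(X)\geq 1$. For the $4\times 4$ blocks in (2a) and (2b), I will exploit their common anti-block-diagonal shape $M=\bigl(\begin{smallmatrix}0&P\\Q&0\end{smallmatrix}\bigr)$ with $P,Q$ upper-triangular $2\times 2$ matrices. Then $M^{2}=\mathrm{diag}(PQ,QP)$, and the product $PQ$ is again upper triangular with diagonal entries $(\mu_j^{+})^{2},(\mu_j^{-})^{2}$ (the $\alpha_j,\beta_j$ factors cancel in case (b); the $\sqrt[4]{k/K},\sqrt[4]{K/k}$ factors cancel in case (a)). Hence $M^2$ has eigenvalues $(\mu_j^{+})^{2},(\mu_j^{-})^{2}$ each with multiplicity two, and $M$ has eigenvalues $\pm\mu_j^{+},\pm\mu_j^{-}$.

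Assembling the pieces, the contributions from blocks (2) cover $\{\pm\mu_j^{\pm}:2\leq j\leq n-\mathcal{E}_X\}$. To match the statement, I will verify that the missing index $j=1$ is harmless: a direct computation from \eqref{eq:mu-def} with $\lambda_1^{2}=(K+1)(k+1)$ gives $(\lambda_1^2-K-k)^2-4Kk=(Kk-1)^2$, hence $\mu_1^{+}=\sqrt{Kk}$ and $\mu_1^{-}=1$, which are precisely the values furnished by blocks (1) and (5). Thus $\{\pm\mu_\lambda^{\pm}:0<\lambda\in\mathrm{Spec}(A_X)\}$ together with $\pm\sqrt{Kk}$ and $\pm 1$ is exactly the union of contributions from blocks (1), (2) and (5). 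Block~(4) is always non-empty because $\mathcal{N}_X=\tfrac{K-k}{k+1}n+\mathcal{E}_X>0$ (using the standing hypothesis $K>k$), so $\pm i\sqrt{k}$ is always in the spectrum. The only eigenvalues whose presence depends on $\mathcal{E}_X$ are $\pm i\sqrt{K}$, coming from the $\mathcal{E}_X$ copies of block~(3), which produces exactly the two-case structure of the corollary.

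There is no real obstacle here: the only point that requires a brief calculation is showing that the $4\times 4$ block has spectrum $\{\pm\mu_j^{+},\pm\mu_j^{-}\}$, and even this reduces to the observation that for upper-triangular $P,Q$ the product $PQ$ is upper triangular with diagonal the product of diagonals. Everything else is bookkeeping on multiplicities and the handling of the PF index $j=1$.
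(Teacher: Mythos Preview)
Your proof is correct and follows essentially the same approach as the paper: read off the eigenvalues of each block from Theorem~\ref{thm:B-decomp} and union them. You are in fact more careful than the paper, which simply asserts the spectra of the $2\times 2$ and $4\times 4$ blocks in a single line; your explicit verification that $\mu_{1}^{+}=\sqrt{Kk}$, $\mu_{1}^{-}=1$, and that $\mathcal{N}_X,\chi(X)>0$ (so blocks~(4) and~(5) are always present) fills in details the paper leaves implicit.
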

This was shown in the cited papers by a detailed study of the Ihara-Bass formula following \cite{hashimoto1989zeta}. The approach we take is inspired by \cite{kotani2000zeta} and has the advantage of giving the eigenvectors as well, which is important for the combinatorial applications in Section \ref{sec:Combinatorics}. We further obtain:
\begin{cor}
\label{cor:ram-by-theta}
\begin{enumerate}
\item The graph $X$ is NB-Ramanujan if and only if it is adj-Ramanujan and additionally
$\mE_{X}=0$.
\item $X$ is adj-Ramanujan if and only if $\Spec\left(A^2\big|_{L}\right)\subseteq\left\{ (K+1)(k+1)\right\} \cup\left[K+k-2\sqrt{Kk},K+k+2\sqrt{Kk}\right]\cup\left\{ 0\right\} $.
\item $X$ is NB-Ramanujan if and only if $\Spec\left(A^2\big|_{L}\right)\subseteq\left\{ (K+1)(k+1)\right\} \cup\left[K+k-2\sqrt{Kk},K+k+2\sqrt{Kk}\right]$.
\item The NB-Ramanuajan property and the Riemann hypothesis are equivalent
for bigraphs.
\end{enumerate}
\end{cor}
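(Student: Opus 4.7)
The plan is to extract all four equivalences from Theorem~\ref{thm:B-decomp} together with the parametrization $\lambda\leftrightarrow\vartheta_\lambda\leftrightarrow\mu_\lambda^\pm$ of \eqref{eq:mu-def}. The key input is that $\Spec(A^2|_L)$ consists of $\lambda_1^2,\dots,\lambda_{n-\mathcal{E}_X}^2$ together with $\mathcal{E}_X$ copies of $0$, and that the map $\lambda\mapsto\lambda^2$ sends the adj-Ramanujan window $[\sqrt{K}-\sqrt{k},\sqrt{K}+\sqrt{k}]$ onto $[K+k-2\sqrt{Kk},K+k+2\sqrt{Kk}]$. With this in hand, statement (2) is a direct rewriting of the definition \eqref{eq:aram-def}: the isolated point collects the trivial eigenvalue $\lambda_1^2$ and the $\{0\}$ collects the $\mathcal{E}_X$-fold kernel of $A|_L$.

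For (1), the implication $(\text{adj-Ram}+\mathcal{E}_X{=}0)\Rightarrow\text{NB-Ram}$ is read directly from Theorem~\ref{thm:B-decomp}: under these hypotheses only blocks of types (1), 2(a), (4) and (5) occur, and their eigenvalues $\pm\sqrt{Kk},\pm\mu_j^\pm,\pm i\sqrt{k},\pm 1$ all lie in $\Spec B_\T$, the $\mu_j^\pm$ sitting on the circle $|z|=\sqrt[4]{Kk}$ exactly because $\vartheta_j\in[0,\pi]$. Conversely, assume NB-Ramanujan. A block of type (3) would contribute $\pm i\sqrt{K}$, which lies outside $\Spec B_\T$ since $K>k\geq 1$ forces $\sqrt{K}\notin\{\sqrt{k},1,\sqrt[4]{Kk}\}$; hence $\mathcal{E}_X=0$. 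Similarly, for any $\lambda_j$ with $\vartheta_j\notin[0,\pi]$ one computes $|\mu_\lambda^\pm|^2=e^{\mp\Im\vartheta_\lambda}\sqrt{Kk}$, and since $\Im\vartheta_j\neq 0$ one of $|\mu_j^+|,|\mu_j^-|$ strictly exceeds $\sqrt[4]{Kk}$ while neither equals $\sqrt{k}$ or $1$, contradicting NB-Ramanujanness. Thus every block of type 2(b) is also absent, proving adj-Ramanujanness. Statement (3) then follows immediately from (1) and (2): the $\{0\}$ appearing in (2) is precisely the contribution of $\ker(A|_L)$, so removing it amounts to imposing $\mathcal{E}_X=0$.

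For (4), NB-Ramanujan trivially implies the Riemann Hypothesis, because $\Spec B_\T$ has spectral radius $\sqrt[4]{Kk}$ and the remaining allowed values $\pm i\sqrt{k},\pm 1$ have modulus at most $\sqrt[4]{Kk}$ (using $K>k\geq 1$ and $Kk>1$). Conversely, the same calculation as in the previous paragraph shows that any bad configuration -- a block of type (3) or of type 2(b) -- produces a nontrivial $B$-eigenvalue of modulus \emph{strictly} greater than $\sqrt[4]{Kk}$, violating RH. Hence RH forces the NB-Ramanujan conditions via (1).

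The technical heart of the argument is the bookkeeping of $|\mu_\lambda^\pm|$ along each of the three arcs of $\Theta_{K,k}$, together with the elementary but essential separation inequalities making $\sqrt{K},\sqrt{k},1,\sqrt[4]{Kk}$ pairwise distinct under $K>k\geq 1$ with $Kk>1$. I expect this last point to be the only place requiring real care: one must exclude the degenerate coincidences that would allow a ``non-Ramanujan'' $\mu_j^\pm$ to accidentally land in the isolated part $\{\pm i\sqrt{k},\pm 1\}$ of $\Spec B_\T$ and thereby break the forward direction of (1) and of (4).
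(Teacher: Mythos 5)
Your proof is correct and takes essentially the same route as the paper: the paper's own argument for (1) is precisely the chain $\lambda\in[\sqrt{K}-\sqrt{k},\sqrt{K}+\sqrt{k}]\Leftrightarrow\vartheta_\lambda\in[0,\pi]\Leftrightarrow|\mu_\lambda^{+}|=|\mu_\lambda^{-}|=\sqrt[4]{Kk}$ combined with $|\pm i\sqrt{k}|<\sqrt[4]{Kk}<|\pm i\sqrt{K}|$, read off from the block description of $\Spec B_X$ coming from Theorem \ref{thm:B-decomp} (Corollary \ref{cor:B-spec}); it then declares (2),(3) immediate, and for (4) it uses the same reciprocity $|\mu_\lambda^{+}||\mu_\lambda^{-}|=\sqrt{Kk}$ for the converse, quoting \cite{Angel2015nonbacktrackingspectrum,hashimoto1989zeta} for the easy direction which you instead verify directly by bounding moduli. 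The only caveat, shared with the statement itself rather than introduced by you, is that the isolated trivial value of $A^2|_L$ is $\lambda_1^{2}=(K+1)(k+1)$ rather than $Kk$, so your reading of the isolated point as ``collecting $\lambda_1^{2}$'' tacitly corrects what appears to be a typo in the corollary.
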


\begin{proof}
(1) This follows from the observation that 
\begin{align*}
\lambda\in[\sqrt{K}+\sqrt{k},\sqrt{K}-\sqrt{k}]\ \Leftrightarrow\ \vartheta_{\lambda}\in[0,\pi]\ \\\Leftrightarrow\ |\mu_{\lambda}^{+}|,|\mu_{\lambda}^{-}|\leq\sqrt[4]{Kk}\ \Leftrightarrow\ |\mu_{\lambda}^{+}|,|\mu_{\lambda}^{-}|=\sqrt[4]{Kk},
\end{align*}
and the fact that $|\pm i\sqrt{k}|<\sqrt[4]{Kk}<|\pm i\sqrt{K}|$;
(2,3) These are immediate from (1); (4) NB-Ramanujan implies the R.H.\ for
any finite graph by \cite{Angel2015nonbacktrackingspectrum} (and
for bigraphs by \cite{hashimoto1989zeta}). For bigraphs the converse
also holds since if $\left|\mu_{\lambda}^{\pm}\right|<\sqrt[4]{Kk}$
then $\left|\mu_{\lambda}^{\mp}\right|>\sqrt[4]{Kk}$. 
\end{proof}
\begin{rem}
In fact, the Riemann Hypothesis, NB-Ramanujan and Ramanujan property
are all equivalent for bigraphs, but for the proof we need the theory
of Hecke algebras -- see Section \ref{sec:local-rep}.
\end{rem}

We now begin our analysis of the non-backtracking spectrum, noting first that $\mu^{+},\mu^{-},-\mu^{+},-\mu^{-}$
are all the roots of 
\begin{equation}
\mu^{4}+\left(K+k-\lambda^2\right)\mu^2+Kk=0.\label{eq:mu-by-lambda}
\end{equation}
Since $\mu^{\pm}\neq0$, and $\mu^{+}\neq-\mu^{-}$ by the choice
of branch in (\ref{eq:mu-def}), equation (\ref{eq:mu-by-lambda})
has only two solutions precisely when
\begin{equation}
\mu^{+}=\mu^{-}\quad\Leftrightarrow\quad\vartheta\in\left\{ 0,\pi\right\} \quad\Leftrightarrow\quad\lambda=\sqrt{K}\pm\sqrt{k}\quad\Leftrightarrow\quad\mu^{\pm}\in\left\{ \sqrt[4]{Kk},i\sqrt[4]{Kk}\right\} \label{eq:singular-case}
\end{equation}
(and Theorem \ref{thm:B-decomp} shows that $B$ is diagonalizable
if and only if this does not occur in the spectrum). As $\lambda$ ranges over $\left[0,\mathfrak{pf}\right]$
(and $\vartheta$ over $\Theta_{K,k}$), $\pm\mu^{\pm}$ ranges over
\begin{equation}
\left\{ \pm\mu_{\lambda}^{\pm}\,\middle|\,\lambda\in\left[0,\mathfrak{pf}\right]\right\} =\pm\left[1,\sqrt{Kk}\right]\cup\left\{ z\in\C\,\middle|\,\left|z\right|=\sqrt[4]{Kk}\right\} \cup\pm\left[i\sqrt{k},i\sqrt{K}\right],\label{eq:mu-range}
\end{equation}
and the points of special importance are:
\begin{center}
\begin{tabular}{|c|c|c|c|}
\hline 
$\lambda$ & $\vartheta$ & $\mu^{+}$ & $\mu^{-}$\tabularnewline
\hline 
\hline 
$\mathfrak{pf}=\sqrt{(K+1)(k+1)}$ & $-i\log\sqrt{Kk}$ & $\mathfrak{pf}_{B}=\sqrt{Kk}$ & 1\tabularnewline
\hline 
$\sqrt{K}+\sqrt{k}$ & $0$ & $\sqrt[4]{Kk}$ & $\sqrt[4]{Kk}$\tabularnewline
\hline 
$\sqrt{K}-\sqrt{k}$ & $\pi$ & $i\sqrt[4]{Kk}$ & $i\sqrt[4]{Kk}$\tabularnewline
\hline 
0 & $\pi+i\log\sqrt{K/k}$ & $i\sqrt{k}$ & $i\sqrt{K}$\tabularnewline
\hline 
\end{tabular}
\par\end{center}
\begin{defn}
\label{def:f**-W}Given $f\colon V\rightarrow\R$, we define
$f_{\ell o},f_{ri},f_{\ell i},f_{ro}\colon E\rightarrow\C$,
by 
\begin{align*}
f_{\ell o}(\ell\!\rightarrow\!r) & =f(\ell) & f_{ri}(\ell\!\rightarrow\!r) & =f(r) & f_{\ell i}(\ell\!\rightarrow\!r) & =0 & f_{ro}(\ell\!\rightarrow\!r) & =0\\
f_{\ell o}(\ell\!\leftarrow\!r) & =0 & f_{ri}(\ell\!\leftarrow\!r) & =0 & f_{\ell i}(\ell\!\leftarrow\!r) & =f(\ell) & f_{ro}(\ell\!\leftarrow\!r) & =f(r)
\end{align*}
(for neighboring vertices $\ell\in L$ and $r\in R$). We write $f_{\star\star}$
to indicate one of the four, and denote 
\[
W_{f}=\mathrm{Span}\left\{ f_{\star\star}\right\} .
\]
We note that $\tilde{f}_{\ell o}=-f_{\ell o}$, $\tilde{f}_{ri}=-f_{ri}$,
$\tilde{f}_{\ell i}=f_{\ell i}$ and $\tilde{f}_{ro}=f_{ro}$, hence
$W_{f}=W_{\tilde{f}}.$

For $f\colon V\rightarrow\R$ satisfying $Af=\lambda f$,
fix $\mu^{\pm}=\mu_{\lambda}^{\pm}$ and define $F^{\pm}=F_{f}^{\pm}$
and $G^{\pm}=G_{f}^{\pm}$ by
\begin{equation}
\begin{alignedat}1F^{\pm} & :=\lambda f_{\ell o}-\left((\mu^{\pm})^2+K\right)f_{ri}-\mu^{\pm}\lambda f_{\ell i}+\left(\mu^{\pm}+K/\mu^{\pm}\right)f_{ro}\\
G^{\pm} & :=\left(\mu^{\pm}+k/\mu^{\pm}\right)f_{\ell o}-\mu^{\pm}\lambda f_{ri}-\left((\mu^{\pm})^2+k\right)f_{\ell i}+\lambda f_{ro}.
\end{alignedat}
\label{eq:F-and-G-eigfun}
\end{equation}
Omitting the $\pm$ choice, this means that for neighboring $\ell\in L$
and $r\in R$ 
\[
\left\{ \negmedspace\negthickspace\begin{array}{l}
F(\ell\!\rightarrow\!r)=\lambda f(\ell)-\left(\mu^2+K\right)f(r)\\
F(\ell\!\leftarrow\!r)=-\mu\lambda f(\ell)+\left(\mu+K/\mu\right)f(r)
\end{array}\right.\qquad\left\{ \negmedspace\negthickspace\begin{array}{l}
G(\ell\!\rightarrow\!r)=\left(\mu+k/\mu\right)f(\ell)-\mu\lambda f(r)\\
G(\ell\!\leftarrow\!r)=-\left(\mu^2+k\right)f(\ell)+\lambda f(r).
\end{array}\right.
\]
In the general case $F^{\pm}$ and $G^{\pm}$ differ by a scalar,
but sometimes one of them vanishes: if $\lambda=0$, then $F^{-}=G^{+}=0$
(since $\mu^{-}=i\sqrt{K}$ and $\mu^{+}=i\sqrt{k}$), but we shall
see that $F^{+},G^{-}$ do not.
\end{defn}

\begin{prop}
\label{prop:F-B-ef}If $Af=\lambda f$, then $BF^{\pm}=\mu^{\pm}F^{\pm}$,
$BG^{\pm}=\mu^{\pm}G^{\pm}$, $B\widetilde{F^{\pm}}=-\mu^{\pm}\widetilde{F^{\pm}}$
and $B\widetilde{G^{\pm}}=-\mu^{\pm}\widetilde{G^{\pm}}$.
\end{prop}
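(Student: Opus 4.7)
The plan is to compute the action of $B$ on each of the four building-block functions $f_{\ell o}, f_{ri}, f_{\ell i}, f_{ro}$ spanning $W_f$, read off a $4{\times}4$ matrix representation of $B|_{W_f}$, and then verify the claimed eigenvector relations for $F^{\pm}$ and $G^{\pm}$ by pure linear algebra driven by the defining quartic of $\mu^{\pm}$.

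First I would do a case analysis of $(Bf_{\star\star})(e)$ for the two edge types $\ell\!\to\!r$ and $\ell\!\leftarrow\!r$. The definition $(BF)(v\!\to\!u)=\sum_{v\ne w\sim u}F(u\!\to\!w)$ together with the fact that $f_{\ell o},f_{ri}$ are supported on $\overrightarrow{LR}$ while $f_{\ell i},f_{ro}$ are supported on $\overleftarrow{LR}$ forces roughly half the outputs to vanish. For instance, $(Bf_{\ell o})(\ell\!\to\!r)=0$ because the successor edges $r\!\to\!w$ live in $\overleftarrow{LR}$, whereas $(Bf_{\ell o})(\ell\!\leftarrow\!r)=\sum_{r'\ne r,\,r'\sim\ell}f(\ell)=Kf(\ell)$, giving $Bf_{\ell o}=Kf_{\ell i}$; analogously $Bf_{ro}=kf_{ri}$. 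The two remaining cases are precisely where the hypothesis $Af=\lambda f$ enters: using $\sum_{w\sim u,\,w\ne v}f(w)=(Af)(u)-f(v)=\lambda f(u)-f(v)$ one finds
\begin{equation*}
Bf_{ri}=\lambda f_{\ell i}-f_{ro},\qquad Bf_{\ell i}=\lambda f_{ri}-f_{\ell o}.
\end{equation*}

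Thus $W_f$ is $B$-invariant and $B|_{W_f}$ is represented, in the ordered basis $(f_{\ell o},f_{ri},f_{\ell i},f_{ro})$, by the matrix
\begin{equation*}
M=\begin{pmatrix}0 & 0 & -1 & 0\\ 0 & 0 & \lambda & k\\ K & \lambda & 0 & 0\\ 0 & -1 & 0 & 0\end{pmatrix}.
\end{equation*}
I would then check directly that the coefficient vectors of $F^{\pm}$ and $G^{\pm}$, namely $v_F=(\lambda,-(\mu^2\!+\!K),-\mu\lambda,\mu\!+\!K/\mu)^{T}$ and $v_G=(\mu\!+\!k/\mu,-\mu\lambda,-(\mu^2\!+\!k),\lambda)^{T}$ with $\mu=\mu^{\pm}$, satisfy $Mv=\mu v$. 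Three of the four component equations reduce to trivial identities; the single nontrivial row reduces, after clearing a factor of $\mu$, to $\mu^{4}+(K+k-\lambda^{2})\mu^{2}+Kk=0$, which is precisely equation \eqref{eq:mu-by-lambda} defining $\mu^{\pm}$. This proves $BF^{\pm}=\mu^{\pm}F^{\pm}$ and $BG^{\pm}=\mu^{\pm}G^{\pm}$.

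The tilde statements then follow immediately from the general symmetry \eqref{eq:Btilde}, which flips the sign of every $B$-eigenvalue under the involution $F\mapsto\widetilde{F}$, so $B\widetilde{F^{\pm}}=-\mu^{\pm}\widetilde{F^{\pm}}$ and similarly for $\widetilde{G^{\pm}}$. The only real obstacle is the bookkeeping in the first step: one must keep straight which component of $f_{\star\star}$ contributes on each edge type, given that $B$ reverses orientation when passing from $v\!\to\!u$ to $u\!\to\!w$. Once $M$ is in hand, the eigenvector verification is purely mechanical, and the remarkable feature is that the construction of $F^{\pm},G^{\pm}$ has been engineered so that the quartic \eqref{eq:mu-by-lambda} appears in exactly one row of $Mv=\mu v$ while every other row is an identity.
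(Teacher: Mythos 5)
Your proposal is correct and follows essentially the same route as the paper: a direct computation of $B$ on the edge functions, using $Af=\lambda f$ once for the "inward" pieces and reducing the single nontrivial relation to the quartic \eqref{eq:mu-by-lambda}, with the tilde cases handled by \eqref{eq:Btilde}. The only difference is packaging — you record $B|_{W_f}$ as a $4\times4$ matrix in the basis $(f_{\ell o},f_{ri},f_{\ell i},f_{ro})$ and verify the coefficient vectors of $F^{\pm},G^{\pm}$ are eigenvectors, whereas the paper checks $BF^{\pm}=\mu^{\pm}F^{\pm}$ pointwise on the two edge types; your matrix and basis images are all correct, so this is a sound (and slightly more systematic) rendering of the same argument.
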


\begin{proof}
For each choice of $\mu=\mu^{\pm}$ with corresponding $F=F^{\pm}$,
we have
\begin{align*}
BF\left(\ell\rightarrow r\right) & =\sum\nolimits _{\ell\neq\ell'\sim r}F\left(\ell'\leftarrow r\right)=\sum\nolimits _{\ell\neq\ell'\sim r}\left(-\mu\lambda f(\ell')+(\mu+K/\mu)f(r)\right)\\
 & =-\mu\lambda\left(\sum\nolimits _{\ell\neq\ell'\sim r}f(\ell')\right)+k(\mu+K/\mu)f(r)
 \\&=-\mu\lambda\left(\left(Af\right)(r)-f(\ell)\right)+k(\mu+K/\mu)f(r)\\
 & =-\mu\lambda\left(\lambda f(r)-f(\ell)\right)+k(\mu+K/\mu)f(r)
 \\&=\mu\left[\lambda f(\ell)-\left(\lambda^2-k-Kk/\mu^2\right)f(r)\right]\\
 & \overset{(*)}{=}\mu\left[\lambda f(\ell)-\left(\mu^2+K\right)f(r)\right]
 \\&=\mu F\left(\ell\rightarrow r\right)
\end{align*}
where $(*)$ makes use of (\ref{eq:mu-by-lambda}). The other direction is somewhat
simpler:
\begin{align*}
BF\left(\ell\leftarrow r\right) & =\sum\nolimits _{\ell\sim r'\neq r}F\left(\ell\rightarrow r'\right)
\\& =K\lambda f(\ell)-(\mu^2+K)\sum\nolimits _{\ell\sim r'\neq r}f(r')\\
 & =K\lambda f(\ell)-(\mu^2+K)(\lambda f(\ell)-f(r))
 \\& =-\mu^2\lambda f(\ell)+(\mu^2+K)f(r)
 \\& =\mu F\left(\ell\leftarrow r\right).
\end{align*}
The verification that $BG^{\pm}=\mu^{\pm}G^{\pm}$ is similar,
and we conclude using (\ref{eq:Btilde}).
\end{proof}
\begin{lem}
\label{lem:inner-product}If $f,f'\colon V\rightarrow\R$
are eigenfunctions of $A$ with non-negative eigenvalues $\lambda,\lambda'$
then:
\begin{enumerate}
\item $\lambda\left\langle f|_{R},f'|_{R}\right\rangle =\lambda'\left\langle f|_{L},f'|_{L}\right\rangle $, and if $\lambda\ne0$ then $\left\Vert f|_{R}\right\Vert =\left\Vert f|_{L}\right\Vert =\tfrac{\left\Vert f\right\Vert }{\sqrt{2}}$.
\item $\left\{ f_{\ell o},f_{ri}\right\} \bot\left\{ f_{\ell i}',f_{ro}'\right\} $,
and 
\begin{align}
\left\langle f_{\ell i},f'_{ro}\right\rangle =\left\langle f_{\ell o},f'_{ri}\right\rangle  & =\lambda\langle f|_{R},f'|_{R}\rangle=\lambda'\left\langle f|_{L},f'|_{L}\right\rangle \nonumber \\
\left\langle f_{\ell i},f'_{\ell i}\right\rangle =\left\langle f_{\ell o},f'_{\ell o}\right\rangle  & =(K+1)\left\langle f|_{L},f'|_{L}\right\rangle \label{eq:lili}\\
\left\langle f_{ri},f'_{ri}\right\rangle =\left\langle f_{ro},f'_{ro}\right\rangle  & =(k+1)\left\langle f|_{R},f'|_{R}\right\rangle .\label{eq:riri}
\end{align}
\item If $\lambda\neq0$ then $f\bot f'$ $\Rightarrow$ $f_{\star\star}\bot f'_{\star\star}$.
\item If $\lambda=0$ and $f|_{L}=0$ or $f|_{R}=0$ then $f\bot f'$ $\Rightarrow$
$f_{\star\star}\bot f'_{\star\star}$.
\item If $\lambda\notin\{0,\mathfrak{pf}\}$ then $\left\{ f_{\star\star}\right\} $
are linearly independent, so that $\dim W_{f}=4$.
\end{enumerate}
\end{lem}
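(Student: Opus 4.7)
The plan is to prove the five parts in order, exploiting the self-adjointness of $A$ and the fact that the restrictions $A|_{L}\colon L^{2}(L)\to L^{2}(R)$ and $A|_{R}\colon L^{2}(R)\to L^{2}(L)$ are transposes of each other. For part (1), I would write $Af|_{L}=\lambda f|_{R}$ as the restriction of $Af=\lambda f$ to $R$ (and similarly on $L$), and then use self-adjointness:
\[
\lambda\langle f|_{R},f'|_{R}\rangle=\langle Af|_{L},f'|_{R}\rangle=\langle f|_{L},Af'|_{R}\rangle=\lambda'\langle f|_{L},f'|_{L}\rangle.
\]

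For part (2), I would note that $f_{\ell o},f_{ri}$ are supported on $\overrightarrow{LR}$ while $f'_{\ell i},f'_{ro}$ are supported on $\overleftarrow{LR}$, giving the advertised orthogonality. Each remaining inner product reduces to a sum over directed edges that reorganizes in two natural ways. For example,
\[
\langle f_{\ell o},f'_{ri}\rangle=\sum_{\ell\to r}f(\ell)f'(r)=\sum_{r\in R}f'(r)\sum_{\ell\sim r}f(\ell)=\sum_{r}f'(r)\,(Af)(r)=\lambda\langle f|_{R},f'|_{R}\rangle,
\]
and summing first over $\ell\in L$ instead yields $\lambda'\langle f|_{L},f'|_{L}\rangle$; the degree identities \eqref{eq:lili}, \eqref{eq:riri} come from $\sum_{\ell\to r}f(\ell)f'(\ell)=\sum_{\ell}\deg(\ell)f(\ell)f'(\ell)$.

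For part (3), assuming $\lambda\neq 0$ and $f\perp f'$, write $\langle f|_{L},f'|_{L}\rangle=-\langle f|_{R},f'|_{R}\rangle$ and substitute into the identity from (1) to obtain $(\lambda+\lambda')\langle f|_{R},f'|_{R}\rangle=0$. Since $\lambda'\geq 0$ and $\lambda>0$, both sided inner products vanish, and then every formula in (2) yields zero. For part (4), the hypothesis $f|_{L}=0$ (the case $f|_{R}=0$ is symmetric) forces $f_{\ell o}=f_{\ell i}\equiv 0$ identically, so only the inner products of $f_{ri}$ and $f_{ro}$ against $f'_{\star\star}$ need checking; by (2) each of them is a scalar multiple of $\langle f|_{R},f'|_{R}\rangle=\langle f,f'\rangle$, which vanishes by hypothesis.

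For part (5), suppose $af_{\ell o}+bf_{ri}+cf_{\ell i}+df_{ro}=0$. Splitting by support of the directed edges immediately gives $af_{\ell o}+bf_{ri}=0$ and $cf_{\ell i}+df_{ro}=0$ separately. From the first relation, evaluation on an oriented edge $\ell\to r$ gives $af(\ell)+bf(r)=0$ for every edge $\{\ell,r\}$ in $X$. Note that $\lambda\neq 0$ implies $f|_{L}$ and $f|_{R}$ are both nonzero (since $Af|_{R}=\lambda f|_{L}$ and vice versa). If $a\neq 0$ or $b\neq 0$, the relation forces $f|_{L}$ to be constant on each connected component of $L$ seen through common neighbors, and similarly for $f|_{R}$; using connectedness of $X$ this means $f|_{L}\equiv c_{L}$ and $f|_{R}\equiv c_{R}$ for scalars, and the eigenvalue equation $Af=\lambda f$ then yields $(K+1)c_{L}=\lambda c_{R}$ and $(k+1)c_{R}=\lambda c_{L}$, forcing $\lambda^{2}=(K+1)(k+1)$, i.e.\ $\lambda=\mathfrak{pf}$, contrary to assumption. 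Hence $a=b=0$, and by the same argument $c=d=0$. No step here is a genuine obstacle; the main subtlety is keeping careful track of the two directed-edge sets and confirming in (5) that a proportionality between $f|_{L}$ and $f|_{R}$ along every edge really does propagate to global constancy, which is where connectedness and the full eigenvalue equation enter.
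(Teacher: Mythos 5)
Your proof is correct, and parts (1)--(4) follow essentially the same route as the paper: self-adjointness of $A$ for (1), direct edge-sum computations for (2), the decomposition $\langle f,f'\rangle=\langle f|_L,f'|_L\rangle+\langle f|_R,f'|_R\rangle$ combined with (1) and (2) for (3)--(4).

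Part (5) is where you genuinely diverge. The paper deduces independence in one line from (2): the Gram determinant of $\{f_{\star\star}\}$ equals $\|f|_L\|^4\|f|_R\|^4\bigl(\lambda^2-(K+1)(k+1)\bigr)^2$, which is nonzero precisely under the hypotheses (and as a bonus characterizes exactly when degeneracy occurs: $f|_L=0$, $f|_R=0$, or $\lambda=\mathfrak{pf}$). You instead argue directly: split a vanishing combination by support into $af_{\ell o}+bf_{ri}=0$ and $cf_{\ell i}+df_{ro}=0$, read off the edge relation $af(\ell)+bf(r)=0$, and use connectedness plus the eigenvalue equation to force $\lambda=\mathfrak{pf}$ unless $a=b=0$. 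This is more elementary and does not rely on the inner-product formulas of (2) at all, at the cost of explicitly invoking connectedness and of having to dispose of the degenerate subcases $a\neq0,\,b=0$ (resp.\ $a=0,\,b\neq0$), where one gets $f|_L\equiv0$ (resp.\ $f|_R\equiv0$) rather than constancy of both restrictions; these are killed by your earlier observation that $\lambda\neq0$ forces $f|_L,f|_R\neq0$, but your phrasing "and similarly for $f|_R$" glosses over this, so spell it out. Also note a harmless index slip: the eigenvalue equation gives $(K+1)c_R=\lambda c_L$ and $(k+1)c_L=\lambda c_R$ (not the other way around); multiplying them still yields $\lambda^2=(K+1)(k+1)$, so your conclusion stands.
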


\begin{proof}
\begin{enumerate}
\item Since $A$ is self-adjoint, $\lambda\left\langle f|_{R},f'|_{R}\right\rangle =\lambda\left\langle f,f'|_{R}\right\rangle =\left\langle Af,f'|_{R}\right\rangle =\left\langle f,A\left(f'|_{R}\right)\right\rangle =\left\langle f,\left(Af'\right)|_{L}\right\rangle =\lambda'\left\langle f,f'|_{L}\right\rangle =\lambda'\left\langle f|_{L},f'|_{L}\right\rangle$. When $\lambda\ne0$ this shows in particular that $f|_{R}$ and $f|_L$ have the same norm, hence the second assertion follows by Pythagoras.
\item $\left\{ f_{\ell o},f_{ri}\right\} \bot\left\{ f_{\ell i}',f_{ro}'\right\} $
since they are supported on disjoint sets of edges ($\overrightarrow{LR}$
and $\overleftarrow{LR}$), and the rest is routine computation, e.g.
\begin{align*}
\left\langle f_{\ell i},f'_{ro}\right\rangle  & =\sum_{\ell\sim r}f_{\ell i}(\ell\!\leftarrow\!r)\overline{f'_{ro}(\ell\!\leftarrow\!r)}\\&=\sum_{\ell\sim r}f(\ell)f'(r)=\sum_{r\in R}f'(r)(Af)(r)=\lambda\left\langle f|_{R},f'|_{R}\right\rangle \\
\left\langle f_{\ell i},f'_{\ell i}\right\rangle  & =\sum_{\ell\sim r}f_{\ell i}(\ell\!\leftarrow\!r)\overline{f'_{\ell i}(\ell\!\leftarrow\!r)}=\sum_{\ell\sim r}f(\ell)f'(\ell)=(K+1)\left\langle f|_{L},f'|_{L}\right\rangle .
\end{align*}
\item If $\lambda\neq0$ then $0=\left\langle f,f'\right\rangle =\left\langle f|_{L},f'|_{L}\right\rangle +\left\langle f|_{R},f'|_{R}\right\rangle =\left(\frac{\lambda'}{\lambda}+1\right)\left\langle f|_{R},f'|_{R}\right\rangle $
implies $f|_{R}\bot f'|_{R}$, hence $f|_{L}\bot f'|_{L}$ as well,
so (2) gives $f_{\star\star}\bot f'_{\star\star}$.
\item If $\lambda=0$ and $f|_{L}=0$ ($f|_{R}=0$ is similar) then $f\bot f'$
implies $f|_{R}\bot f'|_{R}$, and we continue as in (3).
\item By (2), the Gram determinant of $\left\{ f_{\star\star}\right\} $
is $\left\Vert f|_{L}\right\Vert ^{4}\left\Vert f|_{R}\right\Vert ^{4}\left(\lambda^2-\left(K+1\right)\left(k+1\right)\right)^2$,
which vanishes if and only if either $f|_{R}=0$ or $f|_{L}=0$ (either of which
implies $\lambda=0$), or $\lambda=\mathfrak{pf}$.\qedhere
\end{enumerate}
\end{proof}
\begin{prop}
\textcolor{red}{}If $f$ is a $\lambda$-eigenfunction of $A$ for
$\lambda\notin\left\{ 0,\sqrt{K}\pm\sqrt{k}\right\} $, then $\Span\left\{ f_{\star\star}\right\} =\Span\left\{ F^{\pm},\widetilde{F^{\pm}}\right\} $.
\end{prop}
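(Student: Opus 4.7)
The inclusion $\Span\{F^{\pm},\widetilde{F^{\pm}}\}\subseteq W_{f}$ is immediate: each of $F^{\pm}$ is by definition (\ref{eq:F-and-G-eigfun}) a linear combination of the four functions $f_{\star\star}$, and since $W_{f}=W_{\tilde f}$ (as observed in Definition \ref{def:f**-W}), the tildes $\widetilde{F^{\pm}}$ lie in $W_{f}$ as well. The content of the proposition is thus the reverse inclusion.

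The plan is to exploit the fact that the four functions $f_{\ell o},f_{ri}$ are supported on $\overrightarrow{LR}$ while $f_{\ell i},f_{ro}$ are supported on $\overleftarrow{LR}$, and that the tilde operation flips the sign on $\overrightarrow{LR}$ while preserving $\overleftarrow{LR}$. Forming
\[
\tfrac{1}{2}\bigl(F^{\pm}-\widetilde{F^{\pm}}\bigr)=\lambda f_{\ell o}-\bigl((\mu^{\pm})^{2}+K\bigr)f_{ri},\qquad \tfrac{1}{2}\bigl(F^{\pm}+\widetilde{F^{\pm}}\bigr)=-\mu^{\pm}\lambda f_{\ell i}+\bigl(\mu^{\pm}+K/\mu^{\pm}\bigr)f_{ro},
\]
decouples the problem into two independent $2\times2$ linear systems: one for the pair $(f_{\ell o},f_{ri})$ using the ``$+$'' and ``$-$'' choices of $\mu$, and one for the pair $(f_{\ell i},f_{ro})$.

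For the first system the coefficient determinant is
\[
\det\begin{pmatrix}\lambda & -((\mu^{+})^{2}+K)\\ \lambda & -((\mu^{-})^{2}+K)\end{pmatrix}=\lambda\bigl((\mu^{+})^{2}-(\mu^{-})^{2}\bigr),
\]
and for the second it equals $\lambda K\bigl((\mu^{-})^{2}-(\mu^{+})^{2}\bigr)/(\mu^{+}\mu^{-})$, with $\mu^{+}\mu^{-}=\pm\sqrt{Kk}\neq 0$ by (\ref{eq:mu-def}). Both determinants are nonzero precisely when $\lambda\neq 0$ and $\mu^{+}\neq\pm\mu^{-}$. By our choice of branch $\mu^{+}\neq-\mu^{-}$ always holds, while $\mu^{+}=\mu^{-}$ is equivalent to $\vartheta\in\{0,\pi\}$, i.e.\ $\lambda=\sqrt{K}\pm\sqrt{k}$, by (\ref{eq:singular-case}). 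Under the hypothesis $\lambda\notin\{0,\sqrt{K}\pm\sqrt{k}\}$ both $2\times2$ systems are therefore invertible, and each $f_{\star\star}$ can be expressed as an explicit linear combination of $F^{\pm},\widetilde{F^{\pm}}$, giving the reverse inclusion $W_{f}\subseteq\Span\{F^{\pm},\widetilde{F^{\pm}}\}$.

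There is no real obstacle here beyond bookkeeping; the mild subtlety is only to record that the excluded values $\lambda\in\{0,\sqrt{K}\pm\sqrt{k}\}$ are exactly the ones where either the system degenerates ($\mu^{+}=\mu^{-}$, so $F^{+}$ and $F^{-}$ coincide) or one of the defining relations collapses ($\lambda=0$, where $F^{-}=0$ by Definition \ref{def:f**-W}). Everywhere else the two equivalent characterisations of the four-dimensional space $W_{f}$ agree, and in the Perron case $\lambda=\mathfrak{pf}$ (which is allowed since $\mathfrak{pf}\neq\sqrt{K}\pm\sqrt{k}$ for nontrivial bigraphs), the same linear algebra still produces $W_{f}$ correctly even though the $f_{\star\star}$ are no longer independent by Lemma \ref{lem:inner-product}(5).
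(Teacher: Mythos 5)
Your proof is correct and is essentially the paper's argument: the paper also establishes the span equality by viewing $\{F^{\pm},\widetilde{F^{\pm}}\}$ as the image of $\{f_{\star\star}\}$ under an explicit change-of-coordinates matrix and checking that its determinant, proportional to $\lambda^{2}(\mu^{+}-\mu^{-})^{2}(\mu^{+}+\mu^{-})^{2}$, vanishes only when $\lambda=0$ or $\mu^{+}=\mu^{-}$ (i.e.\ $\lambda=\sqrt{K}\pm\sqrt{k}$). Your use of the support decomposition $E=\overrightarrow{LR}\sqcup\overleftarrow{LR}$ merely block-diagonalizes that $4\times4$ matrix into the two $2\times2$ systems, whose determinants multiply to the paper's (up to the sign bookkeeping in which of $F^{\pm}\pm\widetilde{F^{\pm}}$ carries which half, which is harmless for the spans).
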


\begin{proof}
The matrix $T=\left(\begin{smallmatrix}\lambda & -\left(\mu^{+^2}+K\right) & -\mu^{+}\lambda & \mu^{+}+\frac{K}{\mu^{+}}\\
\lambda & -\left(\mu^{-^2}+K\right) & -\mu^{-}\lambda & \mu^{-}+\frac{K}{\mu^{-}}\\
\lambda & -\left(\mu^{+^2}+K\right) & \mu^{+}\lambda & -\left(\mu^{+}+\frac{K}{\mu^{+}}\right)\\
\lambda & -\left(\mu^{-^2}+K\right) & \mu^{-}\lambda & -\left(\mu^{-}+\frac{K}{\mu^{-}}\right)
\end{smallmatrix}\right)$ transforms $\left\{ f_{\star\star}\right\} $ to $\left\{ F^{\pm},\widetilde{F^{\pm}}\right\} $,
so that $\det(T)=-\frac{2\lambda^2K}{\mu^{+}\mu^{-}}\left(\mu^{+}-\mu^{-}\right)^2\left(\mu^{+}+\mu^{-}\right)^2$
implies that $W_{f}=\Span\left\{ F_i^{\pm},\widetilde{F_i^{\pm}}\right\} $
unless $\lambda=0$ or $\mu^{+}=\mu^{-}$.
\end{proof}
Eigenfunctions of $A$ only ``explain'' some of the eigenfunctions
of $B$, and the rest, which we now describe, are of topological nature (the fact that all
eigenfunctions of $B$ are described by Propositions \ref{prop:F-B-ef}
and \ref{prop:cycles} follows from the proof of Theorem \ref{thm:B-decomp}):
\begin{prop}
\label{prop:cycles}For a closed cycle $\gamma=\left(\ell_1,r_1,\ell_2,r_2,\ldots,\ell_{n}=\ell_1,r_{n}=r_1\right)$
in $X$, define $p_{\gamma},n_{\gamma}\colon E\rightarrow\C$
by
\[
\left\{ \begin{alignedat}1p_{\gamma}\left(\ell_i\rightarrow r_i\right) & =p_{\gamma}\left(\ell_{i+1}\leftarrow r_i\right)=1\\
p_{\gamma}\left(\ell_i\leftarrow r_i\right) & =p_{\gamma}\left(\ell_{i+1}\rightarrow r_i\right)=-1,
\end{alignedat}
\right.
\]
 $p_{\gamma}\left(e\right)=0$ for any
edge $e$ which does not appear in $\gamma$, and $n_\gamma=\widetilde{p_\gamma}$. Then:
\begin{enumerate}
\item $Bp_{\gamma}=p_{\gamma}$ and $Bn_{\gamma}=-n_{\gamma}$.
\item $p_{\gamma}\bot n_{\gamma'}$ for any two cycles $\gamma,\gamma'$.
\item For any $f\colon V\rightarrow\C$, $p_{\gamma}$ and $n_{\gamma}$
are orthogonal to $f_{\star\star}$.
\end{enumerate}
\end{prop}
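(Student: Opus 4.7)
The plan is to prove all three parts by direct computation on directed edges, exploiting the antisymmetry $p_\gamma(u\to v)=-p_\gamma(v\to u)$ and the symmetry $n_\gamma(u\to v)=n_\gamma(v\to u)$ which are visible immediately from the definitions. Throughout I will assume $\gamma$ is a simple non-backtracking closed walk: otherwise the defining formulas assign conflicting values to the same directed edge, and such simple cycles are in any case what is needed to span the $\chi(X)$-dimensional topological eigenspace predicted in Theorem~\ref{thm:B-decomp}.

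For part (1), I will compute $(Bp_\gamma)(e)$ and $(Bn_\gamma)(e)$ on each directed edge of $X$. When $e$ lies in $\gamma$, the non-backtracking condition singles out a unique in-cycle extension for the sum defining $B$; for instance the only in-cycle extension of $\ell_i\to r_i$ is $r_i\to\ell_{i+1}$, giving
\[
(Bp_\gamma)(\ell_i\to r_i)=p_\gamma(r_i\to\ell_{i+1})=+1=p_\gamma(\ell_i\to r_i),
\]
while for $n_\gamma$ the same extension gives $(Bn_\gamma)(\ell_i\to r_i)=n_\gamma(r_i\to\ell_{i+1})=-1=-n_\gamma(\ell_i\to r_i)$. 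The sign preservation for $p_\gamma$ reflects that $p_\gamma$ is constant on consecutive forward edges of $\gamma$, whereas the sign flip for $n_\gamma$ is built into its definition at each cycle vertex. Running through the other three types of cycle-edges (the reverse directions and the edges $\ell_{i+1}\leftarrow r_i$) completes the verification on $\gamma$. For an edge $e=u\to v$ outside $\gamma$ whose terminus $v$ lies in $\gamma$, the two nonzero contributions to $(Bp_\gamma)(e)$ come from the two cycle-edges meeting $v$, whose $p_\gamma$-values are $+1$ and $-1$ and therefore cancel; the same mechanism makes $(Bn_\gamma)(e)=0$, since the $n_\gamma$-values on those two cycle-edges are also $+1$ and $-1$.

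For part (2), pairing the two directed copies of each undirected edge and using the antisymmetry of $p_\gamma$ together with the symmetry of $n_{\gamma'}$ collapses the inner product to
\[
\langle p_\gamma,n_{\gamma'}\rangle=\sum_{\{u,v\}}\bigl(p_\gamma(u\to v)+p_\gamma(v\to u)\bigr)\,\overline{n_{\gamma'}(u\to v)}=0.
\]

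For part (3), each of the four inner products $\langle p_\gamma,f_{\star\star}\rangle$ will reduce to a telescoping sum along $\gamma$. For example $\langle p_\gamma,f_{\ell o}\rangle$ receives contributions only from the edges of $\gamma$ in $\overrightarrow{LR}$, namely $\ell_i\to r_i$ (value $+1$) and $\ell_{i+1}\to r_i$ (value $-1$), so it equals $\sum_i f(\ell_i)-\sum_i f(\ell_{i+1})=0$ by cyclicity; the other three inner products collapse to the same telescope or to $\sum_i f(r_i)-\sum_i f(r_i)$. The orthogonality of $n_\gamma$ to each $f_{\star\star}$ follows the same pattern, since the pairing $n_\gamma(\ell_i\to r_i)=+1$ with $n_\gamma(\ell_{i+1}\to r_i)=-1$ produces exactly the same telescope. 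The main obstacle is purely bookkeeping -- keeping the eight $(\pm,\star\star)$-sign conventions straight across the three parts of the statement; there are no analytic subtleties.
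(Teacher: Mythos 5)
Your proof is correct and follows essentially the same route as the paper's: a direct evaluation of $B$ on cycle edges and on edges leaving the cycle for (1), cancellation across the two orientations of each undirected edge (antisymmetry of $p_\gamma$ versus symmetry of $n_{\gamma'}$) for (2), and telescoping sums along $\gamma$ for (3). Your explicit assumption that $\gamma$ is a simple non-backtracking cycle is also implicit in the paper's computation (and is all that is needed, since the cycles used in Theorem \ref{thm:B-decomp} are fundamental cycles of a spanning tree), so there is nothing to add.
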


\begin{proof}
\emph{(1)} We prove that $Bp_{\gamma}|_{\overrightarrow{LR}}=p_{\gamma}|_{\overrightarrow{LR}}$; $\overleftarrow{LR}$ is analogous, and $Bn_{\gamma}=-n_{\gamma}$ follows from \eqref{eq:Btilde}:
\begin{align*}
Bp_{\gamma}(\ell_i\to r_i)&=\sum\nolimits _{\ell_i\neq\ell'\sim r_i}p_{\gamma}\left(\ell'\leftarrow r_i\right)
\\& =p_{\gamma}\left(\ell_{i+1}\leftarrow r_i\right)=1=p_{\gamma}(\ell_i\to r_i),
\end{align*}
 and if $(\ell\to r)\not\in\gamma$ then either $r\ne r_i$ for
any $i$, or $r=r_i$ and $\ell\ne\ell_i,\ell_{i+1}$, hence 
\begin{align*}
Bp_{\gamma}(\ell\to r)&=\sum\nolimits _{\ell\neq\ell'\sim r}p_{\gamma}\left(\ell'\leftarrow r\right)
\\&=\begin{cases}
\begin{array}{c}
0\\
1-1
\end{array} & \begin{array}{c}
r\ne r_i\\
r=r_i,\;\ell\ne\ell_i,\ell_{i+1}
\end{array}=0=p_{\gamma}(\ell\to r).\end{cases}
\end{align*}
\emph{(2)} Clearly if $(\ell\to r)\not\in\gamma\cap\gamma'$ then
\[
p_{\gamma}(\ell\to r)\cdot n_{\gamma'}(\ell\to r)=0=p_{\gamma}(\ell\leftarrow r)\cdot n_{\gamma'}(\ell\leftarrow r)
\]
 and if $(\ell\to r)=\left(\ell_i\rightarrow r_i\right)\in\gamma\cap\gamma'$
then 
\[
p_{\gamma}(\ell_i\rightarrow r_i)\cdot n_{\gamma'}(\ell_i\rightarrow r_i)+p_{\gamma}(\ell_i\rightarrow r_i)\cdot n_{\gamma'}(\ell_i\rightarrow r_i)=1\cdot1+1\cdot(-1)=0,
\]
hence $\langle p_{\gamma},n_{\gamma}\rangle=\sum_{e\in E}p_{\gamma}(e)n_{\gamma}(e)=0$.

\emph{(3)} We treat $f_{\ell i}$, and leave the other cases to the
reader: 
\begin{align*}
\langle p_{\gamma},f_{\ell i}\rangle & =\sum_{e\in E}p_{\gamma}(e)f_{\ell i}(e)=\sum_i\left(p_{\gamma}(\ell_i\leftarrow r_i)f(\ell_i)+p_{\gamma}(\ell_{i+1}\leftarrow r_i)f(\ell_{i+1})\right)
\\&=\sum_i\left(f(\ell_{i+1})-f(\ell_i)\right)=0,
\\
\langle n_{\gamma},f_{\ell i}\rangle & =\langle\widetilde{p_{\gamma}},f_{\ell i}\rangle=\langle p_{\gamma},\widetilde{f_{\ell i}}\rangle=\langle p_{\gamma},f_{\ell i}\rangle=0.\qedhere
\end{align*}
\end{proof}
We can now prove the main Theorem of this section:
\begin{proof}[Proof of Theorem \ref{thm:B-decomp}]
Let us fix an orthonormal basis for $L^2(V)$: 
\[
f_1,\ldots,f_{n-\mE},\tilde{f}_1,\ldots,\tilde{f}_{n-\mE},g_1,\ldots,g_{\mE},h_1,\ldots,h_{\mathcal{N}},
\]
where $\left\{ f_{*}\right\} $ is an orthonormal system with $Af_{j}=\lambda_{j}f_{j}$
(recall $\lambda_i>0$), $\left\{ g_{*}\right\} $ is an orthonormal
basis for $\ker A|_{L}$ and $\left\{ h_{*}\right\} $ for $\ker A|_{R}$.
We denote 
\[
L_{V}^2(E)\overset{{\scriptscriptstyle def}}{=}\Span_{\C}\left\{ \varphi_{\star\star}\,\middle|\,\varphi\in L^2(V)\right\} ,
\]
and observe that $L_{V}^2(E)=\Span\left\{ \varphi_{\star\star}\,\middle|\,\varphi\in\left\{ f_{*},g_{*},h_{*}\right\} \right\} $
since $W_{\tilde{f}_i}=W_{f_i}$. From Lemma \ref{lem:inner-product}(3,4) we further obtain that there is an orthogonal decomposition
$L_{V}^2(E)=\bigoplus_{\varphi\in\left\{ f_{*},g_{*},h_{*}\right\} }W_{\varphi}$,
and our first goal is to construct for each $W_{\varphi}$ an orthonormal
basis in which $B|_{W_{\varphi}}$ has a nice form. We denote $F_{j}^{\pm}=F_{f_{j}}^{\pm}$,
$G_{j}^{\pm}=G_{g_{j}}^{\pm}$ and $H_{j}^{\pm}=F_{h_{j}}^{\pm}$.
\begin{enumerate}
\item For $j=1$, $\lambda_1=\mathfrak{pf}$ and $f_1$ is constant
on each of $R$ and $L$, so that $W_1:=W_{f_1}=\Span\left\{ \mathbf1_{\overrightarrow{LR}},\mathbf1_{\overleftarrow{LR}}\right\} $.
The basis $\mathbf1_{\overrightarrow{LR}},\mathbf1_{\overleftarrow{LR}}$ is orthonormal up to scaling, and in it $B|_{W_1}=\left(\begin{smallmatrix} & K\\
k
\end{smallmatrix}\right)$. The functions $F_1^{+}$ and $\widetilde{F_1^{+}}$ (which are scaling
of $\sqrt{k}\cdot\mathbf1_{\overrightarrow{LR}}\pm\sqrt{K}\cdot\mathbf1_{\overleftarrow{LR}}$)
form a non-orthogonal $B|_{W_1}$-eigenbasis, whereas $F_1^{-}=\widetilde{F_1^{-}}=0$.
\item For $2\leq j\leq n-\mE$ and $f=f_{j}$, $W_{j}:=W_{f_{j}}$ is $4$-dimensional by Lemma \ref{lem:inner-product}(5). We split into two cases:\\
\textbf{(2a)} $\lambda_{j}\in\left[\sqrt{K}-\sqrt{k},\sqrt{K}+\sqrt{k}\right]$, which is equivalent to $|\mu|=\sqrt[4]{Kk}$. It is tempting to start with $F_{j}^{\pm},\widetilde{F_{j}^{\pm}}$
which form a $B$-eigenbasis for $W_{j}$, except for the special
cases $\lambda_{j}=\sqrt{K}\pm\sqrt{k}$ (in which $F_{j}^{+}=F_{j}^{-}$
and $B\big|_{W_{j}}$ is non-diagonalizable). It turns out
however
that it is better to fix $\mu=\mu_{\lambda_{j}}^{+}$ and $F=F_{j}^{+}$,
and define 
\[
\mathscr{B}=\left\{ b_1=F\big|_{\overrightarrow{LR}}\,,\quad b_2=\overleftrightarrow{F}\big|_{\overrightarrow{LR}}\,,\quad b_3=F\big|_{\overleftarrow{LR}}\,,\quad b_{4}=\overleftrightarrow{F}\big|_{\overleftarrow{LR}}\right\} ,
\]
where $\overleftrightarrow{\square}$ inverts edges, namely, $\overleftrightarrow{F}(\ell\rightarrow r)=F\left(\ell\leftarrow r\right)$
and vice-versa (in particular, $\overleftrightarrow{f_{\ell o}}=f_{\ell i}$ and $\overleftrightarrow{f_{ro}}=f_{ri}$). It is immediate from Proposition \ref{prop:F-B-ef}
that $Bb_1=\mu b_3$ and $Bb_3=\mu b_1$; computations of the same spirit as in its proof show that $Bb_2=(K-1)b_3+\tfrac{K}{\mu}b_{4}$ and $Bb_{4}=(k-1)b_1+\tfrac{k}{\mu}b_2$,
so that $\left[B\big|_{W_{j}}\right]_{\mathscr{B}}=\left(\begin{smallmatrix}0 & 0 & \mu & k-1\\
0 & 0 & 0 & k/\mu\\
\mu & K-1 & 0 & 0\\
0 & K/\mu & 0 & 0
\end{smallmatrix}\right)$. So far, we did not use the assumption $|\mu|=\sqrt[4]{Kk}$. Using Lemma \ref{lem:inner-product} and \eqref{eq:mu-by-lambda} we have:
\begin{align*}
\left\Vert b_{1}\right\Vert ^{2} & =\left\Vert F_{j}^{+}\big|_{\overrightarrow{LR}}\right\Vert ^{2}=\left\Vert \lambda f_{\ell o}-(\mu^{2}+K)f_{ri}\right\Vert ^{2}\\
 & =\lambda^{2}\left\Vert f_{\ell o}\right\Vert ^{2}-2\Re\left(\lambda(\mu^{2}+K)\left\langle f_{\ell o},f_{ri}\right\rangle \right)+\left|\mu^{2}+K\right|^{2}\left\Vert f_{ri}\right\Vert ^{2}\\
 & =\lambda^{2}\cdot\tfrac{K+1}{2}-2\Re\left(\lambda(\mu^{2}+K)\tfrac{\lambda}{2}\right)+\left|\mu^{2}+K\right|^{2}\cdot\tfrac{k+1}{2}\\
 & =\tfrac{1}{2}\big(\mu-\tfrac{1}{\mu}\big)\big(\mu+\tfrac{K}{\mu}\big)\left((K+\overline{\mu}^{2}-1)k-\mu^{2}\right)\\
 & = \tfrac{1}{2}(\mu-\tfrac1{\mu})(\mu+\tfrac{k}{\mu})(\mu+\tfrac{K}{\mu})(\tfrac{Kk}{\mu}-\mu),
\end{align*}
where the last equality is the first time we have used $|\mu|=\sqrt[4]{Kk}$. By its definition we have $\left\Vert b_{4}\right\Vert=\left\Vert b_{1}\right\Vert$, and computing similarly gives $b_i\bot b_j$ for $i\neq j$, and $\left\Vert b_{2}\right\Vert^2=\left\Vert b_{3}\right\Vert^2=\sqrt{{K}/{k}}\left\Vert b_{1}\right\Vert^2$. Thus, $\mathscr{B}'=\left\{ \sqrt[4]{K}b_1,\sqrt[4]{k}b_2,\sqrt[4]{k}b_3,\sqrt[4]{K}b_{4}\right\} $
is an orthonormal basis up to scaling, and $\left[B\big|_{W_{j}}\right]_{\mathscr{B}'}$
is the matrix in \eqref{eq:B_W_Ram_case}, as $\mu^{-}=-\overline{\mu}$.\\
\textbf{(2b)} For $\lambda_{j}\notin\left[\sqrt{K}-\sqrt{k},\sqrt{K}+\sqrt{k}\right]$
the eigenvalues $\pm \mu^\pm$ are distinct, and $W_{j}=\Span\left\{ F_{j}^{\pm},\widetilde{F_{j}^{\pm}}\right\} $. From Proposition
\ref{prop:F-B-ef} we see that $B\cdot(F_{j}^{\pm}|_{\overrightarrow{LR}})=\mu^{\pm}F_{j}^{\pm}|_{\overleftarrow{LR}}$,
so that 
\[
\left[B|_{W_{j}}\right]_{\mathscr{B}}=\left(\begin{smallmatrix}0 & 0 & \mu_{j}^{+} & 0\\
0 & 0 & 0 & \mu_{j}^{-}\\
\mu_{j}^{+} & 0 & 0 & 0\\
0 & \mu_{j}^{-} & 0 & 0
\end{smallmatrix}\right),\qquad\text{for}\qquad\mathscr{B}=\left\{ {b_1=F_{j}^{+}|_{\overrightarrow{LR}},b_2=F_{j}^{-}|_{\overrightarrow{LR}},\atop b_3=F_{j}^{+}|_{\overleftarrow{LR}},b_{4}=F_{j}^{-}|_{\overleftarrow{LR}}\phantom{,}}\right\} .
\]
The basis $\mathscr{B}$ is not orthonormal, but $\left\{ b_1,b_2\right\} \bot\left\{ b_3,b_{4}\right\} $
as they are supported on disjoint sets of edges. Gram-Schmidt process then gives a change-of-basis matrix
$P=\left(\begin{smallmatrix}x & * & 0 & 0\\
 & y & 0 & 0\\
 &  & z & *\\
 &  &  & w
\end{smallmatrix}\right)$ which transforms $\mathscr{B}$ to an orthonormal basis $\mathscr{B}'$,
and 
\begin{equation}
\left[B|_{W_{j}}\right]_{\mathscr{B}'}=P\left[B|_{W_{j}}\right]_{\mathscr{B}}P^{-1}=\left(\begin{smallmatrix}0 & 0 & \frac{x}{z}\mu_{j}^{+} & *\\
0 & 0 & 0 & \frac{y}{w}\mu_{j}^{-}\\
\frac{z}{x}\mu_{j}^{+} & * & 0 & 0\\
0 & \frac{w}{y}\mu_{j}^{-} & 0 & 0
\end{smallmatrix}\right)\label{eq:B_gen_B'}
\end{equation}
has the desired form. Finally, since $\mathscr{B}'$ is orthonormal,
every entry of $\left[B|_{W_{j}}\right]_{\mathscr{B}'}$ is bounded
by $\left\Vert B|_{W_{j}}\right\Vert _2\leq\left\Vert B\right\Vert _2=K$,
and in addition $\left|\mu_{j}^{\pm}\right|\geq1$ by (\ref{eq:mu-range}),
so that $|\frac{x}{z}|,|\frac{z}{x}|,|\frac{y}{w}|,|\frac{w}{y}|$
and the two $*$ in (\ref{eq:B_gen_B'}) are bounded by $K$.
\item Let $g=g_{j}\in\ker A|_{L}$. As $\left\Vert g|_{L}\right\Vert =\left\Vert g\right\Vert =1$ and $\mu^{-}=i\sqrt{K}$, taking 
\[
\mathscr{B}=\left\{ b_1=G_{j}^{-}|_{\overrightarrow{LR}}=\tfrac{(K-k)}{\sqrt{K}}ig_{\ell o},b_2=G_{j}^{-}|_{\overleftarrow{LR}}=(K-k)g_{\ell i}\right\} 
\]
we have $\left[B|_{W_{g}}\right]_{\mathscr{B}}=\left(\begin{smallmatrix} & i\sqrt{K}\\
i\sqrt{K}
\end{smallmatrix}\right)$ by Proposition \ref{prop:F-B-ef}, and
\[
\left\Vert b_1\right\Vert =(K-k)\sqrt{1+1/K},\qquad b_1\bot b_2,\qquad\left\Vert b_2\right\Vert =(K-k)\sqrt{K+1}
\]
by Lemma \ref{lem:inner-product}(2). Thus, $\left[B|_{W_{g}}\right]_{\mathscr{B}'}=\left(\begin{smallmatrix} & i\\
iK
\end{smallmatrix}\right)$ for the orthonormal basis $\mathscr{B}'=\{b_1/\left\Vert b_1\right\Vert ,b_2/\left\Vert b_2\right\Vert \}$.
\item Similarly, for $h=h_{j}\in\ker A|_{R}$ Proposition \ref{prop:F-B-ef}
and $\mu^{+}=i\sqrt{k}$ give $\left[B|_{W_{h}}\right]_{\mathscr{B}}=\left(\begin{smallmatrix} & i\sqrt{k}\\
i\sqrt{k}
\end{smallmatrix}\right)$ where 
\[
\mathscr{B}=\left\{ b_1=H_{j}^{+}|_{\overrightarrow{LR}}=\left(K-k\right)g_{ri},b_2=H_{j}^{+}|_{\overleftarrow{LR}}=\tfrac{k-K}{\sqrt{k}}ig_{ro}\right\} ,
\]
and $\left\Vert h|_{R}\right\Vert =1$ together with (\ref{eq:riri})
imply 
\[
\left\Vert b_1\right\Vert =(K-k)\sqrt{k+1},\qquad b_1\bot b_2,\qquad\left\Vert b_2\right\Vert =(K-k)\sqrt{1+1/k},
\]
so that $\left[B|_{W_{h}}\right]_{\mathscr{B}'}=\left(\begin{smallmatrix} & ik\\
i
\end{smallmatrix}\right)$ for the normalized basis $\mathscr{B}'=\{b_1/\left\Vert b_1\right\Vert ,b_2/\left\Vert b_2\right\Vert \}$.
\item Combining (1)-(4) we obtain a block-diagonal form for $B|_{L_{V}^2(E)}$,
and we now also see that its dimension is 
\[
\dim L_{V}^2\left(E\right)=2+4\left(n-\mE-1\right)+2\mE+2\mathcal{N}=2\left(\left|V\right|-1\right).
\]
Next, we choose a maximal spanning tree in $X$, and let $\Gamma$
be the set of closed cycles with one edge outside of it, so that $\left|\Gamma\right|=\frac{|E|}2-|V|+1=\chi\left(X\right)$.
Denoting $P=\Span\left\{ p_{\gamma}\,\middle|\,\gamma\in\Gamma\right\} $
and $N=\Span\left\{ n_{\gamma}\,\middle|\,\gamma\in\Gamma\right\} $,
we have $\dim P=\dim N=\left|\Gamma\right|$. In addition, $P$, $N$
and $L_{V}^2(E)$ are mutually orthogonal by Proposition \ref{prop:cycles},
so that $P\oplus N=L_{V}^2\left(E\right)^{\bot}$ by dimension considerations.
Proposition \ref{prop:cycles} also shows that $B|_p=I$ and $B|_{N}=-I$,
so that any orthonormal bases for $P$ and $N$ give together an orthonormal
basis for $L_{V}^2\left(E\right)^{\bot}$ in which $B|_{L_{V}^2\left(E\right)^{\bot}}=\mathrm{diag}\left(I_{\left|\Gamma\right|},-I_{\left|\Gamma\right|}\right)$. 
\end{enumerate}
\end{proof}
We remark that for non-regular graphs sometimes one is interested
in the Markov operator $\mathcal{M}_{X}=D^{-1}A$ or the Laplace operator
$\mL_{X}=D-A$, where $D$ is the degree operator $\left(Df\right)(v)=\deg\left(v\right)f\left(v\right)$.
For biregular graphs however, the spectrum of $A$ determines that
of $\mathcal{M}_{X}$ and $\mL_{X}$. We finish the section
with a nice exercise which shows in particular that the Laplace spectrum
does distinguish between adj-Ramanujan and Ramanujan bigraphs.
\begin{xca}
\begin{enumerate}
\item The Laplace spectrum of $X$ is 
\begin{align*}
\Spec\mL_{X}=\left\{ 0\right\} &\cup\left\{ \tfrac{K+k+2\pm\sqrt{(K-k)^2+4\lambda_{j}^2}}2\,\middle|\,2\leq j\leq n-\mE\right\} 
\\&\cup\left\{ k+1\right\} ^{\mathcal{N}}\cup\left\{ K+1\right\} ^{\mE}\cup\left\{ K+k\right\} .
\end{align*}
\item $X$ is adj-Ramanujan if and only if
\begin{align*}
\Spec\mL_{X}\subseteq\{0\}&\cup\left[\tfrac{K+k+2-\sqrt{(K-k)^2+4(\sqrt{K}+\sqrt{k})^2}}2,\tfrac{K+k+2-\sqrt{(K-k)^2+4(\sqrt{K}-\sqrt{k})^2}}2\right]\\
&\cup\left\{ k+1\right\} \cup\left\{ K+1\right\} \\&\cup\left[\tfrac{K+k+2+\sqrt{(K-k)^2+4(\sqrt{K}-\sqrt{k})^2}}2,\tfrac{K+k+2+\sqrt{(K-k)^2+4(\sqrt{K}+\sqrt{k})^2}}2\right]
\\&\cup\left\{ K+k\right\} ,
\end{align*}
and it is Ramanujan if and only if in addition $K+1\notin\Spec\mL_{X}$.
\end{enumerate}
\end{xca}

\section{Combinatorics} \label{sec:Combinatorics}

We continue with $X=(L\sqcup R,E)$ being a connected $(K\!+\!1,k\!+\!1)$-bigraph with $|L|=n$,  $\mE=\mE_X = \dim\ker\left(A_X|_{L}\right)$, and $\mathfrak{pf}=\lambda_1>\lambda_2\geq\ldots\geq\lambda_{n-\mE_X}$ the positive eigenvalues of $A_X$.

\subsection{Pseudorandomness and biexpansion} \label{subsec:Clash-counting}

Let $E\left(S,T\right)$ denote the set of edges connecting two
sets of vertices $S$ and $T$ in a graph. The famous \emph{Expander
Mixing Lemma }is a simple yet powerful tool which bounds the deviation
of $\left|E\left(S,T\right)\right|$ from its pseudorandom expectation
(see \cite{HLW06}). In our case it takes this form:
\begin{thm}[EML]
If $X=\left(L\sqcup R,E\right)$ is a $(K\!+\!1,k\!+\!1)$-bigraph
with $\lambda_2\leq\varepsilon$, then 
\begin{equation}
\left|\left|E\left(S,T\right)\right|-\tfrac{k+1}{|L|}\left|S\right|\left|T\right|\right|\leq\varepsilon\sqrt{\left|S\right|\big(1-\tfrac{|S|}{|L|}\big)\left|T\right|\big(1-\tfrac{|T|}{|R|}\big)}\label{eq:EML}
\end{equation}
for any $S\subseteq L$ and $T\subseteq R$.
\end{thm}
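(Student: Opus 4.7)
The plan is to run the standard spectral argument underlying the Expander Mixing Lemma, adapted to the bigraph situation where the ``trivial'' subspace is two-dimensional rather than one-dimensional. First I would express the edge count as an adjacency inner product,
\[
|E(S,T)| = \langle A\mathbf{1}_T,\mathbf{1}_S\rangle,
\]
and then orthogonally decompose the indicator functions along the trivial subspace $\mathrm{Span}\{\mathbf{1}_L,\mathbf{1}_R\}$, which coincides with $\mathrm{Span}\{f_1,\widetilde{f_1}\}$ by the analysis in Section~\ref{sec:Spectral-analysis}. Writing
\[
\mathbf{1}_S = \tfrac{|S|}{|L|}\mathbf{1}_L + g,\qquad \mathbf{1}_T = \tfrac{|T|}{|R|}\mathbf{1}_R + h,
\]
with $g\perp\mathbf{1}_L$ supported on $L$ and $h\perp\mathbf{1}_R$ supported on $R$, a direct computation gives $\|g\|^2 = |S|(1-|S|/|L|)$ and $\|h\|^2 = |T|(1-|T|/|R|)$.

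Next I would compute the ``trivial'' contribution. Using $A\mathbf{1}_R = (K+1)\mathbf{1}_L$ and $A\mathbf{1}_L = (k+1)\mathbf{1}_R$ (these are just degree identities) together with $|R|=\tfrac{K+1}{k+1}|L|$, one obtains
\[
\big\langle A\big(\tfrac{|T|}{|R|}\mathbf{1}_R\big),\mathbf{1}_S\big\rangle = \tfrac{k+1}{|L|}|S||T|,
\]
which is exactly the pseudorandom expectation. The cross terms vanish: $\langle A h,\mathbf{1}_L\rangle = \langle h,A\mathbf{1}_L\rangle = (k+1)\langle h,\mathbf{1}_R\rangle = 0$, and symmetrically $\langle A\mathbf{1}_R,g\rangle = 0$. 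Therefore the ``error'' is precisely $\langle Ah, g\rangle$.

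The main step is bounding $|\langle Ah,g\rangle|$. Both $g$ and $h$ lie in the orthogonal complement of the trivial subspace $\mathrm{Span}\{\mathbf{1}_L,\mathbf{1}_R\}$, which $A$ preserves; on this complement the operator norm of $A$ equals $\lambda_2$, since the remaining eigenvalues of $A$ are $\pm\lambda_2,\pm\lambda_3,\ldots$ together with the kernel contributions. By hypothesis $\lambda_2\le\varepsilon$, and Cauchy--Schwarz gives
\[
|\langle Ah,g\rangle|\;\le\;\|Ah\|\,\|g\|\;\le\;\varepsilon\,\|h\|\,\|g\|\;=\;\varepsilon\sqrt{|S|\big(1-\tfrac{|S|}{|L|}\big)|T|\big(1-\tfrac{|T|}{|R|}\big)}.
\]

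I don't anticipate any real obstacle: the only subtlety compared with the regular case is being careful that the trivial subspace is two-dimensional for a bigraph, so the ``correct'' projection is onto $\mathrm{Span}\{\mathbf{1}_L,\mathbf{1}_R\}$ rather than onto the constants of $V$. Once that is done, the rest is spectral bookkeeping already set up in Section~\ref{sec:Spectral-analysis}.
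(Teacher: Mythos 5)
Your proof is correct and is essentially the argument the paper sketches: expanding $|E(S,T)|=\langle A\one_S,\one_T\rangle$ and isolating the contribution of the $\pm\mathfrak{pf}$-eigenvectors, whose span is exactly $\mathrm{Span}\{\one_L,\one_R\}$, then bounding the rest by $\lambda_2\le\varepsilon$ via Cauchy--Schwarz. Your projection-based phrasing and the paper's eigenbasis expansion are the same spectral argument, so there is nothing further to reconcile.
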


\begin{proof}[Sketch of Proof]
Denoting by $\one_{S}$ the characteristic function of $S$, expand
$\left|E\left(S,T\right)\right|=\left\langle A\one_{S},\one_{T}\right\rangle $
in an eigenbasis for $A$, and isolate the contributions of the $\pm\mathfrak{pf}$-eigenvectors.
\end{proof}
For Ramanujan bigraphs we have $\varepsilon=\sqrt{K}\!+\!\sqrt{k}$,
and the EML is useful when $K\approx k$, but becomes disappointing
when $K\gg k$ and $\sqrt{K}+\sqrt{k}$ approaches the trivial eigenvalue
$\mathfrak{pf}=\sqrt{(K+1)(k+1)}$. However, the EML only takes advantage
of the fact that the nontrivial spectrum lies in $[{-\sqrt{K}\!-\!\sqrt{k}},{\sqrt{K}\!+\!\sqrt{k}}]$,
and not of its concentration in two narrower sub-strips (together
with $0$). We suggest here a different way to think of pseudorandomness
in bigraphs, of a similar nature, but which takes into account the
difference between Ramanujan and weakly-/adj-Ramanujan bigraphs. We
first define a notion of biexpander, which is a weakened version of
the (full) Ramanujan property:
\begin{defn}\label{def:biexp}
A $(K\!+\!1,k\!+\!1)$-bigraph $X$ is an
\emph{$\varepsilon$-biexpander} if $\varepsilon<\sqrt{K}$, $\mE_{X}=0$ and 
\[
\Spec\left(A_{X}\right)\subseteq\left\{ 0\right\} \cup\pm\left[\sqrt{K}-\varepsilon,\sqrt{K}+\varepsilon\right]\cup\left\{ \pm\sqrt{(K+1)(k+1)}\right\} .
\]
\end{defn}

Note that by \eqref{eq:aram-def} and \eqref{eq:NB-adj-ram}, $X$ is NB-Ramanujan if and only if it is a $\sqrt{k}$-biexpander. Ramanujan bigraphs are in fact optimal biexpanders, in the sense that there is no infinite family of $\varepsilon$-biexpanding $(K\!+\!1,k\!+\!1)$-bigraphs with $\varepsilon<\sqrt{k}$ (this follows from the generalized Alon-Boppana theorem \cite{greenberg1995spectrum,grigorchuk1999asymptotic}). We remark that \cite{brito2018spectral} shows that random bigraphs are almost-Ramanujan (in the strong sense), namely, for any $\varepsilon>0$, a random bigraph is a $\sqrt{k}+\varepsilon$-biexpander with probability approaching one as the graph size grows to infinity.

Thinking of $A$ as mapping every element of $L$ to $K\negmedspace+\!1$
elements of $R$, we study the number of \emph{clashes} arising from
two subsets $S,T\subseteq L$, namely, pairs of distinct edges $e,e'$
which leave $S$ and $T$ respectively, and end in the same vertex:
\[
Cl\left(S,T\right)=\left\{ \left(e,e'\right)\in E^2\,\middle|\,\begin{matrix}e\in S\times R,\ e'\in T\times R\\
e\neq e',\left|e\cap e'\cap R\right|=1
\end{matrix}\right\} .
\]

\begin{thm}[Clash Counting Lemma]
\label{thm:clash}If $X$ is a $(K\!+\!1,k\!+\!1)$-regular $\varepsilon$-biexpander
with $\left|L\right|=n$, then
\begin{align*}
\left|\vphantom{\frac{a}{b}}\left|Cl\left(S,T\right)\right|-\left(\tfrac{Kk+k+1}{n}\left|S\right|\left|T\right|-\left|S\cap T\right|\right)\right| \\ \leq3\sqrt{K}\varepsilon\sqrt{\left|S\right|\big(1-\tfrac{|S|}{n}\big)\left|T\right|\big(1-\tfrac{|T|}{n}\big)},
\end{align*}
 and
\begin{align*}
\left|\vphantom{\frac{a}{b}}\left|Cl\left(S,T\right)\right|-\left(\tfrac{Kk+k+1-\varepsilon^{2}}{n}\left|S\right|\left|T\right|+(\varepsilon^{2}-1)\left|S\cap T\right|\right)\right| \\ \leq2\sqrt{K}\varepsilon\sqrt{\left|S\right|\big(1-\tfrac{|S|}{n}\big)\left|T\right|\big(1-\tfrac{|T|}{n}\big)}
\end{align*}
for any sets $S,T\subseteq L$.
\end{thm}

\begin{proof}
As each $\left(e,e'\right)\in Cl(S,T)$ constitutes a non-backtracking
path of length two from $S$ to $T$, we have 
\begin{align*}
\left|Cl\left(S,T\right)\right|&=\left\langle \left(A^2-(K+1)\right)\one_{S},\one_{T}\right\rangle 
\\&=\left\langle \left(A^2-K-\varepsilon^2\right)\one_{S},\one_{T}\right\rangle +(\varepsilon^2-1)\left|S\cap T\right|.
\end{align*}
We denote $\mathfrak{P}=\Span\{\one_{L},\one_{R}\}=\Span\{f_1,\tilde{f}_1\}$
and $U=\Span\left\{ f_2,\tilde{f}_2\ldots,f_{n},\tilde{f}_{n}\right\} $,
obtaining an orthogonal $A$-stable decomposition $L^2(V)=\mathfrak{P}\oplus\ker A\oplus U$.
Decomposing accordingly $\one_{S}=\mathbb{P}_{\mathfrak{P}}(\one_{S})+\mathbb{P}_{\ker A}(\one_{S})+\mathbb{P}_{U}(\one_{S})$
we observe first that $\mE_{X}=0$ implies $\mathbb{P}_{\ker A}(\one_{S})=0$.
Since $\mathbb{P}_{\mathfrak{P}}(\one_{S})=\frac{|S|}{n}\one_{L}$
and $\Spec A|_{\mathfrak{P}}=\pm\mathfrak{pf}_{A}$ we have 
\begin{align*}
&\left\langle \left(A^2-K-\varepsilon^2\right)\mathbb{P}_{\mathfrak{P}}(\one_{S}),\one_{T}\right\rangle 
\\=&\left(Kk+k+1-\varepsilon^2\right)\left\langle \tfrac{|S|}{n}\one_{L},\one_{T}\right\rangle =\tfrac{Kk+k+1-\varepsilon^2}{n}\left|S\right|\left|T\right|,
\end{align*}
and since $\Spec A|_{U}\subseteq[\sqrt{K}-\varepsilon,\sqrt{K}+\varepsilon]$
and $\left\Vert \mathbb{P}_{U}\one_{S}\right\Vert =\sqrt{\left|S\right|-\Vert\mathbb{P}_{\mathfrak{P}}\one_{S}\Vert^2}$
we have 
\begin{align*}
&\left|\left|Cl\left(S,T\right)\right|-\left(\tfrac{Kk+k+1-\varepsilon^2}{n}\left|S\right|\left|T\right|+(\varepsilon^2-1)\left|S\cap T\right|\right)\right| 
\\
 =&\left|\left\langle \left(A^2-K-\varepsilon^2\right)\mathbb{P}_{U}(\one_{S}),\one_{T}\right\rangle \right|\\
  \leq&\left\Vert \left(A^2-K-\varepsilon^2\right)\big|_{U}\right\Vert \left\Vert \mathbb{P}_{U}(\one_{S})\right\Vert \left\Vert \mathbb{P}_{U}(\one_{T})\right\Vert \\
  \leq&2\sqrt{K}\varepsilon\sqrt{|S|(1-\tfrac{|S|}{n})}\sqrt{|T|(1-\tfrac{|T|}{n})}.
\end{align*}
This is the second bound in the Theorem. To obtain the first one, repeat the same proof replacing $\varepsilon^2$ with zero throughout. Using $\left\Vert \left(A^2-K\right)\big|_{U}\right\Vert \leq(2\sqrt{K}\varepsilon+\varepsilon^2)$
in the last step, we obtain
\begin{align*}
&\left|\left|Cl\left(S,T\right)\right|-\big(\tfrac{Kk+k+1}{n}\left|S\right|\left|T\right|-\left|S\cap T\right|\big)\right| 
\\\leq&(2\sqrt{K}\varepsilon+\varepsilon^2)\sqrt{\left|S\right|\big(1-\tfrac{|S|}{n}\big)\left|T\right|\big(1-\tfrac{|T|}{n}\big)}.\\
  \leq&3\sqrt{K}\varepsilon\sqrt{\left|S\right|\big(1-\tfrac{|S|}{n}\big)\left|T\right|\big(1-\tfrac{|T|}{n}\big)}.\qedhere
\end{align*}
\end{proof}

\begin{rem}
(1) In the Ramanujan case the Clash Counting Lemma reads
\begin{eqnarray}
\left|\vphantom{\frac{a}{b}}\left|Cl\left(S,T\right)\right|-\left(\tfrac{Kk+1}{n}\left|S\right|\left|T\right|+(k-1)\left|S\cap T\right|\right)\right|
\nonumber\\ 
\leq2\sqrt{Kk}\sqrt{\left|S\right|\big(1-\tfrac{|S|}{n}\big)\left|T\right|\big(1-\tfrac{|T|}{n}\big)}.\label{eq:clash-ram}
\end{eqnarray}
For an adj-Ramanujan graph, we encounter the additional term 
\[
\left\langle \left(A^2-K-\varepsilon^2\right)\mathbb{P}_{\ker A}(\one_{S}),\one_{T}\right\rangle =(K+k)\left\langle \mathbb{P}_{\ker A}(\one_{S}),\mathbb{P}_{\ker A}(\one_{T})\right\rangle ,
\]
which (without additional knowledge on either \textbf{$\one_{S}$}
or $\one_{T}$) would enlarge the error term $2\sqrt{Kk}$ in (\ref{eq:clash-ram})
to $K+k$. 
(2) A converse for the Expander Mixing Lemma was proved in \cite{BiluLinial2006},
showing that satisfying (\ref{eq:EML}) is in fact equivalent to
being an $\varepsilon$-expander, up to a logarithmic factor. It is
shown in \cite{Morovits2023DirectedExpanderGraphs} that the same
is true for clash counting and biexpansion:
\begin{thm}[\cite{Morovits2023DirectedExpanderGraphs}]\label{thm:convclash}
If a $(K\!+\!1,k\!+\!1)$-regular bigraph $X=(L\sqcup R,E)$ satisfies
\[
\left|\left|Cl\left(S,T\right)\right|-\big(\tfrac{Kk+k+1}{|L|}\left|S\right|\left|T\right|-\left|S\cap T\right|\big)\right|\leq\sqrt{K}\alpha\sqrt{|S||T|}
\]
for any $S,T\subseteq L$, then $X$ is a $O\left(\alpha\left(1+\log\frac{Kk}{\alpha}\right)\right)$-biexpander.
\end{thm}
\end{rem}

\subsection{Sparsification}\label{subsec:sparse}

Another perspective on expansion is given by the notion of spectral
sparsification \cite{spielman2011spectral}. Roughly speaking, a graph
$X$ is called a \emph{sparsifier} of a graph $Y$ with the same set
of vertices if $\alpha A_{X}-A_{Y}$ is small in an appropriate sense,
where $\alpha>1$ is some scaling factor\footnote{We focus here on sparsification of the adjacency operator, as it relates
to pseudorandomess. Other works (e.g.\ \cite{spielman2011spectral})
address the associated Laplace or Markov operators, which relate to
cut sizes and the behavior of random walk, respectively.}. For example, if $X=\left(L\sqcup R,E\right)$ is a $k$-regular
bigraph on $2n$ vertices and $Y=\left(L\sqcup R,L\times R\right)$
is the complete bipartite graph on the same vertices, then it is easy
to see that $\left\Vert \frac{n}{k}A_{X}-A_{Y}\right\Vert \leq\varepsilon n$
if and only if $\lambda_2\left(A_{X}\right)\leq\varepsilon k$; thus, sparsification
of the complete graph is equivalent to being an expander. However,
in the $(K\!+\!1,k\!+\!1)$-biregular case a Ramanujan bigraph $X=(L\sqcup R,E)$ is \emph{not
}a good sparsifier for the complete bipartite graph $Y=(L\sqcup R,L\times R)$.
The reason is the kernel of $A$ on the left side: we require that
$A_{X}$ be injective on $L$, and assuming $K>k$ the other eigenvalues
are bounded away from zero, but for $Y$ the opposite is true, as the kernel of $A|_L$ is almost maximal: $\mE\left(Y\right)=|L|-1$. This raises the question: what
do biexpanders and Ramanujan bigraphs sparsify? A possible answer is given by finite geometry:
\begin{prop}\label{prop:incidence}
Let $k$ be a prime power, $d\geq3$, and $\mathscr{P}^{d,k}=\left(\mL\sqcup\mP,E\right)$
the incidence bigraph of lines ($\mL$) and planes ($\mP$)
in $\F_{k}^{d+1}$. Then $\mathscr{P}^{d,k}$ is a $\left(K\!+\!1,k\!+\!1\right)$-regular
zero-biexpander ($\varepsilon=0$) with $n=|\mL|=\frac{k^{d+1}-1}{k-1}$
and $K=\frac{n-k-1}{k}$.
\end{prop}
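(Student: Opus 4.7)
The plan is to compute everything explicitly from elementary linear algebra over $\F_k$; the whole statement reduces to diagonalizing $A^2|_{\mathcal{L}}$, which turns out to be of the form $K\cdot I + J$ where $J$ is all-ones.

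First I would verify the degrees and sizes. A plane (2-dimensional subspace of $\F_k^{d+1}$) contains $\frac{k^2-1}{k-1} = k+1$ lines, so the right-degree is $k+1$. Dually, the number of 2-dimensional subspaces of $\F_k^{d+1}$ containing a fixed line $\ell$ equals the number of 1-dimensional subspaces of $\F_k^{d+1}/\ell \cong \F_k^d$, which is $\frac{k^d-1}{k-1}$; hence $K+1 = \frac{k^d-1}{k-1}$. The number of lines is $n = \frac{k^{d+1}-1}{k-1}$, and a quick manipulation gives $K+1 = \frac{n-k-1}{k} + 1$, matching the claim. The hypothesis $d\geq 3$ ensures $K>k$.

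Next I would compute $A^2|_{\mathcal{L}}$. For lines $\ell,\ell' \in \mathcal{L}$, the entry $(A^2)_{\ell,\ell'}$ equals the number of planes containing both $\ell$ and $\ell'$. If $\ell = \ell'$ this is $K+1$. If $\ell \neq \ell'$, then $\ell + \ell'$ is a 2-dimensional subspace containing both, and any plane containing both must contain this sum, so it must equal $\ell+\ell'$: there is exactly one such plane. Therefore
\[
A^2|_{\mathcal{L}} = K\cdot I_n + J_n,
\]
where $J_n$ is the $n\times n$ all-ones matrix.

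The spectrum of $J_n$ is $\{n, 0^{(n-1)}\}$, so the spectrum of $A^2|_{\mathcal{L}}$ is $\{K+n, K^{(n-1)}\}$. One checks $K + n = (K+1)(k+1) = \mathfrak{pf}^2$ (a short computation with the formulas above), so the eigenvalue $K+n$ is the trivial Perron--Frobenius square, and all $n-1$ remaining positive eigenvalues of $A_X$ equal exactly $\sqrt{K}$. In particular $A|_{\mathcal{L}}$ is invertible, giving $\mathcal{E}_X = 0$, and the nontrivial spectrum of $A_X$ on $\mathcal{L}$ is concentrated at $\pm\sqrt{K}$. This is precisely the $\varepsilon = 0$ biexpander condition. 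There is no real obstacle here; the only thing to double-check is the bookkeeping identity $K+n = (K+1)(k+1)$, which follows from $(k+1)\frac{k^d-1}{k-1} = \frac{k^{d+1}+k^d-k-1}{k-1} = \frac{k^d(k+1)-(k+1)}{k-1}$, matching $K+n$ as computed above.
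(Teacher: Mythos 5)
Your proposal is correct and follows essentially the same route as the paper: the identity $A^{2}\big|_{\mL}=K\cdot I+J$ is just a matrix restatement of the paper's computation that every line lies in $K+1$ planes and any two distinct lines lie in a unique plane, which the paper uses to show that zero-sum functions on $\mL$ are $K$-eigenfunctions of $A^{2}\big|_{\mL}$. Your additional checks (the bookkeeping $K+n=(K+1)(k+1)$ and the injectivity of $A\big|_{\mL}$ giving $\mathcal{E}_X=0$) are exactly what the paper leaves implicit, and they are correct.
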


\begin{proof}
First, $K+1=\frac{n-1}{k}=\frac{k^{d}-1}{k-1}$ is indeed the number
of planes containing a fixed line. By the discussion at the beginning
of Section \ref{sec:Spectral-analysis}, it remains to show that $f\colon\mL\rightarrow\R$
with $\sum_{\ell\in\mL}f\left(\ell\right)=0$ satisfies $A^2f=Kf$.
Since every line is contained in $K+1$ planes, and every pair of
different lines is contained in a unique plane, indeed
\begin{align*}
\left(A^{2}f\right)\left(\ell\right) & =\sum_{\ell\subset p\in\mP}\left(Af\right)\left(p\right)=\sum_{\ell\subset p}\sum_{\ell'\subset p}f\left(\ell'\right)=\sum_{\ell'\in\mL}\left|\left\{ p\in\mP\,\middle|\,\ell,\ell'\in p\right\} \right|f\left(\ell'\right)\\
 & =(K+1)f(\ell)+\sum_{\ell'\neq\ell}f(\ell')=Kf\left(\ell\right).\qedhere
\end{align*}
\end{proof}
Note that by the miracle of geometry, for line-plane graphs the clash
counting problem is deterministic, as is the edge counting problem
between sets in a complete graph. The next claim shows that for fixed
$k$, the operator $A^2\big|_{L}$ on a biexpander sparsifies the
parallel operator on a line-plane graph with the same left side. We
cannot compare the adjacency operators on the entire graphs, since
they have right sides of different sizes.
\begin{prop}\label{prop:inc-sparse}
Let $X=\left(L\sqcup R,E\right)$ be a $(K\!+\!1,k\!+\!1)$-regular
$\varepsilon$-biexpander, where $k$ is a prime power. If $\left|L\right|=\frac{k^{d+1}-1}{k-1}$
for some $d\geq3$, then identifying $L$ with $\mL\subseteq\mathscr{P}^{d,k}$
(in any manner) gives
\[
\left\Vert \tfrac{n}{k(K+1)}A_{X}^2\big|_{L}-A_{\mathscr{P}^{d,k}}^2\big|_{\mL}\right\Vert \leq\tfrac{k+1}{k}+\tfrac{\left|\varepsilon^2-1\right|+2\sqrt{K}\varepsilon}{(K+1)k}n
\]
\end{prop}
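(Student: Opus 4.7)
The plan is to diagonalize both operators simultaneously with respect to the one direction where they genuinely share structure, namely the Perron–Frobenius eigenvector. Since $L$ is identified with $\mathcal{L}$ as a set, both $A_X^2|_L$ and $A_{\mathscr{P}^{d,k}}^2|_{\mathcal{L}}$ act on $L^2(L)$, and both are self-adjoint, so each admits an orthonormal eigenbasis. In each basis the top eigenvector is the constant function $\one_L/\sqrt{n}$. I would diagonalize in an orthonormal eigenbasis for $A_X$ (including $\one_L/\sqrt{n}$ as the first vector) and show that $A_{\mathscr{P}^{d,k}}^2|_{\mathcal{L}}$ is \emph{already} diagonal in this basis, since by Proposition~\ref{prop:incidence} one has
\[
A_{\mathscr{P}^{d,k}}^2|_{\mathcal{L}} \;=\; K_{\mathscr{P}}\cdot I \;+\; \bigl((K_{\mathscr{P}}{+}1)(k{+}1) - K_{\mathscr{P}}\bigr)\,P_1,
\]
where $K_{\mathscr{P}}=\frac{n-k-1}{k}$ and $P_1$ is the rank-one projection onto $\one_L/\sqrt{n}$. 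That is, $A_{\mathscr{P}^{d,k}}^2|_{\mathcal{L}}$ acts as the scalar $K_{\mathscr{P}}$ on every function of mean zero, and hence commutes with every spectral projection of $A_X$.

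Once both operators are diagonal in the same orthonormal basis, the difference $\alpha A_X^2|_L - A_{\mathscr{P}^{d,k}}^2|_{\mathcal{L}}$ (with $\alpha:=\tfrac{n}{k(K+1)}$) becomes diagonal, so its operator norm is the maximum of the absolute values of the diagonal entries. The first entry, corresponding to $\one_L/\sqrt{n}$, is
\[
\alpha(K{+}1)(k{+}1) - (K_{\mathscr{P}}{+}1)(k{+}1) \;=\; \tfrac{n(k+1)}{k} - \tfrac{(n-1)(k+1)}{k} \;=\; \tfrac{k+1}{k},
\]
which contributes the first summand in the claimed bound. For $j\geq 2$ the eigenvalue $\lambda_j$ of $A_X$ satisfies $\lambda_j\in\pm[\sqrt{K}-\varepsilon,\sqrt{K}+\varepsilon]$ (using that $X$ is an $\varepsilon$-biexpander and $\mathcal{E}_X=0$, so no $\lambda_j=0$ appears on $L$), hence
\[
\bigl|\lambda_j^2 - K - 1\bigr| \;\leq\; |\varepsilon^2-1| + 2\sqrt{K}\,\varepsilon.
\]

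The third step is the arithmetic identity that converts this into the stated bound. A direct computation gives
\[
\alpha\lambda_j^2 - K_{\mathscr{P}} \;=\; \frac{n\bigl(\lambda_j^2 - K - 1\bigr) + (K+1)(k+1)}{k(K+1)} \;=\; \frac{n(\lambda_j^2 - K - 1)}{k(K+1)} + \frac{k+1}{k},
\]
so by the triangle inequality and the bound above,
\[
|\alpha\lambda_j^2 - K_{\mathscr{P}}| \;\leq\; \frac{k+1}{k} + \frac{|\varepsilon^2-1| + 2\sqrt{K}\varepsilon}{(K+1)k}\,n.
\]
Taking the maximum of this over $j\geq 2$ and comparing with the $j=1$ entry $\tfrac{k+1}{k}$ (which is dominated by the above), the operator norm is bounded as claimed.

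There is no real obstacle here beyond being careful with the scaling factor $\alpha$ and confirming that $\ker A_X|_L$ is indeed trivial so that all nontrivial $\lambda_j^2$ lie in the advertised interval; both are built into the definition of an $\varepsilon$-biexpander. The only mildly nontrivial point is that the two operators share an eigenbasis, but this is automatic once one observes that $A_{\mathscr{P}^{d,k}}^2|_{\mathcal{L}}$ is a scalar plus a rank-one correction along the common top eigenvector.
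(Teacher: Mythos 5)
Your proof is correct and follows essentially the same route as the paper: both arguments diagonalize the difference in a common eigenbasis (the constant function plus the nonconstant eigenfunctions of $A_X^2\big|_L$, on which $A_{\mathscr{P}^{d,k}}^2\big|_{\mL}$ acts as the scalar $K_{\mathscr{P}}$), compute the entry $\tfrac{k+1}{k}$ on constants and $\tfrac{k+1}{k}+\tfrac{n(\lambda-K-1)}{k(K+1)}$ on the rest, and bound $|\lambda-K-1|\leq 2\sqrt{K}\varepsilon+|\varepsilon^2-1|$ via the biexpander hypothesis. The explicit "scalar plus rank-one" formulation is just a restatement of what the paper does implicitly.
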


\begin{proof}
The operator $T=\tfrac{n}{k(K+1)}A_{X}^2\big|_{L}-A_{\mathscr{P}^{d,k}}^2\big|_{L}$
acts on constant functions by $\frac{n}{k(K+1)}\mathfrak{pf}_{X}^2-\mathfrak{pf}_{\mathscr{P}^{d,k}}^2=\frac{k+1}{k}$,
and if $f$ is a nonconstant $\lambda$-eigenfunction of $A^2\big|_{L}$,
then $T$ acts on $f$ by $\tfrac{n\lambda}{k(K+1)}-\tfrac{n-k-1}{k}=\tfrac{k+1}{k}+\tfrac{\lambda-K-1}{k(K+1)}n$.
Since $X$ is an $\varepsilon$-biexpander we have $\left|\smash{\sqrt{\lambda}-\sqrt{K}}\right|\leq\varepsilon$,
and thus 
\[
\left|\lambda-K-1\right|\leq\left|\lambda-K-\varepsilon^2\right|+\left|\varepsilon^2-1\right|\leq2\sqrt{K}\varepsilon+\left|\varepsilon^2-1\right|.\qedhere
\]
\end{proof}

\subsection{Incidences in Ramanujan complexes}\label{subsec:biexp-example}

Certain incidence relations in Ramanujan complexes give rise to a
rich family of excellent biexpanders. Specifically, they give arbitrarily
large families of $(K\!+\!1,k\!+\!1)$-bigraphs which are ``Ramanujan up to a constant'', i.e.\ $O(\sqrt{k})$-biexpanders. 

Recall that we have defined a Ramanujan complex in Section \ref{subsec:sub:ram-def} as a
simplicial complex on which the nontrivial spectrum of each geometric operator is confined to that of the same operator on its universal cover. Here, we focus on Ramanujan complexes of type $\widetilde{A}_{d}$. These were first defined and constructed in
\cite{li2004ramanujan, Lubotzky2005a}, albeit using a weaker definition, which only considers specific geometric operators on vertices. We give here a concise description of these complexes, and refer to the cited papers and to \cite{Lubotzky2013, first2016ramanujan, Lubetzky2017RandomWalks} for more details.

For $d\geq2$ and a prime $p$, we now define the Bruhat-Tits building $\mathcal{B}=\mathcal{B}_{d,p}$
associated with $G=PGL_{d}(\mathbb{Q}_{p})$, and refer the reader to \cite{Brown1989, Garrett1997} for a detailed presentation. The building $\mathcal{B}$ is a $(d-1)$-dimensional simplicial clique
complex with the following underlying graph: the vertices correspond to $\mathbb{Q}_{p}^{\times}$-homothety
classes of $\mathbb{Z}_{p}$-lattices in $\mathbb{Q}_{p}^{d}$, and
two classes $C,C'$ are neighbors if $pL<L'<L$ for some representatives $L\in C,L'\in C'$.
In this case, we say the directed edge $C'\rightarrow C$ has
\emph{color} $\log_{p}([L:L'])$. The \emph{Hecke operators} $A_{i}$ ($1\leq i\leq d-1$)
are the corresponding colored adjacency operators on vertices:
\[
\left(A_{i}f\right)(v)=\sum_{\col(v\rightarrow w)=i}f(w),\qquad f\in L^{2}(\mathcal{B}^{0}).
\]
If $\Gamma$ is a torsion-free lattice in $PGL_{d}(\mathbb{Q}_{p})$, the quotient
$X=\Gamma\backslash\mathcal{B}$ is a finite complex with universal
cover $\mathcal{B}$. In particular, if $X$ is Ramanujan, then for
every $i$ we have $\mathrm{Spec}_{0}(A_{i}|_{X})\subseteq\mathrm{Spec}(A_{i}|_{\mathcal{B}})$,
and 
\begin{equation}
\mathrm{Spec}(A_{i}|_{\mathcal{B}})=\left\{ p^{\frac{i(d-i)}{2}}\sigma_{i}(z_{1},\ldots,z_{d})\,\middle|\,\forall j:|z_{j}|=1\text{ and }z_{1}\cdot\ldots\cdot z_{d}=1\right\} ,\label{eq:Ai_building}
\end{equation}
where $\sigma_{i}$ is the $i$-th elementary symmetric polynomial. The degree of $A_i$ is $\left[\begin{smallmatrix}d\\
i
\end{smallmatrix}\right]_{p}$ (the Gaussian binomial coefficients), and the trivial eigenvalues of $A_i$ are its degree multiplied by any $d$-root of unity. It is not hard to see that $\col(v\rightarrow w)=d-\col(w\rightarrow v)$, so that $A_i^*=A_{d-i}$; a nontrivial fact is that all Hecke operators $A_i$ commute with each other, and are in particular normal.

From this point onward we assume that $d\geq3$. For $X=\Gamma\backslash\mathcal{B}$ as above, we define a bigraph $G_{X}=\left(L_{X}\sqcup R_{X},E_{X}\right)$ as follows:

\begin{itemize}
\item $L_{X}$ are the vertices of $X$.
\item $R_{X}$ are the (nondirected) edges of color $\pm2$ in $X$.
\item For $v\in L_{X}$ and $\left\{u,w\right\}\in R_X$,
$\left\{v,\left\{u,w\right\}\right\}\in E_{X}$ if $\left\{ v,u,w\right\} $
is a triangle in $X$ with edge colors 
$\vcenter{\xymatrix@1@R=1.5pt@C=15pt{ & u\ar[ld]_{1}\ar[dd]^{2}\\v\ar[dr]_{1}\\ & w}}$. 
\end{itemize}
We denote by $m_{X}$ the \textit{partiteness} of $X$, which is $m_{X}=\left[\Gamma\colon\Gamma\cap G'\right]$ (where $G'=[G,G]=PSL_d(\mathbb{Q}_p)$).

\begin{prop}
The graph $G_{X}$ is a $(\left[d\right]_{p}\left[d-1\right]_{p},p+1)$-bigraph\footnote{Here $[n]_q$ denotes the $q$-number $[n]_{q}=\frac{q^{n}-1}{q-1}$.}
with $m_{X}$ connected components. Each component $C$ of $G_{X}$
has $\mathcal{E}_C=0$, and 
\[
\mathrm{Spec}\left(A_{C}\right)\subseteq\left\{ 0\right\} \cup\pm\left[\sqrt{K}-\sqrt{k},\sqrt{K}+\tfrac{d^{2}-1}{2}\sqrt{k}\right]\cup\left\{ \pm\mathfrak{pf}\right\} .
\]
\end{prop}

\begin{proof}
Consider the geometric operator $T$ on $\mathcal{B}$
which induces $A_{G_{X}}$, namely, $T$ acts on vertices and color-$\pm 2$ edges
in $\mathcal{B}$, with incidence given by 
$\vcenter{\xymatrix@1@R=1.5pt@C=15pt{ & u\ar[ld]_{1}\ar[dd]^{2}\\v\ar[dr]_{1}\\ & w}}$ triangles. The operator $T^{2}|_{\mathcal{B}^{0}}$ takes a vertex $v$ once
to any $v'\neq v$ for which 
$\vcenter{\xymatrix@1@R=1.5pt@C=15pt{ & u\ar[ld]_{1}\ar[rd]^{1}\ar[dd]^{2} & \\ v\ar[dr]_{1} & & v'\ar[dl]^{1} \\ & w}}$ for some $u,w$, and in addition it takes $v$ $\left[d\right]_{p}\left[d-1\right]_{p}$
times back to itself, as this is the number of $\vcenter{\xymatrix@1@R=1.5pt@C=15pt{ & \ast\ar[ld]_{1}\ar[dd]^{2}\\v\ar[dr]_{1}\\ & \ast}}$ triangles.

Next, observe that 
the operator $A_{d-1}A_{1}$ takes $v$ to any $v'\neq v$
such that $v\underset{1}{\rightarrow}w\underset{1}{\leftarrow}v'$,
and also $\deg A_{1}=\left[d\right]_{p}$ times back to itself.
A path $v\underset{1}{\rightarrow}w\underset{1}{\leftarrow}v'$
corresponds to $v,v'$ being sublattices of $w$ of index $p$ containing
$pw$, so there exists a unique $u$ which completes this path to
$\vcenter{\xymatrix@1@R=1.5pt@C=15pt{ & u\ar[ld]_{1}\ar[rd]^{1}\ar[dd]^{2} & \\ v\ar[dr]_{1} & & v'\ar[dl]^{1} \\ & w}}$, namely $u=v\cap v'$. Thus, $A_{d-1}A_{1}$ and $T^{2}|_{\mathcal{B}^{0}}$
agree, except for the number of backtracking:
\[
T^{2}|_{\mathcal{B}^{0}}=A_{d-1}A_{1}-\left[d\right]_{p}I+\left[d\right]_{p}\left[d-1\right]_{p}I=A_{d-1}A_{1}+\left[d\right]_{p}(\left[d-1\right]_{p}-1)I.
\]
This relation continues to hold on quotients of the building, such as
$X$. The partiteness number $m_{X}$
is the number of $1$-dimensional representations of $G$ which factor through $\Gamma\backslash G$, which account for the Perron-Frobenius eigenvalues of the connected components of $G_{X}$. When $X$ is Ramanujan, the remaining eigenvalues of $A_{d-1}A_{1}$ are bounded
by $d^{2}p^{d-1}$ due to \eqref{eq:Ai_building}, and below by $0$, as $A_{d-1}=A_{1}^{*}$.
Thus, any nontrivial eigenvalue $\lambda$ of $A_{G_{X}}^{2}|_{L_{X}}$
satisfies
\begin{align*}
\sqrt{\lambda}-\sqrt{K} & \leq\sqrt{d^{2}p^{d-1}+[d]_{p}([d-1]_{p}-1)}-\sqrt{[d]_{p}[d-1]_{p}-1}\leq\tfrac{d^{2}-1}{2}\sqrt{p}\\
\sqrt{K}-\sqrt{\lambda} & \leq\sqrt{[d]_{p}[d-1]_{p}-1}-\sqrt{[d]_{p}([d-1]_{p}-1)}\leq\sqrt{p}.\qedhere
\end{align*}
\end{proof}

\begin{rem}
\begin{enumerate}
\item It is necessary for $X$ to be a Ramanujan complex in order to get almost-Ramanujan bigraphs. For example, the non-Ramanujan quotients of $\mathcal{B}_{3,p}$ which we construct in Section \ref{subsec:complexes} have $\varepsilon\approx\left(\sqrt{2}-1\right)\sqrt{p^{3}}$-biexpansion rather
than $\varepsilon=O(\sqrt{p})$.
\item It is natural to look at the incidence bigraph of the vertices and walls (codimension one cells) in $X$, with incidence
given by chambers. By similar arguments one can show that these are still $\varepsilon$-biexpanders (of
degrees $\left(\left[d\right]!_{p},p+1\right)$), but not almost Ramanujan
-- in fact, they have $\varepsilon\approx\frac{d^{2}-1}{2}\sqrt{k^{1+{d-2 \choose 2}}}$,
so only for $d=3$ one has $\varepsilon=O(\sqrt{k})$.
\end{enumerate}
\end{rem}

\subsection{Cutoff phenomena} \label{subsec:Cutoff}

In this section we study the total variation mixing time on bigraphs,
and show that under various assumptions we obtain Diaconis' cutoff
phenomenon with varying window size. 
Except for Corollary \ref{cor:cutoff-SRW} which addresses the simple random walk (SRW) on vertices, we focus on the non-backtracking random walk (NBRW) on directed edges $E=E_{X}$, which moves from an edge $v\rightarrow w$ to a uniformly chosen random edge of the form $w\rightarrow u$ with $u\neq v$. 

We continue with the notations from Section \ref{sec:Spectral-analysis}.
In particular, $X=\left(L\sqcup R,E\right)$ is a $(K\!+\!1,k\!+\!1)$-biregular graph with $|L|=n$. Since the NBRW is 2-periodic (alternating between $\overrightarrow{LR}$ and $\overleftarrow{LR}$), it suffices to observe the walk at even (or at odd) times. We assume w.l.o.g.\ that the starting edge $e_0$ is in $\overrightarrow{LR}$, so that the NBRW has distribution $\mathbf{p}_{e_0}^{2t}=\left(\tfrac{B^2}{Kk}\right)^{t}\one_{e_0}$ at time $2t$, and (restricted to even times) it has sample space of size $N:=\left|\overrightarrow{LR}\right|$, and stationary distribution
\[
\mathbf{u}=\tfrac1{N}\one_{\overrightarrow{LR}}=\mathbb{P}_{\Span(\one_{E},\widetilde{\one_{E}})}(\one_{e_0}),
\]
where $\mathbb{P}$ denotes orthogonal projection. The $\varepsilon$\textit{-mixing time} of NBRW on $X$ is 
\[
t_{mix}\left(\varepsilon\right) = t_{mix}\left(\varepsilon,X\right) = \min\left\{ 2t\in \N\,\middle|\,\forall e_0\in \overrightarrow{LR},\ \Vert\mathbf{p}_{e_0}^{2t}-\mathbf{u}\Vert_{TV}<\varepsilon\right\} ,
\]
where $\left\Vert \cdot\right\Vert _{TV}$ is the total-variation
norm:
\begin{equation}
\left\Vert \mu-\nu\right\Vert _{TV}=\max_{A\subseteq\overrightarrow{LR}}\left|\mu\left(A\right)-\nu\left(A\right)\right|=\tfrac12\left\Vert \mu-\nu\right\Vert _1.\label{eq:TV}
\end{equation}
A family of graphs $\left\{ X_{n}\right\} $ is said to exhibit cutoff
if $\frac{t_{mix}\left(\varepsilon,X_{n}\right)}{t_{mix}\left(1-\varepsilon,X_{n}\right)}\overset{{\scriptscriptstyle n\rightarrow\infty}}{\longrightarrow}1$
for every $0<\varepsilon<1$. The cutoff is said to occur at time
$t\left(n\right)$, if for every $\varepsilon>0$ there exists a window
of size $w\left(n,\varepsilon\right)=o\left(t\left(n\right)\right)$,
such that $\left|t_{mix}\left(\varepsilon,X_{n}\right)-t(n)\right|\leq w(n,\varepsilon)$
for $n$ large enough. The average degree of NBRW is $\sqrt{Kk}$ (in the sense that two
consecutive steps take an edge to $Kk$ edges), and if $t\left(n\right)=\log_{\sqrt{Kk}}|X_{n}|$
we say that the cutoff is \emph{optimal}. By the next claim, a $\sqrt{Kk}$-regular
walk cannot mix in less steps regardless of the graph spectrum:
\begin{lem}
\label{lem:mix-lower}For any $\varepsilon>0$, 
\[
t_{mix}\left(1-\varepsilon\right)\geq\log_{\sqrt{Kk}}N-\log_{\sqrt{Kk}}(\tfrac1{\varepsilon}).
\]
\end{lem}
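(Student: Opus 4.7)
The plan is a standard support-size (or "degree of the walk") lower bound, which requires no spectral input at all. Start from an arbitrary $e_{0} = \ell_{0}\!\to\! r_{0} \in \overrightarrow{LR}$ and count how many directed edges can appear in the support of $\mathbf{p}_{e_{0}}^{2t}$. After one NBRW-step we land on an edge $r_{0}\!\to\!\ell_{1}$ with $\ell_{1}\neq\ell_{0}$, giving at most $k$ possibilities (degree $k+1$ at $r_{0}$, minus the back-edge); the next step lands on $\ell_{1}\!\to\!r_{1}$ with $r_{1}\neq r_{0}$, giving at most $K$ possibilities. Iterating, the support of $\mathbf{p}_{e_{0}}^{2t}$ in $\overrightarrow{LR}$ satisfies
\[
|\,\mathrm{supp}\,\mathbf{p}_{e_{0}}^{2t}\,| \;\leq\; (Kk)^{t} \;=\; (\sqrt{Kk})^{2t}.
\]

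Now I would apply the defining characterization of total variation distance in \eqref{eq:TV}. Taking $A = \mathrm{supp}\,\mathbf{p}_{e_{0}}^{2t}$ gives $\mathbf{p}_{e_{0}}^{2t}(A)=1$ while $\mathbf{u}_{e_{0}}(A) = |A|/N \leq (Kk)^{t}/N$, hence
\[
\big\Vert \mathbf{p}_{e_{0}}^{2t} - \mathbf{u}_{e_{0}} \big\Vert_{TV} \;\geq\; 1 - \frac{(Kk)^{t}}{N}.
\]
The lemma now follows by elementary algebra: if $2t < \log_{\sqrt{Kk}} N - \log_{\sqrt{Kk}}(1/\varepsilon)$, then $(Kk)^{t} = (\sqrt{Kk})^{2t} < \varepsilon N$, so the right-hand side strictly exceeds $1-\varepsilon$, and by definition $2t < t_{mix}(1-\varepsilon)$. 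Taking the supremum over such $t$ yields the claimed bound.

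The argument is essentially a one-liner once the support count is in place, and there is no real obstacle; the only thing to be careful about is the interplay between the two sides of the bipartition (so that the factor $Kk$ per two steps is correct, rather than $K^{2}$ or $k^{2}$), and the fact that NBRW observed at even times has state space $\overrightarrow{LR}$ of size $N$ with uniform stationary distribution $\mathbf{u}_{e_{0}} = \frac{1}{N}\one_{\overrightarrow{LR}}$, which is exactly what is set up just before the lemma.
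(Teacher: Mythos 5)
Your proposal is correct and is essentially the paper's own argument: both bound $|\mathrm{supp}\,\mathbf{p}_{e_0}^{2t}|\leq (Kk)^t$ by the non-backtracking degree count and then use the set $S=\mathrm{supp}\,\mathbf{p}_{e_0}^{2t}$ (the paper tests on its complement, you on $S$ itself, which is the same estimate $\Vert\mathbf{p}_{e_0}^{2t}-\mathbf{u}_{e_0}\Vert_{TV}\geq 1-(Kk)^t/N$) to conclude.
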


\begin{proof}
For $S=\mathrm{supp}\left(B^{2t}\one_{e}\right)$ we have 
\[
\left\Vert \mathbf{p}_{e}^{2t}-\mathbf{u}\right\Vert _{TV}\geq\left|\mathbf{p}_{e}^{2t}(E\backslash S)-\mathbf{u}(E\backslash S)\right|=\mathbf{u}(E\backslash S)=1-\frac{|S|}{N}\geq1-\frac{(Kk)^{t}}{N}
\]
so at $2t=\log_{\sqrt{Kk}}N-\log_{\sqrt{Kk}}(\frac1{\varepsilon})$
we have $\left\Vert \mathbf{p}_{e}^{2t}-\mathbf{u}\right\Vert _{TV}\geq1-\varepsilon$.
\end{proof}
For the upper bound on the mixing time we use the spectral analysis
of Section \ref{sec:Spectral-analysis}. Recall that $X$ has positive adjacency eigenvalues
$\mathfrak{pf}=\lambda_1>\ldots\geq\lambda_{n-\mE}$ (where
$\mE=\mE_{X}$). By Theorem \ref{thm:B-decomp}, for $2\leq j\leq n-\mE$ we have
(in the appropriate orthonormal basis) 
\[
B^2|_{W_{f_{j}}}=\left(\begin{smallmatrix}0 & 0 & \alpha_{j}\mu_{j}^{+} & \gamma_{j}\\
0 & 0 & 0 & \beta_{j}\mu_{j}^{-}\\
\mu_{j}^{+}/\alpha_{j} & \delta_{j} & 0 & 0\\
0 & \mu_{j}^{-}/\beta_{j} & 0 & 0
\end{smallmatrix}\right)^2=\left(\begin{smallmatrix}\mu_{j}^{+^2} & \eta_{j} & 0 & 0\\
0 & \mu_{j}^{-^2} & 0 & 0\\
0 & 0 & \mu_{j}^{+^2} & \eta_{j}'\\
0 & 0 & 0 & \mu_{j}^{-^2}
\end{smallmatrix}\right)
\]
for $\mu_{j}^{\pm}=\mu_{\lambda_{j}}^{\pm}$, and some $\eta_{j},\eta_{j}'\in\C$,
which satisfy $|\eta_{j}|,|\eta_{j}'|\leq\left\Vert B^2|_{\smash{W_{f_{j}}}}\right\Vert _2\leq\left\Vert B^2\right\Vert _2=Kk$.
Denoting $D_{j}=\left(\begin{smallmatrix}\mu_{j}^{+^2} & \eta_{j}\\
0 & \mu_{j}^{-^2}
\end{smallmatrix}\right)$ and $D_{j}'=\left(\begin{smallmatrix}\mu_{j}^{+^2} & \eta_{j}'\\
0 & \mu_{j}^{-^2}
\end{smallmatrix}\right)$, we see that $B^2$ is unitarily equivalent to a $2\times2$-block-diagonal
matrix: 
\begin{equation}
B^2\sim\mathrm{diag}(Kk^{(\times2)},D_2,D_2',\ldots,D_{n-\mE},D_{n-\mE}',\left(-K\right)^{\times2\mE},\left(-k\right)^{\times2\mathcal{N}},1^{\times2\chi}),\label{eq:B2}
\end{equation}
where $\chi=\chi\left(X\right)=\frac{|E|}2-|V|+1$. We introduce
a notation for the corresponding decomposition of $F\in L^2(E)$:
\begin{equation}
F=F^1+F^2+F^{'2}+\ldots+F^{n-\mE}+F^{'n-\mE}+F^{L}+F^{R}+F^{\chi},\label{eq:proj_decomp}
\end{equation}
so that
\begin{equation}
B^{2t}F=(Kk)^{t}F^1+\left[{\textstyle \sum_{j=2}^{n-\mE}}D_{j}^{t}F^{j}+D{}_{j}'^{t}F^{'j}\right]+(-K)^{t}F^{L}+(-k)^{t}F^{R}+F^{\chi}.\label{eq:B2t_act}
\end{equation}
We denote by $L_0^2(E):=\{\one_{E},\widetilde{\one_{E}}\}^{\bot}=\{\one_{\overrightarrow{LR}},\one_{\overleftarrow{LR}}\}^{\bot}$
the ``non-trivial'' part of $L^2(E)$, and observe that $\mathbf{u}=\one_{e_0}^1$. We also observe that for any $F$ supported on $\overrightarrow{LR}$ we have $F^{'j}=0$ (for $2\leq j\leq n-\mE$), as the bases $\mathscr{B}'$ for $W_{f_j}$ in Theorem \ref{thm:B-decomp} are comprised of two vectors supported on $\overrightarrow{LR}$, and two supported on $\overleftarrow{LR}$.

The next Lemma bounds the operator and the Frobenius norms of powers
of $D_{j}$, in the (adj-)Ramanujan case.
\begin{lem}
\label{lem:Dt-norm}Let $\vartheta\in\left[0,\pi\right]$, $\mu^{\pm}=\sqrt{e^{\pm i\vartheta}\sqrt{Kk}}$,
and $D=\left(\begin{smallmatrix}\mu^{_{+}2} & \eta\\
0 & \mu^{_{-}2}
\end{smallmatrix}\right)$ with $\ensuremath{|\eta|\leq Kk}$. Then for any $t\geq0$
\begin{equation}
\left\Vert D^{t}\right\Vert _2^2\leq\left\Vert D^{t}\right\Vert _{F}^2=(Kk)^{t}\left(2+\tfrac{|\eta|^2}{Kk}\left(\tfrac{\sin t\vartheta}{\sin\vartheta}\right)^2\right)\leq(Kk)^{t}\left(2+Kkt^2\right).\label{eq:D_norm_bounds}
\end{equation}
\end{lem}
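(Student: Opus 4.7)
The plan is a direct computation based on the fact that $D$ is upper triangular with distinct eigenvalues (in the generic case), so its powers have a closed form.

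First I would set $a=(\mu^{+})^{2}=e^{i\vartheta}\sqrt{Kk}$ and $b=(\mu^{-})^{2}=e^{-i\vartheta}\sqrt{Kk}$, noting that $|a|=|b|=\sqrt{Kk}$. A short induction on $t$ (or conjugation to diagonal form when $a\neq b$) gives
\[
D^{t}=\begin{pmatrix}a^{t} & \eta\cdot\dfrac{a^{t}-b^{t}}{a-b}\\[2pt] 0 & b^{t}\end{pmatrix},
\]
where the off-diagonal entry is to be read, in the coalescent case $\vartheta\in\{0,\pi\}$ (i.e. $a=b$), as its limit $\eta\cdot t a^{t-1}$; equivalently one writes $D=aI+\eta E_{12}$ with $E_{12}^{2}=0$ and expands the binomial.

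Next I would substitute $a-b=2i\sqrt{Kk}\sin\vartheta$ and $a^{t}-b^{t}=2i(Kk)^{t/2}\sin(t\vartheta)$ to get
\[
\left|\frac{a^{t}-b^{t}}{a-b}\right|^{2}=(Kk)^{t-1}\left(\frac{\sin t\vartheta}{\sin\vartheta}\right)^{2},
\]
and then compute the Frobenius norm as the sum of squared moduli of the three nonzero entries of $D^{t}$:
\[
\|D^{t}\|_{F}^{2}=|a|^{2t}+|b|^{2t}+|\eta|^{2}\left|\frac{a^{t}-b^{t}}{a-b}\right|^{2}=2(Kk)^{t}+|\eta|^{2}(Kk)^{t-1}\left(\frac{\sin t\vartheta}{\sin\vartheta}\right)^{2},
\]
which is exactly the middle expression in \eqref{eq:D_norm_bounds}. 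The inequality $\|D^{t}\|_{2}^{2}\leq\|D^{t}\|_{F}^{2}$ on the left is the standard comparison of operator and Frobenius norms. For the rightmost inequality I would use the elementary bound $|\sin t\vartheta|\leq t|\sin\vartheta|$ (proved by induction on $t$, using $\sin(t\vartheta)=\sin((t-1)\vartheta)\cos\vartheta+\cos((t-1)\vartheta)\sin\vartheta$), together with the hypothesis $|\eta|^{2}\leq(Kk)^{2}$, so that $\frac{|\eta|^{2}}{Kk}\bigl(\frac{\sin t\vartheta}{\sin\vartheta}\bigr)^{2}\leq Kk\cdot t^{2}$.

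There is no real obstacle here; the only point requiring minor care is the degenerate case $\vartheta\in\{0,\pi\}$, which is handled uniformly by interpreting $\sin(t\vartheta)/\sin\vartheta$ as its limit $t$ (matching the Jordan-block computation).
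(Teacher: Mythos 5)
Your proposal is correct and follows essentially the same route as the paper: compute $D^{t}$ explicitly via the geometric sum in the off-diagonal entry, rewrite it as $(Kk)^{(t-1)/2}\frac{\sin t\vartheta}{\sin\vartheta}$ times $\eta/\sqrt{Kk}$ up to a unimodular factor, sum squared moduli for the Frobenius norm, and finish with $\|\cdot\|_{2}\leq\|\cdot\|_{F}$ together with $|\sin t\vartheta|\leq t|\sin\vartheta|$ and $|\eta|\leq Kk$. Your explicit treatment of the coalescent case $\vartheta\in\{0,\pi\}$ is a small point of added care that the paper leaves implicit in the sum formula, but it is not a different argument.
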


\begin{proof}
For any $\vartheta\in\C$ we have
\begin{align*}
D^{t}& = \left(\begin{matrix}\mu^{_{+}2t} & \eta\sum_{m=0}^{t-1}\mu^{_{+}2m}\mu^{_{-}2(t-m-1)}\\
0 & \mu^{_{-}2t}
\end{matrix}\right)\\
& =(Kk)^{t/2}\left(\begin{matrix}e^{it\vartheta} & e^{-i(t-1)\vartheta}\frac{\eta}{\sqrt{Kk}}\sum_{m=0}^{t-1}e^{2im\vartheta}\\
0 & e^{-it\vartheta}
\end{matrix}\right)=(Kk)^{t/2}\left(\begin{matrix}e^{it\vartheta} & \frac{\eta}{\sqrt{Kk}}\frac{\sin t\vartheta}{\sin\vartheta}\\
0 & e^{-it\vartheta}
\end{matrix}\right),
\end{align*}
and for $\vartheta\in\R$ also $|e^{\pm it\vartheta}|=1$.
\end{proof}
Since $X$ is Ramanujan when $\mE=0$ and $\vartheta_j\in\R$ for all $j$, we obtain from \eqref{eq:B2} and Lemma \ref{lem:Dt-norm}:
\begin{cor}
If $X$ is Ramanujan then 
\begin{align}
\left\Vert B^{2t}|_{L_0^2(E)}\right\Vert _2 & \leq\max_{j}\left\Vert D_{j}^{t}|_{L_0^2(E)}\right\Vert _{F}\leq\sqrt{(Kk)^{t}\left(2+Kkt^2\right)}\leq t\sqrt{2(Kk)^{t+1}},\text{  and}\label{eq:B2t_2norm}\\
\left\Vert B^{2t}\right\Vert _{F}^2 & =2(Kk)^{2t}+2\mathcal{N}k^{2t}\\ \nonumber &+2\chi(X)+(Kk)^{t-1}\sum_{j=2}^{n-\mE}\Big(4Kk+(|\eta_{j}|^2+|\eta_{j}'|^2)\big(\tfrac{\sin t\vartheta_{j}}{\sin\vartheta_{j}}\big)^2\Big).\label{eq:B2tF-full}
\end{align}
\end{cor}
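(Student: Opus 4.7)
The entire statement is a direct book-keeping consequence of the block-diagonal form of $B^2$ recorded in \eqref{eq:B2} together with the single-block estimate of Lemma \ref{lem:Dt-norm}; no new geometric input is needed. Two preliminary observations drive the argument. First, Corollary \ref{cor:ram-by-theta} tells us that Ramanujan-ness forces $\mathcal{E}_X=0$ and $\vartheta_j\in[0,\pi]$ for every $2\le j\le n$, so Lemma \ref{lem:Dt-norm} applies to every $D_j$ and $D_j'$ appearing in \eqref{eq:B2}, and the block $(-K)^{\times 2\mathcal{E}}$ simply disappears from the decomposition. Second, the subspace $L_0^2(E)=\{\mathbf{1}_E,\widetilde{\mathbf{1}_E}\}^{\perp}$ corresponds exactly to deleting the two trivial $Kk$-eigenvalue entries from \eqref{eq:B2}, so
\[
B^{2}\big|_{L_0^2(E)}\;\sim\;\mathrm{diag}\!\left(D_2,D_2',\ldots,D_n,D_n',(-k)^{\times 2\mathcal{N}},1^{\times 2\chi}\right).
\]

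For the operator-norm bound \eqref{eq:B2t_2norm}, I would use that the $\ell^2$-norm of a block-diagonal operator equals the maximum of the block norms. For each $2\le j\le n$, Lemma \ref{lem:Dt-norm} gives $\|D_j^t\|_2\le\|D_j^t\|_F\le\sqrt{(Kk)^t(2+Kkt^2)}$ (and the same for $D_j'$). The remaining scalar blocks contribute $\|(-k)^t\|=k^t$ and $\|1\|=1$, both of which are dominated by $\sqrt{2(Kk)^t}\le\sqrt{(Kk)^t(2+Kkt^2)}$ since $k\le\sqrt{Kk}$ (as $K>k$). Taking the maximum yields the first two inequalities; the last inequality $\sqrt{(Kk)^t(2+Kkt^2)}\le t\sqrt{2(Kk)^{t+1}}$ is the elementary observation that $2+Kkt^2\le 2Kkt^2$ whenever $Kkt^2\ge 2$, which holds in the relevant regime $t\ge 1$, $Kk\ge 2$.

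For the Frobenius identity \eqref{eq:B2tF-full}, I would simply sum the squared Frobenius norms of the blocks in \eqref{eq:B2}, using that the Frobenius norm is unitarily invariant and additive over block-diagonal decompositions. The two trivial $\pm\mathfrak{pf}_B^2$ entries contribute $2(Kk)^{2t}$; the $2\mathcal{N}$ entries equal to $-k$ contribute $2\mathcal{N}k^{2t}$; the $2\chi(X)$ entries equal to $1$ contribute $2\chi(X)$; and each $D_j$ (resp.\ $D_j'$) block contributes exactly $(Kk)^t\bigl(2+\tfrac{|\eta_j|^2}{Kk}\bigl(\tfrac{\sin t\vartheta_j}{\sin\vartheta_j}\bigr)^2\bigr)$ by the explicit formula in Lemma \ref{lem:Dt-norm}. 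Factoring $(Kk)^{t-1}$ from each such pair of blocks collects them into $(Kk)^{t-1}\sum_{j=2}^{n-\mathcal{E}}\bigl[4Kk+(|\eta_j|^2+|\eta_j'|^2)(\sin t\vartheta_j/\sin\vartheta_j)^2\bigr]$, reproducing the formula exactly.

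There is essentially no obstacle; the only delicate point worth flagging is that at $\vartheta_j\in\{0,\pi\}$ (the ``singular'' eigenvalues $\lambda_j=\sqrt{K}\pm\sqrt{k}$ from \eqref{eq:singular-case}) the ratio $\sin t\vartheta_j/\sin\vartheta_j$ must be interpreted by its limit $\pm t$; this is harmless since Lemma \ref{lem:Dt-norm} is valid as an analytic identity, and the coarse bound $|\sin t\vartheta/\sin\vartheta|\le t$ persists in the limit, preserving both estimates.
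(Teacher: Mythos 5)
Your proposal is correct and follows exactly the route the paper intends: the corollary is stated there as an immediate consequence of the block decomposition \eqref{eq:B2}, Lemma \ref{lem:Dt-norm}, and $\|\cdot\|_2\le\|\cdot\|_F$, which is precisely the book-keeping you carry out (including the correct use of Corollary \ref{cor:ram-by-theta} to kill the $(-K)$ blocks and put every $\vartheta_j$ in $[0,\pi]$). Your extra remarks — that the scalar $(-k)^t$ and $1$ blocks are dominated by $\sqrt{2(Kk)^t}$, and that $\sin t\vartheta/\sin\vartheta$ is read as its limit $\pm t$ at $\vartheta\in\{0,\pi\}$ — are the right way to fill in the details the paper leaves implicit.
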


Note that if $X$ is only adj-Ramanujan, then we get the much worse $\left\Vert B^{2t}|_{\smash{L_0^2(E)}}\right\Vert _2=K^{t}$,
but the Frobenius norm $\left\Vert B^{2t}\right\Vert _{F}^2$ is
only increased by $2\mE K^{2t}$; this will be used when we replace the Ramanujan assumption by a density hypothesis.

\begin{thm}[Logarithmic NBRW cutoff]
\label{thm:cutoff-NBRW}Restricted to even times, NBRW on $(K\!+\!1,k\!+\!1)$-regular
Ramanujan bigraphs exhibits optimal cutoff (at time $\log_{\sqrt{Kk}}N$)
with window size bounded by $3\log_{\sqrt{Kk}}\log N$ (for any base
in the inner log).
\end{thm}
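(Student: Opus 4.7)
The strategy is to bound the total-variation distance through the $\ell^2$ norm, using the block-diagonal decomposition of $B^2$ from Theorem \ref{thm:B-decomp} via the already-derived operator bound \eqref{eq:B2t_2norm}.

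\textbf{Setup.} The first step is to isolate the stationary part: since $\mathbf{u}_{e_0} = \frac{1}{N}\mathbf{1}_{\overrightarrow{LR}}$ is the projection of $\mathbf{1}_{e_0}$ onto the trivial eigenspace of $B^2$ (spanned by $\mathbf{1}_{\overrightarrow{LR}}$ and $\widetilde{\mathbf{1}_{\overrightarrow{LR}}}$, both $Kk$-eigenvectors), and $B^2 \mathbf{u}_{e_0} = Kk\,\mathbf{u}_{e_0}$, we have
\[
\mathbf{p}_{e_0}^{2t} - \mathbf{u}_{e_0} \;=\; (Kk)^{-t} B^{2t} \bigl(\mathbf{1}_{e_0} - \mathbf{u}_{e_0}\bigr),
\]
where $\mathbf{1}_{e_0}-\mathbf{u}_{e_0}\in L^2_0(E)$ and $\|\mathbf{1}_{e_0}-\mathbf{u}_{e_0}\|_2 = \sqrt{1-1/N}\le 1$.

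\textbf{From $\ell^2$ to total variation.} Both $\mathbf{p}_{e_0}^{2t}$ and $\mathbf{u}_{e_0}$ are supported on $\overrightarrow{LR}$ (of size $N$), so Cauchy--Schwarz gives $\|\cdot\|_{TV} = \tfrac{1}{2}\|\cdot\|_1 \leq \tfrac{\sqrt{N}}{2}\|\cdot\|_2$. Combining this with the Ramanujan spectral bound \eqref{eq:B2t_2norm} yields
\[
\|\mathbf{p}_{e_0}^{2t} - \mathbf{u}_{e_0}\|_{TV} \;\leq\; \frac{\sqrt{N}}{2} (Kk)^{-t} \bigl\|B^{2t}|_{L^2_0(E)}\bigr\|_2 \;\leq\; \frac{t\sqrt{2NKk}}{2(Kk)^{t/2}}.
\]

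\textbf{Choice of window.} Write $2t = \log_{\sqrt{Kk}} N + C$, so $(Kk)^t = N\cdot(Kk)^{C/2}$ and the bound above becomes $\tfrac{t}{2}\sqrt{2Kk}\cdot (Kk)^{-C/4}$. Taking $C = 3\log_{\sqrt{Kk}}\log N$ makes $(Kk)^{C/4} = (\log N)^{3/2}$, while $t = O(\log N)$; thus the bound is $O\bigl((\log N)^{-1/2}\bigr) \to 0$. Consequently, for $N$ large enough, $t_{mix}(\varepsilon) \leq \log_{\sqrt{Kk}}N + 3\log_{\sqrt{Kk}}\log N$. Combined with the matching lower bound of Lemma \ref{lem:mix-lower}, namely $t_{mix}(1-\varepsilon) \geq \log_{\sqrt{Kk}}N - \log_{\sqrt{Kk}}(1/\varepsilon)$, this establishes cutoff at time $\log_{\sqrt{Kk}}N$ with window bounded by $3\log_{\sqrt{Kk}}\log N$ (the additive $\log_{\sqrt{Kk}}(1/\varepsilon)$ from the lower bound is absorbed for large $N$).

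\textbf{Where the work sits.} The only nontrivial ingredient is the Ramanujan operator bound \eqref{eq:B2t_2norm}, which has already been established via Lemma \ref{lem:Dt-norm}. The crucial point there is that when $\vartheta_\lambda\in\{0,\pi\}$ the $2\times 2$ block $D_j$ is a genuine Jordan block, so $\|D_j^t\|$ grows as $t\,(Kk)^{t/2}$ rather than just $(Kk)^{t/2}$. This linear-in-$t$ defect is precisely what forces the window size to be $\Theta(\log\log N)$ rather than $O(1)$, and the factor $3$ is chosen so that $(\log N)^{3/2}$ comfortably beats both the $t$ factor and the $\sqrt{Kk}$ constant. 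Thus the only real obstacle---polynomial growth from non-normality of $B$---has already been resolved by Theorem \ref{thm:B-decomp} and Lemma \ref{lem:Dt-norm}, and the cutoff statement follows as an application.
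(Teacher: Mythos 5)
Your proposal is correct and follows essentially the same route as the paper's proof: projecting $\one_{e_0}$ onto $L_0^2(E)$, bounding total variation by $\tfrac{\sqrt{N}}{2}$ times the $\ell^2$ norm, invoking the bound \eqref{eq:B2t_2norm} coming from Theorem \ref{thm:B-decomp} and Lemma \ref{lem:Dt-norm}, and matching against the lower bound of Lemma \ref{lem:mix-lower}. The quantitative bookkeeping (the factor $t$ from the near-Jordan blocks and the choice $(Kk)^{C/4}=(\log N)^{3/2}$) agrees with the paper's computation.
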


\begin{proof}
Since $B^2\mathbf{u}=Kk\mathbf{u}$, we have\textbf{ }
\begin{align*}
\left\Vert \mathbf{p}_{e}^{2t}-\mathbf{u}\right\Vert _{TV} & \leq\tfrac{\sqrt{N}}2\left\Vert \mathbf{p}_{e}^{2t}-\mathbf{u}\right\Vert _2
\\&=\tfrac{\sqrt{N}}2\left\Vert \left(\tfrac{B^2}{Kk}\right)^{t}(\one_{e}-\mathbf{u})\right\Vert _2=\tfrac{\sqrt{N}}2\left\Vert \left(\tfrac{B^2}{Kk}\right)^{t}\mathbb{P}_{L_0^2(E)}(\one_{e})\right\Vert _2\\
 & \leq\text{\ensuremath{\tfrac{\sqrt{N}}2}}\left\Vert \left(\tfrac{B^2}{Kk}\right)^{t}\big|_{L_0^2(E)}\right\Vert _2\leq\sqrt{\frac{Nt^2Kk}{2(Kk)^{t}}}
\end{align*}
by (\ref{eq:B2t_2norm}). For any $\varepsilon>0$, at $2t=\log_{\sqrt{Kk}}N+3\log_{\sqrt{Kk}}\log N$
we obtain 
\[
\left\Vert \mathbf{p}_{e}^{2t}-\mathbf{u}\right\Vert _{TV}\leq\sqrt{\frac{N\left(\log_{Kk}N+3\log_{Kk}\log N\right)^2Kk}{2N\left(\log N\right)^3}}<\varepsilon
\]
once $N$ is large enough, and the lower bound on the mixing time follows from Lemma \ref{lem:mix-lower}.
\end{proof}

\begin{cor}[SRW cutoff] \label{cor:cutoff-SRW}
Restricted to even times, SRW on $(K\!+\!1,k\!+\!1)$-regular Ramanujan bigraphs exhibits cutoff at time $\frac{(K+1)(k+1)}{Kk-1}\log_{\sqrt{Kk}}n$ with window of size $O\left(\sqrt{\log n}\right)$.
\end{cor}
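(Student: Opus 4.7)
The plan is to deduce the SRW cutoff from the NBRW cutoff of Theorem \ref{thm:cutoff-NBRW}, adapting the strategy of \cite{lubetzky2016cutoff} for regular Ramanujan graphs to the biregular setting. The heuristic is that the SRW advances in a non-backtracking sense at speed $\rho := (Kk-1)/((K+1)(k+1))$ per step, so its TV-mixing time should be $\rho^{-1}$ times that of the NBRW, up to lower-order corrections. Note that a direct spectral bound via $\|\mathcal{M}^{2t}|_{L_0^2}\|_2 \leq \mu_2^t$ with $\mu_2 = (\sqrt{K}+\sqrt{k})^2/((K+1)(k+1))$ would give the strictly larger mixing time $\log n/\log(1/\mu_2)$, so a finer argument is required.

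I would first analyze the radial projection of SRW on the universal cover $\T_{K+1, k+1}$. Restricted to even times (so the walk stays on the $L$-side), the distance from the starting vertex $\ell_0 \in L$ performs a birth-death chain on $2\N_{\geq 0}$: from $2d > 0$, it moves to $2d-2$, $2d$, or $2d+2$ with probabilities $\tfrac{1}{(K+1)(k+1)}$, $\tfrac{K+k}{(K+1)(k+1)}$, and $\tfrac{Kk}{(K+1)(k+1)}$ respectively. This chain has two-step drift $2\rho$ and bounded per-step variance, so by the CLT the distance at time $2t$ equals $2\rho t + O(\sqrt{t})$ with high probability. Writing any SRW trajectory as a non-backtracking skeleton (obtained by loop-erasing backtracking excursions) plus independent excursions off it, one derives a mixture representation
\[
\mathcal{M}_X^{2t}\delta_{\ell_0} = \sum_{r=0}^{t} w_{t,r}\,\nu_r,
\]
where $\nu_r$ is the vertex distribution of a length-$2r$ NBRW starting at a random out-edge of $\ell_0$, and $w_{t,r}$ is the probability that the skeleton of a length-$2t$ SRW trajectory has length $2r$. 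By the CLT, $w_{t,\cdot}$ concentrates on $r \approx \rho t$ with $\sqrt{t}$-fluctuations. Combining with Theorem \ref{thm:cutoff-NBRW} (NBRW mixes in $\log_{\sqrt{Kk}} N$ steps) and using $\log_{\sqrt{Kk}} N = \log_{\sqrt{Kk}} n + O(1)$, the SRW mixes when $\rho t \approx \tfrac{1}{2}\log_{\sqrt{Kk}} n$, that is, at $2t^{*} = \tfrac{(K+1)(k+1)}{Kk-1}\log_{\sqrt{Kk}} n + O(\sqrt{\log n})$, with window $O(\sqrt{\log n})$ coming from the $\sqrt{t}$-fluctuations of the skeleton length. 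The matching lower bound follows by a symmetric argument: for $2t < 2t^{*} - C\sqrt{\log n}$ the skeleton length falls below the NBRW $(1-\varepsilon)$-mixing threshold of Lemma \ref{lem:mix-lower} with constant probability, forcing the TV-distance close to $1$.

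The main obstacle is justifying the mixture decomposition on the finite graph $X$, since cycles of $X$ can, in principle, obscure the skeleton of a SRW trajectory. A clean bypass is to cast the identity algebraically: both sides are polynomials in the operator $A_X^2$ (equivalently, in $B_X^2$ via Theorem \ref{thm:B-decomp} and the Ihara-Bass-type relation \eqref{eq:mu-by-lambda}), with coefficients determined purely by counts of walks on the tree $\T_{K+1,k+1}$ and hence independent of $X$. Once the coefficients are fixed by matching on the universal cover, the expansion holds on every biregular graph, and the probabilistic interpretation of $w_{t,r}$ (together with the concentration from the radial chain) can be transferred to any finite quotient, yielding both the upper and lower bounds on the $\varepsilon$-mixing time and hence the claimed cutoff with window $O(\sqrt{\log n})$.
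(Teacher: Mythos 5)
Your proposal is correct and follows essentially the same route as the paper: lift the SRW to the biregular tree, apply a CLT to the radial birth--death chain with two-step drift $\tfrac{2(Kk-1)}{(K+1)(k+1)}$, observe that conditioned on its distance the lifted walk is uniform on the sphere (so the SRW law is a mixture of NBRW laws weighted by the concentrating radial distribution), and invoke Theorem \ref{thm:cutoff-NBRW}, with the $O(\sqrt{\log n})$ window coming from the radial fluctuations. Your worry about justifying the mixture on the finite graph is unnecessary --- it follows directly by pushing forward the tree decomposition under the covering map, using the spherical symmetry of the lifted walk (the paper's ``symmetry considerations'') --- so the algebraic matching of polynomials in $A_X^2$, while valid, is not needed.
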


Interestingly, even though we study SRW, we need full Ramanujan and
not only adj-Ramanujan for the proof to work. The proof itself is
the same as the deduction of SRW-cutoff from NBRW-cutoff on regular
graphs in \cite{lubetzky2016cutoff}, and from geodesic flow on quotients of buildings
in \cite{Chapman2019CutoffRamanujancomplexes}. Let us give a informal
sketch: Choose a covering map $\varphi\colon\T\twoheadrightarrow X$,
where $\T=\T_{K\!+\!1,k\!+\!1}$ is the $(K\!+\!1,k\!+\!1)$-regular
tree. For a vertex $v_0\in V_{X}$, choose $\xi\in\varphi^{-1}(v_0)$.
Any walk $\left(X_{t}\right)$ on $X$ starting at $v_0$ lifts
via $\varphi$ to a unique walk $\left(\mX_{t}\right)$ on
$\T$ starting at $\xi$. Denoting $\rho\left(t\right):=\mathrm{dist}_{\T}(\xi,\mX_{t})$
and $Y_{t}=\rho(t)-\rho(t-2)$, we observe that except for the times
when $\mX_{t-2}=\xi$ (which occur only finitely many times
with probability one), the variables $Y_{t}$ are i.i.d.\ with expectancy
\[
\E\left(Y_{t}\,\middle|\,\mX_{t-2}\neq\xi\right)=\frac{2Kk-2}{(K+1)(k+1)}.
\]
It follows that $\rho(2t)=\sum_{j=1}^{t}Y_{2j}$ obeys a central limit
theorem with expectancy $\frac{Kk-1}{(K+1)(k+1)}\cdot t$, so at time
$t_0=\frac{(K+1)(k+1)}{Kk-1}\log_{\sqrt{Kk}}N$ the lifted walk
$\mX_{t_0}$ is roughly located at distance $\log_{\sqrt{Kk}}N$
from $\xi$. For simplicity we shall pretend that $\rho(t_0)=\log_{\sqrt{Kk}}N$,
and we can then deduce that $\mX_{t_0}$ distributes evenly
on the $\log_{\sqrt{Kk}}N$-sphere around $\xi$ in $\T$,
by symmetry considerations. Thus, $X_{t_0}=\varphi\left(\mX_{t_0}\right)$
distributes precisely as does NBRW starting from $v_0$ at time
$\log_{\sqrt{Kk}}N$, for which we have cutoff by Theorem \ref{thm:cutoff-NBRW}.

In \cite{Nestoridi2021Boundedcutoffwindow}, it is shown that for
regular Ramanujan graphs an additional assumption on the girth of the
graphs reduces the cutoff window to a bounded size. We next show
that the same holds for Ramanujan bigraphs. Our proof is
different and gives slightly stronger results, as we analyze the non-normal operator $B$ acting on edges,
whereas \cite{Nestoridi2021Boundedcutoffwindow} use Chebycheff polynomials
to relate the spectrum of $A=A_{X}$ to a self-adjoint non-backtracking
walk on vertices. With the necessary changes, our approach can be applied to the regular case as well.

\begin{defn}
\label{def:log-girth}A family of $(K\!+\!1,k\!+\!1)$-bigraphs has\emph{
$m$-logarithmic girth} (for $m\geq2$) if its members satisfy $\mathrm{girth}(X)\geq\frac2{m-1}\log_{\sqrt{Kk}}N_{X}$
for $N$ large enough. 

For example, the Ramanujan graphs $X_{\EE}^{p,q}$ constructed from
the Eisenstein lattice in Section \ref{sec:applications} have $6$-logarithmic
girth (Theorem \ref{thm:main-Eis}).
\end{defn}

\begin{thm}[Bounded cutoff for logarithmic girth]
\label{thm:bounded-cutoff}A family of $(K\!+\!1,k\!+\!1)$-regular
Ramanujan bigraphs with $m$-logarithmic girth exhibits cutoff
at time $\log_{\sqrt{Kk}}N$ and window size bounded by $2\log_{\sqrt{Kk}}\left(\tfrac1{\varepsilon}\right)+\log_{\sqrt{Kk}}7m^2$.\footnote{For technical convenience, we limit the allowed times to multiples
of $m$.}
\end{thm}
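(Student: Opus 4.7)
The lower bound $t_{1-\varepsilon} > \log_{\sqrt{Kk}} N - \log_{\sqrt{Kk}}(1/\varepsilon)$ follows directly from Lemma \ref{lem:mix-lower}, so the task is to establish the upper bound on $t_\varepsilon$. The plan is to adapt the bootstrap strategy of \cite{Nestoridi2021Boundedcutoffwindow}, working throughout with the edge operator $B^2/Kk$ and its spectral decomposition from Theorem \ref{thm:B-decomp}, rather than transferring to vertex operators.

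First I would split $2t = 2t_0 + 2t_1$ with $2t_0$ chosen as a multiple of $m$ just below the girth $g\geq\tfrac{2}{m-1}\log_{\sqrt{Kk}}N$, so that $t_0=\Theta(\log_{\sqrt{Kk}}N/(m-1))$. The crucial point is that for $2t_0<g$, the ball of radius $2t_0$ around any edge $e\in E$ is isomorphic to the corresponding ball in $\T_{K+1,k+1}$, so NB-walks of length $2t_0$ starting at $e$ reach exactly $(Kk)^{t_0}$ distinct edges, each via a unique walk. Consequently $B^{2t_0}\one_e$ is the $0/1$-indicator of these $(Kk)^{t_0}$ edges, yielding the exact formula
\[
\bigl\|(B^2/Kk)^{t_0}\one_e-\mathbf{u}_e\bigr\|_2^2 = (Kk)^{-t_0}-1/N
\]
uniformly in $e$.

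For the remaining $t_1$ steps I would propagate using Theorem \ref{thm:B-decomp}: on $L_0^2(E)$, $B^2$ decomposes as a block-diagonal matrix whose nontrivial blocks are either the $2\times 2$ Ramanujan upper-triangular blocks $D_j=\bigl(\begin{smallmatrix}\mu_j^{+2} & \eta_j\\ 0 & \mu_j^{-2}\end{smallmatrix}\bigr)$ (with $|\mu_j^{\pm 2}|=\sqrt{Kk}$) or scalar blocks with eigenvalues $-k$ and $1$ (the $-K$ block of size $\mathcal{E}_X$ is absent by Corollary \ref{cor:ram-by-theta}). Submultiplicativity and conversion to total variation give
\[
\|\mathbf{p}_e^{2t}-\mathbf{u}_e\|_{TV} \leq \tfrac{\sqrt{N}}{2}\cdot\bigl\|(B^2/Kk)^{t_1}\big|_{L_0^2}\bigr\|_2\cdot\sqrt{(Kk)^{-t_0}-1/N}.
\]
Provided the operator-norm factor is bounded by $C(m)\cdot(Kk)^{-t_1/2}$ with $C(m)$ independent of $N$, the prefactors $\sqrt{N}$ and $(Kk)^{-t/2}$ cancel at $2t = \log_{\sqrt{Kk}}N+2\alpha$, leaving a bound of the form $C(m)(Kk)^{-\alpha}$. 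Solving $C(m)(Kk)^{-\alpha}<\varepsilon$ then produces the stated window $2\log_{\sqrt{Kk}}(7m^2/\varepsilon)$.

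The main obstacle is precisely the last claim: obtaining $\bigl\|(B^2/Kk)^{t_1}|_{L_0^2}\bigr\|_2\leq C(m)(Kk)^{-t_1/2}$ with $C(m)=O(m^2)$. The direct Frobenius estimate of Lemma \ref{lem:Dt-norm} only yields $t_1\sqrt{3Kk}\,(Kk)^{-t_1/2}$, whose $t_1$-factor arises from the non-diagonalizability of the $D_j$ blocks (nonzero $\eta_j$) and would yield only the $\log\log N$ window of Theorem \ref{thm:cutoff-NBRW}. To eliminate the $t_1$-factor, I would iterate the bootstrap: partition the above-girth interval into $O(m)$ sub-intervals of length $<g/2$ each, and at every stage use the tree-local structure in that sub-interval to refresh the $0/1$-indicator presentation of the current distribution before re-applying the Ramanujan spectral bound. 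The window constant $7m^2$ then reflects the $O(m)$ sub-intervals produced by the iteration together with the $O(m)$-sized overhead per sub-interval, giving $C(m)\asymp m^2$.
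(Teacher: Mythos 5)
Your set-up (lower bound from Lemma \ref{lem:mix-lower}, exact $\ell^2$-computation of $\mathbf{p}_e^{2t_0}$ inside the girth, then propagation via the block decomposition of Theorem \ref{thm:B-decomp}) starts along the right lines, but the argument has a genuine gap exactly at the step you flag as "the main obstacle". The estimate you need, $\bigl\|(B^2/Kk)^{t_1}|_{L_0^2}\bigr\|_2\le C(m)(Kk)^{-t_1/2}$ with $C(m)$ independent of $N$, is false in general: a block $D_j$ with $\vartheta_j$ close to $0$ or $\pi$ (eigenvalues near the edge of the Ramanujan strip, which you cannot exclude) and $|\eta_j|\asymp Kk$ has $\|(D_j/Kk)^{t_1}\|_2\gtrsim (Kk)^{-t_1/2}\sqrt{Kk}\,\bigl|\tfrac{\sin t_1\vartheta_j}{\sin\vartheta_j}\bigr|\approx (Kk)^{-t_1/2}\sqrt{Kk}\,t_1$, so any operator-norm route inevitably carries the $\log N$ factor and only reproduces the window of Theorem \ref{thm:cutoff-NBRW}. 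Your proposed remedy --- iterating the bootstrap by partitioning the remaining time into sub-intervals of length below half the girth and "refreshing the $0/1$-indicator presentation" --- is not a well-defined operation: after time $\approx g/4$ the distribution is no longer a point mass, its support is not contained in a tree-like ball, and if you condition on the position at the intermediate time and restart from each edge you lose the cancellation (the triangle inequality over starting edges gives back the same problem at a time that is still above the girth threshold). As a secondary point, the tree-ball claim needs $4t_0<g$ rather than $2t_0<g$: two colliding non-backtracking walks of length $2t_0$ from $e$ only force a cycle of length at most $4t_0$.

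The paper's proof avoids the operator-norm framework entirely. It does not split the time additively as $t=t_0+t_1$; instead it takes $\tau=t/m$ (this is why times are restricted to multiples of $m$), so that $2\tau\le\mathrm{girth}/2$, and exploits the multiplicative relation through the elementary bound $\bigl|\tfrac{\sin m\tau\vartheta}{\sin\vartheta}\bigr|\le m\bigl|\tfrac{\sin\tau\vartheta}{\sin\vartheta}\bigr|$, which bounds the off-diagonal growth of $D_j^{t}$ by $m$ times that of $D_j^{\tau}$. The short-time quantities are then controlled in aggregate by the girth identity $(Kk)^{\tau}=\|B^{2\tau}\one_e\|_2^2\ge\sum_j\|D_j^{\tau}\one_e^{j}\|^2+\cdots$. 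Crucially the analysis is vector-specific: it tracks the coordinates $(\alpha_j,\beta_j)$ of $\one_e^{j}$ and splits indices into a set $\boldsymbol{J}$ according to whether the off-diagonal term is dominated by $|\alpha_j|$, because $\|D_j^{\tau}\one_e^{j}\|^2$ does not by itself control the off-diagonal contribution due to possible cancellation. To make your approach work you would need to supply an analogue of this vector-level bookkeeping; the operator-norm bound you posit cannot be salvaged as stated.
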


\begin{proof}
Since $e\in\overrightarrow{LR}$, $\one_{e}^{'j}=0$ for all $j$,
and write $\one_{e}^{j}=\left(\begin{smallmatrix}\alpha_{j}\\
\beta_{j}
\end{smallmatrix}\right)$ in the basis which corresponds to the matrix representation of $D_{j}$.
Since we assume the graph is Ramanujan, we also have $\one_{e}^{L}=0$. We take
\[
2t=\log_{\sqrt{Kk}}N+2\log_{\sqrt{Kk}}\left(\tfrac1{\varepsilon}\right)+\log_{\sqrt{Kk}}7m^2,
\]
(rounded up to a multiple of $m$) and let $\tau=\frac{t}{m}$. For
large enough $N$ we have 
\[
2\tau\leq\frac{\log_{\sqrt{Kk}}N+2\log_{\sqrt{Kk}}\left(\tfrac1{\varepsilon}\right)+\log_{\sqrt{Kk}}7m^2}{m}\leq\frac{\log_{\sqrt{Kk}}N}{m-1}\leq\frac{\mathrm{girth\,}X}2,
\]
which implies that $2\tau$ steps of NBRW take any edge to $(Kk)^{\tau}$
different edges. Thus, (\ref{eq:B2t_act}) gives 
\begin{equation}
\begin{alignedat}1(Kk)^{\tau} & =\left\Vert B^{2\tau}\one_{e}\right\Vert _2^2=\left\Vert (Kk)^{\tau}\one_{e}^1+\left[{\textstyle \sum_{j=2}^{n-\mE}}D_{j}^{\tau}\one_{e}^{j}\right]+(-k)^{\tau}\one_{e}^{R}+\one_{e}^{\chi}\right\Vert _2^2\\
 & \geq{\textstyle \sum_{j=2}^{n-\mE}}\left\Vert D_{j}^{\tau}\one_{e}^{j}\right\Vert ^2+k^{2\tau}\left\Vert \one_{e}^{R}\right\Vert ^2+\left\Vert \one_{e}^{\chi}\right\Vert ^2.
\end{alignedat}
\label{eq:girth_to_2norm}
\end{equation}
Let 
\begin{equation}
\boldsymbol{J}=\left\{ 2\leq j\leq n-\mE\,:\,\left|{\textstyle \frac{\eta_{j}}{\sqrt{Kk}}\frac{\sin\tau\vartheta_{j}}{\sin\vartheta_{j}}}\beta_{j}\right|\leq c\left|\alpha_{j}\right|\right\} ,\label{eq:in_J}
\end{equation}
where $c>2$ is to be determined later. If $j\notin\boldsymbol{J}$,
then 
\begin{align}
(Kk)^{-\tau}\left\Vert D_{j}^{\tau}\one_{e}^{j}\right\Vert ^2 & =\left\Vert \left(\begin{smallmatrix}e^{i\tau\vartheta_{j}} & \frac{\eta_{j}}{\sqrt{Kk}}\frac{\sin\tau\vartheta_{j}}{\sin\vartheta_{j}}\\
 & e^{-i\tau\vartheta_{j}}
\end{smallmatrix}\right)\left(\begin{smallmatrix}\alpha_{j}\\
\beta_{j}
\end{smallmatrix}\right)\right\Vert ^2
\nonumber \\&=\left|e^{i\tau\vartheta_{j}}\alpha_{j}+{\textstyle \frac{\eta_{j}}{\sqrt{Kk}}\frac{\sin\tau\vartheta_{j}}{\sin\vartheta_{j}}}\beta_{j}\right|^2+\left|\beta_{j}\right|^2\nonumber \\
 & \geq\left|\left|\alpha_{j}\right|-\left|{\textstyle \frac{\eta_{j}}{\sqrt{Kk}}\frac{\sin\tau\vartheta_{j}}{\sin\vartheta_{j}}}\beta_{j}\right|\right|^2+\left|\beta_{j}\right|^2\nonumber \\
 & =\left|\alpha_{j}\right|^2+\left|{\textstyle \frac{\eta_{j}}{\sqrt{Kk}}\frac{\sin\tau\vartheta_{j}}{\sin\vartheta_{j}}}\beta_{j}\right|^2-2\left|\alpha_{j}\right|\left|{\textstyle \frac{\eta_{j}}{\sqrt{Kk}}\frac{\sin\tau\vartheta_{j}}{\sin\vartheta_{j}}}\beta_{j}\right|+\left|\beta_{j}\right|^2\nonumber \\
(\text{as }j\notin\boldsymbol{J})\qquad & \geq\left|\alpha_{j}\right|^2+\left|{\textstyle \frac{\eta_{j}}{\sqrt{Kk}}\frac{\sin\tau\vartheta_{j}}{\sin\vartheta_{j}}}\beta_{j}\right|^2-\tfrac2{c}\left|{\textstyle \frac{\eta_{j}}{\sqrt{Kk}}\frac{\sin\tau\vartheta_{j}}{\sin\vartheta_{j}}}\beta_{j}\right|^2+\left|\beta_{j}\right|^2\nonumber \\
 & =\left|\alpha_{j}\right|^2+\left|\beta_{j}\right|^2+\tfrac{c-2}{c}\left|{\textstyle \frac{\eta_{j}}{\sqrt{Kk}}\frac{\sin\tau\vartheta_{j}}{\sin\vartheta_{j}}}\right|^2\left|\beta_{j}\right|^2.\label{eq:not_in_J}
\end{align}
Moving back to time $t=\tau m$, we have
\begin{align*}
(Kk)^{-t}{\textstyle \sum_{j=2}^{n-\mE}}\left\Vert D_{j}^{t}\one_{e}^{j}\right\Vert ^2 & ={\textstyle \sum_{j=2}^{n-\mE}}\left\Vert \left(\begin{smallmatrix}e^{it\vartheta_{j}} & \frac{\eta_{j}}{\sqrt{Kk}}\frac{\sin t\vartheta_{j}}{\sin\vartheta_{j}}\\
 & e^{-it\vartheta_{j}}
\end{smallmatrix}\right)\left(\begin{smallmatrix}\alpha_{j}\\
\beta_{j}
\end{smallmatrix}\right)\right\Vert ^2\\
 & ={\textstyle \sum_{j=2}^{n-\mE}}\left|e^{it\vartheta_{j}}\alpha_{j}+{\textstyle \frac{\eta_{j}}{\sqrt{Kk}}\frac{\sin\tau m\vartheta_{j}}{\sin\vartheta_{j}}}\beta_{j}\right|^2+\left|\beta_{j}\right|^2\\
 & \leq{\textstyle \sum_{j=2}^{n-\mE}}2\left|\alpha_{j}\right|^2+2\left|{\textstyle \frac{\eta_{j}}{\sqrt{Kk}}\frac{\sin\tau m\vartheta_{j}}{\sin\vartheta_{j}}}\right|^2\left|\beta_{j}\right|^2+\left|\beta_{j}\right|^2.
\end{align*}
Using the bound $\left|\frac{\sin\tau m\vartheta}{\sin\vartheta}\right|=\left|\frac{\sin\tau m\vartheta}{\sin\tau\vartheta}\right|\left|\frac{\sin\tau\vartheta}{\sin\vartheta}\right|\leq m\left|\frac{\sin\tau\vartheta}{\sin\vartheta}\right|$
we obtain 
\begin{align}
(Kk)^{-t}{\textstyle \sum_{j=2}^{n-\mE}}\left\Vert D_{j}^{t}\one_{e}^{j}\right\Vert ^2 & \leq{\textstyle \sum_{j=2}^{n-\mE}}2\left|\alpha_{j}\right|^2+\left|\beta_{j}\right|^2+2m^2\left|{\textstyle \frac{\eta_{j}}{\sqrt{Kk}}\frac{\sin\tau\vartheta_{j}}{\sin\vartheta_{j}}}\right|^2\left|\beta_{j}\right|^2\label{eq:bound_on_Dt}\\
\left(\text{using }\eqref{eq:in_J},\eqref{eq:not_in_J}\right)\quad & \leq\sum_{j\notin\boldsymbol{J}}\tfrac{2cm^2}{(c-2)(Kk)^{\tau}}\left\Vert D_{j}^{\tau}\one_{e}^{j}\right\Vert ^2+\sum_{j\in\boldsymbol{J}}2\left|\alpha_{j}\right|^2+\left|\beta_{j}\right|^2+2m^2\left(c\left|\alpha_{j}\right|\right)^2\nonumber \\
 & \leq\tfrac{2cm^2}{(c-2)(Kk)^{\tau}}\left[{\textstyle \sum_{j=2}^{n-\mE}}\left\Vert D_{j}^{\tau}\one_{e}^{j}\right\Vert ^2\right]+(2c^2+2)m^2{\textstyle \sum_{j=2}^{n-\mE}}\left\Vert \one_{e}^{j}\right\Vert ^2\nonumber \\
 & \leq\tfrac{2cm^2}{(c-2)(Kk)^{\tau}}\left[{\textstyle \sum_{j=2}^{n-\mE}}\left\Vert D_{j}^{\tau}\one_{e}^{j}\right\Vert ^2\right]+(2c^2+2)m^2.\nonumber 
\end{align}
Combining everything we obtain
\begin{align*}
\left\Vert \mathbf{p}_{e}^{2t}-\mathbf{u}\right\Vert _2^2 & =\left\Vert \left(\tfrac{B^2}{Kk}\right)^{t}\mathbb{P}_{L_0^2(E)}(\one_{e})\right\Vert ^2
\\&=\tfrac1{(Kk)^{2t}}{\textstyle \sum_{j=2}^{n-\mE}}\left\Vert D_{j}^{t}\one_{e}^{j}\right\Vert ^2+\tfrac1{K^{2t}}\left\Vert \one_{e}^{R}\right\Vert ^2+\tfrac1{(Kk)^{2t}}\left\Vert \one_{e}^{\chi}\right\Vert ^2\\
\left(\text{using }\eqref{eq:bound_on_Dt}\right)\quad & \leq\tfrac{2cm^2}{(c-2)(Kk)^{t+\tau}}\left[{\textstyle \sum_{j=2}^{n-\mE}}\left\Vert D_{j}^{\tau}\one_{e}^{j}\right\Vert ^2\right]+\tfrac{(2c^2+2)m^2}{(Kk)^{t}}+\tfrac1{K^{2t}}\left\Vert \one_{e}^{R}\right\Vert ^2+\tfrac1{(Kk)^{2t}}\left\Vert \one_{e}^{\chi}\right\Vert ^2\\
 & \leq\tfrac{2cm^2}{(c-2)(Kk)^{t+\tau}}\left[{\textstyle \sum_{j=2}^{n-\mE}}\left\Vert D_{j}^{\tau}\one_{e}^{j}\right\Vert ^2+k^{2\tau}\left\Vert \one_{e}^{R}\right\Vert ^2+\left\Vert \one_{e}^{\chi}\right\Vert ^2\right]+\tfrac{(2c^2+2)m^2}{(Kk)^{t}}\\
\left(\text{using }\eqref{eq:girth_to_2norm}\right)\quad & \leq\tfrac{\left(\frac{2c}{c-2}+2c^2+2\right)m^2}{(Kk)^{t}}.
\end{align*}
Taking $c=3$ we obtain $\left\Vert \mathbf{p}_{e}^{2t}-\mathbf{u}\right\Vert _2^2\leq\frac{26m^2}{(Kk)^{t}}$,
hence at $2t=\log_{\sqrt{Kk}}N+2\log_{\sqrt{Kk}}\left(\tfrac1{\varepsilon}\right)+\log_{\sqrt{Kk}}7m^2$
we obtain
\[
\left\Vert \mathbf{p}_{e}^{2t}-\mathbf{u}\right\Vert _{TV}\leq\tfrac{\sqrt{N}}2\left\Vert \mathbf{p}_{e}^{2t}-\mathbf{u}\right\Vert _2\leq\sqrt{\frac{N\cdot26m^2}{4(Kk)^{t}}}\leq\sqrt{\frac{N\cdot26m^2}{4N\varepsilon^{-2}\cdot7m^2}}\leq\varepsilon.
\]
As before, the lower bound follows from Lemma \ref{lem:mix-lower}.
\end{proof}
In the next theorem, we establish cutoff for adj-Ramanujan bigraphs
which satisfy Sarnak's Density Hypothesis. This is precisely the case
for principal arithmetic quotients of the Bruhat-Tits tree of $U_3$,
by Theorem \ref{thm:SXDH}. In addition, we shall assume that our
bigraphs are \emph{left-transitive}, namely, that their automorphism
groups act transitively on their left (smaller) side -- this is the
case for Cayley bigraphs as defined in Section \ref{sec:bicayley},
and in particular, for the Cayley quotients of the simply-transitive
lattices constructed in Section \ref{sec:lattices}. We remark that one could also remove
the left-transitivity assumption, and instead prove cutoff for $\Theta\left(N\right)$
many starting vertices, as done in \cite{Golubev2022CutoffgraphsSarnakXue}.

\begin{thm}[Cutoff under density hypothesis]
\label{thm:cutoff-density}Let $\FF$ be a family of left-transitive
adj-Ramanujan $(K\!+\!1,k\!+\!1)$-bigraphs, which satisfy the density
hypothesis $\mE_X<N_{X}^{\delta}$ with $\delta=\frac2{1+\log_{k}(K)}$.\footnote{For inert $U_3$ quotients, we have $\delta=\frac2{1+\log_q(q^3)}=\frac12$.}
Then $\FF$ exhibits cutoff at time $\log_{\sqrt{Kk}}N$
and window size bounded by
\begin{enumerate}
\item $\left(\tfrac2{1-\delta}\right)\log_{\sqrt{Kk}}\log N$ in general,
and
\item $\frac1{\delta}\log_{\sqrt{Kk}}\frac{K}{\varepsilon^2}$ if $\FF$
has $m$-logarithmic girth, and $\varepsilon\leq m^{\frac{\delta}{\delta-1}}\sqrt{K}$.
\end{enumerate}
\end{thm}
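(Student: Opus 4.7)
The plan is to extend the proofs of Theorems \ref{thm:cutoff-NBRW} and \ref{thm:bounded-cutoff} to the adj-Ramanujan setting, where the new difficulty is the non-trivial $-K$ eigenvalue of $B^2$ (with multiplicity $2\mathcal{E}_X$) coming from $\ker A|_L$. The key new ingredient will be a uniform bound on the weight of $\one_e$ in the corresponding eigenspace, obtained by combining left-transitivity with the density hypothesis.

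The first, and in my view hardest, step is to establish that $\|\one_e^L\|^2 = \mathcal{E}_X/N$ for every $e\in \overrightarrow{LR}$. Case (3) of the proof of Theorem \ref{thm:B-decomp} identifies the $(-K)$-eigenspace of $B^2$ inside $L^2(\overrightarrow{LR})$ as the image of $\ker A|_L$ under the isometry $g\mapsto g_{\ell o}/\sqrt{K+1}$ (the normalization coming from $\|g_{\ell o}\|^2 = (K+1)\|g\|^2$ in Lemma \ref{lem:inner-product}(2)). Expanding $\one_e$ against an orthonormal basis gives $\|\one_e^L\|^2 = \|\mathbb{P}_{\ker A|_L}(\delta_\ell)\|^2/(K+1)$ for $e = (\ell\to r)$. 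This depends only on $\ell$, and left-transitivity forces it to be independent of $\ell$; since $\sum_{e\in\overrightarrow{LR}} \|\one_e^L\|^2 = \dim\ker A|_L = \mathcal{E}_X$, one gets $\|\one_e^L\|^2 = \mathcal{E}_X/N < N^{\delta-1}$ under the density hypothesis. The analogous $\one_e^R$ and $\one_e^\chi$ contributions decay as $K^{-2t}$ and $(Kk)^{-2t}$ respectively and will be absorbed as lower-order terms.

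For part (1), I would then apply Lemma \ref{lem:Dt-norm} to each $D_j$-block (valid because adj-Ramanujan forces $\vartheta_j\in[0,\pi]$) and combine with the decomposition \eqref{eq:B2t_act} and the $L^2$-to-TV bound \eqref{eq:TV} to obtain
\[
\|\mathbf{p}_e^{2t}-\mathbf{u}_e\|_{TV}^2 \;\leq\; \frac{N(2+Kkt^2)}{4(Kk)^t} \,+\, \frac{N^\delta}{4\,k^{2t}} \,+\, o(1).
\]
The identity $2\log k/\log(Kk) = \delta$, which is exactly how $\delta = 2/(1+\log_k K)$ is calibrated, gives $k^{2t^*} = N^\delta$ at $t^* := \tfrac12\log_{\sqrt{Kk}}N$. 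Setting $2t = 2t^*+W$ with $W = \tfrac{2}{1-\delta}\log_{\sqrt{Kk}}\log N$ then makes $(\sqrt{Kk})^W = (\log N)^{2/(1-\delta)}$ and $k^W = (\log N)^{2\delta/(1-\delta)}$, so both bracketed terms are $O((\log N)^{-2\delta/(1-\delta)}) = o(1)$ as $N\to\infty$. The matching lower bound is Lemma \ref{lem:mix-lower}.

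For part (2), the $m$-logarithmic girth assumption lets me replace the naive $D_j$-estimate by the bootstrap argument of Theorem \ref{thm:bounded-cutoff}: with $\tau = t/m$ the walk of length $2\tau$ stays injective, so inequalities \eqref{eq:in_J}--\eqref{eq:bound_on_Dt} (taking $c=3$) give $\sum_{j\ge 2}\|D_j^t\one_e^j\|^2 \leq 26 m^2 (Kk)^t$. Combining with the density bound on $\|\one_e^L\|^2$ and choosing $W = \tfrac{1}{\delta}\log_{\sqrt{Kk}}(K/\varepsilon^2)$ yields $k^W = K/\varepsilon^2$ (by the same $\delta$-identity), bounding the density contribution by $\varepsilon^2/(4K)$, while the hypothesis $\varepsilon \leq m^{\delta/(\delta-1)}\sqrt{K}$ keeps the bootstrap contribution below $\varepsilon^2/2$. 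I expect the main obstacle throughout to remain Step 1 --- the clean identification of the $(-K)$-eigenspace with $\ker A|_L$ and the use of left-transitivity with the correct normalization --- after which the rest reduces to careful bookkeeping around the choice of $\delta$.
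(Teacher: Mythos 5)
Your proposal is correct, but it takes a genuinely different route from the paper's. The paper never controls the projection $\one_{e}^{L}$ pointwise: it uses left-transitivity to average $\Vert\mathbf{p}_{e}^{2t}-\mathbf{u}_{e}\Vert_{2}^{2}$ over the $\mathrm{Aut}(X)$-orbit of $e_{0}$ (of size at least $N/(K+1)$), rewrites the resulting sum over all of $\overrightarrow{LR}$ as the Frobenius norm $\Vert B^{2t}\vert_{\one^{\bot}}\Vert_{F}^{2}$, and reads off the density contribution as the aggregate term $\mathcal{E}K^{2t}$ inside that Frobenius norm; in part (2) it likewise uses the global girth identity $\Vert B^{2\tau}\Vert_{F}^{2}=N(Kk)^{\tau}$ together with $\Vert D^{t}\Vert_{F}^{2}\leq(Kk)^{t-\tau}m^{2}\Vert D^{\tau}\Vert_{F}^{2}$, rather than the pointwise bootstrap of Theorem \ref{thm:bounded-cutoff}. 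Your Step 1 is a valid replacement for that averaging: the $(-K)$-eigenspace of $B^{2}$ inside $L^{2}(\overrightarrow{LR})$ is indeed $\mathrm{Span}\{g_{\ell o}/\sqrt{K+1}\,:\,g\in\ker A\vert_{L}\}$ (case (3) of Theorem \ref{thm:B-decomp} plus Lemma \ref{lem:inner-product}), automorphisms preserve $L$ (as $K>k$) and commute with $A$, and the trace identity $\sum_{e}\Vert\one_{e}^{L}\Vert^{2}=\mathcal{E}_{X}$ gives the exact value $\Vert\one_{e}^{L}\Vert^{2}=\mathcal{E}_{X}/N$. What this buys is a pointwise estimate, which also makes part (2) a literal continuation of Theorem \ref{thm:bounded-cutoff}; what the paper's averaging buys is lighter bookkeeping and sharper constants. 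The calibration $k^{2t^{*}}=N^{\delta}$, $k^{W}=(\log N)^{2\delta/(1-\delta)}$ resp. $k^{W}=K/\varepsilon^{2}$ is identical to the paper's, so your part (1) lands on exactly the stated window.

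One quantitative caveat in your part (2): with $c=3$ the bootstrap gives $\sum_{j}\Vert D_{j}^{t}\one_{e}^{j}\Vert^{2}\leq26m^{2}(Kk)^{t}$, so after the TV-conversion and using $m^{2}\leq(K/\varepsilon^{2})^{(1-\delta)/\delta}$ that term is bounded by $26\varepsilon^{2}/(4K)$, not by $\varepsilon^{2}/2$ in general; adding the density term $\varepsilon^{2}/(4K)$ and the $R$- and $\chi$-remainders gives roughly $7.25\,\varepsilon^{2}/K$, which is at most $\varepsilon^{2}$ only for $K\geq 8$ (and your "below $\varepsilon^{2}/2$" claim for the bootstrap alone needs $K\geq13$). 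This covers all the arithmetic applications ($K=p^{3}$), and for small $K$ it costs only a bounded additive enlargement of the window, but as written it does not give the stated window for every $K>k$. The paper avoids this because its analogous constant is $K+1$, which is cancelled by the extra factor $K$ hiding in $(K/\varepsilon^{2})^{1/\delta}\geq m^{2}K/\varepsilon^{2}$, landing exactly below $\varepsilon$.
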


\begin{proof}
Let $X\in\FF$ and $\Gamma=\mathrm{Aut}\,X$. We restrict
our attention to $L^2(\overrightarrow{LR})$ (so henceforth $B^2$
stands for $B^2\big|_{L^2(\overrightarrow{LR})}$). We denote
$L_0^2(\overrightarrow{LR})=\one^{\bot}$, and\textbf{ $\mathbf{J}$}
the all-one matrix.\textbf{ }As $\Gamma$ acts by automorphisms and
$\left|\Gamma e_0\right|\geq\frac{N}{K+1}$, we have
\begin{align*}
\left\Vert \mathbf{p}_{e_0}^{2t}-\mathbf{u}\right\Vert _2^2 & =\frac1{\left|\Gamma e_0\right|}\sum_{e\in\Gamma e_0}\left\Vert \mathbf{p}_{e}^{2t}-\mathbf{u}\right\Vert _2^2\\&\leq\frac{K+1}{N}\sum_{e\in\Gamma e_0}\left\Vert \mathbf{p}_{e}^{2t}-\mathbf{u}\right\Vert _2^2\leq\frac{K+1}{N}\sum_{e\in\overrightarrow{LR}}\left\Vert \mathbf{p}_{e}^{2t}-\mathbf{u}\right\Vert _2^2\\
 & =\frac{K+1}{N(Kk)^{2t}}\sum_{e\in\overrightarrow{LR}}\left\Vert B^{2t}\one_{e}-(Kk)^{t}\mathbf{u}\right\Vert _2^2
 \\&=\frac{K+1}{N(Kk)^{2t}}\left\Vert B^{2t}-\tfrac{(Kk)^{t}}{N}\mathbf{J}\right\Vert _{F}^2
\end{align*}
Furthermore, $B^{2t}-\tfrac{(Kk)^{t}}{N}\mathbf{J}$ and $\left(B^{2t}\right)\big|_{\one^{\bot}}$
have the same non-zero singular values, hence the same Frobenius norm,
so that
\begin{align} \label{eq:density-L2-bound}
\left\Vert \mathbf{p}_{e_0}^{2t}-\mathbf{u}\right\Vert _2^2 & \leq\frac{K+1}{N(Kk)^{2t}}\left\Vert B^{2t}\big|_{\one^{\bot}}\right\Vert _{F}^2
\\ \nonumber &=\frac{K+1}{N(Kk)^{2t}}\left[\mathcal{N}k^{2t}+\mE K^{2t}+\chi(X)+\sum_{j=2}^{n-\mE}\left\Vert D_{j}^{t}\right\Vert _{F}^2\right]\\
\left(\text{using }\eqref{eq:D_norm_bounds}\right) \; & \leq\frac{K+1}{N(Kk)^{2t}}\left[\mathcal{N}k^{2t}+\mE K^{2t}+\chi(X)+(n-\mE-1)(Kk)^{t}\left(2+Kkt^2\right)\right]\nonumber \\
 & \leq\frac{K+1}{(Kk)^{t}}\left[\frac{\mE}{N}\left(\frac{K}{k}\right)^{t}+2Kkt^2\right]\nonumber 
\end{align}
(where we have used $\mathcal{N}+\chi(X)+(n-\mE-1)=Kn\leq N$).
The density hypothesis give $\tfrac{\mE}{N}<N^{\frac{1-\log_{k}K}{1+\log_{k}K}}$,
and taking $t_0=\log_{Kk}N+\left(\tfrac2{1-\delta}\right)\log_{Kk}\log N$
(with any basis $b>1$) we obtain 
\[
\frac{\mE}{N}\left(\frac{K}{k}\right)^{t_0}\leq N^{\frac{1-\log_{k}K}{1+\log_{k}K}}\left(\frac{K}{k}\right)^{t_0}=\left(\log N\right)^2.
\]
Thus, 
\[
\left\Vert \mathbf{p}_{e_0}^{2t_0}-\mathbf{u}\right\Vert _2^2\leq\frac{K+1}{(Kk)^{t_0}}\left(\log^2N+2Kkt_0^2\right)\leq\frac{4K^2k\left(\log^2N+t_0^2\right)}{(Kk)^{t_0}},
\]
which implies cutoff at time $2t_0=\log_{\sqrt{Kk}}N+\left(\tfrac2{1-\delta}\right)\log_{\sqrt{Kk}}\log N$,
since
\begin{align*}
\left\Vert \mathbf{p}_{e}^{2t_0}-\mathbf{u}\right\Vert _{TV} & \leq\tfrac{\sqrt{N}}2\left\Vert \mathbf{p}_{e}^{2t_0}-\mathbf{u}\right\Vert _2=\tfrac{\sqrt{N}}2\sqrt{\frac{4K^2k\left(\log^2N+t_0^2\right)}{(Kk)^{t_0}}}\\
 & =\sqrt{\frac{K^2k\left(\log^2N+\left(\log_{Kk}N+\frac2{1-\delta}\log_{Kk}\log N\right)^2\right)}{(\log N)^{\frac2{1-\delta}}}},
\end{align*}
and the latter goes to zero as $N\rightarrow\infty$ since $\frac2{1-\delta}>2$.
This concludes (1), and we now add the assumption that $\FF$
has $m$-logarithmic girth. Taking $t=\log_{Kk}N+\frac1{\delta}\log_{Kk}\frac{K}{\varepsilon^2}$
and $\tau=\frac{t}{m}$, we have 
\[
2\tau\leq\frac{\log_{\sqrt{Kk}}N+\frac1{\delta}\log_{\sqrt{Kk}}\frac{K}{\varepsilon^2}}{m}\leq\frac{\log_{\sqrt{Kk}}N}{m-1}\leq\frac{\mathrm{girth\,X}}2
\]
for large enough $N$. This means that $\tau$ steps of $B^2$ take
any edge in $\overrightarrow{LR}$ to $(Kk)^{\tau}$ distinct edges,
hence every column of $B^{2\tau}$ (w.r.t.\ the standard basis of
$L^2(\overrightarrow{LR})$) has $(Kk)^{\tau}$ ones and zeros elsewhere,
so that $\left\Vert B^{2\tau}\right\Vert _{F}^2=N(Kk)^{\tau}.$
From the spectral analysis of $B^2$ (restricted to $\overrightarrow{LR}$),
we have 
\[
N(Kk)^{\tau}=\left\Vert B^{2\tau}\right\Vert _{F}^2=(Kk)^{2\tau}+\mE K^{2\tau}+\mathcal{N}k^{2\tau}+\chi(X)+\sum\nolimits _{j=2}^{n-\mE}\left\Vert D_{j}^{\tau}\right\Vert _{F}^2.
\]
We turn to Lemma \ref{lem:Dt-norm} to observe that for each $D=D_{j}$
\begin{align*}
\left\Vert D^{t}\right\Vert _{F}^2&=(Kk)^{t}\left(2+\tfrac{|\eta|^2}{Kk}\left(\tfrac{\sin t\vartheta}{\sin\vartheta}\right)^2\right)\\&\leq(Kk)^{t}m^2\left(2+\tfrac{|\eta|^2}{Kk}\left(\tfrac{\sin\tau\vartheta}{\sin\vartheta}\right)^2\right)=(Kk)^{t-\tau}m^2\left\Vert D^{\tau}\right\Vert _{F}^2.
\end{align*}
Thus, returning to \eqref{eq:density-L2-bound} we obtain
\begin{align*}
\left\Vert \mathbf{p}_{e_0}^{2t}-\mathbf{u}\right\Vert _2^2&\leq\frac{K+1}{N(Kk)^{2t}}\left[\mE K^{2t}+\mathcal{N}k^{2t}+\chi(X)+\sum\nolimits _{j=2}^{n-\mE}\left\Vert D_{j}^{t}\right\Vert _{F}^2\right]\\
&\leq\frac{(K+1)}{N(Kk)^{2t}}\left[\mE K^{2t}+\mathcal{N}k^{2t}+\chi(X)+(Kk)^{t-\tau}m^2\sum\nolimits _{j=2}^{n-\mE}\left\Vert D_{j}^{\tau}\right\Vert _{F}^2\right]
\\
&=\frac{(K+1)}{N(Kk)^{2t}}\left(\mE K^{2t}+\mathcal{N}k^{2t}+\chi(X)\right>
\\ & \left. +(Kk)^{t-\tau}m^2\left(N(Kk)^{\tau}-(Kk)^{2\tau}-\mE K^{2\tau}-\mathcal{N}k^{2\tau}-\chi(X)\right)\right)\\
&=\frac{(K+1)}{N(Kk)^{2t}}\left( \mE\left(K^{2t}-m^2K^{t+\tau}k^{t-\tau}\right)+\mathcal{N}\left(k^{2t}-m^2K^{t-\tau}k^{t+\tau}\right)\right.
\\& \left. +\chi(X)\left(1-m^2(Kk)^{t-\tau}\right)+m^2N(Kk)^{t}-m^2(Kk)^{t+\tau}\right)\\
&\leq\frac{(K+1)}{(Kk)^{t}}\left[\frac{\mE}{N}\left(\frac{K}{k}\right)^{t}+m^2\right].
\end{align*}
\textcolor{lightgray}{}For $t=\log_{Kk}N+\frac1{\delta}\log_{Kk}\frac{K}{\varepsilon^2}$
we have $\frac{\mE}{N}\left(\frac{K}{k}\right)^{t}<\left(\frac{K}{\varepsilon^2}\right)^{\frac{1-\delta}{\delta}}$
(by $\mE<N^{\delta}$), and thus 
\[
\left\Vert \mathbf{p}_{e}^{2t}-\mathbf{u}\right\Vert _{TV}\leq\tfrac{\sqrt{N}}2\left\Vert \mathbf{p}_{e}^{2t}-\mathbf{u}\right\Vert _2<\tfrac{\sqrt{N}}2\sqrt{\frac{(K+1)\left(\left(K/\varepsilon^2\right)^{\frac{1-\delta}{\delta}}+m^2\right)}{(Kk)^{t}}}.
\]
The assumption $\varepsilon\leq m^{\frac{\delta}{\delta-1}}\sqrt{K}$
gives $\left(K/\varepsilon^2\right)^{\frac{1-\delta}{\delta}}\geq m^2$,
so that 
\[
\left\Vert \mathbf{p}_{e}^{2t}-\mathbf{u}\right\Vert _{TV}\leq\tfrac{\sqrt{N}}2\sqrt{\frac{2(K+1)\left(K/\varepsilon^2\right)^{\frac{1-\delta}{\delta}}}{(Kk)^{t}}}=\sqrt{\frac{(K+1)\left(K/\varepsilon^2\right)^{\frac{1-\delta}{\delta}}}{2\left(K/\varepsilon^2\right)^{1/\delta}}}<\varepsilon.
\]
\end{proof}

\subsection{Geodesic prime number theorem} \label{subsec:Geodesic-prime-number}

We now delve a bit into the prehistory of Ramanujan graphs. For a
lattice $\Gamma\leq SL_2\left(\R\right)$, Selberg introduced
a zeta function $\zeta_{\Gamma}\left(u\right)$ which counts primitive
geodesic cycles in the Riemann surface $\mathbb{H}/\Gamma$, and proved
some properties analogous to Riemann's zeta function \cite{Selberg1956Harmonicanalysisdiscontinuous}.
The analogue of the Riemann hypothesis in these settings is Selberg's
famous $\nicefrac1{4}$-conjecture. Selberg's trace formula expresses
$\zeta_{\Gamma}\left(u\right)$ in terms of the algebraic structure
of $\Gamma$, which prompted Ihara to suggest a $p$-adic analogue
for lattices $\Gamma$ in $SL_2\left(\Q_p\right)$ \cite{ihara1966discrete}.
It was pointed out by Serre that Ihara's zeta has a parallel geometric
interpretation -- it counts non-backtracking cycles in a finite graph,
which in modern language is the quotient of the Bruhat-Tits tree of
$SL_2\left(\Q_p\right)$ by $\Gamma$ \cite{serre1980trees}.
This means the Ihara zeta function can be defined for any graph,
and Sunada observed that the Riemann Hypothesis for a regular graph
is equivalent to being a Ramanujan graph \cite{sunada1986functions}.
Hashimoto \cite{hashimoto1989zeta} has introduced the non-backtracking
perspective, which uses $B$ rather than $A$, and makes the story
much simpler. For more details we refer the reader to \cite{Terras2011Zetafunctionsgraphs}. 

Let us denote by $N_{m}=N_{m}(X)$ the number of cyclically-non-backtracking
(CNB) cycles in a graph $X$. A CNB cycle is called \emph{primitive
}if it is not a proper power of a shorter one, and two CNB cycles which
differ by their starting point are considered equivalent. A \emph{prime} in $X$ is an equivalence class of primitive CNB cycles, and Ihara's
zeta function is the function $\zeta_{X}\left(u\right)=\prod_{\left[\gamma\right]}\frac1{1-u^{\mathrm{length}\,\gamma}}$,
where the product is over all primes in $X$. We have $N_m=\mathrm{tr}(B^m)$ by the definition of $B$, and every CNB cycle can be uniquely expressed as a power of a primitive one; it is a standard exercise in generating functions (see e.g.\ \cite{kotani2000zeta}) to see that these two observations yield
\[
\zeta_{X}\left(u\right)=\exp\left(\sum_{m=1}^{\infty}\frac{N_{m}}{m}u^{m}\right)=\prod_{\mu\in\Spec B}\frac1{1-\mu u}=\frac1{\det\left(I-uB\right)}.
\]
For a $(K\!+\!1,k\!+\!1)$-bigraph, using the change of variable $u=\sqrt{Kk}^{-s}$
(where $s\in\C$) we note that $\zeta_{X}$ has poles at $s\in\left\{ \log_{\sqrt{Kk}}\mu\,\middle|\,\mu\in\Spec B\right\} $.
 In particular, as was observed in \cite{hashimoto1989zeta}, $\zeta_{X}$ has no poles with real part in $\left(\tfrac12,1\right)$ (which is the right analogue of the Riemann Hypothesis in this case),
if and only if $X$ is NB-Ramanujan. As in the number theoretic setting, the zeta function can be related to prime counting:
\begin{thm}[PNT for bigraphs]\label{thm:PNT}
If $\pi(m)$ is the number of primes of length $m$ in an adj-Ramanujan
$(K\!+\!1,k\!+\!1)$-bigraph $X$ with $N$ edges, then
\[
\left|\pi(2m)-\frac{(Kk)^{m}}{m}-\mE_X\frac{\left(-K\right)^{m}}{2m}\right|\leq2N(Kk)^{m/2}.
\]
\end{thm}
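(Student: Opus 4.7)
The plan is to marry the Möbius relation underlying Ihara's zeta with the explicit spectral block decomposition of $B_X$ given in Theorem \ref{thm:B-decomp}. Setting $N_r:=\tr(B_X^r)$, the two expressions for $\zeta_X$ recalled just above the theorem yield $N_r=\sum_{d\mid r}d\,\pi(d)$; bipartiteness forces $\pi(d)=N_d=0$ for odd $d$, so Möbius inversion on the even indices gives
\[
2m\,\pi(2m)=\sum_{e\mid m}\mu(m/e)\,N_{2e},
\]
and the whole task becomes the evaluation of each $N_{2e}$ together with a careful aggregation of errors under this divisor sum.

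The second step is to read $\tr(B_X^{2e})$ off the block decomposition. Each $2\times 2$ block $\bigl(\begin{smallmatrix}0 & a\\ b & 0\end{smallmatrix}\bigr)$ squares to $ab\cdot I_2$, so it contributes exactly $2(ab)^e$ to $\tr(B_X^{2e})$. Grouping blocks by type as in Theorem \ref{thm:B-decomp} yields
\[
N_{2e}=2(Kk)^{e}+2\mathcal{E}_X(-K)^{e}+2\mathcal{N}_X(-k)^{e}+2\chi(X)+R_e,
\]
where $R_e=2\sum_{j=2}^{n-\mathcal{E}_X}\bigl((\mu_j^+)^{2e}+(\mu_j^-)^{2e}\bigr)$ collects the contributions of the ``generic'' blocks $2(a)$ and $2(b)$. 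By Corollary \ref{cor:ram-by-theta}, the adj-Ramanujan hypothesis forces $|\mu_j^\pm|=(Kk)^{1/4}$ for every $j$, giving the clean pointwise bound $|R_e|\leq 4(n-\mathcal{E}_X-1)(Kk)^{e/2}$.

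Substituting this decomposition into the Möbius identity and isolating the $e=m$ summand, the main terms $2(Kk)^m$ and $2\mathcal{E}_X(-K)^m$ emerge (matching, after division by $2m$, the principal contributions advertised in the statement), and every remaining piece is controlled via three observations: (i) for a proper divisor $e\mid m$ one has $e\leq m/2$, hence $(Kk)^e, K^e, k^e\leq (Kk)^{m/2}$ (using $K\geq k\geq 1$); (ii) $|\mu(m/e)|\leq 1$; and (iii) the uniform bound $|R_e|\leq 4n(Kk)^{e/2}$. The identities $\mathcal{N}_X=\tfrac{K-k}{k+1}n+\mathcal{E}_X$ and $\chi(X)=\tfrac{Kk-1}{k+1}n+1$, together with $\mathcal{E}_X, \mathcal{N}_X, \chi(X), n-\mathcal{E}_X-1 \leq |V|\leq N$, then absorb the residual contributions $2\mathcal{N}_X(-k)^m$ and $2\chi(X)$, together with every Möbius tail, into a total error of the form $4mN(Kk)^{m/2}$; dividing by $2m$ yields the desired inequality.

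The only delicate bookkeeping point is matching the explicit constant $2N$ on the right-hand side: the $R_m$ contribution alone accounts for roughly $4(n-\mathcal{E}_X-1)(Kk)^{m/2}/(2m)$, while the Möbius tails (with at most $d(m)-1$ non-zero terms) and the residual principal contributions must collapse into the remaining slack. The saving grace is that for proper divisors $e\mid m$ the relevant geometric sums $\sum_{e\mid m,\,e<m}(Kk)^{e/2}$, $\sum_{e\mid m,\,e<m}K^e$, and $\sum_{e\mid m,\,e<m}k^e$ are all dominated by their $e\leq m/2$ terms and hence bounded by $(Kk)^{m/2}$ times a polynomial-in-$m$ factor, leaving plenty of room inside the prefactor $2N$. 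Once this bookkeeping is carried out the proof is complete; the conceptual content lies entirely in the spectral formula for $N_{2e}$ and the adj-Ramanujan bound $|\mu_j^\pm|=(Kk)^{1/4}$.
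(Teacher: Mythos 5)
Your route is the same as the paper's: write $N_{2e}=\tr(B_X^{2e})$, invert the relation $N_r=\sum_{d\mid r}d\,\pi(d)$ by M\"obius, evaluate the trace from the block decomposition of Theorem \ref{thm:B-decomp}, and use adj-Ramanujan to force $|\mu_j^{\pm}|=(Kk)^{1/4}$ on the generic blocks. The only cosmetic difference is that you feed the full spectral formula into every term $N_{2e}$ of the M\"obius tail, whereas the paper simply bounds the tail by the crude cycle count $N_d\le 2N(Kk)^{d/2}$; either works, and your aggregation of the tail and of the terms $2\mathcal{N}_X(-k)^m+2\chi(X)+R_m$ into the stated constant is left loose (your claimed total $4mN(Kk)^{m/2}$ is not actually verified, and at $m=1$ it is not obviously true with your stated bounds), but it can be repaired exactly as in the paper, since after dividing by $2m$ each of these pieces is at most $N(Kk)^{m/2}$.

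The one substantive issue is the factor of $2$ on the $\mathcal{E}_X$ term. Your trace computation is right: the block $\left(\begin{smallmatrix} & i\\ iK\end{smallmatrix}\right)$ occurs $\mathcal{E}_X$ times and each copy contributes \emph{both} eigenvalues $\pm i\sqrt{K}$, so $N_{2m}$ contains $2\mathcal{E}_X(-K)^m$, and after dividing by $2m$ the secondary main term is $\mathcal{E}_X\frac{(-K)^m}{m}$, not $\mathcal{E}_X\frac{(-K)^m}{2m}$ as in the statement. You assert these "match", which is a slip, and the discrepancy $\mathcal{E}_X\frac{K^m}{2m}$ cannot be absorbed into $2N(Kk)^{m/2}$ when $K\gg k$ and $\mathcal{E}_X>0$, so as written your argument proves the inequality with $\frac{(-K)^m}{m}$ in place of $\frac{(-K)^m}{2m}$ rather than the literal statement. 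Be aware, though, that the paper's own proof writes the contribution of the $\pm i\sqrt{K}$ eigenvalues to $\tr(B^{2m})$ as $\mathcal{E}_X(-K)^m$, i.e.\ it drops the same factor of $2$ that your block count correctly produces (compare Corollary \ref{cor:B-spec}, where each of $\pm i\sqrt{K}$ has multiplicity $\mathcal{E}_X$); so the printed constant appears to be the anomaly and your spectral bookkeeping is the correct one --- just state explicitly which version of the secondary term you are proving instead of claiming agreement with the displayed statement.
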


\begin{proof}
As every CNB-cycle is a power of a unique primitive one we have $N_{2m}=\sum_{d\mid2m}d\pi(d)$,
so that Möbius inversion gives 
\begin{equation}
\left|\pi(2m)-\frac{N_{2m}}{2m}\right|=\frac1{2m}\left|\sum\nolimits _{2m\neq d\mid2m}\boldsymbol{\mu}\left(\tfrac{2m}{d}\right)N_{d}\right|\leq N\sqrt{Kk}^{m},\label{eq:pi-N}
\end{equation}
where $\boldsymbol{\mu}$ is the Möbius function and the r.h.s.\ is a crude bound
for the number of cycles of length at most $m$. By the definition of
$B$ we have $N_{2m}=\mathrm{tr}(B^{2m})$, and the spectral analysis
of $B$ (Theorem \ref{thm:B-decomp}) yields 
\begin{equation}
N_{2m}=\sum\nolimits _{\mu\in\Spec\left(B\right)}\mu^{2m}=2(Kk)^{m}+\mE_X(-K)^{m}+\sum\nolimits _{{\mu\in\Spec B\atop \left|\mu\right|\leq\sqrt[4]{Kk}}}\mu^{2m}.\label{N-spec}
\end{equation}
Combining (\ref{eq:pi-N}) and (\ref{N-spec}) yields the theorem:
\begin{align*}
&\left|\pi(2m)-\frac{(Kk)^{m}}{m}-\mE_X\frac{\left(-K\right)^{m}}{2m}\right|\\
\leq & N\sqrt{Kk}^{m}+\tfrac1{2m}\sum\nolimits _{{\mu\in\Spec B\atop \left|\mu\right|\leq\sqrt[4]{Kk}}}\left|\mu\right|^{2m}\leq2N(Kk)^{m/2}.\qedhere
\end{align*}
\end{proof}
We remark that the non-Ramanujan graphs which we construct in Theorem \ref{thm:main-Eis} have $K=q^3,k=q$
for some prime $q$. As they are still adj-Ramanujan, their zeta
function satisfy the R.H.\ with the exception of $\mE_X$ additional  poles at 
\[
s=\frac3{4}\pm\frac{\pi}{4\ln q}i.
\]

Going back to prime counting, it is interesting to observe that for
adj-Ramanujan graphs which are not Ramanujan, the deviation of the
number of primes from the ``Riemann hypothesis'' prediction ($\frac{(Kk)^{m}}{m}$)
fluctuates between over and under-counting, according to $2m\Mod4$.
\section{Simply-transitive Lattices in Unitary Groups} \label{sec:lattices}

In this section we begin our arithmetic construction of explicit Ramanujan bigraphs. 
In \ref{subsec:unitarygroups} we define unitary groups schemes, and describe the Bruhat-Tits tree of $p$-adic unitary groups.
In \ref{subsec:lattices-main} we construct arithmetic lattices and state our main Theorem \ref{thm-simply-transitive}, which claims that these arithmetic lattices act simply-transitively on the left hand side of the Bruhat-Tits trees.
In \ref{subsec:lattices-parahorics}, \ref{subsec:lattices-transitive} and \ref{subsec:lattices-simply} we prove Theorem \ref{thm-simply-transitive}. 
Finally in \ref{subsec:lattices-strong-approx} we collect some auxiliary results which will be needed in later Sections (e.g. strong approximation).

\subsection{Unitary groups in three variables} \label{subsec:unitarygroups}

Let $E/F$ be a separable quadratic extension of global fields, and $g\mapsto g^*$ the associated involution on $GL_3(E)$, i.e.\ $(g^{*})_{i,j}=\tau(g_{j,i})$ for $\left\langle \tau\right\rangle =\mathrm{Gal}(E/F)$. Classical unitary groups (which give rise to lattices of type (I) on page \pageref{type-I-unitary}) arise from a choice of an Hermitian form $\Phi\in GL_3(E)$ (i.e.\ $\Phi^{*}=\Phi$). For $\mO_{F}$ the ring of integers of $F$, the unitary group scheme over $\mO_{F}$ associated with $E,\Phi$ is defined by
\begin{equation} \label{eq:U_de}
U_3\left(E,\Phi\right)(R)=\left\{ g\in GL_3\left(\mO_{E}\otimes_{\mO_{F}}R\right)\,\middle|\,g^{*}\Phi g=\Phi\right\} 
\end{equation}
for any $\mO_{F}$-algebra $R$ (note that one can always assume by scaling that $\Phi$ has coefficients in $\mO_E$). For example, for $E=\Q[\sqrt{-3}]$, $F=\Q$ and $\Phi=I$, we obtain for any commutative ring $R$:
\[
U_3\left(\Q\left[\sqrt{-3}\right],I\right)\left(R\right) = \left\{ g\in GL_3\left(R\left[\tfrac{1+\sqrt{-3}}2\right]\right)\,\middle|\,g^{*}g=I\right\}.
\]
The quotient of $ U_3(E,\Phi)$ by its center is called the \emph{projective unitary group scheme}, and denoted by $ PU_3(E,\Phi)$.


For any place $v$ of $F$, and either $G=U_3(E,\Phi)$ or $G=PU_3(E,\Phi)$, we denote by $G_v = G(F_v)$ the group of $F_v$-rational points of $G$. 
When $v$ is a finite place which does not split in $E$, $E_v = \mO_E \otimes F_v$ is a quadratic extension of the local field $F_v$, and $G_v$ acts on a Bruhat-Tits tree. Let us give now a brief description of this tree, for the case that $E_v/F_v$ is unramified (the ramified case is slightly different - see e.g.\ \cite[\S3]{Evra2018RamanujancomplexesGolden}).

Let $\varpi$ be a uniformizer in $F_v$ (and in $E_v$) and $q=|\mO_{F_v}/\varpi|$ the size of the residue field of $F_v$ (so that $|\mO_{E_v}/\varpi|=q^2$). 
Let $\wtB$ be the two-dimensional Bruhat-Tits building of $\wtG = PGL_3\left(E_v\right)$, whose vertices correspond to cosets $\wtG/\wtK$, where $\wtK = PGL_3\left(\mO_{E_v}\right)$ (see e.g.\ \cite[\S V(8)]{Brown1989}). 
If $v_0\in \wtB$ is the vertex with stabilizer $\wtK$, there is a simplicial involution $\#\colon\wtB\rightarrow\wtB$ defined on vertices by $\left(gv_0\right)^{\#}=\Phi^{-1}\left(g^{*}\right)^{-1}v_0$ (for all  $g\in \wtG$).
By \cite[\S2.6.1]{Tits1979Reductivegroupsover}, the Bruhat-Tits tree $\B$ of $G_v$ is a $\left(q^3+1,q+1\right)$-regular tree, which can be identified with the fixed-section of the involution $\#$:
the $\B$-vertices of degree $q^3+1$ are the $\wtB$-vertices fixed by $\#$, those of degree $q+1$ are midpoints of $\wtB$-edges flipped by $\#$, and the $\B$-edges are medians of $\tB$-triangles reflected by $\#$. 
The hyperspecial vertices $\B^{hs}\subseteq\B^0$ are precisely those which were vertices in $\wtB$, namely those of degree $q^3+1$.

By Bruhat-Tits theory (see \cite[\S2]{Tits1979Reductivegroupsover}), $G_v$ acts simplicially on $\B$ and transitively on its maximal faces (the undirected edges). 
Since $\B$ is a biregular tree it follows that $G_v$ acts transitively on the hyperspecial vertices $\B^{hs}\subseteq\B^0$, as well as on the non-hyperspecial ones.
For any $x\in \B$, there is an associated subgroup $P_x \leq G_v$, called a \textit{parahoric} subgroup, which is precisely the stabilizer of $x$ in $G_v$ by the work of \cite{Haines2008Parahoric} (see Lemma \ref{lem:parahorics-equiv}). If $x \in \B^1$, then $P_x$ is called an \textit{Iwahori} subgroup, which is a minimal parahoric. For $x \in \B^0$, $P_x$ is a maximal parahoric, and if $x \in \B^{hs}$, then $P_x$ is called a hyperspecial maximal parahoric. Since $v_0^\#=\Phi^{-1}v_0$, whenever $\Phi\in \wtK$, $v_0$ itself is a hyperspecial vertex in $\B$.\footnote{This is in fact the case in all the examples which we shall consider, with the exception of the CMSZ lattice (Theorem \ref{thm-simply-transitive}($\CC$)) over $p=2$, which requires extra caution.} As its $G_v$-stabilizer is $G(\mO_{F_v}) = G_v\cap\wtK$, and since $G_v$ acts transitively on $\B^{hs}$, we obtain an identification $\B^{hs}\cong G(F_v)/G(\mO_{F_v})$.

\subsection{Simply transitive lattices} \label{subsec:lattices-main}

The main goal of this section is to construct four definite unitary group schemes over $\Z$ and prove that they give rise to $p$-arithmetic congruence subgroups which act simply-transitively on the hyperspecial vertices of the Bruhat-Tits building of the corresponding $p$-adic group (Theorem \ref{thm-simply-transitive}).   

\begin{defn}\label{def-unitary-datum}
A unitary datum (over $\Q$) is a pair $(E,\Phi)$, of an imaginary quadratic field $E/\Q$, and a definite hermitian matrix $\Phi \in GL_3(E)$.\footnote{This extends to any totally real global field $F$ replacing $\Q$, where $E$ is now a CM extension of $F$, and $\Phi$ is definite at every real place of $F$.}
Let $G=U_3(E,\Phi)$ be the unitary group scheme over $\Z$ associated to  $(E,\Phi)$.
An arithmetic datum is a pair $(R,\mK)$, where $R$ is a finite set of primes such that for each $p\not\in R$, the subgroup $K_p := G(\Z_p) \leq G_p$ is maximal parahoric (and hyperspecial for almost all $p$), and a finite set of maximal parahoric subgroups for the remaining primes $\mK = \{K_p \leq G(\Q_p)\}_{p\in R}$.
For any prime $p$, define the principal $p$-arithmetic group associated to $G/\Q$ and the family of parahoric subgroups $\{K_\ell \leq G_\ell \}_\ell$,
\begin{equation}
\Gamma^p = G(\Q) \bigcap \prod_{\ell \ne p} K_\ell \leq G_p.
\end{equation}
A congruence datum is a pair $(M,H)$, of an ideal $M \trianglelefteq \mO_E$, whose norm is a power of a single non-split prime $m \not \in R$, and a subgroup $H$ of the finite group $G[M] := G(\Z_m) \Mod M \leq GL_3(\mO_E/M)$.
Note that the modulo $M$ map is a well defined homomorphism from $K_m = G(\Z_m)$ to $G[M]$.
Denote the level $(M,H)$ congruence subgroup of $K_m$ to be $K_m(H) = \{g \in K_m \; : g \mod M \in H \}$.
For any prime $p\ne m$, since $\Gamma^p \leq K_m$, the modulo $M$ map induces an homomorphism from $\Gamma^p$ to $G[M]$.
Define the associated level $(M,H)$ congruence $p$-arithmetic subgroup to be
\begin{equation}
\tilde\Lambda^p = G(\Q) \bigcap \prod_{\ell \ne p,m} K_\ell \cdot K_m(H) = \{g \in \Gamma^p \; : g \mod M \in H \}.
\end{equation}
A strong unitary datum is a sextuple $(E,\Phi, R, \mK, M, H)$, where $(E,\Phi)$ is a unitary datum (over $\Q$), $(R,\mK)$ is an arithmetic datum and $(M,H)$ is a congruence datum. 
\end{defn}

Note that for the arithmetic datum $(R,\mK)$ with $R=\emptyset$ and $\mK = \emptyset$, then $\Gamma^p = G(\Q) \bigcap \prod_{\ell \ne p}G(\Z_l) = G(\Z[1/p])$.
Also note that for the congruence datum $(M,H)$ with $M=m$ a rational prime and $H = \{I\}$ the trivial subgroup, then $K_m(H)$ is the principal congruence subgroup of level $m$, which we shall also denote by $K_m(m)$.

The main purpose of this section is to prove the following Theorem. 
\begin{thm} \label{thm-simply-transitive}
For each of following strong unitary datum, ($\EE$), ($\GG$), ($\MM$), and ($\CC$), and each prime $p\ne m$, the associated congruence $p$-arithmetic subgroup modulo its center $\Lambda^p = \tilde\Lambda^p/Z(\tilde\Lambda^p)$, where $Z(\tilde\Lambda^p) = \{I\}$ for ($\EE$), ($\GG$),  and ($\CC$), and  $Z(\tilde\Lambda^p) = \{\pm I\}$ for ($\MM$), acts simply-transitively on the hyperspecial vertices on the Bruhat-Tits building of $G(\Q_p)$.
\begin{itemize}
\item[($\EE$)] $E=\Q[\omega]$, $\omega = \frac{-1+\sqrt{-3}}2$, $\Phi= \bmx 1 & 0 &  0 \\ 0 & 1 &  0 \\ 0 & 0 & 1 \emx$, $R=\emptyset$, $\mK=\emptyset$, $M=(3)$, $m=3$ and 
\begin{equation} \label{eq:Eis_H}
H = \left\{ \bmx 1 & * & * \\ * & 1 & * \\ * & * & 1 \emx \right\} \leq G[3].
\end{equation}
Call $\Lambda^p_\EE$ the Eisenstein lattice.

\item[($\GG$)]  $E = \Q[i]$, $i = \sqrt{-1}$, $\Phi= \bmx 1 & 0 &  0 \\ 0 & 1 &  0 \\ 0 & 0 & 1 \emx$, $R=\emptyset$, $\mK=\emptyset$, $M=(2 + 2i)$, $m=2$ and 
\begin{equation} \label{eq:GG_H}
H = \left\{ \bmx 1 & * & * \\ * & 1 & * \\ * & * & 1 \emx \right\} \leq G[2+2i].
\end{equation}
Call $\Lambda^p_\GG$ the Gauss lattice.

\item[($\MM$)] $E=\Q[\lambda]$, $\lambda = \frac{-1+\sqrt{-7}}2$, $\Phi=\bmx 3 & \bar{\lambda} & \bar{\lambda}\\ \lambda & 3 & \bar{\lambda}\\ \lambda & \lambda & 3 \emx$, $R=\emptyset$, $\mK=\emptyset$, $M=(2)$, $m=2$ and 
\begin{equation} \label{eq:MM_H}
H = \left\{ \bmx * & * & * \\  & * & * \\  &  & * \emx \right\} \leq GL_3(\F_2) \cong G[2].
\end{equation}
Call $\Lambda^p_{\MM}$ the Mumford lattice.

\item[($\CC$)] $E=\Q[\eta]$, $\eta = \frac{1-\sqrt{-15}}2$, 
$\Phi=\bmx 10 & -2(\eta+2) & \eta+2\\ -2(\bar{\eta}+2) & 10 & -2(\eta+2)\\ \bar{\eta}+2 & -2(\bar{\eta}+2) & 10 \emx$, $R=\{2\}$, $\mK = \{K_2\}$ where $K_2$ is defined in Definition \ref{def-parahoric}, $M=(3,1+\eta)$, $m=3$ and 
\begin{equation} \label{eq:CMSZ_H}
H= \left\langle \bmx 1 & 1 & 0 \\ -1 & 1 & 0 \\ 0 & 0 & 1 \emx, \bmx \pm I_2 & 0 \\ & 1 \emx, \bmx I_2 & b \\ & 1 \emx \mid  b\in \F_3^2 \right\rangle \leq G[M].
\end{equation}
Call $\Lambda^p_\CC$ the CMSZ lattice.
\end{itemize}
\end{thm}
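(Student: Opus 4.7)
The plan follows the three-step outline in Section \ref{sec:intro}: (i) determine the parahoric structure at every rational prime; (ii) show that the principal $p$-arithmetic group $\Gamma^p = G(\Q) \cap \prod_{\ell \ne p} K_\ell$ already acts transitively on the hyperspecial vertices $\B^{hs}$; and (iii) verify that after imposing the congruence condition $(M,H)$ and quotienting by the (central) torsion $Z(\tilde\Lambda^p)$, the stabilizers become trivial while transitivity is preserved.

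\emph{Step 1 (parahoric data).} For each rational prime $\ell$ one must decide whether $\ell$ is split, inert or ramified in $E/\Q$, identify the parahoric type of $K_\ell = G(\Z_\ell)$ in the non-split case, and compute its reductive quotient. This is the content of Section \ref{subsec:lattices-parahorics} and feeds both remaining steps. Care is needed at the ramified primes dividing $\mathrm{disc}(E/\Q)$, at the primes where $\Phi$ is not $GL_3(\mathcal{O}_{E_\ell})$-equivalent to the identity, and in case ($\CC$) at $\ell = 2$, where the chosen $K_2$ is non-hyperspecial.

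\emph{Step 2 (transitivity via class number one).} Since $G$ is definite, $G(\R)$ is compact and the adelic double coset space $G(\Q) \backslash G(\A^f)/K^f$, with $K^f = \prod_\ell K_\ell$, is finite; standard adelic arguments identify it with the set of $\Gamma^p$-orbits on $\B^{hs}$. Hence transitivity of $\Gamma^p$ reduces to the class number equality $h(G,K^f) = 1$. I will apply Prasad's volume formula in the explicit form of Gan--Hanke--Yu \cite{gan2001exact}, which gives an exact value for $\mathrm{mass}(G,K^f) = \sum_i |G(\Q) \cap g_i K^f g_i^{-1}|^{-1}$ in terms of the local data computed in Step 1. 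The goal is to verify, in each of $(\EE),(\GG),(\MM),(\CC)$, that this mass equals $1/|G(\Z)\cap K^f|$, which forces the sum to contain a single term and yields the desired transitivity.

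\emph{Step 3 (killing the stabilizer via congruence).} Let $S := G(\Q) \cap \prod_\ell K_\ell$, a finite group that is the $\Gamma^p$-stabilizer of the base hyperspecial vertex $v_0$, and let $\bar S$ be its image in the finite group $G[M]$. Two checks are required: (a) transitivity of $\tilde\Lambda^p$ on $\B^{hs}$, equivalent to $\Gamma^p = \tilde\Lambda^p\cdot S$; by strong approximation at $m$ the reduction $\Gamma^p \to G[M]$ is surjective, so this amounts to the equality $G[M] = H \cdot \bar S$; and (b) the intersection $\tilde\Lambda^p \cap K_p = \{g \in S : g \bmod M \in H\}$ is contained in $Z(\tilde\Lambda^p)$, i.e.\ in $\{I\}$ for $(\EE),(\GG),(\CC)$ and in $\{\pm I\}$ for $(\MM)$. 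The group $S$ is a finite subgroup of the compact $G(\R)$ and can be enumerated explicitly (consisting essentially of monomial matrices with roots-of-unity entries and possible sign flips), so both (a) and (b) reduce to direct inspection against the prescribed $H$ in each case.

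\emph{Main obstacle.} The technical heart of the argument is Step 2: unpacking the Gan--Hanke--Yu formula requires careful computation of the local indices $[K_\ell^{\max} : K_\ell]$ and of the volumes of reductive quotients at the ramified and non-hyperspecial places, and the nontrivial input is that the resulting product equals exactly $1/|S|$. Step 3 is then a finite, case-by-case verification, but delicate: $H$ must be chosen small enough to trivialize every non-central element of $S$ mod $M$, yet large enough to avoid introducing extra $\tilde\Lambda^p$-orbits on $\B^{hs}$.
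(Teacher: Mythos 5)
Your proposal is correct and follows essentially the same route as the paper: the parahoric analysis of Section \ref{subsec:lattices-parahorics}, transitivity via the Gan--Hanke--Yu mass formula and class number one (Propositions \ref{prop-transitivity}, Lemmas \ref{lem-GHY}--\ref{lem-transitivity}), and then the transversality argument $G[M]=H\cdot\bar\Gamma$ with trivial (resp.\ central) intersection and kernel, checked case-by-case after explicitly enumerating the finite stabilizer $\Gamma=G(\Q)\cap\prod_\ell K_\ell$ (Propositions \ref{prop-simplicity}, \ref{prop-(G,E,M,C)}). The only cosmetic difference is that you invoke surjectivity of $\Gamma^p\to G[M]$, which the paper avoids (the inclusion of the image in $G[M]$ together with $G[M]=H\cdot\bar\Gamma$ already suffices), and your description of the stabilizer as "monomial matrices" is accurate only for ($\EE$),($\GG$) — in ($\MM$),($\CC$) it is a small group found by explicit enumeration — but neither point affects the argument.
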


We note that it is not obvious why the $H$'s from equations \eqref{eq:Eis_H} and \eqref{eq:GG_H} are in fact subgroups.
For case ($\GG$) this was proved in \cite[Proposition 34]{Evra2018RamanujancomplexesGolden}, while for case ($\EE$) we shall prove it below in Lemma \ref{lem-(E)}.
We also note that in equation \eqref{eq:MM_H}, the isomorphism $G[2] \cong GL_3(\F_2)$ is obtained from $G[2] \le GL_3(\mO_E/(2)) \cong GL_3(\mO_E/(\lambda)) \times GL_3(\mO_E/(\bar{\lambda}))$, $\mO_E/(\lambda) \cong \F_2$, and projecting to the first component.

In this paper we shall mostly be interested in the above theorem when the prime $p$ is inert in $E$, in which case the Bruhat-Tits building is a $(p^3+1,p+1)$-biregular tree.
This happens when $p \equiv 3 \mod 4$ for ($\GG$), $p \equiv 2 \mod 3$ for ($\EE$), $p \equiv 3,5,6 \mod 7$ for ($\MM$) and $p \equiv 7,11,13,14 \mod 15$ for ($\CC$). 
However we note that the above Theorem holds also for split primes, in which case it yields new $\tilde{A}_2$-groups in the sense of \cite{cartwright1993groups,Cartwright1993Groupsactingsimply}, i.e. discrete groups which act simply-transitively on the vertices of the $2$-dimensional building of $PGL_3(\Q_p)$, for infinitely many primes (and not just for the small primes $p=2,3$, as was extensively investigated in \cite{Cartwright1993Groupsactingsimply}). This will be used in Section \ref{subsec:complexes} to give new examples of Ramanujan and non-Ramanujan Cayley $\tilde{A}_2$-complexes.

Our proof of the above Theorem differs from those of \cite{Cartwright1993Groupsactingsimply} and \cite{mumford1979algebraic} and proceeds as follows:
First we prove the class number one property for the unitary groups associated to our unitary and arithmetic data, and use it to deduce the transitivity of the action of the $p$-arithmetic groups, for almost all $p$.
Secondly we use our congruence condition to remove non-trivial elements stabilizing hyperspecial vertices, while still maintaining the transitivity property.
The first step differs from the proofs of \cite{Cartwright1993Groupsactingsimply} and \cite{mumford1979algebraic} and this is what enables us to prove our result for primes other than $p=2,3$.
The construction of the lattices ($\MM$) and ($\CC$) is strongly motivated by, and draws intuition from, the works of \cite{mumford1979algebraic} and \cite{Kato2006ArithmeticstructureCMSZ}.
However, we stress that our congruence conditions are different from the ones appearing in the above mentioned works.
More specifically, in \cite{mumford1979algebraic} the congruence condition $M$ was over the prime $m=7$, while ours is over $m=2$, and in \cite{Kato2006ArithmeticstructureCMSZ} the congruence condition was $M=(3)$, while ours is over $M = (3,1+\eta)$.
The reason to find new congruence conditions which guarantee a simply transitive action has to do with the Ramanujan conjecture, as will become apparent in Section \ref{section:automorphic}.

This section and the proof of Theorem \ref{thm-simply-transitive} is organized as follows: 
In subsection \ref{subsec:lattices-parahorics}, we construct maximal parahoric subgroups at each prime and for each of the four unitary group schemes from Theorem \ref{thm-simply-transitive} (Proposition \ref{pro-parahoric}).
In subsection \ref{subsec:lattices-transitive}, we give a criterion for the transitivity of the action of the $p$-arithmetic groups, for almost all $p$, in terms of the unitary and the arithmetic datum (Proposition \ref{prop-transitivity}).
In subsection \ref{subsec:lattices-simply}, we give a criterion for the simply transitive action of certain congruence subgroups, assuming the transitivity of the action, and depending on the congruence datum (Proposition \ref{prop-simplicity}), and we also prove that the conditions in the previous two steps holds for each of the four strong unitary datum in Theorem \ref{thm-simply-transitive} (Proposition \ref{prop-(G,E,M,C)}).

\subsection{Parahorics} \label{subsec:lattices-parahorics}

\begin{defn}\label{def-parahoric}
Let $(E,\Phi)$ be a unitary datum of type ($\EE$), ($\GG$), ($\MM$) or ($\CC$) as in Theorem \ref{thm-simply-transitive}, let $G=U(E,\Phi)$, and let $p$ be a prime.
Except for the special cases ($\CC$,$(p=)2$) and ($\GG,2$), define the following subgroup of $G_p$, 
\begin{equation*}
K_p = G(\Z_p) = \{g\in GL_3(\mO_{E_p}) \;:\; g^* \Phi g = \Phi \},
\end{equation*}
and for the case ($\CC,2$), define the following subgroup of $G(\Q_2)$,
\begin{equation*}
K_2 \cong \{ (g_1,g_2) \in GL_3(\Z_2) \times GL_3(\Q_2) \;:\;  g_2^t = \phi g_1^{-1} \phi^{-1}  \}
\end{equation*}
using the splitting $E_2 \cong \Q_2 \times \Q_2$, $\mO_{E_2} \cong \Z_2 \times \Z_2$ and $\Phi \cong (\phi , \phi^t) \in M_3(\Z_p)\times M_3(\Z_p)$.
\end{defn}

\begin{rem}
We shall not deal with the case ($\GG, 2$) in which a wildly ramified extension arises.
For a complete proof of case ($\GG$) of Theorem \ref{thm-simply-transitive} see \cite{Evra2018RamanujancomplexesGolden}.
\end{rem}

\begin{rem}
The unique case under consideration in which $K_p \ne G(\Z_p)$ is ($\CC,2$).
In this case, note that $K_2$ contains $G(\Z_2)$ as a proper subgroup of finite index
\begin{equation}
G(\Z_2) \cong \{(g_1,g_2) \in GL_3(\Z_2)\times GL_3(\Z_2) \;:\; g_2 = (\phi^t)^{-1} (g_1^t)^{-1} \phi^t  \}.
\end{equation}
\end{rem}

\begin{prop}\label{pro-parahoric}
The subgroups $K_p \leq G_p =  G(\Q_p)$ defined in Definition \ref{def-parahoric} are maximal parahorics.
Moreover, except for the special cases ($\MM,7$), ($\CC,3$), ($\MM,5$), ($\CC,2$) and ($\GG,2$), they are hyperspecial.
\end{prop}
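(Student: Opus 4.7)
The plan is to establish the claim prime by prime, dividing the analysis into three regimes according to how $p$ splits in the imaginary quadratic field $E$, and in each regime identifying $K_p$ with the stabilizer of an explicit $\mO_{E_p}$-lattice (or lattice chain) in $E_p^3$. By Lemma \ref{lem:parahorics-equiv} such stabilizers are precisely the parahoric subgroups of $G_p$, so both maximality and the hyperspecial/special distinction can then be read off the position of the stabilized vertex in the Bruhat-Tits building recalled in \S\ref{subsec:unitarygroups}.

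First, if $p$ splits in $E$, then $E_p \cong \Q_p \oplus \Q_p$ with the involution swapping factors, and projection to either factor gives an isomorphism $G_p \xrightarrow{\sim} GL_3(\Q_p)$ sending $K_p = G(\Z_p)$ to a conjugate of $GL_3(\Z_p)$, which is hyperspecial. The same computation applied to the non-standard $K_2$ defined for case ($\CC$) identifies it under the isomorphism $E_2 \cong \Q_2 \oplus \Q_2$ with the simultaneous stabilizer of a pair of lattices $L_1 \supset L_2 \supset \varpi L_1$, yielding a maximal but non-hyperspecial parahoric (whence the exception $(\CC,2)$).

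Second, if $p$ is inert in $E$, then $E_p/\Q_p$ is the unramified quadratic extension and the maximal parahorics correspond to the two $G_p$-orbits of vertices of the $(p^3+1,p+1)$-biregular tree, the hyperspecial orbit consisting of stabilizers of $\Phi$-self-dual $\mO_{E_p}$-lattices. Since the dual of $L=\mO_{E_p}^3$ equals $\Phi^{-1}L$, self-duality is equivalent to $\det\Phi \in \mO_{E_p}^\times$. For the Eisenstein and Gauss forms ($\det\Phi = 1$), and for the Mumford and CMSZ forms at inert primes not dividing $\det\Phi = 7$ and $\det\Phi = 300$ respectively, this is automatic and $K_p$ is hyperspecial. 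At the remaining inert cases $(\MM,5)$ and $(\CC,3)$ the lattice $L$ fails to be self-dual but still fits in a chain $L \supsetneq L^\# \supsetneq \varpi_{E_p}L$ whose stabilizer is a vertex on the non-hyperspecial side, so $K_p$ remains a maximal parahoric though no longer hyperspecial.

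Third, if $p$ is ramified in $E$, which covers $(\MM,7)$ and $(\GG,2)$, then by \cite{Tits1979Reductivegroupsover} the group $U_3(E_p/\Q_p)$ has no hyperspecial parahoric, only two conjugacy classes of maximal parahorics. A direct computation using a uniformizer of $E_p$ shows that $L = \mO_{E_p}^3$ is maximal among $\Phi$-integral lattices, so $K_p$ is the full stabilizer of one of these ``special'' vertices and is itself a maximal parahoric. The main obstacle will be the exceptional cases: identifying, in $(\MM,5)$ and $(\CC,3)$, which side of the tree the vertex stabilized by $L$ lies on, and checking maximality of $K_p$ at the ramified primes; both require explicit computations with lattice chains and with the reduction of $\Phi$ modulo the local uniformizer, rather than any uniform argument.
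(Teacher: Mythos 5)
Your split-prime and inert-unramified arguments are fine and run parallel to the paper's (the paper realizes the unitary building as the fixed locus of the involution $\theta(g)=\Phi^{-1}(g^{*})^{-1}\Phi$ on the building of $GL_3(E_p)$ and notes that $\Phi\in GL_3(\mO_{E_p})$ fixes the standard vertex; your self-dual-lattice criterion is the same fact in lattice language). The gaps are exactly where the real content lies. First, your case bookkeeping is wrong: in case ($\CC$) the primes $3$ and $5$ ramify in $\Q[\sqrt{-15}]$, and $7$ ramifies in $\Q[\sqrt{-7}]$, so the exceptional cases are ramified primes dividing $\det\Phi$, namely ($\MM,7$), ($\CC,3$), ($\CC,5$) (the statement's ``($\MM,5$)'' is a slip for ($\CC,5$): since $\det\Phi_{\MM}=7$ is a $5$-adic unit, your own criterion makes $K_5$ hyperspecial in case ($\MM$), contradicting your later claim that $L$ fails to be self-dual there). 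Consequently your ``inert exceptional'' paragraph treats cases that do not occur, your ramified paragraph only claims to cover ($\MM,7$) and ($\GG,2$), and ($\CC,3$), ($\CC,5$) are never handled. Moreover, the assertion that ``$L=\mO_{E_p}^3$ is maximal among $\Phi$-integral lattices, so $K_p$ is the full stabilizer of a special vertex'' is not a proof of maximality. What the paper actually does (Lemma \ref{lem:parahorics-ramified-odd-case}) is exhibit, in each of the three cases, an explicit Cartan decomposition $\Phi=k_1\,\diag(\varpi^{e_1},\varpi^{e_2},\varpi^{e_3})\,k_2$ with $|e_i-e_j|\le 1$; this is precisely what guarantees that $v_0$ and $\theta(v_0)$ are adjacent in the $GL_3$-building, so the $\theta$-fixed edge is a vertex of the unitary building whose full stabilizer is $K_p=G_p\cap GL_3(\mO_{E_p})=G_p\cap\theta(GL_3(\mO_{E_p}))$, hence a maximal parahoric by Lemma \ref{lem:parahorics-equiv}. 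Without bounding the elementary divisors of $\Phi$ one cannot exclude that $v_0$ and $\theta(v_0)$ lie far apart, in which case $K_p$ stabilizes a longer geodesic and need not be maximal. (Also, for ($\GG,2$) the extension is wildly ramified, Tits' descent does not apply, and the paper explicitly sets this case aside rather than invoking the classification as you do.)

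Second, the case ($\CC,2$) is mishandled. Here $2$ splits in $\Q[\sqrt{-15}]$, and the $K_2$ of Definition \ref{def-parahoric} imposes integrality only on the first component $g_1$, the second being determined; under the splitting $G_2\cong GL_3(\Q_2)$ it is carried onto $GL_3(\Z_2)$, the stabilizer of a single lattice, hence a \emph{hyperspecial} maximal parahoric (paper's Lemma \ref{lem:parahorics-2-case}). The simultaneous stabilizer of a chain $L_1\supset L_2\supset \varpi L_1$ that you describe is an edge stabilizer — it is essentially what the strictly smaller group $G(\Z_2)\subsetneq K_2$ is, not $K_2$ — and an edge stabilizer in the $\widetilde A_2$ building is \emph{not} a maximal parahoric, while in a split group every maximal parahoric is hyperspecial, so ``maximal but non-hyperspecial'' is impossible in this regime. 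Taken literally, your argument would therefore disprove the maximality you need to establish for ($\CC,2$); note the proposition merely refrains from asserting hyperspecialness in the listed cases, and in fact the paper proves $K_2$ is hyperspecial.
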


The proof of Proposition \ref{pro-parahoric} follows from the following Lemmas:
Lemma \ref{lem:parahorics-equiv} gives a characterization of maximal parahoric subgroups in terms of stabilizer of vertices in the Bruhat-Tits building.
Lemma \ref{lem:parahorics-unramified-case} constructs hyperspecial maximal parahoric subgroups in all cases ($\EE$), ($\GG$), ($\MM$), ($\CC$), and for all primes $p$,  except for the following the special cases in which $p$ divides the determinant $\det\Phi$: ($\MM,7$), ($\CC,3$), ($\CC,5$), ($\CC,2$) and ($\GG,2$).
Lemma \ref{lem:parahorics-ramified-odd-case} constructs maximal parahoric subgroups in the special cases ($\MM,7$), ($\CC,3$), ($\CC,5$).
Lemma \ref{lem:parahorics-2-case} deals with the  special case ($\CC,2$).

\begin{lem}\label{lem:parahorics-equiv}
Let $(E,\Phi)$ be a unitary datum, $G = U_3(E,\Phi)$, $p$ a prime and $G_p = G(\Q_p)$.
Then $K_p \leq G_p$ is maximal parahoric if and only if it is the stabilizer of a vertex in the Bruhat-Tits building of $G_p$. 
\end{lem}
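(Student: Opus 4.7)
The plan is to deduce the lemma from general Bruhat--Tits theory together with the Haines--Rapoport characterization from \cite{Haines2008Parahoric}: for any connected reductive group $G$ over a non-archimedean local field $F$, the parahoric subgroup $P_f$ attached to a facet $f$ of the Bruhat--Tits building equals $\ker(\kappa_G) \cap \mathrm{Stab}_{G(F)}(f)$, where $\kappa_G$ is the Kottwitz homomorphism. Since $P_f$ depends on $f$ in an order-reversing manner, maximal parahorics correspond to minimal facets. In our setting these minimal facets are always vertices: when $p$ does not split in $E$ the building is the biregular tree $\mathcal{B}$ described in Section~\ref{subsec:unitarygroups}, and when $p$ splits it is the $\widetilde{A}_2$-building of $PGL_3(\Q_p) \cong U_3(\Q_p)$. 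Thus the only content of the lemma beyond these generalities is that the parahoric attached to a vertex agrees with its full stabilizer, i.e.\ that $\kappa_G$ is trivial on every vertex stabilizer in $G_p$.

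For this I would exploit that the derived group of $G = U_3(E,\Phi)$ is the simply connected group $SU_3$, on whose $\Q_p$-points the Kottwitz homomorphism vanishes identically. Consequently $\kappa_G$ factors through the abelianization $G/SU_3 \cong U_1 = R^1_{E/\Q}\mathbb{G}_m$ via the determinant. When $p$ is inert or ramified in $E$, $U_1(\Q_p)$ is compact, so its image under the Kottwitz map (which lands in a discrete group) is trivial; in particular $\kappa_G$ vanishes on all of $G_p$, and every facet stabilizer is a parahoric. When $p$ splits in $E$, $G_p \cong GL_3(\Q_p)$ and the claim becomes the classical fact that vertex stabilizers in the $\widetilde{A}_2$-building of $PGL_3$ are precisely the maximal parahorics, namely the conjugates of the hyperspecial subgroup $GL_3(\Z_p)$.

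The main technical obstacle is making the Kottwitz computation uniform across the split, inert, and ramified cases, since the $\Q_p$-structure of $U_1$ and the Galois action on $Z(\widehat{G}) = \mathbb{G}_m$ vary with the decomposition of $p$ in $E$; in the ramified case in particular one must verify carefully (cf.\ \cite[\S 3]{Evra2018RamanujancomplexesGolden}) that even after passing to the fixed-set construction of $\mathcal{B}$, the vertex stabilizer has not acquired a non-trivial Kottwitz component. Once this triviality is established, both implications follow at once: a maximal parahoric is the stabilizer of the minimal facet to which it is attached (necessarily a vertex), and conversely the stabilizer of any vertex coincides with the parahoric at that vertex, which is maximal because vertices are the minimal facets of the building.
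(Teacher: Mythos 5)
Your route is the same as the paper's: both rest on the Haines--Rapoport identity $P_f=\mathrm{Fix}(f)\cap\ker\kappa_G$ together with the claim that $\kappa_G$ imposes no condition; the paper disposes of the latter in one sentence (compactness of $Z(\hat G)^{\mathrm{Gal}(\bar\Q_p/\Q_p)}$), while you try to prove it via $\det$ and the simply connected $SU_3$. The reduction to $U_1$ is fine, but the step ``$U_1(\Q_p)$ is compact and the target is discrete, hence the image is trivial'' is a genuine gap: compactness only forces the image to be \emph{finite}, and the target has torsion exactly in the ramified case. Concretely, for $E_p/\Q_p$ ramified (and $p$ odd) the Kottwitz map of the norm-one torus is the projection onto the component group of its N\'eron model, which is $\Z/2\Z$, and $-1\in U_1(\Q_p)$ maps to the nontrivial class; by functoriality any $g\in G_p$ with $\det g\equiv-1$ modulo the maximal ideal --- for instance $-I$, which lies in \emph{every} vertex stabilizer --- then has nontrivial Kottwitz invariant. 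So the triviality you invoke really only holds at inert primes, where the relevant target $\pi_1(G)_{I}\cong\Z$ is torsion-free and norm-one determinants are units; at ramified primes your argument as written does not close, and this is precisely the point you flagged as the ``main technical obstacle'' and then waved through.

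The split case is also misstated: under $G_p\cong GL_3(\Q_p)$ the full stabilizer of a vertex of the (reduced) building is, up to conjugacy, $\Q_p^\times\, GL_3(\Z_p)$ --- it contains central elements of nonzero determinant valuation --- which strictly contains the maximal parahoric $GL_3(\Z_p)$. There $\kappa_G$ is essentially $v_p\circ\det$ and is certainly not trivial on $G_p$, so you cannot conclude that facet stabilizers are parahorics; the desired equality only appears after intersecting the stabilizer with $\ker(v_p\circ\det)$ (equivalently, after passing to the extended building), which is exactly the content of the Haines--Rapoport statement and must be said explicitly rather than quoted as ``the classical fact that vertex stabilizers are the conjugates of $GL_3(\Z_p)$.'' For comparison, the paper's own proof is a one-line appeal to the same triviality of $\kappa_G$; your proposal is an attempt to supply that missing line, and it succeeds at inert primes (the case the bigraph construction actually uses), but the ramified and split cases are not established by the argument you give.
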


\begin{proof}
The proof follows from the main result of \cite{Haines2008Parahoric}, together with the fact that kernel of the Kottwitz homomorphism, $\kappa_G \,:\, G_p \rightarrow X^*(\hat{Z}(G)^{\mbox{Gal}(\bar{\Q}_p/\Q_p)})$, is the entire group $G_p$, since the group $\hat{Z}(G)^{\mbox{Gal}(\bar{\Q}_p/\Q_p)} \cong U(1)$ is compact, hence $X^*(\hat{Z}(G)^{\mbox{Gal}(\bar{\Q}_p/\Q_p)})$ is trivial.
\end{proof}

A maximal parahoric subgroup is called hyperspecial if it stabilizes a hyperspecial vertex in the Bruhat-Tits building.
If $p$ splits then any vertex is hyperspecial, if $p$ ramifies then no vertex is hyperspecial, and if $p$ is inert then every vertex of degree $p^3+1$ is hyperspecial.

\begin{lem}\label{lem:parahorics-unramified-case}
Let $(E,\Phi)$ be a unitary datum and $G = U_3(E,\Phi)$. For any prime $p \nmid \mathrm{disc} \Phi$, the group
\begin{equation*}
K_p = G(\Z_p) = \{g\in GL_3(\mO_{E_p}) \;:\; g^* \Phi g = \Phi \}
\end{equation*}
is a maximal parahoric subgroup of $G_p = G(\Q_p)$.
Moreover, if $p\nmid \mbox{disc}(E)$, then $K_p$ is hyperspecial.
\end{lem}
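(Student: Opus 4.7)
The plan is to identify $K_p = G(\Z_p)$ as the stabilizer in $G_p$ of a distinguished vertex $v_0$ of the Bruhat-Tits tree $\B$, and then invoke Lemma \ref{lem:parahorics-equiv} to deduce maximal parahoricity; the hyperspecial claim will then follow by reading off the degree of $v_0$.

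First I would translate the hypothesis $p \nmid \mathrm{disc}\Phi$ into the integral statement that $\Phi$, viewed as an element of $GL_3(E_p)$, in fact lies in $GL_3(\mO_{E_p})$ with unit determinant, so that its image in $\wtG = PGL_3(E_p)$ lies in $\wtK = PGL_3(\mO_{E_p})$. Letting $v_0 \in \wtB$ be the distinguished vertex with $\stab_{\wtG}(v_0) = \wtK$, the involution $\#$ of \S\ref{subsec:unitarygroups} satisfies $v_0^{\#} = \Phi^{-1}v_0 = v_0$, so $v_0$ is also a vertex of $\B$. A direct computation then gives
\begin{equation*}
\stab_{G_p}(v_0) \;=\; G_p \cap \wtK \;=\; \{g \in GL_3(\mO_{E_p}) \,:\, g^*\Phi g = \Phi\} \;=\; G(\Z_p) \;=\; K_p,
\end{equation*}
and Lemma \ref{lem:parahorics-equiv} immediately yields that $K_p$ is maximal parahoric. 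For the hyperspecial claim, if additionally $p \nmid \mathrm{disc}(E)$ then $E_p/\Q_p$ is split or unramified, and by the characterization of $\B^{hs}$ in \S\ref{subsec:unitarygroups} any $\wtB$-vertex which is $\#$-fixed lies in $\B^{hs}$; in particular $v_0 \in \B^{hs}$, so $K_p$ is a hyperspecial maximal parahoric.

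The one delicate point is the ramified case ($p \mid \mathrm{disc}(E)$ but $p \nmid \mathrm{disc}\Phi$), where $\B^{hs}$ is empty and the embedding $\B \hookrightarrow \wtB$ described in \S\ref{subsec:unitarygroups} requires the modification outlined in \cite[\S3]{Evra2018RamanujancomplexesGolden}. There one must check that the standard vertex $v_0$ still corresponds to a well-defined vertex of the ramified tree with $G_p$-stabilizer $G(\Z_p)$. Since the Bruhat-Tits theory of quasi-split ramified $PU_3$ is classical, this amounts to bookkeeping rather than a conceptual obstacle, and the relevant conclusions for the specific pairs $(E,p)$ appearing in Theorem \ref{thm-simply-transitive} (e.g. $p=3$ for ($\EE$)) can in fact be checked directly by writing down an explicit Iwahori decomposition of $G(\Z_p)$.
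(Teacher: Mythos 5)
Your argument for the unramified case is essentially the paper's: realize $\B$ as the fixed locus of the $\Phi$-involution on the building of $GL_3(E_p)$ (resp.\ $PGL_3$), observe that $p\nmid\mathrm{disc}\Phi$ together with $\Phi\in M_3(\mO_E)$ forces $\Phi\in GL_3(\mO_{E_p})$ so that the standard vertex $v_0$ is fixed, identify $\stab_{G_p}(v_0)=G_p\cap\wtK=K_p$, and invoke Lemma \ref{lem:parahorics-equiv}; the hyperspecial claim for $p\nmid\mathrm{disc}(E)$ is read off exactly as in the paper. Up to phrasing (you use the $\#$-involution language of Section \ref{subsec:unitarygroups}, the paper uses Weil restriction $\mathrm{Res}_{E/\Q}G$ and the Galois involution $\theta(g)=\Phi^{-1}(g^*)^{-1}\Phi$), this is the same proof.

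The genuine issue is your last paragraph. The lemma is stated for all $p\nmid\mathrm{disc}\Phi$, including primes ramified in $E$, and this case is not decorative: it is exactly what is needed for the Eisenstein datum at $p=3$ (where $\mathrm{disc}\Phi=1$ but $3\mid\mathrm{disc}(E)$), since Proposition \ref{pro-parahoric} cites this lemma to get that $K_3=G(\Z_3)$ is a maximal (non-hyperspecial) parahoric. Deferring that case to unspecified ``bookkeeping'' or to case-by-case checks is a gap, and the suggested patch --- writing down an Iwahori decomposition of $G(\Z_p)$ --- does not by itself show that $G(\Z_p)$ is a \emph{maximal} parahoric. Moreover, no modified embedding is actually required: the paper's descent argument works verbatim in the (tamely) ramified case, because Tits \cite[2.6.1]{Tits1979Reductivegroupsover} identifies $\B_p$ with the Galois-fixed points of $\B(GL_3(E_p))$ also when $E_p/\Q_p$ is ramified (only the wildly ramified pair $(\Q[i],2)$ must be excluded, as the paper notes), and the computation $\theta(\wtK)=\Phi^{-1}\wtK\Phi=\wtK$ uses only $\Phi\in GL_3(\mO_{E_p})$, not unramifiedness. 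Thus $v_0$ is again a vertex of $\B_p$ with stabilizer $K_p$, and Lemma \ref{lem:parahorics-equiv} applies; the only conclusion lost when $p\mid\mathrm{disc}(E)$ is hyperspecialness, which the lemma does not claim there. Replacing your ``delicate point'' paragraph with this observation closes the gap. (Minor wording point: for split $p$ the quotient $\B$ is a two-dimensional building, not a tree, though your stabilizer argument is unaffected.)
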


\begin{proof}
Let $\tG = \mbox{Res}_{E/ \Q} G$ be the Weil restriction of scalars of $G$ from $E$ to $\Q$ and let $\mbox{Gal}(E/\Q) = \{1,\sigma\}$ be the Galois group. 
Since $E\otimes_{\Q} E = E \oplus E$, we obtain that $\tG(\Q)=G(E)=\{g=(g_1,g_2)\in GL_3(E)\times GL_3(E) \,:\, g_2 = \Phi^{-1} (g_1^*)^{-1} \Phi \}$.
By projecting to the first component we get an isomorphism $\iota \,:\, \tG(\Q) \rightarrow GL_3(E)$, $\iota(g)=g_1$. 
The Galois group $\mbox{Gal}(E/\Q)$ acts on $\tG$ by $\sigma((g_1,g_2))=(g_2,g_1)$, and $G$ is the $\mbox{Gal}(E/\Q)$-fixed points of $\tG$.
Under the $\iota$ isomorphism, $\mbox{Gal}(E/\Q)$ acts on $GL_3(E)$ through the following involution $\theta(g) = \Phi^{-1} (g^*)^{-1} \Phi$.
Note that $G$ is equal the $\theta$-fixed points of $GL_3(E)$, i.e. $G = GL_3(E)^{\theta} =  \{g \in GL_3(E) \,:\, \theta(g) = g \}$.

Let $\B_p = \B(G_p)$ be the Bruhat-Tits building of $G_p= G(\Q_p)$ and $\tilde{\B}_p = \B(\tG_p)$ the Bruhat-Tits building of $\tG_p= \tG(\Q_p) \cong GL_3(E_p)$.
By Section 2.6.1 of Tits' survey article \cite{Tits1979Reductivegroupsover} (excluding the case $(E,p)=(\Q[i],2)$, which is wildly ramified), $\B_p$ is the $\mbox{Gal}(E/\Q)$-fixed points of $\tilde{\B}_p$, where $\mbox{Gal}(E/\Q)$ acts on $\tilde{\B}_p$ through its action on $\tG_p$.
The Bruhat-Tits building $\tilde{\B}_p = \B(GL_3(E_p))$ is a well understood object, for instance $\tK_p = GL_3(\mO_{E_p})$  is the stabilizer of a hyperspecial vertex $v_0$.

Since $p \nmid \det \Phi$ and $\Phi \in M_3 (\mO_E)$, we have $\Phi_3 \in GL_3(\mO_{E_p})$ and therefore $\theta(\tK_p) = \Phi^{-1} \tK_p \Phi = \tK_p$.
Since $\theta(\tK_p) $ is the stabilizer in $GL_3(E_p)$ of $\theta(v_0)$, we get that $\theta(v_0) = v_0$, i.e. $v_0 \in \tilde{\B}_p^{\mbox{Gal}(E/\Q)}$.
Hence $v_0$ belongs to $\B_p$, and $K_p = G_p \cap \tK_p$, which is the stabilizer of $v_0$ in $G_p$, is a maximal parahoric subgroup of $G_p$, and it is hyperspecial if $E_p/\Q_p$ is an unramified extension, which is when $p\nmid \mbox{disc}(E)$.
\end{proof}

\begin{lem}\label{lem:parahorics-ramified-odd-case}
In each of the following cases the group
\begin{equation*}
K_p = G(\Z_p) = \{g\in GL_3(\mO_{E_p}) \;:\; g^* \Phi g = \Phi \},
\end{equation*}
is a maximal parahoric subgroup of $G_p = G(\Q_p)$.
\begin{itemize}
 \item[($\MM,7$)] $E=\Q[\lambda = \frac{-1+\sqrt{-7}}2]$,
$\Phi=\left(\begin{array}{ccc} 3 & \bar{\lambda} & \bar{\lambda}\\ \lambda & 3 & \bar{\lambda}\\ \lambda & \lambda & 3 \end{array}\right)$, and $p=7$.
\item[($\CC,3$)] $E=\Q[\eta = \frac{1-\sqrt{-15}}2]$,
$\Phi=\left(\begin{array}{ccc} 10 & -2(\eta+2) & \eta+2\\ -2(\bar{\eta}+2) & 10 & -2(\eta+2)\\ \bar{\eta}+2 & -2(\bar{\eta}+2) & 10 \end{array}\right)$ and  $p=3$.
\item[($\CC,5$)] $E=\Q[\eta = \frac{1-\sqrt{-15}}2]$,
$\Phi=\left(\begin{array}{ccc} 10 & -2(\eta+2) & \eta+2\\ -2(\bar{\eta}+2) & 10 & -2(\eta+2)\\ \bar{\eta}+2 & -2(\bar{\eta}+2) & 10 \end{array}\right)$ 
and  $p=5$.
\end{itemize}
\end{lem}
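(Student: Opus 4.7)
The plan is to adapt the Galois-descent argument from Lemma \ref{lem:parahorics-unramified-case} to the ramified setting. By Lemma \ref{lem:parahorics-equiv}, it suffices to exhibit a vertex $v \in \B_p$ whose stabilizer in $G_p$ equals $K_p$. First I would note that in all three cases, $p \mid \operatorname{disc}(E)$ (namely $\operatorname{disc}\Q[\lambda] = -7$ and $\operatorname{disc}\Q[\eta] = -15$), so $E_p / \Q_p$ is a ramified quadratic extension and $\B_p$ has no hyperspecial vertices. The obstruction in the previous lemma -- that $\theta(\tilde K_p) = \Phi^{-1}\tilde K_p \Phi$ should coincide with $\tilde K_p$ -- now genuinely fails because $p \mid \det\Phi$; so the standard hyperspecial vertex $v_0$ of $\tilde\B_p$ is no longer $\theta$-fixed, and one cannot simply descend it.

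The workaround is to use the lattice-theoretic description of the Bruhat-Tits building of $U_3(E_p,\Phi)$. Let $\pi$ be a uniformizer of $E_p$, write $L_0 = \mO_{E_p}^3$, and let $L_0^\# = \{ v \in E_p^3 \mid \Phi(v,L_0) \subseteq \mO_{E_p}\} = \Phi^{-1} L_0$ be the dual lattice with respect to the Hermitian form. In the ramified setting, the vertices of $\B_p$ correspond to homothety classes of $\mO_{E_p}$-lattices $L \subset E_p^3$ satisfying $\pi L \subseteq L^{\#} \subseteq L$, with the ``type'' of the vertex recorded by $\dim_{\mO_{E_p}/\pi}(L/L^{\#})$. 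Since $\Phi \in M_3(\mO_{E_p})$ we have $L_0^{\#} \subseteq L_0$, so the content of the vertex condition is showing
\[
\pi\,\Phi^{-1} \in M_3(\mO_{E_p}),
\]
i.e.\ that $\pi L_0 \subseteq \Phi^{-1} L_0$. Once this is established, $L_0$ determines a vertex $v$ of $\B_p$, and directly from the definitions the stabilizer of $v$ in $G_p = U_3(E_p,\Phi)$ is $G_p \cap \operatorname{Stab}_{\tilde G_p}(L_0) = G_p \cap \tilde K_p = G(\Z_p) = K_p$, giving the maximal parahoricity.

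The remaining step is the case-by-case verification of $\pi\Phi^{-1}\in M_3(\mO_{E_p})$, which I expect to be the main (but routine) obstacle. Concretely: in case ($\MM,7$), with $\lambda\bar\lambda = 2$ one computes $\det\Phi$ and notes that $(\sqrt{-7})$ is the ramified prime above $7$, so $\pi = \sqrt{-7}$, and one verifies the claim by computing the adjugate of $\Phi$ and reducing modulo $\pi$; in cases ($\CC,3$) and ($\CC,5$), with $\eta\bar\eta = 4$ the ramified primes of $\Q[\sqrt{-15}]$ above $3$ and $5$ are $(3,\sqrt{-15})$ and $(5,\sqrt{-15})$ respectively, and again the cofactor matrix of $\Phi$ is inspected modulo the appropriate $\pi$. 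A mild additional check is needed to confirm that $L_0^{\#} \ne L_0$ (so that $v$ is a genuine non-hyperspecial vertex, as must be the case), which follows from $\pi \mid \det\Phi$. Putting these computations together yields the lemma.
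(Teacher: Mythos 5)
Your overall strategy (descending from the building of $GL_3(E_p)$ and reducing everything to an integrality statement about $\Phi$) is in the same spirit as the paper's proof, but your key verification is strictly too strong and in fact fails in one of the three cases. You ask for $\varpi\,\Phi^{-1}\in M_3(\mO_{E_p})$, i.e.\ that the elementary divisors of $\Phi$ with respect to $L_0=\mO_{E_p}^3$ all lie in $\{0,1\}$ (equivalently, that $L_0$ is almost self-dual). In case ($\CC,5$) one computes $\det\Phi=300$ (using $(\eta+2)(\bar\eta+2)=10$ and $\eta+\bar\eta=1$), whose valuation at the ramified place of $\Q[\sqrt{-15}]$ above $5$ equals $4$; hence the elementary divisors of $\Phi$ sum to $4$ and cannot all be $\le 1$, no matter how you rescale by elements of $\Q_5^\times$ (which shifts them in steps of $2$). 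Indeed the paper's explicit Cartan decomposition there is $\Phi=k_1\,\diag(\varpi^2,\varpi,\varpi)\,k_2$, so $\varpi\,\Phi^{-1}\notin M_3(\mO_{E_5})$, the standard lattice is not almost self-dual, and your identification of $[L_0]$ with a vertex of $\B_p$ via the chain condition breaks down exactly in case ($\CC,5$). (There is also a small slip of direction: since $\Phi$ is integral one gets $L_0\subseteq L_0^{\#}=\Phi^{-1}L_0$, not $L_0^{\#}\subseteq L_0$; this only affects which normalization of the almost-self-dual condition you should impose and is not the real issue.)

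The correct, homothety-invariant criterion --- and the paper's route --- is weaker: it suffices that the $GL_3(E_p)$-vertices $v_0=[L_0]$ and $\theta(v_0)=[\Phi^{-1}L_0]$ be \emph{adjacent}, i.e.\ that the elementary divisors of $\Phi$ differ pairwise by at most $1$ (for $(2,1,1)$ this holds although your condition does not). Then the edge $\{v_0,\theta(v_0)\}$ is flipped by the Galois involution and so corresponds to a vertex of $\B_p$; since any $g\in G_p$ stabilizing $v_0$ automatically stabilizes $\theta(v_0)$, the $G_p$-stabilizer of that vertex is exactly $G_p\cap GL_3(\mO_{E_p})=K_p$, which is maximal parahoric by Lemma \ref{lem:parahorics-equiv}. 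The paper verifies adjacency in all three cases by exhibiting explicit Cartan decompositions $\Phi=k_1\,a\,k_2$ with $a=\diag(\varpi,\varpi,1)$ in cases ($\MM,7$) and ($\CC,3$), and $a=\diag(\varpi^2,\varpi,\varpi)$ in case ($\CC,5$). Your computation, once the inclusion direction is fixed, does handle ($\MM,7$) and ($\CC,3$), where $\det\Phi$ has valuation $2$ and the divisors are $(1,1,0)$; but for ($\CC,5$) you must replace the almost-self-dual condition on $L_0$ by the adjacency criterion (equivalently, pass to the stabilizer of the flipped edge), and this is the missing idea in your proposal.
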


\begin{proof}
We continue with the notation in the proof of Lemma \ref{lem:parahorics-unramified-case}.
Recall that $\mbox{Gal}(E/\Q)$ acts on the Bruhat-Tits building $\B(GL_3(E_p))$ through the involution $\theta$, which acts on $GL_3(E_p)$ by $\theta(g) = \Phi^{-1} (g^*)^{-1} \Phi$, and it acts on the building by sending the vertex $v_0$ to the vertex $\theta(v_0)$ whose stabilizer is $\theta(GL_3(\mO_{E_p})) =  \Phi^{-1} GL_3(\mO_{E_p}) \Phi$.
Namely, since $v_0 = [\mO_{E_p}^3] \in \B(GL_3(E_p))$, then $\theta(v_0) = \Phi^{-1}.v_0 =  [\Phi^{-1}\mO_{E_p}^3]$.

Note that $K_p = G_p \cap GL_3(\mO_{E_p})$ is the stabilizer in $G_p$ of $v_0 = [\mO_{E_p}^3] \in \B(GL_3(E_p))$.
If $\theta(v_0)$ is adjacent to $v_0$, then the edge $\{v_0,\theta(v_0)\}$ is $\mbox{Gal}(E/\Q)$-fixed, hence it is a vertex in $\B(G_p)$. 
Therefore $K_p = G_p \cap GL_3(\mO_{E_p}) = G_p \cap \theta(GL_3(\mO_{E_p}))$ is a maximal parahoric subgroup of $G_p$.
Hence it suffice to prove that $\theta(v_0)$ is adjacent to $v_0$.

Let $\Phi = k_1 \cdot a \cdot k_2$, where $k_1,k_2\in GL_3(\mO_{E_p})$, $a=\mbox{diag}(\varpi^{e_1},\varpi^{e_2},\varpi^{e_3})$, $\varpi \in \mO_{E_p}$ a uniformizer, $e_1\geq e_2 \geq e_3$, be a Cartan decomposition of $\Phi$.
Since $GL_3(\mO_{E_p})$ is the stabilizer of $v_0$, the distance between $\theta(v_0) = \Phi^{-1}.v_0$ and $v_0$, is the same as the distance between $a.v_0$ and $v_0$.
So it suffices to prove that $a.v_0$ and $v_0$ are of distance $1$, which means that $|e_i - e_j| \leq 1$ for any $i,j=1,2,3$.
Hence we check that in each of the three cases ($\MM,7$), ($\CC,3$) and ($\CC,5$), the Cartan decomposition of $\Phi$ satisfies the above condition.

\begin{itemize}
\item[($\MM,7$)] The Cartan decomposition of $\Phi$ satisfies the above condition, 
\begin{equation*}
\Phi = k_1 \bmx \varpi &  &  \\  & \varpi &  \\  &  & 1 \emx k_2,
\end{equation*}
where $\varpi=\sqrt{-7}$ is a uniformizer in $\mO_{E_7}$ and
\begin{equation*}
k_1 = (B^{-1})^* \in GL_3(\mO_{E_7}) , \qquad k_2 = \bmx & 1 & \\ -1 & & \\ & & -1 \emx \cdot B^{-1}  \in GL_3(\mO_{E_7}),
\end{equation*}
\begin{equation*}
B= \bmx 1 & -\frac{\bar\lambda}3 & -\frac{\bar\lambda}3 \\ & 1 & \\  & & 1 \emx \bmx 1 & & \\ & 1 & 1 \\  & \beta & \beta^{-1} \emx \bmx \alpha^{-1} & & \\  & \gamma & \\  & & 1 \emx
\bmx  &  & 1\\ & 1& \\ 1&  &  \emx \in GL_3(\mO_{E_7}),
\end{equation*}
$\alpha \in \Z_7^\times$ satisfies $\alpha^2 = -3, \beta = \frac{1+\alpha}2\in\Z_7^\times$, and $\gamma = \frac{-\alpha-\varpi}2 \in \mO_7^\times$.

\item[($\CC,3$)]  The Cartan decomposition of $\Phi$ satisfies the above condition, 
\begin{equation*}
\Phi = k_1 \bmx \varpi &  &  \\  & \varpi &  \\  &  & 1 \emx k_2,
\end{equation*}
where $\varpi=\sqrt{-15}$ is a uniformizer in $\mO_{E_3}$ and
\begin{equation*}
k_1 = (B^{-1})^\ast \in GL_3(\mO_{E_3}) , \qquad k_2 =  \bmx &1 & \\ -1 & & \\  & & 1 \emx \cdot B^{-1} ,
\end{equation*}
\begin{equation*}
B=  \bmx 1 & \frac{\eta+2}{5} & -\frac{\eta+2}{10} \\ & 1 & \\ & & 1 \emx \bmx 1 & & \\ & \beta & \tilde\beta \\ & 1 & 1 \emx
\bmx \sqrt{10} & & \\ & 1 & \\ & & (\alpha+\bar\varpi)^{-1} \emx \bmx &&1\\&1&\\1&& \emx \in GL_3(\mO_{E_3}),
\end{equation*}
$\alpha\in\Z_3^\times$ satisfies $\alpha^2=-5$ and $\alpha\equiv 1\mod 3$, $\beta=\frac{1+\alpha}2\in\Z_3^\times$, and $\tilde\beta=\frac{1-\alpha}2\in p\Z_3^\times$.

\item[($\CC,5$)]  The Cartan decomposition of $\Phi$ satisfies the above condition, 
\begin{equation*}
\Phi = k_1 \bmx \varpi^2 &  &  \\  & \varpi &  \\  &  & \varpi \emx k_2,
\end{equation*}
where $\varpi=\sqrt{-15}$ is a uniformizer in $\mO_{E_5}$ and
\begin{equation*}
k_1 = (B^\ast)^{-1} \in GL_3(\mO_{E_5}) , \qquad k_2 = \bmx  -\frac2{9} & & \\ &  &1 \\  & 1&  \emx \cdot B^{-1}  \in GL_3(\mO_{E_5}),
\end{equation*}
\begin{equation*}
B= \bmx \eta+2 & 1 & \\ 2+\alpha & \frac{2-\alpha}{\eta+2} & \\ & & 1 \emx \bmx \beta^{-1} & &  \\ & 1 & \\ & & 1 \emx
\bmx 1 &  & \frac1{\varpi}(\eta+2-\eta(2-\alpha)) \\ & 1 & \frac{10-2(\eta+2)(2+\alpha)}{\varpi\bar\beta} \\ & & 1 \emx \bmx & & 1\\ & 1&  \\ 1&  &  \emx \in GL_3(\mO_{E_5}),
\end{equation*}
$\alpha\in\Z_5^\times$ satisfies $\alpha^2=-6$ and $\alpha\equiv 2\mod 5$, and $\beta=\frac{12(\eta+2)}{\bar\varpi}\in\Z_5^\times$.
\end{itemize}
\end{proof}

We are left to deal with the last remaining case ($\CC,2$). 
Note that $p=2$ splits in the quadratic field $E = \Q[\eta = \frac{1-\sqrt{-15}}2]$ of case ($\CC$).
The following Lemma deals with this case, and more generally, it constructs a maximal parahoric subgroup for any unitary datum $(E,\Phi)$ and any prime $p$ that splits in $E$.

\begin{lem}\label{lem:parahorics-2-case}
Let $(E,\Phi)$ be a unitary datum, $G=U_3(E,\Phi)$ and let $p$ be prime that splits in $E$.
Consider the splitting $E_p \cong \Q_p \times \Q_p$, $\mO_{E_p} \cong \Z_p \times \Z_p$ and let $\Phi \cong (\phi , \phi^t) \in M_3(\Z_p)\times M_3(\Z_p)$.
Under this splitting
\begin{equation*}
G(\Q_p) \cong \{(g_1,g_2) \in GL_3(\Q_p)\times GL_3(\Q_p) \;|\; g_2^t = \phi g_1^{-1} \phi^{-1} \},
\end{equation*}
and the following is a hyperspecial maximal parahoric subgroup of $G(\Q_p)$,  
\begin{equation*}
K_p  \cong  \{(g,( \phi g_1^{-1} \phi^{-1})^t) \in GL_3(\Z_p)\times (\phi^t)^{-1} GL_3(\Z_p) \phi^t \}.
\end{equation*}
\end{lem}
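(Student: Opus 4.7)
The plan is to exploit the fact that at a split prime, the unitary group $G$ becomes isomorphic to $GL_3$ over $\Q_p$ via projection to the first factor, reducing the statement to the well-known Bruhat--Tits-theoretic properties of $GL_3(\Z_p)\leq GL_3(\Q_p)$. The three steps I would carry out are: (i) set up the isomorphism $G(\Q_p)\cong GL_3(\Q_p)$ explicitly; (ii) verify that $K_p$ corresponds to $GL_3(\Z_p)$ under this isomorphism; (iii) transport the hyperspecial maximal parahoric property from $GL_3(\Z_p)$ to $K_p$.

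For step (i), I would use the splitting $E_p\cong\Q_p\times\Q_p$ to identify $GL_3(E_p)$ with $GL_3(\Q_p)\times GL_3(\Q_p)$, under which the involution $*$ becomes $(g_1,g_2)^{*}=(g_2^t,g_1^t)$. Hermiticity of $\Phi$ then forces $\Phi\cong(\phi,\phi^t)$, and the unitarity relation $g^{*}\Phi g=\Phi$ for $g=(g_1,g_2)$ reduces to the single equation $g_2^t\phi g_1=\phi$, equivalently $g_2=(\phi^t)^{-1}(g_1^t)^{-1}\phi^t$. Hence the projection $\pi_1\colon G(\Q_p)\to GL_3(\Q_p)$, $(g_1,g_2)\mapsto g_1$, is a bicontinuous group isomorphism onto $GL_3(\Q_p)$. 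For step (ii), once $g_1\in GL_3(\Z_p)$, the partner $g_2=(\phi g_1^{-1}\phi^{-1})^t$ automatically lies in $(\phi^t)^{-1}GL_3(\Z_p)\phi^t$, so $K_p$ as defined equals $\pi_1^{-1}(GL_3(\Z_p))$, and $\pi_1$ restricts to an isomorphism $K_p\xrightarrow{\sim}GL_3(\Z_p)$.

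For step (iii), I would invoke the standard fact that $GL_3(\Z_p)$ is a hyperspecial maximal parahoric in $GL_3(\Q_p)$: it is the stabilizer of the homothety class of the standard lattice $\Z_p^3$, and its reductive quotient $GL_3(\F_p)$ has the same type as $GL_3(\Q_p)$. Because the Bruhat--Tits building, parahoric structure, and hyperspeciality are all preserved under isomorphisms of connected reductive groups over $\Q_p$, transporting through $\pi_1$ yields that $K_p$ is a hyperspecial maximal parahoric in $G(\Q_p)$.

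The only point requiring a moment of care is that Lemma \ref{lem:parahorics-equiv} as proved earlier relied on compactness of $\hat Z(G)^{\mathrm{Gal}(\bar\Q_p/\Q_p)}$, which fails at split primes since the Galois action on $\hat Z(G)=\C^{\times}$ becomes trivial. Thus one cannot quote Lemma \ref{lem:parahorics-equiv} verbatim; instead the hyperspecial-maximal-parahoric claim must be obtained directly from the $GL_3$ picture, which is the main (and essentially only) substantive point of the argument. Everything else amounts to routine bookkeeping with the splitting $E_p\cong\Q_p\times\Q_p$.
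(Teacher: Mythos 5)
Your proposal is correct and follows essentially the same route as the paper: split $E_p\cong\Q_p\times\Q_p$, observe that unitarity becomes $g_2^t=\phi g_1^{-1}\phi^{-1}$ so projection to the first factor identifies $G(\Q_p)$ with $GL_3(\Q_p)$ and $K_p$ with $GL_3(\Z_p)$, and transport the standard hyperspecial maximal parahoric property of $GL_3(\Z_p)$. One small nitpick: $GL_3(\Z_p)$ is the stabilizer of the lattice $\Z_p^3$ itself (the stabilizer of its homothety class is the larger group $\Q_p^\times GL_3(\Z_p)$), but this does not affect the argument.
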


\begin{proof}
For a split prime $p$, the isomorphism $E_p \cong \Q_p\times\Q_p$ is defined by two conjugate idempotents $\nu$ and $\nu^\ast$ (i.e. $\nu^2=\nu$, $1=\nu+\nu^\ast$ and $\nu\nu^\ast=0$) together with $\nu\mapsto (1,0)$ resp.\ $\nu^\ast\mapsto(0,1)$. 
This induces the following decompositions $GL_3(E_p) \cong GL_3(\Q_p)\times GL_3(\Q_p)$, where $g=g_1\nu+g_2\nu^\ast \mapsto (g_1,g_2)$,  and $\Phi \cong (\phi,\phi^T)$, where $\Phi=\phi\nu+\phi^T\nu^\ast$. 
An element $g=g_1\nu+g_2\nu^\ast\in G(E_p)$ belongs to $G(\Q_p)$  if and only if $(g_1,g_2)\in GL_3(\Q_p)\times GL_3(\Q_p)$ satisfies $g_2^t=\phi g_1^{-1}\phi^{-1}$.
This induces the following isomorphism $G(\Q_p) \cong GL_3(\Q_p)$, $g=g_1\nu+g_2\nu^\ast \mapsto g_1$.
The subgroup $GL_3(\Z_p)$ is a hyperspecial maximal parahoric subgroup of $GL_3(\Q_p)$.
By the above isomorphism between $G(\Q_p)$ and $GL_3(\Q_p)$, the preimage of $GL_3(\Z_p)$ in $G(\Q_p)$, which is precisely $K_p$, is a hyperspecial maximal parahoric subgroup.
\end{proof}

\begin{proof}[Proof of Proposition \ref{pro-parahoric}]
Follows from Lemmas \ref{lem:parahorics-unramified-case}, \ref{lem:parahorics-ramified-odd-case} and \ref{lem:parahorics-2-case}.
\end{proof}

\subsection{Transitive actions} \label{subsec:lattices-transitive}

Let $(E,\Phi)$ be a unitary datum of one of the four types in Theorem \ref{thm-simply-transitive} and $G=U_3(E,\Phi)$.

\begin{defn}
For any prime $p$, let $K_p$ be the maximal parahoric subgroup of $G_p = G(\Q_p)$ in Definition \ref{def-parahoric}.
Define the following maximal open compact adelic subgroup of $G(\A)$
\begin{equation}
K = G(\R) \prod_p K_p \leq G(\A).
\end{equation}
Define the group of globally integral elements of $G$,
\begin{equation}
\Gamma = G(\Q) \bigcap K = G(\Q) \bigcap \prod_p K_p.
\end{equation}
For any prime $p$, define the principal $p$-arithmetic subgroup of $G$,
\begin{equation}
\Gamma^p = G(\Q) \bigcap K^p = G(\Q) \bigcap \prod_{\ell \ne p} K_\ell.
\end{equation}
\end{defn}

\begin{rem}
Note that for all cases except ($\CC,2$), $K_p = G(\Z_p)$, and in case ($\CC,2$), $G(\Z_p)$ is of finite index in $K_p$.
Hence in all cases except ($\CC$), $K = G(\R\hat{\Z})$, $\Gamma = G(\Z)$ and $\Gamma^p = G(\Z[1/p])$, for any $p$.
In case ($\CC$), $G(\Z)$ is of finite index in $\Gamma$ and $G(\Z[1/p])$ is of finite index in $\Gamma^p$, for any $p$.
\end{rem}

\begin{prop} \label{prop-transitivity}
Let $(E,\Phi)$ be a unitary datum, $G= U_3(E,\Phi)$ and $R_E = \{p \mbox{ prime} \,:\, p \mid \mbox{disc}(E) \}$.
Assume that the discriminant of $\Phi$ is a product of primes from $R_E$.
Let $L(s,\chi_E)$ be the (continuation of the) Dirichlet L-function of $\chi_E$, the Dirichlet character associated to $E/\Q$ by class field theory.
For any prime $p$, let $K_p \leq G(\Q_p)$  be the maximal parahoric subgroup in Definition \ref{def-parahoric}.
If 
\begin{equation}
|\Gamma|^{-1} = 2^{-2 - |R_E|} 12^{-1}  L(0,\chi_E) L(-2,\chi_E),
\end{equation}
then for any unramified prime $p$, the principal $p$-arithmetic subgroup of $G$, $\Gamma^p = G(\Q) \bigcap \prod_{\ell \ne p} K_\ell$, acts transitively on the hyperspecial vertices of the Bruhat-Tits building of $G(\Q_p)$.
\end{prop}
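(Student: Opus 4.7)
The plan is to prove transitivity by establishing that the double coset space $G(\Q)\backslash G(\A)/K$ is a singleton (``class number one''), and then extracting transitivity at the unramified prime $p$ by an adelic unpacking. Since $G(\R)$ is compact (the hermitian form is definite), the double coset space is finite, and the Tamagawa mass decomposes as
\[
\mathrm{mass}(G,K) \;=\; \sum_{i=1}^{h} |\Gamma_i|^{-1}, \qquad \Gamma_i \;=\; G(\Q) \cap g_i K g_i^{-1},
\]
where $g_1 = 1, g_2, \ldots, g_h$ are double coset representatives and in particular $\Gamma_1 = \Gamma$. Since every summand is a positive rational, it suffices to prove $\mathrm{mass}(G,K) = |\Gamma|^{-1}$, as this forces $h=1$.

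For the mass, I would apply the explicit form of Prasad's volume formula for $U_3$ over $\Q$ developed by Gan--Hanke--Yu \cite{gan2001exact}, which writes
\[
\mathrm{mass}(G,K) \;=\; \tau_{G}\, \prod_{p} \lambda_{p},
\]
with $\tau_{G}$ a global constant built from the Dirichlet $L$-values attached to $\chi_E$ (which, via the functional equations $L(-2,\chi_E) \leftrightarrow L(3,\chi_E)$ and $L(0,\chi_E) \leftrightarrow L(1,\chi_E)$, produces exactly the $L(0,\chi_E)L(-2,\chi_E)$ appearing in the statement), and $\lambda_{p}$ a local index-type factor attached to the parahoric $K_p$. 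By the standing hypothesis that $\mathrm{disc}\,\Phi$ is supported in $R_E$, Lemma \ref{lem:parahorics-unramified-case} ensures $K_p$ is hyperspecial for every $p \notin R_E$, and for such $p$ the tables of \cite{gan2001exact} give $\lambda_p = 1$. Only the primes in $R_E$ contribute, and a careful accounting of these contributions should yield precisely the constant $2^{-2-|R_E|} 12^{-1}$. Comparing with the assumed value of $|\Gamma|^{-1}$ forces $h=1$, i.e.\ $G(\A) = G(\Q)\cdot K$.

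With class number one in hand, transitivity is immediate. Fix an unramified prime $p$; then $K_p$ is hyperspecial (Proposition \ref{pro-parahoric}), there is a single $G(\Q_p)$-orbit of hyperspecial vertices in $\mathcal{B}_p$, and this orbit is identified with $G(\Q_p)/K_p$ via the stabilizer of a base vertex $v_0$. Given $g_p \in G(\Q_p)$, consider the adele $\tilde{g} \in G(\A)$ whose $p$-component is $g_p$ and every other component is $1$. By class number one, write $\tilde{g} = \gamma k$ with $\gamma \in G(\Q)$ and $k \in K$. Reading components at each $v \ne p$ gives $\gamma = k_v^{-1} \in K_v$, so $\gamma \in G(\Q) \cap K^p = \Gamma^p$; at $p$ we read off $g_p K_p = \gamma K_p$, so $g_p v_0 = \gamma v_0$. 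Hence $\Gamma^p$ acts transitively on the hyperspecial vertices.

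The main obstacle in this plan is the evaluation of the bad-prime local factors $\lambda_p$ for $p \in R_E$ and the verification that they multiply to exactly $2^{-2-|R_E|}12^{-1}$. This requires identifying each $K_p$ inside the Bruhat--Tits model of $U_3$ over $\Z_p$ (unramified, ramified, or split) and matching it against the parahoric volume tables of \cite{gan2001exact}. The most delicate cases are the split instance ($\CC,2$), where $K_2 \ne G(\Z_2)$ and one must use the concrete description from Lemma \ref{lem:parahorics-2-case}, and the ramified instances ($\MM,7$), ($\CC,3$), ($\CC,5$) from Lemma \ref{lem:parahorics-ramified-odd-case}, where the local group is quasi-split and $K_p$ is a non-hyperspecial maximal parahoric. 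These case-by-case volume computations are routine but unilluminating, and the cleanest formulation is indeed the conditional statement of the proposition, which packages the arithmetic of $(E,\Phi)$ into the single numerical hypothesis on $|\Gamma|^{-1}$.
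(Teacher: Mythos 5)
Your proposal follows the paper's proof essentially verbatim: compute $\mathrm{Mass}(G,K)$ via the Gan--Hanke--Yu explicit mass formula (Prop.\ 2.13 of \cite{gan2001exact}), use positivity of the mass summands together with the hypothesis on $|\Gamma|^{-1}$ to force class number one, and then deduce transitivity at $p$ by exactly the same adelic unpacking argument. The only (harmless) bookkeeping slip is attributing the whole constant $2^{-2-|R_E|}12^{-1}$ to the bad-prime local factors: in the formula the ramified $\lambda_p$ contribute only $2^{-|R_E|}$ (since each parahoric there has reductive quotient $O_1\times Sp_2$, giving $\lambda_p=\tfrac12$), while $12^{-1}$, $2^{-3}$ and the Tamagawa number $\tau(G)=2$ come from the global terms.
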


The proof of the above transitivity criterion comprise of the following steps:
First, we introduce the mass invariant of $G$ and $K$ (Definition \ref{def-mass}).
Second, we give a mass formula relating the mass of $G$ to the subgroup of globally integral elements of $G$ and its twists (Lemma \ref{lem-GHY}).
Third, we show that the assumption in Proposition \ref{prop-transitivity} implies that the class number of the group is one (Lemma \ref{lem-cls1}).
Finally, we show that class number one property implies that the $p$-arithmetic subgroup of $G$ acts transitively on the corresponding Bruhat-Tits buildings (Lemma \ref{lem-transitivity}).

\begin{defn}\label{def-mass}
Define the mass of $G/\Q$ w.r.t. the maximal open compact adelic subgroup $K$, to be 
\begin{equation}
\mbox{Mass}(G,K) := \frac{\mu(G(\Q)\backslash G(\A))}{\mu(K)} = 
 \sum_{g \in G(\Q)\backslash G(\A) / K} |G(\Q) \cap g^{-1}K g|^{-1}.
\end{equation}
where $\mu$ is a Haar measure on the adelic group $G(\A)$. The class number of $G$ is
\begin{equation}
\# \mathrm{Class}(G,K) = |G(\Q)\backslash G(\A) / K|.
\end{equation}
\end{defn}

\begin{lem} \label{lem-GHY}
In the notation of Proposition \ref{prop-transitivity},
\begin{equation}
\mathrm{Mass}(G,K) = 2^{-2 - |R_E|} \cdot 12^{-1} \cdot L(0,\chi_E) \cdot L(-2,\chi_E).
\end{equation}
\end{lem}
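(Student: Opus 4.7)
The strategy is to apply Prasad's volume formula \cite{prasad1989volumes} in the explicit adelic form worked out for unitary groups in three variables by Gan--Hanke--Yu \cite{gan2001exact}, and then to identify each of the factors appearing there with the quantities in the statement. The basic setup is to factor the Tamagawa measure $\mu$ on $G(\A)$ as a product $\mu_\infty \prod_p \mu_p$ of local Haar measures, so that
\[
\mathrm{Mass}(G,K) \;=\; \mu_\infty(G(\R))^{-1} \cdot \prod_p \mu_p(K_p)^{-1} \cdot \tau(G),
\]
where $\tau(G)$ is the Tamagawa number; for the simply-connected cover $SU_3(E,\Phi)$ one has $\tau = 1$, and passing to $U_3$ introduces an additional factor which can be bookkept via the exact sequence $1 \to SU_3 \to U_3 \to U_1 \to 1$, yielding a contribution from $L(1,\chi_E)$ that we will use via the functional equation.

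First, I would invoke Prasad's formula for the simply-connected inner form to express the global factor, after canceling discriminants, as a product of special $L$-values: the two relevant zeta/L-functions for $SU_3$ over $\Q$ with quadratic extension $E$ are $\zeta(2)$ and $L(3,\chi_E)$, which by the functional equation for $L(s,\chi_E)$ and Euler's formula for $\zeta(2)$ can be rewritten in terms of $L(-2,\chi_E)$ and $L(0,\chi_E)$, producing a factor of $12^{-1}$ (coming from $\zeta(2)=\pi^2/6$ combined with $\zeta(-1)=-1/12$ and the $\Gamma$-factors at $\infty$). The precise bookkeeping of the archimedean factor $\mu_\infty(G(\R))^{-1}$ against the $\Gamma$-factors in the functional equation of $L(s,\chi_E)$ at $s=0,-2,3$ is what ultimately produces the $12^{-1}$; I would verify this by direct comparison with the tables in \cite[\S4]{gan2001exact}.

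Second, the local factors $\mu_p(K_p)^{-1}$ at primes outside $R_E$ are trivialized: by Proposition~\ref{pro-parahoric} and Lemma~\ref{lem:parahorics-unramified-case}, for any $p \nmid \mathrm{disc}(E) \cdot \mathrm{disc}(\Phi)$ the group $K_p = G(\Z_p)$ is a hyperspecial maximal parahoric; Gross's theory of Bruhat--Tits normalization (equivalently, the local computation in \cite[\S2]{gan2001exact}) gives that the hyperspecial local factor equals exactly the value that cancels the corresponding Euler factors in $\zeta(2)L(3,\chi_E)$. Under the assumption that $\mathrm{disc}(\Phi)$ is supported in $R_E$, the only surviving local corrections come from primes $p \in R_E$, i.e.\ ramified in $E/\Q$. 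At each such $p$, the local group $G(\Q_p)$ is a quasi-split $SU_3$ (or a ramified variant), and the correction factor is $1/2$ per ramified prime; summing over $p \in R_E$ gives the factor $2^{-|R_E|}$.

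Third, the remaining constant $2^{-2}$ comes from two sources: one power of $2$ corresponds to the contribution of the center and the archimedean normalization (the order-two outer structure of $U_3$ relative to $SU_3$), and the other from the global quotient between the Tamagawa number of $U_3$ and that of $SU_3$, computed from the class number / unit index of the Hecke character coming from $U_1(E) \cong \ker(N_{E/\Q}\colon E^\times \to \Q^\times)$.

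The main obstacle will be the explicit determination of the local volume at ramified primes, particularly in cases ($\MM,7$), ($\CC,3$), ($\CC,5$), and ($\CC,2$) where $K_p$ is not hyperspecial: here one must use the explicit Cartan decomposition of $\Phi$ computed in Lemma~\ref{lem:parahorics-ramified-odd-case} together with Prasad's local volume formula involving the Kac coordinates of the parahoric, and verify that the product of these ``non-standard'' local factors combines cleanly with the global piece to give exactly $2^{-2-|R_E|} \cdot 12^{-1}$. For the four specific unitary data ($\EE$), ($\GG$), ($\MM$), ($\CC$) this is a finite check, which I would do case-by-case; the uniformity of the answer across the four cases serves as a consistency check that the normalizations are correctly matched.
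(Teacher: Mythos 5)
Your plan follows essentially the same route as the paper: both reduce the computation to the explicit mass formula of Gan--Hanke--Yu \cite{gan2001exact}. The paper simply quotes Proposition 2.13 there, which for $G=U_3$ gives $\mathrm{Mass}(G,K)=2^{-\ell d}\,L(M)\,\tau(G)\prod_{v\in S}\lambda_v$ with $d=1$, $\ell=3$, $\tau(G)=2$, $L(M)=L(0,\chi_E)\,\zeta(-1)\,L(-2,\chi_E)$ and $S=R_E$; thus $2^{-3}\cdot 2=2^{-2}$, $|\zeta(-1)|=12^{-1}$, and the only genuinely nontrivial input is $\lambda_p=1/2$ for $p\in R_E$, which the paper obtains from the local computation in \cite{gan2001exact}, Section 3, by identifying the maximal reductive quotient of $K_p$ modulo $p$ as $O_1\times Sp_2$. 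Two points in your plan need repair. First, your accounting of the constant $2^{-2}$ through $1\to SU_3\to U_3\to U_1\to 1$ is off: the Tamagawa quotient $\tau(U_3)/\tau(SU_3)=2$ enters the mass as a factor $2$ in the numerator, not $2^{-1}$; in the GHY normalization the $2^{-2}$ is exactly $2^{-\ell}\tau(G)=2^{-3}\cdot 2$, so attributing one power of $2^{-1}$ to a ``class number / unit index'' of $U_1$ would double-count. Second, the factor $1/2$ per ramified prime is not a generic consequence of the local group being ``quasi-split $SU_3$''; it is the $\lambda$-factor attached to the specific non-hyperspecial maximal parahorics of Lemma \ref{lem:parahorics-ramified-odd-case} (cases $(\MM,7)$, $(\CC,3)$, $(\CC,5)$) and must actually be computed, either via the reductive quotient as in the paper or via your proposed case-by-case check against the tables of \cite{gan2001exact}; note also that $(\CC,2)$ needs no correction at all, since $2$ splits in $\Q[\sqrt{-15}]$ and $K_2$ is hyperspecial by Lemma \ref{lem:parahorics-2-case}, consistent with the product in the formula running only over $S=R_E$.
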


\begin{proof}
This formula follows directly from Proposition 2.13 of \cite{gan2001exact}, which gives
\begin{equation}
\mathrm{Mass}(G,K) = 2^{ - \ell \cdot d} \cdot L(M) \cdot  \tau(G) \cdot \prod_{v \in S} \lambda_v,
\end{equation}
where in our case, $d=[\Q:\Q]=1$ (the dimension of the number field $\Q$), $\ell=\mbox{rank}_{\bar\Q}(G)=3$ (the absolute rank of $G$), $\tau(G)=2$ (the Tamagawa number of $G/\bar\Q$), $L(M) = \prod_{i=1}^{\ell}  L(1-i,\chi_E^i) = 12^{-1} \cdot L(0,\chi_E) \cdot L(-2,\chi_E)$ (the special value of the Artin-Tate motive associated to $G$), $S=R_E$ (the ramified primes of $G$) and $\lambda_p = 1/2$ for $p\in R_E$   (the lambda factors at the ramified places). 
The last fact follows from the local calculation in Section 3 of \cite{gan2001exact} and since for any $p\in R_E$, $K_p$  has maximal reductive quotient modulo $p$ which is isomorphic to $O_1\times Sp_2$.
\end{proof}

\begin{lem} \label{lem-cls1}
In the notation of proposition \ref{prop-transitivity}, if  
\begin{equation}
|\Gamma|^{-1} = 2^{-2 - |R_E|} 12^{-1}  L(0,\chi_E) L(-2,\chi_E),
\end{equation}
then $G$ is of class number one, i.e.
\begin{equation}
 G(\A) = G(\Q) \cdot K.
\end{equation}
\end{lem}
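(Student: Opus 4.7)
The plan is a short counting argument using the mass formula already established in Lemma \ref{lem-GHY}. First I would observe that since $(E,\Phi)$ is a unitary datum, $\Phi$ is definite, so $G(\R)\cong PU(3)$ is compact; combined with the fact that $K_\infty=G(\R)$ and each $K_p$ is compact open, the discreteness of $G(\Q)$ in $G(\A)$ implies that for every $g\in G(\A)$ the group $G(\Q)\cap gKg^{-1}$ is discrete in a compact group, hence finite. In particular $\Gamma=G(\Q)\cap K$ is a finite group, and the mass formula
\[
\mathrm{Mass}(G,K)=\sum_{g\in G(\Q)\backslash G(\A)/K}|G(\Q)\cap g^{-1}Kg|^{-1}
\]
is a genuine sum of strictly positive rational numbers.

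Next I would isolate the contribution of the trivial double coset $G(\Q)\cdot 1\cdot K$: its contribution is exactly $|\Gamma|^{-1}$, so
\[
\mathrm{Mass}(G,K)=|\Gamma|^{-1}+\sum_{\substack{[g]\in G(\Q)\backslash G(\A)/K \\ [g]\neq [1]}}|G(\Q)\cap g^{-1}Kg|^{-1}.
\]
By Lemma \ref{lem-GHY}, $\mathrm{Mass}(G,K)=2^{-2-|R_E|}\cdot 12^{-1}\cdot L(0,\chi_E)\cdot L(-2,\chi_E)$, which by hypothesis equals $|\Gamma|^{-1}$. Therefore the remaining sum must be zero. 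Since every term in that sum is strictly positive, the sum must be empty, i.e.\ there are no non-trivial double cosets. This yields $\#\mathrm{Class}(G,K)=1$, that is $G(\A)=G(\Q)\cdot K$, as claimed.

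There is no real obstacle here beyond keeping track of the definiteness of $\Phi$ (which is what guarantees the sum consists of strictly positive terms and that $\Gamma$ is finite); the argument is essentially a rigidity statement — the mass invariant is a weighted count of double cosets, so once the trivial coset already saturates the mass, there is no room for any other.
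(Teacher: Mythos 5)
Your proposal is correct and is essentially the paper's own argument: both isolate the trivial double coset's contribution $|\Gamma|^{-1}$ in the mass formula of Lemma \ref{lem-GHY}, and then use strict positivity of the remaining terms to conclude the double coset space is a single class. The extra remarks on finiteness of the stabilizers $G(\Q)\cap g^{-1}Kg$ (from definiteness of $\Phi$) just make explicit what the paper uses implicitly.
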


\begin{proof}
By Lemma \ref{lem-GHY}, we get that
\begin{equation}
0 = \mbox{Mass}(G,K) - |G(\Q)\cap K|^{-1} 
=  \sum_{1 \ne g \in G(\Q)\backslash G(\A) / K} |G(\Q) \cap g^{-1}K g|^{-1}.
\end{equation}
Since all the finite groups $G(\Q) \cap g^{-1}K g$ are of size $\geq 1$, we get that $G(\Q)\backslash G(\A) / K = \{1\}$.
\end{proof}

\begin{lem} \label{lem-transitivity}
In the notation of Proposition \ref{prop-transitivity}, if $G(\A) = G(\Q) \cdot K$, then for any unramified prime $p$, the principal $p$-arithmetic subgroup of $G$, $\Gamma^p  = G(\Q) \cap \bigcap_{\ell \ne p} K_\ell$, acts transitively on the hyperspecial vertices of the Bruhat-Tits building of $G(\Q_p)$.
\end{lem}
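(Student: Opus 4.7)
The plan is to deduce the statement from a standard strong approximation argument applied ``at the prime $p$.'' The class number one hypothesis $G(\A) = G(\Q) \cdot K$ is exactly the global input needed to translate into the local decomposition $G(\Q_p) = \Gamma^p \cdot K_p$, after which transitivity on hyperspecial vertices is immediate from the Bruhat-Tits description recalled in Section \ref{subsec:unitarygroups}. There is really no ``hard part'' here --- the substance was absorbed into the mass formula computation of Lemmas \ref{lem-GHY} and \ref{lem-cls1}; what remains is a purely formal adelic manipulation.

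More concretely, I would first prove the key local decomposition $G(\Q_p) = \Gamma^p \cdot K_p$ as follows. Fix $g_p \in G(\Q_p)$ and form the adele $\tilde{g} = (\tilde{g}_v)_v \in G(\A)$ defined by $\tilde{g}_p := g_p$ and $\tilde{g}_v := 1$ for every place $v \neq p$. By the class number one assumption, we may write $\tilde{g} = \gamma \cdot k$ with $\gamma \in G(\Q)$ (embedded diagonally) and $k = (k_v)_v \in K = G(\R) \prod_\ell K_\ell$. Comparing components at each place $v \neq p$ gives $\gamma \cdot k_v = 1$, whence $\gamma = k_v^{-1} \in K_v$; ranging over all $v \neq p$ (the archimedean constraint being vacuous because $K_\infty = G(\R)$) shows
\[
\gamma \in G(\Q) \cap \prod_{\ell \neq p} K_\ell = \Gamma^p.
\]
At the remaining place $p$, the identity $g_p = \gamma \cdot k_p$ with $k_p \in K_p$ yields $g_p \in \Gamma^p \cdot K_p$, as desired.

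To conclude, recall from Proposition \ref{pro-parahoric} that for $p$ unramified, $K_p$ is a hyperspecial maximal parahoric subgroup, and hence equals the $G(\Q_p)$-stabilizer of some hyperspecial vertex $v_0 \in \B^{hs}$ of the Bruhat-Tits tree of $G(\Q_p)$. As reviewed in Section \ref{subsec:unitarygroups}, $G(\Q_p)$ acts transitively on $\B^{hs}$, so every hyperspecial vertex is of the form $v = g_p v_0$ for some $g_p \in G(\Q_p)$. Writing $g_p = \gamma \cdot k_p$ via the decomposition above with $\gamma \in \Gamma^p$ and $k_p \in K_p$, we obtain $v = \gamma k_p v_0 = \gamma v_0$, since $k_p$ fixes $v_0$. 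Thus $\Gamma^p$ acts transitively on $\B^{hs}$, completing the proof.
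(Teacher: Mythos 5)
Your proof is correct and follows essentially the same route as the paper: construct the adele equal to $g_p$ at $p$ and $1$ elsewhere, use class number one to split it as $\gamma\cdot k$, compare components away from $p$ to get $\gamma\in\Gamma^p$, deduce $G(\Q_p)=\Gamma^p\cdot K_p$, and conclude via the transitivity of $G(\Q_p)$ on hyperspecial vertices with $K_p$ the stabilizer of $v_0$. If anything, your write-up of the final step is slightly cleaner than the paper's.
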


\begin{proof}
Since $G_p = G(\Q_p)$ acts transitively on the hyperspecial vertices of its Bruhat-Tits building and  $K_p$ is the stabilizer of a hyperspecial vertex, we get that $\Gamma^p$ will act transitively if we can prove that
\begin{equation}
G_p = \Gamma^p \cdot K_p.
\end{equation}  
Let $g\in G_p$, and let $\tG \in G(\A)$ be such that $\tG_p = g$ and $\tG_v = 1$ if $v \ne p$.
By the class number one property, there exists $q \in G(\Q)$, considered as $\tilde{q} = (q,q,q,\ldots) \in G(\A)$, and $\tilde{k}  = (k_\infty,k_2,k_3,\ldots) \in K$, such that $\tG = \tilde{q} \cdot \tilde{k}$. For any $\ell \ne p$, we get $1 = \tG_\ell = \tilde{q}_\ell \cdot \tilde{k}_\ell = q \cdot k_\ell$, hence $q = k_\ell^{-1} \in K_\ell$. 
Therefore, $q \in G(\Q) \cap \bigcap_{\ell \ne p} K_\ell = \Gamma^p$. 
Finally, from $g = \tG_p = \tilde{q}_p \cdot \tilde{k}_p = q \cdot k_p$, and the fact that $k_p \in \Gamma^p$, we obtain the claim.
\end{proof}

\begin{proof}[Proof of Proposition \ref{prop-transitivity}]
Follows from Lemmas \ref{lem-GHY}, \ref{lem-cls1} and \ref{lem-transitivity}.
\end{proof}

\subsection{Simply transitive actions} \label{subsec:lattices-simply}

\begin{defn}
Fix a group $G$ and a subgroup $S$. 
A subgroup $H$ of $G$ is called transversal to $S$ if 
\begin{equation}
G = H \cdot S := \{ h\cdot s \;:\; h\in H, s\in S\} \qquad \mbox{and} \qquad  H \cap S = \{1\}.
\end{equation}
A subgroup $H$ of $G$ is called central transversal to $S$ if 
\begin{equation}
G = H \cdot S := \{ h\cdot s \;:\; h\in H, s\in S\} \qquad \mbox{and} \qquad  H \cap S \subset Z(G),
\end{equation}
where $Z(G)$ is the center of $G$.
\end{defn}

\begin{prop} \label{prop-simplicity}
Let $M$ be an ideal of $\mO_E$ above a  prime $m \in \Z$ (assume $m \ne 2$ in case ($\CC$)) and denote, 
\begin{equation}
G[M] = G(\Z_m) \mod M \leq GL_3(\mO_E/M).
\end{equation}
Let $H \leq G[M]$ be transversal to $\Gamma \mod{M} \leq G[M]$, and assume that the kernel of $\Gamma$ modulo $M$ is trivial (resp.\ equal to the center $Z(\Gamma)$).
Define for any unramified prime $p \nmid m$, the level $H$ congruence $p$-arithmetic subgroup and its quotient modulo the center
\begin{equation}
\tilde\Lambda^p= \{ g\in  \Gamma^p \; :\; (g \mod m) \in H\}, \qquad \Lambda^p = \tilde\Lambda^p/Z(\tilde\Lambda^p).
\end{equation}
If $\Gamma^p$ acts transitively on the hyperspecial vertices of the Bruhat-Tits building associated to $G(\Q_p)$, then $\Lambda^p $  acts simply-transitively on the hyperspecial vertices of the Bruhat-Tits building.
\end{prop}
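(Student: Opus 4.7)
The plan is to prove simple transitivity in three steps: identify the $\Gamma^p$-stabilizer of a fixed hyperspecial vertex $v_0$; use the transversality hypothesis together with the given transitive action of $\Gamma^p$ to upgrade to a transitive action of $\tilde{\Lambda}^p$; then compute the stabilizer of $v_0$ inside $\tilde{\Lambda}^p$ and pass to the central quotient to kill it.

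First I would fix a hyperspecial vertex $v_{0}$ whose stabilizer in $G(\Q_{p})$ is exactly $K_{p}$. Since $K_{p}$ is the local factor at $p$ entering the definition of $\Gamma$, the stabilizer of $v_{0}$ inside $\Gamma^{p}=G(\Q)\cap\prod_{\ell\neq p}K_{\ell}$ is
\[
\mathrm{Stab}_{\Gamma^{p}}(v_{0})=\Gamma^{p}\cap K_{p}=G(\Q)\cap \prod_{\ell}K_{\ell}=\Gamma.
\]
Next, to verify that $\tilde{\Lambda}^{p}$ still acts transitively on the hyperspecial vertices, let $v$ be such a vertex; by hypothesis there exists $g\in\Gamma^{p}$ with $g\cdot v_{0}=v$. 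The transversality assumption $G[M]=H\cdot(\Gamma\bmod M)$ allows us to write $g\bmod M = h\cdot\bar{\gamma}$ for some $h\in H$ and $\bar{\gamma}\in\Gamma\bmod M$; lifting $\bar{\gamma}$ to $\gamma\in\Gamma$, the element $\tilde{g}:=g\gamma^{-1}\in\Gamma^{p}$ reduces to $h\in H$ modulo $M$, so $\tilde{g}\in\tilde{\Lambda}^{p}$. Since $\gamma\in\Gamma$ fixes $v_{0}$, we have $\tilde{g}\cdot v_{0}=g\cdot v_{0}=v$, proving transitivity. Passing to $\Lambda^{p}=\tilde{\Lambda}^{p}/Z(\tilde{\Lambda}^{p})$ preserves transitivity, as the central elements of $\tilde{\Lambda}^{p}$ are scalar and act trivially on the Bruhat--Tits building.

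The second (and final) step is to show the stabilizer of $v_{0}$ in $\Lambda^{p}$ is trivial. By the same identification as above,
\[
\mathrm{Stab}_{\tilde{\Lambda}^{p}}(v_{0})=\tilde{\Lambda}^{p}\cap\Gamma=\bigl\{\gamma\in\Gamma\,\bigm|\,\gamma\bmod M\in H\bigr\}.
\]
Transversality forces $H\cap(\Gamma\bmod M)=\{1\}$ in $G[M]$, so this stabilizer collapses to the kernel of the reduction map $\Gamma\to G[M]$. By hypothesis, that kernel is either trivial, in which case $\tilde{\Lambda}^{p}$ already acts freely and we are done, or it equals $Z(\Gamma)$. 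In the latter case, since $Z(\Gamma)$ consists of scalars it is contained in $Z(\tilde{\Lambda}^{p})$, and therefore the stabilizer of $v_{0}$ becomes trivial in the quotient $\Lambda^{p}$. Transitive plus free stabilizers at one (hence, by transitivity, every) hyperspecial vertex yields the simply transitive action.

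The argument is essentially a bookkeeping exercise once the transversality and kernel hypotheses are in place, and the only real subtlety I anticipate is making sure that the kernel of $\Gamma\to G[M]$ really does sit inside $Z(\tilde{\Lambda}^{p})$ in the ``(M)'' case where $Z(\Gamma)=\{\pm I\}$; but this is automatic because scalar matrices commute with every element of $G(\Q)$, so their image in $\tilde{\Lambda}^{p}$ is central. In Proposition~\ref{prop-(G,E,M,C)} below (where the transversality and triviality-of-kernel conditions are verified case by case for ($\EE$), ($\GG$), ($\MM$), ($\CC$)) this abstract statement is then combined with Proposition~\ref{prop-transitivity} to yield Theorem~\ref{thm-simply-transitive}.
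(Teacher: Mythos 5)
Your proposal is correct and follows essentially the same route as the paper: the modulo-$M$ transversality is lifted to the factorization $\Gamma^p=\tilde\Lambda^p\cdot\Gamma$ with $\tilde\Lambda^p\cap\Gamma$ trivial or central (the paper's Lemma \ref{lem-transversal}), and then transitivity of $\tilde\Lambda^p$ plus the identification of the vertex stabilizer with $\tilde\Lambda^p\cap\Gamma$, killed in the central quotient, gives simple transitivity (the paper's Lemma \ref{lem-simplicity}). The only point you treat as briefly as the paper does is why $Z(\tilde\Lambda^p)$ acts trivially on the building (so the $\Lambda^p$-action is well defined), which the paper likewise handles in a short remark rather than in the proof itself.
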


We note in passing that $\Gamma$ contains the center of $\Gamma^p$ and therefore that the center of $\Gamma$ and of $\Gamma^p$ coincides.
This follows from the fact that $\Gamma = \Gamma \cap K_p$ is a stabilizer of a vertex in $\Gamma^p$ and that the center of $\Gamma^p$ is the point-wise stabilizer of the entire building.
A similar argument shows that $\Gamma \cap \tilde\Lambda^p$ contains the centers of $\tilde\Lambda^p$ and that the center of $\Gamma \cap \tilde\Lambda^p$ and of $\tilde\Lambda^p$ coincide.

\begin{lem} \label{lem-transversal}
In the notation of Proposition \ref{prop-simplicity}, assume that the kernel of $\Gamma$ modulo $M$ is trivial (resp.\ equals the center $Z(\Gamma)$).
If $H \leq G[M]$ is transversal to $\Gamma \mod{M} \leq G[M]$, then for any prime $p \nmid m$, $\tilde\Lambda^p \leq \Gamma^p$ is (resp.\ central) transversal to $\Gamma$.
\end{lem}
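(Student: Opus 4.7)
The plan is to verify the two defining conditions of (central) transversality for $\tilde\Lambda^p$ inside $\Gamma^p$ separately, leveraging the fact that reduction modulo $M$ is well-defined on all of $\Gamma^p$. Indeed, since $p \neq m$, the definition of $\Gamma^p = G(\Q) \cap \prod_{\ell \neq p} K_\ell$ forces $\Gamma^p \subseteq K_m = G(\Z_m)$, so the modulo $M$ map extends from $\Gamma$ to $\Gamma^p$ and lands in $G[M]$. Throughout I will work with this extended reduction map, whose kernel on $\Gamma$ is, by hypothesis, either trivial or equal to $Z(\Gamma) = Z(\Gamma^p)$.

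For the intersection condition, I would take any $g \in \tilde\Lambda^p \cap \Gamma$. Then $g \in \Gamma$ forces $g \bmod M$ to lie in the image of $\Gamma$ in $G[M]$, while $g \in \tilde\Lambda^p$ forces $g \bmod M \in H$. The transversality of $H$ and $\Gamma \bmod M$ inside $G[M]$ then yields $g \bmod M = 1$, so that $g$ lies in the kernel of reduction modulo $M$ on $\Gamma$. By assumption this kernel is $\{1\}$ (respectively $Z(\Gamma)$), which is the conclusion.

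For the product decomposition, I would start with an arbitrary $g \in \Gamma^p$ and reduce to $g \bmod M \in G[M]$. By the transversality hypothesis there exist $h \in H$ and $\bar\gamma \in \Gamma \bmod M$ with $g \bmod M = h \bar\gamma$. Lifting $\bar\gamma$ to $\gamma \in \Gamma \subseteq \Gamma^p$, the element $g\gamma^{-1}$ still lies in $\Gamma^p$ and satisfies $(g\gamma^{-1}) \bmod M = h \in H$, hence $g\gamma^{-1} \in \tilde\Lambda^p$. This gives the required factorization $g = (g\gamma^{-1}) \cdot \gamma \in \tilde\Lambda^p \cdot \Gamma$.

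The main point to be careful about is the bookkeeping: ensuring that the modulo $M$ map is well-defined where we need it (hence the standing assumption $p \nmid m$, and, in case ($\CC$), the further exclusion $m \neq 2$ so that $\Gamma^p \subseteq K_m$ really does coincide with the integral elements at $m$), and keeping track of whether the kernel of reduction on $\Gamma$ is trivial or merely central, which dictates whether one obtains transversality or only central transversality. No deep input is needed beyond the hypotheses; the lemma is essentially a diagram chase once the reduction map is set up correctly.
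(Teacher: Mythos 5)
Your proof is correct and follows essentially the same route as the paper's: the intersection condition is reduced via the mod-$M$ map to $H \cap (\Gamma \bmod M) = \{1\}$ together with the hypothesis on the kernel of reduction on $\Gamma$, and the decomposition $\Gamma^p = \tilde\Lambda^p \cdot \Gamma$ is obtained by writing $g \bmod M = h\bar\gamma$ with $h \in H$, lifting $\bar\gamma$ to $\gamma \in \Gamma$, and observing $g\gamma^{-1} \in \tilde\Lambda^p$. The only difference is your extra (harmless) bookkeeping remarks about well-definedness of the reduction map, which the paper takes for granted.
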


\begin{proof}
If $\gamma \in \Gamma  \cap \tilde\Lambda^p$, then modulo $M$, $\bar{\gamma} \in \Gamma \cap H=1$, and since the kernel of $\Gamma$ modulo $M$ is trivial (resp.\ equal the center $Z(\Gamma)$), we get that $\gamma =1$ (resp.\ $\gamma \in Z(\Gamma  \cap \tilde\Lambda^p)= Z(\tilde\Lambda^p)$).
Let $\gamma \in \Gamma^p$, and consider it modulo $M$, $\bar{\gamma} \in G[M]$. 
Since $G[M]= H \cdot \Gamma$, there exist $\gamma_0 \in \Gamma$, such that $\bar{\gamma} \gamma_0^{-1} \in H$.
Hence $\gamma \gamma_0^{-1} \in \tilde\Lambda^p$, and therefore $\gamma = \gamma \gamma_0^{-1} \cdot \gamma_0 \in \tilde\Lambda^p \cdot \Gamma$.
\end{proof}

\begin{lem} \label{lem-simplicity}
In the notation of Proposition \ref{prop-simplicity}, if $\tilde\Lambda^p \leq \Gamma^p$ is central transversal to $\Gamma$, then $ \Lambda^p=\tilde\Lambda^p/Z(\tilde\Lambda^p)$  acts simply-transitively on the hyperspecial vertices of the Bruhat-Tits building.
\end{lem}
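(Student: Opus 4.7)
\textbf{Proof plan for Lemma \ref{lem-simplicity}.} The plan is to first upgrade the transitivity of $\Gamma^p$ to a transitive action of $\tilde\Lambda^p$, then to identify the vertex stabilizer in $\tilde\Lambda^p$ with $Z(\tilde\Lambda^p)$, and finally to pass to the quotient $\Lambda^p$.

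First I would fix the hyperspecial vertex $v_0$ whose stabilizer in $G(\Q_p)$ is the chosen maximal parahoric $K_p$ of Definition \ref{def-parahoric}, so that $\Gamma=\Gamma^p\cap K_p$ is precisely the $\Gamma^p$-stabilizer of $v_0$. Given any hyperspecial vertex $v$, by the hypothesis of Proposition \ref{prop-simplicity} there is $\gamma\in\Gamma^p$ with $\gamma v_0=v$; using the decomposition $\Gamma^p=\tilde\Lambda^p\cdot\Gamma$ supplied by central transversality, I write $\gamma=\lambda g$ with $\lambda\in\tilde\Lambda^p$, $g\in\Gamma$, and since $gv_0=v_0$ I obtain $\lambda v_0=v$. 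Hence $\tilde\Lambda^p$ already acts transitively on hyperspecial vertices.

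Next I would identify $\mathrm{Stab}_{\tilde\Lambda^p}(v_0)=\tilde\Lambda^p\cap\Gamma$. Central transversality gives $\tilde\Lambda^p\cap\Gamma\subseteq Z(\Gamma^p)$, and since $\tilde\Lambda^p\leq\Gamma^p$ any element of $Z(\Gamma^p)$ that lies in $\tilde\Lambda^p$ is \emph{a fortiori} central in $\tilde\Lambda^p$, so
\[
\mathrm{Stab}_{\tilde\Lambda^p}(v_0)=\tilde\Lambda^p\cap\Gamma\subseteq Z(\tilde\Lambda^p).
\]
For the reverse inclusion I would invoke the standard fact that the scalar centre of $G(\Q_p)$ acts trivially on its Bruhat--Tits building; since in all four cases of Theorem \ref{thm-simply-transitive} the centre of $\tilde\Lambda^p$ consists of scalars (as noted in the parenthetical remark following Proposition \ref{prop-simplicity}), $Z(\tilde\Lambda^p)$ fixes $v_0$ and therefore lies in $\tilde\Lambda^p\cap\Gamma$. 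This yields $\mathrm{Stab}_{\tilde\Lambda^p}(v_0)=Z(\tilde\Lambda^p)$.

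Finally, because $Z(\tilde\Lambda^p)$ acts trivially on the whole building, the $\tilde\Lambda^p$-action descends to a well-defined action of $\Lambda^p=\tilde\Lambda^p/Z(\tilde\Lambda^p)$, which is transitive on hyperspecial vertices by the first step and has trivial $v_0$-stabilizer by the second. Combined, these give that $\Lambda^p$ acts simply-transitively on the hyperspecial vertices. The only genuine subtlety is matching the two a priori different notions of centre: the hypothesis only controls the intersection with $Z(\Gamma^p)$, whereas what we need is equality with $Z(\tilde\Lambda^p)$, and both inclusions rely on the fact that the scalar centre acts trivially on the tree --- a feature already implicit in the discussion immediately preceding this lemma.
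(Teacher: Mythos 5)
Your proof is correct and follows essentially the same route as the paper: transitivity of $\tilde\Lambda^p$ via the decomposition $\Gamma^p=\tilde\Lambda^p\cdot\Gamma$ applied to a $\Gamma^p$-element moving $v_0$ to $v$, and simplicity from the fact that the vertex stabilizer $\tilde\Lambda^p\cap\Gamma$ is central. The only difference is that you spell out the identification $\mathrm{Stab}_{\tilde\Lambda^p}(v_0)=Z(\tilde\Lambda^p)$ in both directions (the reverse inclusion, that $Z(\tilde\Lambda^p)$ fixes the building, is exactly the content of the remark preceding the lemma rather than of the parenthetical you cite), whereas the paper's phrase ``$\tilde\Lambda^p$ acts simply modulo its center'' leaves this implicit.
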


\begin{proof}
Note that $\Gamma = \Gamma^p \cap K_p$ is the stabilizer in $\Gamma^p$ of a hyperspecial vertex $v_0$. 
Since $\Gamma^p$ acts transitively, for any other hyperspecial vertex $v$, there exists $\gamma \in \Gamma^p$ such that $\gamma.v_0 = v$.
Write $\gamma = \gamma_1 \gamma_0$, where $\gamma_1 \in \Lambda^p$ and $\gamma_0 \in \Gamma$.
Then $\gamma_1.v_0 = \gamma \gamma_0^{-1}.v_0 = \gamma.v_0 = v$, hence $\Lambda^p$ acts transitively.
By $\Gamma  \cap \tilde\Lambda^p = \{I\}$ (resp.\ central), we get that $\tilde\Lambda^p$ acts simply modulo its center. 
\end{proof}

\begin{proof}[Proof of Proposition \ref{prop-simplicity}]
Follows from Lemmas \ref{lem-transversal} and \ref{lem-simplicity}.
\end{proof}

The next Proposition shows that the four cases of Theorem \ref{thm-simply-transitive} satisfies the conditions of Propositions \ref{prop-transitivity} and \ref{prop-simplicity}.

\begin{prop} \label{prop-(G,E,M,C)}
In the notation of Theorem \ref{thm-simply-transitive}, each of the four strong unitary datum, ($\EE$), ($\GG$), ($\MM$) and ($\CC$), satisfies the following two properties:
\begin{itemize}
\item[(i)] $|\Gamma|^{-1} = 2^{-2 - |R_E|} 12^{-1}  L(0,\chi_E) L(-2,\chi_E)$.
\item[(ii)] $\Gamma$ embeds (resp.\ its kernel is the center for ($\MM$)) in $G[M]$, and $H\leq G[M]$ is transversal to $\Gamma$.
\end{itemize}
\end{prop}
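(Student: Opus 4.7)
The proof will be case-by-case, handling each of $(\EE), (\GG), (\MM), (\CC)$ separately, since the data and the groups $\Gamma$ have different sizes. My plan is to first compute $|\Gamma|$ explicitly, then verify the mass formula by pairing it with the known special values of the Dirichlet $L$-function $L(s,\chi_E)$, and finally check the three requirements for $(ii)$: trivial (resp.\ central) kernel modulo $M$, the subgroup property for $H$, and transversality.

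\textbf{Step 1 (Computing $|\Gamma|$).} Because each $\Phi$ is definite, $G(\R)$ is compact and hence $\Gamma = G(\Q)\cap K$ is finite. For cases $(\EE)$ and $(\GG)$ where $\Phi=I$, every $g\in\Gamma$ has orthonormal columns in $\mO_E^3$ with respect to the standard Hermitian form. The identity $\sum_i N_{E/\Q}(g_{ij})=1$ (with all summands non-negative integers) forces each column to have exactly one unit entry, so $\Gamma$ consists only of monomial matrices with entries in $\mO_E^{\times}$; this gives $|\Gamma_{\EE}|=6^3\cdot 3!=1296$ and $|\Gamma_{\GG}|=4^3\cdot 3!=384$. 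For $(\MM)$ and $(\CC)$ the forms $\Phi$ are not diagonal, so I will bound $|g_{ij}|$ from the relation $g^*\Phi g=\Phi$ via the minima of $\Phi$ and enumerate the finitely many integer matrices satisfying this, recovering the counts already recorded in the literature on these groups (in particular \cite{mumford1979algebraic}, \cite{Cartwright1993Groupsactingsimply}, and the computations of \cite{Evra2018RamanujancomplexesGolden} for $(\GG)$).

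\textbf{Step 2 (Verifying the mass formula).} The special values $L(0,\chi_E)$ and $L(-2,\chi_E)$ can be read from the generalized Bernoulli numbers $B_{n,\chi}$ via $L(1-n,\chi)=-B_{n,\chi}/n$, yielding closed rational numbers depending only on $\mathrm{disc}(E)$. For $E=\Q[\sqrt{-3}],\Q[i],\Q[\sqrt{-7}],\Q[\sqrt{-15}]$ these values are tabulated; plugging them into $2^{-2-|R_E|}\cdot 12^{-1}\cdot L(0,\chi_E)\cdot L(-2,\chi_E)$ and comparing with the $1/|\Gamma|$ computed in Step~1 reduces property $(i)$ to an arithmetic identity in each case. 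This is the most computational part of the proof, but it is routine.

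\textbf{Step 3 (Verifying the transversality property).} The congruence data are organized so that $H$ and $\Gamma\bmod M$ jointly exhaust $G[M]$ with minimal intersection. I will proceed in three substeps. \emph{(a)} The kernel of reduction modulo $M$: for $(\EE), (\GG), (\CC)$ I use that the monomial form of $\Gamma$ (or the explicit CMSZ description) forces $g\equiv I\pmod M$ to have every off-diagonal entry divisible by $M$ while each diagonal entry is a root of unity congruent to $1$ modulo $M$; direct inspection of $\mO_E^\times$ modulo $M$ then forces $g=I$. For $(\MM)$, the same argument allows $g=\pm I$, giving the center as claimed. \emph{(b)} The subgroup property for $H$: in $(\MM)$ and $(\CC)$ this is automatic since $H$ is described by explicit matrix generators and shapes. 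For $(\EE)$ and $(\GG)$, the set $H$ of ``unipotent-looking'' matrices is not obviously closed under multiplication, but following the strategy of \cite[Prop.~34]{Evra2018RamanujancomplexesGolden} one shows that the relation $g^*\Phi g=\Phi$ together with the congruence $g\equiv I+M\cdot X \pmod{M^2}$ forces $X$ to be skew-Hermitian, and verifies closure by a short computation in $\mathrm{Mat}_3(\mO_E/M)$. \emph{(c)} Transversality: I will compute $|G[M]|$ from the known orders of $GL_3$ and unitary groups over the residue ring, and check that $|H|\cdot|\Gamma|=|G[M]|$ (respectively $|H|\cdot|\Gamma|=2|G[M]|$ in case $(\MM)$), combined with $H\cap(\Gamma\bmod M)=\{I\}$ verified directly from the explicit description.

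\textbf{Expected difficulties.} The conceptual structure is clear from Lemmas \ref{lem-GHY}--\ref{lem-simplicity}; the real work lies in the enumerative verifications. I anticipate the main obstacle to be the subgroup property for $H$ in cases $(\EE)$ and $(\GG)$: unlike the visibly upper-triangular $H$ in $(\MM)$, the ``congruent to $I$'' condition modulo the ramified prime is not obviously preserved by matrix multiplication, and requires using the Hermitian constraint to see that the first-order deformation lives in a Lie-algebra-like subspace closed under the Jacobi-style commutator arising from multiplication. The second delicate point is the kernel computation for $(\CC)$: because $R_{\CC}=\{2\}$ the description of $\Gamma$ involves the non-standard parahoric $K_2$ from Definition \ref{def-parahoric}, so one must carefully intersect $G(\Q)$ with both $K_2$ and the congruence condition at $3$ to ensure no exotic units sneak in.
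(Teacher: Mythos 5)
Your overall strategy is the same as the paper's: Lemma \ref{lem-(G,E,M,C)-(I)} computes $\Gamma$ exactly as you propose (monomial matrices $S_3\ltimes(\mO_E^\times)^3$ when $\Phi=I$; for ($\MM$) and ($\CC$) a base change $B$ with $B^*\Phi B=I$ gives coefficient bounds that are fed to a finite/computer enumeration — the paper does its own search rather than quoting Mumford or CMSZ, which is relevant because for ($\CC$) the enlarged parahoric $K_2\supsetneq G(\Z_2)$ makes $\Gamma\cong C_6$ strictly larger than $G(\Z)=\{\pm I\}$, a point you correctly flag), the $L$-value check is the same routine computation, and property (ii) is verified case by case in Lemma \ref{lem-(G,E,M,C)-(II)} along the lines you describe. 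The only notable difference of route is in the product decomposition: you propose the counting criterion $|H|\cdot|\Gamma\bmod M|=|G[M]|$, whereas the paper proves $G[M]=H\cdot\Gamma$ constructively (column rearrangement by monomial matrices for ($\EE$), a Sylow/index argument for ($\MM$), explicit generation for ($\CC$)); your version works but requires knowing $|G[M]|$ over the non-reduced residue ring $\mO_E/M$ in the ramified cases, which amounts to the same structural analysis anyway.

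The one step of your sketch that would not go through as written is Step 3(b) for ($\EE$) (and ($\GG$)). You start from ``$g\equiv I+M\cdot X\pmod{M^2}$,'' but that congruence is not part of the definition of $H$: elements of $H$ are only required to have diagonal entries $\equiv 1\bmod M$, with no constraint on the off-diagonal entries. The actual content of the paper's Lemma \ref{lem-(E)} is precisely to show that unitarity plus the diagonal condition \emph{forces} the reduction modulo the ramified prime to be the identity: writing $g=a+bz$ in $GL_3(\F_3[z]/(z^2))$ with $a^ta=I$, the condition $a_{ii}=1$ forces $a=I$ because a sum of two nonzero squares in $\F_3$ is nonzero; only then does one get $H=\{I+bz: b^t=b,\ b_{ii}=0\}$, which is visibly an (abelian) subgroup. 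Your skew-Hermitian first-order description is the correct picture of $H$ after this reduction, but the reduction itself is the missing idea, and without it the closure argument has nothing to act on. (For ($\GG$) the analogous statement is \cite[Prop.~34]{Evra2018RamanujancomplexesGolden}, as you note.) With that step supplied, the rest of your plan matches the paper's proof.
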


Before proceeding with the proof of the above proposition, we first need to show that case ($\EE$) is well defined, in the sense that $H$ is indeed a subgroup (case ($\GG$) is well defined by Proposition 3.4 in [EP]).

\begin{lem} \label{lem-(E)}
Let $G = U_3( \Q[\sqrt{-3}],I)$ and $H = \left\lbrace \bmx 1 & * & * \\ * & 1 & * \\ * & * & 1 \emx \right\rbrace \subset G[3]$.  
Then $H$ is a normal subgroup of $G[3]$.
Moreover, $G[3] = \Omega \ltimes H$, where $\Omega$ is the subgroup of monomials of matrices with coefficients in $\langle \zeta \rangle$, $\zeta = \frac{1+\sqrt{-3}}2$ a sixth root of unity.
\end{lem}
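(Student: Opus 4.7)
The plan is to work concretely in the ring $R := \mO_{E}/3\mO_{E}$. Since $3 = -\pi^{2}$ for $\pi := \sqrt{-3}$, we have $R \cong \F_{3}[\pi]/(\pi^{2})$ (with $\omega \mapsto 1+2\pi$), the Galois involution reduces to $\overline{a+b\pi} = a - b\pi$, and the norm collapses to $(a+b\pi)\overline{(a+b\pi)} = a^{2}$. Writing every $g \in G[3]$ uniquely as $g = A + B\pi$ with $A, B \in M_{3}(\F_{3})$, the equation $g^{*}g = I$ decomposes, using $\pi^{2}=0$, into $A^{T}A = I$ and $A^{T}B \in \mathrm{Sym}_{3}(\F_{3})$. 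The single structural fact driving the whole proof is that $O_{3}(\F_{3})$ consists only of signed permutation matrices: because the squares in $\F_{3}$ are $\{0,1\}$, the equation $x^{2}+y^{2}+z^{2}=1$ has only the six solutions $\pm e_{i}$, so each column of such $A$ is a signed standard basis vector. Hence $|O_{3}(\F_{3})| = 48$ and $|G[3]| = 48 \cdot 3^{6} = 34992$.

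Next I identify $H$. The diagonal condition $g_{ii}=1$ forces the signed permutation $A$ to equal $I$, after which the constraint $A^{T}B \in \mathrm{Sym}_{3}$ becomes simply $B \in \mathrm{Sym}_{3}(\F_{3})$ while $B_{ii}=0$; so $H$ corresponds to symmetric matrices of zero diagonal and $|H| = 27$, and $H$ is a subgroup because $(I + B\pi)(I + B'\pi) = I + (B + B')\pi$. For normality, using $g^{-1} = A^{T} - B^{T}\pi$ and $\pi^{2}=0$, a direct expansion gives
\[
(A + B\pi)(I + B_{0}\pi)(A^{T} - B^{T}\pi) = I + \bigl(A B_{0} A^{T} + B A^{T} - A B^{T}\bigr)\pi.
\]
The symmetry $A^{T}B = B^{T}A$ yields $BA^{T} = AB^{T}$, killing the skew piece. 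Since $A$ is a signed permutation, each of its rows has a single nonzero entry, and therefore
\[
(A B_{0} A^{T})_{ii} = 2\sum_{j<k} A_{ij}A_{ik}(B_{0})_{jk} = 0,
\]
so $A B_{0} A^{T}$ is symmetric with zero diagonal and the conjugate lies in $H$.

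For the semidirect decomposition, every element of $\langle\zeta\rangle$ has norm $1$ in $R$ (since $\zeta\bar\zeta = 1$), so monomial matrices with entries in $\langle\zeta\rangle$ automatically lie in $G[3]$, giving $\Omega \subseteq G[3]$. Any monomial matrix of the form $I + B\pi$ with $B$ symmetric and zero-diagonal must have $B = 0$, because the nonzero diagonal entries $= 1$ force the off-diagonal part to vanish; hence $\Omega \cap H = \{I\}$. Counting, $|\Omega| = |S_{3}| \cdot |\langle\zeta\rangle|^{3} = 6 \cdot 6^{3} = 1296 = |G[3]|/|H|$, so $G[3] = \Omega \cdot H$, and combined with the normality of $H$ this is the claimed semidirect product $G[3] = \Omega \ltimes H$. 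I anticipate no substantive obstacle: once one recognizes that $O_{3}(\F_{3})$ coincides with the signed permutation group, both the normality of $H$ and the semidirect product structure follow immediately.
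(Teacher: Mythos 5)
Your computations are all correct, and your normality argument is essentially the paper's (diagonal of $A$ equal to $1$ plus orthogonality of rows forces $A=I$; conjugation by a signed permutation preserves ``symmetric with zero diagonal''). Where you genuinely diverge is the existence half of the decomposition: you get $G[3]=\Omega\cdot H$ by counting ($|\Omega|\,|H|=1296\cdot 27=34992=|G[3]|$), whereas the paper argues constructively: for $g\in G[3]$ the reduced matrix $A$ is a signed permutation, so every column of $g$ contains a unit of $R=\F_3[z]/(z^2)$, and since $\langle\zeta\rangle$ surjects onto $R^\times$ modulo $3$, left multiplication by a suitable monomial matrix $\omega\in\Omega$ makes the diagonal exactly $1$, i.e.\ $\omega^{-1}g\in H$. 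The constructive route needs only the trivial inclusion $G[3]\subseteq\{g\in GL_3(R):g^*g=I\}$ and no cardinality computations.

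This difference matters, because your counting silently identifies $G[3]$, which by definition is the \emph{image} of the reduction map $G(\Z_3)\to GL_3(\mO_E/3)$, with the full solution set $\{g\in GL_3(R):g^*g=I\}$. That identification is exactly what is needed for both $|G[3]|=48\cdot 3^6$ and $|H|=27$ (every matrix $I+B\pi$ with $B$ symmetric of zero diagonal must actually lift to a unitary matrix over $\Z_3[\sqrt{-3}]$), and your argument cannot do without it; you also implicitly use that reduction is injective on $\langle\zeta\rangle$ to get $|\Omega|=1296$. The paper states the identification as an opening observation but never uses the surjective direction in its proof. The identification is true — it follows from a Hensel-type lifting at the ramified prime: improving a solution mod $\pi^n$ to one mod $\pi^{n+1}$ amounts to solving $Y+(-1)^nY^{T}=-X$ over $\F_3$ with $X$ symmetric or antisymmetric according to the parity of $n$, which is possible since $2$ is invertible — but as written this step is missing from your proof. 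Either supply that lifting argument (and the injectivity of $\langle\zeta\rangle\to R^\times$), or replace the counting by the paper's diagonal-normalization argument, which avoids the issue entirely.
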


\begin{proof}
First observe that $G[3] = \{g \in GL_3(\F_3[z]/(z^2)) \,:\, g^*g = I \}$, where $(g^*)_{i,j} = \overline{g_{j,i}}$ and $\overline{a + bz} = a - bz$. 
Writing $g = a +bz$, $a,b \in M_3(\F_3)$, then $g \in G[3]$ if and only if $a^ta = I$ and $b^t = a^t b a^t$.
Let $N = \ker\left(\mbox{mod}z\right) \leq G[3]$ and note that $N = \{ I + bz \,:\, b^t = b\}$ is an abelian normal subgroup.
Let $g = a + bz \in H$. Since $a_{i,i} = 1$ for any $i$, combined with the fact that a sum of two non-zero squares in $\F_3$ is non-zero, we get that $a = I$.
Therefore $H = \{g = I + bz \,:\, b^t = b,\; b_{i,i} = 0\; \forall i \}$, which is a subgroup of $G[3]$.
Next note that $\Omega \cap H = \{1\}$ and that $\omega H \omega^{-1} \subset H$ for any $\omega \in \Omega$ (both claims are easy to verify when $\omega$ is either a permutation matrix or a diagonal matrix).
Finally note that for any $g \in G[3]$, in any column of $g$ there exists an element from $(\F_3[z]/(z^2))^* = \{\pm 1 +bz \,:\, b\in \F_3\}$, hence there exists $\omega \in \Omega$ such that $h = \omega^{-1}g$ satisfies $h_{i,i} = 1$ for any $i$, i.e. $h \in H$.
Therefore $G[3] = \Omega \cdot H$ as sets, which implies that $H$ is normal and the fact that $G[3] = \Omega \ltimes H$, as claimed.
\end{proof}

\begin{lem}\label{lem-(G,E,M,C)-(I)} 
For each of the four strong unitary datum, ($\EE$), ($\GG$), ($\MM$) and ($\CC$), we have
\begin{itemize}
\item[($\EE$)] $\Gamma \cong S_3 \ltimes  C_6^3$ and $2^{-2 - |R_E|} 12^{-1}  L(0,\chi_E) L(-2,\chi_E) = \frac1{3! \cdot 6^3} $.
\item[($\GG$)] $\Gamma \cong S_3 \ltimes C_4^3$ and $2^{-2 - |R_E|} 12^{-1}  L(0,\chi_E) L(-2,\chi_E) =  \frac1{3! \cdot 4^3}$.
\item[($\MM$)] $\Gamma \cong C_2 \times C_3 \ltimes  C_7$ and $2^{-2 - |R_E|} 12^{-1}  L(0,\chi_E) L(-2,\chi_E) = \frac1{2\cdot 3 \cdot 7}$.
\item[($\CC$)] $\Gamma \cong C_2 \times C_3 $ and $2^{-2 - |R_E|} 12^{-1}  L(0,\chi_E) L(-2,\chi_E) = \frac1{2 \cdot 3}$.
\end{itemize}
In particular property (i) of Proposition \ref{prop-(G,E,M,C)} holds for ($\EE$), ($\GG$), ($\MM$) and ($\CC$).
\end{lem}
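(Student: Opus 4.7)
The lemma splits into two independent claims in each of the four cases: the structure of $\Gamma$, and the identity for $L(0,\chi_E)L(-2,\chi_E)$. I will dispose of the L-values uniformly using Bernoulli-number formulas, and use the mass formula of Lemma~\ref{lem-GHY} to reduce the structural claim to exhibiting enough explicit elements of $\Gamma$. In each case $\chi_E$ is the primitive odd Dirichlet character of conductor $|d_E|\in\{3,4,7,15\}$ cut out by $E/\Q$. The standard identity $L(1-n,\chi)=-B_{n,\chi}/n$ with $B_{n,\chi}=N^{n-1}\sum_{a=1}^{N}\chi(a)B_n(a/N)$ reduces the computation to a finite sum involving $B_1(x)=x-\tfrac12$ and $B_3(x)=x^3-\tfrac32 x^2+\tfrac12 x$. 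Writing down the values of $\chi_E$ on $(\Z/|d_E|\Z)^{\times}$ (for ($\CC$) via the factorization $\chi_{-15}=\chi_{-3}\chi_{5}$) and evaluating yields, in cases ($\EE$), ($\GG$), ($\MM$), ($\CC$) respectively, the pairs $(|L(0,\chi_E)|,|L(-2,\chi_E)|)$ equal to $(\tfrac13,\tfrac29)$, $(\tfrac12,\tfrac12)$, $(1,\tfrac{16}{7})$, $(2,16)$. Multiplying by the prefactor $2^{-2-|R_E|}\cdot 12^{-1}$ with $|R_E|=1,1,1,2$ gives $\tfrac{1}{1296},\tfrac{1}{384},\tfrac{1}{42},\tfrac{1}{6}$ as required. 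The mass is positive by definition, so the formula in Proposition~\ref{prop-transitivity} should be read with absolute values; this is consistent with the convention in \cite{gan2001exact} upon applying the functional equation of $L(s,\chi_E)$.

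For the structural claim, $\Phi$ being positive definite makes $G(\R)$ compact and hence $\Gamma$ finite. The trivial double coset contributes $|\Gamma|^{-1}$ to the mass sum, so Lemma~\ref{lem-GHY} combined with the L-value computation already gives the upper bound $|\Gamma|\leq 1296,\,384,\,42,\,6$ in the four cases. It therefore suffices to exhibit subgroups of these orders with the claimed isomorphism types; equality then holds automatically, which simultaneously proves class number one (cf.\ Lemma~\ref{lem-cls1}). For ($\EE$) and ($\GG$), where $\Phi=I$, $g\in\Gamma$ forces the columns of $g$ to form an orthonormal $\mO_E$-basis. Since the only elements of norm $1$ in $\mO_E$ are units --- the $w_E$-th roots of unity with $w_E=6$ for ($\EE$) and $w_E=4$ for ($\GG$) --- $g$ must be monomial with roots-of-unity entries; such matrices form the wreath product $C_{w_E}\wr S_3\cong S_3\ltimes C_{w_E}^3$, matching the upper bound exactly.

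For ($\MM$) and ($\CC$) the unit group of $\mO_E$ is only $\{\pm 1\}$, so the monomial-matrix argument no longer suffices and one must exhibit explicit generators. In ($\MM$) the required $C_2\times(C_7\rtimes C_3)$ of order $42$ is built from the central $-I$, an order-$3$ element coming from a twisted cyclic permutation preserving $\Phi$, and an order-$7$ element obtained by identifying $\Phi$ with a scalar multiple of the $\mO_E$-trace form on $\Z[\zeta_7]$, viewed as a free rank-$3$ $\mO_E$-module via the embedding $\mO_E\hookrightarrow\Z[\zeta_7]$ coming from the Gauss-sum expression of $\sqrt{-7}$; under this identification, multiplication by $\zeta_7$ becomes an isometry of $\Phi$ and realizes the desired order-$7$ generator when written as a $3\times 3$ matrix in a chosen $\mO_E$-basis. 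In ($\CC$) one writes down an order-$3$ element by direct computation, using the cyclic pattern of $\Phi$, which together with $-I$ generates $C_2\times C_3$.

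The heaviest single computation is the order-$7$ generator in case ($\MM$): identifying $\Phi$ with the $\Z[\zeta_7]$ trace form and writing out the explicit matrix representation of multiplication by $\zeta_7$ in a concrete $\mO_E$-basis require some careful bookkeeping. Once this matrix is in hand, all remaining checks --- closure, orders of generators, and the defining relation $g^*\Phi g=\Phi$ --- reduce to routine finite computations.
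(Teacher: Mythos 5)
Your $L$-value computations are correct: with $B_{1,\chi}$ and $B_{3,\chi}$ one gets $|L(0,\chi_E)|,|L(-2,\chi_E)|$ equal to $(\tfrac13,\tfrac29),(\tfrac12,\tfrac12),(1,\tfrac{16}{7}),(2,16)$ in the four cases, and after multiplying by $2^{-2-|R_E|}\cdot 12^{-1}$ these give $\tfrac{1}{1296},\tfrac{1}{384},\tfrac{1}{42},\tfrac16$; your sign remark is also right, since in the Gan--Hanke--Yu product $\prod_{i=1}^3 L(1-i,\chi_E^i)=L(0,\chi_E)\,\zeta(-1)\,L(-2,\chi_E)$ the factor $12^{-1}$ is $|\zeta(-1)|$ and the two negative signs cancel. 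Your treatment of ($\EE$) and ($\GG$) is also complete and coincides with the paper's: a column of $g\in\Gamma$ satisfies $\sum_j N_{E/\Q}(g_{ji})=1$ with each norm a nonnegative rational integer, so $g$ is monomial with unit entries, and this characterizes $\Gamma\cong S_3\ltimes(\mO_E^\times)^3$ in both directions.

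The gap is in ($\MM$) and ($\CC$), and it stems from an inverted inequality. Since $\mathrm{Mass}(G,K)=\sum_g |G(\Q)\cap g^{-1}Kg|^{-1}$ and the trivial class contributes $|\Gamma|^{-1}$, positivity of the other terms gives $|\Gamma|^{-1}\le \mathrm{Mass}$, i.e.\ a \emph{lower} bound $|\Gamma|\ge \mathrm{Mass}^{-1}$, not the upper bound you claim; the upper bound $|\Gamma|\le\mathrm{Mass}^{-1}$ is precisely equivalent to class number one (Lemma \ref{lem-cls1}), which is what this lemma is meant to feed into, so your scheme ``mass bounds $|\Gamma|$ above, exhibit a subgroup of that order, conclude equality and class number one'' is circular. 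Exhibiting $-I$, an order-$3$ element and an order-$7$ element in ($\MM$) only shows $|\Gamma|\ge 42$; the substantive step, which your proposal never supplies, is that $\Gamma$ contains nothing else. The paper gets this upper bound from compactness of $G(\R)$: choosing $B$ with $B^*\Phi B=I$ yields an explicit bound $m(\Phi)$ on all matrix entries of elements of $\Gamma$, norm inequalities in $\mO_E$ then leave finitely many possible entries, and a finite (computer) enumeration determines $\Gamma$ exactly. In case ($\CC$) there is a further problem: because $K_2$ is the larger parahoric at the split prime $2$ (Definition \ref{def-parahoric}), $\Gamma$ is strictly bigger than $G(\Z)$, and in fact $G(\Z)=\{\pm I\}$, while the generator of $\Gamma\cong C_6$ has entries in $\tfrac12\mO_E$ in two of its columns; moreover $\Phi$ is not circulant (compare its $(1,2)$ and $(1,3)$ entries), so no signed cyclic permutation preserves it. Hence the ``order-$3$ element by direct computation using the cyclic pattern of $\Phi$'' does not exist among integral matrices, and your sketch for ($\CC$) cannot be repaired without taking the denominator at $2$ into account.
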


\begin{proof}
Evaluating the expression $2^{-2 - |R_E|} 12^{-1}  L(0,\chi_E) L(-2,\chi_E)$ is a simple and direct calculation of the special values of the Dirichlet $L$-functions.
Therefore what we are left to do is to compute the global integer group $\Gamma$ for each of the four cases ($\EE$), ($\GG$), ($\MM$) and ($\CC$).

Cases ($\EE$) and ($\GG$) follows from the fact for $\Phi = I$, the identity matrix, the group $\Gamma$ is generated by permutation matrices and diagonal matrices with entries roots of unity in $\mO_E$, i.e. $\Gamma \cong S_3 \ltimes (\mO_E^*)^3$.

For cases ($\MM$) and ($\CC$), where $\Phi \ne I$, because $G(\R)$ is compact, there exist bounds for the absolute values of the coordinates of a matrix $g \in G(\R)$, $\|g\| = \max_{i,j}|g_{i,j}|$, which in particular hold for any $g\in \Gamma = \bigcap_p K_p$.  
Explicitly, one obtains such bounds by  giving base change matrices $B \in GL_3(\R)$ such that
\[
B^\ast \Phi B = I.
\]
Then, $G(\R)=U_3(E,\Phi)(\R) = B \cdot U(E,I)(\R) \cdot B^{-1}$. 
Note that $\|g\| \leq 1$ for any $g \in U(E,I)(\R)$.
Therefore the absolute values of the coordinates of any $g\in G(\R) = U(E,\Phi)(\R)$, are bounded by 
\[
m(\Phi):=\max_j\{\sum_k \lvert b_{jk}\rvert\}\cdot \max_k\{\sum_j\lvert c_{jk}\rvert\},
\]
where $B=(b_{jk})_{jk}$ and $B^{-1}=(c_{jk})_{jk}$.
This information is fed to a computer to compute elements of $\Gamma$.

In case ($\MM$), we particularly find
\[
B = \bmx \frac1{\sqrt3} & -\frac{\bar\lambda}{\sqrt{21}} & \frac{\bar\lambda(\bar\lambda-3)}{7} \\ 0 & \sqrt{\frac3{7}} & \frac{2-3\bar\lambda}{7} \\ 0 & 0 & 1 \emx, \quad \mbox{ and }\quad B^{-1} = \bmx {\sqrt 3} & \frac{\bar\lambda}{\sqrt3} & \frac{\bar\lambda}{\sqrt 3} \\0 & \sqrt{\frac{7}3} & \frac{3\bar\lambda-2}{\sqrt{21}} \\ 0 & 0 & 1 \emx,
\]
from which we obtain $m(\Phi) \approx 7.46$. For an element $a+b\lambda\in \mO$ ($a,b\in\R$) the norm is given by $N(a+b\lambda)=a^2-ab+2b^2$. Looking at this as a parabola in $a$, its minimum is in $a=\frac{b}2$, so $N(a+b\lambda)\geq \frac{7}{4}b^2$. Similarly, the minimum with respect to the variable $b$ is in $b=\frac{a}{4}$, so $N(a+b\lambda)\geq \frac{7}{8}a^2$. 
In turn, in order for $a+b\lambda$ to occur as a component of $g\in U_3(\Phi,\R)$, it is necessary  that $a^2\leq \frac{8}{7}m(\Phi)^2$ and $b^2\leq \frac{4}{7}m(\Phi)^2$.
In particular, for integer components must have range $a\in\{-7,5,\dots,7\}$ and $b\in\{-5,-3,\dots,5\}$. These yields finitely many coordinates to test for the entries of $g\in \Gamma$. Fed to a computer, we find that the group $\Gamma$ is a group with $42$ elements
\[
\Gamma \cong C_2\times \tilde G,
\] 
where $C_2$ is generated by $-I$, and the group $\tilde G$ of order $21$ has a unique $7$-Sylow group $\mathrm{Syl}_7$ which is generated by 
$g=\left(\begin{smallmatrix} \lambda&1&0\\-\bar\lambda&0&1\\1&0&0\end{smallmatrix}\right)$.
The element $h=\left(\begin{smallmatrix}\bar\lambda&0&-1\\-1&1&-1\\-1&0&\lambda\end{smallmatrix}\right)$ in  $\Gamma$
generates a $3$-Sylow group $\mathrm{Syl}_3$, which acts by $h^{-1}xh\mapsto x^2$ on $\mathrm{Syl}_7$.
Altogether, this describes the group $\Gamma$ completely
\[\Gamma \cong C_2\times\bigl(\mathrm{Syl}_3\ltimes \mathrm{Syl}_7\bigr).\]

In case ($\CC$), we proceed analogy. A matrix satisfying the above is
\[
 B = \bmx \frac1{\sqrt{10}} & \frac{\eta+2}{5\sqrt 6} & \frac{\eta-2}{6\sqrt 5} \\ \frac1{\sqrt{10}} & \frac{\eta+7}{5\sqrt 6} & \frac{\eta}{2\sqrt 5} \\ \frac1{\sqrt{10}} & \frac{\eta+7}{5\sqrt 6} & \frac{\eta+2}{2\sqrt 5} \emx \quad \mbox{ and } \quad B^{-1} = \bmx \frac{2(\eta+7)}{\sqrt{10}} & \frac{-3(\eta+2)}{\sqrt{10}} & \frac{\eta+2}{\sqrt{10}} \\ -\sqrt 6 & \frac{2(\eta+4)}{\sqrt{6}} & \frac{-2(\eta+1)}{\sqrt 6} \\ 0 & -\sqrt 5 & \sqrt 5 \emx,
\]
which gives the bound $m(\Phi)\approx 15.3$.
Notice here that because $K_2$ is larger than $G(\Z_2)$, we have to allow factors $\frac12$ in some of the entries of
\[
\Gamma = \bigcap_p K_p=\left\lbrace g\in  U(E,\Phi)(\Q) \bigcap
\bmx \mO_E & \frac12\mO_E & \frac12\mO_E \\ \mO_E & \frac12\mO_E & \frac12\mO_E \\ \mO_E & \frac12\mO_E & \frac12\mO_E \emx \mid \det g=\pm 1 \right\rbrace.
\]
Here we obtain 
\[
\Gamma = \left\langle \bmx 2 & -2 & 1 \\ 3 & -2+\frac{\eta}2 & 1-\frac{\eta}2 \\ 3 & -1+\frac{\eta}2 & -\frac{\eta}2 \emx \right\rangle
= \left\langle \pm \bmx 1 & -(1+\frac{\eta}2) & \frac{\eta}2 \\ 3 & -(3+\eta) & 1+\eta \\ 3 & -(4+\eta) & 2+\eta \emx \right\rangle \cong C_6.
\]
Observe that  $G(\Z)=\{\pm I\}$.
\end{proof}

\begin{lem}\label{lem-(G,E,M,C)-(II)} 
In the notation of Theorem \ref{thm-simply-transitive}, for each of the four strong unitary datum, ($\EE$), ($\GG$), ($\MM$) and ($\CC$), property 2 of Proposition \ref{prop-(G,E,M,C)} holds, i.e. $\Gamma$ embeds (resp.\ its kernel is the center for ($\MM$)) in $G[M]$ and $H\leq G[M]$ is transversal to $\Gamma$.
\end{lem}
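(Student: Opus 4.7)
The plan is to handle each of the four strong unitary data separately, proceeding case by case, since the structures of $\Gamma$, $G[M]$, and $H$ are explicitly known from Lemma~\ref{lem-(G,E,M,C)-(I)} and the definitions in Theorem~\ref{thm-simply-transitive}. In each case I will (a) verify that the reduction map $\pi_M\colon\Gamma\to G[M]$ has trivial kernel (or kernel equal to $\{\pm I\}$ in case ($\MM$)), and (b) verify the two transversal conditions $\pi_M(\Gamma)\cap H=\{1\}$ and $\pi_M(\Gamma)\cdot H=G[M]$. For (b) it is essentially sufficient to verify trivial intersection together with the order identity $|\Gamma|\cdot|H|=|G[M]|$ (after adjusting for the kernel in case ($\MM$)), since transversality follows by counting.

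For case ($\EE$), using the decomposition $G[3]=\Omega\ltimes H$ proved in Lemma~\ref{lem-(E)} together with $\Gamma\cong S_3\ltimes C_6^3$, I will identify the image $\pi_3(\Gamma)$ with $\Omega$ (the monomial matrices with entries in $\langle\zeta\rangle$). The injectivity of $\pi_3$ on the abelian part $C_6^3$ amounts to the fact that a sixth root of unity in $\Z[\omega]$ reducing to $1$ modulo $3$ must equal $1$; injectivity on the permutation part is clear since $\pi_3$ preserves the shape of a monomial matrix and the non-identity permutation matrices are not congruent to identity. The transversality $\Omega\cap H=\{I\}$ is then immediate from the structure theorem, and $G[3]=\Omega\cdot H$ yields surjectivity. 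Case ($\GG$) is essentially identical, using the analogue of Lemma~\ref{lem-(E)} proved in \cite{Evra2018RamanujancomplexesGolden}, with $C_4$ replacing $C_6$ and $\mathrm{mod}\,(2+2i)$ replacing $\mathrm{mod}\,3$.

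For case ($\MM$), the reduction $G(\Z)=\{\pm I\}\cdot\tilde G\to GL_3(\F_2)\cong G[2]$ has kernel exactly $\{\pm I\}=Z(\Gamma)$, since $-I\equiv I\pmod 2$ and the non-scalar elements of $\Gamma$ (the order-$3$ element $h$ and the order-$7$ generator $g$ written out in the proof of Lemma~\ref{lem-(G,E,M,C)-(I)}) reduce to non-trivial elements of $GL_3(\F_2)$ that generate the subgroup $C_3\ltimes C_7$ of order $21$. The subgroup $H$ of unipotent upper-triangular matrices of $GL_3(\F_2)$ has order $8$, while $|GL_3(\F_2)|=168=21\cdot 8$. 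It remains to check that $\overline\Gamma\cap H=\{1\}$, which I verify by direct computation: the explicit generators $\overline g,\overline h$ and their products have non-trivial entries strictly below the diagonal (or are not unipotent), so none of the $21$ non-identity elements of $\overline\Gamma$ is unipotent upper-triangular. Combined with the order identity this yields the semidirect product decomposition $G[2]=H\rtimes\overline\Gamma$.

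Case ($\CC$) is the main obstacle: here $M=(3,1+\eta)$ is a prime ideal of $\mO_E$ above $3$ of residue degree one, so $G[M]$ is a group of $3\times 3$ matrices over $\F_3$ preserving the reduction $\overline\Phi\in M_3(\F_3)$ of the Hermitian form. The first task is to compute $|G[M]|$ explicitly: since $\overline\Phi$ modulo $M$ is a non-degenerate symmetric bilinear form over $\F_3$ (the Galois action on $\mO_E/M=\F_3$ is trivial), $G[M]$ is an orthogonal group on $\F_3^3$, whose order can be read off from the discriminant of $\overline\Phi$. Next, one verifies that $H$ as defined in \eqref{eq:CMSZ_H} is indeed a subgroup by computing the products of the listed generators and checking closure (or by identifying $H$ with a concrete subgroup such as an affine group of order $|H|=|GL_2(\F_3)|\cdot|\F_3^2|/(\text{centre factors})$, yielding the predicted order). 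For the injectivity of $\pi_M$ on $\Gamma\cong C_6$ one checks that the explicit generator of $\Gamma$ from Lemma~\ref{lem-(G,E,M,C)-(I)} reduces to an element of order $6$ in $G[M]$, which is a finite verification. Finally, the transversality $\overline\Gamma\cap H=\{1\}$ and the order identity $|\Gamma|\cdot|H|=|G[M]|$ are verified by direct computation; these calculations are most cleanly carried out in GAP or Magma, which is standard practice for CMSZ-type lattices (see \cite{Cartwright1993Groupsactingsimply,Kato2006ArithmeticstructureCMSZ}). The hard part is correctly identifying $G[M]$ and $H$ as finite groups and performing the arithmetic in $\mO_E/M$, since $M$ is not principal; once this is done, the verification reduces to matrix bookkeeping.
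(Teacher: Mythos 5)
Your cases ($\EE$), ($\GG$) and ($\MM$) are essentially the paper's argument: injectivity of reduction (kernel $\{\pm I\}$ in the Mumford case), trivial intersection with $H$, and the factorization obtained either from $G[3]=\Omega\ltimes H$ (Lemma \ref{lem-(E)}) or by counting, which is exactly the paper's ``size consideration'' for ($\MM$). One small slip there: since $GL_3(\F_2)$ is simple, neither $H$ nor $\overline{\Gamma}$ is normal, so there is no semidirect product decomposition $G[2]=H\rtimes\overline{\Gamma}$; what your coprime-order count actually gives (and all that transversality requires) is the exact factorization $G[2]=H\cdot\overline{\Gamma}$ with $H\cap\overline{\Gamma}=\{1\}$.

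The genuine gap is in case ($\CC$), and it sits precisely at the point you flag as ``the hard part.'' You assert that $\Phi$ reduces modulo $M=(3,1+\eta)$ to a \emph{non-degenerate} symmetric form over $\F_3$, so that $G[M]$ is an orthogonal group $O_3(\F_3)$ whose order you would read off the discriminant. This is false: $3$ ramifies in $\Q[\sqrt{-15}]$ and $3\mid\det\Phi=300$; concretely, modulo $M$ one has $\eta\equiv-1$, so every entry of $\Phi$ reduces to $1$ and $\bar\Phi$ has rank one (equivalently, the Cartan decomposition at $3$ in Lemma \ref{lem:parahorics-ramified-odd-case} is $k_1\,\mathrm{diag}(\varpi,\varpi,1)\,k_2$). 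Because the reduced form is degenerate, $G[M]=G(\Z_3)\bmod M$ is not an orthogonal group of order $48$ but the much larger ``parabolic-type'' group of matrices $\bsmx A & b\\ & \pm 1\esmx$ with $A\in GL_2(\F_3)$, $b\in\F_3^2$, computed in the paper's Lemma \ref{lem:CMSZ-congruence-condition}(a) by unfolding the unitarity condition for the degenerate $\bar\Phi$. With your wrong target the bookkeeping you propose cannot even start: the group $H$ of \eqref{eq:CMSZ_H} already has order exceeding $|O_3(\F_3)|=48$, so it is not a subgroup of an orthogonal group, and the identity $|\Gamma|\cdot|H|=|G[M]|$ fails for $|G[M]|=48$; the correct statement requires the structure of $G[M]$ from Lemma \ref{lem:CMSZ-congruence-condition}, after which one identifies $\Gamma\bmod M$ with the order-six group $N$ generated by $\pm\bsmx 1&1&1\\0&1&0\\0&0&1\esmx$ (giving trivial kernel) and checks transversality of $H$ and $N$ inside that group. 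Deferring the identification of $G[M]$ and of $H$ to a Magma computation does not fill this gap, because the structural fact that $\bar\Phi$ degenerates modulo $M$ — and hence that $G[M]$ has this parabolic shape — is the entire mathematical content of case ($\CC$).
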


\begin{proof}
Case ($\EE$): Note that $G[3] = \{g\in GL_3(R)\mid g^\ast \cdot g=\mathbf 1_3\}$, where $R=\mO_E/3\mO_E\cong \mathbb F_3(t)/(t^2)$, identifying $\zeta_6 $ (the sixth root of unity in $\mO_E$) with $-(1+t)$ and complex conjugation is given by $\bar t=-t$.
By Lemma \ref{lem-(G,E,M,C)-(I)}, $\Gamma$ is the group of monomial matrices whose non-zero coefficients belongs to $\langle \zeta_6 \rangle$.
Because $\zeta_6^j\equiv 1 \mod 3$ holds if and only if $j\in 6\Z$, and because in each non-trivial permutation matrix at least one of the diagonal elements is zero, we see that the residue class homomorphism $\mod 3$ is injective on $\Gamma$, as well as $H\cap \Gamma=\{I\}$.
What is more, for each element of $G[3]$ it is possible to rearrange the columns such that every diagonal entry is a unit. 
This is done by multiplying with an appropriate permutation matrix from the right.
Then, by multiplying with a diagonal matrix from the right (the diagonal entries belonging to the units $R^\times\cong <\zeta_6>$), we obtain a matrix with all diagonal elements equal to one. This shows $G[3]=H\cdot \Gamma$.

Case ($\GG$): This was already proved in \cite{Evra2018RamanujancomplexesGolden}. 

Case ($\MM$): Since $M=(2)$ splits in $\mO_E=\Z[\lambda = \frac{-1+\sqrt{-7}}2]$, we choose zeros $\mu,\bar\mu\in\Z_2$ of $X^2+X+2$ get the idempotents $e_1=-\frac{\sqrt{-7}}{7}(\lambda-\bar\mu)$ and $\frac{\sqrt{-7}}{7}(\lambda-\mu)$
in $E_2\cong\Q_2[X]/(X^2+X+1)$, where $\lambda$ is identified with the equivalence class of $X$. The splitting of $E_2$ is explicitly given by the  $\Q_2$-algebra isomorphism $\Psi:E_2\to\Q_2\oplus \Q_2$, $e_1\mapsto (1,0)$, $e_2\mapsto (0,1)$,
in particular $\Psi(1)=(1,1)$ and $\Psi(\lambda)=(\mu,\bar\mu)$.
Imposing on $E_2$ the valuation $v$ such that $v(\lambda)=1$, there is a unique choice of valuation $v_1$ on the first component of $\Q_2\oplus\Q_2$ which is compatible with $v$, that is $v_1(\mu)=v_1\left(pr_1(\Psi(\lambda))\right)=1$.
\\
The isomorphism $G_2\cong PGL_3(\Q_2)$ is induced by $\Psi$,  as $\Psi(G_2)=\left\{(g_1,g_2): g_1\in GL_3(\Q_2) \text{ and } g_2^t=\phi g_1^{-1}\phi^{-1}\right\}$, where $\Psi(\Phi)=(\phi,\phi^t)$, followed by projection onto the first component. Because $\phi\in GL_3(\Z_2)$, we obtain $K_2\cong GL_3(\Z_2)$, and any $\Mod\lambda$-condition on $G(\Z_2)$ translates to a $\Mod 2$-condition on $GL_3(\Z_2)$. In particular $G[2] \cong GL_3(\F_2)$, and the congruence condition \eqref{eq:MM_lambda} may be replaced by \eqref{eq:MM_H}.
\\
By Lemma \ref{lem-(G,E,M,C)-(I)}, $\Gamma \cong C_2 \times C_3 \ltimes C_7$, where $C_2 \cong \langle \pm I \rangle = Z(\Gamma)$ is the center of $\Gamma$, 
$C_3 \cong \left\langle h = \bmx \bar\lambda & 0 & -1 \\ -1 & 1 & -1 \\ -1 & 0 & \lambda \emx \right\rangle$ and $C_7 \cong \left\langle g = \bmx \lambda & 1 & 0 \\ -\bar\lambda & 0 & 1 \\ 1 & 0 & 0 \emx \right\rangle$.
Then the kernel of $\Gamma$ under the $\mod 2$ map is precisely the center group $Z(\Gamma)=\{\pm I\}$.
Finally the subgroup $H$ is a $2$-Sylow subgroup in $G[2]\cong GL_3(\F_2)$ whose index is $21$, hence by size consideration, $H$ is transversal to $\Gamma \mod 2 \leq G[2]$.

Case ($\CC$): The following lemma completes the proof of Proposition \ref{prop-(G,E,M,C)} in case ($\CC$).
\end{proof}

\begin{lem}\label{lem:CMSZ-congruence-condition}
In the notation of Theorem \ref{thm-simply-transitive} case ($\CC$), we have:
(a) The group $G[M]$ is equal to 
\[
G[M] = \left\langle \bmx A & b \\ & \pm 1 \emx \mid A \in GL_2(\F_3), b \in \F_3^2 \right\rangle.
\]
(b) The kernel of $\Gamma$ modulo $M$ is trivial and the image of $\Gamma$ modulo $M$ is equal to
\[
\Gamma \mod M =  \left\langle \pm \bmx 1 & 1 & 1 \\ 0 & 1 & 0 \\ 0 & 0 &1 \emx \right\rangle =: N \leq G[M].
\]
(c) Define the following subgroups 
\[
V = \left\lbrace \bmx I_2 & b \\ & 1 \emx \mid b \in \F_3^2\right\rbrace, \quad Syl_2 = \left\langle \bmx 1 & 1 & 0 \\ -1 & 1 & 0 \\ 0 & 0 & 1 \emx, \bmx \pm I_2 & 0 \\ & 1 \emx \right\rangle.
\]
Then $H = \left\langle Syl_2, V \right\rangle$ is transversal to $N$.
\end{lem}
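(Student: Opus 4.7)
The plan is to verify the three claims (a), (b), (c) by reducing everything to explicit computations modulo the ramified prime $\mathfrak{p}=(3,1+\eta)$, and then bookkeeping with finite group orders.

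For (a), I would first compute the Hermitian form $\Phi$ modulo $\mathfrak{p}$. Since $-15$ has $3$ ramified in $E=\Q[\eta]$, the residue field $\mO_E/\mathfrak{p}$ is $\F_3$ and the non-trivial Galois element acts trivially on it, so $g^{\ast}\equiv g^{t}\pmod{\mathfrak{p}}$ for $g\in K_3=G(\Z_3)$. Using $\eta\equiv -1$ and $\bar\eta=1-\eta\equiv -1$, together with $10\equiv 1$ and $-2\equiv 1\pmod 3$, one gets $\Phi\equiv u\,u^{t}\pmod{\mathfrak{p}}$ for $u=(1,1,1)^{t}\in\F_3^{3}$, a rank-one form. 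The condition $g^{t}(uu^{t})g=uu^{t}$ is equivalent to $g^{t}u=\pm u$, and after the standard change of basis $P$ sending $u$ to $e_3$, this becomes $\{g\in GL_3(\F_3):g^{t}e_3=\pm e_3\}=\{\left(\begin{smallmatrix}A&b\\0&\pm 1\end{smallmatrix}\right)\}$. Surjectivity of the reduction $K_3\twoheadrightarrow G[M]$ onto this subgroup follows because $K_3$ is a maximal parahoric whose reductive quotient realizes precisely this image (one explicitly lifts every such matrix by Hensel's lemma using the uniformizer $\varpi=1+\eta$ with $\varpi^{2}=3(\eta-1)$).

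For (b), I take the explicit generator of $\Gamma\cong C_{6}$ produced in Lemma~\ref{lem-(G,E,M,C)-(I)} (after possibly applying the base change $P$ of part (a)) and reduce it modulo $\mathfrak{p}$ entry-by-entry, using $\eta\equiv -1$ and $\tfrac{1}{2}\equiv 2\pmod{3}$. One checks that the order-$6$ generator reduces to an element with third-diagonal entry $-1$ whose square equals $g_{0}:=\left(\begin{smallmatrix}1&1&1\\0&1&0\\0&0&1\end{smallmatrix}\right)$ and whose cube equals $-I$, so that $\langle\overline{\alpha}\rangle=\langle\pm g_{0}\rangle=N$. Triviality of the kernel then amounts to verifying that none of $\pm I,\pm g_{0},\pm g_{0}^{2}$ reduces to $I$ modulo $\mathfrak{p}$, which is immediate from the explicit matrices.

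For (c), I would first verify that $V$ is abelian of order $9$, normal in $H$, and that $\rho=\left(\begin{smallmatrix}1&1\\-1&1\end{smallmatrix}\right)\in GL_2(\F_3)$ has $\rho^{4}=-I_2$ (so the generator $-I_2\oplus 1$ is redundant), giving $Syl_{2}=\langle\rho\oplus 1\rangle$. The semidirect product description $H=V\rtimes Syl_{2}$ then yields $|H|$, and one checks $H\cap V=\{I\}$ directly on the form of elements. Transversality with $N$ then reduces to two items: $H\cap N=\{I\}$ (since no non-identity element of $N$ lies in the semidirect structure of $H$, as $g_0$ has order $3$ while $Syl_{2}$ is a $2$-group and $g_0\notin V$), and the order identity $|H|\cdot|N|=|G[M]|$, which matches the count of part (a).

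The main obstacle will be part (a): one must carefully show that the image of $K_3$ under reduction modulo $\mathfrak{p}$ is exactly the stated subgroup and not a proper subgroup. This requires either an appeal to Bruhat–Tits theory identifying the reductive quotient of the maximal parahoric of the ramified $U_{3}$ at $p=3$, or an explicit Hensel-type lifting of representative generators (a Levi, the unipotent radical, and the center-like $\left(\begin{smallmatrix}I_2&0\\0&-1\end{smallmatrix}\right)$). Once (a) is established, parts (b) and (c) are essentially bookkeeping with the explicit generator of $\Gamma$ and the semidirect-product structure of $H$.
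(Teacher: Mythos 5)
Your overall route is the same as the paper's: reduce $\Phi$ and the explicit generator of $\Gamma$ modulo the ramified prime, identify $G[M]$ by unfolding the unitarity condition, and finish (c) by group-order bookkeeping; in particular your treatment of (b) coincides with the paper's, which simply reduces its chosen generator $\tau$ entry-by-entry and observes that it maps to the generator of $N$. However, there are two genuine gaps.

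First, in (a) you only use the constant term of the unitarity condition. Writing $\Phi=\Phi_0+\varpi\Phi_1$ with $\varpi=\sqrt{-15}$, $\Phi_0$ symmetric, $\Phi_1$ skew-symmetric, and $g=a+\varpi b$ with $a,b$ rational, the relation $g^{*}\Phi g=\Phi$ gives modulo $3$ not only $\bar a^{t}\bar\Phi_0\bar a=\bar\Phi_0$ (your condition $\bar a^{t}u=\pm u$) but also the $\varpi$-linear condition $\bar a^{t}\bar\Phi_1\bar a+\bar a^{t}\bar\Phi_0\bar b-\bar b^{t}\bar\Phi_0\bar a=\bar\Phi_1$, and since $\bar\Phi_0$ has rank one this is a genuine constraint on $\bar a$ itself, not something that can always be absorbed into $\bar b$. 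Consequently a solution of the mod-$M$ equation need not lift, and "Hensel" cannot be invoked naively: at a ramified prime the naive reduction of the unitarity equations is not a smooth system, and the surjectivity of $K_3\to G[M]$ onto the full stabilizer of the degenerate form is exactly the point that needs an argument. Your fallback appeal to the reductive quotient of the maximal parahoric actually pulls in the opposite direction: the paper's own mass-formula computation (proof of Lemma \ref{lem-GHY}) identifies this quotient at ramified primes as $O_1\times Sp_2\cong\{\pm1\}\times SL_2(\F_3)$, of order $48$, which does not by itself account for a Levi part equal to all of $GL_2(\F_3)$; so the claimed equality, the crux of (a), is not established by your proposal (the paper, too, only asserts this step, so this is where a careful proof must do real work).

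Second, the order bookkeeping in (c) does not close. As you yourself note, $\rho^{4}=-I_2$, so with the generators as written $Syl_2=\langle\rho\oplus1\rangle$ is cyclic of order $8$ and $|H|=|V\rtimes Syl_2|=72$; then $|H|\cdot|N|=432$, which is not the order $864$ of the group you (and statement (a)) take for $G[M]$. Hence your assertion that "the order identity $|H|\cdot|N|=|G[M]|$ matches the count of part (a)" is false as written, and the transversality $G[M]=H\cdot N$, $H\cap N=\{I\}$ is not proved. The paper's own proof of (c) instead argues structurally, writing $G[M]\cong\{\pm I\}\times(V\rtimes GL_2(\F_3))$ and using that $Syl_2$ together with the unipotent $\left(\begin{smallmatrix}1&1\\0&1\end{smallmatrix}\right)$ contributed by $N$ generates $GL_2(\F_3)$; whichever description of $G[M]$ one adopts, the factor-of-two discrepancy between $|Syl_2|=8$ and a full $2$-Sylow of $GL_2(\F_3)$ (order $16$) must be reconciled, and your outline leaves both this and the dependence of (c) on the unproved surjectivity in (a) unresolved.
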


\begin{proof}
(a) follows from the fact that $\Phi \mod M = \bmx 0 & 0 & 0 \\ 0 & 0 & 0 \\ 0 & 0 & 1 \emx =: \bar\Phi$, and by unfolding the unitarity condition w.r.t. $\bar\Phi$. 
(b) follows from the fact that the generator $\tau=\bmx 1&-(1+\frac{\eta}2)&\frac{\eta}2\\ 3&-(3+\eta)&1+\eta\\ 3&-(4+\eta)&2+\eta \emx$ of $\Gamma$ obtained in the proof of Lemma \ref{lem-(G,E,M,C)-(I)} is equal modulo $M$  to the corresponding generator of $N$
\[
\tau\mod M \equiv \begin{pmatrix}1&1&1\\0&1&0\\0&0&1\end{pmatrix}\in N.
\]
(c) by (a), $G[M] \cong \{\pm I\} \times \left( V \rtimes GL_2(\F_3)\right)$, and the group $Syl_2$ is isomorphic to a $2$-Sylow-group of $GL_2(\F_3)$, 
which together with $\bmx 1 & 1 \\ 0 & 1 \emx$ generates $GL_2(\F_3)$. 
\end{proof}

\begin{proof}[Proof of Proposition \ref{prop-(G,E,M,C)}]
Follows from Lemmas \ref{lem-(G,E,M,C)-(I)} and \ref{lem-(G,E,M,C)-(II)}.
\end{proof}

We are finally in a position to prove our main result in this section.

\begin{proof}[Proof of Theorem \ref{thm-simply-transitive}]
Follows from combining Propositions \ref{pro-parahoric}, \ref{prop-transitivity}, \ref{prop-simplicity} and \ref{prop-(G,E,M,C)}. 
\end{proof}

We end this subsection with the following consequence of Theorem \ref{thm-simply-transitive}, which we shall use in Section \ref{section:automorphic}.

\begin{cor}\label{cor:adelic-simply-transitive}
For any one of the four cases of Theorem \ref{thm-simply-transitive}, define the open compact adelic subgroup $K'=\prod_v K_v' \leq G(\A)$, where $K_v'=K_v$ for all $v\neq m$, and $K'_m = K_m(H)$.
Then $G(\A)=G(\Q)\cdot K'$.
\end{cor}

\begin{proof}
By Lemma \ref{lem-cls1} and claim (1) of Proposition \ref{prop-(G,E,M,C)} we get that $G(\A) = G(\Q)\cdot K$, where $K = \prod_v K_v$, and by claim (2) of Proposition \ref{prop-(G,E,M,C)} we get that $G(\Q)\cdot K = G(\Q)\cdot K'$, which completes the proof.
\end{proof}



\subsection{Strong approximation} \label{subsec:lattices-strong-approx}

We end this section with some consequences of the strong approximation property, and a few auxiliary results which were used earlier in Section \ref{sec:bicayley}, and which will be used in later sections.

Let $(E,\Phi)$ be a unitary datum and $G = U_3(E,\Phi)$ the unitary group scheme.
Let $K' = G(\R) \cdot \prod_\ell K'_\ell \le G(\A)$ be an open compact subgroup, and let $\mbox{Ram}(K')$ be the finite set of primes $\ell$ such that either $K_\ell = G(\Z_\ell)$ is not hyperspecial or $K'_\ell \ne K_\ell$.
Let $p \not\in \mbox{Ram}(K')$ be a prime, let $K'^p = G(\R) \cdot \prod_{\ell \ne p} K'_\ell$ and let $\tilde{\Lambda}^p = G(\Q) \cap K'^p$.
Let $G^s = SU_3(E,\Phi) = \{g\in G\,:\, \det(g)=1\}$ be the special unitary group scheme, let $G^s_\ell = G^s(\Q_\ell)$, $K^s_\ell = K_\ell \cap G^s_\ell$ and $K'^s_\ell = K'_\ell \cap G^s_\ell$ for any prime $\ell$, and let $K'^{s,p} = G^s(\R) \cdot \prod_{\ell \ne p} K'^s_\ell$ and $\tilde{\Lambda}^{s,p} = G^s(\Q) \cap K'^{s,p}$. 
 
A prime $q$ is called {\it good} (w.r.t. $K'$) if $q \notin \mbox{Ram}(K')$.
By Hensel's Lemma, if $q$ is good then $K^s_q = G^s(\Z_q)$ projects onto $G^s(\F_q)$ via the modulo $q$ map.
Note that $G^s(\F_q) \cong SL_3(\F_q)$ if $q$ splits in $E$, and $G^s(\F_q) \cong SU_3(\F_q)$ if $q$ is inert.

\begin{prop} \label{prop:SA-G^s}
For any good prime $p$
\[
G^s(\A) = G^s(\Q) \cdot G^s_\infty \cdot G^s_p \cdot K'^{s,p},
\] 
and for any  good prime $q \ne p$,
\[
\tilde\Lambda^{s,p} \Mod{q} = G^s(\F_q).
\] 
\end{prop}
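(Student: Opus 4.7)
The plan is to derive both statements from the Kneser--Platonov strong approximation theorem applied to the simply-connected semisimple $\Q$-group $G^s = SU_3(E,\Phi)$, which is (absolutely) almost $\Q$-simple. Since $\Phi$ is definite, $G^s_\infty \cong SU(3)$ is compact, so the key hypothesis in strong approximation is the non-compactness of $G^s_p$. For any $p\notin R\cup\{m\}$ the local group $G^s(\Q_p)$ acts non-trivially on the Bruhat--Tits tree associated with $U_3$ over $\Q_p$ (quasi-split whenever $p$ is unramified in $E$; still isotropic at the ramified primes $p\in\{3,5\}$ of case~($\CC$), where every three-variable hermitian form over a local quadratic extension contains an isotropic vector), and is in particular non-compact.

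With this non-compactness in hand, the strong approximation theorem asserts that $G^s(\Q)\cdot G^s_\infty\cdot G^s_p$ is dense in $G^s(\A)$. The compact open subgroup $K'^{s,p}=\prod_{\ell\ne p,m}K^s_\ell\cdot K^s_m(H)\le G^s(\A^{\infty,p})$ therefore absorbs the approximation, giving
\[
G^s(\A)=G^s(\Q)\cdot G^s_\infty\cdot G^s_p\cdot K'^{s,p},
\]
which is the first claim.

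For the second claim, fix a good prime $q\ne p$ and $g\in G^s(\F_q)$. Since $q$ is good, $K^s_q=G^s(\Z_q)$ is hyperspecial and $G^s$ is a smooth reductive $\Z_q$-scheme, so Hensel's lemma provides a lift $\tilde g\in K^s_q$ with $\tilde g\bmod q=g$. Consider the adèle $\alpha\in G^s(\A)$ with $\alpha_q=\tilde g$ and $\alpha_v=1$ for $v\ne q$. Applying the (already established) first claim with the refined open compact subgroup $U=K^s_q(q)\cdot\prod_{\ell\notin\{p,q,m\}}K^s_\ell\cdot K^s_m(H)$ in place of $K'^{s,p}$ (strong approximation holds for every such $U$, since $K'^{s,p}$ was arbitrary among compact opens at the places $\ne p,\infty$), I write
\[
\alpha=\gamma\cdot\beta\cdot k,\qquad \gamma\in G^s(\Q),\ \beta\in G^s_\infty G^s_p,\ k\in U.
\]
Reading this componentwise at each $\ell\ne p,\infty$ yields $\gamma_\ell=k_\ell^{-1}\in K^s_\ell$ for $\ell\ne q,m$, $\gamma_m=k_m^{-1}\in K^s_m(H)$, and $\gamma_q=\tilde g\,k_q^{-1}$ with $k_q\in K^s_q(q)$. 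Hence $\gamma\in G^s(\Q)\cap K'^{s,p}=\tilde\Lambda^{s,p}$ and $\gamma\bmod q=\tilde g\bmod q=g$, proving surjectivity of the reduction map.

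The main point to verify carefully is the isotropy (equivalently, non-compactness) of $G^s(\Q_p)$ at \emph{every} prime $p\notin R\cup\{m\}$, especially at the ramified-in-$E$ primes not belonging to $R$ that occur in case~($\CC$); once this is in place, both assertions are standard adelic bookkeeping built on top of Kneser--Platonov.
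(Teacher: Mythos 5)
Your argument is correct and is essentially the paper's own proof: both deduce the adelic decomposition from Kneser--Platonov strong approximation for the simply connected group $G^s$ (with non-compactness supplied by $G^s_p$, since $G^s_\infty$ is compact), and both obtain surjectivity modulo $q$ by rerunning strong approximation with the deeper level $K^s_q(q)$ at the place $q$ and invoking Hensel's lemma for the reduction $K^s_q \twoheadrightarrow G^s(\F_q)$. One small caution: at good primes $q$ that ramify in $E$ (e.g.\ $q=7$ in the Mumford case) $K^s_q$ is not hyperspecial and the $\Z_q$-scheme is not reductive, so the lifting step should be justified as in the paper's remark preceding the proposition rather than via ``smooth reductive/hyperspecial''; for the unramified $q$ used in the applications your justification is exactly right.
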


\begin{proof}
By the strong approximation theorem (see e.g. \cite{Platonov1994Algebraicgroupsand}) applied to the special unitary group $G^s$, which is a simply-connected almost-simple algebraic group, for the set $\{\infty,p\}$, for which $G_\infty \cdot G_p$ is non-compact, we get that $G^s(\Q) \cdot G^s_\infty \cdot G^s_p$ is dense in $G^s(\A)$.
The first claim follows from this by multiplying this dense set with the open subgroup $K^s_p \cdot K'^{s,p}$.
The second claim follows from multiplying this dense set with the open subgroup $K^s_p \cdot K'^{s,p}(q)$, where $K'^{s,p}(q) =  G^s_\infty \cdot \prod_{\ell \ne p,q} K'^s_\ell \cdot K^s_q(q)$ and $K^s_q(q)$ is the kernel of the modulo $q$ map on $K^s_q$, getting that $K^s_q$ is in the image of this product, and concluding by the above consequence of Hensel's Lemma.
\end{proof}

\begin{cor} \label{cor:SA-classical}
Let $p \ne q$ be good primes. 
Let $S$ be a generating set of $\tilde\Lambda^p$, $S_q = S \mod{q} \subset G(\F_q)$ and $D = \langle \det s \,:\, s \in S_q \rangle \leq \det G(\F_q)$. 
Then 
\[
\tilde\Lambda^p \Mod{q} = \{g\in G(\F_q) \,:\, \det(g) \in D\}.
\] 
Moreover, if $p$ is an inert prime, then
\[
\tilde\Lambda^p \Mod{q} = G^s(\F_q).
\] 
\end{cor}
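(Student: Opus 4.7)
The plan is to prove the two displayed equalities in turn, with the first following from a straightforward ``lifting'' argument built on Proposition~\ref{prop:SA-G^s}, and the second following from arithmetic considerations about norm-one $S$-units when $p$ is inert.

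First I would dispatch the inclusion $(\tilde\Lambda^p \bmod q) \subseteq \{g \in G(\F_q) : \det g \in D\}$. This is immediate: every element of $\tilde\Lambda^p$ is a word in $S$, and the determinant homomorphism maps such a word to a product of $\det(s)$'s, hence its reduction modulo $q$ lies in the subgroup $D$ generated by the $\det s_q$.

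For the opposite inclusion, let $g \in G(\F_q)$ with $\det g \in D$. Since $D = \langle \det s \bmod q \mid s \in S \rangle$, write $\det g \equiv \det(s_1 s_2 \cdots s_n) \bmod q$ for some $s_i \in S$; set $h := s_1 \cdots s_n \in \tilde\Lambda^p$. Then $g \cdot (h \bmod q)^{-1} \in G(\F_q)$ has determinant $1$, i.e.\ lies in $G^s(\F_q)$. Proposition~\ref{prop:SA-G^s} supplies $\tilde k \in \tilde\Lambda^{p,s} \subseteq \tilde\Lambda^p$ with $\tilde k \bmod q = g \cdot (h \bmod q)^{-1}$, and then $\tilde k \cdot h \in \tilde\Lambda^p$ reduces to $g$ modulo $q$. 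This gives the first equality.

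For the ``moreover'' statement it suffices to show $D = \{1\}$ when $p$ is inert in $E$, for then the first equality specializes to $\tilde\Lambda^p \bmod q = G^s(\F_q)$. Since $D = \det(\tilde\Lambda^p) \bmod q$, it is enough to prove $\det(\tilde\Lambda^p) = \{1\}$. Now $\det$ maps $\tilde\Lambda^p$ into the group $U_1(\mO_E[1/p]) := \{u \in \mO_E[1/p]^{\times} : u \bar u = 1\}$; when $p$ is inert, $p\mO_E$ remains prime, so $\mO_E[1/p]^{\times} = \mO_E^{\times} \times \langle p \rangle^{\Z}$ and the norm-one elements reduce to the roots of unity in $\mO_E^{\times}$. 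It therefore remains to show that the only root of unity in $\mO_E^{\times}$ which lies in $\det(\tilde\Lambda^p)$ is $1$. Since every $g \in \tilde\Lambda^p$ satisfies $g \equiv h \bmod M$ for some $h \in H$, we have $\det g \equiv \det h \bmod M$. The main (and only) piece of case-by-case bookkeeping is then the combined verification that (a) $\det h = 1$ for every $h \in H$ in each of the four cases ($\EE$), ($\GG$), ($\MM$), ($\CC$) of Theorem~\ref{thm-simply-transitive}, and (b) the only root of unity in $\mO_E$ congruent to $1$ modulo $M$ is $1$ itself. Both facts are already essentially carried out in Lemmas~\ref{lem-(E)}, \ref{lem-(G,E,M,C)-(I)} and \ref{lem-(G,E,M,C)-(II)} (for instance the observation in the proof of Lemma~\ref{lem-(G,E,M,C)-(II)}($\EE$) that $\zeta_6^j \equiv 1 \bmod 3$ forces $6 \mid j$), so checking the remaining cases is the only obstacle, and is routine.
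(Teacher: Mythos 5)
Your first displayed equality is proved correctly and by essentially the same route as the paper: the image of $\tilde\Lambda^p$ modulo $q$ contains $G^s(\F_q)$ by Proposition~\ref{prop:SA-G^s}, so it is determined by determinants, and $S$ generating $\tilde\Lambda^p$ finishes it. Spelling out the lifting $g\mapsto \tilde k\cdot h$ is fine.

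The ``moreover'' part is where there is a genuine gap. You reduce it to showing $\det(\tilde\Lambda^p)=\{1\}$ and propose to get this from (a) $\det h=1$ for all $h\in H$ and (b) the only root of unity in $\mO_E$ congruent to $1$ modulo $M$ being $1$, declaring the case check ``routine''. It is not: it fails in two of the four data of Theorem~\ref{thm-simply-transitive}. In the CMSZ case ($\CC$), the group $H$ of \eqref{eq:CMSZ_H} contains $\left(\begin{smallmatrix}1&1&0\\-1&1&0\\0&0&1\end{smallmatrix}\right)$, whose determinant is $2\equiv -1\pmod{M}$, so (a) fails and your congruence only yields $\det g\equiv\pm1$, which carries no information since the roots of unity in $\Q[\sqrt{-15}]$ are already $\pm1$. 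In the Mumford case ($\MM$) the situation is worse: the roots of unity in $\Q[\sqrt{-7}]$ are $\pm1$ and $-1\equiv 1\pmod{(2)}$ (indeed mod $\lambda$), so (b) fails, and in fact the intermediate statement you reduce to is false there: $-I$ lies in $\tilde\Lambda^p_{\MM}$ (it is integral, unitary, and reduces to $I\in H$ mod $2$) and has determinant $-1$. This is exactly the reason Theorem~\ref{thm-simply-transitive} passes to $\Lambda^p=\tilde\Lambda^p/\{\pm I\}$ in case ($\MM$); your determinant-and-congruence route cannot close that case, and does not close ($\CC$) either without a new idea. (Even in the Gauss case the needed fact $\det H=\{1\}$ is not verified in this paper and would have to be checked against \cite{Evra2018RamanujancomplexesGolden}.)

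For comparison, the paper's own proof of the ``moreover'' avoids all computations with $H$: it argues that for inert $p$ the lattice acts type-preservingly on the biregular tree, identifies the type-preserving elements with $G^s_pK_p$, and uses $\tilde\Lambda^p\cap K_p=\{I\}$ (i.e.\ the freeness of the action on hyperspecial vertices coming from the transversality in Proposition~\ref{prop-simplicity}) to conclude $\tilde\Lambda^p\subset G^s_p$, after which the first claim gives the result. If you want to salvage your approach, you would need to replace (a)--(b) by an argument of this kind, or restrict the statement to the data where $H$ has trivial determinant and $Z(\tilde\Lambda^p)=\{I\}$.
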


\begin{proof}
By Proposition \ref{prop:SA-G^s}, the image of $\tilde\Lambda^p$ modulo $q$, which is a subgroup of $G(\F_q)$, contains $G^s(\F_q) = \{g\in G(\F_q)\,:\, \det g = 1\}$ as a subgroup. 
Hence, the image of $\tilde\Lambda^p$ modulo $q$, is completely determined by the values of the determinant of the elements of $\tilde\Lambda^p$, and the first claim follows since $S$ generates $\tilde\Lambda^p$.

For an inert prime $p$, we note that $\tilde\Lambda^p$ acts type-preserving on the Bruhat-Tits biregular tree.
Since  $G^s_p K_p \leq G_p$ is the subgroup of type preserving elements in $G_p$, and since $\tilde\Lambda^p \cap K_p = \{I\}$, we get that $\tilde\Lambda^p \subset G^s_p$.
combining this with the first claim, we get the second claim.
\end{proof}

Denote the projective unitary group scheme by $\bar{G} = PU_3(E,\Phi) = U_3(E,\Phi)/Z$, where $Z = \{cI\,:\,c\in U_1(E)\}$ is the center of $G$.\footnote{We note that in Sections \ref{sec:local-rep} and \ref{sec:applications}, we denote $PU_3(E,\Phi)$ by $G$.} 
As before we denote $\bar{G}_v = \bar{G}(\Q_v)$ for any place $v$, $\bar{K}_\ell = \bar{G}(\Z_\ell)$ for any prime $\ell$, for any $K'_\ell \leq K_\ell$ denote by $\bar{K}'_\ell \leq \bar{K}_\ell$, and let $\bar{K}'^p = \bar{G}_\infty \cdot \prod_{\ell \ne p} \bar{K}'_\ell$. 
We now give a variant of the strong approximation property (Proposition~\ref{prop:SA-G^s}) for the projective unitary group scheme $\bar{G}$.

\begin{prop} \label{prop:SA-adeles}
Let $\bar{G} = PU_3(E,\Phi)$ and denote $\bar{G}^0 := \bar{G}(\Q) \cdot \bar{G}_\infty \cdot \bar{G}_p \cdot \bar{K}'^p \subset \bar{G}(\A)$. 
Then $\bar{G}^0$ is a finite index normal subgroup of $\bar{G}(\A)$, and there exists $c_1 = 1, \ldots, c_h \in \bar{G}(\A)$, $h = [\bar{G}(\A) : \bar{G}^0]$, such that 
\[
\bar{G}(\A) =\bigsqcup_{i=1}^h c_i \bar{G}^0 = \bigsqcup_{i=1}^h \bar{G}(\Q) c_i \bar{G}_p \bar{K}'^p.
\]
Moreover, denote the $\Q$-algebraic abelian group $C = \det \bar{G}$, then
\[
h \leq 3^{\mbox{Ram}(K')} \cdot [C(\A) : C(\Q) \cdot C(\R \hat{\Z})] < \infty.
\]
\end{prop}

\begin{proof}
Let $G^0 := G(\Q) \cdot G_\infty \cdot G_p \cdot K'^p \subset G(\A)$.
Then by Proposition~\ref{prop:SA-G^s}, 
\[
G^s(\A) = G^s(\Q) \cdot G^s_\infty \cdot G^s_p \cdot K'^{s,p} \subset G(\Q) \cdot G_\infty \cdot G_p \cdot K'^p = G^0 \subset G(\A).
\]
Since $G^s$ is the derived subgroup of $G$ we get that $G^0$ is a normal subgroup of $G(\A)$.
Therefore $\bar{G}^0$, which is the image of $G^0$ in $\bar{G}(\A)$, is normal in $\bar{G}(\A)$.
Denote $C^0 = \det \bar{G}^0 \leq C(\A)$ and $C' = \det \bar{K}'$, and note that $h = [\bar{G}(\A) : \bar{G}^0] = [C(\A) : C^0]$ and therefore $h \leq [C(\A) : C(\Q) \cdot C'] \leq [C(\A) : C(\Q) C(\R \hat{\Z})] \cdot [C(\R \hat{\Z}) : C']$.
Note that $C(\A) / C(\Q) C(\R \hat{\Z})$ is the class group of $C$, hence it is finite.
Also note that $C(\Q_v) \cong U_1(\Q_v) /U_1(\Q_v)^3$, which is of size $\leq 3$.
Hence $[C(\R \hat{\Z}) : C'] = \prod_{\ell \in \mbox{Ram}(K')} [C(\Q_\ell) : \det \bar{K}_\ell] \leq \prod_{\ell \in \mbox{Ram}(K')} |C(\Q_\ell)| = 3^{\mbox{Ram}(K')}$, which completes the proof.
\end{proof}

We end this section with the following Lemma, which we shall use in Section~\ref{section:automorphic}.
Let $G = U_3(E,\Phi)$ and $G^* = U_3(E,J)$, where $E$ is a quadratic imaginary field and $\Phi \in GL_3(E)$ is a non-degenerate Hermitian matrix and $J = (\delta_{i,3-j})_{i,j} \in GL_3(E)$. 
By \cite{Landherr1935quivalenzHF} (or \cite{Jacobowitz1962Hermitianformsover}) the groups $G(\Q_p)$ and $G^*(\Q_p)$ are isomorphic for any prime $p$, and we shall identify them using the following Lemma in such a way that $G(\Z_p)$ is sent to $G^*(\Z_p)$, for any odd prime $p \nmid \mathrm{disc}\Phi$.

\begin{lem} \label{lem:Landherr}
\begin{itemize}
\item [(a)] The groups $G(\Z_p)$   and $G^*(\Z_p)$ are isomorphic if and only if $G(\Z_p)$ is a hyperspecial maximal compact subgroup of $G(\Q_p)$. 
If so, there is a matrix $A\in GL_3(E_p)$ such that an isomorphism is given by $g\mapsto A^{-1}gA$.
In this case, because  changing $\Phi$ by some factor doesn't change the unitary group, we may assume $\Phi\in GL_3(\mO_p)$.
\item[(b)] Assume $\Phi\in M_3(\mO_p)$, and  $p^{-1}\Phi\notin M_3(\mO_p)$. Then $G(\Z_p)$ is hyperspecial if and only if $p \nmid \det(\Phi)$.
\item[(c)] Let $p$ be either an odd prime or a split prime in $E$. If $G(\Z_p)$ is hyperspecial, then the matrix $A$ can be chosen such that it belongs to $GL_3(\mO_p)$ canonically, and such that $A^\ast\Phi A =J$.
\end{itemize}
\end{lem}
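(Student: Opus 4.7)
The plan is to prove the three parts in the order (b), (a), (c), since each builds on the previous.

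For (b), I will work directly with the Hermitian lattice $L := \mO_{E_p}^3$ on which $G(\Z_p)$ acts as the stabilizer in $G(\Q_p)$. A direct computation gives the dual lattice with respect to the Hermitian form $\langle v,w\rangle = v^*\Phi w$ as $L^{\#} = \Phi^{-1}L$. By the dictionary between parahoric subgroups and vertices in the Bruhat–Tits building recalled in Section~\ref{subsec:unitarygroups} (combined with Lemma~\ref{lem:parahorics-equiv}), $G(\Z_p)$ is hyperspecial iff $L$ is self-dual, iff $\Phi \in GL_3(\mO_{E_p})$. Since $\Phi = \Phi^*$ forces $\det\Phi \in \Q_p$ and $\Phi \in M_3(\mO_{E_p})$ is primitive by hypothesis, the condition $\det\Phi \in \mO_{E_p}^{\times}$ is equivalent to $\det\Phi \in \Z_p^{\times}$, i.e.\ $p \nmid \det\Phi$.

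For (a), I first note that $G^*(\Z_p)$ is hyperspecial by applying (b) to $J$, whose determinant is $\pm 1$. The direction ($\Rightarrow$) is then immediate: if $g \mapsto A^{-1}gA$ gives an isomorphism $G(\Z_p) \cong G^*(\Z_p)$ with $A \in GL_3(E_p)$, then the same conjugation identifies $G(\Q_p)$ with $G^*(\Q_p)$, and carries the hyperspecial parahoric $G^*(\Z_p)$ back to $G(\Z_p)$, which is therefore hyperspecial. For ($\Leftarrow$), I exploit that $n=3$ is odd: any two non-degenerate rank-$3$ Hermitian forms over $E_p$ become equivalent after rescaling by a suitable element of $\Q_p^{\times}$, since the discriminant invariant lives in $\Q_p^{\times}/N_{E_p/\Q_p}(E_p^{\times})$ (a group of order at most $2$) and scaling by $c$ multiplies the discriminant by $c^3$, hitting every class. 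Thus one produces $A_0 \in GL_3(E_p)$ and $\mu \in \Q_p^{\times}$ with $A_0^*\Phi A_0 = \mu J$; replacing $\Phi$ by $\mu^{-1}\Phi$ (which leaves $G$ unchanged) reduces to $A_0^*\Phi A_0 = J$. Then $A_0$ conjugates $G(\Q_p)$ onto $G^*(\Q_p)$, and the image of the hyperspecial $G(\Z_p)$ is conjugate inside $G^*(\Q_p)$ to $G^*(\Z_p)$; absorbing this last conjugation into $A_0$ on the right (using that right multiplication by $G^*(\Q_p)$ preserves the equation $A^*\Phi A = J$) yields the desired $A$. The final clause about $\Phi \in GL_3(\mO_p)$ follows from (b): after rescaling $\Phi$ so that its Gram matrix becomes primitive in $M_3(\mO_{E_p})$, hyperspecialness of $G(\Z_p)$ forces $\det \Phi \in \mO_p^{\times}$, i.e.\ $\Phi \in GL_3(\mO_{E_p})$.

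For (c), I assume $G(\Z_p)$ hyperspecial, so by (a) and (b) we may take $\Phi \in GL_3(\mO_{E_p})$, and the goal is to upgrade the equivalence from $GL_3(E_p)$ to $GL_3(\mO_{E_p})$. In the split case $E_p = \Q_p \times \Q_p$, write $\Phi \cong (\phi, \phi^t)$ with $\phi \in GL_3(\Z_p)$; then $A = (I,\,\phi^{-T}J) \in GL_3(\mO_{E_p})$ satisfies $A^*\Phi A = J$ by a direct matrix check. In the (odd, inert) unramified field case, $\mO_{E_p}/p$ is the quadratic extension $\F_{p^2}/\F_p$, over which there is a unique equivalence class of non-degenerate rank-$3$ Hermitian form (using that every element of $\F_p^{\times}$ is a norm from $\F_{p^2}^{\times}$ for $p$ odd). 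Thus the reductions $\bar\Phi$ and $\bar J$ are equivalent in $GL_3(\F_{p^2})$, and Hensel's lemma applied to the smooth scheme $\{A : A^*\Phi A = J\}$ over $\mO_{E_p}$ lifts this equivalence to an $A \in GL_3(\mO_{E_p})$ with $A^*\Phi A = J$. The main obstacle is this Hensel lift step in the unramified field case, where one must verify smoothness of the ``change-of-form'' scheme at the reduction — this boils down to $p$ being odd so that the relevant Jacobian is non-degenerate; the split case is the easy sub-case. The canonicity assertion comes from the fact that the choices involved (lifting $\bar A$ via Hensel, or writing $A$ in the split case) are functorial in $\phi$.
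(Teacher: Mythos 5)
Your proposal is correct, and for parts (a) and the split case of (c) it follows essentially the paper's route: existence of an isomorphism $G(\Q_p)\cong G^*(\Q_p)$ from the local classification of Hermitian forms (your odd-rank rescaling argument is exactly what the paper's citation of Landherr/Jacobowitz supplies), conjugacy of hyperspecial maximal compacts via transitivity on hyperspecial vertices, rescaling $\Phi$ into $GL_3(\mO_p)$, and in the split case an explicit integral matrix (your $A=(I,\phi^{-T}J)$ versus the paper's $(\phi^{-1}J,\mathbf{1}_3)$; both work since $J=J^t$). Where you genuinely diverge: you prove (b) directly from the self-dual-lattice criterion in the building, whereas the paper deduces (b) from (a) (hyperspecial $\Rightarrow \Phi L_0=\mu L_0$, then primitivity forces $v_p(\mu)=0$); and in the inert case of (c) you replace the paper's argument — integral diagonalization of $\Phi$ over $\mO_p$ following Landherr, surjectivity of the norm $\mO_p^\times\to\Z_p^\times$, and an explicit change of basis using $\mu\bar\mu=-1$, $\nu\bar\nu=\tfrac12$ — by reduction mod $p$, uniqueness of the rank-$3$ Hermitian form over $\F_{p^2}/\F_p$, and a Hensel/smoothness lift. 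Both are valid for $p$ odd unramified; the paper's method produces an explicit $A$, while yours is existence-only but avoids verifying that Landherr's manipulations can be carried out integrally.

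Two small points to tighten. In (b), "hyperspecial iff $L$ self-dual" should read "self-dual up to a scalar": hyperspecialness only gives $\Phi L_0=\mu L_0$ with $\mu\in\Q_p^\times$, and it is the primitivity hypothesis ($\Phi\in M_3(\mO_p)$, $p^{-1}\Phi\notin M_3(\mO_p)$) that forces $v_p(\mu)=0$ and hence $\Phi\in GL_3(\mO_p)$ — this is exactly the step the paper spells out. In (c), the variety $\{A: A^*\Phi A=J\}$ is naturally a scheme over $\Z_p$ (a torsor under the unitary group scheme, since the equation involves the Galois conjugation), not over $\mO_{E_p}$; smoothness for odd $p$ is fine, e.g.\ because the differential amounts to solving $X+X^*=H$, soluble by $X=H/2$. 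Finally, "canonically" in the statement only asserts integrality of $A$; your Hensel lift involves choices, but so does the paper's construction, so nothing is lost there.
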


\begin{rem}
In the remaining cases of either a ramified prime $p$, or $p=2$ inert and $G(\Z_p)$ hyperspecial, we will give explicit matrices $A\in GL_3(\mO_p)$ with $A^\ast\Phi A=J$ when needed.
\end{rem}

\begin{proof}
(a) There is a matrix $B\in GL_3(E_p)$ inducing the isomorphism $G(\Q_p)\to G^\ast(\Q_p)$, $g\mapsto B^{-1}gB$. The compact subgroup $B^{-1} G(\Z_p)B$ of $G^\ast(\Q_p)$ is conjugate to $G^\ast(\Z_p)$ if and only if it is hyperspecial, i.e. the stabilizer of the  lattice chain given by the multiples of the standard lattice $L_0=\mO_p^3$. So there is $C\in GL_3(E_p)$ such that $(BC)^{-1} G(\Z_p)BC=G^\ast(\Z_p)$. 
Then $A=BC$ is as claimed. 
In this case, it holds $\Phi L_0=\mu L_0$ for some $\mu\in \Q_p^\ast$, so $\mu^{-1}\Phi\in \mathrm{Stab}_{GL_3(E_p)}(L_0)=GL_3(\mO_p)$.
 
(b) If $(p,\det(\Phi))=1$, then $\Phi\in GL_3(\mO_p)$ stabilizes  $L_0$, i.e. $L_0$ is selfdual with respect to the Hermitian form given by $\Phi$. So $G(\Z_p)=\mathrm{Stab}_{U_3(\Phi,\Q_p)}(L_0)$ is hyperspecial.
In turn, if $G(\Z_p)$ is hyperspecial, then by (a), $\Phi L_0=\mu L_0$ for some $\mu\in \Q_p$. Because $\Phi\in M_3(\mO_p)$, we have $v_p(\mu)\geq 0$. Because $\mu^{-1}\Phi\in GL_3(\mO_p)$, by assumption we obtain $v_p(\mu)=0$. 
So actually $\Phi\in GL_3(\mO_p)$.
 
(c) We may assume $\Phi \in GL_3(\mO_p)$. 
In case $p$ splits in $E$, i.e. $E_p\simeq\Q_p\oplus\Q_p$, let $\Phi=(\phi,\phi^T)$ be the corresponding decomposition with $\phi\in GL_3(\Z_p)$. Then $(g_1,g_2)\in G(\Z_p)$ if and only if $g_1\in GL_3(\Z_p)$ and $g_2^T=\phi g_1^{-1}\phi^{-1}$. 
Choosing $A=(\phi^{-1}J,\mathbf{1_3})\in GL_3(\mO_p)$, we obtain $A^\ast\Phi A=(J,J)$ as well as $A^{-1}G(\Z_p)A=G^\ast(\Z_p)$.
 
In case $p$ is non-split in $E$, the manipulations in \cite{Landherr1935quivalenzHF} can be chosen such that they correspond to base changes of $\mO_p^3$, so there is $D\in GL_3(\mO_p)$ such that $D^\ast\Phi D=diag(a,b,c)$ is a diagonal matrix. 
Because the norm $\mO_p^\times\to\Z_p^\times$ is surjective for $p$ inert in $E$, we may assume $a=b=c=1$ with respect to the basis $e_1,e_2,e_3$. 
Choosing $\mu,\nu\in\mO_p^\times$ such that $\mu\bar\mu=-1$ and $\nu\bar\nu=\frac12$, we obtain that with respect to the basis $\nu(e_1+\mu e_3),\mu e_2, \nu(e_1-\mu e_3)$ of $\mO_p^3$, the Hermitian form  becomes $J$. 
This amounts to changing $D$ further by the base change matrix to obtain $A$.
\end{proof}

\section{Local Representation Theory} \label{sec:local-rep}

In this Section we develop the representation theoretic machinery needed to analyze arithmetic quotients of the Bruhat-Tits tree of unramified $p$-adic unitary groups. 
In Section \ref{subsec:Local-rank-one-representation} we describe the irreducible, Iwahori-spherical representations of these groups. 
In Section \ref{subsec:Spectral-theory-revisited} we relate the representation theory to the spectral theory of bigraphs developed in Section \ref{sec:Spectral-analysis}, and use this to give a local criterion for quotients of the Bruhat-Tits tree to be (adj-)Ramanujan. 
In Section \ref{subsec:Ramanujan-global-criterion} we lift this criterion to a global criterion on adelic representations, which will be used in Sections \ref{section:automorphic} and \ref{sec:applications}.

\subsection{Representations of $PU_3$ over local fields} \label{subsec:Local-rank-one-representation}

Let $E/F$ and $\Phi$ be as in Section \ref{subsec:unitarygroups}, and let $v$ be a finite place of $F$ which is inert in $E$. 
In this section we focus on the group $G=PU_3(E,\Phi)(F_v)$, and since all non-degenerate Hermitian forms on $E_v^3$ give rise to isomorphic unitary groups \cite{Landherr1935quivalenzHF,Jacobowitz1962Hermitianformsover}), we let ourselves assume that $\Phi=J=\left(\begin{smallmatrix} &  & 1\\ & -1\\ 1 \end{smallmatrix}\right)$, which makes computations considerably simpler.
We shall use the action of $G$ on its Bruhat-Tits tree $\B$ to classify the irreducible unitary Iwahori-spherical (I.S.) representations of $G$. 
This classification is not new (it appears in \cite{Ballantine2011Ramanujanbigraphsassociated}, and related results appear also in \cite{borel1976admissible,Moy1986RepresentationsU21,hashimoto1989zeta,Lusztig1989AffineHeckealgebras}), but we give an elementary analysis which in addition highlights the connection to the spectral theory of bigraphs developed in Section \ref{sec:Spectral-analysis}. 
We remark that the representations of $G=PU_3$ are the representations of $U_3$ with trivial central character, and they suffice for our purposes as the center of $U_3$ acts trivially on $\B$. 

Denote $\F=F_v$, $\E=E_v$, $\kappa_{\F}=\mO_{\F}/\varpi$, $\kappa_{\E}=\mO_{\E}/\varpi$ (for a uniformizer $\varpi\in\F$), and let $q=\left|\kappa_{\F}\right|$ (so that $\left|\kappa_{\E}\right|=q^2$).\footnote{In fact, the results of this section apply to local fields $\F,\E$ of positive characteristic as well.}
Recall that $\B$ was described in Section \ref{subsec:unitarygroups} as the fixed-section of the involution $\#$ of the Bruhat-Tits building $\widetilde{\B}$ of $PGL_3(\E)$. Since $J\in GL_3(\mO_{\E})$, the $\widetilde{\B}$-vertex $v_0$ is a hyperspecial vertex in $\B$ with stabilizer $K=G\cap PGL_3(\mO_{\E})$. 
The midpoint of the $\widetilde{\B}$-edge
$\left(\begin{smallmatrix}1\\ & 1\\  &  & \varpi \end{smallmatrix}\right)v_0-\left(\begin{smallmatrix}1\\  & \varpi\\  &  & \varpi \end{smallmatrix}\right)v_0$ is a non-hyperspecial $\B$-vertex, which we denote by $v_1$. 
We denote by $K'$ the stabilizer of $v_1$ in $G$, and 
\[
\boldsymbol{I} = K\cap K' = G\cap\left(\begin{smallmatrix}\mO_{\E}^{\times} & \mO_{\E} & \mO_{\E}\\ \varpi\mO_{\E} & \mO_{\E}^{\times} & \mO_{\E}\\ \varpi\mO_{\E} & \varpi\mO_{\E} & \mO_{\E}^{\times} \end{smallmatrix}\right)
\]
is the $G$-stabilizer of the $\B$-edge $e_0$, which connects $v_0$ and $v_1$; $\boldsymbol{I}$ is an Iwahori subgroup of $G$ (see \cite[Lem.\ 5.4-5.6]{Evra2018RamanujancomplexesGolden}\footnote{contrary to popular belief, the stabilizer of a chamber in a Bruhat-Tits building is not always Iwahori - see \cite[Lecture 2]{Yu2009BruhatTitstheory}.}). As $\E/\F$ is unramified, there exists $\varepsilon\in\mO_{\E}^{\times}$ with $Tr_{\E/\F}\left(\varepsilon\right)=0$. The upper-triangular matrices in $G$ form a Borel group, which can be parameterized by
\[
P:=\left\{ p_{\alpha,x,y}:=\overline{\alpha}^{-1}\left(\begin{matrix}\alpha\overline{\alpha} & \alpha x & x\overline{x}/2+y\varepsilon\\ & \alpha & \overline{x}\\ &  & 1 \end{matrix}\right)\,\middle|\,\alpha\in \E^{\times},x\in \E,y\in \F\right\} .
\]

The Borel group $P$ acts transitively on hyperspecial vertices, and the $2$-sphere around $v_0$ in $\B$ is composed of $q^{4}+q$ vertices, which are the translations of $v_0$ by 
\begin{equation} \label{eq:Sp-borel}
\mS_p:=\left\{ p_{\varpi,x,y}\,\middle|\,x\in\kappa_{\E},\ y\in\nicefrac{\mO_{F}}{\varpi^2}\right\} \cup\left\{ p_{1,0,y/\varpi}\,\middle|\,y\in\kappa_{\F}^{\times}\right\} \cup\left\{ p_{1/\varpi,0,0}\right\} .
\end{equation}

The representations which concern us arise from characters (one-dimensional representations) of $P$. 
For $z\in\C^{\times}$, denote by $\chi_z:P\rightarrow\C^{\times}$
the unramified character $\chi_z\left(p_{\alpha,x,y}\right)=z^{\ord_{\varpi}\alpha}$.
The modular character of $P$ is $\Delta_{P}\left(p_{\alpha,x,y}\right)=q^{4\ord_{\varpi}\alpha}$,
and we denote $\widetilde{\chi_z}=\chi_z\cdot\Delta_{P}^{-1/2}$,
namely $\widetilde{\chi_z}\left(p_{\alpha,x,y}\right)=\left(z/q^2\right)^{\ord_{\varpi}\alpha}$.
We denote the normalized parabolic induction of $\chi_z$ by
\begin{equation} \label{eq:Vz-def}
V_z:=\mbox{n-ind}_P^G \chi_z=\left\{ f\in C^{\infty}\left(G,\C\right)\,\middle|\,\forall p\in P,g\in G:f\left(pg\right)=\widetilde{\chi_z}\left(p\right)f\left(g\right)\right\},
\end{equation}
on which $G$ acts by $(gf)(x)=f(xg)$.
If $W$ is an irreducible representation of $G$ which embeds in $V_z$, then $z$ is called a \emph{Satake parameter} for $W$.

Let $\mathcal{H}_{\boldsymbol{I}}=C_{c}\left(\boldsymbol{I}\backslash G/\boldsymbol{I}\right)$
be the Iwahori-Hecke algebra of $G$, namely, compactly supported
doubly-$\boldsymbol{I}$-invariant functions on $G$, w.r.t.\ convolution.
For any I.S.\ representation $V$ of $G$, its space of Iwahori-fixed
vectors $V^{\boldsymbol{I}}$ is an $\mathcal{H}_{\boldsymbol{I}}$-module
via $\varphi v=\int_{G}\varphi(g)gv\,dg$, and we have the following:
\begin{thm}
\cite{borel1976admissible,casselman1980unramified,Barbasch2013Unitaryequivalencesreductive}
Let $V,W$ be representations of $G$. 
\begin{enumerate}
\item If $V$ is irreducible then it is I.S.\ if and only if it embeds in $V_z$
for some $z\in\C^{\times}$.
\item If $V$ is generated by $V^{\boldsymbol{I}}$ then $V$ is irreducible
if and only if $V^{\boldsymbol{I}}$ is.
\item If $V,W$ are generated by $V^{\boldsymbol{I}}$, $W^{\boldsymbol{I}}$
then $V\cong_{G}W$ if and only if $V^{\boldsymbol{I}}\cong_{\mathcal{H}_{\boldsymbol{I}}}W^{\boldsymbol{I}}$.
\item If $V$ is irreducible and I.S.\ then it is unitary if and only if $V^{\boldsymbol{I}}$
is (where $\mathcal{H}_{\boldsymbol{I}}$ is a $*$-algebra by $\varphi^{*}\left(g\right)=\overline{\varphi\left(g^{-1}\right)}$).
\end{enumerate}
\end{thm}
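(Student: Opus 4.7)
The plan is to treat these four statements as facets of a single equivalence-of-categories theorem between the smooth $G$-representations generated by their Iwahori-fixed vectors and the modules over the Iwahori-Hecke algebra $\mathcal{H}_{\boldsymbol{I}}$; (1)--(3) are due to Borel--Casselman and (4) to Barbasch--Moy, so I would reduce everything to a handful of structural facts and then invoke these. The central input is that $(-)^{\boldsymbol{I}}$ is an exact functor (since $\boldsymbol{I}$ is compact open and we may integrate against the idempotent $e_{\boldsymbol{I}}=\mathrm{vol}(\boldsymbol{I})^{-1}\mathbf{1}_{\boldsymbol{I}}$), and that on the subcategory of representations generated by their Iwahori-fixed vectors it is fully faithful with quasi-inverse $M\mapsto \mathcal{H}\otimes_{\mathcal{H}_{\boldsymbol{I}}}M$, where $\mathcal{H}$ is the full Hecke algebra of $G$.

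For (1), I would start with the easy direction: any subrepresentation of $V_z$ is I.S., because one checks explicitly that $V_z^{\boldsymbol{I}}\neq 0$ using the Iwasawa decomposition $G=PK$ and the fact that $\boldsymbol{I}\subseteq K$ meets every $P$-coset in a computable way. For the converse, given irreducible I.S.\ $V$, I would pass to the Jacquet module $V_N$ with respect to the unipotent radical $N$ of $P$; by Casselman's theorem, $V_N\neq0$ (since $V^{\boldsymbol{I}}\neq 0$ injects into $V_N^{\boldsymbol{I}\cap M}$ via the Jacquet map, after twisting by $\delta_P^{1/2}$), and by Frobenius reciprocity $V$ embeds into $\mathrm{n\text{-}ind}_P^G(V_N)$. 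Since the torus $M\cong \E^{\times}$ acts on the finite-dimensional $V_N$, one picks an eigencharacter $\chi$; the Iwahori-spherical assumption forces $\chi$ to be unramified, so $\chi=\chi_z$ for some $z\in\C^{\times}$, and $V\hookrightarrow V_z$ follows.

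For (2) and (3), I would use the idempotent $e_{\boldsymbol{I}}$: a submodule $U\leq V^{\boldsymbol{I}}$ generates a subrepresentation $\mathcal{H}U\leq V$ whose Iwahori-fixed part is exactly $U$ (this uses $e_{\boldsymbol{I}}\mathcal{H}e_{\boldsymbol{I}}=\mathcal{H}_{\boldsymbol{I}}$ together with exactness of $(-)^{\boldsymbol{I}}$), and conversely any subrepresentation of $V$ generated by its Iwahori vectors is recovered from its Iwahori part. This simultaneously yields (2) and the fully-faithful part of (3); the essentially-surjective part for isomorphisms follows by noting that an $\mathcal{H}_{\boldsymbol{I}}$-isomorphism $V^{\boldsymbol{I}}\xrightarrow{\sim}W^{\boldsymbol{I}}$ extends uniquely to the $\mathcal{H}$-module generated by each side.

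The main obstacle is (4), the Barbasch--Moy unitarity statement. One direction is routine: a $G$-invariant positive-definite Hermitian form on $V$ restricts to one on $V^{\boldsymbol{I}}$ that is $\mathcal{H}_{\boldsymbol{I}}$-invariant with respect to the $*$-structure $\varphi^\ast(g)=\overline{\varphi(g^{-1})}$, since this $*$ is induced by the unitary involution $g\mapsto g^{-1}$ of $G$. The hard direction is to \emph{lift} an invariant positive Hermitian form on $V^{\boldsymbol{I}}$ to one on the full $V$; here I would use that $V$ is the $\mathcal{H}$-module generated by $V^{\boldsymbol{I}}$ and that the form is uniquely determined on such a generating set, then establish positivity on all of $V$ by an approximation argument using larger and larger compact-open subgroups $K_n\subseteq \boldsymbol{I}$, exploiting that $V^{K_n}$ carries a canonical $\mathcal{H}_{K_n}$-invariant form built from $V^{\boldsymbol{I}}$. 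Since these arguments are already in the literature, in the actual write-up I would simply cite Borel, Casselman, and Barbasch--Moy and omit the reductions.
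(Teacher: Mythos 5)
Your proposal is essentially the paper's approach: the paper gives no proof of this theorem at all, citing exactly the references (Borel, Casselman, Barbasch--Moy) that you yourself fall back on, and your sketch of the Borel--Casselman equivalence, the Jacquet-module argument for (1), and the Barbasch--Moy unitarity transfer for (4) is the standard route those references take. The outline is sound (modulo the minor imprecision that the maximal torus of $PU_3$ is a quotient of $\E^{\times}\times\E^{1}$ rather than $\E^{\times}$ itself, which does not affect the parametrization by unramified characters $\chi_z$), so nothing further is needed.
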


In particular, every I.S.\ irreducible representation has Satake parameter(s), and we denote by $Sat(W)$ the set of Satake parameters of $W$. We now
show the following:
\begin{prop}\label{prop:satake}
The Satake parametrization identifies the I.S.\ dual of $G$ with
the non-Hausdorff space 
\[
\frac{\C\backslash\left\{ -q^{\pm1},0,q^{\pm2}\right\} }{z\sim1/z}\cup\left\{ -q^{\pm1},q^{\pm2}\right\} ,
\]
and the unitary irreducible representations are those with Satake parameter in $S^1\cup\left[-q,-\tfrac1{q}\right]\cup\left[\tfrac1{q^2},q^2\right]$.
\end{prop}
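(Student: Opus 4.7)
The plan is to classify simple modules of the Iwahori--Hecke algebra $\mathcal{H}_{\boldsymbol{I}}$ and transport the classification to irreducible I.S.\ representations of $G$ via the Borel--Casselman equivalence cited just above the statement, identifying the Satake parameter with an eigenvalue of the Bernstein centre, and then to read off unitarity from the induced $*$-involution $\varphi^{*}(g)=\overline{\varphi(g^{-1})}$ on $V_z^{\boldsymbol{I}}$.

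First I would exhibit $\mathcal{H}_{\boldsymbol{I}}$ as an affine Hecke algebra of type $\widetilde{A}_1$ with unequal parameters. The two maximal parahorics $K,K'\supset\boldsymbol{I}$ stabilising the endpoints of the edge $e_0$ provide two simple affine reflections $s,s'$ whose Hecke generators satisfy $(T_s-q^3)(T_s+1)=0$ and $(T_{s'}-q)(T_{s'}+1)=0$, coming from the indices $[K:\boldsymbol{I}]=q^3+1$ and $[K':\boldsymbol{I}]=q+1$. Bernstein's presentation then yields a commutative subalgebra $\mathcal{A}\subset\mathcal{H}_{\boldsymbol{I}}$ whose spectrum is $\C^\times/W$ with $W=\{1,w\}$ of order two acting by $z\mapsto z^{-1}$. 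Combining Frobenius reciprocity with Iwasawa's $G=P\cdot K$ gives $\dim V_z^{\boldsymbol{I}}=|W|=2$, with the Bernstein centre acting on $V_z^{\boldsymbol{I}}$ through the Weyl orbit of $z$; hence $V_z$ and $V_{z'}$ can share a composition factor only when $z'\in\{z,z^{-1}\}$. This yields the $z\sim 1/z$ identification of the statement and fixes the Satake parameter of each I.S.\ irreducible up to this ambiguity.

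Next I would locate the reducibility points of $V_z$. These are the zeros of the long intertwining operator $A(z)\colon V_z\to V_{z^{-1}}$, whose matrix on the two-dimensional $V_z^{\boldsymbol{I}}$ is an explicit rational function of $z$; in our rank-one non-split setting it factors (up to an invertible function) as a product of Tate-type factors whose only zeros in $\C^\times$ lie at the two $W$-orbits $\{q^{\pm 2}\}$ and $\{-q^{\pm 1}\}$. The first orbit corresponds to the trivial/Steinberg reducibility, the second to the endoscopic reducibility produced by the quadratic character $\omega_{\E/\F}$ of $\E^\times/\F^\times$. At each of these orbits $V_z$ has length two with two inequivalent composition factors, so the four parameters $-q^{\pm 1}$ and $q^{\pm 2}$ (which collapse in pairs in the Weyl quotient) re-emerge as four distinct isomorphism classes, producing exactly the non-Hausdorff gluing in the displayed formula; the point $z=0$ is removed simply because $\chi_z$ is not then a well-defined character.

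Finally, for unitarity I would use the Barbasch--Moy reduction (Theorem preceding the statement, item (4)) to positivity of the $*$-induced Hermitian form on $V_z^{\boldsymbol{I}}$. For $|z|=1$ the character $\chi_z$ is unitary and the form is automatically positive definite, giving the unitary principal series parametrised by $S^1/\{z\sim z^{-1}\}$. For real $z$ the determinant of the Hermitian form is a rational function whose zeros are the four reducibility parameters above; checking its sign at $z=\pm 1$ (where the corresponding representation is irreducible and unitary) identifies the real positivity locus as $[-q,-1/q]\cup[1/q^{2},q^{2}]$, the endpoints accounting for the unitary constituents of the reducible $V_z$. I expect the main obstacle to be the explicit intertwining / Hermitian-form computation, which in the rank-one non-split case requires careful tracking of the $\omega_{\E/\F}$-twist; however, every input needed is encoded in the biregular-tree description from Section~\ref{subsec:unitarygroups}, via the parahoric indices $q^{3}+1$ and $q+1$ and the Iwahori quadratic relations above.
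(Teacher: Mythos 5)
Your proposal is correct in substance and follows the same underlying strategy as the paper: reduce to two-dimensional $\mathcal{H}_{\boldsymbol{I}}$-modules on $V_z^{\boldsymbol{I}}$ via the Borel--Casselman/Barbasch--Moy dictionary, locate the reducibility parameters $\{q^{\pm2},-q^{\pm1}\}$, deduce the non-Hausdorff gluing from the unique irreducible submodules at those points, and settle unitarity through the $*$-structure on the Iwahori-fixed space. The difference is one of execution. Where you invoke general machinery --- Bernstein's presentation of the unequal-parameter affine Hecke algebra of type $\widetilde{A}_1$, the factorization of the long intertwining operator into Tate-type factors (with the $\omega_{E_v/F_v}$-twist responsible for the orbit $\{-q^{\pm1}\}$), and a sign/continuity argument for the Hermitian form --- the paper computes everything by hand from the tree: explicit $2\times2$ matrices $\rho_z(\tau),\rho_z(\sigma)$ in the basis $\{f_1,f_w\}$, an explicit conjugating matrix $Q$ realizing $V_z\cong V_{1/z}$, and the observation that the same $Q$ is (up to scalar) the unique candidate invariant Hermitian form, whose determinant $-q^3(z-q^2)(z-\tfrac1{q^2})(z+q)(z+\tfrac1q)$ directly cuts out the complementary series. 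Your route is conceptually cleaner and more portable, but it leaves the intertwining-operator factorization as a black box and, more importantly for this paper, it does not produce the explicit eigenvectors $f^K,f^{K'}$ and matrices that Section 6.2 reuses to match Satake parameters with the adjacency and non-backtracking spectra of the quotient bigraphs.

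One step you should make explicit: before discussing positivity you must rule out unitarity for Satake parameters off $S^1\cup\R^{\times}$. An invariant Hermitian form on $V_z^{\boldsymbol{I}}$ exists only when the module is isomorphic to its conjugate-dual, which forces $\{z,1/z\}$ to be stable under complex conjugation, i.e.\ $|z|=1$ or $z\in\R$; the paper gets this by noting that the self-adjoint element $\tau\sigma+\sigma\tau$ must have real trace, giving $z+1/z\in\R$. This is standard and easily inserted into your argument, so it is an omission rather than a genuine gap.
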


\begin{proof}
The Weyl element $w=\left(\begin{smallmatrix} &  & 1\\
 & 1\\
1
\end{smallmatrix}\right)\in G$ reflects $e_0$ around $v_0$, taking $v_1$ to the midpoint
of $\diag(\varpi,1,1)v_0-\diag(\varpi,\varpi,1)v_0$. Similarly,
$s=\left(\begin{smallmatrix} &  & 1/\varpi\\
 & 1\\
\varpi
\end{smallmatrix}\right)\in G$ reflects $e_0$ around $v_1$, taking $v_0$ to $p_{\frac1{\varpi},0,0}v_0$.
The algebra $\mathcal{H}_{\boldsymbol{I}}$ is generated by $\tau=\one_{\boldsymbol{I}w\boldsymbol{I}}$
and $\sigma=\one_{\boldsymbol{I}s\boldsymbol{I}}$. Observing the
action of $G$ on edges we have
\[
\boldsymbol{I}w\boldsymbol{I}=\bigsqcup\nolimits _{{x\in\kappa_{\E}\atop y\in\kappa_{\F}}}p_{1,x,y}w\boldsymbol{I},\qquad\boldsymbol{I}s\boldsymbol{I}=p_{\frac1{\varpi},0,0}w\boldsymbol{I}\sqcup\bigsqcup\nolimits _{y\in\kappa_{\F}^{\times}}p_{1,0,\frac{y}{\varpi}}\boldsymbol{I},
\]
and it follows that for any representation $V$ of $G$, $\mathcal{H}_{\boldsymbol{I}}$
acts on $V^{\boldsymbol{I}}$ by
\begin{equation}
\tau v=\sum\nolimits _{{x\in\kappa_{\E}\atop y\in\kappa_{\F}}}p_{1,x,y}wv,\qquad\sigma v=p_{\frac1{\varpi},0,0}wv+\sum\nolimits _{y\in\kappa_{\F}^{\times}}p_{1,0,\frac{y}{\varpi}}v\label{eq:tau-sigma}
\end{equation}
(here we have fixed Haar measures satisfying $\mu_{G}\left(\boldsymbol{I}\right)=\mu_{P}\left(P\cap\boldsymbol{I}\right)=1$).
For $V=V_z$, the Iwasawa decomposition $G=PK=P\boldsymbol{I}\sqcup Pw\boldsymbol{I}$
implies that $f\in V_z^{\boldsymbol{I}}$ is determined by $f(1),f(w)$;
upon verifying that the functions $f_1,f_{w}$ given by
\[
\begin{matrix}f_1\left(pb\right)= & \widetilde{\chi_z}\left(p\right), &  &  & f_1\left(pwb\right)= & 0,\\
f_{w}\left(pb\right)= & 0, &  &  & f_{w}\left(pwb\right)= & \widetilde{\chi_z}\left(p\right)/q^{3/2},
\end{matrix}\qquad\left(p\in P,b\in\boldsymbol{I}\right)
\]
are well defined, one obtains that $\left\{ f_1,f_{w}\right\} $
forms a basis for $V_z^{\boldsymbol{I}}$ (and we shall later see
it is orthonormal when $\left|z\right|=1$). Denoting $f^{K}:=f_1+q^{3/2}f_{w}$
we have that $wf^{K}=f^{K}$; thus, $K=\boldsymbol{I}\sqcup\boldsymbol{I}w\boldsymbol{I}$
implies that $f^{K}$ spans $V_z^{K}$. Similarly $K'=\boldsymbol{I}\sqcup\boldsymbol{I}s\boldsymbol{I}$,
and $f^{K'}=f_1+\frac{z}{\sqrt{q}}f_{w}$ is $s$-fixed so that
it spans $V_z^{K'}$. Fixing the basis $\left\{ f_1,f_{w}\right\} $,
the structure map $\rho=\rho_z\colon\mathcal{H}_{\boldsymbol{I}}\rightarrow End\left(V_z^{\boldsymbol{I}}\right)$
can be directly computed from (\ref{eq:Vz-def}) and (\ref{eq:tau-sigma}),
yielding
\begin{equation}
\rho_z\left(\tau\right)=\left(\begin{matrix}0 & q^{3/2}\\
q^{3/2} & q^3-1
\end{matrix}\right),\quad\rho_z\left(\sigma\right)=\left(\begin{matrix}q-1 & \frac{\sqrt{q}}{z}\\
z\sqrt{q} & 0
\end{matrix}\right).\label{eq:rho-sigma-tau}
\end{equation}
The eigenvectors of $\rho_z\left(\tau\right)$ are $f^{K}\negthickspace=\negthickspace\left(\begin{smallmatrix}1\\
q^{3/2}
\end{smallmatrix}\right),\left(\begin{smallmatrix}1\\
-q^{-3/2}
\end{smallmatrix}\right)$ and of $\rho_z\left(\sigma\right)$ are $f^{K'}\negthickspace=\negthickspace\left(\begin{smallmatrix}1\\
z/\sqrt{q}
\end{smallmatrix}\right),\left(\begin{smallmatrix}1\\
-z\sqrt{q}
\end{smallmatrix}\right)$, so $V_z^{\boldsymbol{I}}$ (and thus $V_z$) is reducible if
and only if $z\in\left\{ q^{\pm2},-q^{\pm1}\right\} $. Denoting by
$W_z$ the unique irreducible I.S.\ subrepresentation of $V_z$,
we obtain the four representations $W_{q^{\pm2}},W_{-q^{\pm1}}$, which
are described in Table \ref{tab:subreps}; these are the trivial and
Steinberg representations, and two other representations which we
call A and B.\footnote{In \cite{Ballantine2011Ramanujanbigraphsassociated} these four are
called sph, St, nt and ds respectively. We chose the name ``A''
as these are the components of Rogawski's A-packets (see Section \ref{section:automorphic}),
and ``B'' accordingly. In \cite[\S5.8(ii)]{borel1976admissible} A
and B appear as $\left(1,-1\right)$ and $\left(-1,1\right)$ (under
$l=1$).}

\begin{table}[h]
\hspace*{-2cm}
\begin{tabular}{|c|c|c|c|c|c|c|c|}
\hline 
name & $z$ (Satake) & basis for $W_z^{\boldsymbol{I}}$ & $\rho_z\left(\tau\right)$ & $\rho_z\left(\sigma\right)$ & $\rho_z\left(\sigma\tau\right)$ & $K$-spher & $K'$-spher\tabularnewline
\hline 
\hline 
Trivial & $q^2$ & $f^{K}=f^{K'}=f_1+q^{3/2}f_{w}$ & $q^3$ & $q$ & $q^{4}$ & \Checkmark{} & \Checkmark{}\tabularnewline
\hline 
Steinberg & $\frac1{q^2}$ & $f_1-q^{3/2}f_{w}$ & $-1$ & $-1$ & $1$ & \XSolidBrush{} & \XSolidBrush{}\tabularnewline
\hline 
A-type & $-q$ & $f^{K}=f_1+q^{3/2}f_{w}$ & $q^3$ & $-1$ & $-q^3$ & \Checkmark{} & \XSolidBrush{}\tabularnewline
\hline 
B-type & $-\frac1{q}$ & $f^{K'}=f_1-q^{-3/2}f_{w}$ & $-1$ & $q$ & $-q$ & \XSolidBrush{} & \Checkmark{}\tabularnewline
\hline 
\end{tabular}
\par
\caption{\label{tab:subreps}The irreducible representations $W_z$ of $PU_3$
with $\dim W_z^{\boldsymbol{I}}=1$.}
\end{table}

We move to the irreducible $V_z$, i.e.\ $z\not\in\left\{ q^{\pm2},-q^{\pm1}\right\} $, for which $W_z=V_z$ and $\dim V_z^{\boldsymbol{I}}=2$. The
eigenvalues of 
\[
\rho_z\left(\sigma\tau\right)=\left(\begin{matrix}\frac{q^2}{z} & \sqrt{q}\left(\frac{q^3-1}{z}+q^2-q\right)\\
0 & q^2z
\end{matrix}\right)
\]
are $zq^2,q^2/z$, so  the only possible isomorphism between
two such representations is $V_z\cong V_{1/z}$. They are indeed
isomorphic: 
\begin{equation}
Q=\left(\begin{matrix}\left(q^2z+qz+q+z\right)(q-1)z & q^{3/2}\left(1-z^2\right)\\
q^{3/2}\left(1-z^2\right) & \left(q^2+qz+q+1\right)(q-1)
\end{matrix}\right)\label{eq:P}
\end{equation}
gives $Q\rho_z\left(\tau\right)Q^{-1}=\rho_{1/z}\left(\tau\right)$
and $Q\rho_z\left(\sigma\right)Q^{-1}=\rho_{1/z}\left(\sigma\right)$. 

Next comes the matter of unitarity. We have $\tau^{*}=\overline{\one_{\left(\boldsymbol{I}w\boldsymbol{I}\right)^{-1}}}=\tau$,
and likewise $\sigma^{*}=\sigma$. Assume first that $V\cong V_z$
for some $z\notin\left\{ q^{\pm2},-q^{\pm1}\right\} $. Since $\tau\sigma+\sigma\tau$
is self-adjoint, if $V$ is unitary then 
\[
\rho_z\left(\tau\sigma+\sigma\tau\right)=\left(\begin{array}{cc}
q^2\left(z+\frac1{z}\right) & \frac{\sqrt{q}\left(q^3+zq^2-qz-1\right)}{z}\\
\sqrt{q}\left(q^3z+q^2-q-z\right) & q^2\left(z+\frac1{z}\right)
\end{array}\right)
\]
is self-adjoint, and in particular has real trace, so that $z\in S^1\cup\R^{\times}$.
If $z\in S^1$, then $\chi_z$ is unitary, hence so is $V_z$
with the inner product $\left\langle f,f'\right\rangle =\int_{P\backslash G}f\left(x\right)\overline{f'\left(x\right)}dx$;
these $V_z$ are called the \emph{(unitary) principal series}. For $V_z^{\boldsymbol{I}}$ with
$z\in\R^{\times}\backslash\left\{ q^{\pm2},-q^{\pm1}\right\}$ to be unitary,
we need a Hermitian matrix $H\in GL_2\left(\C\right)$ such that $\rho=\rho_z$ is a $*$-homomorphism with respect to $X^{*}:=H^{-1}\overline{X}^{t}H$, namely:
\begin{align*}
\left(\begin{smallmatrix}0 & q^{3/2}\\
q^{3/2} & q-1
\end{smallmatrix}\right) & =\rho\left(\tau\right)=\rho\left(\tau^{*}\right)=\rho\left(\tau\right)^{*}=H^{-1}\left(\begin{smallmatrix}0 & q^{3/2}\\
q^{3/2} & q-1
\end{smallmatrix}\right)H\\
\left(\begin{smallmatrix}q-1 & \sqrt{q}/z\\
z\sqrt{q} & 0
\end{smallmatrix}\right) & =\rho\left(\sigma\right)=\rho\left(\sigma^{*}\right)=\rho\left(\sigma\right)^{*}=H^{-1}\left(\begin{smallmatrix}q-1 & \overline{z}\sqrt{q}\\
\sqrt{q}/\overline{z} & 0
\end{smallmatrix}\right)H.
\end{align*}
The unique solution (up to scaling) is $H=Q$ from (\ref{eq:P}), which is invertible as
\begin{equation} \label{eq:Qdet}
\det Q=-q^3\left(z-q^2\right)\left(z-\tfrac1{q^2}\right)\left(z+q\right)\left(z+\tfrac1{q}\right).    
\end{equation}
This means that $V_z^{\boldsymbol{I}}$ admits a Hermitian
structure for all $z\in\R^{\times}\backslash\left\{ q^{\pm2},-q^{\pm1}\right\}$; it is unitary if in
addition $H$ is definite, which by \eqref{eq:Qdet} happens if and only if $z\in\left(-q,-\frac1{q}\right)\cup\left(\frac1{q^2},q^2\right)$;
these $V_z$ form the \emph{complementary series}. Finally, for
$z\in\left\{ q^{\pm2},-q^{\pm1}\right\} $, the one-dimensional representation
$W_z^{\boldsymbol{I}}$ is unitary since $\rho_z\left(\sigma\right),\rho_z\left(\tau\right)\in\R$
(and $\sigma^{*}=\sigma,\tau^{*}=\tau$).
\end{proof}
We shall also need the notion of temperedness: An irreducible representation $W$
is called tempered if it weakly contained in $L^2\left(G\right)$,
which is equivalent to its matrix coefficients being in $L^{2+\varepsilon}\left(G\right)$
for every $\varepsilon>0$ \cite{Haagerup1988}. A criterion of Casselman
for temperedness is that all Satake parameters of $W$ satisfy $\left|z\right|\leq1$
(see \cite[Def.\ 3.3]{Ballantine2011Ramanujanbigraphsassociated},
where $\chi$ denotes the Satake parameter), and Table \ref{tab:rep-spec}
indicates which $W_z$ satisfy this.

\subsection{Spectral theory revisited} \label{subsec:Spectral-theory-revisited}

If $\Lambda$ is a cocompact lattice in $G$ which acts on $\B$
without fixed points, then $X=X_{\Lambda}=\Lambda\backslash\B$
is a finite $(q^3+1,q+1)$-bigraph, and our next goal is to relate
the analysis of $A=A_{X}$ and $B=B_{X}$ in Section \ref{sec:Spectral-analysis}
to the representation theory of $G$. 
\begin{prop}\label{prop:Bspec_reps}
If $L^2\left(\Lambda\backslash G\right)=\bigoplus_iW_i\oplus\widehat{\bigoplus}_iU_i$
is the decomposition of $L^2\left(\Lambda\backslash G\right)$ as
a $G$-rep., where $W_i$ are the Iwahori spherical components and
the $U_i$ are the rest, then
\begin{equation}
\Spec\left(B_{X}\right)=\bigcup\nolimits _i\left\{ \pm q\sqrt{z}\,\middle|\,z\in\mathrm{Sat}\left(W_i\right)\right\} .\label{eq:spec-B}
\end{equation}
\end{prop}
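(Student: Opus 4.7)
The plan is to realize $B_X$ on $L^2(E_X)$ as coming from a Hecke algebra element acting on $L^2(\Lambda\backslash G)^{\boldsymbol{I}}$, and then read off its spectrum from the explicit matrices $\rho_z(\sigma),\rho_z(\tau)$ computed inside the proof of Proposition~\ref{prop:satake}. First I would set up the representation-theoretic description of $L^2(E_X)$. Since the Iwahori subgroup $\boldsymbol{I}=K\cap K'$ fixes both $v_0$ and $v_1$ pointwise, and since no element of $G$ can swap a hyperspecial vertex with a non-hyperspecial one, the set of directed edges of $\mathcal{B}$ splits into two $G$-orbits, each $G$-equivariantly identified with $G/\boldsymbol{I}$ via the base edges $e_0=(v_0,v_1)$ and $\bar{e}_0=(v_1,v_0)$. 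Passing to the $\Lambda$-quotient, this yields
\[
L^2(E_X)\cong L^2(\overrightarrow{LR})\oplus L^2(\overleftarrow{LR})\cong L^2(\Lambda\backslash G)^{\boldsymbol{I}}\oplus L^2(\Lambda\backslash G)^{\boldsymbol{I}}=\bigoplus\nolimits_i\bigl(W_i^{\boldsymbol{I}}\oplus W_i^{\boldsymbol{I}}\bigr),
\]
where $W_i$ ranges over the Iwahori-spherical irreducible constituents of $L^2(\Lambda\backslash G)$.

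Next, I would express $B_X$ in block form with respect to this decomposition. Any NB-continuation of an edge in $\overrightarrow{LR}$ lies in $\overleftarrow{LR}$ and vice versa, so $B_X$ is off-diagonal. To identify the two blocks I would use the coset decompositions computed in Section~\ref{subsec:Local-rank-one-representation}: $\boldsymbol{I}w\boldsymbol{I}=\bigsqcup_{x\in\kappa_{\E},y\in\kappa_{\F}}p_{1,x,y}w\boldsymbol{I}$ enumerates the $q^3$ non-hyperspecial neighbors of $v_0$ other than $v_1$, while $\boldsymbol{I}s\boldsymbol{I}=p_{1/\varpi,0,0}w\boldsymbol{I}\sqcup\bigsqcup_{y\in\kappa_{\F}^\times}p_{1,0,y/\varpi}\boldsymbol{I}$ enumerates the $q$ hyperspecial neighbors of $v_1$ other than $v_0$. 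Unwinding the definitions of $B_X$ and of the Hecke action shows that one off-diagonal block is exactly the action of $\tau=\boldsymbol{1}_{\boldsymbol{I}w\boldsymbol{I}}$ and the other that of $\sigma=\boldsymbol{1}_{\boldsymbol{I}s\boldsymbol{I}}$; equivalently, $B_X$ is unitarily equivalent to the operator $\left(\begin{smallmatrix}0&\sigma\\ \tau&0\end{smallmatrix}\right)$ acting on $L^2(\Lambda\backslash G)^{\boldsymbol{I}}\oplus L^2(\Lambda\backslash G)^{\boldsymbol{I}}$.

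Consequently, squaring shows that $B_X^2$ acts on each $W_i^{\boldsymbol{I}}\oplus W_i^{\boldsymbol{I}}$ as $\rho_i(\sigma\tau)\oplus\rho_i(\tau\sigma)$, so the restriction $\Spec\bigl(B_X\big|_{W_i^{\boldsymbol{I}}\oplus W_i^{\boldsymbol{I}}}\bigr)=\{\pm\sqrt{\mu}\mid\mu\in\Spec\rho_i(\sigma\tau)\}$. By the explicit computation of $\rho_z(\sigma\tau)$ in the proof of Proposition~\ref{prop:satake}, this matrix has eigenvalues $q^2z$ and $q^2/z$ in the generic two-dimensional case, and eigenvalue $q^2z$ in each of the four reducible 1-dimensional cases listed in Table~\ref{tab:subreps}, whose Satake parameters $z$ are precisely the elements of $\mathrm{Sat}(W_i)$. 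Hence the eigenvalues of $B_X$ arising from $W_i$ are exactly $\pm q\sqrt{z}$ for $z\in\mathrm{Sat}(W_i)$, and taking the union over $i$ yields \eqref{eq:spec-B}. The main technical step will be the block identification $B_X\sim\left(\begin{smallmatrix}0&\sigma\\ \tau&0\end{smallmatrix}\right)$: it requires careful bookkeeping of the two inequivalent $G$-equivariant identifications $G/\boldsymbol{I}\cong\overrightarrow{LR}$ and $G/\boldsymbol{I}\cong\overleftarrow{LR}$, and of the left/right conventions in the definition of the Hecke action; once this is done, the spectrum calculation is a direct consequence of the Satake-parameter analysis already carried out in Section~\ref{subsec:Local-rank-one-representation}.
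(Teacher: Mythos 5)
Your proposal is correct and follows essentially the same route as the paper: the identification $L^{2}(E_{X})\cong L^{2}(\Lambda\backslash G)^{\boldsymbol{I}}\oplus L^{2}(\Lambda\backslash G)^{\boldsymbol{I}}$ via the two $G$-orbits of directed edges, the realization of $B_{X}$ as $\left(\begin{smallmatrix}0&\sigma\\ \tau&0\end{smallmatrix}\right)\in M_{2}(\mathcal{H}_{\boldsymbol{I}})$, and the spectral computation from $\rho_{z}(\sigma),\rho_{z}(\tau)$ and Table \ref{tab:subreps}. Your passage through $B_X^2$ acting as $\rho(\sigma\tau)\oplus\rho(\tau\sigma)$ is just a mild repackaging of the paper's direct computation of the $4\times4$ (resp.\ $2\times2$) blocks, and the standard fact that a block anti-diagonal operator has spectrum $\{\pm\sqrt{\mu}\mid\mu\in\Spec\rho(\sigma\tau)\}$ closes the argument.
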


\begin{proof}
We note that $G$ acts on $E_{\B}$ (the directed edges in
$\B$) with two orbits, $E_{\B}=Ge_0\sqcup Ge_0'$
where $e_0=v_0\shortrightarrow v_1$ and $e_0'=v_0\shortleftarrow v_1$.
Both $e_0$ and $e'_0$ have stabilizer $\boldsymbol{I}$, so
that $E_{\B}\cong G/\boldsymbol{I}\sqcup G/\boldsymbol{I}$
as $G$-sets, and thus $E_{X}=\Lambda\backslash E_{\B}\cong\Lambda\backslash G/\boldsymbol{I}\sqcup\Lambda\backslash G/\boldsymbol{I}$.
We obtain an identification 
\begin{equation}
L^2\left(E_{X}\right)\cong L^2\left(\Lambda\backslash G\right)^{\boldsymbol{I}}\oplus L^2\left(\Lambda\backslash G\right)^{\boldsymbol{I}},\label{eq:L2E-reps}
\end{equation}
where $\left(f,f'\right)$ on the r.h.s.\ corresponds to $\big\{{\Lambda ge_0\mapsto f\left(\Lambda g\right)\phantom{'}\atop \Lambda ge_0'\mapsto f'(\Lambda g)}\big\}$
on the l.h.s. The r.h.s.\ is naturally a $M_2(\mathcal{H}_{\boldsymbol{I}})$-module,
and the crux of the matter is that the operator $B=B_{X}$ on $L^2\left(E_{X}\right)$
corresponds under (\ref{eq:L2E-reps}) to an element of $M_2(\mathcal{H}_{\boldsymbol{I}})$
-- specifically, to $\left(\begin{smallmatrix}0 & \sigma\\
\tau & 0
\end{smallmatrix}\right)$. From the decomposition $L^2\left(\Lambda\backslash G\right)=\bigoplus_iW_i\oplus\widehat{\bigoplus}_iU_i$
we obtain an orthogonal decomposition of $M_2\left(\mathcal{H}_{\boldsymbol{I}}\right)$-modules
\begin{equation}
L^2\left(E_{X}\right)\cong L^2\left(\Lambda\backslash G\right)^{\boldsymbol{I}}\oplus L^2\left(\Lambda\backslash G\right)^{\boldsymbol{I}}=\bigoplus\nolimits _iW_i^{\boldsymbol{I}}\oplus W_i^{\boldsymbol{I}}\label{eq:L2E-decomp}
\end{equation}
(note there are finitely many $W_i$ since $\left|E_{X}\right|<\infty$).
Each summand $W_i^{\boldsymbol{I}}\oplus W_i^{\boldsymbol{I}}$
corresponds via (\ref{eq:L2E-decomp}) to a two- or four-dimensional
subspace of $L^2\left(E_{X}\right)$ which is invariant under $B$,
and furthermore, the isomorphism type of $W_i$ already determines
the spectrum of $B=\left(\begin{smallmatrix}0 & \sigma\\
\tau & 0
\end{smallmatrix}\right)$ on this subspace. We obtain: 
\begin{align*}
\Spec\left(\rho_{W_z^{\boldsymbol{I}}\oplus W_z^{\boldsymbol{I}}}\left(\begin{smallmatrix}0 & \sigma\\
\tau & 0
\end{smallmatrix}\right)\right) & =\Spec\left(\begin{matrix}0 & \rho_{W_z^{\boldsymbol{I}}}(\sigma)\\
\rho_{W_z^{\boldsymbol{I}}}(\tau) & 0
\end{matrix}\right)\\
 & =\left.\begin{cases}
\Spec\left(\begin{smallmatrix}0 & 0 & q-1 & \sqrt{q}/z\\
0 & 0 & z\sqrt{q} & 0\\
0 & q^{3/2} & 0 & 0\\
q^{3/2} & q^3-1 & 0 & 0
\end{smallmatrix}\right) & \dim W_z^{\boldsymbol{I}}=2\\
\pm\sqrt{\rho_z(\sigma\tau)} & \dim W_z^{\boldsymbol{I}}=1
\end{cases}\right\}
\\&=\left\{ \pm q\sqrt{z}\,\middle|\,z\in\mathrm{Sat}\left(W_z\right)\right\} ,
\end{align*}
using (\ref{eq:rho-sigma-tau}) for $\dim W_z^{\boldsymbol{I}}=2$
(i.e.\ $z\notin\left\{ q^{\pm2},-q^{\pm1}\right\} $), and Table
\ref{tab:subreps} otherwise. In total, from $L^2\left(\Lambda\backslash G\right)=\bigoplus_iW_i\oplus\widehat{\bigoplus}_iU_i$
we obtain \eqref{eq:spec-B}. 
\end{proof}
The block decomposition of $L^2\left(E_{X}\right)$ in (\ref{eq:L2E-decomp})
agrees with that appearing in Theorem \ref{thm:B-decomp}, and Table
\ref{tab:rep-spec} gives the precise connection between the two presentations.
We note that the Satake parameter is slightly stronger than the parameter
$\vartheta$ used in Section \ref{sec:Spectral-analysis}, as the
latter does not distinguish between $W_z$ and $W_{1/z}$. 

\begin{table}[h]
\hspace*{-2cm}
\begin{tabular}{|c|c|c|c|c|c|c|c|}
\hline 
Type & $z$ (Satake) & temp & $\vartheta$ & $\negthickspace\negmedspace{\scriptscriptstyle \lambda=\pm\sqrt{q^3+\big(z+\tfrac1{z}\big)q^2+q}}\negthickspace\negmedspace$ & $\mu=\pm q\sqrt{z}$ & \# & Thm.\ \ref{thm:B-decomp}\tabularnewline
\hline 
\hline 
triv. & $q^2$ & \XSolidBrush{} & \multirow2{*}{$-i\log q^2$} & ${\scriptstyle \pm\mathfrak{pf}=\pm\sqrt{(q^3+1)(q+1)}}$ & $\pm q^2$ & 1 & \emph{(1)}\tabularnewline
\cline{1-3} \cline{2-3} \cline{3-3} \cline{5-8} \cline{6-8} \cline{7-8} \cline{8-8} 
Stein. & $\frac1{q^2}$ & $\checkmark$ &  & none & $\pm1$ & $\chi(X)$ & \emph{(5)}\tabularnewline
\hline 
prin. & $z^{\pm1}\in S^1$ & $\checkmark$ & $\left[0,\pi\right]$ & ${\scriptstyle \pm\left[q^{3/2}-\sqrt{q},q^{3/2}+\sqrt{q}\right]}$ & $qS^1$ & \multirow3{*}{${\scriptstyle n-1-\mE_X}\!$} & \emph{(2)(a)}\tabularnewline
\cline{1-6} \cline{2-6} \cline{3-6} \cline{4-6} \cline{5-6} \cline{6-6} \cline{8-8} 
\multirow2{*}{comp.} & $z^{\pm1}\in{\scriptstyle \left(\frac1{q^2},q^2\right)\backslash\{1\}}$ & \XSolidBrush{} & ${\scriptstyle \left(-i\log q^2,0\right)}$ & $\pm(q^{3/2}+\sqrt{q},\mathfrak{pf})$ & ${\scriptstyle \pm\left(\left(1,q^2\right)\backslash\{q\}\right)}$ &  & \multirow2{*}{\emph{(2)(b)}}\tabularnewline
\cline{2-6} \cline{3-6} \cline{4-6} \cline{5-6} \cline{6-6} 
 & $z^{\pm1}\in{\scriptstyle \left(-q,-\frac1{q}\right)\backslash\{-1\}}$ & \XSolidBrush{} & ${\scriptstyle \left(\pi,\pi+i\log q\right)}$ & $\pm(0,q^{3/2}-\sqrt{q})$ & ${\scriptstyle \negthickspace\negmedspace\pm\left(\left(i\sqrt{q},iq^{3/2}\right)\backslash\{iq\}\right)\negthickspace\negmedspace}$ &  & \tabularnewline
\hline 
A-type & $-q$ & \XSolidBrush{} & \multirow2{*}{$\pi\!+\!i\log q$} & $0$ ($\ker A\big|_{L}$) & $\pm iq^{3/2}$ & $\mE_X$ & \emph{(3)}\tabularnewline
\cline{1-3} \cline{2-3} \cline{3-3} \cline{5-8} \cline{6-8} \cline{7-8} \cline{8-8} 
B-type & $-\frac1{q}$ & $\checkmark$ &  & $0$ ($\ker A\big|_{R}$) & $\pm i\sqrt{q}$ & $\mathcal{N}_X$ & \emph{(4)}\tabularnewline
\hline 
\end{tabular}
\par
\caption{\label{tab:rep-spec}The unitary Iwahori-spherical irreducible representations of unramified
$U_3$, their Satake parameter(s) $z$, temperedness, the corresponding
$\vartheta$ parameter from Section \ref{sec:Spectral-analysis},
adjacency eigenvalues $\lambda$ and non-backtracking eigenvalues
$\mu$, the number of representations of this type in $L^2\left(\Lambda\backslash G\right)$
for $X=\Lambda\backslash\B$ with $n$ vertices, and the
corresponding section in Theorem \ref{thm:B-decomp}.}
\end{table}

\begin{prop}\label{prop:Aspec_reps}
With $L^2\left(\Lambda\backslash G\right)=\bigoplus_iW_i\oplus\widehat{\bigoplus}_iU_i$
as in Proposition \ref{prop:Bspec_reps},
\begin{equation}
\Spec\left(A_{X}\right)=\{0\}^{\#\left\{ i\,\middle|\,-q^{\pm1}\in\mathrm{Sat}(W_i)\right\} }\cup\bigcup_i\left\{ \pm\sqrt{q^3+\big(z_i+\tfrac1{z_i}\big)q^2+q}\right\} \label{eq:A-spec}
\end{equation}
where the union is over $i$ such that $q^{-2},-q^{\pm1}\notin\mathrm{Sat}\left(W_i\right)$,
and $z_i$ is any choice of Satake parameter in $\mathrm{Sat}(W_i)$. Moreover,
\begin{equation}
\begin{alignedat}1\mE_X & =\dim\mbox{Hom}_{G}\left(W_{-q},L^2\left(\Lambda\backslash G\right)\right)=\#\left\{ i\,\middle|\,-q\in\mathrm{Sat}\left(W_i\right)\right\} ,\\
\mathcal{N}_{X} & =\dim\mbox{Hom}_{G}\left(W_{-1/q},L^2\left(\Lambda\backslash G\right)\right)=\#\left\{ i\,\middle|\,-1/q\in\mathrm{Sat}\left(W_i\right)\right\} .
\end{alignedat}
\label{eq:E-N-reps}
\end{equation}
\end{prop}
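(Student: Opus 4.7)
The plan is to run the same strategy used for Proposition \ref{prop:Bspec_reps}, but now for vertices. Since $G$ acts on the vertex set $V_\mathcal{B}$ with exactly two orbits $Gv_{0}\sqcup Gv_{1}$, with stabilizers $K$ and $K'$ respectively, I obtain
\[
L^{2}(V_{X})\;\cong\;L^{2}(\Lambda\backslash G)^{K}\;\oplus\;L^{2}(\Lambda\backslash G)^{K'},
\]
under which $A_X$ becomes a Hecke-type operator exchanging the two summands. Because $\boldsymbol{I}\le K$ and $\boldsymbol{I}\le K'$, the non-I.S.\ components $U_{i}$ have no $K$- or $K'$-invariants and contribute nothing; so the decomposition refines to $\bigoplus_i(W_{i}^{K}\oplus W_{i}^{K'})$, each summand being $A_X$-stable. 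Consulting Table \ref{tab:subreps}, the profile $(\dim W_{z}^{K},\dim W_{z}^{K'})$ is $(1,1)$ for the trivial representation, $(0,0)$ for Steinberg, $(1,0)$ for A-type, $(0,1)$ for B-type, and $(1,1)$ for every principal or complementary $V_{z}$ with $z\notin\{q^{\pm 2},-q^{\pm 1}\}$.

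Next I would compute the $A$-eigenvalues on each $W_{i}^{K}\oplus W_{i}^{K'}$. The trivial case is immediate and yields $\pm\mathfrak{pf}=\pm\sqrt{(q^{3}+1)(q+1)}$; Steinberg contributes nothing; an A-type (resp.\ B-type) component gives a one-dimensional subspace in $\ker(A_{X}|_{L})$ (resp.\ $\ker(A_{X}|_{R})$), because $A_X$ sends $K$-fixed vectors to $K'$-fixed ones and the latter (resp.\ former) are zero. Rather than compute the adjacency Hecke operator directly on a 2-dimensional $V_{z}^{K}\oplus V_{z}^{K'}$, the cleanest route is to invoke Proposition \ref{prop:Bspec_reps}: since $\pm q\sqrt{z}\in\Spec(B_{X})$, the quadratic relation \eqref{eq:mu-by-lambda} specialized to $(K,k)=(q^{3},q)$ and $\mu^{2}=q^{2}z$ gives
\[
\lambda^{2}\;=\;K+k+\mu^{2}+\frac{Kk}{\mu^{2}}\;=\;q^{3}+q+q^{2}z+\frac{q^{2}}{z}\;=\;q^{3}+q^{2}\bigl(z+\tfrac{1}{z}\bigr)+q,
\]
and both $\pm\lambda$ appear, since $V_{z}^{K}\oplus V_{z}^{K'}$ is 2-dimensional and $A_X$ swaps its summands (and the two Satake parameters $z,1/z$ of the irreducible $V_z$ yield the same $\lambda^2$).

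Finally, observe that $\lambda^{2}=0$ in the above formula occurs precisely for $z\in\{-q,-1/q\}$, which is why the union in \eqref{eq:A-spec} excludes these parameters (together with the Steinberg parameter $q^{-2}$) and why $\ker A_X$ is entirely captured by A- and B-type contributions. Collecting the contributions yields \eqref{eq:A-spec}, and reading off multiplicities, $\mathcal{E}_{X}=\dim\ker(A_{X}|_{L})$ equals the total multiplicity of A-type components in $L^{2}(\Lambda\backslash G)$, giving $\mathcal{E}_{X}=\dim\Hom_{G}(W_{-q},L^{2}(\Lambda\backslash G))=\#\{i\,:\,-q\in\mathrm{Sat}(W_{i})\}$, and symmetrically for $\mathcal{N}_{X}$ using $W_{-1/q}$. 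The argument is largely bookkeeping; the only conceptual point requiring care is the correct identification of which I.S.\ representations are $K$- versus $K'$-spherical, which is precisely what separates A-type from B-type in Table \ref{tab:subreps}.
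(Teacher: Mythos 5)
Your proposal is correct and follows the paper's own setup for the first half: the same identification $L^{2}(V_{X})\cong L^{2}(\Lambda\backslash G)^{K}\oplus L^{2}(\Lambda\backslash G)^{K'}$, the same observation that the $U_{i}$ drop out and that the dimension profile of $(W_{i}^{K},W_{i}^{K'})$ from Table \ref{tab:subreps} produces the trivial eigenvalues, the kernel contributions of the A- and B-type blocks, and hence \eqref{eq:E-N-reps} exactly as in the paper. Where you genuinely diverge is the eigenvalue computation on a generic block: the paper computes the Hecke element directly, writing $\one_{KK'}=1+\tau+\sigma+\tau\sigma$ and $\one_{K'K}=1+\tau+\sigma+\sigma\tau$ and letting them act on the basis $f^{K},f^{K'}$ via $\rho_{z}$, which gives the antidiagonal block $\left(\begin{smallmatrix}0 & qz+1\\ q(1+q/z) & 0\end{smallmatrix}\right)$ and hence $\pm\sqrt{q^{3}+q^{2}(z+\tfrac1z)+q}$ with the nondegeneracy visible on the nose; you instead import Proposition \ref{prop:Bspec_reps} and invert the quadratic \eqref{eq:mu-by-lambda}. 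Your route is shorter once the non-backtracking result is in hand, but as written it has a small logical jump: knowing that $\pm q\sqrt{z_i}\in\Spec(B_{X})$ globally does not by itself tie those $\mu$'s to the adjacency eigenvalue of the particular block $W_{i}^{K}\oplus W_{i}^{K'}$.

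To close that jump you need one more (soft) observation: the Section \ref{sec:Spectral-analysis} construction $f\mapsto F^{\pm}$ is compatible with the $W_{i}$-grading. Concretely, under the identifications $L^{2}(L_X)\cong L^{2}(\Lambda\backslash G)^{K}$, $L^{2}(R_X)\cong L^{2}(\Lambda\backslash G)^{K'}$ and $L^{2}(E_X)\cong L^{2}(\Lambda\backslash G)^{\boldsymbol{I}}\oplus L^{2}(\Lambda\backslash G)^{\boldsymbol{I}}$, the maps $f\mapsto f_{\star\star}$ amount to viewing a $K$- (resp.\ $K'$-) invariant vector as an $\boldsymbol{I}$-invariant vector in the appropriate copy (using $\boldsymbol{I}\le K,K'$), so they send $W_{i}^{K}$ and $W_{i}^{K'}$ into $W_{i}^{\boldsymbol{I}}\oplus W_{i}^{\boldsymbol{I}}$. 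Hence an adjacency eigenfunction with eigenvalue $\lambda_i$ living in the $i$-th block produces non-backtracking eigenfunctions inside the $i$-th block, whose $B$-spectrum is $\{\pm q\sqrt{z}\,:\,z\in\mathrm{Sat}(W_i)\}$ by Proposition \ref{prop:Bspec_reps}; only then does $\mu_{\lambda_i}^{\pm}\in\{\pm qz_i^{\pm1/2}\}$ and the relation $\lambda_i^{2}=K+k+\mu^{2}+Kk/\mu^{2}$ force $\lambda_i^{2}=q^{3}+q^{2}(z_i+\tfrac{1}{z_i})+q$. With that sentence added, your argument is complete and is a legitimate alternative to the paper's direct Hecke computation; what the paper's version buys is independence from Section \ref{sec:Spectral-analysis} and the explicit intertwining constants, while yours buys brevity by reusing the edge-level result.
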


\begin{proof}
The analysis of $A=A_{X}$ is a bit trickier than that of $B$. We
now have $V_{\B}=Gv_0\sqcup Gv_1$, which gives $L^2\left(V_{X}\right)\cong L^2\left(\Lambda\backslash G\right)^{K}\oplus L^2\left(\Lambda\backslash G\right)^{K'}$.
The latter is a module over the subalgebra 
\[
\mA=\left\{ \left(\begin{matrix}\alpha & \beta\\
\gamma & \delta
\end{matrix}\right)\in M_2(\mathcal{H}_{\boldsymbol{I}})\,\middle|\,{\alpha\in C_{c}\left(K\backslash G/K\right),\beta\in C_{c}\left(K\backslash G/K'\right)\atop \gamma\in C_{c}\left(K'\backslash G/K\right),\delta\in C_{c}\left(K'\backslash G/K'\right)}\right\} 
\]
of $M_2(\mathcal{H}_{\boldsymbol{I}})$, and the operator $A_{X}$
acting on $L^2\left(V_{X}\right)$ coincides with the element $\left(\begin{smallmatrix}0 & \tfrac1{\mu(K')}\one_{KK'}\\
\tfrac1{\mu(K)}\one_{K'K} & 0
\end{smallmatrix}\right)\in\mA$. We obtain an $A_{X}$-stable decomposition $L^2\left(V_{X}\right)\cong\bigoplus_i\big(W_i^{K}\oplus W_i^{K'}\big)$,
and the summands have dimension one for $z=-q^{\pm1}$, zero for $z=q^{-2}$
and two otherwise (see Table \ref{tab:subreps}). We recall that $f^{K}:=f_1+q^{3/2}f_{w}$ and $f^{K'}=f_1+\frac{z}{\sqrt{q}}f_{w}$
span $W_i^{K}$ and $W_i^{K'}$ respectively, and observe that
$\mu(K)=\mu(\boldsymbol{I})[K:\boldsymbol{I}]=q^3+1$ and similarly $\mu(K')=q+1$.
We have
\[
KK'=\left(\boldsymbol{I}\sqcup\boldsymbol{I}w\boldsymbol{I}\right)\left(\boldsymbol{I}\sqcup\boldsymbol{I}s\boldsymbol{I}\right)=\boldsymbol{I}\sqcup\boldsymbol{I}w\boldsymbol{I}\sqcup\boldsymbol{I}s\boldsymbol{I}\sqcup\boldsymbol{I}ws\boldsymbol{I},
\]
which implies $\one_{KK'}=1+\tau+\sigma+\tau\sigma$ and $\one_{K'K}=\smash{\one_{KK'}^{^{*}}}=1+\tau+\sigma+\sigma\tau$.
Using (\ref{eq:rho-sigma-tau}) we can now compute that $\mu(K')^{-1}\one_{KK'}f^{K'}=(qz+1)f^{K}$
and $\mu(K)^{-1}\one_{K'K}f^{K}=q(1+q/z)f^{K'}$. Altogether, we have
obtained 
\begin{align*}
&\Spec\left(\rho_{W_z^{K}\oplus W_z^{K'}}\left(\begin{smallmatrix}0 & \tfrac1{\mu(K')}\one_{KK'}\\
\tfrac1{\mu(K)}\one_{K'K} & 0
\end{smallmatrix}\right)\right) \\& =\begin{cases}
\Spec\left(\begin{smallmatrix}0 & qz+1\\
q(1+q/z) & 0
\end{smallmatrix}\right) & z\notin\{q^{-2},-q^{\pm1}\}\\
\Spec\left(\left(0\right)\right) & z\in\{-q^{\pm1}\}
\end{cases}\\
 & =\begin{cases}
\pm\sqrt{q^3+\big(z+\tfrac1{z}\big)q^2+q} & z\notin\{q^{-2},-q^{\pm1}\}\\
0 & z\in\{-q^{\pm1}\}
\end{cases},
\end{align*}
from which (\ref{eq:A-spec}) follows. Moreover, $z=-q$ and $z=-1/q$
give eigenfunctions in $W_z^{K}$ and $W_z^{K'}$ respectively,
so in the notations of Section \ref{sec:Spectral-analysis}, $W_{-q}$
contributes to $\ker A|_{L}$ and $W_{-1/q}$ to $\ker A|_{R}$, yielding
(\ref{eq:E-N-reps}).
\end{proof}

\begin{thm}
\label{thm:ram-local-crit}Let $\Lambda\leq G$ be a cocompact lattice,
and $X=X_{\Lambda}=\Lambda\backslash\B$.
\begin{enumerate}
\item $X$ is adj-Ramanujan if and only if every $K$-spherical irreducible representation $W\leq L^2\left(\Lambda\backslash G\right)$
is one-dimensional, tempered or of A-type.
\item The following are equivalent:
\begin{enumerate}
\item $X$ is Ramanujan.
\item $X$ is NB-Ramanujan.
\item $X$ satisfies the Riemann Hypothesis.
\item Every $K$-spherical irreducible representation $W\leq L^2\left(\Lambda\backslash G\right)$
is one-dimensional or tempered.
\end{enumerate}
\end{enumerate}
\end{thm}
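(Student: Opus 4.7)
\emph{Plan.} The strategy is to exploit the dictionary established in Propositions~\ref{prop:Bspec_reps} and~\ref{prop:Aspec_reps} (and summarized in Table~\ref{tab:rep-spec}) linking the spectra of $A_X$ and $B_X$ to the irreducible constituents of $L^2(\Lambda\backslash G)$, and then to upgrade the control on these two specific operators to control on arbitrary geometric operators via Hecke-algebra / Plancherel considerations.

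\emph{Part (1).} By Proposition~\ref{prop:Aspec_reps}, the nontrivial adjacency spectrum of $X$ comes entirely from the $K$-spherical constituents $W_i$, each contributing either $\lambda=0$ (exactly when $W_i$ is of type A) or $\lambda=\pm\sqrt{q^{3}+(z+\tfrac{1}{z})q^{2}+q}$. Since $K=q^{3}$, $k=q$, $\sqrt{Kk}=q^{2}$ and $K+k=q^{3}+q$, the identity $\lambda^{2}-K-k=(z+\tfrac{1}{z})q^{2}$ shows that the adj-Ramanujan condition $|\lambda^{2}-K-k|\leq 2\sqrt{Kk}$ translates into $|z+\tfrac{1}{z}|\leq 2$. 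By Proposition~\ref{prop:satake}, the unitary Satake parameters lie in $S^{1}\cup[-q,-\tfrac{1}{q}]\cup[\tfrac{1}{q^{2}},q^{2}]$; the inequality holds precisely on the unit circle (principal series, hence tempered) and fails on the open complementary intervals, while the endpoints $z\in\{q^{2},-q\}$ give the trivial or the zero eigenvalue. Consequently, adj-Ramanujan is equivalent to demanding that every $K$-spherical irreducible be one-dimensional, tempered (principal series), or of type A, which is statement (1).

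\emph{Part (2), easy equivalences.} The equivalence (b)$\Leftrightarrow$(c) is Corollary~\ref{cor:ram-by-theta}, which also identifies (b) with \emph{adj-Ramanujan together with $\mathcal{E}_X=0$}. Proposition~\ref{prop:Aspec_reps} identifies $\mathcal{E}_X$ with the multiplicity of the A-type representation $W_{-q}$ in $L^2(\Lambda\backslash G)$, so combining this with Part (1) yields (b)$\Leftrightarrow$(d). The implication (a)$\Rightarrow$(b) is immediate, since $B_X$ is itself a geometric operator.

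\emph{Part (2), main step: (d)$\Rightarrow$(a).} Any geometric operator $T$ acts on $L^{2}(G/H)$ for some cell-stabilizer $H\in\{K,K',\boldsymbol{I}\}$, and thus lies in (a matrix algebra over) the Hecke algebra $\mathcal{H}_H=C_c(H\backslash G/H)$. Accordingly, along the decomposition $L^{2}(\Lambda\backslash G)^{H}=\bigoplus_i W_i^{H}$ it splits as $T=\bigoplus_i\rho_{W_i}(T)$, so that $\Spec T|_X=\bigcup_i\Spec\,\rho_{W_i}(T)$; by the Plancherel formula for $L^{2}(G/H)$, $\Spec T_\mathcal{B}$ is precisely the corresponding union taken over tempered $H$-spherical irreducibles $W$. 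Under (d), Table~\ref{tab:rep-spec} shows that the only non-tempered, non-one-dimensional Iwahori-spherical representations are the complementary series and A-type, both of which are $K$-spherical and hence ruled out by (d). Therefore every non-one-dimensional $W_i$ occurring in $L^{2}(\Lambda\backslash G)$ is tempered, its contribution lies in $\Spec T_\mathcal{B}$, and the one-dimensional components provide exactly the trivial eigenvalues of Definition~(3), establishing (a). The main technical point is this Plancherel reduction, which rests on the Borel–Casselman theory together with the explicit classification of Proposition~\ref{prop:satake}; the rest of the argument is essentially bookkeeping.
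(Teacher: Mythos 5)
Your proof is correct and follows essentially the same route as the paper: Part (1) via the spectral dictionary of Proposition \ref{prop:Aspec_reps} and Table \ref{tab:rep-spec}, the easy equivalences in Part (2) via Corollary \ref{cor:ram-by-theta} together with the identification of $\mathcal{E}_X$ with the A-type multiplicity, and (d)$\Rightarrow$(a) by realizing geometric operators inside (matrix algebras over) the Iwahori--Hecke algebra and using temperedness to force their nontrivial spectrum into the spectrum on $\mathcal{B}$. The only cosmetic differences are that you invoke the Plancherel decomposition of $L^{2}(G/H)$ where the paper argues via weak containment of tempered representations in $L^{2}(G)$ (the same fact), and that your parenthetical ``$\lambda=0$ exactly when $W_i$ is of type A'' holds only among $K$-spherical constituents --- the $K'$-spherical B-type components also contribute $0$ to $\Spec(A_X)$ --- which is harmless since $0$ is always admissible for adj-Ramanujan.
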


\begin{proof}
\emph{1.} By (\ref{eq:aram-def}) $X_{\Lambda}$ is adj-Ramanujan if and only if every $\lambda\in\mathrm{Spec}\left(A\right)$ satisfies either (i) $\lambda=\pm\mathfrak{pf}$, (ii) $\lambda=0$ or (iii) $\left|\lambda^2-q^3-q\right|\leq2q^2$. From Table \ref{tab:rep-spec} we see that (i) corresponds to a one-dimensional irreducible representation, (ii) to the A and B types (where the latter is tempered), and (iii) to the (tempered) principal series.

\emph{2.} $(a)\Rightarrow(b)$ is immediate and $(b)\Leftrightarrow(c)$
was shown in Corollary \ref{cor:ram-by-theta}.\\
$(b)\Rightarrow(d)$: Any $K$-spherical representation is also I.S., and by
\eqref{eq:ram-bg-def} every $\mu\in\Spec\left(B_{X}\right)$ satisfies
$\left|\mu\right|\in\left\{ 1,\sqrt{q},q,q^2\right\} $, so by Table
\ref{tab:rep-spec} any I.S.\ $W\leq L^2\left(\Lambda\backslash G\right)$
is one-dimensional (trivial) or tempered. \\
$(d)\Rightarrow(a)$: If
every $K$-spherical $W\leq L^2\left(\Lambda\backslash G\right)$
is tempered then so is every I.S.\ $W\leq L^2\left(\Lambda\backslash G\right)$,
since by Table \ref{tab:subreps} the remaining ones are only the Steinberg
and B-type representations, which are tempered. Since $\B=Gv_0\sqcup Gv_1\sqcup Ge_0\sqcup Ge_1$,
any geometric operator $T$ on (all or some) cells of $\B$
can be identified with an operator in an appropriate subalgebra of
$M_{4}\left(\mathcal{H}_{\boldsymbol{I}}\right)$ acting on a subspace
of $\bigoplus_iW_i^{K}\oplus W_i^{K'}\oplus W_i^{\boldsymbol{I}}\oplus W_i^{\boldsymbol{I}}$
(see \cite[proof of Prop.\ 4.1]{Lubetzky2017RandomWalks} for more
details). If $W$ is one-dimensional then the corresponding eigenfunctions
of $T|_{X}$ are invariant under $G'=PSU_3(\Q_p)$, so
the corresponding eigenvalues are trivial. If $W$ is tempered, then
it is weakly contained in $L^2\left(G\right)$ \cite{Haagerup1988},
so its $T$-eigenfunctions are approximate eigenfunctions in $L^2\left(\B\right)$,
and thus $\Spec_0\left(T|_{W}\right)\subseteq\Spec(T|_{L^2\left(\B\right)})$.
\end{proof}

\begin{rem}
\label{rem:Ram-NBRam}In Corollary \ref{cor:ram-by-theta} we have
shown $\left(b\right)\Leftrightarrow\left(c\right)$ for all bigraphs,
and not only quotients of $\T_{p^3+1,p+1}$ by lattices
in $PU_3(\Q_p)$. Let us indicate how to show that $\left(a\right)\Leftrightarrow\left(b\right)$
for all bigraphs. The structure and representation theory of the Hecke-algebra
$\mathcal{H}_{\boldsymbol{I}}$ both follow from the structure of
the Bruhat-Tits tree and $\mathcal{H}_{\boldsymbol{I}}$'s
action on it. Taking $G=\Aut(\T_{K+1,k+1})$ and $\boldsymbol{I}\leq G$
to be an edge stabilizer, one obtains completely analogous results for the
structure and representation theory of the algebra $\mathcal{H}_{\boldsymbol{I}}=C_{c}\left(\boldsymbol{I}\backslash G/\boldsymbol{I}\right)$
in this case. Every finite $(K\!+\!1,k\!+\!1)$-bigraph $X$ is a quotient
of $\T$ by a co-compact lattice $\Lambda$ in $G$, and
one can again verify that the non-backtracking eigenvalues (unlike adjacency eigenvalues) already distinguish between tempered and non-tempered representations. Thus, $X$ is NB-Ramanujan if and only if every irreducible $\mathcal{H}_{\boldsymbol{I}}$-subrepresentation of $L^2(\Lambda\backslash G)^{\boldsymbol{I}}$ is trivial or tempered. As any
geometric operator can be represented by (a sub-matrix algebra over) the
Iwahori-Hecke algebra (see \cite[Prop.\ 4.1]{Lubetzky2017RandomWalks}),
this shows that in the case of bigraphs, NB-Ramanujan already implies Ramanujan for all geometric operators.
\end{rem}

\subsection{Ramanujan global criterion} \label{subsec:Ramanujan-global-criterion}

Let $E/\Q$ be a quadratic imaginary field, $\Phi\in GL_3\left(E\right)$
a definite Hermitian form, and 
$G=PU\left(E,\Phi\right)$
the associated projective unitary group scheme over $\Z$.
Denote $K_{\infty}=G\left(\R\right)=G_{\infty}$, $K_p=G\left(\Z_p\right)\leq G\left(\Q_p\right)=G_p$
for any prime $p$, and $G\left(\A\right)=\Pi'_{v}G_{v}$,
the adelic group. This is a locally compact group, with $K=\prod_{v}K_{v}=G(\R\hat{\Z})$
a maximal compact open subgroup, and $G\left(\Q\right)$ embeds diagonally
as a discrete cocompact lattice in $G(\A)$. Let $K'=\prod_{v}K'_{v}$ be a finite
index subgroup of $K$, i.e.\ $K'_{\infty}=K_{\infty}$ and $K'_{\ell}=K_{\ell}$
for almost all primes $\ell$. Let $p$ be an $E$-inert prime such that
$K'_p=K_p$, and denote $K'^p=\prod_{\ell\ne p}K'_{\ell}$.
Define the $p$-arithmetic congruence subgroup corresponding to $K'$
to be
\[
\Lambda_{K'}^p:=G\left(\Q\right)\bigcap K'^p\leq G\left(\Q\right)\bigcap K^p=G\left(\Z[1/p]\right).
\]
By the strong approximation property (Proposition \ref{prop:SA-adeles}),
there exist $c_1=1,\ldots,c_{h}\in G(\A)$, such that 
\[
G\left(\A\right)=\bigsqcup\nolimits_{i=1}^{h}G\left(\Q\right)c_iG_pK'^p.
\]
For any $1\leq i\leq h$, denote $\Lambda_{K',i}^p=G\left(\Q\right)\bigcap c_iK'^pc_i^{-1}$,
and note that $\Lambda_{K',1}^p=\Lambda_{K'}^p$. By a Theorem of Borel--Harish-Chandra
\cite[Thm.\ 5.7(2)]{Platonov1994Algebraicgroupsand}, $\Lambda_{K',i}^p$
is a cocompact lattice in $G_p$. Denote the finite quotient of
the Bruhat-Tits tree $\B_p$ by the congruence subgroup
$\Lambda_{K'}^p$ by
\[
X_{K'}^p:=\Lambda_{K'}^p\backslash\B_p.
\]

An automorphic representation of $G\left(\A\right)$ is an
irreducible subrepresentation of the right regular $G\left(\A\right)$-representation $L^2\left(G\left(\Q\right)\backslash G\left(\A\right)\right)$.
Denote by $\mA_{G}$ the set of irreducible automorphic representations
of $G$. By \cite{flath1979decomposition}, any $\pi\in\mA_{G}$
is a restricted tensor product $\pi=\otimes_{v}\pi_{v}$, where $\pi_{v}$
is an irreducible admissible representation of $G_{v}$ for each $v$,
and $\pi_{\ell}^{K_{\ell}}\ne\{0\}$ for almost all prime $\ell$.
For $K'\leq K$ of finite index, define the set of automorphic representations
of level $K'$ by
\[
\mA_{G}\left(K'\right)=\left\{ \pi\in\mA_{G}\,\middle|\,\pi^{K'}\ne\{0\}\right\} .
\]

We remark that the automorphic representations of $PU\left(E,\Phi\right)$
are in natural bijection with the automorphic representations of $U\left(E,\Phi\right)$
with a trivial central character, which will be denoted by $\mA_{U\left(E,\Phi\right),\mathbf1}$ in Section \ref{automorphic:SXDH}.
 
\begin{prop} \label{prop:SA-rep} 
The following is an isomorphism of $G_p$-representations (by right translations)
\[
L^2\left(G\left(\Q\right)\backslash G\left(\A\right)\right)^{K'^p}\cong\bigoplus_{i=1}^{h}L^2\left(\Lambda_{K',i}^p\backslash G_p\right).
\]
\end{prop}

\begin{proof}
By Proposition \ref{prop:SA-adeles} and by projecting to the $p$-factor,
we get the following $G_p$-equivariant bijection
\[
G\left(\Q\right)\backslash G\left(\A\right)/K'^p=\bigsqcup_{i=1}^{h}G\left(\Q\right)\backslash G\left(\Q\right)c_iG_pK'^p/K'^p\cong\bigsqcup_{i=1}^{h}\Lambda_{K',i}^p\backslash G_p,
\]
which gives rise to an isomorphism of $G_p$-representations 
\begin{align*}
L^2\left(G\left(\Q\right)\backslash G\left(\A\right)\right)^{K'^p}&\cong\bigoplus_i^{h}L^2\left(G\left(\Q\right)\backslash G\left(\Q\right)c_iG_pK'^p/K'^p\right)
\\&\cong\bigoplus_{i=1}^{h}L^2\left(\Lambda_{K',i}^p\backslash G_p\right).\qedhere
\end{align*}
\end{proof}
\begin{defn}
Say that $\mA_{G}\left(K'\right)$ is Ramanujan (resp.\ A-Ramanujan)
at $p$ if for any $\pi\in\mA_{G}\left(K'\right)$, either
$\pi_p$ is one-dimensional or $\pi_p$ is tempered (resp.\ or
$\pi_p$ is tempered or of A-type).
\end{defn}

The following Theorem gives an automorphic representation theoretic
criterion for $X_{K'}^p$ being Ramanujan.
\begin{thm}
\textcolor{red}{\label{thm:ram-global-crit}} (i) If $\mA_{G}\left(K'\right)$
is Ramanujan (resp.\ A-Ramanujan) at $p$ then $X_{K'}^p$ is Ramanujan
(resp.\ adj-Ramanujan).

(ii) Let $K''\trianglelefteq K'\leq K$ such that $G\left(\A\right)=G\left(\Q\right)\cdot K'$.
If $X_{K''}^p$ is Ramanujan (resp.\ adj-Ramanujan) then $\mA_{G}\left(K''\right)$
is Ramanujan (resp.\ A-Ramanujan) at $p$.
\end{thm}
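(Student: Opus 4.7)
The plan is to translate between irreducible subrepresentations of $L^{2}(\Lambda_{K'}^{p}\backslash G_{p})$ and global automorphic representations using the strong approximation decomposition of Proposition~\ref{cor:SA-rep}, and then invoke the local Ramanujan criterion of Theorem~\ref{thm:ram-local-crit}.

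For part (i), I would take any $K_{p}$-spherical irreducible $G_{p}$-subrepresentation $W\leq L^{2}(\Lambda_{K'}^{p}\backslash G_{p})$. Via the $i=1$ summand in Proposition~\ref{cor:SA-rep}, $W$ sits inside $L^{2}(G(\Q)\backslash G(\A))^{K'^{p}}$ as a $G_{p}$-submodule. Decomposing the $G(\A)$-module it generates into irreducible automorphic pieces $\pi=\bigotimes_{v}\pi_{v}$, one such $\pi$ contains $W$ as a $G_{p}$-summand, forcing $\pi_{p}\cong W$; since $W^{K_{p}}\ne 0$ and the $K'^{p}$-invariance is built in, $\pi\in\mA_{G}(K')$. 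The hypothesis then makes $\pi_{p}$ one-dimensional, tempered, or (in the A-Ramanujan case) A-type, so Theorem~\ref{thm:ram-local-crit} applied to $X_{K'}^{p}=\Lambda_{K'}^{p}\backslash\B_{p}$ yields Ramanujan (resp.\ adj-Ramanujan).

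For part (ii), I would first exploit the two hypotheses $K''\triangleleft K'$ and $G(\A)=G(\Q)\cdot K'$ to choose each double-coset representative $c_{i}$ in Proposition~\ref{cor:SA-rep} inside $G(\Q)\cdot K'$. Writing $c_{i}=\gamma_{i}k_{i}$ with $\gamma_{i}\in G(\Q)$, $k_{i}\in K'$, normality of $K''$ in $K'$ gives $k_{i}K''^{p}k_{i}^{-1}=K''^{p}$ at every non-$p$ place, so $\Lambda_{K'',i}^{p}=G(\Q)\cap c_{i}K''^{p}c_{i}^{-1}=\gamma_{i}\Lambda_{K''}^{p}\gamma_{i}^{-1}$; viewed in $G_{p}$ all these lattices are $G_{p}$-conjugate, and hence $L^{2}(\Lambda_{K'',i}^{p}\backslash G_{p})\cong L^{2}(\Lambda_{K''}^{p}\backslash G_{p})$ as $G_{p}$-representations. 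Now for any $\pi\in\mA_{G}(K'')$ the condition $K''_{p}=K_{p}$ (built into the $p$-arithmetic setup) together with $\pi^{K''}\neq 0$ forces $\pi_{p}^{K_{p}}\ne 0$, so $\pi_{p}$ is $K_{p}$-spherical; any pure tensor $K''^{p}$-invariant vector realises $\pi_{p}$ as a subrepresentation of $L^{2}(G(\Q)\backslash G(\A))^{K''^{p}}$, and hence, after the previous identification, of $L^{2}(\Lambda_{K''}^{p}\backslash G_{p})$. The assumed (adj-)Ramanujan property of $X_{K''}^{p}$ and Theorem~\ref{thm:ram-local-crit} then ensure $\pi_{p}$ is one-dimensional, tempered (or A-type), giving the conclusion.

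The main obstacle is the conjugacy step in (ii): making precise that $G(\A)=G(\Q)K'$ together with $K''\triangleleft K'$ really does force every $\Lambda_{K'',i}^{p}$ to be $G(\Q)$-conjugate to $\Lambda_{K''}^{p}$, so that the spectral analysis on all class components reduces to that on $X_{K''}^{p}$. Once this is clean, everything else is a routine translation between local and global via tensor product decompositions of automorphic representations, and an application of the already-established local criterion.
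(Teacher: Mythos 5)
Your proposal is correct and follows essentially the same route as the paper: both parts reduce to the strong-approximation decomposition of Proposition \ref{cor:SA-rep} combined with the local criterion of Theorem \ref{thm:ram-local-crit}, applied to the $K_p$-spherical constituents. The only (cosmetic) difference is in (ii): the paper uses $G(\A)=G(\Q)\cdot K'$ to choose the class representatives $c_i$ inside $K'$, so that normality gives $\Lambda^{p}_{K'',i}=\Lambda^{p}_{K''}$ on the nose, whereas you write $c_i=\gamma_i k_i$ and identify the lattices only up to $G(\Q)$-conjugation, which yields the same isomorphism of $G_p$-representations.
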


\begin{proof}
(i) If $\mA_{G}\left(K'\right)$ is Ramanujan (resp.\ A-Ramanujan)
at $p$ then any irreducible $G_p$-representation of $L^2\left(G\left(\Q\right)\backslash G\left(\A\right)\right)^{K'^p}$
is either one-dimensional or tempered (resp.\ or A-type). By Proposition
\ref{prop:SA-rep}, $L^2\left(\Lambda_{K'}^p\backslash G_p\right)$
is a sub-$G_p$-representation of $L^2\left(G\left(\Q\right)\backslash G\left(\A\right)\right)^{K'^p}$,
hence its irreducible representations are either tempered or one-dimensional
(resp.\ or A-type), so by Theorem \ref{thm:ram-local-crit}, 
$X_{K'}^p$ is Ramanujan (resp.\ adj-Ramanujan).

(ii) If $X_{K''}^p$ is Ramanujan (resp.\ adj-Ramanujan) then by
Theorem \ref{thm:ram-local-crit}, $L^2\left(\Lambda_{K''}^p\backslash G_p\right)$
is comprised of either tempered or one-dimensional (resp.\ or A-type)
irreducible representations. By Proposition \ref{prop:SA-rep}, $L^2\left(G\left(\Q\right)\backslash G\left(\A\right)\right)^{K''^p}\cong\bigoplus_{i=1}^{h}L^2\left(\Lambda_{K'',i}^p\backslash G_p\right)$.
Since $G\left(\A\right)=G\left(\Q\right)\cdot K'$,
we may choose $c_2,\ldots,c_{h}$ from $K'$, and since $K''\trianglelefteq K'$,
we get that $c_iK''^pc_i^{-1}=K''^p$ for any $i$. Therefore
$L^2\left(G\left(\Q\right)\backslash G\left(\A\right)\right)^{K''^p}\cong L^2\left(\Lambda_{K''}^p\backslash G_p\right)^{\oplus h}$,
hence its irreducible $G_p$-representations are either tempered
or one-dimensional (resp.\ or A-type), which completes the proof. 
\end{proof}

In fact, in the settings of Theorem \ref{thm:ram-global-crit}(ii),
we have shown something a bit stronger. For an irreducible $K_p$-spherical
representation $\rho$ of $G_p$, recall that $\rho\leq V_{\mathrm{Sat}\left(\rho\right)}$. 

\begin{prop}
Under the assumptions of Theorem \ref{thm:ram-global-crit}(ii), 
\[
\mathrm{Spec}\left(A^2|_{L_{X_{K'}^p}}\right)=\left\{ \mathrm{Spec}\left(A^2\big|_{V_{\mathrm{Sat}(\pi_p)}^{K}}\right)\,\middle|\,\pi\in\mA_{G}\left(K'\right)\right\} ,
\]
and counting eigenvalues with multiplicities, we have: 
\[
\mathrm{Spec}\left(A^2|_{L_{X_{K'}^p}}\right)^{\times h}=\left\{ \mathrm{Spec}\left(A^2\big|_{\smash{V_{\mathrm{Sat}(\pi_p)}^{K}}}\right)^{\times\dim\pi^{K'}}\,\middle|\,\pi\in\mA_{G}\left(K'\right)\right\} ,
\]
where $1\leq h < \infty$ is as in Proposition \ref{prop:SA-adeles}. 
In particular,
\begin{equation}
\mE\left(X_{K'}^p\right)=\frac1{h}\sum_{{\pi\in\mA_{G}\left(K'\right)\atop \mathrm{Sat}(\pi_p)=-p}}\dim\pi{}^{K'}.\label{eq:exces-autom}
\end{equation}
\end{prop}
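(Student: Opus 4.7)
The proof proceeds by combining the strong approximation decomposition of Proposition \ref{cor:SA-rep} with the decomposition of the adelic $L^2$-space into automorphic representations, tracking eigenvalues and multiplicities rather than merely types. The starting point is that, under the assumptions of Theorem \ref{thm:ram-global-crit}(ii), the double-coset representatives $c_i$ appearing in Proposition \ref{cor:SA-rep} can be chosen to lie in a subgroup which normalizes $K'^p$, so that $c_i K'^p c_i^{-1}=K'^p$ and hence $\Lambda_{K',i}^p=\Lambda_{K'}^p$ for all $i$. This collapses the isomorphism of Proposition \ref{cor:SA-rep} to
\[
L^2\bigl(G(\Q)\backslash G(\A)\bigr)^{K'^p}\;\cong\;L^2\bigl(\Lambda_{K'}^p\backslash G_p\bigr)^{\oplus h}
\]
as $G_p$-representations. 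Taking $K_p$-invariants on both sides, and using that the left vertices of $X_{K'}^p$ correspond to $\Lambda_{K'}^p\backslash G_p/K_p$ (since $v_0$ is hyperspecial with stabilizer $K_p$), I will obtain
\[
L^2\bigl(L_{X_{K'}^p}\bigr)^{\oplus h}\;\cong\;L^2\bigl(G(\Q)\backslash G(\A)\bigr)^{K'}.
\]

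Next I will decompose the right-hand side into automorphic representations. Using multiplicity one for $G=PU_3$ (valid by Rogawski's work, with due attention to $A$-packets), $L^2(G(\Q)\backslash G(\A))\cong\widehat{\bigoplus}_{\pi\in\mA_G}\pi$, whence
\[
L^2\bigl(G(\Q)\backslash G(\A)\bigr)^{K'}\;\cong\;\bigoplus_{\pi\in\mA_G(K')}\pi^{K'}.
\]
The operator $A^2|_L$ acts through the spherical Hecke algebra $C_c(K_p\backslash G_p/K_p)$ on the factor $\pi_p^{K_p}$ of each summand $\pi^{K'}=\pi_p^{K_p}\otimes\bigotimes_{v\neq p}\pi_v^{K'_v}$, with spectrum equal to that of $A^2$ on $V_{z(\pi_p)}^K$ by the Hecke computation preceding Proposition \ref{prop:Aspec_reps} (giving the single eigenvalue $\pm\sqrt{q^3+(z+1/z)q^2+q}$). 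Since the total $G_p$-multiplicity of $\pi_p^{K_p}$ contributed by $\pi$ in the direct sum equals $\dim\pi^{K'}$, dividing by $h$ yields both the set-theoretic and multiplicity-counted identities of the proposition.

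For the excessiveness formula, I will invoke Proposition \ref{prop:Aspec_reps}, which identifies $\mathcal{E}(X_{K'}^p)$ with the multiplicity of the $A$-type representation $W_{-p}$ in $L^2(\Lambda_{K'}^p\backslash G_p)$. Via the same identification, and using $\dim W_{-p}^{K_p}=1$ (Table \ref{tab:subreps}), this multiplicity equals $\tfrac{1}{h}\sum_{\pi:\,z(\pi_p)=-p}\dim\pi^{K'}$, as claimed. The bound $h\le 3^{\mathrm{Ram}(K')}$ will follow as in the proof of Proposition \ref{prop:SA-adeles}: $h=[C(\A):C^0]$ for the abelian quotient $C=G/G^s$, with the local contribution at each $v\in\mathrm{Ram}(K')$ bounded by $|C(\Z_v)/\det(K'_v)|\le 3$ in the $PU_3$ case.

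The main subtleties to address are the identification of the Hecke action at $p$ on $\pi_p^{K_p}$ with the graph-theoretic operator $A^2|_L$, and the verification that $c_i K'^p c_i^{-1}=K'^p$ genuinely holds (using $K'_p=K_p$, so that the $p$-component of $c_i$ is inconsequential). Multiplicity one for $PU_3$ is the other delicate point: where it fails for certain endoscopic packets, one replaces the statement by the direct counting of occurrences in the spectral decomposition, which does not alter the final formulas.
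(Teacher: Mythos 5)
Your proposal is correct and follows essentially the route the paper intends: the paper gives no separate proof of this proposition, presenting it as an immediate strengthening of the argument for Theorem \ref{thm:ram-global-crit}(ii), and that is exactly the chain you reconstruct --- choose the representatives $c_i$ in the larger group that normalizes the level so that $c_iK'^pc_i^{-1}=K'^p$, collapse Proposition \ref{cor:SA-rep} to $h$ copies of $L^{2}\left(\Lambda_{K'}^{p}\backslash G_{p}\right)$, take $K_p$-invariants to land on $L^{2}(L_{X_{K'}^{p}})^{\oplus h}$, and read off the eigenvalues and the excessiveness from the spherical Hecke computation and Proposition \ref{prop:Aspec_reps}. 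The only cosmetic points are that the equality $c_iK'^pc_i^{-1}=K'^p$ rests on the normality hypothesis of Theorem \ref{thm:ram-global-crit}(ii) (your earlier, correct phrasing) rather than on $K'_p=K_p$, and your sketch of the bound $h\leq 3^{\mathrm{Ram}(K')}$ is no less detailed than the paper, which asserts it without proof.
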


\section{Automorphic Representation Theory} \label{section:automorphic}

In this section we prove both positive and negative results about families of automorphic representations satisfying and violating the Ramanujan property.

This section is organized as follows: 
In Section \ref{automorphic:theorems} we state the main automorphic results we will be proving. 
In Section \ref{automorphic:rogawski} we recall the main results of Rogawski from \cite{Rogawski1990Automorphicrepresentationsunitary} and \cite{Rogawski1992Analyticexpressionnumber} that we need. 
In Section \ref{automorphic:CFT} we carry out explicit calculations of class field theory. 
In Section \ref{automorphic:level} we compute levels of relevant automorphic representations. 
In Section \ref{automorphic:proofs} we put everything together to prove the main theorems.
In Section \ref{automorphic:SXDH} we prove the Sarnak-Xue Density Hypothesis for unitary group associated to a definite form in three variables. 

Throughout this section we use the following notations. 
Let $G=U_3(E,\Phi)$ denote the definite unitary group scheme over $\Z$ associated to $E$ a quadratic imaginary field and $\Phi \in GL_3(E)$ a definite Hermitian matrix, and  $G^*=U_3(E,J)$ denote its quasi-split inner form. 
For any place $v$ of $\Q$, denote $G_v = G(\Q_v)$, $G^*_v = G^*(\Q_v)$, $K_\infty = G_\infty = U(3)$, $K_p=G(\Z_p)$ and $K^*_p = G^*(\Z_p)$ for any prime $p$.
Fix an isomorphism $G_p \cong G^*_p$ for any prime $p$, and if $p \nmid 2\mathrm{disc}(\Phi)$, choose it such that $K_p$ is mapped to $K^*_p$ (see Lemma \ref{lem:Landherr}).
Let $G(\A) = \prod'_v G_v$ be the adelic group and let $K = \prod_v K_v = G(\R \hat{\Z}) \leq  G(\A)$. 
Denote by $K' = \prod_v K'_v$ a finite index subgroup of $K$ and denote its finite set of ramified places by $\mathrm{Ram}(K') = \{p \mbox{ prime} \;:\; K'_p \ne K_p \}$.

Let $\mA_G$ be the set of automorphic representations of $G$.
For any $\pi \in \mA_G$, $\pi = \otimes_v \pi_v$, where $\pi_v$ is the $v$-local factor of $\pi$ for any place $v$.
For any $K' \leq K$, let $\mA_G(K') = \{ \pi \in \mA_G \,:\, \pi^{K'} \ne 0 \} $ be the set of level $K'$ automorphic representations of $G$.

\begin{defn} \label{defn:Ram}
We say that $\pi \in \mA_G$ is {\it Ramanujan} if $\pi_p$ is tempered for all $p$.

We say that $\mA_G(K')$ is {\it Ramanujan} if for any $\pi \in \mA_G(K')$, either $\pi$ is one-dimensional, or $\pi$ is Ramanujan.

We say that $\mA_G(K')$ is {\it A-Ramanujan} if for any $\pi \in \mA_G(K')$, either: 
(i) $\pi$ is one-dimensional, (ii) $\pi$ is Ramanujan, or (iii) $\pi$ belongs to a global A-packet (see Definition \ref{defn:A-packet}).
\end{defn} 

Note that in the terminology of the previous section, if $\mA_G(K')$ is {\it Ramanujan} (resp.\ {\it A-Ramanujan}) then it is {\it Ramanujan} (resp.\ {\it A-Ramanujan}) at $p$, for all $p$.

\subsection{Statement of the main results} \label{automorphic:theorems}

In this subsection we record the main results of this section which we then prove in Section \ref{automorphic:proofs}, with the exception of our first main result which we prove in Section \ref{automorphic:rogawski}.

\begin{thm} \label{thm:A-Ram}
$\mA_G(K')$ is A-Ramanujan for any $K'$.
\end{thm}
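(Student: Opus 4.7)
The plan is to combine Rogawski's classification of the automorphic spectrum of the quasi-split inner form $G^{\ast}=U_3(E,J)$ with the known case of the Ramanujan--Petersson conjecture for cohomological conjugate self-dual cuspidal representations of $GL_3(\A_E)$. Given $\pi\in\mA_G$, the first step is to transfer $\pi$ to an automorphic representation $\pi^{\ast}$ of $G^{\ast}$ using the Jacquet--Langlands-type correspondence for inner forms of $U_3$ established by Rogawski \cite{Rogawski1990Automorphicrepresentationsunitary}. Because $G$ is definite, $G_\infty$ is compact, so $\pi$ is automatically discrete and the transfer places $\pi^{\ast}$ in the discrete automorphic spectrum of $G^{\ast}$; at every finite place $p$ where $G$ and $G^{\ast}$ are locally isomorphic (in particular at every prime unramified for $G$), $\pi^{\ast}_p\cong\pi_p$, and at $\infty$ the component $\pi^{\ast}_\infty$ is a cohomological discrete series of $G^{\ast}_\infty=U(2,1)$ with the same infinitesimal character as $\pi_\infty$.

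Next I would invoke Rogawski's classification of the discrete automorphic spectrum of $G^{\ast}$: every $\pi^{\ast}\in\mA_{G^{\ast}}$ falls into exactly one of three families, (i) a one-dimensional character, (ii) a stable tempered representation obtained via stable base change from a cuspidal cohomological conjugate self-dual representation $\Pi$ of $GL_3(\A_E)$, or (iii) a non-tempered endoscopic representation in a global A-packet attached to a character of the elliptic endoscopic group $U_2\times U_1$ (cf.\ Definition \ref{defn:A-packet}). Families (i) and (iii) correspond directly to alternatives (i) and (iii) of Definition \ref{defn:Ram}, so it remains to verify temperedness in case (ii). Here the base-change lift $\Pi$ is cuspidal, conjugate self-dual, and cohomological at the unique archimedean place of $E$ (the latter by the archimedean compatibility of the transfer), and hence by Shin's theorem \cite{Shin2011Galoisrepresentationsarising} — building on the Fundamental Lemma of Ng\^o together with work of Harris--Taylor, Clozel, Kottwitz and Deligne — $\Pi_w$ is tempered at every finite place $w$ of $E$. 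Unramified local base change preserves (and reflects) temperedness, so at every prime $p$ unramified in $E$ at which $\pi^{\ast}_p$ is spherical, $\pi^{\ast}_p$ is tempered, and by the local equality $\pi_p\cong\pi^{\ast}_p$ the same holds for $\pi$.

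I expect the main obstacle to be purely bookkeeping rather than requiring any new ingredient: one must carefully match the set of ``unramified'' places appearing in Definition \ref{defn:Ram} with the places at which the inner-form transfer is locally trivial and where stable base change to $GL_3/E$ is the usual unramified Satake lift. The finitely many places where $G$ is not locally isomorphic to $G^{\ast}$ are allowed to be ramified for $K'$, and so do not enter the Ramanujan condition; at the remaining places the argument above goes through uniformly. The genuinely non-formal input is Shin's theorem in step (ii), which is stated precisely in the cohomological conjugate self-dual setting produced by our transfer.
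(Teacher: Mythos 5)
Your overall route---Rogawski's inner-form transfer from $G$ to the quasi-split $G^{*}$, his classification of the discrete automorphic spectrum of $G^{*}$, and Shin's theorem \cite{Shin2011Galoisrepresentationsarising} on temperedness of cohomological conjugate self-dual cuspidal representations of $GL_3(\A_E)$---is exactly the route the paper takes (the paper itself only sketches this and defers the details to \cite{Evra2018RamanujancomplexesGolden}, Thm.~5.2 and Prop.~5.3). However, there is a gap in your case analysis. Rogawski's classification is not the trichotomy you state: besides (i) one-dimensional representations, (ii) stable packets that base-change to cuspidal conjugate self-dual representations of $GL_3(\A_E)$, and (iii) A-packets attached to \emph{characters} of the elliptic endoscopic group $U_2\times U_1$, there is a fourth family, namely the endoscopic (tempered) packets attached to cuspidal data $(\sigma,\chi)$ on $U_2\times U_1$ with $\sigma$ an infinite-dimensional cuspidal representation of $U_2$. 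These are neither stable nor A-packets (Definition \ref{defn:A-packet} attaches A-packets only to pairs of characters of $U_1$), so they fall outside your cases (ii) and (iii), and your argument never shows that their members are tempered at unramified primes---which is exactly what the theorem requires of any representation that is neither one-dimensional nor of A-type.

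The gap is fixable with the same tools, which is why the paper's sketch is phrased as: every member of the stable packets $\Pi_s(G^{*})$ \emph{or} the endoscopic packets $\Pi_e(G^{*})$ is finite-dimensional or Ramanujan. A representation in the fourth family base-changes to an isobaric sum $\Pi_1\boxplus\Pi_2$ on $GL_3(\A_E)$ with $\Pi_1$ cuspidal, conjugate self-dual and cohomological on $GL_2$ and $\Pi_2$ a unitary Hecke character; temperedness of $\Pi_1$ at the finite places (hence of the unramified local factors of $\pi$) again follows from the known Ramanujan bounds in the cohomological conjugate self-dual setting. Your write-up silently merges $\Pi_e(G^{*})$ into the A-packet case, which misstates Rogawski's classification and leaves this family unproved; adding the fourth case and running the same GRPC input through the $GL_2\boxplus GL_1$ base change closes the argument and brings it in line with the paper's.
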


Theorem \ref{thm:A-Ram} follows from Rogawski's classification of automorphic representations of unitary groups in three variables, combined with the generalized Ramanujan-Petersson Theorem (see Section \ref{automorphic:rogawski} for more details).

Our second main result gives some criteria for proving that $\mA_G(K')$ is Ramanujan for certain $K' \leq K$.

\begin{thm} \label{thm:Ram-gen}
 $\mA_G(K')$ is Ramanujan if either:
\begin{enumerate}
\item there exists a prime $p$ which ramifies in $E$, such that $K'_p$ contains an Iwahori subgroup, or
\item there exists $ K'' \leq K$, such that  $K' \leq K''$, $\mA(K'')$ is Ramanujan, and for any prime $p$ for which $K'_p \neq K''_p$, $K'_p$ contains an Iwahori subgroup.
\end{enumerate}
\end{thm}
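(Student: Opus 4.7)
My plan is to start from Theorem \ref{thm:A-Ram}: since every $\pi\in\mA_G$ is A-Ramanujan, any non-Ramanujan $\pi\in\mA_G(K')$ must lie in a global A-packet $\Pi$ (being neither one-dimensional nor everywhere-tempered). To prove the theorem it therefore suffices to show that no non-Ramanujan member of a global A-packet admits a nonzero $K'$-invariant vector. Recall that each local packet $\Pi_v$ consists of a pair $\{\pi_v^t,\pi_v^n\}$ (a tempered and a non-tempered representation), and that a global member $\pi=\otimes_v\pi_v$ occurs in $\mA_G$ with multiplicity governed by Rogawski's formula (Theorem \ref{thm:Rogawski-A-packets}), which requires the parity of the set $S(\pi):=\{v:\pi_v=\pi_v^n\}$ to match a specific global $\varepsilon$-factor.

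For case (1), let $p$ be a prime ramifying in $E$ such that $K'_p$ contains an Iwahori subgroup $\boldsymbol{I}_p$. The key local input I will establish is that the non-tempered member $\pi_p^n$ of every local A-packet at such a $p$ has no Iwahori-fixed vector, unless it is the one-dimensional representation of $G_p$. The Iwahori-sphericity of $\pi_p$ forced by $\pi^{K'}\neq 0$ then rules out the supercuspidal case, while the one-dimensional case propagates through the global multiplicity formula and forces $\pi$ itself to be the one-dimensional representation, which is Ramanujan by definition. Consequently $\pi_p=\pi_p^t$ is tempered at $p$, and then the sign constraint in Rogawski's multiplicity formula together with Shin's resolution of GRPC for cohomological self-dual representations of $GL_n$ (as used in the proof of Theorem \ref{thm:A-Ram}) propagates temperedness to every unramified place, contradicting non-Ramanujanness of $\pi$.

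For case (2), suppose $\pi\in\mA_G(K')$ is non-Ramanujan and lies in a global A-packet $\Pi$. Since $K'\leq K''$ gives $\mA_G(K'')\subseteq\mA_G(K')$ and $\mA_G(K'')$ is Ramanujan, we have $\pi\notin\mA_G(K'')$; hence there is a prime $p$ with $K'_p\neq K''_p$ such that $\pi_p^{K''_p}=0$ while $\pi_p^{K'_p}\neq 0$. The plan is to produce a partner $\pi''\in\Pi$ lying in $\mA_G(K'')$ by replacing, at every prime $p$ with $K'_p\neq K''_p$, the local factor $\pi_p$ by the other member of the local packet $\Pi_p$. The Iwahori containment in $K'_p$ ensures that the swapped local factor is still Iwahori-spherical and therefore $K''_p$-invariant (after passing to its invariants). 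Rogawski's multiplicity formula is preserved provided the number of swaps has the correct parity, which, if necessary, can be achieved by absorbing the parity discrepancy at an auxiliary place where $K'_v=K''_v$. The resulting $\pi''\in\mA_G(K'')$ must then be Ramanujan by hypothesis; since $\pi$ and $\pi''$ belong to the same A-packet and share Satake data outside the (finite, controlled) swap locus, this forces $\pi$ itself to be tempered at every unramified place, contradicting non-Ramanujanness.

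The main obstacle in both parts is controlling Rogawski's sign-parity condition under local modifications: at each place $v$ the local A-packet member carries a sign $\varepsilon_v(\pi)\in\{\pm 1\}$, and ensuring that the forced local choice in case (1) and the swap in case (2) remain compatible with the global $\varepsilon$-factor requires the explicit $\varepsilon$-factor computations and the depth-preservation results for theta correspondence developed in Sections \ref{automorphic:CFT} and \ref{automorphic:level}.
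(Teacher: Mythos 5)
Your starting point (reduce via Theorem \ref{thm:A-Ram} to showing that no A-packet member can have a $K'$-invariant vector) is the right one, but both halves of your execution have genuine gaps. In case (2), the central construction fails: at a prime $p$ with $K'_p\neq K''_p$ you propose to swap $\pi_p$ for ``the other member'' of the local packet and claim that the Iwahori containment in $K'_p$ keeps the swapped factor Iwahori-spherical. But since $\pi_p^{K'_p}\neq 0$ and $\boldsymbol{I}_p\subseteq K'_p$, the factor you must swap away from is $\pi^n(\rho_p)$, and the only other member is the supercuspidal $\pi^s(\rho_p)$, which has \emph{no} Iwahori-fixed vectors at any non-split prime (Lemma \ref{lem:level-supercuspidal}); a fortiori it has no $K''_p$-fixed vectors, so your partner $\pi''$ is never in $\mA_G(K'')$ and the parity bookkeeping never gets off the ground. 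The closing inference is also off: even if some $\pi''$ in the same packet lay in $\mA_G(K'')$, Ramanujan-ness of $\mA_G(K'')$ would say something about $\pi''$, not ``force $\pi$ to be tempered at every unramified place'' — packet membership does not transfer temperedness between members, and indeed every member of a global A-packet is non-tempered at almost all places. A similar confusion appears in case (1): after ruling out the supercuspidal (no Iwahori vectors) and the non-tempered member, you conclude ``$\pi_p=\pi_p^t$ is tempered at $p$'' — but the tempered member \emph{is} the supercuspidal you just excluded. The correct conclusion at that point is that $\pi_p$ lies in no local A-packet, hence $\pi$ lies in no global A-packet, and Theorem \ref{thm:A-Ram} immediately gives that $\pi$ is one-dimensional or Ramanujan; no sign-constraint or $\varepsilon$-factor propagation is needed (and none would work, for the reason above).

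For comparison, the paper treats (1) as a quotation of Theorem 1.4 of \cite{Evra2018RamanujancomplexesGolden}, and its proof of (2) is short and purely local, with no partner representation and no parity argument. If $\pi\in\mA_G(K')$ were non-Ramanujan, pick $p$ with $\pi_p^{K''_p}=0$ but $\pi_p^{K'_p}\neq 0$; then $\pi_p^{K_p}=0$ while $\pi_p^{\boldsymbol{I}_p}\neq 0$. By Theorem \ref{thm:A-Ram} and Lemma \ref{lem:level-supercuspidal}, $\pi_p=\pi^n(\rho_p)$; by part (1), $p$ must be unramified in $E$. Then Casselman's theorem (Iwahori-spherical representations embed in inductions of \emph{unramified} characters) forces $\eta_p^{K^*_p\cap P^*_p}\neq 0$, while Proposition \ref{prop:level-spherical} says $\pi^n(\rho_p)^{K_p}=0$ forces $\eta_p^{K^*_p\cap P^*_p}=0$ — a contradiction. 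In other words, the key fact your proposal is missing is that at an unramified prime the non-tempered packet member cannot be Iwahori-spherical without already being spherical; once you have that, no global manipulation of the packet is required.
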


Claim (1) of Theorem \ref{thm:Ram-gen} is a slight strengthening of Theorem 1.4 of \cite{Evra2018RamanujancomplexesGolden}.
It follows from Theorem \ref{thm:A-Ram} above combined with Proposition 5.7 of \cite{Evra2018RamanujancomplexesGolden}.
Note that if $p\nmid disc(\Phi)$, and $p \notin \mathrm{Ram}(K')$, then $K'_p$ contains an Iwahori subgroup.
Claim (2) of Theorem \ref{thm:Ram-gen} requires a more careful analysis of Rogawski's A-packets.

Next we specialize to the Eisenstein case $E=\Q[\sqrt{-3}]$ and $\Phi = I$.
We recall that by Lemma \ref{lem-(E)} of Section \ref{sec:lattices}, the following is a normal subgroup of $K_3$:
\begin{equation} \label{eq:K3C}
K_3(C) :=  \left\lbrace g \in K_3 \;:\; g \equiv \bsmx 1 & * & * \\ * & 1 & * \\ * & * & 1 \esmx \mod3 \right\rbrace.
\end{equation}

The following corollary of Theorem \ref{thm:Ram-gen} gives positive Ramanujan results in the Eisenstein case.

\begin{cor} \label{cor:Ram-Eis}
Let $G=U_3(\Q[\sqrt{-3}],I)$.
Then $\mA_G(K')$ is Ramanujan if either:
\begin{enumerate}
\item $K'_3$ contains an Iwahori subgroup, in particular if $K'_3 = K_3$, or
\item $K'_3 = K_3(C)$ and for any prime $3 \ne p \in \mathrm{Ram}(K')$, $K'_p$ contains an Iwahori subgroup.
\end{enumerate}
\end{cor}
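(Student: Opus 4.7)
The plan is to treat the two cases separately: part (1) is a direct specialization of Theorem \ref{thm:Ram-gen}(1), while part (2) requires a supplementary argument tailored to the ramified prime $p = 3$.

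For part (1), since $E = \Q[\sqrt{-3}]$ has discriminant $-3$, the unique rational prime ramifying in $E$ is $p = 3$. If $K_3'$ contains an Iwahori subgroup, the hypothesis of Theorem \ref{thm:Ram-gen}(1) is met at this ramified prime, and $\mA_G(K')$ is Ramanujan. In the special case $K_3' = K_3$, the full hyperspecial maximal compact trivially contains every Iwahori.

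For part (2), the natural strategy is to try to invoke Theorem \ref{thm:Ram-gen}(2) with the auxiliary subgroup $K''$ defined by $K''_v = K'_v$ for all $v \neq 3$ and $K''_3 = K_3$. Then $K' \leq K''$, and part (1) applied to $K''$ (using that $K_3$ contains an Iwahori at the ramified prime $3$) shows $\mA_G(K'')$ is Ramanujan. However $K'_3 = K_3(C)$ does \emph{not} contain the standard Iwahori of $G(\Q_3)$, since a generic Iwahori element may have diagonal entries congruent to $2 \pmod 3$, so the Iwahori-containment hypothesis of Theorem \ref{thm:Ram-gen}(2) fails at $p = 3$ and a supplementary argument is needed there.

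The supplement would run as follows. By Theorem \ref{thm:A-Ram}, every $\pi \in \mA_G(K')$ is A-Ramanujan, so if $\pi$ failed to be Ramanujan it would lie in a non-trivial global A-packet $\Pi(\theta)$ associated by Rogawski to a Hecke character $\theta$ of $E^\times$. At every prime $q \neq 3$ with $K'_q$ containing an Iwahori, the same mechanism underlying Theorem \ref{thm:Ram-gen}(1) forces the local component $\pi_q$ to be tempered, constraining the conductor of $\theta_q$. At the remaining prime $3$, the level analysis of Section \ref{automorphic:level}---specifically the evaluation of the relevant $p$-adic period integrals and depth-preservation of the theta correspondence, which together underlie Proposition \ref{cor:supercuspidal}---shows that the non-tempered local A-packet component at $3$ has no $K_3(C)$-invariant vectors unless the conductor of $\theta_3$ satisfies a specific constraint. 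Combined with the constraints obtained at the other primes and with Rogawski's multiplicity formula for A-packets (Theorem \ref{thm:Rogawski-A-packets}), this forces the global multiplicity of $\pi$ to vanish, a contradiction. The main obstacle is precisely this fine analysis at the ramified prime $3$ with the non-standard congruence subgroup $K_3(C)$, which is the essential content of Proposition \ref{cor:supercuspidal} and the associated depth computations.
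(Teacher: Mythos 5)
Part (1) of your proposal matches the paper exactly: Theorem \ref{thm:Ram-gen}(1) applied at the ramified prime $3$. You also correctly observe that the naive choice $K''_3=K_3$ fails in part (2) because $K_3(C)$ contains no Iwahori subgroup. The problem is with your supplementary argument, which has a genuine gap.

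If you flesh out your sketch, the situation at $3$ is the following: any non-Ramanujan $\pi\in\mA_G(K')$ lies in a packet $\Pi'(\rho)$ with $\rho$ unramified away from $3$ and $\infty$, $\pi_\infty$ trivial, and $\pi_p=\pi^n(\rho_p)$ at every finite $p\ne 3$ (note these local components are \emph{not} tempered, contrary to what you assert; the Iwahori condition only forces $\rho_p$ to be unramified). Proposition \ref{prop:phi-global-unram} then gives $\varepsilon(1/2,\phi)=1$, so Rogawski's multiplicity formula forces the automorphic member to have $\pi_3=\pi^s(\rho_3)$, and your argument needs precisely the vanishing $\pi^s(\rho_3)^{K_3(C)}=0$ for the two relevant $\rho_3$. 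That vanishing is Proposition \ref{cor:supercuspidal} — but in the paper that proposition is \emph{deduced from} Corollary \ref{cor:Ram-Eis}(2) by a global argument, so invoking it (or claiming it is "underlain" by the period-integral and depth computations) makes your proof circular. The local inputs you cite do not supply it: the period-integral computation (Proposition \ref{prop:level-K3C}) controls invariant vectors in $\pi^n(\rho_3)$, not in the supercuspidal, and the depth-preservation results (Proposition \ref{prop:depth-preservation}, Corollary \ref{cor:level-3-depth}) apply to principal congruence and Iwahori-congruence levels, not to $K_3(C)$; the missing local statement is exactly an instance of Conjecture \ref{conj:level-packet} at the ramified prime $3$, which the paper does not prove (it is assumed as a hypothesis in Theorem \ref{thm:Ram-Eis-kq}).

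The paper's proof of (2) avoids all of this by a different choice of auxiliary level in Theorem \ref{thm:Ram-gen}(2): take $K''_3=K_3(C)$ itself and $K''_v=K_v$ for all $v\ne 3$. Then $\mA_G(K'')$ is Ramanujan for a trivial reason: by Corollary \ref{cor:eisenstein-compact} one still has $G(\A)=G(\Q)\cdot K''$, so any $\pi\in\mA_G(K'')$ has a $K''$-invariant vector which must be constant, forcing $\pi=\mathbf{1}$, i.e.\ $\mA_G(K'')=\{\mathbf{1}\}$. Since $K'_p\ne K''_p$ only at primes $p\ne 3$, where your hypothesis gives Iwahori containment, Theorem \ref{thm:Ram-gen}(2) applies directly and no analysis of A-packets at $3$ is needed.
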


\begin{proof}
(1) follows from Theorem \ref{thm:Ram-gen}(1) and the fact that $3$ ramifies at $\Q[\sqrt{-3}]$.

(2) follows from Theorem \ref{thm:Ram-gen}(2) and the following fact:
If $K' = \prod_v K'_v$, $K_3' = K_3(C)$ and $K'_v = K_v$ for any $v\ne 3$, then $\mA_G(K') = \{\mathbf1 \}$, and in particular is Ramanujan.
Indeed by Corollary \ref{cor:adelic-simply-transitive}, $G(\A) = G(\Q) \cdot K'$. 
Therefore for any $\pi \in \mA_G(K')$ there is a $K'$-invariant vector $0 \ne  f \in \pi^{K'} \leq L^2(G(\Q)\backslash G(\A))^{K'}$. 
By the assumption on $K'$, $f$ is the constant function, hence $\mathbf1 \leq \pi$, and since $\pi$ is irreducible, we get that $\pi = \mathbf1$. 
\end{proof}

\begin{rem}
We  also prove a local result at $3$ showing certain A-packet supercuspidal representations have no $K_3(C)$-invariant vectors.
 For the complete statement see Proposition \ref{cor:supercuspidal}.
\end{rem}

The following theorem and the subsequent corollary of it provide counterexamples of non-Ramanujan levels in the Eisenstein case. 
For any prime $q$, denote $K_q(q) = \{g \in K_q \,:\, g \equiv I \mod{q}\}$ and see the paragraph after Proposition \ref{prop:level-Langlands} for the definition of the subgroups $\boldsymbol{I}_3(3) \leq K_3(3)$.

\begin{thm} \label{thm:NonRam-Eis}
Let $G=U_3(\Q[\sqrt{-3}],I)$. 
Then $\mA_G(K')$ is non-Ramanujan if either: 
\begin{enumerate}
\item $\mathrm{Ram}(K') = \{3\}$ and $K'_3 = \boldsymbol{I}_3(3)$, or
\item $\mathrm{Ram}(K')= \{3,q\}$, $q \geq 5$, $K'_3 = K_3(C)$ and $K'_q = K_q(q)$.
\end{enumerate}
\end{thm}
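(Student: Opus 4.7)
The plan is to construct an explicit non-tempered automorphic representation $\pi^n \in \mA_G(K')$ lying in a global A-packet; by Theorem \ref{thm:A-Ram} and Definition \ref{defn:Ram}, the existence of such $\pi^n$ (which is neither one-dimensional nor tempered at every unramified prime) forces $\mA_G(K')$ to be non-Ramanujan. Thus the entire problem reduces to producing one concrete Hecke character whose associated endoscopic lift satisfies both a nonvanishing multiplicity condition and the required level condition.

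First I would fix notation: let $\pi^n$ denote the non-tempered member of a global A-packet on $U_3$ attached via endoscopic transfer to a character $\theta$ of $U_2 \times U_1$, which in our setting is built from a Hecke character $\chi$ of $\A_E^{\times}/E^{\times}$ (or equivalently of $U(1)$). The strategy in both cases (1) and (2) is:
\begin{enumerate}
\item Produce a Hecke character $\chi$ (depending on the ramification set and congruence conditions imposed by $K'$) with carefully controlled local components.
\item Apply Rogawski's multiplicity formula (Theorem \ref{thm:Rogawski-A-packets}) to check that $\pi^n$ occurs with nonzero multiplicity in $L^2(G(\Q)\backslash G(\A))$. This reduces to a global $\varepsilon$-factor identity, which is computed place-by-place by the class field theory formulas recorded in Section \ref{automorphic:CFT}.
\item Verify $(\pi^n)^{K'} \neq 0$ place-by-place, i.e.\ $(\pi^n_v)^{K'_v} \neq 0$ for every $v$. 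By the analysis of levels in Section \ref{automorphic:level}, this is reduced to the non-vanishing of an explicit $p$-adic period integral whose input is the conductor of the local component of $\chi$.
\end{enumerate}

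For part (1), the ramified set is $\{3\}$ and $K'_3 = \boldsymbol{I}_3(3)$, which is comparatively deep but contains the principal congruence subgroup of the Iwahori; the corresponding character $\chi$ should be unramified away from $3$, and its local component at $3$ chosen so that the induced non-tempered representation has a non-zero vector fixed by $\boldsymbol{I}_3(3)$. The level computation in Section \ref{automorphic:level} gives an explicit formula relating the conductor of $\chi_3$ to the smallest Iwahori congruence level supporting $\pi^n_3$; I would select $\chi_3$ of minimal conductor for which the $\varepsilon$-factor sign condition in Rogawski's formula is satisfied, and then verify that this conductor is compatible with $\boldsymbol{I}_3(3)$. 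For part (2), the ramified set is $\{3,q\}$, the character $\chi$ is taken with prescribed ramification at $q$ as well, and the analogous verification at $q$ is carried out: $K'_q = K_q(q)$ is the principal congruence subgroup of level $q$, so it suffices to match the conductor of $\chi_q$ to ensure $K_q(q)$-invariance of $\pi^n_q$; at $3$ the $K_3(C)$-invariance is again verified via the non-vanishing of the relevant period integral, using that $\chi_3$ is unramified at $3$ (since the only new ramification in (2) is at $q$).

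The main obstacle is step (3) at the small prime $p=3$: the congruence subgroups $\boldsymbol{I}_3(3)$ and $K_3(C)$ are specific and not standard Iwahori or principal congruence subgroups at $3$, so the level of $\pi^n_3$ must be computed directly, and here we must use the explicit evaluation of the $p$-adic period integral expressing the dimension of $(\pi^n_3)^{K'_3}$ in terms of the ramification data of $\chi_3$, combined with the $\varepsilon$-factor computations from Section \ref{automorphic:CFT}. Once this arithmetic input is organized, putting the global picture together through Rogawski's multiplicity formula completes the construction of an automorphic $\pi^n \in \mA_G(K')$ that is non-tempered at unramified places, establishing that $\mA_G(K')$ fails to be Ramanujan in both cases.
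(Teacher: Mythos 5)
Your overall framework (build a Hecke character, check Rogawski's multiplicity formula via a global $\varepsilon$-factor computation, then verify $K'$-invariance place by place using the level/period-integral analysis) is the same as the paper's, and for case (2) your route essentially matches: the paper takes $\rho=(\chi\mu_1,\mu_1^{-1})$ with $\chi$ ramified exactly at $q$, gets $\varepsilon(1/2,\phi)=-1$ from Proposition \ref{prop:phi-global-p}, and then the everywhere-non-tempered member $\pi'^s(\rho_\infty)\otimes\bigotimes_p\pi^n(\rho_p)$ is automorphic, with $K_3(C)$-invariance from Proposition \ref{prop:level-K3C} and $K_q(q)$-invariance from Proposition \ref{prop:level-Langlands}.

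In case (1), however, your plan has a genuine gap. You propose to take the non-tempered member $\pi^n$ at every finite place, including $3$, with $\chi$ unramified away from $3$. But Rogawski's formula for the definite form reads $m(\pi)=\tfrac12\bigl(1+\varepsilon(1/2,\phi)(-1)^{1+n'(\pi)}\bigr)$, so the everywhere-non-tempered member ($n'(\pi)=0$) is automorphic only if $\varepsilon(1/2,\phi)=-1$. With $\mathrm{Ram}(K')=\{3\}$ you cannot ramify $\chi$ at any other finite prime (otherwise $\pi^n_\ell$ loses its $K_\ell$-fixed vector), the archimedean condition forcing $\pi'^s(\rho_\infty)$ trivial pins down $\rho_{1,\infty}$, and then Proposition \ref{prop:phi-global-unram} forces $\varepsilon(1/2,\phi)=+1$ (at least for conductor at most $2$ at $\sqrt{-3}$, which is the range the paper's $\varepsilon$-computations cover); your "choose $\chi_3$ so the sign condition holds" has no candidate. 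The paper's resolution is different from your plan: it accepts $\varepsilon=+1$ and places the \emph{supercuspidal} member $\pi^s(\rho_3)$ at $3$ (so $n'(\pi)=1$), and the whole point of requiring the deeper level $K'_3=\boldsymbol{I}_3(3)$ is that supercuspidals have no Iwahori-fixed vectors (Lemma \ref{lem:level-supercuspidal}); the needed non-vanishing $\pi^s(\rho_3)^{\boldsymbol{I}_3(3)}\neq 0$ is then supplied by the depth-preservation result through the theta correspondence (Proposition \ref{prop:depth-preservation}, Corollary \ref{cor:level-3-depth}), not by the intertwining-integral computation you cite, which only controls the Langlands quotient $\pi^n$. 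Without this switch to the supercuspidal at $3$ and the depth-preservation input, your construction in case (1) produces a representation of multiplicity zero and the argument does not close.
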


\begin{cor} \label{cor:NonRam-Eis}
Let $G=U_3(\Q[\sqrt{-3}],I)$ and $K'\leq G(\A)$ such that $K'_3 \subseteq K_3(C)$ and $K'_q \subseteq K_q(q)$ for some prime $q > 5$. 
Then $\mA_G(K')$ is non-Ramanujan.
\end{cor}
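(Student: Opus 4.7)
The plan is to derive Corollary \ref{cor:NonRam-Eis} as an immediate monotonicity consequence of Theorem \ref{thm:NonRam-Eis}(2), which already supplies the non-Ramanujan representation in a specific minimal case. The only work required is to sandwich the given $K'$ inside the level considered in that theorem.

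First, I would introduce the auxiliary compact subgroup $K'' = \prod_v K''_v \leq G(\A)$ defined by $K''_3 = K_3(C)$, $K''_q = K_q(q)$, and $K''_v = K_v$ for all other places $v$. Since $q > 5$, this $K''$ satisfies the hypotheses of Theorem \ref{thm:NonRam-Eis}(2), which produces an automorphic representation $\pi \in \mA_G(K'')$ violating the Ramanujan property.

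Next, I would verify place-by-place that $K' \subseteq K''$. At $v = \infty$ and at each finite place $v \notin \{3,q\}$ we have $K'_v \subseteq K_v = K''_v$ automatically. At $v = 3$ the standing hypothesis gives $K'_3 \subseteq K_3(C) = K''_3$, and similarly $K'_q \subseteq K_q(q) = K''_q$ at $v = q$. Taking the product yields the inclusion $K' \subseteq K''$.

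The inclusion $K' \subseteq K''$ implies that for every representation $\pi$ of $G(\A)$ one has $\pi^{K''} \subseteq \pi^{K'}$. In particular $\mA_G(K'') \subseteq \mA_G(K')$, so the non-Ramanujan $\pi$ furnished by Theorem \ref{thm:NonRam-Eis}(2) already lies in $\mA_G(K')$, proving the corollary. There is no substantive obstacle here; the content of the corollary is entirely packaged inside Theorem \ref{thm:NonRam-Eis}(2), and the corollary simply records the trivial but useful observation that shrinking the level can only enlarge the set of automorphic representations that appear.
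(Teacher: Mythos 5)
Your proof is correct and follows essentially the same route as the paper: the paper also deduces the corollary from Theorem \ref{thm:NonRam-Eis}(2) together with the monotonicity fact that shrinking the level enlarges $\mA_G(\cdot)$ (stated there in contrapositive form, that $K''\leq K'$ and $\mA_G(K'')$ Ramanujan forces $\mA_G(K')$ Ramanujan, since $\mA_G(K')\subset\mA_G(K'')$). Your direct version, exhibiting the non-Ramanujan $\pi$ with $\pi^{K''}\subseteq\pi^{K'}$, is the same argument phrased positively.
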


\begin{proof}
The corollary follows directly from Theorem \ref{thm:NonRam-Eis} combined with the following simple fact:
if $K'' \leq K' \leq K$ and $\mA_G(K'')$ is Ramanujan then $\mA_G(K')$ is Ramanujan (since $\mA_G(K') \subset \mA_G(K'')$). 
\end{proof}

The conjecture below is needed to state the next theorem, which gives another positive result towards Ramanujan in the Eisenstein case. 
For the notation used in the conjecture see Definition \ref{defn:A-packet}. 

\begin{conjecture} \label{conj:level-packet} 
For any character $\rho_p$ of $U_1(\Q_p)^2$ and finite index subgroup  $K_p'$ of $K_p$,
\[
\text{ if }\pi^n(\rho_p)^{K_p'}= 0 \text{ then } \pi_p^{K_p'}= 0 \text{ for all } \pi_p\in \Pi'(\rho_p).
\]
\end{conjecture}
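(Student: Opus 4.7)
The plan is to reduce the conjecture to a level/depth comparison between the two members of the A-packet $\Pi'(\rho_p)$, and to split the argument along the two families of compact subgroups for which the result is accessible. Recall from Rogawski's theory that $\Pi'(\rho_p) = \{\pi^n(\rho_p), \pi^s(\rho_p)\}$, where $\pi^n(\rho_p)$ is the Langlands quotient of a non-tempered principal-series induction built from $\rho_p$, and $\pi^s(\rho_p)$ is a supercuspidal obtained as a theta lift from a suitable form of $U_2 \times U_1$. Both representations are ultimately encoded by the same character $\rho_p$, so the thrust of the proof is to translate ``having no $K_p'$-invariants'' into a statement purely in terms of the conductor and type of $\rho_p$ on each side, and then to observe that these statements coincide.

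For the first case, principal congruence subgroups $K_p' = K_p(p^e)$ at an inert prime $p>10$, I would combine three ingredients. First, Rogawski's stable trace formula for $U_3$ writes the multiplicities of $\pi^n(\rho)$ and $\pi^s(\rho)$ in global automorphic spaces of level $K(p^e)$ as a weighted sum of endoscopic orbital integrals on $U_2\times U_1$. Second, Ferrari's generalization of the fundamental lemma \cite{Ferrari2007Theoremedelindice}, in the form used by Marshall \cite{Marshall2014Endoscopycohomologygrowth} and by \cite{Gerbelli-Gauthier2019Growthcohomologyarithmetic}, allows one to transfer characteristic functions of $K_p(p^e)$ across the endoscopic correspondence. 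Third, the level analysis of Section \ref{automorphic:level} expresses the level of $\pi^n(\rho_p)$ as an explicit function of the conductor of $\rho_p$. Embedding the local $\rho_p$ in a global character and running these ingredients together, vanishing of $\pi^n(\rho_p)^{K_p(p^e)}$ forces the conductor of $\rho_p$ to exceed $p^e$; the matching statement on the supercuspidal side then gives $\pi^s(\rho_p)^{K_p(p^e)} = 0$ as well.

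For the second case, Iwahori principal congruence subgroups, the trace formula input is not needed and I would work purely locally via the theta correspondence. Both $\pi^n(\rho_p)$ and $\pi^s(\rho_p)$ are realized as theta lifts from the same character $\rho_p$, differing only in the choice of ambient form of $U_2 \times U_1$. Depth preservation of theta correspondence, from \cite{pan2001splittings,pan2002depth} and the extensions developed in Section \ref{automorphic:level}, equates the depths of the two lifts. Combined with the explicit $p$-adic period formula from Section \ref{automorphic:level} expressing the dimension of $\pi^n(\rho_p)^{K_p'}$ as the nonvanishing of a period integral in $\rho_p$, the identical period on the supercuspidal side controls $\pi^s(\rho_p)^{K_p'}$, yielding the implication.

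The main obstacle is making the two translations ``$\pi^n(\rho_p)^{K_p'}=0 \Leftrightarrow$ explicit condition on $\rho_p$'' and ``$\pi^s(\rho_p)^{K_p'}=0 \Leftrightarrow$ the same explicit condition on $\rho_p$'' genuinely coincide rather than merely implying each other up to constants. This is delicate because depth alone does not control invariants under an arbitrary $K_p'$, and one must track the full Moy--Prasad datum through the endoscopic or theta transfer. The restriction $p>10$ enters precisely here: Pan's depth-preservation requires the residue characteristic to be large enough that the Weil representation splits tamely, and Ferrari's matching of orbital integrals at principal congruence level is explicit only outside a small finite set of bad primes. Extending the conjecture to general $K_p'$ (not of principal or Iwahori-principal congruence type) or to $p \in \{2,3,5,7\}$ would likely require a type-theoretic analysis of the supercuspidal $\pi^s(\rho_p)$ via Bushnell--Kutzko covers, which lies beyond the toolkit used here.
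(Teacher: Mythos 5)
The statement you were asked to prove is the full Conjecture \ref{conj:level-packet}: the implication for \emph{every} character $\rho_p$ and \emph{every} finite-index subgroup $K_p'\leq K_p$. Neither your proposal nor the paper proves this; the paper deliberately leaves it as a conjecture and establishes only two special cases in Proposition \ref{prop:conj} (principal congruence subgroups $K_p(p^m)$ at inert $p>10$, and Iwahori principal congruence subgroups $\boldsymbol{I}_p(p^e)$). Your proposal addresses exactly these two families and explicitly concedes that general $K_p'$ and small primes are out of reach, so as a proof of the stated conjecture it has a genuine, acknowledged gap: the reduction of ``no $K_p'$-invariants'' to a condition on $\rho_p$ that is literally the same on both sides of the packet is precisely what is missing for arbitrary $K_p'$, and no argument is offered for it.

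Within the two special cases your route is essentially the paper's, with two deviations worth flagging. For $K_p(p^m)$ at inert $p>10$, the paper does not globalize $\rho_p$: it applies the local endoscopic character identity (Rogawski, via the transfer pair $\1_{K_p(p^m)}\leftrightarrow p^{-4m}\1_{K^H_p(p^m)}$ supplied by \cite{Ferrari2007Theoremedelindice}) together with Proposition \ref{prop:level-Langlands}, and the decisive final step — which your sketch leaves implicit under ``the matching statement on the supercuspidal side'' — is positivity: both $\tr(\pi^n(\rho_p)(\1_{K_p(p^m)}))$ and $\tr(\pi^s(\rho_p)(\1_{K_p(p^m)}))$ are nonnegative multiples of dimensions of invariant spaces, so once the endoscopic side vanishes and the $\pi^n$-term vanishes, the $\pi^s$-term must vanish too. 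For $\boldsymbol{I}_p(p^e)$, the paper's argument is shorter than yours: vanishing of $\boldsymbol{I}_p(p^e)$-invariants is, by the definition of depth used in Section \ref{automorphic:level}, a depth statement, and Proposition \ref{prop:depth-preservation} (depth preservation across the packet via the Gelbart--Rogawski theta realization and Pan's results) finishes it; the $p$-adic period integral you invoke is the tool for producing invariant vectors in the Langlands quotient $\pi^n$ only, and there is no ``identical period on the supercuspidal side'' — that part of your argument would not go through as written, though it is also not needed.
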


While we only use the conjecture for $p$ ramified in $E$, we state it for all $p$. 
The conjecture holds trivially for split primes $p$ since $\Pi'(\rho_p)=\{\pi^n(\rho_p)\}$.
In Proposition \ref{prop:conj} we prove the conjecture holds for inert primes  and principal congruence subgroups as well as  Iwahori principal congruence subgroups.

\begin{thm} \label{thm:Ram-Eis-kq}
Let $G=U_3(\Q[\sqrt{-3}],I)$ and $K'\leq G(\A)$, where $\mathrm{Ram}(K')=\{3, q\}$,   $q\equiv 1 \pmod{12}$, $K_3' = K_3(C)$ and $K'_q=\{g\in K_q : \det(g)\mod q\in \langle \zeta \rangle \}$. 
Assume Conjecture \ref{conj:level-packet} holds for $G$ and $p=3$.
Then $\mA_G(K')$ is Ramanujan.  
\end{thm}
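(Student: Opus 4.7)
The plan is to combine the A-Ramanujan dichotomy of Theorem \ref{thm:A-Ram} with a careful local analysis at the two ramified primes $3$ and $q$, using Rogawski's multiplicity formula together with the computations of Sections \ref{automorphic:CFT} and \ref{automorphic:level}. The strategy is to show that every global A-packet either has no representative with a non-zero $K'$-fixed vector, or does not occur in $L^{2}(G(\Q)\backslash G(\A))$ at all.

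First, by Theorem \ref{thm:A-Ram}, any $\pi\in\mA_G(K')$ is one-dimensional, tempered at all unramified places, or lies in a global A-packet $\Pi_\psi(\rho)$ for some Hecke character data $\rho$. The first two cases already satisfy the Ramanujan property, so the problem reduces to ruling out non-trivial A-packet contributions. For each such candidate $\pi$, we must have $\pi_v^{K'_v}\neq 0$ at every $v$, in particular at $v=3$ and $v=q$.

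At $p=3$, which is ramified in $E=\Q[\sqrt{-3}]$, we have $K'_3=K_3(C)$ and the local component $\pi_3$ belongs to the local A-packet $\Pi'(\rho_3)=\{\pi^{n}(\rho_3),\pi^{s}(\rho_3)\}$. Invoking Conjecture \ref{conj:level-packet} at $p=3$, it suffices to show that $\pi^{n}(\rho_3)^{K_3(C)}=0$: the conjecture then forces $\pi^{s}(\rho_3)^{K_3(C)}=0$ as well, and the packet is eliminated locally. Using the level analysis of Section \ref{automorphic:level}, which matches the conductor of $\rho_3$ with the level of $\pi^{n}(\rho_3)$ via explicit $p$-adic period integrals, we obtain a short list of candidate $\rho_3$ of small conductor (those for which $\pi^{n}(\rho_3)$ admits $K_3(C)$-fixed vectors); these are the only $\rho$ that survive step one.

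At the prime $q$, since $q\equiv 1\pmod{3}$, $q$ splits in $E$, so $G_q\cong GL_3(\Q_q)$ and $\Pi'(\rho_q)=\{\pi^{n}(\rho_q)\}$ is a singleton. The constraint $\pi^{n}(\rho_q)^{K'_q}\neq 0$, together with the precise shape $K'_q=\{g\in K_q\mid\det g\bmod q\in\langle\zeta\rangle\}$ and the central-character compatibility of the A-packet, pins down the conductor of $\rho_q$ and, via the determinant condition, the restriction of its central character to the subgroup $\langle\zeta\rangle^\perp\subseteq\F_q^{\times}$. For the surviving $\rho$, apply Rogawski's multiplicity formula (Theorem \ref{thm:Rogawski-A-packets}), which expresses $m(\pi)$ as $2^{-n}\prod_v\langle s_v,\pi_v\rangle$, with the local signs $\langle s_v,\pi_v\rangle$ controlled by the local epsilon factors of $\rho$ computed in Section \ref{automorphic:CFT}. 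The constraints on $\rho_3$ from step two and on $\rho_q$ from step three, combined with the global functional equation, force the product of local signs to vanish, hence $m(\pi)=0$ for every surviving candidate, proving the theorem.

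The main obstacle will be the delicate epsilon-factor computation at $q$: one must show that the specific form of $K'_q$ (in which the determinant lands in the order-$6$ subgroup $\langle\zeta\rangle$) together with the $q\equiv 1\pmod{12}$ congruence, forces the local sign at $q$ to align against the local sign at $3$ prescribed by the $K_3(C)$ condition. The congruence $q\equiv 1\pmod{4}$ is used here to control certain Gauss-sum phases appearing in the local epsilon factors, while $q\equiv 1\pmod{3}$ ensures both the splitting in $E$ and the presence of primitive sixth roots of unity in $\F_q^{\times}$ needed for the cancellation; without either congruence the argument fails and non-Ramanujan representations can persist, as is indeed the case in Theorem \ref{thm:NonRam-Eis}.
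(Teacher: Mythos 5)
Your reduction to A-packets via Theorem \ref{thm:A-Ram}, the use of Conjecture \ref{conj:level-packet} to transfer vanishing from $\pi^n(\rho_3)$ to $\pi^s(\rho_3)$, and the local constraints at the split prime $q$ are all in line with the paper's argument. But there is a genuine gap at the final step. You claim that for the surviving $\rho$ (those with $\pi^n(\rho_3)^{K_3(C)}\neq 0$ and $\rho_q$ compatible with $K'_q$), Rogawski's multiplicity formula together with the epsilon-factor computations forces $m(\pi)=0$. This is false: the formula is $m(\pi)=\tfrac12\bigl(1+\varepsilon(1/2,\phi)(-1)^{1+n'(\pi)}\bigr)$, and the paper's computation (Propositions \ref{prop:phi-global-unram} and \ref{prop:phi-global-p}, after pinning down $b_q=0$, $b_\infty\in\{-1,2\}$, $b_3\in\{1,4\}$) shows that the surviving configurations $(b_3,b_q,b_\infty)\in\{(1,0,-1),(4,0,2)\}$ have global root number $+1$, so the packet member with the \emph{supercuspidal} $\pi^s(\rho_3)$ at $3$ is automorphic with $m(\pi)=1$. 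Conjecture \ref{conj:level-packet} gives you nothing here, since it only passes vanishing of invariants from $\pi^n$ to $\pi^s$, and in these cases $\pi^n(\rho_3)^{K_3(C)}\neq 0$. So the multiplicity formula cannot eliminate these candidates, and your plan has no mechanism to do so.

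What the paper does instead is prove a separate local input, Proposition \ref{cor:supercuspidal} (itself established by a global argument using Corollary \ref{cor:Ram-Eis}(2)): the two specific supercuspidals $\pi^s(\rho_3)$ arising in the surviving cases have no nonzero $K_3(C)$-invariant vectors unless $\rho'_3(\zeta)$ avoids a specific value. The contradiction is then extracted from the second character $\rho'$, which your proposal never uses: invariance at $q$ forces $\rho'_q(\zeta)=1$ (this is where $q\equiv 1\pmod{12}$ enters — it guarantees $\xi\in\F_q^\times$ with $\xi^2=\zeta$, so that $\operatorname{diag}(\xi,\xi^{-2},\xi)$ and $\operatorname{diag}(1,\zeta,1)$ lie in $K'_q$ and probe $\rho_{1,q}(\zeta)$ and $\rho'_q(\zeta)$; it is not about Gauss-sum phases), automorphy forces $\rho'_\infty(\zeta)\rho'_3(\zeta)\rho'_q(\zeta)=1$, and Proposition \ref{cor:supercuspidal} rules out the only value of $\rho'_3(\zeta)$ consistent with these constraints. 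Without this supercuspidal non-invariance result and the bookkeeping of $\rho'$, the automorphic representation with $\pi^s(\rho_3)$ at $3$ and $\pi^n(\rho_v)$ elsewhere survives all of your steps and would lie in $\mA_G(K')$, defeating the argument.
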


Our final main result is a proof of the Sarnak-Xue Density Hypothesis (SXDH) for definite unitary $3\times 3$ matrix groups $G=U_3(E,\Phi)$.
For the classical SXDH see \cite{sarnak1991bounds} and for its variant for algebraic groups defined over number fields which are compact at the infinite places see \cite{evra2023cohomological}.

Let $S$ be a finite set of places of $\Q$ which contains $\{\infty, 2, 3, 5, 7\}$ and the primes in which $G$ ramifies. 
Fix $K'_\ell \leq K_\ell$ a finite index subgroup for any $\ell \in S$, where $K'_\infty = K_\infty$.
For any integer $N  = \prod_i p_i^{e_i}$, such that $p_i \not\in S$, denote $K_{p_i}(p_i^{e_i}) = \{g\in K_{p_i} \,:\, g \equiv I \mod{p_i^{e_i}} \}$, and define $K'(N) = \prod_{v \in S} K'_v \prod_{p_i \ne v \not\in S} K_v \prod_i K_{p_i}(p_i^{e_i}) \leq G(\A)$.
Let $\mA_{G,\b1}$ be the subset of $\pi \in \mA_G$ with trivial central character and let $\mA^R_{G,\b1}$ (resp.\ $\mA^A_{G,\b1}$, resp.\ $\mA^F_{G,\b1}$) be the subset of $\pi \in \mA_{G,\b1}$ which are Ramanujan (resp.\ A-type, resp.\ one-dimensional).
Denote $V(N) := \bigoplus_{\pi \in \mA_{G,\b1}} \pi^{K'(N)}$ and $V_X(N) := \bigoplus_{\pi \in \mA^X_{G,\b1}} \pi^{K'(N)}$ for $X= R, A$ or $F$.
By Theorem \ref{thm:A-Ram}, $V(N) = V_R(N) \oplus V_A(N) \oplus V_F(N)$. 

\begin{conjecture}[SXDH]
For any $\e >0$ there exists $C_\e > 0$, such that for any $N$ coprime to $S$,
\[
\dim V_A(N) \leq C_\e \cdot \dim V(N)^{\frac12 + \e}.
\]
\end{conjecture}

Adjusting Marshall's endoscopic arguments \cite{Marshall2014Endoscopycohomologygrowth} from the cohomological to the definite settings, we get the following result which is a stronger version of the (SXDH).

\begin{thm} \label{thm:SXDH}
For any $\e >0$ there exists $C_\e > 0$, such that for any $N$ coprime to $S$,
\[
\dim V_A(N)  \leq C_\e \cdot \dim V(N)^{\frac3{8} + \e}.
\]
\end{thm}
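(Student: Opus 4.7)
\textbf{Proof plan for Theorem \ref{thm:SXDH}.} The plan is to adapt the endoscopic argument of Marshall \cite{Marshall2014Endoscopycohomologygrowth}, which bounds the first Betti number of arithmetic $U(n{-}1,1)$-quotients, to the definite setting. The crucial simplification is that, in the definite case, one works directly with $L^2$-multiplicities rather than passing through the Matsushima formula; the cohomological test functions at infinity used by Marshall are replaced by a smooth idempotent on $G_\infty$ projecting onto the trivial central character. By Theorem \ref{thm:A-Ram}, the set $\mA^A_{G,\mathbf{1}}$ exhausts the non-Ramanujan part of $\mA_{G,\mathbf{1}}$. By Rogawski's classification (Section \ref{automorphic:rogawski}), every $\pi\in\mA^A_{G,\mathbf{1}}$ belongs to a global A-packet $\Pi'(\rho)$ attached to a character $\rho$ of $U_1(\Q)\backslash U_1(\A_E) \times U_1(\Q)\backslash U_1(\A_E)$, and the relevant elliptic endoscopic group is $H = U_2 \times U_1$.

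I would first set up the test function $f = f_\infty \otimes f^\infty$ with $f^\infty = \mathrm{vol}(K'(N))^{-1}\,\mathbf{1}_{K'(N)}$ and $f_\infty$ the idempotent isolating trivial central character. Since $G_\infty$ is compact, $\mathrm{tr}\,\pi(f) = \dim \pi^{K'(N)}$ for all $\pi$ with trivial central character, so
\begin{equation*}
\dim V_A(N) = \sum_{\pi \in \mA^A_{G,\mathbf{1}}} m(\pi)\,\mathrm{tr}\,\pi(f).
\end{equation*}
Next I would invoke Rogawski's stable trace formula comparison between $G$ and $H$, together with the endoscopic character identities of Theorem \ref{thm:Rogawski-A-packets}, to rewrite the right-hand side as a sum of traces on $H$ of the Langlands--Shelstad transfer $f^H$:
\begin{equation*}
\dim V_A(N) \;\leq\; C \sum_{\rho}\bigl|\mathrm{tr}\,\rho(f^H)\bigr|,
\end{equation*}
where the sum runs over characters $\rho$ of $U_1^2(\A_E)$ with the appropriate central character condition. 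Existence and smoothness of the transfer rest on the fundamental lemma \cite{Ngo2010Lelemmefondamental} and Ferrari's extension \cite{Ferrari2007Theoremedelindice}.

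The third step is to exploit the Ramanujan conjecture on $H$: since $U_1$ is a torus and $U_2$ is Ramanujan by the Jacquet--Langlands transfer to $GL_2$ and Deligne's theorem, the spectral side on $H$ is controlled by the Weyl law applied to the conductor of $f^H$. An explicit computation of the conductor of $f^H$ in terms of the level of $K'(N)$, following the lines of Sections \ref{automorphic:CFT}--\ref{automorphic:level} (in particular the level-of-character calculations that enter the proofs of Theorems \ref{thm:NonRam-Eis} and \ref{thm:Ram-Eis-kq}), together with the polynomially many characters of a given level, should yield
\begin{equation*}
\dim V_A(N) \;\ll_\varepsilon\; N^{3+\varepsilon}.
\end{equation*}
Comparing with $\dim V(N) \gg N^{8}$, which follows from the Weyl law for the compact adelic quotient $G(\Q)\backslash G(\A)/K'(N)$ with trivial central character, gives $\dim V_A(N) \ll_\varepsilon \dim V(N)^{3/8+\varepsilon}$ as required.

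The hard part will be the explicit control of the transfer $f \mapsto f^H$ at the ramified places, especially when $K'(N)_p$ is a deep principal congruence subgroup. In Marshall's indefinite setting the corresponding analysis is localized at the archimedean place, where the Vogan--Zuckerman classification of cohomological $(\mathfrak{g},K)$-modules yields the required bound; in our definite setting this mechanism is unavailable, and the analogous bound must instead be proven at the finite ramified primes. The key input is a depth-preservation statement for endoscopic transfer relative to principal congruence subgroups, refining the orbital-integral computations of \cite{Ferrari2007Theoremedelindice} and paralleling the depth analysis already developed in Section \ref{automorphic:level} for the level calculations of $\pi^n(\rho)$. Once this technical ingredient is in place, combining it with the trace identities assembled above completes the proof.
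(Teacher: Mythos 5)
Your plan follows essentially the same route as the paper: adapt Marshall's endoscopic argument to the definite setting, bound $\dim V_A(N)$ via the local endoscopic character identities for A-packets applied to the congruence test function, and then count the characters of $H=U_2\times U_1$ that index the relevant packets. Three points where your write-up diverges from what is actually needed. First, no stable trace formula, no Weyl law on $H$, and no Ramanujan input for $U_2$ enter: since the A-packets in question are attached to \emph{one-dimensional} automorphic representations $\rho$ of $H$, the paper works purely on the spectral side, writing $\dim V_A(N)\asymp N^{8}\sum_{\rho}\sum_{\pi\in\Pi(\rho)}\prod_v \tr\,\pi_v(\1_{K_v(N)})$ and applying the character relation place by place; invoking a full spectral bound on $H$ would in fact be too lossy to reach the exponent. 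Second, you mislocate the hard part: the explicit transfer pair $\1_{K_\ell(\ell^m)}\leftrightarrow \ell^{-4m}\1_{K^H_\ell(\ell^m)}$ is needed at the \emph{unramified} primes $\ell\mid N$, where it is exactly Ferrari's theorem (quoted, not refined, and this is precisely why $S$ is required to contain the small primes), whereas at the ramified primes the level $K'_v$ is fixed independently of $N$ and contributes only $O(1)$; so no new depth-preservation statement for the transfer at ramified places is required. Third, to get $3/8$ rather than the SXDH exponent $1/2$, ``polynomially many characters'' must be made precise: the paper's chain is $\dim V_A(N)\lesssim N^{6}\sum_\rho \tr\,\rho(\1_{K^H(N)})\asymp N^{2}\sum_\rho\dim\rho^{K^H(N)}$, and the count of Hecke characters of $H$ with the prescribed central character and level dividing $N$ is $\asymp N$ (not $N^2$), giving $N^{3}\asymp \dim V(N)^{3/8}$. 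With these adjustments your outline coincides with the paper's proof.
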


\subsection{Automorphic representations of unitary groups in three variables} \label{automorphic:rogawski}

The purpose of this subsection is to characterize the automorphic representations of unitary groups in three variables, which are non-Ramanujan. We outline a proof and consequences of
Theorem \ref{thm:A-Ram} which states that the non-Ramanujan infinite-dimensional automorphic representations of definite unitary groups are precisely the ones appearing in Rogawski's A-packets \cite{Rogawski1990Automorphicrepresentationsunitary}. 

Recall that $G=U_3(E,\Phi)$,  $G^*=U_3(E,J)$ the quasi-split inner form and that $G_p \cong G^*_p$, for any prime $p$, while $G_\infty \not \cong G^*_\infty$.
Let $P^* \leq G^*$ be the Borel subgroup of upper triangular matrices, $M^* \leq P^*$ the maximal torus of diagonal matrices, and denote $P^*_v = P^*(\Q_v)$ and $M^*_v = M^*(\Q_v)$ for any place $v$.

Let $|\cdot|=|\cdot|_E : E^\times \backslash \A_E^\times \rightarrow \C^\times$ be the adelic norm considered as an automorphic character of $GL_1/E$.
Let $\omega=\omega_{E/\Q}:\Q^\times \backslash \A^\times \rightarrow \C^\times$ be the automorphic character of $GL_1/\Q$, associated by class field theory to the field extension $E/\Q$. 
Fix $\mu : E^\times \backslash \A_E^\times \rightarrow \C^\times$ an automorphic character of $GL_1/E$, which extends $\omega$, i.e. $\mu |_{\A^\times} = \omega$.
See Definition \ref{defn:mu} below for an explicit construction of such a $\mu$.

\begin{defn}\label{defn:A-packet}
For any $\rho=(\rho_1,\rho')$, where $\rho_1, \rho' : U_1(\Q) \backslash U_1(\A) \rightarrow \C^\times$ are automorphic characters of  $U_1(E)/\Q$, define:
\begin{enumerate} 

\item Let $\phi = \phi(\rho)$ be the automorphic character of $GL_1/E$  associated to $\rho$ (and $\mu$) defined by 
\[
\phi \,:\, E^\times \backslash \A_E^\times \rightarrow \C^\times, \qquad \phi(\alpha) = \mu(\alpha)\rho_1(\alpha/\bar{\alpha}).
\]
Let $\varepsilon(s,\phi)$  be the epsilon factor of $\phi$, and denote the root number of $\phi$ by $\varepsilon(1/2,\phi) = \pm 1$.

\item Let $\eta=\eta(\rho)$ be the automorphic character of $M^*/\Q$ associated to $\rho$ (and $\mu$) defined by 
\[
\eta \,:\, M^*(\Q) \backslash M^*(\A) \rightarrow \C^\times, \qquad \eta \bmx \alpha & 0 & 0\\ 0 & \beta & 0\\0 & 0 & \bar{\alpha}^{-1} \emx 
= |\alpha|^{1/2}\phi(\alpha)\rho'((\alpha/\bar{\alpha})\beta).
\]
For any place $v$ of $\Q$, let $\delta_v$ be the modular character of $P^*_v$, and $\mbox{ind}(\eta_v) = \mbox{Ind}(\delta_v^{1/2}\eta_p)$ be the unitary parabolic induction of $\eta_v$ from $P^*_v$ to $G^*_v$, defined by
\begin{align} \label{eq:ind}
&\mbox{ind}_{P^*_v}^{G^*_v}(\eta_v) = \mbox{ind}(\eta_v)  
\\ \nonumber =& \left\lbrace f\,:\, G^*_v \rightarrow \C \;:\; 
\begin{array}{c}
(i) \quad \exists K \leq_{f.i.} K^*_v,\; \forall k\in K,\; \forall g\in G^*_v, \qquad f(gk) = f(g) \\ 
(ii) \quad \forall p \in P^*_v,\; \forall g\in G^*_v, \qquad f(pg) = \delta_v(p)^{1/2} \eta_v(p) f(g)
\end{array} 
\right\rbrace.
\end{align}

\item For a non-split prime $v=p$ (resp.\ $v=\infty$), there is a certain supercuspidal (resp.\ discrete series) representation of $G^*_v$, denoted by $\pi^s(\rho_v)$, which is characterized via endoscopic character relations and depends only on $\rho_v$. For all finite primes $v$, there is a non-tempered representation $\pi^n(\rho_v)$ that is the unique irreducible quotient, or Langlands quotient, of $\mbox{ind}(\eta_v)$. 
Define the local A-packet of $G^*_v$ associated to $\rho_v$, to be 
\[
\Pi(\rho_v)=\begin{cases} \{\pi^n(\rho_v)\} & v \mbox{ split}\\ \{\pi^n(\rho_v),\pi^s(\rho_v)\} & \mbox{otherwise.} \end{cases}
\]
Define the global A-packet of $G^*$ associated to $\rho$, to be
\[
\Pi(\rho) = \{ \pi = \otimes_v \pi_v \in \Pi(\rho_v) \mid \mbox{for almost all places } v, \pi_v = \pi^n(\rho_v) \}. 
\]
\item Recall $G_p \cong G^*_p$ for any finite place $v = p$, and $G_\infty = U(3)$ is compact for the infinite place $v=\infty$.
For $v=\infty$, there is a finite-dimensional irreducible representation of $G_\infty$, denoted by $\pi'^s(\rho_v)$, which depends only on $\rho_v$.
For any place $v$, define the local A-packet of $G_v$ associated to $\rho$, to be
\[
\Pi'(\rho_v)=\begin{cases} \Pi(\rho_v) & v =p \ne \infty \\ \{\pi'^s(\rho_v)\} & v=\infty .\end{cases}
\]
Define the global A-packet of $G/\Q$ associated to $\rho$, to be
\[
\Pi '(\rho)=\left\{ \pi=\otimes_v \pi_v \in \otimes_v \Pi'(\rho_v) \mid \mbox{for almost all places } v, \pi_v = \pi^n(\rho_v) \right\} 
\]
An automorphic representation $\pi$ of $G$ is said to belong to some A-packet, and more explicitly, that $\pi$ belong to the A-packet of $\rho$, if $\pi \in \Pi'(\rho)$, i.e. $\pi_v \in \Pi'(\rho_v)$  for any place $v$.
\end{enumerate}
\end{defn}

We now state some useful facts about A-packets which were proven by Rogawski in \cite{Rogawski1990Automorphicrepresentationsunitary} and \cite{Rogawski1992Analyticexpressionnumber}. Similar results were also proven by Flicker (see \cite{flicker:2006}).

\begin{thm} \label{thm:Rogawski-A-packets} \cite{Rogawski1992Analyticexpressionnumber}
Let $\rho$, $\phi = \phi(\rho)$ and $\varepsilon(1/2,\phi)$ be as in Definition \ref{defn:A-packet}.
For $\pi \in \Pi(\rho)$, let $n(\pi) = \#\{v\mid\pi_v = \pi^s(\rho_v)\}$, and $m(\pi) = \dim \mathrm{Hom}\left(\pi , L^2_{disc}(G^*(\Q)\backslash G^*(\A))  \right)$, i.e. $m(\pi) \neq 0$ if and only if $\pi$ is a discrete automorphic representation of $G^*$.
Then 
\[
m(\pi)=\frac12 \left(1+\varepsilon(1/2,\phi)(-1)^{n(\pi)}\right).
\]
For $\pi \in \Pi'(\rho)$, let $n'(\pi) = \#\{v \ne \infty \mid \pi_v = \pi^s(\rho_v)\}$, and $m(\pi) = \dim \mathrm{Hom}\left(\pi , L^2(G(\Q)\backslash G(\A))  \right)$, i.e. $m(\pi) \neq 0$ if and only if $\pi$ is an automorphic representation of $G$.
Then 
\[
m(\pi)=\frac12\left(1+\varepsilon(1/2,\phi)(-1)^{1+n'(\pi)}\right).
\]
\end{thm}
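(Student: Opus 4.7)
The plan is to follow Rogawski's stable trace formula strategy, which reduces the multiplicity computation to an Arthur-type formula governed by the component group $\mS \cong \Z/2\Z$ attached to the A-parameter determined by $\rho$. The first step is to set up the endoscopic picture: the pair $\rho = (\rho_1, \rho')$ corresponds via twisted endoscopy to an automorphic representation $\sigma = \sigma(\rho)$ on the elliptic endoscopic group $H = U_2 \times U_1$, and the A-packets $\Pi(\rho)$ and $\Pi'(\rho)$ are \emph{defined} so that their members transfer to $\sigma$ under stabilization. One then installs local endoscopic character identities at each place: at non-split finite $v$ one has a sign pairing $\langle s, \pi^n(\rho_v)\rangle = +1$ and $\langle s, \pi^s(\rho_v)\rangle = -1$, where $s$ is the non-trivial element of the local $\mS$; at the archimedean place, the discrete series $\pi^s(\rho_\infty)$ on $G^\ast_\infty$ and the finite-dimensional $\pi'^s(\rho_\infty)$ on the compact $G_\infty$ are related by an inner twist, introducing an extra sign which accounts for the difference between the two formulas in the theorem.

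Next I would invoke Arthur's multiplicity formula for this A-parameter, which Rogawski establishes directly from a comparison of stable trace formulas for $G^\ast$ and $H$ (made possible by the fundamental lemma, smooth transfer, and the twisted trace formula for $GL_3(\A_E)$). The formula reads
\[
m(\pi) \;=\; \frac{1}{|\mS|}\sum_{\chi \in \widehat{\mS}} \chi(s_\pi)\,\varepsilon_\rho(\chi),
\]
where $s_\pi = \prod_v \langle s, \pi_v\rangle$ is the global sign of $\pi$ and $\varepsilon_\rho$ is the Arthur character of $\mS$ attached to the A-parameter. Since $|\mS|=2$, this collapses to $m(\pi) = \tfrac{1}{2}(1 + \varepsilon_\rho(s)\,s_\pi)$, so the problem becomes: (i) compute $s_\pi$ in terms of $n(\pi)$ (resp.\ $n'(\pi)$), and (ii) identify $\varepsilon_\rho(s)$.

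For (i), a direct count gives $s_\pi = (-1)^{n(\pi)}$ in the quasi-split case; in the definite case the archimedean component $\pi'^s(\rho_\infty)$ contributes an additional $-1$ relative to $\pi^s(\rho_\infty)$ because of the inner twist, producing $s_\pi = (-1)^{1+n'(\pi)}$. Part (ii) is the crux: one must show
\[
\varepsilon_\rho(s) \;=\; \varepsilon(1/2, \phi),
\]
where $\phi = \phi(\rho) = \mu \cdot (\rho_1 \circ (\alpha \mapsto \alpha/\bar\alpha))$ is the Hecke character on $\A_E^\times$ encoding the A-parameter of $U_3$. The plan for this step is to use the Langlands--Shahidi machinery (or equivalently Jacquet--Piatetskii-Shapiro--Shalika's Rankin--Selberg analysis) to write $\varepsilon_\rho(s)$ as a product of local root numbers of $\phi$ at all places, and invoke the global functional equation to collapse the product to $\varepsilon(1/2,\phi)$. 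Rogawski carries this out in \cite{Rogawski1992Analyticexpressionnumber} by an explicit evaluation of the endoscopic character identities on a special class of test functions tailored to the $U(2)$-factor of $H$.

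The main obstacle is this last identification \textbf{(ii)}: fitting the abstract Arthur sign character $\varepsilon_\rho$, which is only defined implicitly through the stabilization, to an explicit global $\varepsilon$-factor. Once this is pinned down, Steps (i) and the assembly of the final formulas are bookkeeping, and the two cases of the theorem follow by reading off $s_\pi$ on each inner form.
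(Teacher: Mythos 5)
You should first be aware that the paper does not prove this statement at all: Theorem \ref{thm:Rogawski-A-packets} is quoted verbatim as a result of Rogawski, with the citation \cite{Rogawski1992Analyticexpressionnumber} attached (and the surrounding text says explicitly that these facts "were proven by Rogawski"). So there is no internal argument in the paper to compare your proposal against; the authors use the formula as a black box, e.g.\ in the proofs of Theorem \ref{thm:NonRam-Eis}, Proposition \ref{cor:supercuspidal} and Theorem \ref{thm:Ram-Eis-kq}.

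Judged as a reconstruction of Rogawski's proof, your proposal gets the architecture right — transfer to the elliptic endoscopic group $H=U_2\times U_1$, local sign pairings $\langle s,\pi^n(\rho_v)\rangle=+1$, $\langle s,\pi^s(\rho_v)\rangle=-1$ for the two-element component group, an Arthur-type average over $\widehat{\mS}$ with $|\mS|=2$, and an extra archimedean sign on the definite inner form accounting for the shift from $(-1)^{n(\pi)}$ to $(-1)^{1+n'(\pi)}$. But as a proof it is incomplete precisely where the mathematical content lies, and you acknowledge this yourself: (a) the local endoscopic character identities that define the signs, (b) the identification $\varepsilon_\rho(s)=\varepsilon(1/2,\phi)$ of the abstract sign character with the global root number, and (c) the normalization of the archimedean pairing on the compact form $G_\infty$ (where the packet has the single member $\pi'^s(\rho_\infty)$ and carries sign $-1$) are all asserted or deferred to \cite{Rogawski1992Analyticexpressionnumber} and \cite{Rogawski1990Automorphicrepresentationsunitary}. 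In particular step (ii), which you correctly flag as the crux, is exactly the theorem being cited, so invoking it leaves the argument circular as written. In short: your text is an accurate roadmap of the cited proof, not a proof, and within this paper the appropriate treatment is exactly what the authors do — cite Rogawski.
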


\begin{prop} \label{prop:archtrivial} \cite[\S~14]{Rogawski1990Automorphicrepresentationsunitary}
Let $\rho=(\rho_1,\rho')$ be as in Definition \ref{defn:A-packet} and assume  $\mu_\infty(z) = (\frac{z}{|z|})^{-1}$. 
Then $\pi'^s(\rho_\infty)$ is the trivial representation of $G_\infty \cong U(3)$ if and only if either:
\begin{enumerate}
\item $\rho_{1,\infty}(\frac{\alpha}{\bar \alpha})=(\frac{\alpha}{\bar \alpha})^2$ and $\rho'_\infty(\beta)= \beta^{-1}$, or
\item $\rho_{1, \infty}(\frac{\alpha}{\bar \alpha})=(\frac{\alpha}{\bar \alpha})^{-1}$ and $\rho'_\infty(\beta)=\beta$.
\end{enumerate}
where $|z|=\sqrt{z\bar z}$ and for $s\in \frac12\Z$, $(\frac{z}{\bar z})^s:=\left(\frac{z}{|z|}\right)^{2s}$.
\end{prop}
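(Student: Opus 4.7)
The plan is to identify the archimedean representation $\pi'^s(\rho_\infty)$ via its infinitesimal character and match against the infinitesimal character of the trivial representation of $U(3)$. By Rogawski's theory of archimedean A-packets (\cite[\S 14]{Rogawski1990Automorphicrepresentationsunitary}), $\pi'^s(\rho_\infty)$ and the non-tempered Langlands quotient $\pi^n(\rho_\infty)$ on the quasi-split form $G^*_\infty = U(2,1)$ share an A-parameter and therefore an infinitesimal character. Since every irreducible (finite-dimensional) representation of the compact group $G_\infty = U(3)$ is determined up to isomorphism by its infinitesimal character, it suffices to characterize those $\rho_\infty$ for which the infinitesimal character of $\mathrm{ind}(\eta_\infty)$ coincides with that of the trivial representation, namely $\rho_{U(3)} = (1,0,-1)$ (the half-sum of positive roots of $U(3)$).

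I would parameterize any character of $U_1(\R) = U(1)$ by $\rho_{1,\infty}(u) = u^a$ and $\rho'_\infty(\beta) = \beta^b$ for integers $a, b \in \Z$. Using $\mu_\infty(z) = (z/|z|)^{-1} = z^{-1/2}\bar z^{1/2}$ (with $|z|=\sqrt{z\bar z}$) and the adelic module $|\alpha|^{1/2}_\infty = (\alpha\bar\alpha)^{1/2}$, direct substitution into the definitions of $\phi$ and $\eta$ yields
\[
\eta_\infty\bigl(\mathrm{diag}(\alpha,\beta,\bar\alpha^{-1})\bigr) = |\alpha|^{1/2}\,\phi_\infty(\alpha)\,\rho'_\infty((\alpha/\bar\alpha)\beta) = \alpha^{a+b}\,\bar\alpha^{\,1-a-b}\,\beta^{b}.
\]
Identifying the complexified diagonal torus of $U(2,1)$ with that of $GL_3(\C)$ via $(t_1,t_2,t_3) \leftrightarrow (\alpha, \beta, \bar\alpha^{-1})$, this character corresponds to the weight $(a+b,\,b,\,a+b-1) \in X^*(T_{GL_3})$. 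Because $\mathrm{ind}(\eta_\infty)$ is the unitary normalized parabolic induction, this triple is precisely the infinitesimal character of $\mathrm{ind}(\eta_\infty)$, and hence of its Langlands quotient $\pi^n(\rho_\infty)$, taken modulo the Weyl group $W = S_3$.

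Setting $\{a+b,\, b,\, a+b-1\} = \{1,0,-1\}$ as multisets, the two consecutive integers $a+b-1, a+b$ must both lie in $\{-1,0,1\}$, so they are either $\{0,1\}$ or $\{-1,0\}$. This leaves exactly two integer solutions: $(a+b, b) = (1,-1)$, giving $(a,b) = (2,-1)$, and $(a+b, b) = (0,1)$, giving $(a,b) = (-1,1)$. These recover precisely cases (1) and (2) of the statement, and conversely each of them yields a triple $W$-equivalent to $(1,0,-1)$, so $\pi'^s(\rho_\infty)$ is the trivial representation in each case.

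The main step requiring care is tracking the conventions: distinguishing the Euclidean absolute value $|z| = \sqrt{z\bar z}$ appearing in $\mu_\infty$ from the archimedean module $|\alpha|_\infty = \alpha\bar\alpha$ appearing in the adelic norm factor $|\alpha|^{1/2}$, fixing the identification $\bar\alpha \leftrightarrow t_3^{-1}$ of the $U(2,1)$-torus inside the $GL_3(\C)$-torus, and verifying that the unitary-normalized convention for $\mathrm{ind}$ gives the infinitesimal character directly as the weight of $\eta_\infty$ (rather than a $\rho$-shift). Once these bookkeeping issues are settled, the computation is entirely elementary, and the exact integrality structure of the triple $(a+b,\,b,\,a+b-1)$ forces exactly the two solutions in the statement.
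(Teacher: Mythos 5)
Your argument is correct, but note that the paper gives no in-text proof of this proposition at all: it is stated with an attribution to Rogawski's \S 14, where the finite-dimensional member of the archimedean packet on the anisotropic form is described explicitly. Your route reconstructs the content by a different mechanism: you reduce everything to infinitesimal characters, using (i) the fact that all members of the A-packet attached to $\rho_\infty$, across inner forms, share the infinitesimal character determined by the parameter, so $\pi'^s(\rho_\infty)$ has the same infinitesimal character as the Langlands quotient $\pi^n(\rho_\infty)$ of $\mathrm{ind}(\eta_\infty)$ on $U(2,1)$, and (ii) the fact that irreducible representations of the connected compact group $U(3)$ are separated by their infinitesimal characters (distinct dominant weights give distinct regular dominant $\Lambda+\rho$). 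Point (i) is exactly the place where Rogawski's construction is genuinely invoked, so your proof is not independent of the citation, but it isolates cleanly what is needed from it; the rest is elementary and checks out. In particular your bookkeeping is right: with $\rho_{1,\infty}(u)=u^a$, $\rho'_\infty(\beta)=\beta^b$, $\mu_\infty(z)=(z/|z|)^{-1}$ and $|\alpha|_\infty=\alpha\bar\alpha$, one gets $\eta_\infty(\mathrm{diag}(\alpha,\beta,\bar\alpha^{-1}))=\alpha^{a+b}\bar\alpha^{1-a-b}\beta^{b}$, hence the weight $(a+b,\,b,\,a+b-1)$ on the complexified torus; the normalized-induction convention indeed yields the infinitesimal character as the Weyl orbit of this differential with no further $\rho$-shift (the sanity check $\chi=\delta^{\pm 1/2}$ giving the orbit of $\rho$ confirms the convention); and the multiset equation $\{a+b,\,b,\,a+b-1\}=\{1,0,-1\}$ forces exactly $(a,b)=(2,-1)$ or $(-1,1)$, which are precisely cases (1) and (2) of the statement. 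What the paper's citation buys is an explicit highest-weight recipe valid without appeal to the packet formalism; what your approach buys is a short, checkable derivation of the criterion in which the only black box is the infinitesimal-character compatibility of packet members.
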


We now prove Theorem \ref{thm:A-Ram} which characterizes automorphic representations of $G = U_3(E,\Phi)$, for $\Phi$ definite, as either one-dimensional, or Ramanujan, or of A-type (i.e. sits in global A-packets).
A slightly weaker result follows from Section 5 of \cite{Evra2018RamanujancomplexesGolden}, where the Ramanujan property holds only for the unramified places. 
It was pointed out to us by S.W. Shin that using Rogawski's explicit description of base change, one can prove the Ramanujan property at all places. 
In particular, this confirms the expectation stated in Remark 5.8 of \cite{Evra2018RamanujancomplexesGolden}.

\begin{proof}[Proof of Theorem \ref{thm:A-Ram}]
Let $\pi \in \mA_G$, which is not one-dimensional or of A-type.
By Section 14 of \cite{Rogawski1990Automorphicrepresentationsunitary} (transfer of inner forms of $U_3$), there exists a unique $\pi^* \in \mA_{G^*}$, such that $\pi_p \cong \pi^*_p$ for any finite place $p$ and $\pi^*_\infty$ is cohomological.
By Section 13 of \cite{Rogawski1990Automorphicrepresentationsunitary} (endoscopic classification of $U_3$), see also Theorem 5.2 of \cite{Evra2018RamanujancomplexesGolden}, we get that $\pi^*$ is either a stable cuspidal representation of $G^*$ or an endoscopic transfer of a stable cuspidal representation from $U_2 \times U_1$ or $U_1 \times U_1 \times U_1$.
Let $\tilde{\pi}$ be the global base change of $\pi^*$, which is a cohomological and (conjugate) self-dual automorphic representation of $GL_3/E$, that is either cuspidal (when $\pi^*$ is stable) or belongs to the automorphic parabolic induction from cuspidals on $GL_2 \times GL_1$ or $GL_1 \times GL_1 \times GL_1$.
By the Generalized Ramanujan-Petersson Conjecture (GRPC) due to \cite{Shin2011Galoisrepresentationsarising} (which relies on Deligne's proof of the Weil conjectures \cite{deligne1974conjecture}, Ngo's proof of the fundamental Lemma \cite{Ngo2010Lelemmefondamental}, as well as previous techniques developed by Eichler, Shimura, Langlands, Kottowitz, Clozel, Harris-Taylor and other experts), any cohomological, self-dual, cuspidal representation of $GL_n/E$ is Ramanujan.
Combining the (GRPC) with the fact that parabolic induction preserves temperedness, we get that $\tilde{\pi}$ is Ramanujan.
By the paragraph before Theorem 13.3.3 of \cite{Rogawski1990Automorphicrepresentationsunitary}, 
we get that by definition, $\tilde{\pi}_p$ is the local base change of $\pi^*_p$, for any $p$.
If $\pi^*_p$ was not tempered, then by Sections 11.4 and 13.2 of \cite{Rogawski1990Automorphicrepresentationsunitary}, we get that $\pi^*_p$ and $\tilde{\pi}_p$ are the Langlands quotients of parabolic induction from the Borel subgroups of characters $\chi$ and $\chi\circ N$, where $N$ is the norm map from the subgroup of diagonal matrices of $GL_3$ to $G^*$ defined in Section 3.10 of \cite{Rogawski1990Automorphicrepresentationsunitary}.
Since $\pi^*_p$ is non-tempered the character $\chi$ is non-unitary, hence $\chi\circ N$ is also non-unitary, and therefore $\tilde{\pi}_p$ is non-tempered, in contradiction to the above. Hence $\pi_p = \pi^*_p$ is tempered for any finite place $p$.
\end{proof}


As a consequence of Theorem \ref{thm:A-Ram}, together with the classification result of the Iwahori-spherical unitary representations of $G_p$, for an inert prime $p$  (Table \ref{tab:rep-spec}), we get that the A-type representations are precisely the Iwahori-spherical members of the Rogawski's A-packets.

\begin{cor} \label{cor:A-type} 
Let $\pi \in \mA_G$. Then the following are equivalent:
\begin{enumerate}
\item[(1)] $\pi$ belongs to a global A-packet.
\item[(2)] For all primes $p$, $\pi_p$ belongs to a local A-packet.
\item[(3)] For all primes $p$, if $\pi_p$ is Iwahori-spherical then $\pi_p = \pi^n(\rho_p)$ for some unramified character $\rho_p$.
\item[(4)] For all inert primes $p$, if $\pi_p$ is Iwahori-spherical then $\mathrm{Sat}(\pi_p) = - p$.
\item[(5)] For some inert prime $p$, $\pi_p$ is Iwahori-spherical and $\mathrm{Sat}(\pi_p) = - p$.
\end{enumerate}
\end{cor}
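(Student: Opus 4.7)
The plan is to prove the cycle $(1)\Rightarrow(2)\Rightarrow(3)\Rightarrow(4)\Rightarrow(1)$. Three of these implications reduce either to the definition of A-packets or to the explicit classification of Iwahori-spherical representations recorded in Table \ref{tab:rep-spec}; the substantive content lies in $(4)\Rightarrow(1)$, where I would invoke Theorem \ref{thm:A-Ram} and then eliminate the two alternatives to A-type.

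The implication $(1)\Rightarrow(2)$ is immediate from Definition \ref{defn:A-packet}(4): membership of $\pi$ in a global A-packet $\Pi'(\rho)$ means $\pi_v \in \Pi'(\rho_v)$ at every place by construction. For $(2)\Rightarrow(3)$, assume $\pi_p \in \Pi'(\rho_p)$ is Iwahori-spherical. At a split prime $\Pi'(\rho_p)=\{\pi^n(\rho_p)\}$, so the conclusion is automatic; at a non-split prime the alternative $\pi_p = \pi^s(\rho_p)$ is supercuspidal and therefore has no Iwahori-fixed vectors, forcing $\pi_p = \pi^n(\rho_p)$. Since $\pi^n(\rho_p)$ is realized as the Langlands quotient of $\mathrm{ind}(\eta_p)$, it has a nonzero Iwahori-fixed vector only when $\eta_p$ is unramified on the maximal torus of $P^*_p$, which by the formula in Definition \ref{defn:A-packet}(2) is equivalent to $\rho_p$ being unramified. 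For $(3)\Rightarrow(4)$, with $\rho_p$ unramified the character $\eta_p$ descends to an unramified character on the Borel, and its normalized induction matches the family $V_z$ from Section \ref{subsec:Local-rank-one-representation}; comparison with Table \ref{tab:rep-spec} identifies the Langlands quotient $\pi^n(\rho_p)$ with the A-type irreducible representation $W_{-p}$, hence $z(\pi_p)=-p$.

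For $(4)\Rightarrow(1)$, Theorem \ref{thm:A-Ram} leaves only three possibilities: $\pi$ is one-dimensional, Ramanujan, or A-type. Since $\pi$ is automorphic, $\pi_p$ is unramified, hence Iwahori-spherical, at almost all primes $p$. If $\pi$ were Ramanujan, each such $\pi_p$ would be tempered, giving $|z(\pi_p)|=1$ and contradicting $z(\pi_p)=-p$ for large $p$. If $\pi$ were one-dimensional it would factor as $\chi\circ\det$ for a Hecke character $\chi$ of $U_1$; at any inert unramified prime $p$ the local group $U_1(\Q_p)$ is compact and admits no nontrivial unramified characters, so $\pi_p$ must be the trivial representation of $U_3(\Q_p)$, which by Table \ref{tab:rep-spec} has Satake parameter $p^2\neq -p$. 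The parallel observation rules out $\chi\circ\det$ at split primes, since a one-dimensional unramified character of $GL_3(\Q_p)$ cannot have Satake parameter $-p$ compatibly at infinitely many such $p$. Hence $\pi$ must be of A-type, closing the cycle.

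The main obstacle I expect is the bookkeeping in $(3)\Rightarrow(4)$: aligning Rogawski's normalization of $\eta$, which involves the twist by $|\alpha|^{1/2}$ and the choice of the global character $\mu$ extending $\omega_{E/\Q}$, with the Satake normalization used in Section \ref{subsec:Local-rank-one-representation}, and confirming that the reducibility point of $\mathrm{ind}(\eta_p)$ corresponding to the non-tempered Langlands quotient is precisely $z=-p$ rather than some other element of the reducible set $\{-p^{\pm 1},p^{\pm 2}\}$.
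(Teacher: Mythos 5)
Your proposal is correct and follows essentially the same route as the paper: the cycle $(1)\Rightarrow(2)\Rightarrow(3)\Rightarrow(4)\Rightarrow(1)$, with $(1)\Rightarrow(2)$ by definition of A-packets, $(2)\Rightarrow(3)$ from the fact that the supercuspidal member $\pi^s(\rho_p)$ has no Iwahori-fixed vectors (Lemma \ref{lem:level-supercuspidal}), $(3)\Rightarrow(4)$ by matching the Langlands quotient of the unramified induced representation against Table \ref{tab:rep-spec}, and $(4)\Rightarrow(1)$ from Theorem \ref{thm:A-Ram} after noting $z(\pi_p)=-p$ rules out temperedness and one-dimensionality at the (almost all) unramified primes. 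Your extra remarks — deducing unramifiedness of $\rho_p$ in $(2)\Rightarrow(3)$ and ruling out $\chi\circ\det$ explicitly in $(4)\Rightarrow(1)$ — only add detail the paper leaves implicit.
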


\begin{proof}
By the definition of a A-packets, (1) implies (2).
Since the only Iwahori-spherical member in a local A-packet can be $\pi^n(\rho_p)$ (see Lemma \ref{lem:level-supercuspidal} below), (2) implies (3).
For an unramified character $\rho_p$, $\pi^n(\rho_p)$ is the Langlands quotient of an unramified principal series representation $\mbox{ind}_{P_p}^{G_p}(\eta_p)$, and since
\[ \eta_{p}\left(\diag(p,1,p^{-1})\right)=\left|p\right|_{E}^{1/2}\mu\left(p\right)\rho_{1}(p/\overline{p})\rho'\left(p/\overline{p}\right)=p^{-1}\omega(p)=-p^{-1}, \] the Satake parameter of any subquotient of $\mbox{ind}_{P_p}^{G_p}(\eta_p)$ is either $-p$ or its inverse $-1/p$. However, since the Langlands quotient $\pi^n(\rho)$ is non-tempered (see \cite[\S12.2]{Rogawski1990Automorphicrepresentationsunitary}), $-1/p$ is ruled out (see Table \ref{tab:rep-spec}) and we get that (3) implies (4).
Clearly, (4) implies (5) since $\pi_p$ is Iwahori-spherical for almost all $p$.
Finally, since $\mathrm{Sat}(\pi_p) = - p$ implies that $\pi_p$ is neither tempered nor one-dimensional, by Theorem \ref{thm:A-Ram}, (5) implies (1).
\end{proof}

\subsection{Class field theory} \label{automorphic:CFT}

Throughout this subsection $E=\Q[\sqrt{-3}]$ is the Eisenstein field.
Our goal in this section is to construct a specific automorphic character $\mu$ of $GL_1/E$, whose restriction to an automorphic character of $GL_1/\Q$ gives the quadratic character $\omega_{E/\Q}$ coming from class field theory, and to calculate its epsilon factor. 

For any place $v$ of $\Q$, denote $E_v = E \otimes_\Q \Q_v$ the étale quadratic extension of $\Q_v$.
Note that $E_\infty = \C$, and for a prime $p$, $E_p$ is a field if and only if $p$ does not split in $E$, otherwise $E_p \cong \Q_p \times \Q_p$.

\begin{lem} \label{lem:omega}
The class field character $\omega=\omega_{E/\Q} \,:\, \Q^\times \backslash \A^\times \rightarrow \C^\times$, $\omega=\otimes_v \omega_v$, is uniquely defined as follows: 
\begin{enumerate}
    \item 
 $\omega_\infty \mid_{\R^\times_{>0}} \equiv 1$ and $w_\infty(-1)=-1$, 
 \item $\omega_3 \mid_{3^\Z (1+3 \Z_3)} \equiv 1$ and $\omega_3(-1)=-1$,  
 \item $\omega_p \mid_{\Q_p^\times} \equiv 1$  for any prime $p \equiv 1 \mod3$, and 
 \item $\omega_p \mid_{\Z_p^\times} \equiv 1$ and $\omega_p(p) = -1$, for any prime $p \equiv 2 \mod3$.
\end{enumerate}
\end{lem}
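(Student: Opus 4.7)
The plan is to recognize $\omega=\omega_{E/\Q}$ as the quadratic idele class character cut out by local reciprocity, so that $\omega_v$ is precisely the character of $\Q_v^\times$ which is trivial on norms from $E_v^\times$ and of order two. The decomposition $\omega=\otimes_v\omega_v$ then follows from class field theory, and the lemma reduces to unwinding the local norm groups at each place, together with a product-formula argument to pin down the ramified component at $3$.

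I would begin with the unramified places. At $v=\infty$ one has $E_\infty=\C$ and $N_{\C/\R}(\C^\times)=\R^\times_{>0}$, which yields $\omega_\infty|_{\R^\times_{>0}}\equiv 1$ and $\omega_\infty(-1)=-1$, giving (1). Since $E=\Q[\zeta_3]$, a rational prime $p\neq 3$ splits in $E$ iff $p\equiv 1\pmod 3$ and is inert iff $p\equiv 2\pmod 3$. For split $p$ the \'etale algebra $E_p\cong\Q_p\oplus\Q_p$ has surjective norm $(a,b)\mapsto ab$, so $\omega_p\equiv 1$, giving (3). For inert $p$ the extension $E_p/\Q_p$ is unramified quadratic, whose norm image is $\Z_p^\times\cdot p^{2\Z}$; hence $\omega_p$ is trivial on units and satisfies $\omega_p(p)=-1$, giving (4).

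For the ramified place $p=3$, the key observation is that $1+3\Z_3$ is a pro-$3$ group, so every character of $\Q_3^\times$ of order dividing $2$ is automatically trivial on $1+3\Z_3$. Writing $\Q_3^\times=3^\Z\times\{\pm 1\}\times(1+3\Z_3)$, it remains only to determine the values $\omega_3(3)$ and $\omega_3(-1)$. Both are pinned down by the global product formula $\prod_v\omega_v(x)=1$ for $x\in\Q^\times$: applied to $x=3$ one gets $\omega_3(3)=\prod_{v\neq 3}\omega_v(3)^{-1}=1$ using (1), (3), (4); applied to $x=-1$ one gets $\omega_3(-1)=-\omega_\infty(-1)^{-1}\cdot\prod_{p\neq 3}\omega_p(-1)^{-1}=-1$, since $-1\in\Z_p^\times$ for every $p\neq 3$. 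Combined with triviality on $1+3\Z_3$ this gives (2). Uniqueness is immediate from the fact that (1)--(4) determine $\omega_v$ on a set of topological generators of $\Q_v^\times$ for each $v$.

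The only real subtlety will be the step at $p=3$: I need to know a priori that $\omega$ is ramified only at $3$ and $\infty$ (which follows from $\operatorname{disc}(E/\Q)=-3$), and that an order-$2$ character of $\Q_3^\times$ is forced to be trivial on the pro-$3$ part $1+3\Z_3$. Everything else is routine local class field theory, so I expect no serious obstacle beyond careful bookkeeping of signs.
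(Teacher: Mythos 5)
Your proposal is correct, and at all places except $3$ it coincides with the paper's proof: both identify $\omega_v$ as the order-two character cut out by the local norm group and compute $N_{\C/\R}(\C^\times)=\R^\times_{>0}$, $\mathrm{Nm}(E_p^\times)=\Q_p^\times$ for split $p$, and $\mathrm{Nm}(E_p^\times)=p^{2\Z}\Z_p^\times$ for inert $p$. Where you genuinely diverge is the ramified place: the paper simply computes $\mathrm{Nm}_{E_3/\Q_3}(E_3^\times)=3^\Z(1+3\Z_3)$ directly and reads off (2), whereas you avoid that local norm computation by observing that an order-two character of $\Q_3^\times$ is automatically trivial on the pro-$3$ group $1+3\Z_3$, and then pin down $\omega_3(3)=1$ and $\omega_3(-1)=-1$ from the product formula $\prod_v\omega_v(x)=1$ applied to $x=3$ and $x=-1$. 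Your route trades a local computation for the global input that $\omega$ is trivial on $\Q^\times$ (which is indeed built into its definition here), and it generalizes painlessly to other ramified quadratic characters of odd residue characteristic; the paper's argument is purely local and makes the norm group explicit, which is reused implicitly elsewhere (e.g.\ in the depth and conductor computations at $3$). One bookkeeping slip: in your display for the second application of the product formula you write $\omega_3(-1)=-\,\omega_\infty(-1)^{-1}\prod_{p\neq 3}\omega_p(-1)^{-1}$, which with $\omega_\infty(-1)=-1$ would evaluate to $+1$; the correct identity is $\omega_3(-1)=\omega_\infty(-1)^{-1}\prod_{p\neq3}\omega_p(-1)^{-1}=-1$, so the stray leading minus sign should be dropped — the conclusion you state is the right one.
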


\begin{proof}
Denote by $\omega_v = \omega_{E_v/\Q_v}$ the local class field character. 
Then as a character of the ideles of $\Q$, we get  that $w=\otimes_v w_v$. 
By definition \[\omega_v(x)=\begin{cases} 1 & x \in \mathrm{Nm}_{E_v/\Q_v}(E_v\times) \\ -1 & x \in  \Q_v^\times \setminus \mathrm{Nm}(E_v^\times).\end{cases}\]
By direct calculation we get that 
\begin{enumerate} \item $\mathrm{Nm}_{E_\infty/\Q_\infty}(E_\infty^\times) = \R^\times_{>0}$, \item $\mathrm{Nm}_{E_3/\Q_3}(E_3^\times) = 3^\Z (1 + 3\Z)$, \item $\mathrm{Nm}_{E_p/\Q_p}(E_p^\times) = \Q_p^\times$ for any prime $p \equiv 1 \mod3$, and \item $\mathrm{Nm}_{E_p/\Q_p}(E_p^\times) = p^{2\Z}\Z_p^\times$ for any prime $p \equiv 2 \mod3$.\end{enumerate}
The claim now follows.
\end{proof}

Let  $\zeta = \frac{1+\sqrt{-3}}2 \in E$. Then $\mO= \Z[\zeta]$ is the ring of integers of $E$ and  $\mO^\times = \langle \zeta \rangle$ is its  group of units.
Also $E$ is embedded in $\C$ by sending $\zeta$ to $e^{\frac{2\pi i}{6}} \in \C$.

For any finite place $w$ of $E$, denote by $E_w$ the completion of $E$ w.r.t. $w$,  $\nu_w \, :\, E_w^\times \twoheadrightarrow \Z$ the normalized discrete valuation, $\mO_w = \{x\in E_w \,:\, \nu_w(x) \geq 0\}$ the ring of integers of $E_w$, $\mO_w^\times = \{x\in E_w \,:\, \nu_w(x) = 0\}$ the group of units of $\mO_w$,  and $\varpi_w \in \mO_w$, $\nu_w(\varpi_w) = 1$, a uniformizer of $E_w$.

\begin{defn} \label{defn:conductor}
Let $w$ be a finite place of $E$ and let $\chi$ be a character of $E_w^\times$.
Call $\chi$ unramified if  $\chi \mid_{\mO_w^*}  \equiv 1$, in which case define $c(\chi) = 0$.
If $\chi$ is not unramified then define its conductor to be
\[
c(\chi) = \min \{ c \in \N \,:\, \chi \mid (1+\varpi_w^c \mO_w ) \equiv 1\}.
\]
\end{defn}

The places of $E$, denoted by $w$, are as follows: 
The unique infinity place $w=\infty$, for which $E_\infty = \C$.
The unique ramified place $w=\sqrt{-3}$, for which $E_{\sqrt{-3}}$ is a ramified quadratic field extension of $\Q_3$ and $E_3 = E_{\sqrt{-3}}$.
The inert places $w=p$, one for each (positive) prime $p \equiv 2 \mod3$, for which $E_p$ is an unramified quadratic field extension of $\Q_p$.
The split places $w=\p$, two for each (positive) prime $p \equiv 1 \mod3$,  for which $E_\p$ is isomorphic to $\Q_p$ and $E_p = E_{\p}\times E_{\bar{\p}}$ where $p = \p \cdot \bar{\p}$.

Note that for any finite place $w$, we have $E_w^\times =  \mO_w^\times \cdot \varpi_w^\Z$, and that for any $n\in \N$, and any set of representatives $X \subset \mO_w^\times$ for the coset space $ \mO_w^\times / (1 + \varpi_w^n \mO_w)$, we have $\mO_w^\times = X \cdot (1 + \varpi_w^n \mO_w)$.
We shall need a slightly finer information regarding the place $\sqrt{-3}$, which we summarize in the following Lemma.

\begin{lem} \label{lem:unif-mod3}
(i) For $w = \sqrt{-3}$, the group of order six, $\langle \zeta \rangle$, is a set of representatives for $\mO_w^\times / (1+3\mO_w)$.
Therefore we have the decomposition $\mO_{\sqrt{-3}}^\times = (1+3\mO_{\sqrt{-3}}) \cdot \langle \zeta \rangle$.\\
(ii) For any finite place $w \ne \sqrt{-3}$, there exists a uniformizer $\varpi_w \in \mO$ for $E_w$ which belongs to $(1+3\mO)$.
\end{lem}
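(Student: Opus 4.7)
The plan is to handle the two parts in order, with (ii) following from (i) together with the fact that $\mO=\Z[\zeta]$ is a PID whose residue ring $\mO/3\mO$ is a finite local ring of known size.

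For (i), I would first observe that at $w=\sqrt{-3}$ the extension $E_w/\Q_3$ is totally ramified quadratic with uniformizer $\varpi_w=\sqrt{-3}$ and residue field $\F_3$, and that $3\mO_w=(\sqrt{-3})^{2}\mO_w=\varpi_w^{2}\mO_w$. The standard filtration
\[
\mO_w^{\times}\supseteq 1+\varpi_w\mO_w\supseteq 1+\varpi_w^{2}\mO_w=1+3\mO_w
\]
has successive quotients $\F_3^{\times}$ and $\F_3$, so the target group has order $6=|\langle\zeta\rangle|$. Hence it suffices to prove injectivity of $\langle\zeta\rangle\to \mO_w^{\times}/(1+3\mO_w)$, which reduces to verifying $1-\zeta^{k}\notin 3\mO_w$ for $1\le k\le 5$. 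I would compute the absolute norms: $N(1-\zeta)=N(1-\zeta^{5})=1$, $N(1-\zeta^{3})=N(2)=4$, and $N(1-\zeta^{2})=N(1-\zeta^{4})=3$. Therefore $\nu_{\sqrt{-3}}(1-\zeta^{k})\le 1$ for each such $k$, ruling out membership in $3\mO_w$ (whose valuation is $2$). Combined with the order equality this gives bijectivity.

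For (ii), given a finite place $w\ne\sqrt{-3}$ I would produce a uniformizer $\varpi\in\mO$ of $E_w$ directly: in the inert case ($p\equiv 2\!\mod 3$) take $\varpi=p$; in the split case ($p\equiv 1\!\mod 3$) use that $\mO$ has class number one to write the prime above $w$ as $\p=\pi\mO$ and take $\varpi=\pi$. Because the prime $\varpi\mO$ is distinct from $\sqrt{-3}\mO$, the element $\varpi$ does not lie in $\sqrt{-3}\mO$, hence it represents a unit in the local ring $\mO/3\mO$ (whose maximal ideal is $\sqrt{-3}\mO/3\mO$). Part (i) identifies $(\mO/3\mO)^{\times}$ with the image of $\langle\zeta\rangle$, so I can choose $j\in\{0,\ldots,5\}$ with $\varpi\,\zeta^{j}\equiv 1\pmod{3\mO}$. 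Then $\varpi_w:=\varpi\,\zeta^{j}\in\mO\cap(1+3\mO)$, and since $\zeta\in\mO^{\times}$ it remains a uniformizer at $w$ (and a unit at every other finite place), as required.

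The only nontrivial step is the norm/valuation computation in (i); everything else is formal bookkeeping. I do not expect any genuine obstacle — once the six residues of $\zeta^{j}$ modulo $3\mO_w$ are shown to be distinct, part (ii) drops out of the PID structure of $\mO$ together with the observation that multiplying a uniformizer by a global unit yields another uniformizer.
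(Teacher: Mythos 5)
Your proof is correct and follows essentially the same route as the paper: part (i) by comparing the order six of $\mO_{\sqrt{-3}}^\times/(1+3\mO_{\sqrt{-3}})$ with the distinctness of the powers of $\zeta$ modulo $1+3\mO_{\sqrt{-3}}$, and part (ii) by taking a uniformizer in $\mO$ and multiplying by a suitable power of $\zeta$ to land in $1+3\mO$. Your norm/valuation check of $1-\zeta^k$ and the explicit choice of the initial uniformizer simply flesh out details the paper leaves implicit.
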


\begin{proof}
(i) follows from the fact that $\mO_{\sqrt{-3}}^\times/ (1+3\mO_{\sqrt{-3}}) $ is of order six, $\zeta$ is also of order six and $\zeta \not \in 1+3\mO_{\sqrt{-3}}$. 
(ii) is a consequence of (i), since by picking any uniformizer $\varpi'_w \in \mO$ of $E_w$, by (i) there exists a unique $a\in \Z/6\Z$ such that $\varpi'_w \equiv \zeta^a \mod3$, and then simply take $\varpi_w = \varpi'_w \cdot \zeta^{-a}$. 
\end{proof}

From now on, for any finite place $w \ne \sqrt{-3}$, let $\varpi_w \in \mO$ be a uniformizer of $E_w$ as in Lemma \ref{lem:unif-mod3}, i.e. $\varpi_w \equiv 1 \mod3$, and for $w = \sqrt{-3}$ we take $w_{\sqrt{-3}} = \sqrt{-3}$. 
Let us be more explicit.
If $w=p$ is an inert prime, then $p \equiv 2 \mod3$, and we can take $\varpi_w = -p$ since $-p \equiv 1 \mod3$.
If $w=\p$, where $\p\cdot\bar{\p} = p$ is a split prime, then $p \equiv 1 \mod3$, and we can take $\varpi_w = \p$ such that $\p \equiv 1 \mod3$.

\begin{lem}\label{lem:u1} Let  $U_1(\Z_q,q) = \ker\left( U_1(\Z_q) \rightarrow U_1(\F_q) \right)$, for any prime $q$. Then
 $$U_1(\Q_p)=\begin{cases} U_1(\Z_3,3)\cdot \langle \zeta \rangle, & p=3 
 \\ U_1(\Z_p,p)\cdot \langle \beta^{p-1}\rangle, \beta \text{ element of order $p^2-1$  in }U_1(\Z_p), & p \text{ inert }
 \\ \Q_p^\times=p^{\Z}U_1(\Z_p, p) \cdot \F_p^\times& p \text{ split.} 
 \end{cases}$$
\end{lem}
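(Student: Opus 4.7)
The plan is to handle the three cases of Lemma \ref{lem:u1} separately, with the common first step being to reduce from $U_1(\Q_p)$ to its intersection with $\mO_p^\times$ in the non-split cases. Concretely, for $p$ inert or ramified the Galois involution on $E_p$ preserves the normalized valuation $\nu_w$, so any $x\in U_1(\Q_p)$ satisfies $0=\nu_w(x\bar x)=2\nu_w(x)$, forcing $x\in\mO_w^\times$; thus $U_1(\Q_p)=U_1(\mO_w^\times)$ in these cases.

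For $p=3$ we apply Lemma \ref{lem:unif-mod3}(i), which gives $\mO_{\sqrt{-3}}^\times=(1+3\mO_{\sqrt{-3}})\cdot\langle\zeta\rangle$. One checks directly that $\zeta\bar\zeta=\tfrac{1-(-3)}{4}=1$, so $\langle\zeta\rangle\subseteq U_1(\Q_3)$. Intersecting the decomposition with $U_1$ and noting $U_1(\Z_3,3)=U_1(\Q_3)\cap(1+3\mO_{\sqrt{-3}})$ yields $U_1(\Q_3)=U_1(\Z_3,3)\cdot\langle\zeta\rangle$. For $p$ inert, the Teichmüller splitting gives $\mO_p^\times\cong\F_{p^2}^\times\times(1+p\mO_p)$. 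The norm map $\mathrm{Nm}:\F_{p^2}^\times\to\F_p^\times$ sends $\beta\mapsto\beta^{p+1}$, so its kernel is cyclic of order $p+1$, generated (after Teichmüller lifting) by $\beta^{p-1}$, which visibly lies in $U_1(\Z_p)$ since $\beta^{(p-1)(p+1)}=\beta^{p^2-1}=1$. The claim follows upon intersecting with $U_1$.

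For $p$ split, write $E_p\cong\Q_p\times\Q_p$ with the involution swapping the factors, so the norm becomes $(a,b)\mapsto ab$ and $U_1(\Q_p)=\{(a,a^{-1})\,|\,a\in\Q_p^\times\}\cong\Q_p^\times$. Under this identification $U_1(\Z_p,p)$ corresponds to $1+p\Z_p$, and the standard decomposition $\Q_p^\times=p^\Z\cdot(1+p\Z_p)\cdot\mathrm{Teich}(\F_p^\times)$ delivers the final formula. No step of this argument is substantive: the only potential pitfall is bookkeeping the passage from $\F_{p^2}^\times$ to the lift $\beta\in\mO_p^\times$ (or its projection to $U_1(\Z_p)/U_1(\Z_p,p)\cong\F_{p^2}^{N=1}$), which we resolve by first decomposing $\mO_p^\times$ via Teichmüller and then intersecting with $U_1$.
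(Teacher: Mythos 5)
Your proof is correct and follows essentially the same route as the paper: reduce to units, then use the decomposition $\mO_{\sqrt{-3}}^\times=(1+3\mO_{\sqrt{-3}})\cdot\langle\zeta\rangle$ for $p=3$, the residue-field norm computation in $\F_{p^2}^\times$ for $p$ inert, and the identification $U_1(\Q_p)\cong\Q_p^\times$ for $p$ split. The only (harmless) variation is at the first step, where you deduce $U_1(\Q_p)\subseteq\mO_w^\times$ in the non-split cases directly from $2\nu_w(x)=\nu_w(x\bar x)=0$, whereas the paper gets the same reduction via Hilbert 90 and the computation of $\varpi_w/\overline{\varpi_w}$; your handling of the Teichm\"uller lift in the inert case is also slightly more careful than the paper's statement, which loosely refers to $\beta$ of order $p^2-1$ ``in $U_1(\Z_p)$''.
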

\begin{proof} 
For $p$ not split in $E$, by Hilbert Theorem 90, $U_1(\Q_p)=\{ \alpha /\bar \alpha : \alpha \in E_{w}^\times\}$. Thus by $E_w^\times =  \mO_w^\times \cdot \varpi_w^\Z$ and observing that $\frac{\varpi_{\sqrt{-3}}}{\overline{\varpi_{\sqrt{-3}}}}=-1=\zeta^3$ and for $w=p$ inert, $\frac{\varpi_{w}}{\overline{\varpi_{w}}}=1$ we see that
\[
U_1(\Q_p)=\begin{cases} U_1(\Z_p) & p \text{ inert or } p=3
 \\ \Q_p^\times\ &p\equiv 1\mod 3.
 \end{cases}
\]

For $p=3$, we finish by noting that $\mO_{\sqrt{-3}}^\times = (1+3\mO_{\sqrt{-3}}) \cdot \langle \zeta \rangle$ and $\zeta \in U_1(\Z_3)$. 

For $p\equiv 2 \mod 3$ we observe that $\mO_{w}^\times/(1+p\mO_{w})\cong \F_{p^2}^\times=\langle \beta \rangle$ and $\alpha=\beta^d \in \F_{p^2}^\times$ has norm $1$ if and only if $\beta^{d(p+1)}=1$ which is if and only if $(p-1)\mid d$. Hence $U_1(\F_p)=\langle \beta^{p-1}\rangle \cong \Z/(p+1)\Z$. 
\end{proof}

\begin{defn} \label{defn:mu}
Define $\mu \,:\, \A^\times_E \rightarrow \C^\times$, $\mu=\otimes_w \mu_w$  as follows:\\
First define the local character at the infinite place $w=\infty$, to be
\[
\mu_\infty \mid \R^\times_{>0} \equiv 1, \qquad \mu_\infty(e^{\theta i}) = e^{ - \theta i} \quad \textrm{ for } \quad 0 \leq \theta \leq 2\pi.
\]
Second, define the local character at the ramified place $w=\sqrt{-3}$, to be
\begin{align*}
\mu_{\sqrt{-3}} \mid (1+3 \mO_{\sqrt{-3}}) \equiv 1, \quad \mu_{\sqrt{-3}}(\zeta) = e^{\frac{2\pi i}{6}} , \\ \mu_{\sqrt{-3}}(\varpi_{\sqrt{-3}}) = \mu_{\infty}(\varpi_{\sqrt{-3}})^{-1} =  e^{\frac{2\pi i}{4}}.
\end{align*}
Finally, define the local character at any unramified place $w \ne\sqrt{-3}$, to be
\[
\mu_w \mid_{\mO_w^\times} \equiv 1, \qquad \mu_w(\varpi_w) = \mu_{\infty}(\varpi_w)^{-1} = \left\lbrace \begin{array}{cc} -1 & w=p \mbox{ an inert prime} \\ \frac{\p}{\sqrt{p}} & w=\p, \; \p\cdot \bar{\p} = p \mbox{ a split prime.} \end{array}  \right. 
\]
\end{defn}

\begin{prop}\label{prop:mu}
Let $\mu$ be as in Definition \ref{defn:mu}. Then:
\begin{enumerate}
\item $\mu$ is an automorphic character of $GL_1/E$, i.e. $\mu \mid_{E^\times} \equiv 1$.
\item $\mu$ extends the quadratic class field character associated to $E/\Q$, i.e. $\mu \mid_{\A^\times} \equiv \omega_{E/\Q}$.
\end{enumerate}
\end{prop}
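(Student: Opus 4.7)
\textbf{Proof plan for Proposition \ref{prop:mu}.}

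The plan is to verify both claims by reducing to a generating set and computing local factors one place at a time. For (1), the key observation is that since $\mO_E = \Z[\zeta]$ is a PID (class number one), the multiplicative group $E^\times$ is generated by the unit $\zeta$ together with the chosen uniformizers $\{\varpi_w\}_{w \text{ finite}}$, where $\varpi_w \in \mO$ satisfies $\varpi_w \equiv 1 \pmod{3}$ for $w \neq \sqrt{-3}$ and $\varpi_{\sqrt{-3}} = \sqrt{-3}$ (by Lemma \ref{lem:unif-mod3}). Since $\mu$ is a continuous homomorphism, it suffices to show $\prod_{w} \mu_w(\alpha) = 1$ on these generators.

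First I would handle $\zeta$: by definition $\mu_\infty(\zeta) = e^{-2\pi i/6}$ and $\mu_{\sqrt{-3}}(\zeta) = e^{2\pi i/6}$, while $\zeta \in \mO_w^\times$ forces $\mu_w(\zeta)=1$ at every other finite place, giving product $1$. Next I would treat a uniformizer $\varpi_w$ with $w \neq \sqrt{-3}$: it is a unit at every finite place $w' \neq w$, and the condition $\varpi_w \in 1+3\mO$ ensures $\mu_{\sqrt{-3}}(\varpi_w)=1$, so only the factors at $w$ and $\infty$ survive, and these cancel by the defining relation $\mu_w(\varpi_w) = \mu_\infty(\varpi_w)^{-1}$. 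Finally, $\mu(\sqrt{-3}) = \mu_\infty(\sqrt{-3}) \cdot \mu_{\sqrt{-3}}(\sqrt{-3}) = (-i)(i) = 1$, completing (1).

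For (2), I would check agreement with $\omega_{E/\Q}$ place by place, using the explicit description of $\omega_v$ in Lemma \ref{lem:omega}. At $v=\infty$, $\mu_\infty(z) = (z/|z|)^{-1}$ is trivial on $\R^\times_{>0}$ and sends $-1 \mapsto -1$, matching $\omega_\infty$. At $v=3$, I use the decomposition $\Q_3^\times = 3^\Z \cdot \langle -1\rangle \cdot (1+3\Z_3)$, noting $1+3\Z_3 \subset 1+3\mO_{\sqrt{-3}}$, that $-1=\zeta^3$ gives $\mu_{\sqrt{-3}}(-1)=-1$, and that $3 = -(\sqrt{-3})^2$ yields $\mu_{\sqrt{-3}}(3)= (-1)\cdot i^2 = 1$, matching Lemma \ref{lem:omega}(2). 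At an inert prime $p$, $\varpi_p = -p$ (forced by $-p \equiv 1 \pmod 3$) gives $\mu_p(p) = \mu_p(-p)\mu_p(-1) = \mu_\infty(-p)^{-1} \cdot 1 = -1$, matching Lemma \ref{lem:omega}(4). The main obstacle is the split case: for $p = \pi\bar\pi$ with $\pi,\bar\pi \equiv 1 \pmod 3$, the restriction of $\mu_\p \otimes \mu_{\bar\p}$ to $\Q_p^\times \hookrightarrow E_\p^\times \times E_{\bar\p}^\times$ (diagonal embedding) must be trivial. In $E_\p$, $\bar\pi$ is a unit, so $p = \pi \cdot \bar\pi$ gives $\mu_\p(p) = \pi/\sqrt{p}$, and symmetrically $\mu_{\bar\p}(p) = \bar\pi/\sqrt{p}$; their product is $\pi\bar\pi/p = 1$, and units $u \in \Z_p^\times$ contribute trivially on both sides, so indeed $\mu_\p(x)\mu_{\bar\p}(x)=1 = \omega_p(x)$ for all $x \in \Q_p^\times$.

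The only genuinely delicate point is the split case, where one must carefully track how the rational prime $p$ and a rational unit factor through $E_\p^\times \times E_{\bar\p}^\times$; the global relation $\pi\bar\pi=p$ is what makes the product of the two archimedean-compensating phases collapse to $1$. All other cases are direct consequences of the defining formulas for $\mu_w$.
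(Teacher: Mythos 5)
Your proposal is correct and follows essentially the same route as the paper: part (1) uses the class-number-one/unique-factorization reduction to the generators $\zeta$ and the normalized uniformizers $\varpi_w$ (with the $\varpi_w\equiv 1\bmod 3$ normalization killing the factor at $\sqrt{-3}$), and part (2) is the same place-by-place comparison with Lemma \ref{lem:omega}, including the identical treatment of the split case via $\mu_\p(p)\mu_{\bar\p}(p)=\p\bar\p/p=1$.
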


\begin{proof}
(1) Because $E$ has class number one, we can work with unique prime factorization. 
So it is enough to show $\mu(\zeta)=1$ and  $\mu(\varpi_u)=1$ for each finite place $u$ of $E$.
By Definition \ref{defn:mu} we get, 
\[
\mu(\zeta) = \prod_w \mu_w(\zeta) = \mu_\infty(\zeta) \cdot \mu_{\sqrt{-3}}(\zeta) =e^{-\frac{2\pi i}{6}} \cdot e^{\frac{2\pi i}{6}} = 1,
\]
\[
\mu(\varpi_u) = \prod_w \mu_w(\varpi_u) = \mu_\infty(\varpi_u) \cdot \mu_u(\varpi_u) =\mu_\infty(\varpi_u) \cdot \mu_\infty(\varpi_u)^{-1} = 1.
\]
(2) Let $\mu' = \mu \mid \A^\times$ be the restriction of $\mu$ to the ideles of $\Q$, i.e. $\mu' = \otimes_v \mu'_v$, $\mu'_\infty = \mu_\infty \mid \R^\times$, $\mu'_3 = \mu_{\sqrt{-3}} \mid_{\Z_3^\times}$,  $\mu'_p = \mu_p \mid \Z_p^\times$ for any inert prime $p$, and $\mu'_p(x) = \mu_\p(x) \cdot \mu_{\bar{\p}}(x)$ for any $x\in \Z_p^\times$ and any split prime $p = \p \cdot \bar{\p}$.
By Definition \ref{defn:mu} at the infinite place we get 
\[
\mu'_\infty \mid \R_{>0} \equiv 1, \qquad \mu'_\infty(-1) = -1,
\]
at the ramified prime we get 
\begin{align*}
&\mu'_3 \mid (1 + 3\Z_3) \equiv 1, \qquad \mu'_3(-1) = -1, 
\\  &\mu'_3(3) = \mu_{\sqrt{-3}}(-\sqrt{-3}^2) = (-1) \mu_\infty(\sqrt{-3})^{-2} = (-1)(-1)=1.
\end{align*}
at an inert prime, i.e. $p \equiv 2 \mod3$, we get
\[
\mu'_p \mid \Z_p^\times \equiv 1, \qquad \mu'_p(p) = \mu_p(p) = \mu_p(-p) = \mu_\infty(-p)^{-1} = -1,
\]
and at a split prime, i.e. $p \equiv 1 \mod3$, and let $p = \p \cdot \bar{\p}$ where $\p,\bar{\p} \equiv 1 \mod3$, we get
\[
\mu'_p \mid \Z_p^\times \equiv 1,\]
\[\mu'_p(p) = \mu_{\p}(p)\mu_{\bar{\p}}(p) = \mu_{\p}(\p)\mu_{\bar{\p}}(\bar{\p}) = \mu_\infty(\p)^{-1} \mu_\infty(\bar{\p})^{-1} = \mu_\infty(p)^{-1} = 1.
\]
By Lemma \ref{lem:omega} this proves that $\mu' = \omega_{E/\Q}$.
\end{proof}

Fix from now on the automorphic character of $GL_1/E$ which extends $\omega_{E/\Q}$ to be $\mu$ from Proposition \ref{prop:mu}.
Fix the additive character $\psi \,:\, \Q \backslash \A \rightarrow \C^\times$, $\psi = \otimes_v \psi_v$, defined by $\psi_\infty(x) = e^{2\pi i x}$ for any $x \in \R$ and $\psi_p \mid \Z_p \equiv 1$ for every prime $p$.
Fix the additive character $\psi' \,:\, E \backslash \A_E \rightarrow \C^\times$, $\psi' = \otimes_w \psi'_w$, defined by $\psi'_w = \psi \circ \mbox{Tr}_{E_w/\Q_v}$, where $v$ is the unique place below $w$.
For any place $w$ of $E$, and any local character $\chi_w\,:\,E_w^\times\rightarrow \C$, denote by $\epsilon(1/2,\chi_w,\psi'_w)$ be the epsilon number of $\chi_w$ w.r.t. $\psi'_w$, evaluated at $1/2$.

Next we calculate the conductors and epsilon factors of a slightly more general family $\rho$.
Recall the notation of Definition \ref{defn:A-packet}, where $\rho = (\rho_1,\rho')$ is an automorphic character of $U_1^2(\Q)$, and  $\phi = \phi(\rho) = \mu\rho_{1,E}$, where $\rho_{1,E}(x) = \rho_1(x/\bar{x})$ is an automorphic character of $GL_1/E$, i.e. $\rho_{1,E}$ is the base change of $\rho_1$ from $U_1$ to $GL_1/E$.
Note that $\mu$ is fixed, and that $\phi$ depends only on $\rho_1$ (and not on $\rho'$).

\begin{defn} \label{defn:phi-b}
For $\rho_1$ be as above and $v$ a place of $\Q$, denote $b(\rho_{1,v}) \in \Z/6\Z$, defined by $\rho_{1,v}(\zeta) = \zeta^{b(\rho_{1,v})}$.
Note that $b(\rho_{1,v}) = 0$ whenever $\rho_{1,v}$ is unramified, i.e. $b(\rho_{1,v}) \ne 0$ for at most finitely many places $v$.
Moreover, since $\rho_1$ is automorphic, $1 = \rho_1(\zeta) = \prod_v \rho_{1,v}(\zeta) = \prod_v \zeta^{b(\rho_{1,v})}$, hence $\sum_v b(\rho_{1,v}) = 0$.
\end{defn}

The following Lemmas \ref{lem:phi-arch}, \ref{lem:phi-unram}, \ref{lem:phi-inert}, \ref{lem:epsilon-split} and \ref{lem:phi-ram}, describe the conductor and epsilon factor of $\phi$ at each local place, for certain $\rho$.

\begin{lem}\label{lem:phi-arch} 
For any $\rho$ such that $\pi'^s(\rho_\infty)$ (see Definition \ref{defn:A-packet}) is the trivial representation of $G_\infty = U(3)$.
Then
\[
\epsilon(1/2,\phi_\infty,\psi'_\infty) = - i .
\]
\end{lem}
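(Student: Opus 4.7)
The plan is to reduce to an explicit character computation and apply the standard formula for the archimedean epsilon factor at the complex place.

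First I would unwind the definitions at $v=\infty$. Since $E_\infty = \C$, the character $\phi_\infty$ is a character of $\C^\times$ given by $\phi_\infty(\alpha) = \mu_\infty(\alpha)\rho_{1,\infty}(\alpha/\bar\alpha)$. From Definition~\ref{defn:mu}, $\mu_\infty(z) = (z/|z|)^{-1}$ for $z \in \C^\times$. By Proposition~\ref{prop:archtrivial}, the hypothesis that $\pi'^s(\rho_\infty)$ is the trivial representation of $G_\infty = U(3)$ forces exactly one of two possibilities: either $\rho_{1,\infty}(\alpha/\bar\alpha) = (\alpha/\bar\alpha)^{2} = (\alpha/|\alpha|)^{4}$ or $\rho_{1,\infty}(\alpha/\bar\alpha) = (\alpha/\bar\alpha)^{-1} = (\alpha/|\alpha|)^{-2}$. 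Substituting each into the formula for $\phi_\infty$ yields
\[
\phi_\infty(\alpha) = (\alpha/|\alpha|)^{3} \quad \textrm{(Case 1)}, \qquad \phi_\infty(\alpha) = (\alpha/|\alpha|)^{-3} \quad \textrm{(Case 2)}.
\]
In both cases $\phi_\infty$ is a unitary character of $\C^\times$ of the form $z \mapsto (z/|z|)^{n}$ with $|n|=3$.

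Next I would invoke the standard formula from Tate's thesis (see, e.g., \cite{Tate1979Numbertheoreticbackground} or \cite{kudla2004tate}) for the epsilon factor of characters of $\C^\times$ with respect to the additive character $\psi'_\infty(x) = e^{2\pi i (x+\bar x)}$: for a character $\chi(z) = (z/|z|)^{n} |z|^{s}$ with $n \in \Z$, one has $\epsilon(s,\chi,\psi'_\infty) = i^{|n|}$, independent of $s$. Applying this formula to $\phi_\infty$ with $|n|=3$ in both cases gives
\[
\epsilon(1/2,\phi_\infty,\psi'_\infty) = i^{3} = -i,
\]
as desired.

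The only potential subtlety is matching conventions — specifically, that the additive character $\psi'_\infty$ fixed in the paper (the composition of $\psi_\infty(x) = e^{2\pi i x}$ with $\operatorname{Tr}_{\C/\R}$) is precisely the one for which the formula $\epsilon = i^{|n|}$ holds with no auxiliary factor of $|d|^{s-1/2}$ (the complex place is self-dual, so the conductor discrepancy is trivial). This is the only place where I would need to be careful, but it is a routine verification using the standard references, so no serious obstacle is expected.
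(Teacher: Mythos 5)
Your proposal is correct and follows essentially the same route as the paper: invoke Proposition \ref{prop:archtrivial} to pin down $\rho_{1,\infty}$, compute $\phi_\infty(z)=\mu_\infty(z)\rho_{1,\infty}(z/\bar z)=(z/|z|)^{\pm 3}$, and apply the archimedean epsilon-factor formula (\cite[Prop.\ 3.8(iv)]{kudla2004tate}) to get $i^{3}=-i$. The convention check you flag is exactly what the citation to Kudla handles, so there is nothing missing.
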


\begin{proof}
By Proposition \ref{prop:archtrivial} we get that either $\rho_{1,\infty}(z) = z^2$  or $\rho_{1, \infty}(z) = z^{-1}$.
Hence $\phi_\infty(z) = z^3 (z\bar{z})^{-3/2}$ or $\phi_\infty(z) = \bar{z}^3 (z\bar{z})^{-3/2}$.
By \cite[Proposition 3.8(iv)]{kudla2004tate}, we get $\epsilon(1/2, \phi_\infty, \psi'_\infty) =  i^3 = - i$.
\end{proof}

We recall some facts of local epsilon factors used in  \cite[p.\ 404]{Rogawski1992Analyticexpressionnumber} and \cite{Tate1979Numbertheoreticbackground}. 

\begin{prop}[Tate] \label{prop:Tate}
Let $\chi$  be an idele character of $E^\times$, let $\psi'$ be as above, and let $w$ be a place of $E$.

(i)  \cite[(3.6.3) p. 17]{Tate1979Numbertheoreticbackground} The local epsilon factor is 
\begin{equation}\label{eq:def_epsilon}
 \epsilon(1/2,\chi_w,\psi'_{w})=\chi(d_w)\cdot\frac{\int_{\mO_w^\times}\chi_w^{-1}(y)\psi'_{w}(\frac{y}{d_w})~dy}{\vert\int_{\mO_w^\times}\chi_w^{-1}(y)\psi'_{w}(\frac{y}{d_w})~dy\vert},
\end{equation}
for an arbitrary $d_w$ of valuation $c(\chi_w)+n(\psi'_w)$, where $n(\psi'_w)$ is the largest integer $n$ such that $\psi'_w(\mathfrak p_w^{-n})=1$.

(ii) \cite[(3.2.6.3) p.14]{Tate1979Numbertheoreticbackground} If $\chi'$ is an unramified character of $E_w^\times$, then   
\begin{equation}\label{eq:epsilon-1}
 \epsilon_w(1/2,\chi_w \chi' ,\psi'_{w})=\chi'(\pi^{c(\chi_w)})\cdot \epsilon_w(1/2,\chi_w,\psi'_{w}).
\end{equation}

(iii)  \cite[(3.6.8) p.17]{Tate1979Numbertheoreticbackground} Since $\chi^{-1}$ is the contragradient of $\chi$,
\begin{equation}\label{eq:epsilon-2}
 \epsilon_w(1/2,\chi_w,\psi'_w)\cdot \epsilon_w(1/2,\chi_w^{-1},\psi'_w)=\chi_w(-1).
\end{equation}
\end{prop}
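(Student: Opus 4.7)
The plan is to follow Tate's exposition in \cite{Tate1979Numbertheoreticbackground} directly, since the proposition is essentially a compilation of standard facts about local epsilon factors. In practice, rather than reprove these from scratch, I would cite the referenced formulas in Tate's article. Nevertheless, here is how a self-contained argument would proceed.

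First, recall that for a local character $\chi_w$ of $E_w^\times$ and additive character $\psi'_w$, Tate defines the local epsilon factor via the local functional equation for zeta integrals: for Schwartz-Bruhat functions $f$ on $E_w$, the ratio $Z(1-s,\hat f,\chi_w^{-1})/Z(s,f,\chi_w)$ equals $\epsilon(s,\chi_w,\psi'_w)L(1-s,\chi_w^{-1})/L(s,\chi_w)$, where $\hat f$ is the Fourier transform with respect to $\psi'_w$. To prove (i), I would test this functional equation against the explicit Schwartz-Bruhat function $f = \mathbf{1}_{d_w^{-1}(1+\mathfrak p_w^{c(\chi_w)})}$ (or a suitable normalization thereof), compute both $Z(s,f,\chi_w)$ and $Z(1-s,\hat f,\chi_w^{-1})$ via a direct change of variables $y = d_w x$, and extract $\epsilon(1/2,\chi_w,\psi'_w)$ as the displayed ratio. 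The normalization by the absolute value in the denominator accounts for the choice of Haar measure.

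For (ii), the key observation is that for an unramified character $\chi'$, we have $c(\chi_w\chi') = c(\chi_w)$, so the valuation $c(\chi_w)+n(\psi'_w)$ of $d_w$ is unchanged. Substituting $\chi_w\chi'$ in place of $\chi_w$ in formula \eqref{eq:def_epsilon}, the integrand acquires the factor $\chi'^{-1}(y)$; since $\chi'$ is unramified, $\chi'(y) = 1$ on $\mO_w^\times$, so the integral is unchanged. The prefactor $\chi_w\chi'(d_w)$ differs from $\chi_w(d_w)$ by $\chi'(d_w) = \chi'(\varpi_w^{c(\chi_w)+n(\psi'_w)}) = \chi'(\varpi_w^{c(\chi_w)})$, using that $\chi'$ is also unramified on the unramified choice of $d_w/\varpi_w^{c(\chi_w)+n(\psi'_w)} \in \mO_w^\times$. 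This yields \eqref{eq:epsilon-1}.

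For (iii), I would apply formula \eqref{eq:def_epsilon} to both $\chi_w$ and $\chi_w^{-1}$, noting $c(\chi_w^{-1}) = c(\chi_w)$, so the same $d_w$ works for both. Multiplying, the denominators combine to $|I(\chi_w)|\cdot|I(\chi_w^{-1})|$ where $I(\chi_w)$ denotes the Gauss sum integral; the prefactors give $\chi_w(d_w)\chi_w^{-1}(d_w) = 1$. A change of variable $y \mapsto -y$ in one of the integrals and the complex-conjugation relation $\overline{I(\chi_w)} = \chi_w(-1)I(\chi_w^{-1})$ (valid since $\psi'_w(-x) = \overline{\psi'_w(x)}$) allow us to combine the two integrals, giving $\chi_w(-1)$ as the product. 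The main obstacle, should one want to avoid simply citing \cite{Tate1979Numbertheoreticbackground}, is being careful about normalization of Haar measures and the additive character $\psi'_w$; but all three identities are insensitive to these normalizations in the manner stated, which is why Tate's formulas transport directly.
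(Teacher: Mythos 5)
The paper gives no proof of this proposition at all — it is stated purely as a recollection of Tate's formulas (3.6.3), (3.2.6.3) and (3.6.8) — so your primary plan of simply citing \cite{Tate1979Numbertheoreticbackground} is exactly the paper's approach, and your self-contained sketches of (i) and (iii) are the standard arguments and are fine. One caution about your supplementary derivation of (ii): substituting $\chi_w\chi'$ into \eqref{eq:def_epsilon} produces the prefactor $\chi'(d_w)=\chi'(\varpi_w^{c(\chi_w)+n(\psi'_w)})$, and your asserted equality $\chi'(\varpi_w^{c(\chi_w)+n(\psi'_w)})=\chi'(\varpi_w^{c(\chi_w)})$ is not justified in general (the stated reason only shows $\chi'(d_w)$ depends on $d_w$ through its valuation); Tate's (3.2.6.3) in fact carries the exponent $c(\chi_w)+n(\psi'_w)$, and the formula as displayed in the proposition agrees with it only because the paper invokes (ii) at places $w\neq\sqrt{-3}$, where the fixed $\psi'_w$ satisfies $n(\psi'_w)=0$.
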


\begin{lem} \label{lem:phi-unram}
Let $w \ne \sqrt{-3}$ be a finite place of $E$. 
Then for any $\rho$,
\[
c(\phi_w) = c(\rho_{1,E,w}) \qquad \mbox{ and } \qquad \epsilon(1/2, \phi_w, \psi'_w) = \mu_w(\varpi_w^{c(\rho_{1,E,w})})\epsilon(1/2, \rho_{1,E,w}, \psi'_w).
\]
In particular, if $\rho_{1,w}$ is unramified, then 
\[
c(\phi_w) = 0 \qquad \mbox{ and } \qquad \epsilon(1/2, \phi_w, \psi'_w) = 1.
\]
\end{lem}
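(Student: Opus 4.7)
\textbf{Proof plan for Lemma \ref{lem:phi-unram}.} The key input is that, for any finite place $w\neq\sqrt{-3}$ of $E$, the character $\mu_w$ is unramified by construction (Definition \ref{defn:mu}): $\mu_w|_{\mathcal{O}_w^\times}\equiv 1$. Since $\phi=\mu\cdot\rho_{1,E}$, at the local level we have $\phi_w=\mu_w\cdot\rho_{1,E,w}$, i.e.\ $\rho_{1,E,w}$ twisted by an unramified character. So the whole lemma reduces to two basic facts about how conductors and $\epsilon$-factors behave under twists by unramified characters.

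The first step is to verify the conductor statement $c(\phi_w)=c(\rho_{1,E,w})$. Since $\mu_w$ is trivial on $\mathcal{O}_w^\times$, for every $n\geq 0$ we have $\phi_w|_{1+\varpi_w^n\mathcal{O}_w}=\rho_{1,E,w}|_{1+\varpi_w^n\mathcal{O}_w}$, and so the minimal $n$ making either side trivial is the same. The second step computes the $\epsilon$-factor: this is a direct application of Proposition \ref{prop:Tate}(ii) (equation \eqref{eq:epsilon-1}) with $\chi_w=\rho_{1,E,w}$ and the unramified twist $\chi'=\mu_w$, giving
\[
\epsilon(1/2,\phi_w,\psi'_w)=\mu_w(\varpi_w^{c(\rho_{1,E,w})})\,\epsilon(1/2,\rho_{1,E,w},\psi'_w).
\]
No deep obstacle here; everything is essentially definitional once Definition \ref{defn:mu} is in hand.

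For the ``in particular'' clause one needs two easy observations. First, if $\rho_{1,v}$ is unramified then so is its base change $\rho_{1,E,w}$ for any $w\mid v$: the restriction of $\alpha\mapsto\alpha/\bar\alpha$ sends $\mathcal{O}_w^\times$ into $U_1(\Z_v)$, where $\rho_{1,v}$ is trivial. Hence $c(\rho_{1,E,w})=0$, and the factor $\mu_w(\varpi_w^{c(\rho_{1,E,w})})=\mu_w(1)=1$ disappears. Second, one checks that the additive character $\psi'_w=\psi_v\circ\mathrm{Tr}_{E_w/\Q_v}$ is also unramified at every finite $w\neq\sqrt{-3}$: for $w$ inert, $E_w/\Q_v$ is unramified so $\mathrm{Tr}(\mathcal{O}_w)=\Z_v$ and $n(\psi'_w)=0$; for $w$ split, $E_w\cong\Q_v$ and $\psi'_w=\psi_v$ is already unramified. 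An unramified character paired with an unramified additive character has $\epsilon=1$ (directly from the integral formula \eqref{eq:def_epsilon} with $d_w=1$), giving $\epsilon(1/2,\rho_{1,E,w},\psi'_w)=1$, and therefore $\epsilon(1/2,\phi_w,\psi'_w)=1$.

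The lemma is therefore entirely bookkeeping on top of Tate's Proposition \ref{prop:Tate}; the only thing that requires care is isolating the split case in the base-change argument (checking that $\alpha/\bar\alpha\in U_1(\Z_v)$ makes sense under the identification $E_\p^\times\hookrightarrow E_p^\times=E_\p^\times\times E_{\bar\p}^\times$), and verifying that $\psi'_w$ is unramified at every $w\neq\sqrt{-3}$, which is exactly where the ramified place $\sqrt{-3}$ must be excluded -- the different of the wildly(/tamely) ramified extension $E_{\sqrt{-3}}/\Q_3$ is nontrivial, so $n(\psi'_{\sqrt{-3}})\neq 0$, which is why the next lemma (for the ramified place) is treated separately.
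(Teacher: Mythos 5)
Your proposal is correct and follows essentially the same route as the paper: both rest on the observation that $\mu_w$ is unramified for $w\ne\sqrt{-3}$ (Definition \ref{defn:mu}), from which the conductor identity is immediate and the $\epsilon$-factor identity is Proposition \ref{prop:Tate}(ii). The extra bookkeeping you supply for the ``in particular'' clause (base change of an unramified $\rho_{1,v}$ is unramified, and $\psi'_w$ is unramified at every finite $w\ne\sqrt{-3}$) is left implicit in the paper but is accurate.
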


\begin{proof}
Recall that $\phi_w = \mu_w \rho_{1,E,w}$, and that $\mu_w$ is unramified for $w\ne \sqrt{-3}$.
The claim on the conductor follows immediately.
The claim on the epsilon factor follows from claim (ii) in Proposition \ref{prop:Tate}.
\end{proof}

\begin{lem} \label{lem:phi-inert}
Let $p$ be an odd prime which is inert in $E$. 
Then for any $\rho$,
\[
\epsilon(1/2, \phi_p, \psi'_p) =  (-1)^{b(\rho_{1,p})+c(\rho_{1,E,p})}.
\]
\end{lem}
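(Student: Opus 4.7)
The argument proceeds by reducing to $\epsilon(1/2,\rho_{1,E,p},\psi'_p)$ via Lemma~\ref{lem:phi-unram}, showing this sign is $\pm 1$ by a Galois-symmetry argument, and then pinning down the sign through an explicit Gauss-sum calculation using Tate's formula.

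\emph{Step 1: Reduction via Lemma~\ref{lem:phi-unram}.} Since $p$ is inert and odd, $p\ne 3$, so by Definition~\ref{defn:mu} we have $\mu_p(\varpi_p)=\mu_p(-p)=-1$. Thus Lemma~\ref{lem:phi-unram} yields
\[
\epsilon(1/2,\phi_p,\psi'_p)=\mu_p\!\left(\varpi_p^{c(\rho_{1,E,p})}\right)\epsilon(1/2,\rho_{1,E,p},\psi'_p)=(-1)^{c(\rho_{1,E,p})}\,\epsilon(1/2,\rho_{1,E,p},\psi'_p),
\]
reducing the claim to showing $\epsilon(1/2,\rho_{1,E,p},\psi'_p)=(-1)^{b(\rho_{1,p})}$.

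\emph{Step 2: The epsilon factor is $\pm 1$.} Write $\chi=\rho_{1,E,p}$. Since $\chi(x)=\rho_{1,p}(x/\bar x)$, for $x\in\Q_p^\times$ we have $x=\bar x$, hence $\chi|_{\Q_p^\times}\equiv 1$. Moreover, for all $x\in E_p^\times$,
\[
\chi(\bar x)=\rho_{1,p}(\bar x/x)=\rho_{1,p}\!\left((x/\bar x)^{-1}\right)=\chi(x)^{-1},
\]
so $\chi\circ c=\chi^{-1}$ where $c$ is the nontrivial element of $\mathrm{Gal}(E_p/\Q_p)$. The additive character $\psi'_p=\psi_p\circ\mathrm{Tr}_{E_p/\Q_p}$ is $c$-invariant, so a change of variables $y\mapsto\bar y$ in Tate's formula \eqref{eq:def_epsilon} gives
\[
\epsilon(1/2,\chi^{-1},\psi'_p)=\epsilon(1/2,\chi,\psi'_p).
\]
Combined with Proposition~\ref{prop:Tate}(iii) and $\chi(-1)=1$, this gives $\epsilon(1/2,\chi,\psi'_p)^2=1$.

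\emph{Step 3: Determining the sign.} Apply Tate's formula \eqref{eq:def_epsilon} with $d=\varpi_p^{c}$, $c:=c(\chi)$. Since $\varpi_p=-p\in\Q_p^\times$ and $\chi|_{\Q_p^\times}\equiv 1$, the prefactor $\chi(d)$ equals $1$. Decomposing $\mO_{E_p}^\times$ into cosets of $1+\varpi_p^{c}\mO_{E_p}$ and using that $\chi$ is trivial on $1+\varpi_p^{c}\mO_{E_p}$ while $\psi'_p$ is trivial on $\mO_{E_p}$ (because $\mathrm{Tr}(\mO_{E_p})\subseteq\Z_p$), the integral reduces, up to a positive volume factor, to the finite sum
\[
T=\sum_{u\in\mO_{E_p}^\times/(1+\varpi_p^{c}\mO_{E_p})}\chi^{-1}(u)\,\psi'_p(u/\varpi_p^{c}).
\]
Pairing $u$ with $\bar u$ (noting $\bar\varpi_p^c=\varpi_p^c$ and using $\chi\circ c=\chi^{-1}$) shows that $T\in\R$, confirming $\epsilon(1/2,\chi,\psi'_p)=T/|T|=\pm 1$.

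\emph{Step 4: Matching with $b(\rho_{1,p})$.} Via the local Hilbert~90 isomorphism $E_p^\times/\Q_p^\times\xrightarrow{\sim}U_1(\Q_p)$, $x\mapsto x/\bar x$, the character $\chi$ on $\mO_{E_p}^\times$ corresponds to $\rho_{1,p}$ on $U_1(\Q_p)$. In particular, the ``tame'' part of $\chi$ (on $\mO_{E_p}^\times/(1+\varpi_p\mO_{E_p})\cong\F_{p^2}^\times$) is determined by the value $\rho_{1,p}(\zeta)=\zeta^{b(\rho_{1,p})}$ on the order-$6$ subgroup $\langle\zeta\rangle\subset U_1(\Q_p)$ together with its restriction to the pro-$p$ subgroup $U_1(\Z_p,p)$. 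Substituting $u=u_0(1+\varpi_p w)$ for $u_0$ in a Teichmüller-type section and $w\in\mO_{E_p}/\varpi_p^{c-1}\mO_{E_p}$, the sum $T$ factors into a product of a classical quadratic-extension Gauss sum over $\F_{p^2}$ (whose sign is computed via the Hasse--Davenport relation combined with $\chi|_{\F_p^\times}\equiv 1$) and a ``wild'' exponential sum that is manifestly positive. The quadratic Gauss sum contributes the sign $(-1)^{b(\rho_{1,p})}$, yielding the claim.

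The main obstacle is Step 4, the explicit sign computation of the Gauss sum $T$. While Steps 1--3 are essentially formal, extracting the precise sign $(-1)^{b(\rho_{1,p})}$ requires careful bookkeeping of the choice of uniformizer $\varpi_p=-p$, the embedding $\zeta\hookrightarrow\mO_{E_p}^\times$, and the Hasse--Davenport relation relating Gauss sums over $\F_{p^2}$ to those over $\F_p$ (which is where the parity of $b$, rather than merely the value $\chi(\zeta)=\zeta^{2b}$, enters). This calculation parallels the one carried out in \cite[p.~404]{Rogawski1992Analyticexpressionnumber}.
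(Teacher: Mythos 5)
Your Steps 1--3 are sound: the reduction via Lemma \ref{lem:phi-unram} together with $\mu_p(\varpi_p)=-1$ is correct, and the symmetry $\chi\circ c=\chi^{-1}$, $\chi|_{\Q_p^\times}\equiv 1$ combined with Proposition \ref{prop:Tate}(iii) does force $\epsilon(1/2,\rho_{1,E,p},\psi'_p)=\pm 1$. But the entire content of the lemma is the determination of that sign, and Step 4 does not prove it. The asserted factorization of $T$ into ``a tame Gauss sum over $\F_{p^2}$ times a manifestly positive wild sum'' is unjustified: for conductor $c\geq 2$ the sum is evaluated by stationary phase, $T$ being (up to a positive constant, and an auxiliary quadratic Gauss sum when $c$ is odd) $\chi^{-1}(\gamma)\,\psi'_p(\gamma/\varpi_p^{c})$ for a point $\gamma$ determined by the restriction of $\chi$ to higher unit groups, and there is no a priori splitting with a positive wild factor. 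The sign is pinned down only by using the symmetry $\chi\circ c=\chi^{-1}$, $\chi|_{\Q_p^\times}\equiv1$ to show $\gamma$ may be taken trace-zero, i.e. in $\sqrt{-3}\,\Q_p^\times$, whence $\chi^{-1}(\gamma)=\chi(\sqrt{-3})=\rho_{1,p}(-1)=(-1)^{b(\rho_{1,p})}$ and $\psi'_p(\gamma/\varpi_p^{c})=1$; nothing in your sketch supplies this. Moreover the Hasse--Davenport relation is the wrong tool: it concerns characters of $\F_{p^2}^\times$ pulled back through the norm, whereas your $\chi$ is trivial on $\F_p^\times$ (the image of the norm), the opposite family. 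Even in the tame case $c=1$ no quadratic Gauss sum appears: summing over $\F_p^\times$-cosets gives $T=p\,\chi^{-1}(u_0)$ with $u_0$ the trace-zero direction $u_0\equiv\sqrt{-3}$, which is how $(-1)^{b}$ actually enters.

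In effect you are attempting to reprove from scratch the Fr\"ohlich--Queyrut-type identity $\epsilon(1/2,\rho_{1,E,p},\psi'_p)=\rho_{1,E,p}(\sqrt{-3})$, which is precisely what the paper avoids: its proof quotes Proposition 4.1 of \cite{Rogawski1992Analyticexpressionnumber}, namely $\epsilon(1/2,\phi_p,\psi'_p)=\phi_p(\sqrt{-3})(-1)^{c(\phi_p)}$, then uses Lemma \ref{lem:phi-unram} to get $c(\phi_p)=c(\rho_{1,E,p})$ and evaluates $\phi_p(\sqrt{-3})=\rho_{1,p}(-1)=(-1)^{b(\rho_{1,p})}$. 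So either cite that result as the paper does, or carry out the coset/stationary-phase argument sketched above; as written, Step 4 is a genuine gap at exactly the point where the lemma's content lies.
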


\begin{proof}
By Proposition 4.1 of \cite{Rogawski1992Analyticexpressionnumber}, $\epsilon(1/2, \phi_p, \psi'_p) = \phi_p(\sqrt{-3}) (-1)^{c(\phi_p)}$. 
By Lemma \ref{lem:phi-unram}, $c(\phi_w) = c(\rho_{1,E,w})$.
Finally $\phi_p(\sqrt{-3}) = \rho_{1,E,p}(\sqrt{-3}) = \rho_{1,p}(-1) = (-1)^{b(\rho_{1,p})}$. 
\end{proof} 

\begin{lem} \label{lem:epsilon-split}
Let $p=w\bar w$ be a prime which is split in $E$.  Then for any $\rho$,
\[
 \epsilon(1/2,\phi_p,\psi_p):=\epsilon(1/2,\phi_w,\psi'_w)\cdot \epsilon(1/2,\phi_{\bar w},\psi'_{\bar w})=(-1)^{b(\rho_{1,p})}.
\]
\end{lem}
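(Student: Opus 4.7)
The plan is to exploit the fact that for a split prime $p = w\bar{w}$, the two local characters $\phi_w$ and $\phi_{\bar{w}}$ are inverses of each other under the natural identifications, so that Tate's local functional equation (equation \eqref{eq:epsilon-2} in Proposition \ref{prop:Tate}) collapses the product of epsilon factors to the single value $\phi_w(-1)$, which can then be computed directly.

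First I would fix the splitting $E_p \cong \Q_p \times \Q_p$ compatible with the two places $w$ and $\bar{w}$, so that $E_w \cong E_{\bar{w}} \cong \Q_p$ and the Galois involution $\alpha \mapsto \bar{\alpha}$ corresponds to $(a,b) \mapsto (b,a)$. Under this identification $\psi'_w = \psi'_{\bar{w}} = \psi_p$, since for a split place the local trace $\mathrm{Tr}_{E_w/\Q_p}$ is the identity. Next I would verify that $\mu_{\bar{w}} = \mu_w^{-1}$ as characters on $\Q_p^\times$: since $p \equiv 1 \Mod{3}$, by Lemma \ref{lem:omega}(3) $\omega_p$ is trivial, and by Proposition \ref{prop:mu}(2) the diagonal restriction gives $\mu_w(x)\mu_{\bar{w}}(x) = \omega_p(x) = 1$ for every $x \in \Q_p^\times$. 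An analogous identity $\rho_{1,E,\bar{w}} = \rho_{1,E,w}^{-1}$ holds: using $\rho_{1,E}(\alpha) = \rho_1(\alpha/\bar{\alpha})$ and the identification $U_1(\Q_p) \cong \Q_p^\times$ via the first coordinate, one computes that $\rho_{1,E,p}(a,b) = \rho_{1,p}(a)\,\rho_{1,p}(b)^{-1}$, so $\rho_{1,E,w} = \rho_{1,p}$ and $\rho_{1,E,\bar{w}} = \rho_{1,p}^{-1}$. Combining these gives $\phi_{\bar{w}} = \phi_w^{-1}$.

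Applying equation \eqref{eq:epsilon-2} of Proposition \ref{prop:Tate} now yields
\[
\epsilon(1/2, \phi_w, \psi'_w) \cdot \epsilon(1/2, \phi_{\bar{w}}, \psi'_{\bar{w}}) \;=\; \epsilon(1/2, \phi_w, \psi_p) \cdot \epsilon(1/2, \phi_w^{-1}, \psi_p) \;=\; \phi_w(-1).
\]
To finish I would evaluate $\phi_w(-1) = \mu_w(-1)\,\rho_{1,E,w}(-1)$. Since $-1 \in \mO_w^\times$ and $\mu_w|_{\mO_w^\times} \equiv 1$ by Definition \ref{defn:mu}, we get $\mu_w(-1) = 1$. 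On the other hand $\rho_{1,E,w}(-1) = \rho_{1,p}(-1) = \rho_{1,p}(\zeta^3) = \zeta^{3b(\rho_{1,p})} = (-1)^{b(\rho_{1,p})}$ from Definition \ref{defn:phi-b}, which gives the claimed value.

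The main obstacle is purely bookkeeping: one must be careful with the conventions for identifying $E_w$ and $E_{\bar{w}}$ with $\Q_p$, and for the identification of $U_1(\Q_p)$ with $\Q_p^\times$, so that the inverse relationships $\mu_{\bar{w}} = \mu_w^{-1}$ and $\rho_{1,E,\bar{w}} = \rho_{1,E,w}^{-1}$ hold on the nose (rather than only up to an automorphism). Once the identifications are pinned down, the proof reduces to the single invocation of Tate's local functional equation together with the explicit computation of $\rho_{1,p}(-1)$.
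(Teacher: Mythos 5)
Your proof is correct and follows essentially the same route as the paper's: both rest on the observation that the local components at $w$ and $\bar w$ are mutually inverse, so that Tate's relation \eqref{eq:epsilon-2} collapses the product of epsilon factors to a character value at $-1$, which is then computed to be $(-1)^{b(\rho_{1,p})}$. The only (harmless) difference is that you fold $\mu$ in from the start by noting $\mu_{\bar w}=\mu_w^{-1}$ (since $\omega_p$ is trivial at a split prime) and apply the functional equation directly to $\phi_w$, whereas the paper first strips off the unramified characters $\mu_w,\mu_{\bar w}$ via the twist formula \eqref{eq:epsilon-1}, applies \eqref{eq:epsilon-2} to $\rho_{1,E,w}$, and then uses $\mu_w(\varpi_w)\mu_{\bar w}(\varpi_{\bar w})=1$.
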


\begin{proof}
 Identifying $E_p=E_w\oplus E_{\bar w}$ with $\Q_p\oplus\Q_p$, for any $x\in\Q_p$ we have 
$ \rho_{1,E,w}(x)=\rho_{1,p}((x,x^{-1}))$ as well as $ \rho_{1,E,\bar w}(x)=\rho_{1,p}((x^{-1},x))$, i.e.
$\rho_{1,E,\bar w}=\overline{\rho_{1,E,w}}$. In particular, we have
\[
 \rho_{1,E,w}(-1)=\overline{\rho_{1,E,\bar w}}(-1)=\rho_{1,p}((-1,-1))=(-1)^{b(\rho_{1,p})}.
\]
By Lemma \ref{lem:phi-unram},
\[
 \epsilon_w(1/2,\phi_w,\psi'_{w})=\mu_w(\varpi_w^{c(\rho_{1,E,w})})\epsilon_w(1/2,\rho_{1,E,w},\psi'_{w}),
\]
as well as 
\[
 \epsilon_{1/2,\bar w}(\phi_{\bar w},\psi'_{\bar w})= \mu_{\bar w}(\varpi_{\bar w}^{c(\rho_{1,E,\bar w})})\epsilon_{\bar w}(1/2,\rho_{1,E,\bar w},\psi'_{\bar w}).
\]
Here we may interpret the epsilon factors at the places $w$ and $\bar w$ of $E$ as epsilon factors at $p$ of $\Q_p$. Then, by \eqref{eq:epsilon-2},
\begin{align*}
 \epsilon_v(1/2,\phi,\psi')&:=\epsilon_w(1/2,\phi_w,\psi'_{w})\cdot \epsilon_{1/2,\bar w}(\phi_{\bar w},\psi'_{\bar w})\\
 &=\mu_w(\pi_w^{c(\rho_{1,E,w})})\mu_{\bar w}(\pi_{\bar w}^{c(\rho_{1,E,w})})\cdot 
 \epsilon_p(1/2,\rho_{1,E,w},\psi_p)\cdot \epsilon_p(1/2,\rho_{1,E,w}^{-1},\psi_p)\\
 &= \mu_w(\pi_w^{c(\rho_{1,E,w})})\mu_{\bar w}(\pi_{\bar w}^{c(\rho_{1,E,w})})\rho_{1,E,w}(-1).
\end{align*}
By Definition \ref{defn:mu}, we obtain $\mu_w(\pi_w)\mu_{\bar w}(\pi_{\bar w})=\frac{\mathfrak p}{\sqrt{p}}\frac{\bar{\mathfrak p}}{\sqrt{p}}=1$, so  
 $\epsilon_v(1/2,\phi,\psi')=\rho_{1,E,w}(-1)=(-1)^{b(\rho_1,p)}$.
\end{proof}

\begin{lem} \label{lem:phi-ram}
Let  $\rho$ be such that $c(\rho_{1,E,\sqrt{-3}}) \leq 2$. 
Then  
\[
 c(\phi_{\sqrt{-3}}) = \begin{cases} 1 & b(\rho_{1,3}) \equiv 1 \mod3 \\ 2 & \mbox{otherwise} \end{cases}
\]
and
\[
\epsilon(1/2, \phi_{\sqrt{-3}}, \psi'_{ \sqrt{-3}}) = \begin{cases} i & b(\rho_{1,3}) \equiv 1,2,3,4 \mod{6} \\ -i   & b(\rho_{1,3}) \equiv 0,5 \mod{6} \end{cases}.
\]
\end{lem}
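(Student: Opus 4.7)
The plan is to reduce both the conductor and epsilon factor computations to explicit evaluations on the generators $\zeta,\sqrt{-3}$ of $E_{\sqrt{-3}}^\times$ modulo $1+3\mO_{\sqrt{-3}}$, and then apply Tate's formula from Proposition \ref{prop:Tate}(i). Using $\bar\zeta=\zeta^{-1}$ in $E$ (since $\zeta\bar\zeta=N(\zeta)=1$) and Definition \ref{defn:mu}, I would first compute
\[
\phi_{\sqrt{-3}}(\zeta) \;=\; \mu_{\sqrt{-3}}(\zeta)\cdot\rho_{1,3}(\zeta/\bar\zeta) \;=\; \zeta^{\,1+2b(\rho_{1,3})},\qquad \phi_{\sqrt{-3}}(\sqrt{-3}) \;=\; i\cdot(-1)^{b(\rho_{1,3})}.
\]
The hypothesis $c(\rho_{1,E,\sqrt{-3}})\leq 2$ combined with $\mu_{\sqrt{-3}}|(1+3\mO_{\sqrt{-3}})\equiv 1$ gives $\phi_{\sqrt{-3}}|(1+3\mO_{\sqrt{-3}})\equiv 1$, so $c(\phi_{\sqrt{-3}})\leq 2$. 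To refine this, I would check (using $1+\zeta=\sqrt{-3}\cdot\bar\zeta$) that $\zeta\equiv -1\pmod{\sqrt{-3}}$, so the intermediate subgroup $(1+\sqrt{-3}\mO_{\sqrt{-3}})/(1+3\mO_{\sqrt{-3}})$ corresponds to $\langle\zeta^2\rangle$ inside $\mO_{\sqrt{-3}}^\times/(1+3\mO_{\sqrt{-3}})\cong\langle\zeta\rangle$. Thus $c(\phi_{\sqrt{-3}})\leq 1$ iff $\phi_{\sqrt{-3}}(\zeta^2)=\zeta^{2(1+2b)}=1$ iff $b(\rho_{1,3})\equiv 1\pmod 3$, yielding the conductor dichotomy. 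The conductor cannot vanish since $1+2b$ is odd.

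Next I would compute $n(\psi'_{\sqrt{-3}})$: since $\mathrm{Tr}_{E_{\sqrt{-3}}/\Q_3}(a+b\sqrt{-3})=2a$ and $\psi_3$ is trivial on $\Z_3$ but not on $3^{-1}\Z_3$, one finds $\mathrm{Tr}(\sqrt{-3}^{-1}\mO_{\sqrt{-3}})\subseteq\Z_3$ while $\mathrm{Tr}(3^{-1}\mO_{\sqrt{-3}})=3^{-1}\Z_3$, so $n(\psi'_{\sqrt{-3}})=1$. Tate's formula then demands $d$ of valuation $c(\phi_{\sqrt{-3}})+1$. In Case A ($b\equiv 1\pmod 3$, so $c=1$), take $d=-3$. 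The integral collapses to a sum over $\{1,-1\}$ and yields
\[
\psi_3(-2/3)-\psi_3(2/3) \;=\; e^{-2\pi i/3}-e^{2\pi i/3} \;=\; -i\sqrt{3};
\]
combined with $\phi_{\sqrt{-3}}(-3)=\phi_{\sqrt{-3}}(-1)\phi_{\sqrt{-3}}(3)=-1$ and dividing by the modulus $\sqrt{3}$, we get $\epsilon=(-1)(-i)=i$ for both $b\in\{1,4\}$. In Case B ($c=2$), take $d=3\sqrt{-3}$, so $\phi_{\sqrt{-3}}(d)=i(-1)^b$; the integral becomes $\sum_{k=0}^{5}\zeta^{-(1+2b)k}\psi_3\!\bigl(\mathrm{Tr}(\zeta^k/(3\sqrt{-3}))\bigr)$.

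The main technical step is evaluating this last sum: the traces $\mathrm{Tr}(\zeta^k/(3\sqrt{-3}))$ take the values $0,\tfrac13,\tfrac13,0,-\tfrac13,-\tfrac13$ for $k=0,\dots,5$, so the sum depends only on $(1+2b)\bmod 6$ and evaluates (by direct trigonometric identity with $\zeta=e^{\pi i/3}$) to $-3$ when $b\equiv 0\pmod 3$ and to $+3$ when $b\equiv 2\pmod 3$. Combining with $\phi_{\sqrt{-3}}(d)=i(-1)^b$ yields $\epsilon=-i$ for $b\in\{0,5\}$ and $\epsilon=i$ for $b\in\{2,3\}$. Collecting all six residues mod $6$ gives the stated formula. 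The main obstacle is this conductor-$2$ Gauss sum evaluation, which is routine but requires careful bookkeeping of six trace values and a six-term exponential sum; the sign flip between $b\equiv 0$ and $b\equiv 2$ is the content of the computation and is ultimately a consequence of the identity $e^{2\pi i/3}+e^{-2\pi i/3}=-1$ governing the quadratic Gauss sum over $\F_3$.
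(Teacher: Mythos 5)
Your proposal is correct and follows essentially the same route as the paper's proof: compute $\phi_{\sqrt{-3}}(\zeta)=\zeta^{1+2b}$ and $\phi_{\sqrt{-3}}(\sqrt{-3})=i(-1)^b$, read off the conductor from the filtration $1+3\mO_{\sqrt{-3}}\subset 1+\sqrt{-3}\mO_{\sqrt{-3}}\subset\mO_{\sqrt{-3}}^\times$ (with $(1+\sqrt{-3}\mO_{\sqrt{-3}})/(1+3\mO_{\sqrt{-3}})=\langle\zeta^2\rangle$), and evaluate $\epsilon$ via Tate's formula by collapsing the integral to a two-term sum when $c=1$ and a six-term sum over $\langle\zeta\rangle$ when $c=2$, exactly as in the paper. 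The only deviations are immaterial: you choose $d=-3$ and $d=3\sqrt{-3}$ instead of $\sqrt{-3}^2$ and $\sqrt{-3}^3$ (which merely flips intermediate signs and leaves $\epsilon$ unchanged), and you explicitly verify $n(\psi'_{\sqrt{-3}})=1$ and the $\langle\zeta^2\rangle$ identification, which the paper states without proof.
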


\begin{proof}
Since $c(\mu_{\sqrt{-3}}) = 2$ and $c(\rho_{1,E,\sqrt{-3}}) \leq 2$ we get that $c(\phi_{\sqrt{-3}})\leq 2$.
Now
\[
\phi_{\sqrt{-3}}(\zeta) = \mu_{\sqrt{-3}}(\zeta) \rho_{1,E,\sqrt{-3}}(\zeta)  = \zeta \cdot  \rho_{1,3}(\zeta^2) = \zeta^{1+2b_3}.
\]
Using the decompositions, $\mO_{\sqrt{-3}}^\times = (\mO_{\sqrt{-3}})_2 \cdot \langle \zeta \rangle$ and $ (\mO_{\sqrt{-3}})_1 = (\mO_{\sqrt{-3}})_2 \cdot \langle \zeta^2 \rangle$, we get that $c(\phi_{\sqrt{-3}}) \ne 0$ for any $b(\rho_{1,3})$, and $c(\phi_{\sqrt{-3}}) = 1$ if and only if $3 \mid 1+2b(\rho_{1,3})$, which proves the claim on the conductor. 

We use \eqref{eq:def_epsilon}. Here, $n(\psi'_{\sqrt{-3}})=1$, and we have $\psi_3(\pm\frac13)=\zeta^{\mp 2}$. We further know $\phi_{\sqrt{-3}}(\zeta)=\zeta^{1+2b(\rho_{1,3})}$, and $\phi_{\sqrt{-3}}({\sqrt{-3}})=i\cdot (-1)^{b(\rho_{1,3})}$.
 
First assume $c(\phi_{\sqrt{-3}})=1$, which is equivalent to $b(\rho_{1,3})\equiv 0\mod 3$. Putting $V_1=\mathrm{vol}(1+{\sqrt{-3}}\mO_{\sqrt{-3}})$, the integral in \eqref{eq:def_epsilon} is given by
\begin{align*}
&\int_{\mO_{\sqrt{-3}}^\times}\chi_{\sqrt{-3}}^{-1}(y)\psi'_{\sqrt{-3}}(\frac{y}{\sqrt{-3}^2})~dy
\\=&
V_1\cdot\left(\phi_{\sqrt{-3}}^{-1}(1)\psi'_{\sqrt{-3}}(-\frac13)+
\phi_{\sqrt{-3}}^{-1}(-1)\psi'_{\sqrt{-3}}(\frac13)  
\right)\\
=&V_1(\psi_3(-\frac23)-\psi_3(\frac23))=V_1\cdot(\zeta^4-\zeta^2)=V_1\cdot(-\sqrt{-3}).
\end{align*}
Accordingly,
\begin{align*}
\epsilon(1/2,\phi_{\sqrt{-3}},\psi'_{\sqrt{-3}})&=\phi_{\sqrt{-3}}({\sqrt{-3}}^2)\cdot
\frac{V_1\cdot(-\sqrt{-3})}{\lvert V_1\cdot(-\sqrt{-3})\rvert}=\left(i\cdot(-1)^{b(\rho_{1,3})}\right)^2\cdot(-i)=i.
\end{align*}
Now assume $c(\phi_{\sqrt{-3}})=2$. Then, writing $V_2=\mathrm{vol}(1+3\mO_{\sqrt{-3}})$, the integral in \eqref{eq:def_epsilon} is given by
\begin{align*}
&\int_{\mO_{\sqrt{-3}}^\times}\chi_{\sqrt{-3}}^{-1}(y)\psi'_{\sqrt{-3}}(\frac{y}{\sqrt{-3}^3})~dy
\\=&
V_2\cdot\sum_{j=0}^5\phi_{\sqrt{-3}}^{-1}(\zeta^j)\psi'_{\sqrt{-3}}(\frac{\zeta^j}{{\sqrt{-3}}^3})\\
=& V_2\cdot\left(1+\zeta^{1-2b(\rho_{1,3})}+\zeta^{2b(\rho_{1,3})}-1+\zeta^{-2b(\rho_{1,3})}+\zeta^{-1+2b(\rho_{1,3})}\right)\\
=& V_2\cdot \left\{\begin{array}{lc}3&b\equiv 0\mod 3\\ -3& b\equiv 2\mod 3 \end{array}\right..
  \end{align*}
So we obtain
\begin{align*}
\epsilon(1/2,\phi_{\sqrt{-3}},\psi'_{\sqrt{-3}})&=\phi_{\sqrt{-3}}({\sqrt{-3}}^3)\cdot
\left\{\begin{array}{lc}1&b(\rho_{1,3})\equiv 0\mod 3\\ -1& b(\rho_{1,3})\equiv 2\mod 3 \end{array}\right\}\\
&=i\cdot (-1)^{1+3b(\rho_{1,3})}\cdot 
\left\{\begin{array}{lc}1&b(\rho_{1,3})\equiv 0\mod 3\\ -1& b(\rho_{1,3})\equiv 2\mod 3,\end{array}\right.
 \end{align*}
 which implies the claim of the lemma.
\end{proof}

The next Lemma is a special case of calculating the conductors and epsilon factors of $\phi = \phi(\rho)$, $\rho = (\rho_1,\rho')$ as in Definition \ref{defn:A-packet}, where $\rho_1$ is trivial.
\begin{lem} \label{lem:mu-epsilon}
Let $\mu$ be as in Proposition \ref{prop:mu}. 
Then for any place $w$, 
\[
c(\mu_w) = \left\lbrace \begin{array}{cc} 2 & w = \sqrt{-3} \\ 0 & w \ne  \sqrt{-3},\infty \end{array} \right.
\]
 and 
 \[
\epsilon(1/2,\mu_w,\psi'_w) = \left\lbrace \begin{array}{cc} i & w = \infty \\ - i & w =  \sqrt{-3} \\ 1 & w \ne \sqrt{-3},\,\infty . \end{array} \right.
\]
\end{lem}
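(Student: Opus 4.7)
The statement is essentially the specialization of Lemmas \ref{lem:phi-unram}, \ref{lem:phi-inert}, \ref{lem:epsilon-split}, \ref{lem:phi-ram} to the trivial character $\rho_1 = \mathbf{1}$ (so that $\phi = \mu$, $b(\rho_{1,v}) = 0$, and $c(\rho_{1,E,w}) = 0$), supplemented by a direct archimedean computation since Lemma \ref{lem:phi-arch} does not apply when $\rho_{1,\infty}$ is trivial. My plan is to verify each local claim separately.

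\textbf{Conductors.} For every finite $w \neq \sqrt{-3}$, Definition \ref{defn:mu} imposes $\mu_w|_{\mO_w^\times} \equiv 1$, so $c(\mu_w)=0$. At $w = \sqrt{-3}$, the definition gives $\mu_{\sqrt{-3}}|_{1+3\mO_{\sqrt{-3}}} \equiv 1$, and since $\varpi_{\sqrt{-3}}^2\mO_{\sqrt{-3}} = 3\mO_{\sqrt{-3}}$, this yields $c(\mu_{\sqrt{-3}}) \leq 2$. To rule out conductor $\leq 1$, I will exhibit an element of $1 + \varpi_{\sqrt{-3}}\mO_{\sqrt{-3}}$ on which $\mu_{\sqrt{-3}}$ is nontrivial: by Lemma \ref{lem:unif-mod3}(i), $\mO_{\sqrt{-3}}^\times = (1+3\mO_{\sqrt{-3}})\cdot\langle\zeta\rangle$, and since $2 \equiv -1 = \zeta^3 \pmod 3$ we may write $2 = \zeta^3(1-3) \cdot (-1) = \zeta^3 \cdot (-2)$, whence $1+\sqrt{-3} = 2\zeta = \zeta^4 \cdot (-2)$ with $-2 \in 1+3\mO_{\sqrt{-3}}$. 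Thus $\mu_{\sqrt{-3}}(1+\sqrt{-3}) = \zeta^4 \neq 1$, forcing $c(\mu_{\sqrt{-3}}) = 2$.

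\textbf{Epsilon factors at finite places $w \neq \sqrt{-3}$.} By definition, $\mu_w$ is unramified and $\psi'_w$ is of conductor zero at every such place (for $w$ lying above primes coprime to $3$, $\psi_p$ itself has conductor $0$ and so does $\psi'_w = \psi_p \circ \mathrm{Tr}_{E_w/\Q_p}$). The standard Tate formula \eqref{eq:def_epsilon} then collapses: with $d_w = 1$, the integral becomes $\int_{\mO_w^\times} \mu_w^{-1}(y)\,dy = \mathrm{vol}(\mO_w^\times) > 0$, so the ratio is $1$, giving $\epsilon(1/2,\mu_w,\psi'_w) = \mu_w(1) = 1$. (Equivalently, this is the case $b=0$, $c=0$ of Lemmas \ref{lem:phi-inert} and \ref{lem:epsilon-split}.)

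\textbf{Epsilon factor at $w = \sqrt{-3}$.} Applying Lemma \ref{lem:phi-ram} with $\rho_1 = \mathbf{1}$, so $b(\rho_{1,3}) = 0 \in 6\Z$ and $c(\rho_{1,E,\sqrt{-3}})=0 \leq 2$, outputs the values $c(\mu_{\sqrt{-3}}) = 2$ and $\epsilon(1/2,\mu_{\sqrt{-3}},\psi'_{\sqrt{-3}}) = -i$, as claimed; no new computation is required.

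\textbf{Epsilon factor at $\infty$.} This is the step that genuinely requires separate work, because the hypothesis of Lemma \ref{lem:phi-arch} (the associated $\pi'^s$ being trivial) is not met here. I rewrite $\mu_\infty(z) = (z/|z|)^{-1} = \bar{z}(z\bar{z})^{-1/2}$, which is the unitary Hecke character of $\C^\times$ of ``exponent'' $k=1$. Then \cite[Proposition 3.8(iv)]{kudla2004tate}, exactly as cited in the proof of Lemma \ref{lem:phi-arch} (but now applied with exponent $1$ instead of $3$), gives $\epsilon(1/2,\mu_\infty,\psi'_\infty) = i^1 = i$. This is the only step where I expect any friction, and it is settled by invoking the standard archimedean formula directly rather than going through Lemma \ref{lem:phi-arch}.
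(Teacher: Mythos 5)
Your proof is correct and follows essentially the same route as the paper: conductors read off from Definition \ref{defn:mu}, triviality of the epsilon factor at places where both $\mu_w$ and $\psi'_w$ are unramified, the archimedean value via \cite[Prop.\ 3.8(iv)]{kudla2004tate}, and the value $-i$ at $\sqrt{-3}$ obtained by specializing Lemma \ref{lem:phi-ram} to trivial $\rho_1$ (the paper's own proof leaves this last point implicit, so your extra step — together with the explicit verification that $c(\mu_{\sqrt{-3}})=2$ via $\mu_{\sqrt{-3}}(1+\sqrt{-3})=\zeta^4\neq 1$ — only makes the argument more complete, and there is no circularity since Lemma \ref{lem:phi-ram} precedes this statement and does not use it). One trivial slip: the intermediate expression $\zeta^3(1-3)\cdot(-1)$ equals $-2$, not $2$; the identity you actually use, $2=\zeta^3\cdot(-2)$ with $-2\in 1+3\mO_{\sqrt{-3}}$, is correct, so nothing is affected.
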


\begin{proof}
The claim about the conductor follows from Definition \ref{defn:mu}. 
Note that the epsilon factor is trivial at places where all the data ($\mu$ and $\psi'$) is unramified, which are $w \ne \infty, \sqrt{-3}$.
For $w = \infty$, note that $\mu_\infty(z) = (\frac{z}{\bar{z}})^{-1/2} = \bar{z} (z\bar{z})^{-1/2}$, and by \cite[Proposition 3.8(iv)]{kudla2004tate}, we get $\epsilon(1/2, \mu_\infty, \psi'_\infty) = i$.
\end{proof}

\begin{prop}\label{prop:phi-global-unram}
Let $\rho$ be such that: 
(1) $\pi'^s(\rho_\infty)$ is the trivial representation of $G_\infty = U(3)$,  (2) $c(\rho_{1,E,w}) = 0$ for any $w \ne \sqrt{-3}$, and (3) $c(\rho_{1,E,\sqrt{-3}})\leq 2$.
Then 
\[
 c(\phi_w) = \begin{cases} 1 & w = \sqrt{-3} \\ 0 & \mbox{otherwise} \end{cases} \qquad \mbox{ and } \qquad 
\epsilon(1/2, \phi_w, \psi'_w) = \begin{cases} i & w = \sqrt{-3} \\ -i & w = \infty \\ 1  & \mbox{otherwise} \end{cases}.
\]
In particular, 
\[
\epsilon(1/2, \phi) = \prod_w \epsilon(1/2, \phi_w, \psi'_w) = 1.
\]
\end{prop}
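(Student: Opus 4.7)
The plan is to reduce the statement to a direct application of the preceding Lemmas \ref{lem:phi-arch}, \ref{lem:phi-unram}, and \ref{lem:phi-ram}, once the integer $b(\rho_{1,3})\pmod 6$ is pinned down from the hypotheses. The only nontrivial input is thus determining $b(\rho_{1,3})$.

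First, I would use hypothesis (2) to deduce that $\rho_{1,p}$ is unramified (in the sense that $\rho_{1,p}|_{U_1(\Z_p)}=1$) for every prime $p\neq 3$. For non-split $p$ this follows because the norm-style map $\alpha\mapsto \alpha/\bar\alpha$ sends $\mO_{E_p}^{\times}$ onto $U_1(\Z_p)$ by Hilbert 90, so the unramifiedness of $\rho_{1,E,w}=\rho_{1,p}\circ(\alpha\mapsto \alpha/\bar\alpha)$ propagates back to $\rho_{1,p}$; the split case is analogous after passing through both idempotents. Since $\zeta\in\mO_{E_p}^{\times}$ lies in $U_1(\Z_p)$ for each such $p$, this forces $b(\rho_{1,p})=0$ for all $p\neq 3$. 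Next, hypothesis (1) combined with Proposition \ref{prop:archtrivial} shows that $\rho_{1,\infty}(y)\in\{y^{2},y^{-1}\}$ for $y\in U_1(\R)=S^{1}$, so evaluating at $\zeta=e^{2\pi i/6}$ gives $b(\rho_{1,\infty})\in\{2,-1\}\pmod 6$. Feeding this into the cocycle relation $\sum_{v} b(\rho_{1,v})\equiv 0\pmod 6$ from Definition \ref{defn:phi-b} yields
\[
b(\rho_{1,3})\in\{4,1\}\pmod 6.
\]
Crucially, both values satisfy $b(\rho_{1,3})\equiv 1\pmod 3$ and lie in $\{1,2,3,4\}\pmod 6$.

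With $b(\rho_{1,3})$ thus constrained, the proof assembles by cases over places $w$ of $E$. At $w=\infty$, Lemma \ref{lem:phi-arch} delivers $\epsilon(1/2,\phi_\infty,\psi'_\infty)=-i$ immediately from hypothesis (1). At any finite $w\neq\sqrt{-3}$, the hypothesis $c(\rho_{1,E,w})=0$ combined with the ``in particular'' clause of Lemma \ref{lem:phi-unram} yields $c(\phi_w)=0$ and $\epsilon(1/2,\phi_w,\psi'_w)=1$. Finally, at $w=\sqrt{-3}$ hypothesis (3) ensures $c(\rho_{1,E,\sqrt{-3}})\leq 2$, so Lemma \ref{lem:phi-ram} applies; the congruence $b(\rho_{1,3})\equiv 1\pmod 3$ forces $c(\phi_{\sqrt{-3}})=1$, and $b(\rho_{1,3})\in\{1,2,3,4\}\pmod 6$ forces $\epsilon(1/2,\phi_{\sqrt{-3}},\psi'_{\sqrt{-3}})=i$. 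Taking the product over all places gives
\[
\epsilon(1/2,\phi)=(-i)\cdot i\cdot\prod_{w\neq\infty,\sqrt{-3}}1=1,
\]
completing the proof.

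The only real obstacle is the first step, deducing from the hypothesis $c(\rho_{1,E,w})=0$ (a condition on the base-changed character on $E_w^\times$) that $\rho_{1,p}$ itself is unramified on $U_1(\Z_p)$; this requires invoking surjectivity of $\alpha\mapsto\alpha/\bar\alpha$ in the right sense for both inert and split primes. Once that reduction is made, the rest is a careful but straightforward bookkeeping of the local ingredients already computed in Lemmas \ref{lem:phi-arch}, \ref{lem:phi-unram}, and \ref{lem:phi-ram}.
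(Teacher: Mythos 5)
Your proof is correct, and all the local ingredients you invoke (Lemmas \ref{lem:phi-arch}, \ref{lem:phi-unram}, \ref{lem:phi-ram}) are used exactly as in the paper; the only divergence is in how the ramified place is settled. The paper bounds $c(\phi_w)$ via sub-multiplicativity of conductors together with Lemma \ref{lem:mu-epsilon}, and then pins down $c(\phi_{\sqrt{-3}})=1$ by evaluating the automorphic character $\phi$ itself at $\zeta^2$: automorphy plus unramifiedness away from $\sqrt{-3}$ give $\phi_{\sqrt{-3}}(\zeta^2)=\phi_\infty(\zeta^2)^{-1}$, and Proposition \ref{prop:archtrivial} shows $\phi_\infty(\zeta^2)=\zeta^{\pm 6}=1$, so no computation of $b(\rho_{1,3})$ is needed. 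You instead determine $b(\rho_{1,3})\in\{1,4\}$ from the relation $\sum_v b(\rho_{1,v})=0$ of Definition \ref{defn:phi-b}, which forces you to supply the extra bridging step that $c(\rho_{1,E,w})=0$ implies $\rho_{1,p}$ is trivial on $U_1(\Z_p)$ (hence $b(\rho_{1,p})=0$) for every $p\ne 3$; your Hilbert~90 argument for this is valid in both the inert case (a uniformizer can be taken in $\Q_p$, so the unit part of $\mO_{E_p}^\times$ already surjects onto $U_1(\Z_p)$) and the split case (the image of $\Z_p^\times\times\Z_p^\times$ under $x\mapsto x/\bar x$ is exactly $U_1(\Z_p)$). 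In effect you are running the strategy the paper uses for the sibling Proposition \ref{prop:phi-global-p}, which makes your treatment uniform across the two propositions, at the cost of the extra local unramifiedness lemma; the paper's route for this proposition avoids that step by working with $\phi$ directly. Either way the assembled product $(-i)\cdot i\cdot 1=1$ is as claimed.
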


\begin{proof} 
By the definition of the conductor we get that it is sub-multiplicative, $c(\phi_w) = c(\mu_w\rho_{1,E,w}) \leq \max\{c(\mu_w),c(\rho_{1,E,w}) \}$.
Therefore, by Lemma \ref{lem:mu-epsilon} and the assumption on $\rho$, $c(\phi_w) = 0$ for any $w\ne \sqrt{-3}$ and $c(\phi_{\sqrt{-3}}) \leq 2$.
By Lemma \ref{lem:phi-unram}, $\epsilon(1/2, \phi_w, \psi'_w) = 1$ for any $w\ne \sqrt{-3},\infty$, by Lemma \ref{lem:phi-arch}, $\epsilon(1/2, \phi_\infty, \psi'_\infty) = -i$, and by Lemma \ref{lem:phi-ram}, $\epsilon(1/2, \phi_{\sqrt{-3}}, \psi'_{ \sqrt{-3}}) = i$ if $c(\phi_{ \sqrt{-3}}) = 1$.
Therefore we are left to prove $c(\phi_{ \sqrt{-3}}) = 1$, or equivalently, since $c(\phi_{ \sqrt{-3}}) \leq 2$ and $(\mO_{\sqrt{-3}})_1 = (\mO_{\sqrt{-3}})_2 \cdot \langle\zeta^2 \rangle$, it suffices to prove $\phi_{\sqrt{-3}}(\zeta^2) = 1$.  
Because $\phi$ is automorphic and unramified at $w \ne \sqrt{-3}$, we have $1 = \prod_w \phi_w(\zeta^2) = \phi_{\sqrt{-3}}(\zeta^2) \phi_\infty(\zeta^2)$, i.e. $\phi_{\sqrt{-3}}(\zeta^2) = \phi_\infty(\zeta^2)^{-1}$.
Using the fact that $\pi'^s(\rho_\infty)$ is the trivial representation of $G_\infty = U(3)$, by Proposition \ref{prop:archtrivial}, we have $\rho_{1,\infty}(\alpha) = \alpha^2 \mbox{ or } \alpha^{-1}$.
In either case we get that $\phi_\infty(\zeta^2) =  \zeta^{-2} \rho_{1,\infty}(\zeta^4) = \zeta^{\pm 6} = 1$, which proves the claim.
\end{proof}

\begin{prop}\label{prop:phi-global-p}
Fix a prime $p>5$. 
Let $\rho$ be such that: 
(1) $\pi'^s(\rho_\infty)$ is the trivial representation of $G_\infty = U(3)$,  (2) $c(\rho_{1,E,w}) = 0$ for any $w \ne p, \sqrt{-3}$,  (3) $c(\rho_{1,E,\sqrt{-3}})\leq 2$, and (4) $c(\rho_{1,E,p})=1$. 
For any place $v$ of $\Q$, let $b_v = b(\rho_{1,v}) \in \Z/6\Z$ be as in Definition \ref{defn:phi-b}.
Then
\[
 \forall v \ne \infty, 3, p: \quad b_v = 0, \qquad b_\infty = -1 \mbox{ or } 2 \qquad \mbox{ and } \qquad b_\infty + b_3 + b_p = 0.
\]
The local conductors and epsilon factors of $\phi = \mu \rho_{1,E}$ are given by
\[
 c(\phi_w) = \begin{cases} 1 & w = \sqrt{-3},\; 3 \mid b_3 - 1 \\  2 & w = \sqrt{-3},\; 3 \nmid b_3 - 1 \\ 1 & w = p \\ 0 & \mbox{otherwise} \end{cases}\]
and
\[
\epsilon(1/2, \phi_w, \psi'_w) = \begin{cases} -i & w = \infty \\ i & w = \sqrt{-3},\;  b_3 \equiv 1, 2, 3, 4\mod{6} \\ -i  & w = \sqrt{-3},\;  b_3 \equiv 0, 5 \mod 6  \\ (-1)^{b_p +1} & w = p\equiv 2\mod 3 
\\ (-1)^{b_p} & w=p\equiv 1 \mod 3\\
1  & \mbox{otherwise.} \end{cases}
\]
In particular, since $b_3 = - b_\infty - b_p$, we get that for $p\equiv 2 \mod 3$
\begin{align*}
\epsilon(1/2, \phi) &= \prod_w \epsilon(1/2, \phi_w, \psi'_w)  
\\&= \begin{cases} 1 & (b_\infty,b_p) =  (-1,2),\, (-1,3),\, (-1,5),\, (2,1),\, (2,3) \mbox{ or } (2,4)  
\\ -1 & (b_\infty,b_p) = (-1,0),\, (-1,1),\, (-1,4),\, (2,0),\, (2,2) \mbox{ or } (2,5) \end{cases}
\end{align*}
and for $p\equiv 1 \mod 3$, 
\[
\epsilon(1/2, \phi) = \begin{cases} 1 & (b_\infty,b_p) = (-1,0),\, (-1,1),\, (-1,4),\, (2,0),\, (2,2) \mbox{ or } (2,5)
\\ -1 & (b_\infty,b_p) =
 (-1,2),\, (-1,3),\, (-1,5),\, (2,1),\, (2,3) \mbox{ or } (2,4) . \end{cases}
\]
\end{prop}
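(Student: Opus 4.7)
The plan is to assemble the proposition by computing each local factor using the lemmas already established, then multiplying globally. The argument splits naturally into (a) determining the values of $b_v = b(\rho_{1,v})$, (b) tabulating local conductors, (c) tabulating local epsilon factors, (d) forming the product.

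First I would pin down the $b_v$. For $v \neq 3, p, \infty$, assumption (2) gives $\rho_{1,E,v}$ unramified, hence $\rho_{1,v}$ is unramified on $U_1(\Q_v)$, so $b_v = 0$ (using the description of $U_1(\Q_v)$ in Lemma \ref{lem:u1}, which shows that outside the ramified/conductor places, the generator $\zeta$ of the torsion in $U_1(\Z_v)$ already lies in the kernel; for the split case $p\equiv 1\pmod{3}$, note $\zeta$ becomes a sixth root in $\F_p^\times\subset \mO_p^\times$ and similarly is unramified.) At $\infty$, assumption (1) combined with Proposition \ref{prop:archtrivial} forces $\rho_{1,\infty}(\alpha) = \alpha^2$ or $\alpha^{-1}$, whence $\rho_{1,\infty}(\zeta) = \zeta^{\pm 2}$, i.e.\ $b_\infty\in\{2,-1\}$. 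The relation $\sum_v b_v \equiv 0 \pmod 6$ from Definition \ref{defn:phi-b} then yields $b_\infty + b_3 + b_p \equiv 0$.

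Next I would compute the conductors. For $w \neq \sqrt{-3}$, Lemma \ref{lem:phi-unram} gives $c(\phi_w) = c(\rho_{1,E,w})$, which by assumptions (2)--(4) is $0$ except $c(\phi_p) = 1$. At $w = \sqrt{-3}$, Lemma \ref{lem:phi-ram} applies (since $c(\rho_{1,E,\sqrt{-3}}) \leq 2$) and gives $c(\phi_{\sqrt{-3}}) = 1$ exactly when $b_3 \equiv 1 \pmod 3$ and $2$ otherwise. For the local epsilons: Lemma \ref{lem:phi-unram} gives $\epsilon = 1$ at $w \neq \sqrt{-3}, p, \infty$ (using that $\mu_w$ is unramified and $\rho_{1,E,w}$ is unramified so the formula collapses); Lemma \ref{lem:phi-arch} gives $\epsilon_\infty = -i$; Lemma \ref{lem:phi-ram} gives the stated value $\pm i$ at $\sqrt{-3}$ in terms of $b_3 \bmod 6$; and at $p$ one invokes Lemma \ref{lem:phi-inert} for $p$ inert (yielding $(-1)^{b_p + c(\rho_{1,E,p})} = (-1)^{b_p+1}$) or Lemma \ref{lem:epsilon-split} for $p$ split (yielding $(-1)^{b_p}$).

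Finally, the global epsilon is the product of the local factors. Using $(-i)(i) = 1$ versus $(-i)(-i) = -1$ for the $\infty \cdot \sqrt{-3}$ contribution and the sign from $p$, the overall sign is determined by whether $b_3 \in \{1,2,3,4\}$ or $\{0,5\}$, and whether $p$ is split or inert, together with the parity of $b_p$. Substituting $b_3 \equiv -b_\infty - b_p \pmod 6$ and running through the twelve pairs $(b_\infty, b_p) \in \{-1,2\} \times \{0,\ldots,5\}$ produces the case distinction in the statement. There is no real obstacle here beyond organization; the main step requiring care is the $\epsilon$ computation at $\sqrt{-3}$, which is handled entirely by Lemma \ref{lem:phi-ram}, and bookkeeping the sign pattern for the two residues of $p \bmod 3$. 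I would present the final tabulation as a simple $2 \times 6$ table rather than checking cases verbosely.
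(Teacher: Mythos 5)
Your proposal is correct and follows essentially the same route as the paper's proof: the $b_v$ are pinned down via automorphy plus Proposition \ref{prop:archtrivial}, the conductors via Lemmas \ref{lem:phi-unram} and \ref{lem:phi-ram}, the local epsilons via Lemmas \ref{lem:phi-arch}, \ref{lem:phi-unram}, \ref{lem:phi-inert}, \ref{lem:epsilon-split}, \ref{lem:phi-ram}, and the global sign by substituting $b_3 = -b_\infty - b_p$ and checking the twelve pairs $(b_\infty,b_p)$. Nothing to correct.
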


\begin{proof} 
The claim about the $b_v$ follows from the fact that $\rho_1$ is automorphic and unramified at $v \ne \infty, 3, p$, together with Proposition \ref{prop:archtrivial}.
The claim about the conductors follows from the assumption on $\rho$, combined with Lemma \ref{lem:phi-ram}.
The claim about the epsilon factors follows from Lemmas \ref{lem:phi-arch}, \ref{lem:phi-unram},  \ref{lem:phi-inert}, \ref{lem:epsilon-split}, and \ref{lem:phi-ram}.
As a consequence, we get that the global epsilon factor of $\phi$ is 
\begin{align*}
\epsilon(1/2, \phi) &= \prod_w \epsilon(1/2, \phi_w, \psi'_w)  
\\&= \begin{cases} (-1)^{b_p} & b_3\equiv 1, 2, 3, 4 \mod 6, p\equiv 1 \mod 3 \text{ or } 
\\ & b_3\equiv 0, 5 \mod 6, p\equiv 2\mod 3 \\ (-1)^{b_p+1} & b_3 \equiv 0, 5 \mod 6, p\equiv 1\mod 3 \text{ or }
\\ & b_3\equiv 1, 2, 3, 4 \mod 6, p\equiv 2 \mod 3.\end{cases}
\end{align*}
Note that $b_3 + b_p + b_\infty = 0$ and $b_\infty = 2 \mbox{ or } -1$, hence for any $(b_\infty,b_p) \in \{2,-1\}\times \Z/6\Z$ there exists a unique triplet  $(b_\infty,b_3,b_p)$ satisfying the above conditions, and a direct calculation shows the final claim.
\end{proof}

\begin{lem} \label{lem:class1}
Let  $U_1(\Z_q,q) = \ker\left( U_1(\Z_q) \rightarrow U_1(\F_q) \right)$, for any prime $q$.
Then
\[
\forall g\in U_1(\A), \quad \exists ! r \in U_1(\Q), \quad \exists ! k \in U_1(\R) U_1(\Z_3,3) \prod_{p \ne 3} U_1(\Z_p) \quad : \qquad g= r \cdot k.
\]
Hence, for any prime $q \ne 3$, there is a bijection between the following two sets
\[
\left\{ \mbox{characters of }U_1(\F_q)\right\} \quad\longleftrightarrow\quad\left\{ {\mbox{characters of }U_1(\Q)\backslash U_1(\A)\mbox{ trivial on}\atop U_1(\R)U_1(\Z_3,3)U_1(\Z_q,q)\prod_{p\ne3,q}U_1(\Z_p)}\right\} ,\]
\begin{align*}
\chi' \mapsto \chi \quad : \quad &\chi(r \cdot k) = \chi'(k_q \mod{q}), \quad r \in U_1(\Q), 
\\ &k = (k_v)_v \in U_1(\R) U_1(\Z_3,3) \prod_{p \ne 3} U_1(\Z_p).
\end{align*}
\end{lem}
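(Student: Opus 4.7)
The plan is to prove the decomposition $U_1(\A) = U_1(\Q) \cdot K_0$ with $K_0 := U_1(\R)\, U_1(\Z_3,3) \prod_{p \ne 3} U_1(\Z_p)$ and $U_1(\Q) \cap K_0 = \{1\}$; the bijection of characters will then follow formally.

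\emph{Step 1 (Existence).} By Hilbert 90 applied to $\mathrm{Gal}(E/\Q)$ acting on $E^\times$ and on $\A_E^\times$, every element of $U_1(\Q)$ (resp.\ $U_1(\A)$) is of the form $x/\bar x$ for some $x\in E^\times$ (resp.\ $x\in\A_E^\times$). Since $E=\Q[\sqrt{-3}]$ has class number one, we have $\A_E^\times = E^\times \cdot \C^\times \cdot \widehat{\mO}_E^\times$, where $\widehat{\mO}_E^\times = \prod_w \mO_w^\times$. Given $g = x/\bar x \in U_1(\A)$, writing $x = e\cdot c\cdot u$ accordingly yields $g = (e/\bar e)\,(c/\bar c)\,(u/\bar u)$, with $e/\bar e\in U_1(\Q)$, $c/\bar c\in U_1(\R)$, and $(u/\bar u)_p \in U_1(\Z_p)$ for every finite prime $p$. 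At $p=3$, Lemma~\ref{lem:u1} gives $U_1(\Z_3) = U_1(\Z_3,3)\cdot\langle\zeta\rangle$, so we may write $(u/\bar u)_3 = k_3\cdot \zeta^{a}$ with $k_3\in U_1(\Z_3,3)$ and $a\in\Z/6\Z$. Absorbing $\zeta^a\in U_1(\Q)$ into the rational factor gives the desired decomposition $g = r\cdot k$ with $r\in U_1(\Q)$ and $k\in K_0$.

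\emph{Step 2 (Uniqueness).} It suffices to check that $U_1(\Q)\cap K_0=\{1\}$. Since $E$ is imaginary quadratic with unit group $\langle\zeta\rangle$ and $U_1(\Q)=\ker\bigl(N_{E/\Q}\colon E^\times\to \Q^\times\bigr)$, one has $U_1(\Q)=\langle\zeta\rangle$. If $\zeta^a\in K_0$, then in particular $\zeta^a\in U_1(\Z_3,3)$, i.e.\ $\zeta^a\equiv 1\pmod{3\mO_{\sqrt{-3}}}$. But by Lemma~\ref{lem:unif-mod3}(i), $\langle\zeta\rangle$ is a set of representatives for $\mO_{\sqrt{-3}}^\times/(1+3\mO_{\sqrt{-3}})$, so this forces $\zeta^a=1$. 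Thus $r$ and $k$ in Step~1 are uniquely determined.

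\emph{Step 3 (Bijection of characters).} Given a character $\chi'$ of $U_1(\F_q)=U_1(\Z_q)/U_1(\Z_q,q)$, define $\chi\colon U_1(\A)\to\C^\times$ by $\chi(r\cdot k) := \chi'(k_q\bmod q)$. This is well-defined by the unique decomposition of Steps~1--2, and multiplicative since $U_1$ is abelian (so $(r_1k_1)(r_2k_2)=(r_1r_2)(k_1k_2)$ is already in the form required). By construction $\chi$ is trivial on $U_1(\Q)$ and on $U_1(\R)\,U_1(\Z_3,3)\,U_1(\Z_q,q)\prod_{p\ne 3,q} U_1(\Z_p)$. Conversely, given such a $\chi$, restricting to the $q$-component (embedded at place $q$ only, trivial elsewhere) yields a character of $U_1(\Z_q)$ trivial on $U_1(\Z_q,q)$, hence a character $\chi'$ of $U_1(\F_q)$; these two constructions are mutually inverse.

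The only nontrivial input is the existence step, which relies on: (a) Hilbert 90 to parametrize $U_1$ by $x\mapsto x/\bar x$, (b) class number one of $E$ to adelically factor $x$, and (c) the explicit structure of $U_1(\Z_3)/U_1(\Z_3,3)$ given by Lemma~\ref{lem:u1} (which is what isolates the level structure at the ramified prime~$3$). No part of the argument should present a substantial obstacle.
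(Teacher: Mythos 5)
Your existence step and your character bijection are sound, and they follow essentially the paper's route (the paper quotes the class-number-one decomposition $U_1(\A)=U_1(\Q)\cdot U_1(\R)\prod_p U_1(\Z_p)$ from the earlier Evra--Parzanchevski paper; your Hilbert-90-plus-class-number-one argument is just a proof of that statement). The gap is in Step~2: the claim that $U_1(\Q)=\langle\zeta\rangle$ is false. $U_1(\Q)$ is the full group of rational points of the norm-one torus, which by Hilbert 90 equals $\left\{x/\bar{x}\,:\,x\in E^\times\right\}\cong E^\times/\Q^\times$ and is infinite; for instance $\tfrac{-11+4\sqrt{-3}}{13}$ has norm $\tfrac{121+48}{169}=1$ but is not an algebraic integer. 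What is true is that $U_1(\Z)=\mO_E^\times\cap U_1(\Q)=\langle\zeta\rangle$: you have conflated $U_1(\Q)$ with $U_1(\Z)$. As written, your uniqueness argument does not go through, because it only rules out the elements $\zeta^a$ from $U_1(\Q)\cap K_0$ (where $K_0=U_1(\R)\,U_1(\Z_3,3)\prod_{p\ne 3}U_1(\Z_p)$), and says nothing about the infinitely many non-integral elements of $U_1(\Q)$.

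The repair is short and is exactly what the paper does. If $r\in U_1(\Q)\cap K_0$, then in particular $r\in U_1(\Z_p)$ for every prime $p$, so $r\in\mO_E\otimes\Z_p$ for all $p$ and hence $r\in\mO_E$; a norm-one element of $\mO_E$ in an imaginary quadratic field is a root of unity, so $r\in U_1(\Z)=\langle\zeta\rangle$. Only after this reduction does your mod-$3$ step apply: by Lemma \ref{lem:unif-mod3}(i) (equivalently, the splitting $U_1(\Z_3)\cong U_1(\Z_3,3)\times\langle\zeta\rangle$ of Lemma \ref{lem:u1}), $\zeta^a\in U_1(\Z_3,3)$ forces $\zeta^a=1$, giving $U_1(\Q)\cap K_0=\{1\}$ and hence uniqueness. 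With this correction your argument coincides in substance with the paper's proof.
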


\begin{proof}
Since $E= \Q[\sqrt{-3}]$ has class number one, by arguing similarly to Proposition 3.6 in \cite[]{Evra2018RamanujancomplexesGolden} (see Equation 3.7), we get that $U_1(\A) = U_1(\Q) \cdot \left( U_1(\R) \prod_p U_1(\Z_p) \right)$.
Also note that $U_1(\Q) \cap \left( U_1(\R) \prod_p U_1(\Z_p) \right) = U_1(\Z) = \left\langle \zeta \right\rangle$, where $\zeta = \frac{1+ \sqrt{-3}}2$.
Note that $U_1(\F_3) \cong U_1(\Z)$ and therefore we have the unique decomposition $U_1(\Z_3) \cong U_1(\Z_3,3) \times U_1(\Z)$.
This proves the first claim.
The second claim follows from the first.
\end{proof}

\subsection{Level of local representations} \label{automorphic:level}

The purpose of this subsection is to describe the levels of local factors of automorphic representations of unitary groups in three variables, which belongs to A-packets.

\begin{lem}\label{lem:level-supercuspidal}
If $p$ does not split in $E$, then for any $\rho_p$, 
\[
\pi^s(\rho_p)^{\boldsymbol{I}^*_p} = 0.
\]
\end{lem}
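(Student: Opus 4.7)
The plan is to deduce this directly from the Borel--Casselman theorem characterizing Iwahori-spherical representations, combined with the fact that $\pi^s(\rho_p)$ is supercuspidal by construction. Specifically, Definition \ref{defn:A-packet}(3) declares that for any non-split finite prime $p$, the non-trivial member $\pi^s(\rho_p)$ of the local A-packet $\Pi(\rho_p)$ is a supercuspidal representation of $G^*_p$ (determined by endoscopic character relations with Rogawski's construction). Hence once we know that supercuspidal representations of $G^*_p$ admit no Iwahori-fixed vectors, we are done.

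For the second ingredient, I would cite the classical theorem of Borel \cite{borel1976admissible} and Casselman \cite{casselman1980unramified}: an irreducible smooth representation $(\pi,V)$ of a connected reductive $p$-adic group has $V^{\boldsymbol{I}^*_p}\neq 0$ if and only if $\pi$ is a subquotient of a parabolic induction $\mathrm{Ind}_{B}^{G^*_p}(\chi)$ from an unramified character $\chi$ of a Borel subgroup. By definition, a supercuspidal representation is not a subquotient of any parabolic induction from a proper parabolic (and in particular not from the Borel), so it cannot carry a non-zero Iwahori-fixed vector. This already appears in the discussion preceding Proposition~\ref{prop:satake}, where the Iwahori-spherical dual is listed and no supercuspidal representations appear.

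I would conclude by noting that this argument is uniform in whether $p$ is inert or ramified in $E$, which exhausts the assumption ``$p$ does not split in $E$''. The only mild point to verify is that the Iwahori subgroup $\boldsymbol{I}^*_p$ used in the statement is indeed an Iwahori subgroup in the Bruhat--Tits sense, as recalled in Section~\ref{subsec:Local-rank-one-representation}; this is exactly the object to which Borel--Casselman applies.

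I do not anticipate a substantive obstacle here: the lemma is essentially a bookkeeping statement, recording that the supercuspidal member of a Rogawski A-packet contributes nothing to the Iwahori-spherical spectrum. The only issue worth being careful about is making sure the definition of $\pi^s(\rho_p)$ in the ramified case (where $G^*_p$ may have a slightly non-standard parahoric theory) genuinely produces a supercuspidal representation; this is guaranteed by Rogawski's construction \cite[§11--13]{Rogawski1990Automorphicrepresentationsunitary}, so the proof reduces to a single citation.
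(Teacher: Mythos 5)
Your proposal is correct and follows essentially the same route as the paper: $\pi^s(\rho_p)$ is supercuspidal by Rogawski's construction, and any irreducible Iwahori-spherical representation is a constituent of an unramified principal series, so the two facts are incompatible. The only difference is that the paper does not invoke Borel--Casselman uniformly: it cites Casselman for $p$ unramified in $E$ and a separate claim (claim 1 in the proof of Proposition 5.3 of \cite{Evra2018RamanujancomplexesGolden}) for $p$ ramified, which is exactly the "mild point" about the ramified Iwahori theory that you flag at the end.
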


\begin{proof}
On the one hand, since $\pi = \pi^s(\rho_p)$ is supercuspidal it does not embed into any parabolic induction from $P^*_p$ to $G^*_p$ of a character of $M^*_p$.
On the other hand, any irreducible representation with a non-zero Iwahori-fixed vector embeds in a parabolic induction from $P^*_p$ to $G^*_p$ of a character $M^*_p$ which is trivial on $K^*_p \cap M^*_p$, by \cite{casselman1980unramified} when $p$ is unramified in $E$, and by claim 1 in the proof of Proposition 5.3 in \cite{Evra2018RamanujancomplexesGolden} when $p$ is ramified in $E$.
Combined together we get the claim.
\end{proof}

\begin{lem} \label{lem:level-Frobenius}
Let $K' \leq K^*_p$ be a finite index subgroup.
Then for any $\rho_p$, 
\[
\mathrm{ind}(\eta_p)^{K'} \neq 0 \qquad \Longleftrightarrow \qquad  \exists g \in K^*_p \quad : \quad  \eta_p^{gK'g^{-1} \cap P^*_p} \neq 0.
\]
In particular, if $K'$ is a normal subgroup of $K^*_p$, then
\[
\mathrm{ind}(\eta_p)^{K'} \neq 0 \qquad \Longleftrightarrow \qquad \eta_p^{K' \cap P^*_p} \neq 0. 
\]
\end{lem}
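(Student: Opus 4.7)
The plan is to apply Mackey's decomposition of $\mathrm{Res}_{K'}\circ\mathrm{Ind}_{P^*_p}^{G^*_p}$ and then take $K'$-invariants via Frobenius reciprocity, using that the Iwasawa decomposition makes the double-coset space tractable and that the modular factor $\delta_p^{1/2}$ is automatically trivial on compact subgroups.

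\textbf{Step 1 (Iwasawa).} First I would observe that since $G^*_p = P^*_p \cdot K^*_p$, every double coset in $P^*_p \backslash G^*_p / K'$ admits a representative in $K^*_p$, giving the identification
\[
P^*_p \backslash G^*_p / K' \;\cong\; (P^*_p \cap K^*_p) \backslash K^*_p / K'.
\]
In particular the quantifier ``$\exists g \in K^*_p$'' in the target statement is the natural one.

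\textbf{Step 2 (Mackey + Frobenius).} Next I would invoke Mackey's restriction theorem for smooth induction from a closed subgroup to a compact open one:
\[
\mathrm{Res}_{K'}\,\mathrm{ind}(\eta_p) \;=\; \bigoplus_{g \in P^*_p\backslash G^*_p/K'} \mathrm{Ind}_{K' \cap g^{-1}P^*_p g}^{K'}\bigl((\delta_p^{1/2}\eta_p)^g\bigr),
\]
where $\chi^g(x) := \chi(gxg^{-1})$. Taking $K'$-invariants and applying Frobenius reciprocity on each summand yields
\[
\dim\mathrm{ind}(\eta_p)^{K'} \;=\; \sum_{g \in P^*_p\backslash G^*_p/K'} \dim\bigl(\delta_p^{1/2}\eta_p\bigr)^{gK'g^{-1}\cap P^*_p}.
\]
Thus $\mathrm{ind}(\eta_p)^{K'}\neq 0$ iff there exists (a double-coset representative) $g$ such that $\delta_p^{1/2}\eta_p$ is trivial on $gK'g^{-1}\cap P^*_p$.

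\textbf{Step 3 (Killing the modular factor).} I would then remove $\delta_p^{1/2}$ by a compactness argument. For $g\in K^*_p$ the group $gK'g^{-1}$ is a subgroup of the compact group $K^*_p$, so $gK'g^{-1}\cap P^*_p$ is compact; since $\delta_p^{1/2}\colon P^*_p\to\mathbb{R}_{>0}$ is a continuous homomorphism and $\mathbb{R}_{>0}$ contains no nontrivial compact subgroup, $\delta_p^{1/2}$ is trivial on $gK'g^{-1}\cap P^*_p$. Hence triviality of $\delta_p^{1/2}\eta_p$ on this group is equivalent to triviality of $\eta_p$, which is precisely the condition $\eta_p^{gK'g^{-1}\cap P^*_p}\neq 0$. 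Combined with Step 1 this proves the first equivalence.

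\textbf{Step 4 (Normal case).} Finally, for the ``in particular'' assertion, if $K'\triangleleft K^*_p$ then $gK'g^{-1}=K'$ for every $g\in K^*_p$, so the existential quantifier collapses, yielding $\mathrm{ind}(\eta_p)^{K'}\neq 0 \Leftrightarrow \eta_p^{K'\cap P^*_p}\neq 0$.

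There is no serious obstacle here: the argument is a clean application of Mackey theory for a compact open subgroup, with the only mild point being the observation in Step 3 that eliminates $\delta_p^{1/2}$; this is purely formal once one notes that $gK'g^{-1}\cap P^*_p$ is compact, a fact guaranteed by restricting the double-coset representatives to $K^*_p$ via Iwasawa.
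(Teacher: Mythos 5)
Your proposal is correct and follows essentially the same route as the paper: Iwasawa decomposition to put double-coset representatives in $K^*_p$, Mackey restriction plus Frobenius reciprocity to reduce $K'$-invariants of $\mathrm{ind}(\eta_p)$ to triviality of $\delta^{1/2}\eta_p$ on the groups $gK'g^{-1}\cap P^*_p$, and the collapse of the existential quantifier when $K'$ is normal. The only cosmetic difference is how the factor $\delta^{1/2}$ is discarded — you argue via compactness of $gK'g^{-1}\cap P^*_p$ (equivalently, of $K^*_p\cap P^*_p$) and the absence of nontrivial compact subgroups of $\R_{>0}$, while the paper simply notes that $\delta$ and $\eta_p$ factor through $M^*_p$; your version is an equally valid (and slightly more explicit) justification of the same point.
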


\begin{proof}
For any group $H$, subgroup $S \leq H$, element $h\in H$ and character $\chi\,:\,S\rightarrow \C^\times$, denote $^hS = hSh^{-1}$ and $^h\chi\,:\, ^hS \rightarrow \C^\times$, $^h\chi(x) = \chi(h^{-1}xh)$.
Then by Frobenius reciprocity, Mackey intertwining theorem and the Iwasawa decomposition, we get
\begin{align*}
 \mbox{ind}(\eta)^{K'} &\cong  \Hom_{K'}(\C, \mbox{Res}_{K'}^{G_p^*} \mbox{Ind}_{P_p^*}^{G_p^*}(\delta^{1/2}\eta_p)) 
 \\
 &\cong \Hom_{K'}(\C, \bigoplus_{g \in K'\backslash G^*_p/P^*_p} \mbox{Ind}_{K'\cap ^gP^*_p}^{K'} \mbox{Res}_{K'\cap ^g P^*_p}^{^gP^*_p}(^g(\delta^{1/2}\eta_p))) 
 \\
&\cong \bigoplus_{g \in K' \backslash K^*_p}  \Hom_{K' \cap ^gP^*_p}( \C, \mbox{Res}_{K' \cap ^g P^*_p}^{^gP^*_p}(^g(\delta^{1/2}\eta_p))) 
\\
&\cong \bigoplus_{g \in K' \backslash K^*_p}\, ^g(\delta^{1/2} \eta_p)^{K' \cap ^gP^*_p} \cong \bigoplus_{g \in K' \backslash K^*_p}\, (\delta^{1/2} \eta_p)^{^gK' \cap P^*_p}.
\end{align*}
Since $\delta$ and $\eta_p$ factor through $P^*_p \twoheadrightarrow M^*_p$, we get the first claim.
The second claim follows from the first and the fact that $^hK' = K'$ for any $h \in K^*_p$.
\end{proof}

\begin{prop}\label{prop:level-spherical}
(i) If $p$ is unramified in $E$, then
\[
\pi^n(\rho_p)^{K^*_p} \ne 0 \qquad \Longleftrightarrow \qquad \eta_p^{K^*_p \cap P^*_p} \ne 0.
\]
(ii) If $p$ is ramified in $E$, then
\[
\pi^n(\rho_p)^{K^*_p} = 0.
\]
\end{prop}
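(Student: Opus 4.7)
The strategy in both parts is to reduce the claim for the Langlands quotient $\pi^n(\rho_p)$ to a corresponding statement for the full normalized induction $\mathrm{ind}(\eta_p)$, where Lemma~\ref{lem:level-Frobenius} already supplies a Frobenius-reciprocity criterion in terms of $\eta_p^{K^*_p \cap P^*_p}$.

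For part (i), I would apply Lemma~\ref{lem:level-Frobenius} with $K' = K^*_p$ (trivially normal in itself; the only input needed in the proof of the lemma is the Iwasawa decomposition $G^*_p = P^*_p K^*_p$, which holds for the hyperspecial $K^*_p$), obtaining
\[
  \mathrm{ind}(\eta_p)^{K^*_p} \ne 0 \;\Longleftrightarrow\; \eta_p^{K^*_p \cap P^*_p} \ne 0.
\]
It then remains to show $\pi^n(\rho_p)^{K^*_p} \ne 0 \iff \mathrm{ind}(\eta_p)^{K^*_p} \ne 0$. The forward implication is automatic: since $\pi^n(\rho_p)$ is a $G^*_p$-quotient of $\mathrm{ind}(\eta_p)$ and $K^*_p$ is compact, any trivial $K^*_p$-type in $\pi^n(\rho_p)$ already appears in the induction. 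For the reverse implication, the classical Borel--Casselman theory for the hyperspecial $K^*_p$ forces $\dim \mathrm{ind}(\eta_p)^{K^*_p} \le 1$; when non-zero, the spherical vector is an eigenvector for the spherical Hecke algebra and singles out a unique $K^*_p$-spherical irreducible subquotient. Combining Rogawski's description of the A-packet with the Iwahori-spherical classification of Table~\ref{tab:subreps}, in which the A-type is the unique non-tempered, $K$-spherical constituent of the relevant reducible principal series (with the B-type, its counterpart in the composition series, explicitly not $K$-spherical), this subquotient is the Langlands quotient $\pi^n(\rho_p)$, and the spherical vector therefore descends to a non-zero $K^*_p$-invariant of $\pi^n(\rho_p)$.

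For part (ii), the proof of Lemma~\ref{lem:level-Frobenius} still applies verbatim, since the Iwasawa decomposition $G^*_p = P^*_p K^*_p$ continues to hold for the maximal parahoric $K^*_p$ by Bruhat--Tits theory. It therefore suffices to show $\eta_p^{K^*_p \cap P^*_p} = 0$ for every $\rho$. Restrict $\eta_p$ to the rational subtorus $\{\mathrm{diag}(\alpha,1,\bar\alpha^{-1}) : \alpha \in \Z_p^\times\} \subset K^*_p \cap P^*_p$: for such $\alpha$ one has $|\alpha|=1$ and $\alpha/\bar\alpha = 1$, so $\rho_{1,E,p}(\alpha) = \rho_{1,p}(1) = 1$ and $\rho'_p((\alpha/\bar\alpha)\cdot 1) = 1$, and the formula of Definition~\ref{defn:A-packet}(2) collapses to
\[
  \eta_p|_{\Z_p^\times} \;=\; \mu_p|_{\Z_p^\times} \;=\; \omega_p|_{\Z_p^\times},
\]
which by Lemma~\ref{lem:omega} is a non-trivial quadratic character precisely because $p$ ramifies in $E$. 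Hence $\eta_p$ is non-trivial on $K^*_p \cap P^*_p$, forcing $\mathrm{ind}(\eta_p)^{K^*_p} = 0$ and so $\pi^n(\rho_p)^{K^*_p} = 0$, since $\pi^n(\rho_p)$ is a subquotient.

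The main obstacle is the reverse direction of part (i): locating, among the irreducible subquotients of the reducible unramified principal series $\mathrm{ind}(\eta_p)$, the one carrying the spherical vector and verifying that it coincides with the Langlands quotient $\pi^n(\rho_p)$. This identification rests on the Iwahori-Hecke algebra analysis of Section~\ref{sec:local-rep} and on Rogawski's explicit A-packet description; once it is settled, part (ii) reduces to the clean observation that $\mu_p|_{\Z_p^\times} = \omega_p|_{\Z_p^\times} \ne 1$ at ramified primes.
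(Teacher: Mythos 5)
Your proposal is correct and takes essentially the same route as the paper: both parts reduce via Lemma~\ref{lem:level-Frobenius} to whether $\eta_p$ is trivial on $K^*_p\cap P^*_p$, your part (ii) is the paper's argument verbatim ($\eta_p$ restricted to the units of the diagonal torus is $\mu_p=\omega_p$, which is ramified exactly when $p$ ramifies), and in part (i) the paper makes the same identification of the spherical constituent of $\mathrm{ind}(\eta_p)$ with $\pi^n(\rho_p)$, only justifying it by the non-tempered Satake parameter $\mathrm{diag}(p^{1/2},1,p^{-1/2})$ of the spherical subquotient rather than by the $K$-sphericality column of Table~\ref{tab:subreps}. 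One small caveat: Table~\ref{tab:subreps} was derived in Section~\ref{sec:local-rep} only for inert $p$, whereas part (i) also covers split $p$ (and is used there in the global arguments); for split $p$ your identification should instead invoke the standard fact, which is what the paper's Satake-parameter argument amounts to, that the Langlands quotient of an unramified principal series in dominant position is its unique spherical constituent.
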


\begin{proof}
(i) By Lemma \ref{lem:level-Frobenius} $\eta_p^{K^*_p \cap P^*_p} \ne 0$ if and only if $\mbox{ind}(\eta_p)^{K^*_p} \ne 0$.
By  \cite[Section 12.2]{Rogawski1990Automorphicrepresentationsunitary}, $\mbox{ind}(\eta_p)$ is reducible with two components, $\pi^2(\rho_p)$ and $\pi^n(\rho_p)$, where $\pi^2(\rho_p)$ is square integrable and $\pi^n(\rho_p)$ is non-tempered.
The Satake parameter associated to the unique irreducible spherical component of $\mbox{ind}(\eta_p)^{K^*_p} \ne 0$ is $\diag(p^{1/2},1,p^{-1/2})$.
Since they are not all roots of unity, this component is non-tempered, hence it must be $\pi^n(\rho_p)$, namely $\mbox{ind}(\eta_p)^{K^*_p} \ne 0$ if and only if $\pi^n(\rho_p)^{K^*_p} \ne 0$. 

(ii) Since $p$ is ramified, the class field character $\omega_p = \omega_{E_p/\Q_p}$ is ramified, i.e. there exists $x \in \Z_p^*$ such that $\omega_p(x) \ne 1$, and therefore $\eta_p(\diag(x,1,1/x)) = \mu_p(x) = \omega_p(x) \ne 1$.
Hence $\eta_p^{K^*_p \cap P^*_p} = 0$, which implies $\mbox{ind}(\eta_p)^{K^*_p} = 0$, which in turn implies $\pi^n(\rho_p)^{K^*_p} = 0$. 
\end{proof}

\begin{lem} \label{lem:level-Invariant}
Let $K' \leq K^*_p$ be a finite index subgroup and let $\rho_p$ be such that $\eta_p^{K' \cap P^*_p} \ne 0$.
Define 
\[
f\,:\, G^*_p \rightarrow \C, \qquad f(g) = \left\lbrace \begin{array}{cc} (\delta^{1/2} \eta_p)(b) & g = bk \in P^*_p \cdot K' \\ 0 & g \not \in P^*_p \cdot K'. \end{array} \right. 
\]
Then $f$ is a well defined, non-zero, $K'$-invariant vector in $\mathrm{ind}(\eta_p)$.
\end{lem}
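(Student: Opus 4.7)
The plan is to verify the four required properties of $f$ — well-definedness, the left $P^*_p$-transformation law, right $K'$-invariance (with smoothness), and non-vanishing — essentially directly from the hypothesis $\eta_p^{K'\cap P^*_p}\neq 0$. The only non-formal ingredient is the compatibility check needed for well-definedness, and that is where the hypothesis enters.

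First I would establish well-definedness. Suppose $g = b_1 k_1 = b_2 k_2$ with $b_i \in P^*_p$ and $k_i \in K'$. Then $b_2^{-1}b_1 = k_2 k_1^{-1} \in P^*_p \cap K'$. Since $\eta_p$ is a one-dimensional representation, the hypothesis $\eta_p^{K'\cap P^*_p}\neq 0$ means precisely that $\eta_p$ is trivial on $P^*_p \cap K'$. Moreover, $\delta^{1/2}$ is the (positive square root of) the modular character of $P^*_p$, which is the absolute value of a rational character of $M^*_p$, hence takes the value $1$ on any compact subgroup; since $P^*_p\cap K'$ is compact (as a closed subgroup of $K'\leq K^*_p$), $\delta^{1/2}|_{P^*_p\cap K'}\equiv 1$. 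Therefore $(\delta^{1/2}\eta_p)(b_2^{-1}b_1) = 1$, i.e.\ $(\delta^{1/2}\eta_p)(b_1)=(\delta^{1/2}\eta_p)(b_2)$, so $f$ is well-defined.

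Next, the transformation and invariance properties follow by inspection, treating the two cases $g \in P^*_p K'$ and $g\notin P^*_p K'$ separately. For $p\in P^*_p$: if $g=bk\in P^*_p K'$ then $pg=(pb)k\in P^*_p K'$ and
\[
f(pg)=(\delta^{1/2}\eta_p)(pb)=(\delta^{1/2}\eta_p)(p)\,f(g),
\]
while if $g\notin P^*_p K'$ then neither is $pg$, since $P^*_p K'$ is left $P^*_p$-stable, so both sides vanish. For $k'\in K'$: if $g=bk\in P^*_p K'$ then $gk'=b(kk')\in P^*_p K'$ and $f(gk')=(\delta^{1/2}\eta_p)(b)=f(g)$; otherwise both sides are $0$. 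This also shows that $f$ has an open stabilizer (namely $K'$), hence is smooth, so $f$ is indeed a vector in $\mathrm{ind}(\eta_p)$ as defined in \eqref{eq:ind}.

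Finally, non-vanishing is immediate from $1 \in P^*_p K'$, giving $f(1)=(\delta^{1/2}\eta_p)(1)=1$. The whole argument is essentially formal; the only real content is the observation that compactness of $P^*_p\cap K'$ forces $\delta^{1/2}$ to be trivial there, so that the hypothesis on $\eta_p$ alone suffices for well-definedness. I anticipate no serious obstacle.
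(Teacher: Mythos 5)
Your proposal is correct and follows essentially the same route as the paper: check well-definedness via $b_2^{-1}b_1\in P^*_p\cap K'$ using that $\delta^{1/2}$ and $\eta_p$ are trivial there, then verify the $P^*_p$-transformation law, $K'$-invariance (hence smoothness), and $f(1)=1$. Your only addition is spelling out why $\delta^{1/2}$ and $\eta_p$ are trivial on the compact intersection, which the paper asserts without comment.
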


\begin{proof}
The function $f$ is well defined, since if $b_1k_1 = b_2k_2$, for some $b_1,b_2 \in P^*_p$ and $k_1,k_2 \in K'$, then $b_1^{-1}b_2 = k_1k_2^{-1} \in P^*_p \cap K'$, and because $\delta$ and $\eta_p$ are trivial on $P^*_p \cap K'$, we get 
\[
f(b_1k_1) = (\delta^{1/2} \eta_p)(b_1) = (\delta^{1/2} \eta_p)(b_2) = f(b_2k_2).
\]
The function $f$ is obviously $K'$-invariant from the right and non-zero since $f(1)=1$.
The $K'$-invariance of $f$ also proves condition (i) in Equation \ref{eq:ind}.
The function $f$ belongs to $\mbox{ind}(\eta_p)$, since if $g \not \in P^*_p \cdot K'$, then $bg \not \in P^*_p \cdot K'$ for any $b\in P^*_p$, hence $f(bg) = 0 = (\delta^{1/2} \eta_p)(b) f(g)$, and if $g = b'k' \in P^*_p \cdot K'$, then
\[
f(bg) = f(bb'k') = (\delta^{1/2} \eta_p)(bb') = (\delta^{1/2} \eta_p)(b) (\delta^{1/2} \eta_p)(b') = (\delta^{1/2} \eta_p)(b) f(g).\qedhere
\]
\end{proof}

\begin{lem} \label{lem:level-intertwining}
Let $w =\bsmx & & 1 \\ & -1 & \\ 1 &  &\esmx \in K^*_p$, let $K' \leq K^*_p$ and let $0 \ne f \in \mathrm{ind}(\eta_p)^{K'}$.
Then
\[
\int_{N^*_p} f(w^{-1}nw)dn \ne 0 \qquad \Rightarrow \qquad \pi^n(\rho_p)^{K'} \ne 0.
\]
\end{lem}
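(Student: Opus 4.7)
My plan is to produce a non-zero $K'$-invariant vector in $\pi^n(\rho_p)$ by applying the standard intertwining operator to $f$ and identifying $\pi^n(\rho_p)$ with (a submodule of) its image. Concretely, introduce the operator
\[
M\colon \mathrm{ind}(\eta_p) \longrightarrow \mathrm{ind}(w\!\cdot\!\eta_p), \qquad (Mh)(g) = \int_{N^*_p} h(w^{-1}ng)\,dn,
\]
where $w\!\cdot\!\eta_p$ denotes the Weyl conjugate. The first order of business is to check that $M$ is well-defined on the specific vector $f$ built in Lemma \ref{lem:level-Invariant}: because that $f$ is supported on $P^*_p\!\cdot\!K'$ and $w^{-1}N^*_p w$ lies in the opposite unipotent $\bar N^*_p$, for any fixed $g$ the set $\{n\in N^*_p : w^{-1}ng \in P^*_p\!\cdot\!K'\}$ has compact closure (this can be seen from the explicit Iwahori factorization of elements in $\bar N^*_p \cap P^*_p K'$), so the integrand is compactly supported and the integral converges absolutely.

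Since $M$ is $G^*_p$-equivariant, it sends $\mathrm{ind}(\eta_p)^{K'}$ into $\mathrm{ind}(w\!\cdot\!\eta_p)^{K'}$. The hypothesis $\int_{N^*_p} f(w^{-1}nw)\,dn \neq 0$ is exactly the statement $(Mf)(w)\neq 0$, so $Mf$ is a non-zero $K'$-invariant vector in the image of $M$.

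The final step is to identify that image with $\pi^n(\rho_p)$. Here I would appeal to Rogawski's detailed analysis of the composition series of $\mathrm{ind}(\eta_p)$ in \cite[\S12.2]{Rogawski1990Automorphicrepresentationsunitary}. When $\mathrm{ind}(\eta_p)$ is irreducible it simply equals $\pi^n(\rho_p)$ and we are done; when it is reducible (the case relevant to A-packets), the kernel of $M$ is precisely the square-integrable subrepresentation $\pi^2(\rho_p)$, and $M$ realizes the Langlands quotient $\pi^n(\rho_p)$ as a subrepresentation of $\mathrm{ind}(w\!\cdot\!\eta_p)$. In either case, the non-zero $K'$-invariant vector $Mf$ lies in a submodule isomorphic to $\pi^n(\rho_p)$, giving $\pi^n(\rho_p)^{K'}\neq 0$.

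The main technical obstacle is the identification of the image of $M$ with $\pi^n(\rho_p)$ at the (possibly reducible) point of the induction; this is not a general formality but depends on Rogawski's explicit local classification. Convergence of the intertwining integral at this non-generic parameter also requires care, though for the concrete $f$ from Lemma \ref{lem:level-Invariant} it reduces to an elementary compact-support check as described above.
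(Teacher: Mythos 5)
Your proof is in substance the same as the paper's. The paper defines the intertwining operator $J_{\bar{P}^*_p|P^*_p}f(g)=\int_{\bar{N}^*_p}f(ng)\,dn$ (your $M$, up to the cosmetic difference that the paper lands in induction from the opposite Borel rather than in $\mathrm{ind}(w\cdot\eta_p)$), invokes \cite[Cor.~3.2]{konno2003note} for the fact that the image of this operator is exactly the Langlands quotient $\pi^n(\rho_p)$ — where you invoke Rogawski's composition-series analysis, which yields the same conclusion — and then uses $G^*_p$-equivariance together with the non-vanishing of the integral (which is the value of the image vector at a single point) to produce a non-zero $K'$-invariant vector in $\pi^n(\rho_p)$.

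One caveat: your convergence argument is not correct as stated. For a general $K'\leq K^*_p$ the set $\left\{ n\in N^*_p \,:\, w^{-1}ng\in P^*_p\cdot K'\right\}$ need not be relatively compact; for instance for $K'=K^*_p$ the Iwasawa decomposition gives $P^*_p\cdot K^*_p=G^*_p$, so the integrand is supported on all of $N^*_p$. The compactness statement you have in mind is the content of Lemma \ref{lem:level-integral}, which is proved only for the special subgroups $K'$ treated there, not for arbitrary $K'$. Convergence holds anyway, because $\eta_p$ is $|\alpha|^{1/2}$ times a unitary character, i.e. the induction sits at the Langlands (positive-exponent) point where the standard intertwining integral converges absolutely — a point the paper leaves implicit. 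So this slip does not affect the validity of your argument, but the compact-support justification should be dropped or restricted to the $K'$ of Lemma \ref{lem:level-integral}.
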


\begin{proof}
Denote $\bar{P}^*_p = w^{-1} P^*_p w$ the subgroup of lower triangular matrices and $\bar{N}^*_p = w^{-1} N^*_p w$ the subgroup of unipotent lower triangular matrices.
Note $M^*_p = w^{-1}M^*_pw$, and define $w.\eta_p := \eta_p(w^{-1}mw)$ for any $m\in M^*_p$.
Let $\mbox{ind}(\eta_p) = \mbox{ind}^{G^*_p}_{P^*_p}(\eta_p)$ and $\mbox{Ind}^{G^*_p}_{\bar{P}^*_p}(w.\eta_p)$ be the unitary parabolic induction from $P^*_p$ and $\bar{P}^*_p$, respectively.
Define the intertwining operator
\[
J_{\bar{P}^*_p | P^*_p} \,:\, \mbox{ind}^{G^*_p}_{P^*_p}(\eta_p) \rightarrow \mbox{Ind}^{G^*_p}_{\bar{P}^*_p}(w.\eta_p), \qquad J_{\bar{P}^*_p | P^*_p}f(g) = \int_{\bar{N}^*_p} f(ng)dn.
\]
By \cite[Corollary 3.2]{konno2003note} the unique irreducible quotient of $\mbox{ind}^{G^*_p}_{P^*_p}(\eta_p)$, i.e. the Langlands quotient $\pi_p^n(\rho_p)$, is isomorphic to the image of the intertwining operator $J_{\bar{P}^*_p | P^*_p}$.
Since $J_{\bar{P}^*_p | P^*_p}$ is an intertwining operator and $f$ is $K'$-invariant, $J_{\bar{P}^*_p | P^*_p}f \in \pi^n_p(\rho_p)$ is $K'$-invariant. 
Observe that $J_{\bar{P}^*_p | P^*_p}f(1) = \int_{\bar{N}^*_p} f(n)dn$, therefore if $\int_{\bar{N}^*_p} f(n)dn \neq 0$ then $J_{\bar{P}^*_p | P^*_p}f$ is non-zero $K'$-invariant vector of $\pi^n_p(\rho_p)$, which proves the claim.
\end{proof}

\begin{lem} \label{lem:level-integral}
Let $K'$ be one of the following subgroups of $K_p^\ast$:
\begin{itemize}
 \item  $K'=K^*_p(p^m) =  \{g \in K^*_p \,:\, g \equiv I \mod{p^m}\}$ for some $m\geq 1$, or
 \item  $K'=\{g\in K_p^* \,:\, \det g\equiv \langle \zeta^a \rangle \mod p\}$, for some $a\in \Z/6\Z$.
\end{itemize}
Then
\[
w^{-1}N^*_p w \cap P^*_p\cdot K'  = w^{-1}N^*_p w \cap K'.
\]
In particular, if $\eta^{K' \cap P^*_p} \ne 0$ and  $0 \ne f \in \mathrm{ind}(\eta_p)^{K'}$ is as in Lemma \ref{lem:level-Invariant}, then
\[
\int_{N^*_p} f(w^{-1}nw)dn = [N^*_p \cap K^*_p : N^*_p \cap K']^{-1}.
\]
\end{lem}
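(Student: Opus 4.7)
The plan is to first establish the set-theoretic identity $w^{-1}N^*_pw \cap P^*_pK' = w^{-1}N^*_pw \cap K'$, then derive the integral formula as a routine consequence. The inclusion $\supseteq$ is immediate (take $b=I$), so the heart of the matter is the reverse inclusion.

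For this I would take $g \in w^{-1}N^*_pw \cap P^*_pK'$, write $g = bk$ with $b \in P^*_p$ and $k \in K'$, and aim to show $g \in K'$. Parametrize $g = \bsmx 1 & 0 & 0 \\ \alpha & 1 & 0 \\ \beta & -\bar\alpha & 1 \esmx$ with $\beta + \bar\beta = \alpha\bar\alpha$. In the principal congruence case $K' = K^*_p(p^m)$, write $k^{-1} = I + p^m M$ with $M \in M_3(\mO_p)$; then the upper-triangularity of $b = gk^{-1}$ forces $(gk^{-1})_{21} = 0$, that is,
\[
\alpha(1 + p^m M_{11}) = -p^m M_{21},
\]
whence $\alpha \in p^m\mO_p$, since $1+p^m M_{11}\in \mO_p^\times$. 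Analogous computations applied to the $(3,1)$ and $(3,2)$ entries yield $\beta \in p^m\mO_p$ as well, so $g \equiv I \pmod{p^m}$, i.e.\ $g \in K'$. The determinant-condition case proceeds by the same strategy, using that $K'$ contains the reduction-mod-$p$ kernel and that $\det g = 1$ automatically belongs to any subgroup of $U_1(\mO_p)\bmod p$, so that the integrality of the sub-diagonal entries of $g$ already suffices to place it in $K'$.

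With the equality in hand, the integral evaluation is immediate: $f(w^{-1}nw)$ vanishes outside $\{n \in N^*_p : w^{-1}nw \in K'\}$, and on this set the factorization from Lemma~\ref{lem:level-Invariant} may be taken with $b = I$, giving $f(w^{-1}nw) = (\delta^{1/2}\eta_p)(I) = 1$. Therefore
\[
\int_{N^*_p} f(w^{-1}nw)\,dn = \vol\bigl(\{n\in N^*_p : w^{-1}nw \in K'\}\bigr) = \vol(N^*_p \cap wK'w^{-1}) = \vol(N^*_p \cap K'),
\]
the last equality using that $w \in K^*_p$ and $K'$ is normal in $K^*_p$ in both cases. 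Normalizing the Haar measure on $N^*_p$ so that $\vol(N^*_p \cap K^*_p) = 1$ yields the claimed formula $[N^*_p \cap K^*_p : N^*_p \cap K']^{-1}$. The main technical obstacle is the nontrivial inclusion in the determinant-condition case: since there is no direct pro-$p$ simplification available, one must carefully exploit the sandwich $K^*_p(p) \subseteq K' \subseteq K^*_p$ together with the $P\cap K^*_p$-invariance of the determinant condition to extract integrality of $\alpha$ and $\beta$ paralleling the clean argument of the principal-congruence case.
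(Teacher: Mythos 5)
Your principal-congruence case and your derivation of the integral formula from the set identity are correct and essentially the paper's argument: the paper likewise unwinds an Iwasawa factorization (looking at the last row of $b\bar n$ rather than at the vanishing subdiagonal entries of $gk^{-1}$, which is the same computation) and then evaluates the integral from $\mathrm{supp}(f)=P^*_p\cdot K'$ and $f|_{K'}\equiv 1$.

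The determinant-condition case, however, is a genuine gap, and it cannot be closed ``by the same strategy'' as you propose. Your case-1 argument works because $(k^{-1})_{11}\in 1+p^m\mO_p$ is a unit, so $(gk^{-1})_{21}=0$ can be solved for the subdiagonal entry; for the determinant-condition $K'$ the entries of $k^{-1}$ satisfy no congruence whatsoever, so this unwinding yields nothing. Worse, the integrality you plan to ``extract'' is not there to be extracted: for this $K'$ one actually has $P^*_p\cdot K'=G^*_p$. Indeed, given any $g\in G^*_p$, write $g=bk$ by Iwasawa with $k\in K^*_p$, and choose a norm-one unit $\beta$ with $\beta\equiv(\det k)^{-1}\bmod p$ (the reduction $U_1(\Z_p)\to U_1(\F_p)$ is onto, cf.\ Lemma \ref{lem:u1}; the same works at a split prime with $\Z_q^\times\to\F_q^\times$). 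Then $m=\diag(1,\beta,1)$ lies in $M^*_p\cap K^*_p\subseteq P^*_p\cap K^*_p$, and $g=(bm^{-1})(mk)$ with $\det(mk)\equiv 1\bmod p\in\langle\zeta^a\rangle$, so $mk\in K'$. Consequently $w^{-1}N^*_pw\cap P^*_p K'$ is all of $w^{-1}N^*_pw$, which contains non-integral elements, so no argument ``paralleling the clean argument of the principal-congruence case'' can force $\alpha,\beta\in\mO_p$; the sandwich $K^*_p(p)\subseteq K'\subseteq K^*_p$ that you invoke is of no help, since the obstruction lives entirely in the determinant direction that $P^*_p\cap K^*_p$ absorbs. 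To make the second bullet work one must either restrict to $w^{-1}N^*_pw\cap K^*_p$ before intersecting with $P^*_pK'$ (note that the paper's own displayed computation of the integral is over $N^*_p\cap K^*_p$, not $N^*_p$) or supply some genuinely new input. I will also note, for fairness, that the paper's proof of this case consists of the single sentence that ``the same holds true because $\det\bar n=1$'', which, exactly like your sketch, only explains why integrality of $\bar n$ would imply $\bar n\in K'$ and does not establish the integrality itself; so the step you flagged as the obstacle is precisely the one that is not addressed on either side.
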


\begin{proof}
Let $K'$ be one of the above subgroups.
Let $ \bar{n} = w^{-1} n(x,y) w \in w^{-1}N^*_p w$, and assume $\bar{n} \in P^*_p K'$, namely there exists 
\[b  = \bmx \alpha & * & * \\ & \beta & * \\ & & \bar{\alpha}^{-1} \emx \in P^*_p,\] 
such that $b\bar{n} \in K'$.
Since the last row of $b\bar{n}$ is $(\bar{\alpha}^{-1}y, \bar{\alpha}^{-1}\bar{x}, \bar{\alpha}^{-1})$, by  $K'\subset K_p^*$, we obtain that  $\alpha \in  \mO_p^\times$ and that $x,y \in \mO_p$, i.e. $\bar n \in K_p^*$. 
In case $K'=K_p^*(m)$ we even get the stronger conditions $\alpha \in  1+p^m\mO_p^\times$ and  $x,y \in p^m\mO_p$, which proves the first claim. In the second case, the same holds true because $\det \bar n=1$.

The second claim follows from the fact that $\mbox{supp}(f) = P^*_p \cdot K'$ and $f | K' \equiv 1$, hence
\begin{align*}
\int_{N^*_p\cap K^*_p} f(w^{-1}nw)dn &= \int_{N^*_p\cap K'} f(w^{-1}nw)dn \\&= \int_{N^*_p \cap K'} dn = [N^*_p \cap K^*_p : N^*_p \cap K']^{-1}.\qedhere
\end{align*}
\end{proof}

Combining the above lemmas we get the following criterion for the existence of a non-zero invariant vector in the Langlands quotient $\pi^n(\rho_p)$ of $\mbox{ind}(\eta_p)$, in terms of the level of the character $\eta_p = \eta(\rho_p)$, for levels which are principal congruence subgroups of $K^*_p$, or satisfy a determinant congruence condition.

\begin{prop}\label{prop:level-Langlands}
For each of the subgroups $K'$ of Lemma~\ref{lem:level-integral}, and for any character $\rho_p$, 
\[
\pi^n(\rho_p)^{K' } \ne 0 \qquad \Longleftrightarrow \qquad \eta_p^{K'  \cap P^*_p} \ne 0.
\]
\end{prop}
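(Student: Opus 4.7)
The plan is to assemble the preceding five lemmas into a clean equivalence, and the key observation that makes everything fit is that for both choices of $K'$ in Lemma~\ref{lem:level-integral}, $K'$ is in fact \emph{normal} in $K_p^*$: the principal congruence subgroup $K_p^*(p^m)$ is manifestly normal, and the determinant-level subgroup is normal because $\det(ghg^{-1}) = \det h$. This normality lets us invoke the ``In particular'' clause of Lemma~\ref{lem:level-Frobenius}, yielding
\[
\mathrm{ind}(\eta_p)^{K'} \neq 0 \quad \Longleftrightarrow \quad \eta_p^{K' \cap P^*_p} \neq 0,
\]
without having to worry about the auxiliary conjugate $gK'g^{-1}$ appearing there.

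For the implication $(\Leftarrow)$, assuming $\eta_p^{K' \cap P_p^*} \neq 0$, I would write down the explicit non-zero $K'$-invariant section $f \in \mathrm{ind}(\eta_p)^{K'}$ produced by Lemma~\ref{lem:level-Invariant}, compute via Lemma~\ref{lem:level-integral} that
\[
\int_{N_p^*} f(w^{-1}nw)\, dn \;=\; [N_p^* \cap K_p^* : N_p^* \cap K']^{-1} \;\neq\; 0,
\]
and then apply the intertwining-integral criterion of Lemma~\ref{lem:level-intertwining} to conclude $\pi^n(\rho_p)^{K'} \neq 0$.

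For the implication $(\Rightarrow)$, I would use that $\pi^n(\rho_p)$ is by definition the unique irreducible (Langlands) quotient of $\mathrm{ind}(\eta_p)$, so there is a $G_p^*$-equivariant surjection $\mathrm{ind}(\eta_p) \twoheadrightarrow \pi^n(\rho_p)$. Because $K'$ is compact open and our representations are smooth, the functor $V \mapsto V^{K'}$ is exact (one averages any preimage against normalized Haar measure on $K'$ to produce a $K'$-invariant lift), so $\mathrm{ind}(\eta_p)^{K'} \twoheadrightarrow \pi^n(\rho_p)^{K'}$ remains surjective. Hence $\pi^n(\rho_p)^{K'} \neq 0$ forces $\mathrm{ind}(\eta_p)^{K'} \neq 0$, and the displayed equivalence above closes the loop.

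The main obstacle is not conceptual but rather a matter of being careful that both subgroups in Lemma~\ref{lem:level-integral} really are normal in $K_p^*$, so that the clean form of Lemma~\ref{lem:level-Frobenius} can be used. A secondary subtle point is that the $(\Leftarrow)$ direction crucially requires \emph{non-vanishing} of the specific intertwining integral from Lemma~\ref{lem:level-integral}, not merely the existence of some $K'$-fixed vector in $\mathrm{ind}(\eta_p)$; this is what distinguishes the Langlands quotient $\pi^n(\rho_p)$ from the square-integrable subrepresentation also sitting inside $\mathrm{ind}(\eta_p)$, and it is only the former that we gain invariants in.
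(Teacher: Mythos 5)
Your proposal is correct and follows essentially the same route as the paper: the $(\Leftarrow)$ direction via the explicit $K'$-invariant section of Lemma \ref{lem:level-Invariant}, the integral computation of Lemma \ref{lem:level-integral}, and the intertwining criterion of Lemma \ref{lem:level-intertwining}; the $(\Rightarrow)$ direction via exactness of $K'$-invariants applied to the surjection $\mathrm{ind}(\eta_p)\twoheadrightarrow\pi^n(\rho_p)$ followed by Lemma \ref{lem:level-Frobenius}. Your explicit observation that both subgroups in Lemma \ref{lem:level-integral} are normal in $K_p^*$ (so the clean form of Lemma \ref{lem:level-Frobenius} applies) is a detail the paper leaves implicit, and is a welcome clarification rather than a deviation.
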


\begin{proof}
$(\Rightarrow)$ If $\pi^n(\rho_p)^{K'} \ne 0$, then $\mbox{ind}(\eta_p)^{K'} \ne 0$, and by Lemma \ref{lem:level-Frobenius}, $\eta_p^{K' \cap P^*_p} \ne 0$.\\
$(\Leftarrow)$ If $\eta_p^{K' \cap P^*_p} \ne 0$, then by Lemmas \ref{lem:level-integral} and \ref{lem:level-intertwining}, $\pi^n(\rho_p)^{K'} \ne 0$.
\end{proof}

Let $K^*_p = GL_3(\mO_p) \cap G^*_p$, $K^{**}_p = aGL_3(\mO_p)a^{-1} \cap G^*_p$ and $\boldsymbol{I}^*_p = K^*_p \cap K^{**}_p$, where $\mO_p$ is the ring of integers of $E_p$, $\varpi_p$ a uniformizer of $E_p$ and $a = \mbox{diag}(\varpi_p,1,1)$, i.e. $K^*_p$ and $K^{**}_p$ are two non-conjugate maximal parahoric subgroups and $\boldsymbol{I}^*_p$ is an Iwahori subgroup  of $G^*_p$.
Let $GL_3(\mO_p,\varpi_p^e)$ be the kernel of the modulo $\varpi_p^e$ map, and let  $\boldsymbol{I}^*_p(p^e) = GL_3(\mO_p,p^e) \cap G^*_p$, $K^{**}_p(p^e) = a GL_3(\mO_p,p^e) a^{-1} \cap G^*_p$ and $\boldsymbol{I}^*_p(p^e) = K^*_p(p^e) \cap K^{**}_p(p^e)$.
Let $\boldsymbol{I}_p$, $K_p(p^e)$ and $\boldsymbol{I}_p(p^e)$ be the  images  of $\boldsymbol{I}^*_p$, $K^*_p(p^e)$, and $\boldsymbol{I}^*_p(p^e)$, respectively, under the isomorphism from $G^*_p$ to $G_p$ which sends $K^*_p$ to $K_p$.
Define the Moy-Prasad depth of a representation $\pi$ of $G_p$ to be
\[
d(\pi) = \inf\{ r \geq 0 \;:\; \forall e > r, \quad  \pi^{\boldsymbol{I}_p(p^e)} \ne 0 \}
\]
Let $\mu$ be an extension of $\omega = \omega_{E/Q}$, the class field character associated to $E/\Q_p$.

\begin{prop}\label{prop:depth-preservation}
For any character $\rho = (\rho_1,\rho')$ of $U_1(\Q_p)^2$, we have  $d(\pi^n(\rho)) = d(\pi^s(\rho))$. 
Furthermore, if $d(\rho) \geq d(\omega)$ then $d(\pi) = d(\rho)$ for any $\pi \in \Pi(\rho)$.
\end{prop}


For $p>5$ the claim follows directly from Oi's depth preservation result between L-packets and L-parameters in the local Langlands correspondence \cite{oi2023depth}, combined with the observation that the L-parameters of  $\pi^n(\rho)$ and $\pi^s(\rho)$ differ by how they act on the Deligne $SL_2$ factor, hence in particular are of the same depth (\cite[A.10]{bellaiche:2009}). 
However, we are interested in the case $p=3$, so we provide a different proof using the theta correspondence.

\begin{proof}
By \cite[Lemma 5.1.2]{Gelbart1991LfunctionsFourier}, $\Pi(\rho)$ is equal to the set of local Weil representations or local theta lifts, $\{\omega(\psi, \gamma, \chi) \,:\, \psi \}$, where $(\gamma,\chi)$ are determined uniquely from  $\rho$, by the relations $\gamma = \mu \rho_{1,E} \rho'_{E}$ and $\chi = \gamma^1 \rho'$, where $\gamma^1$ is the restriction of $\gamma$ to $U_1(\Q_p)$, and where $\psi$ runs over the additive characters of $\Q_p$ modulo $N_{E/\Q_p}(E^\times)$.
The theta correspondence is the one defined in \cite[Section 3]{Gelbart1991LfunctionsFourier} according to their choice of splitting.
By \cite[Main Corollary, p. 533]{pan2002depth}, the theta correspondence, according to the splitting defined in \cite{pan2001splittings}, is depth preserving.
Two splittings differ by a character (\cite[Section 2.5]{gan2023automorphic}).
Combining the results of Gelbart-Rogawski, Pan and Gan we get that  all the members in a given A-packet are of the same depth, i.e. $d(\pi^n(\rho)) = d(\pi^s(\rho))$.
Finally, we note that $\pi^n(\rho)$ is the Langlands quotient of the parabolic induction ind$(\eta)$. 
By \cite[Theorem 5.2]{prasad1994unrefined}, the Moy-Prasad depth is preserved under parabolic induction, hence $d(\pi^n(\rho)) = d(\eta)$.
If $d(\rho) \geq d(\omega)$, we have $d(\eta)=d(\rho)$, and hence $d(\pi) = d(\rho)$ for any $\pi \in \Pi(\rho)$.
\end{proof}

For the remainder of this section $E=\Q[\sqrt{-3}]$, $\Phi = I$ and $G = U_3(\Q[\sqrt{-3}],I)$.
Let $\sqrt{-2} \in \mO_3$ be such that $\sqrt{-2} \equiv 1\mod3$, and define
\begin{align*}
A &:= \frac12 \bmx 2\sqrt{-2}+\sqrt{-3} & 1+\sqrt{-3} & -1 \\ (1+\sqrt{-2})+(1-\sqrt{-2})\sqrt{-3} & 2 & (1-\sqrt{-2})+(1+\sqrt{-2})\sqrt{-3} \\ -1 & 1+\sqrt{-3} & -2\sqrt{-2}+\sqrt{-3} \emx 
\\ &\in  GL_3(\Z_3[\sqrt{-3}]).
\end{align*}
Then $AA^* = J$, and therefore conjugation by $A$ gives the following isomorphisms $G^*_3 = A \cdot G_3 \cdot A^{-1}$, $K^*_3 = A \cdot K_3 \cdot A^{-1}$ and $K^*_3(3) = A \cdot K_3(3) \cdot A^{-1}$.
Let $K_3(C)$ be as in \eqref{eq:K3C}, and let $K^*_3(C) = A \cdot K_3(C) \cdot A^{-1}$.

\begin{lem} \label{lem:K3C}
In the above notations,
\[
K^*_3(C) \cap P^*_3 = \left\lbrace g\in K^*_3 \cap P^*_3 \;|\; \exists a \in 2\Z/6\Z \;:\; g \equiv \zeta^a \cdot \bmx 1 &  & \zeta^{-a} - 1 \\  & 1 &  \\  & & 1 \emx \mod 3 \right\rbrace.
\]
\end{lem}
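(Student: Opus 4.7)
The plan is to work modulo $3$ via the identification $K^*_3(C) = A \cdot K_3(C) \cdot A^{-1}$ and exploit the nilpotence of $\sqrt{-3}$ in $\mathcal{O}_3/3\mathcal{O}_3 \cong \F_3[t]/(t^2)$ with $t=\sqrt{-3}$.

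First, I would parameterize $K_3(C)\bmod 3$. Any $h \in K_3(C)$ satisfies $h^*h = I$ and reduces to $h \equiv I + N \pmod 3$ with $N = N_0 + \sqrt{-3}\,N_1$, where $N_0, N_1 \in M_3(\F_3)$ both have zero diagonal (since $h_{ii} \equiv 1 \pmod 3$ kills both the constant and the $\sqrt{-3}$-coefficient of $N_{ii}$). Expanding $h^*h \equiv I \pmod 3$ and separating components yields
\[
N_0 + N_0^T + N_0^T N_0 \equiv 0, \qquad N_1 - N_1^T + N_0^T N_1 - N_1^T N_0 \equiv 0 \quad \text{in } M_3(\F_3).
\]
The first says $I+N_0 \in O_3(\F_3)$; expanding its diagonal entries gives sums of two squares in $\F_3$ equal to zero, and since $-1$ is not a square in $\F_3$, this forces $N_0 = 0$. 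The second then reduces to $N_1 = N_1^T$. Hence $h \equiv I + \sqrt{-3}\, S \pmod 3$ with $S \in M_3(\F_3)$ symmetric with zero diagonal, a three-parameter family.

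Next, since $\sqrt{-3}\cdot x$ in $\mathcal{O}_3/3$ depends only on $x\bmod\sqrt{-3}$, we have $AhA^{-1} \equiv I + \sqrt{-3}\,\bar A S\bar A^{-1} \pmod 3$, where $\bar A := A \bmod\sqrt{-3}$ lies in $GL_3(\F_3)$. Using $\sqrt{-2} \equiv 1 \pmod 3$, a direct reduction gives
\[
\bar A = \bmx 1 & -1 & 1 \\ 1 & 1 & 0 \\ 1 & -1 & -1 \emx, \qquad \bar A^{-1} = \bmx 1 & -1 & 1 \\ -1 & -1 & -1 \\ -1 & 0 & 1 \emx,
\]
and one checks $\bar A \bar A^T = J$, consistent with $AA^* = J$. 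Writing $S = \bsmx 0 & s_1 & s_2 \\ s_1 & 0 & s_3 \\ s_2 & s_3 & 0 \esmx$ and computing the product entrywise, the strictly-lower-triangular entries of $\bar A S \bar A^{-1}$ come out as $-s_2-s_3$, $s_1+s_2-s_3$, $s_2+s_3$. Imposing that $AhA^{-1}$ be upper triangular forces $s_3 = -s_2$ and $s_1 = s_2$, so with $s := s_1 = s_2 = -s_3$ the matrix simplifies to $\bar A S\bar A^{-1} = s\cdot\bsmx 1 & 0 & -1 \\ 0 & 1 & 0 \\ 0 & 0 & 1 \esmx$.

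Therefore $g \equiv I + s\sqrt{-3}\cdot\bsmx 1 & 0 & -1 \\ 0 & 1 & 0 \\ 0 & 0 & 1 \esmx \pmod 3$ for some $s \in \F_3$, and using $\zeta^{\pm 2} \equiv 1 \mp \sqrt{-3} \pmod 3$ together with $1-\zeta^{\pm 2} \equiv \pm\sqrt{-3}$, the three values $s = 0, -1, 1$ correspond exactly to $a = 0, 2, 4 \in 2\Z/6\Z$ in the claimed form. The main obstacle is the bookkeeping in the unitarity computation mod $3$; the crucial algebraic input is the non-squareness of $-1$ in $\F_3$, which collapses nine potential parameters down to three (killing $N_0$ entirely), after which the three upper-triangular constraints cut the remaining symmetric family down to a single $\F_3$-degree of freedom parametrizing the three cosets.
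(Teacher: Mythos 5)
Your computations are all correct: the mod-$3$ description of $K_3(C)$ as matrices $I+\sqrt{-3}\,S$ with $S\in M_3(\F_3)$ symmetric and zero-diagonal (the same "sum of two squares in $\F_3$" argument as in Lemma \ref{lem-(E)}), the reductions $\bar A$, $\bar A^{-1}$, the identity $\bar A S\bar A^{-1}=s\bsmx 1&0&-1\\0&1&0\\0&0&1\esmx$ once the lower-triangular constraints force $s_1=s_2=s$, $s_3=-s$, and the matching of $I+s\sqrt{-3}\bsmx 1&0&-1\\0&1&0\\0&0&1\esmx$ with $\zeta^a\bsmx 1&&\zeta^{-a}-1\\&1&\\&&1\esmx$ for $a=0,2,4$. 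But as written you have only proved one inclusion. Starting from $g=AhA^{-1}$ with $h\in K_3(C)$ and imposing that $g$ is upper triangular shows that every element of $K^*_3(C)\cap P^*_3$ satisfies the stated congruence, i.e.\ the left-hand side is contained in the right-hand side. The reverse inclusion --- that every $g\in K^*_3\cap P^*_3$ with $g\equiv\zeta^a\bsmx 1&&\zeta^{-a}-1\\&1&\\&&1\esmx \bmod 3$ for some even $a$ actually lies in $K^*_3(C)=AK_3(C)A^{-1}$ --- is nowhere argued; your closing phrase "parametrizing the three cosets" tacitly asserts it. Note that this direction is not vacuous: for $a=2,4$ the displayed representative is not itself an element of $U_3(J)$ (since $\zeta^{-a}-1$ has nonzero trace), so one cannot simply point at the representatives; one must show that an arbitrary upper-triangular unitary matrix in that congruence class conjugates back into $K_3(C)$.

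The good news is that the missing half follows from your own identity read backwards. If $g\in K^*_3\cap P^*_3$ with $g\equiv I+s\sqrt{-3}\bsmx 1&0&-1\\0&1&0\\0&0&1\esmx \bmod 3$, then $h:=A^{-1}gA\in K_3$ and, by the same nilpotency argument ($\sqrt{-3}\cdot 3\equiv\sqrt{-3}\cdot\sqrt{-3}\equiv 0 \bmod 3$), $h\equiv I+s\sqrt{-3}\,\bar A^{-1}\bsmx 1&0&-1\\0&1&0\\0&0&1\esmx\bar A \bmod 3$; inverting your computation, $\bar A^{-1}\bsmx 1&0&-1\\0&1&0\\0&0&1\esmx\bar A$ is exactly the symmetric zero-diagonal matrix with $s_1=s_2=1$, $s_3=-1$, so the diagonal of $h$ is $\equiv 1\bmod 3$ and $h\in K_3(C)$ by definition, whence $g\in K^*_3(C)$. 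With that paragraph added your proof is complete, and it is a somewhat cleaner route than the paper's, which handles the two containments separately: explicit coset representatives multiplied by $K^*_3(3)\cap P^*_3$ for one direction, and a trace-plus-entrywise analysis of a general upper-triangular element $\mathrm{diag}$-parametrized by $\alpha,\beta,\bar\alpha^{-1}$ for the other.
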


\begin{proof}
First note that
\[
A  \equiv \bmx \zeta^2 & \zeta & 1 \\ 1 & 1 & \sqrt{-3} \\ 1 & \zeta & \zeta \emx \mod3
\]
and
\[
A^{-1} =  A^* J \equiv  \bmx 1 & -1 & \zeta^4 \\ \zeta^5 &-1 & \zeta^5 \\ \zeta^5 & \sqrt{-3} & 1 \emx \mod3.
\]
Next we prove both containments in the claim.

$(\supset)$ - 
Note that any element in the set on the right hand side of the claim, can be written as a product $g = b_a \cdot g'$, for some  $a \in \{0,2,4\}$, where 
\[b_a = \zeta^a \cdot \bmx 1 &  & \zeta^{-a} - 1 \\  & 1 &  \\  & & 1 \emx\] 
and 
\[
g' \in K^*_3(3) \cap P^*_3 = \{h \in K^*_3 \cap P^*_3 \;:\; h \equiv I \mod3 \}.\]
Since $g' \in K^*_3(3) \cap P^*_3 $ is clearly contained in $K^*_3(C) \cap P^*_3$, it suffices to show that $b_a \in K^*_3(C) \cap P^*_3$, which follows from the following direct computation
\begin{align*}
A^{-1}b_a^*A &\equiv \zeta^a \bmx \zeta^2 - 1 + \zeta^4 + \zeta^{-a} - 1 & * & * \\ * & 1 - 1 + \zeta^{-a}  & * \\ * & * & \zeta^5 - 3 + \zeta +  \zeta^{-a} - 1 \emx 
\\&\equiv  \bmx 1 & * & * \\ * & 1 & * \\ * & * & 1 \emx \mod 3.
\end{align*}

$(\subset)$ - 
Let $g \in K^*_3(C) \cap P^*_3$, and note that since $g \in K^*_3 \cap P^*_3$, $g = \bmx \alpha & \alpha x & \alpha y \\  & \beta & \beta \bar x \\ & & \bar{\alpha}^{-1} \emx$ for some $\alpha, \beta \in \mO_{\sqrt{-3}}^\times, y+\bar y=x\bar x$, and since $g \in K^*_3(C)$, $g = Ag'A^{-1}$ for some $g' \in K_3(C)$. 
Hence $\alpha + \beta + \bar{\alpha}^{-1} = \mbox{Trace}(g) = \mbox{Trace}(g') \equiv  \mbox{Trace}(\bmx 1 & * & * \\ * & 1 & * \\ * & * & 1 \emx) \equiv 0 \mod3$.
As $\mO_{\sqrt{-3}}^\times = (1+3\mO_{\sqrt{-3}}) \cdot \zeta^{\Z/6\Z}$, there is an $a \in \Z/6\Z$ such that $\alpha \equiv \zeta^a \mod 3$.
Then $\bar{\alpha}^{-1} \equiv \alpha \mod 3$ and combined with the trace condition we see that $\beta \equiv \alpha \mod 3$. 
Hence  $g \equiv \zeta^a \bmx 1 & x &y \\  & 1 & \bar{x} \\  & & 1 \emx  \mod 3$, for some $a \in \Z/6\Z$, and some $x,y \in \Z[\sqrt{-3}]/3$, $x\bar{x} = y + \bar{y}$.
A direct calculation shows
\begin{align*}
&\bmx 1 & * & * \\ * & 1 & * \\ * & * & 1 \emx \equiv A^{-1} g A  
\\ \equiv &
\zeta^a \bmx 1 + y + x - \bar{x} & * &* \\  * & 1 + y + \zeta^5 x - \zeta \bar{x} & * \\ * & * & 1 + y +\zeta^5 \sqrt{-3} x + \zeta \sqrt{-3} \bar{x} \emx \mod{3}.
\end{align*}
Since the diagonal elements are congruent mod $3$ we have $x- \bar{x} \equiv \zeta^5 x -\zeta \bar{x} \equiv \zeta^5 \sqrt{-3} x + \zeta \sqrt{-3} \bar{x} \mod 3 $ which holds if and only if $x\equiv \zeta^4\bar x \mod 3$.
Now we use that $y \equiv \zeta^{-a} - 1-x+\bar x\mod 3$ hence $y+\bar y\equiv 1+\zeta^a+\zeta^{-a}\mod 3$. Since $y + \bar{y} = x \bar{x}$, we get that 
$a \in \{ 0,2,4\}$ and thus $x\bar x \equiv 0\mod 3$, which, along with the previous condition that $x\equiv \zeta^4 \bar x\mod 3$, implies $x\equiv 0 \mod 3$ and completes the proof.
\end{proof}

\begin{prop}\label{prop:level-K3C} 
For any character $\rho_3$,
\[ 
\pi^n(\rho_3)^{K^*_3(3, C)} \neq 0  \qquad \Longleftrightarrow \qquad \eta_3^{K^*_3(C) \cap P^*_3} \neq 0. 
\]
\end{prop}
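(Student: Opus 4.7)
The plan is to mirror the structure of Proposition~\ref{prop:level-Langlands}, splitting into the easy forward implication and the harder reverse implication, and establishing an analogue of Lemma~\ref{lem:level-integral} tailored to $K_3^\ast(C)$.

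First, I would observe that $K_3(C)$ is normal in $K_3$ by Lemma~\ref{lem-(E)} (since $H\triangleleft G[3]$), and normality is preserved under the conjugation $g\mapsto A g A^{-1}$ defining $K_3^\ast(C)$; thus $K_3^\ast(C)\triangleleft K_3^\ast$. For the forward direction, $\pi^n(\rho_3)$ is the Langlands quotient of $\mathrm{ind}(\eta_3)$, so the exactness of $K_3^\ast(C)$-invariants (for the compact group $K_3^\ast(C)$) gives $\mathrm{ind}(\eta_3)^{K_3^\ast(C)}\twoheadrightarrow \pi^n(\rho_3)^{K_3^\ast(C)}$. Nonvanishing of the right therefore forces $\mathrm{ind}(\eta_3)^{K_3^\ast(C)}\neq 0$, and the normal case of Lemma~\ref{lem:level-Frobenius} rewrites this as $\eta_3^{K_3^\ast(C)\cap P_3^\ast}\neq 0$.

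For the reverse direction, assume $\eta_3^{K_3^\ast(C)\cap P_3^\ast}\neq 0$. Lemma~\ref{lem:level-Invariant} supplies a nonzero $K_3^\ast(C)$-invariant $f\in\mathrm{ind}(\eta_3)$, and by Lemma~\ref{lem:level-intertwining} it suffices to show
\[
\int_{N_3^\ast} f(w^{-1}nw)\,dn \;\neq\; 0.
\]
The heart of the proof is the analogue of Lemma~\ref{lem:level-integral}:
\[
w^{-1}N_3^\ast w \cap P_3^\ast K_3^\ast(C) \;=\; w^{-1}N_3^\ast w \cap K_3^\ast(C).
\]
The inclusion $\supseteq$ is immediate. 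For $\subseteq$, write $\bar n=w^{-1}n(x,y)w=b\cdot k$ with $b=p_{\alpha,u,v}\in P_3^\ast$ and $k\in K_3^\ast(C)$. Comparing the last rows of $\bar n$ and $bk=b\cdot k$, the bottom row of $\bar n$ is $(z,-x,1)$ with $z=x\bar x/2+y\sqrt{-3}$, while the bottom row of $b k$ is $\bar\alpha^{-1}$ times the bottom row of $k\in K_3^\ast\subset GL_3(\mathcal O_3)$. The identical argument of Lemma~\ref{lem:level-integral} then forces $\alpha\in\mathcal O_3^\times$ and $x,z\in\mathcal O_3$, so $b\in P_3^\ast\cap K_3^\ast$ and $\bar n\in K_3^\ast$.

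The new step is to refine this conclusion to $\bar n\in K_3^\ast(C)$. The plan is to compute $A^{-1}\bar n A\bmod 3$ using the explicit reductions of $A$ and $A^{-1}$ recorded just before Lemma~\ref{lem:K3C}, and then invoke the description of $K_3^\ast(C)\cap P_3^\ast$ from Lemma~\ref{lem:K3C} to choose the $P_3^\ast$-representative of $\bar n$ inside $K_3^\ast(C)\cap P_3^\ast$. Because $K_3^\ast(C)$ is a subgroup, once $b$ can be taken in $K_3^\ast(C)\cap P_3^\ast$, it follows that $\bar n=bk\in K_3^\ast(C)$. Granting the displayed identity, $f(w^{-1}nw)$ reduces to $(\delta^{1/2}\eta_3)(I)=1$ on its support (taking $b=I$, $k=\bar n$), and hence
\[
\int_{N_3^\ast}f(w^{-1}nw)\,dn \;=\; \mu\bigl(N_3^\ast\cap K_3^\ast(C)\bigr)\;=\;\bigl[N_3^\ast\cap K_3^\ast:N_3^\ast\cap K_3^\ast(C)\bigr]^{-1}\;>\;0,
\]
which via Lemma~\ref{lem:level-intertwining} produces a nonzero $K_3^\ast(C)$-invariant vector in $\pi^n(\rho_3)$.

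The main obstacle I anticipate is the modulo-$3$ matrix computation in the refinement step. Unlike the principal-congruence and determinant-twist cases handled in Lemma~\ref{lem:level-integral}, the congruence defining $K_3^\ast(C)\cap P_3^\ast$ involves the nontrivial $\zeta^a$-twist from Lemma~\ref{lem:K3C} with $a\in\{0,2,4\}$, so the verification branches over the three possible values of $a$ and relies on the arithmetic identities $\zeta^2+\zeta^4\equiv -1\equiv 2\pmod 3$ together with $\bar x\zeta^j-x\zeta^{-j}\in\Z\sqrt{-3}$; these are exactly the identities that make the diagonal of $A^{-1}\bar n A$ lie in $1+3\mathcal O_3$ once the congruences on $(x,y)$ forced by the last-row analysis are imposed.
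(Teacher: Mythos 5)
Your overall architecture (easy forward direction via exactness of invariants and the normal case of Lemma \ref{lem:level-Frobenius}, reverse direction via Lemmas \ref{lem:level-Invariant} and \ref{lem:level-intertwining}) matches the paper, and your "last row" argument correctly shows that the support of $f$ meets $\bar N_3^\ast=w^{-1}N_3^\ast w$ only inside $K_3^\ast$ (this is the paper's $F_2=0$ step). However, the key identity you build the reverse direction on, namely $\bar N_3^\ast \cap P_3^\ast K_3^\ast(C) = \bar N_3^\ast \cap K_3^\ast(C)$, is false, and this is a genuine gap: unlike the principal-congruence and determinant cases of Lemma \ref{lem:level-integral}, here there exist $\bar n(x,y)\in \bar N_3^\ast\cap K_3^\ast$ with $(x,y)\not\equiv(0,0)\bmod 3$ (hence $\bar n\notin K_3^\ast(C)$, since $\bar N_3^\ast\cap K_3^\ast(C)$ is exactly the kernel of the mod-$3$ map on $\bar N_3^\ast\cap K_3^\ast$) which nevertheless lie in $P_3^\ast K_3^\ast(C)$ — for instance $\bar n(1,\tfrac12)$, for which one can take $b\in P_3^\ast\cap K_3^\ast$ with $b\equiv\operatorname{diag}(-1,1-\sqrt{-3},-1)\cdot(\text{unipotent})\bmod 3$ so that $b\bar n\in K_3^\ast(C)$. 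Consequently your plan of "adjusting $b$ into $K_3^\ast(C)\cap P_3^\ast$" cannot succeed in general: by Lemma \ref{lem:K3C} the $P$-part of such a decomposition is a genuinely nontrivial class modulo $K_3^\ast(C)\cap P_3^\ast$, and on these extra points $f$ takes the value $(\delta^{1/2}\eta_3)(b^{-1})$, a root of unity that need not equal $1$; moreover $f$ vanishes on many cosets of $\bar N_3^\ast\cap K_3^\ast(C)$ in $\bar N_3^\ast\cap K_3^\ast$. So the integral is not $\bigl[N_3^\ast\cap K_3^\ast:N_3^\ast\cap K_3^\ast(C)\bigr]^{-1}$, and positivity cannot be read off as you claim.

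What the paper actually does at this point is a finite computation: it breaks $\bar N_3^\ast\cap K_3^\ast$ into the $27$ cosets of $\bar N_3^\ast\cap K_3^\ast(C)$ represented by $\bar n\bigl(x_1+x_2\sqrt{-3},\tfrac{x_1^2+3x_2^2}{2}+y_2\sqrt{-3}\bigr)$, $x_1,x_2,y_2\in\{-1,0,1\}$, determines case by case (via the congruence \eqref{eq:F_1-condition}) on which cosets $f$ is nonzero and which character value $\xi^{-1}(b)$ it takes there, and obtains
\[
F_1=\tfrac{1}{27}\Bigl(9+2\,\xi^{-1}(\operatorname{diag}(-1,-1,-1))\bigl(\xi^{-1}(\operatorname{diag}(1,-1,1))+\xi(\operatorname{diag}(1,\zeta,1))+\xi(\operatorname{diag}(1,\zeta^{-1},1))\bigr)\Bigr),
\]
which is nonzero because the nine "unipotent" cosets contribute $9$ while the remaining contributions have absolute value at most $6$. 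This case analysis (and the triangle-inequality argument for non-vanishing) is the essential new content of the proposition; if your identity were true, the statement would already follow from Proposition \ref{prop:level-Langlands} verbatim, and the separate proposition would be unnecessary. To repair your proof you would need to replace the claimed identity by this coset-by-coset evaluation, or find some other argument that the sum of the character values cannot cancel the main term.
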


\begin{proof}
$(\Rightarrow)$ 
If $\pi^n(\rho_3)^{K_3^*(C)} \ne 0$, then $\mbox{ind}(\eta_3)^{K_3^*(C)} \ne 0$, and by Lemma \ref{lem:level-Frobenius} and \ref{lem-(E)}, $\eta_3^{K_3^*(C) \cap P^*_3} \ne 0$.

$(\Leftarrow)$ 
By Lemma \ref{lem:level-intertwining},  we have to show that for the function $f$ of Lemma \ref{lem:level-Invariant}, the following integral is non-zero
\[
F := \int_{\bar{N}^*_3} f(n)dn = \int_{\bar{N}^*_3 \cap K^*_3} f(n)dn + \int_{\bar{N}^*_3 \setminus K^*_3} f(n)dn =: F_1 +F_2.
\]
Because $K_3^\ast(3)\subset K_3^\ast(3,C)\subset K_3^\ast(\sqrt{-3})$ and because, by Lemma \ref{lem:level-integral}, $(\bar N_3^\ast\setminus K_3^\ast)\cap K_3^\ast(\sqrt{-3}^m)=\emptyset$ for all $m$, we know $F_2=0$. 
For the integral $F_1$ we fix notation:
\[
\bar{n}(x,y)=\bmx 1 & & \\ x & 1 & \\ y & \bar{x} & 1 \emx \in \bar{N}_3^\ast, 
\]
\[n(x,y) = \bmx 1 & x & y \\ & 1 & \bar{x} \\ & & 1 \emx \in N_3^\ast, \textrm{ whenever } x\bar{x}=y+\bar{y}.
\]
The condition $x\bar x=y+\bar y$ implies that $y$ has the form $y=\frac{x\bar x}2+y_2\sqrt{-3}$, where $y_2\in\Q_3$. 
This yields  an obvious isomorphism $\bar N_3^\ast\cong E_3\times \Q_3\cong\Q_3^3$, under which the choice of Haar measure on $\bar N_3^\ast$ coincides with that on $\Q_3^3$ giving $\Z_3^3$ measure one.

By Lemma \ref{lem:K3C} we see that $N^*_3 \cap K^*_3(C)$ is the kernel of the modulo $3$ map on $N^*_3 \cap K^*_3$.
Since  $\bar{N}^*_3  = w N^*_3 w$ and $w \in K^*_3$, we get also that $\bar{N}^*_3 \cap K^*_3(C)$ is the kernel of the modulo $3$ map on $\bar{N}^*_3 \cap K^*_3$.

The set $ \Sigma=\{\bar{n}(x_1+x_2\sqrt{-3},\frac{x_1^2 + 3x_2^2}2 + y_2\sqrt{-3}) \,:\, x_1,x_2,y_2 \in \{-1,0,1\} \}$ is a transversal set of  $\bar{N}^*_3 \cap K^*_3 / \bar{N}^*_3 \cap K^*_3(C)$. 
Breaking $\bar{N}^*_3 \cap K^*_3$ into $\bigsqcup_{g\in \Sigma}g \left(\bar{N}^*_3 \cap K^*_3(C)\right)$, and using the fact that $f$ is $K^*_3(C)$-invariant, we get
\[
F_1 =  \frac1{27} \sum_{x_1,x_2,y_2=-1,0,1}^3 f(\bar{n}(x_1+x_2\sqrt{-3},\frac{x_1^2 + 3x_2^2}2 + y_2\sqrt{-3})).
\]
 
By Lemma~\ref{lem:level-Invariant}, we know $f(\bar n(x,y))\neq 0$ if and only if there is $b\in P^\ast_3\cap K^\ast_3$ such that $b\bar n(x,y)\in K_3^\ast(3,C)$. (And in this case, $f(\bar n(x,y))=\xi(b^{-1}))$, where $\xi=\delta_3^\frac12\eta_3$.)
Equivalently, if and only if $A^{-1}\cdot b\cdot \bar n(x,y) \cdot A\in K_3(C)$, i.e.
\begin{equation}\label{eq:F_1-condition}
 A^{-1}\cdot b\cdot \bar n(x,y) \cdot A \equiv \begin{pmatrix} 1 & \ast & \ast \\ \ast & 1 & \ast \\ \ast & \ast & 1 \end{pmatrix} \mod 3.
\end{equation}
We evaluate this mod-$3$-condition  for $\bar n(x,y)\in\Sigma$ above. 
Putting $b=\begin{pmatrix} \alpha & \alpha v & \alpha w \\ & \beta & \beta \bar v \\ & & 1 /\bar\alpha \end{pmatrix}\in P_3^\ast\cap K_3^\ast$ we obtain the following:
\\
{\bf Case 1:} Let $x\equiv 1\mod \sqrt{-3}$, i.e $x\equiv 1+x_2\sqrt{-3}$, $y\equiv -1+y_2\sqrt{-3}\mod 3$.\\
Then there is no such matrix $b$ in case $x_2\neq y_2$.
But in case $x_2=y_2$, such $b$ is given, for example, by
\[
\alpha\equiv -1,\: \beta\equiv 1+(y_2-1)\sqrt{-3},\: v\equiv 0,\: w\equiv 1+\sqrt{-3} \mod 3,
\]
where we obtain $f(\bar n(x,y))=\xi^{-1}(\diag(-1,1+(y_2-1)\sqrt{-3},-1))$.\\
{\bf Case 2:} Let $x\equiv 2\mod \sqrt{-3}$, i.e $x\equiv -1+x_2\sqrt{-3}$, $y\equiv -1+y_2\sqrt{-3}\mod 3$.
\\
Here, there is no such matrix $b$ if $x_2+y_2\neq 1$.
But in case $x_2+y_2=1$, such $b$ is given, for example, by
\[
\alpha\equiv -1,\: \beta\equiv 1+y_2t,\: v\equiv 0,\: w\equiv 1 \mod 3,
\]
 where we obtain $f(\bar n(x,y))=\xi^{-1}(\diag(-1,1+y_2\sqrt{-3},-1))$.
 \\
{\bf Case 3:} Let $x\equiv 3\mod \sqrt{-3}$, i.e $x\equiv -x_2\sqrt{-3}$, $y\equiv y_2\sqrt{-3}\mod 3$. Then $b$ being defined by $\alpha=\beta=1$, $v=x$ and $w=-y$ is a solution.
We obtain $ f(\bar n(x,y))=\xi^{-1}(\diag(1,1,1))=1$.
Now we sum up the integral:
\begin{eqnarray*}
F_1&=&\frac1{27}\sum_{x_1,x_2,y_2\in\mathbb F_3} f(x_1,x_2,y_2)\\
&=\frac1{27}\Bigl(9&+\sum_{y_2\in\{-1,0,1\}}\bigl(\xi^{-1}(\diag(-1,1+(y_2-1)\sqrt{-3},-1))
\\
&&+ \xi^{-1}(\diag(-1,1+y_2\sqrt{-3},-1))\bigr) \Bigr)\\
& =\frac1{27}\Bigl(9&+2\cdot\bigl(\xi^{-1}(\diag(-1,1,-1))
\\&&+\xi(\diag(-1,\zeta^2,-1))+2\xi(\diag(-1,\zeta^{4},-1))\bigr)\Bigr)\\
&=\frac1{27}\bigl(9&+2\cdot\xi^{-1}(\diag(-1,-1,-1)\cdot(\xi^{-1}(\diag(1,-1,1)
\\&&+\xi(\diag(1,\zeta,1))+\xi(\diag(1,\zeta^{-1},1))\bigr), 
\end{eqnarray*}
in particular, $F_1$ is non-zero. In  case $\xi(\diag(1,\zeta,1))=\phi_3'(\zeta)=\zeta^{\pm 1}$, we obtain $F_1=\frac13$.
\end{proof}

The following is a Corollary of Proposition \ref{prop:depth-preservation}.

\begin{cor}\label{cor:level-3-depth}
If $E = \Q[\sqrt{-3}]$ and $p = 3$, then
\[
\pi^s(\rho_3)^{\boldsymbol{I}^*_3(3)} \ne 0 \qquad \Longleftrightarrow \qquad \eta_3^{K^*_3(3) \cap P^*_3} \ne 0.
\]
\end{cor}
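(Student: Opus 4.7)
The plan is to bridge the two sides of the equivalence using the depth-preservation result Proposition \ref{prop:depth-preservation}. First, I would apply Proposition \ref{prop:level-Langlands} with $K' = K^*_3(3)$ (the principal congruence subgroup of level $3$ of the hyperspecial parahoric) to rewrite the right-hand side: $\eta_3^{K^*_3(3) \cap P^*_3} \neq 0$ if and only if $\pi^n(\rho_3)^{K^*_3(3)} \neq 0$. This reduces the corollary to proving the symmetric-looking equivalence
\[
\pi^s(\rho_3)^{\boldsymbol{I}^*_3(3)} \neq 0 \quad \Longleftrightarrow \quad \pi^n(\rho_3)^{K^*_3(3)} \neq 0.
\]

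Next, I would interpret both conditions through the Moy--Prasad depth $d(\cdot)$ introduced just before Proposition \ref{prop:depth-preservation}. Directly from that definition (using the monotonicity $\boldsymbol{I}^*_3(3^e) \subseteq \boldsymbol{I}^*_3(3)$ for $e \geq 1$), we have $\pi^s(\rho_3)^{\boldsymbol{I}^*_3(3)} \neq 0$ iff $d(\pi^s(\rho_3)) < 1$. For the non-tempered side, $\pi^n(\rho_3)$ is the Langlands quotient of $\mathrm{ind}(\eta_3)$, and depth is preserved under parabolic induction by \cite[Theorem 5.2]{prasad1994unrefined}, so $d(\pi^n(\rho_3)) = d(\eta_3)$. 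The level-$3$ principal-congruence condition $\pi^n(\rho_3)^{K^*_3(3)} \neq 0$ then translates, via the identification of $K^*_3(3)$ as a Moy--Prasad filtration subgroup of $K^*_3$, into the same depth inequality $d(\eta_3) < 1$.

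With both sides now characterized by the same depth threshold, the equivalence follows from Proposition \ref{prop:depth-preservation}, which yields $d(\pi^s(\rho_3)) = d(\pi^n(\rho_3))$. The hypothesis $d(\rho_3) \geq d(\omega_3)$ of that proposition can be checked using the explicit recipe for $\mu_3$ in Definition \ref{defn:mu} (which has conductor $2$) together with the fact that, whenever either side of the corollary is nonzero, the relevant $\rho_3$ is trivial enough modulo $3$ to dominate the depth of $\mu_3$.

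The main obstacle will be the precise identification of $K^*_3(3)$ and $\boldsymbol{I}^*_3(3)$ as Moy--Prasad subgroups at a common depth threshold for the quasi-split ramified group $G^*_3 = U_3(E_3, J)$ with $E_3 = \Q_3(\sqrt{-3})$. In this tamely ramified setting, $\sqrt{-3}$ is a uniformizer of $E_3$ while $3 = -(\sqrt{-3})^2$ has valuation $2$, so one must check that the normalization of the Moy--Prasad filtration at the hyperspecial base point (governing $K^*_3(3)$) and at the Iwahori base point (governing $\boldsymbol{I}^*_3(3)$) produce matching depth bounds; this rests on the explicit description of these congruence subgroups and the standard comparison of Moy--Prasad filtrations along an apartment.
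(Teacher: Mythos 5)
Your overall strategy is the same as the paper's: both arguments funnel through Proposition \ref{prop:depth-preservation}, using equality of depths inside the local A-packet to transfer a level condition from the induced side ($\eta_3$, equivalently $\pi^n(\rho_3)$) to the supercuspidal $\pi^s(\rho_3)$. The difference is only in how the right-hand side is encoded: the paper unwinds $\eta_3^{K^*_3(3)\cap P^*_3}\neq 0$ into triviality of $\rho_3$ on $U_1(\Z_3,3)^2$, equivalently into conductor conditions on the Gelbart--Rogawski theta data $(\gamma_3,\chi_3)$, and then applies the depth preservation of the theta correspondence; you route instead through Proposition \ref{prop:level-Langlands} with $K'=K^*_3(3)$, which is legitimate (that proposition does not require $p$ inert).

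Two points in your write-up need repair. First, your check of the hypothesis of Proposition \ref{prop:depth-preservation} is backwards: in the regime where either side of the corollary is nonzero, $\rho_3$ is trivial on $U_1(\Z_3,3)^2$, so its depth is \emph{small}; it is dominated by, not dominating, the depth of $\mu_3$ (which has conductor $2$, hence positive depth, by Lemma \ref{lem:mu-epsilon}). If the inequality to verify is $d(\rho_3)\geq d(\mu_3)$, as in the proof of that proposition, it fails precisely in the relevant cases; what actually rescues the argument (and what the paper's terse ``now apply Proposition \ref{prop:depth-preservation}'' uses) is the unconditional part of its proof, namely that all members of $\Pi(\rho_3)$ have equal depth via Pan's depth preservation for the theta lift --- not the conditional statement you propose to invoke. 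Second, your pivot ``$\pi^n(\rho_3)^{K^*_3(3)}\neq 0 \Longleftrightarrow d(\pi^n(\rho_3))<1$'' conflates two filtrations: the paper's depth is defined through the Iwahori-type subgroups $\boldsymbol{I}_3(3^e)$, so ``$d<1$'' means existence of $\boldsymbol{I}^*_3(3)$-fixed vectors, and since $\boldsymbol{I}^*_3(3)\subsetneq K^*_3(3)$ the passage from the depth bound back to $K^*_3(3)$-invariance is not automatic (small depth guarantees fixed vectors under the filtration subgroup at \emph{some} point of the building, not at the hyperspecial point). This can be patched --- at the ramified place $\boldsymbol{I}^*_3(3)\cap M^*_3=K^*_3(3)\cap M^*_3$, so a depth bound for $\eta_3$ (via Prasad's depth preservation under parabolic induction) forces $\eta_3$ to be trivial on $K^*_3(3)\cap P^*_3$, and Proposition \ref{prop:level-Langlands} then returns the $K^*_3(3)$-fixed vector --- but this is exactly the normalization bookkeeping for the half-integral filtration jumps of the ramified torus that you defer at the end, and without it the chain of equivalences is not closed.
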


\begin{proof}
We note that $\eta_3^{K_3^*(3)\cap P_3^*}\neq 0$ if and only if $\rho_3=(\rho_{1,3}, \rho'_3)$ is trivial on $U_1(\Z_3,3)^2$ and this is if and only if $\gamma_3^{1+3\mO_3}\neq 0$ and $\chi_3^{U_1(\Z_3,3)}\neq 0$. 
Now we apply Proposition \ref{prop:depth-preservation}.
\end{proof}

\subsection{Proofs of main results} \label{automorphic:proofs}

In this subsection we prove the main results stated in Section \ref{automorphic:theorems}.

\begin{proof}[Proof of Theorem \ref{thm:Ram-gen}]

(2) Assume in contradiction that $\pi \in \mA(K')$ is non-Ramanujan. 
Since $\mA(K'')$ is Ramanujan,  $\pi \not \in \mA(K'')$, hence there exists $p \in \mathrm{Ram}(K') \setminus \mathrm{Ram}(K'')$, such that $\pi_p^{K''_p} = 0$.
Since $\boldsymbol{I}^*_p \subset K'_p \subset K''_p \subset K^*_p$, where $\boldsymbol{I}^*_p$ is an Iwahori subgroup, we get that $\pi_p^{K^*_p} = 0$ and $\pi_p^{\boldsymbol{I}^*_p} \ne 0$.
By Theorem \ref{thm:A-Ram}, $\pi$ belongs to an A-packet, i.e. there exists an automorphic character $\rho$ of $U_1(\Q)^2$, such that $\pi_v \in \Pi'(\rho_v)$ for any place $v$.
Therefore, $\pi_p$ is equal to either $\pi^n(\rho_p)$ or $\pi^s(\rho_p)$, for some $\rho_p$. However, by Lemma \ref{lem:level-supercuspidal}, $\pi^s(\rho_p)^{\boldsymbol{I}^*_p} = 0$. 
Thus we must have $\pi_p=\pi^n(\rho_p)$. 
Since $\pi_p^{\boldsymbol{I}^*_p} \ne 0$, by Theorem \ref{thm:Ram-gen}(1), $p$ must be unramified in $E$. 
Then by Proposition \ref{prop:level-spherical} $\eta_p^{K_p^*\cap P_p^*}=0$ but this is impossible because  by \cite{casselman1980unramified}, $\pi_p^{\boldsymbol{I}^*_p} \ne 0$ implies $\eta_p^{K_p^*\cap P_p^*}\neq 0$.
\end{proof}

\begin{proof}[Proof of Theorem \ref{thm:NonRam-Eis}]
(1) Define the automorphic character $\rho = (\mu_1, \mu_1^{-1})$ of $U_1(\Q)^2$, where $\mu_1 = \mu|_{U_1(\Q)}$ is the restriction from $GL_1(E)$ to $U_1(\Q)$ of $\mu$ from Definition \ref{defn:mu}.
Then $\rho$ satisfies the following: 
(i)  $\rho_\infty(x) = (x^{-1},x)$ for any $x \in U_1(\R)$, (ii) $\eta_p^{K^*_p \cap P^*_p} \ne 0$ for any $p\ne 3$, and (iii) $\eta_3^{K^*_3(3) \cap P^*_3} \ne 0$.
Define the adelic representation $\pi = \pi'^s(\rho_\infty) \otimes \pi^s(\rho_3) \otimes \bigotimes_{p\ne 3} \pi^n(\rho_p) \in \Pi'(\rho)$.
By Proposition \ref{prop:phi-global-unram}, $\epsilon(1/2,\phi) = 1$, where $\phi = \phi(\rho) = \mu^2 \bar{\mu}^{-1}$, and by Theorem \ref{thm:Rogawski-A-packets}, we get that $\pi$ is automorphic.
By Theorem \ref{thm:A-Ram}, we get that $\pi$ is non-Ramanujan.
By Proposition \ref{prop:archtrivial} and property (i), $\pi'^s(\rho_\infty)$ is the trivial representation of $G_\infty$, hence $\pi_\infty^{K_\infty} \ne 0$ for $K_\infty = G_\infty$.
By Proposition \ref{prop:level-spherical} and property (ii),  $\pi^n(\rho_p)^{K^*_p} \ne 0$, for any $p\ne 3$.
By Corollary \ref{cor:level-3-depth} and property (iii), $\pi^s(\rho_3)^{\boldsymbol{I}^*_3(3)} \ne 0$.
Combining all of the above, we get that $\pi \in \mA(K')$, hence $\mA(K')$ is non-Ramanujan for $K' = K_\infty \otimes \boldsymbol{I}_3(3) \otimes \bigotimes_{p\ne 3} K_p$.

(2) Let $q \geq 5$ be a prime and let $a_q \in U_1(\F_q)$ be a generator of the cyclic group of order $n_q = |U_1(\F_q)|$ chosen such that $\zeta \,\mbox{mod}\,q\, = a_q^{\frac{n_q}{6}}$. Let $\chi' \,:\, U_1(\F_q) \rightarrow \C^\times$ by $\chi'(a_q) = \left(e^{2 \pi i / n_q}\right)^{b_q}$, where $b_q=6$ if $q\equiv 2\mod 3$ and $b_q=3$ if $q\equiv 1\mod 3$,  and let $\chi$ be the automorphic character of $U_1(\Q)$ associated to $\chi'$ by Lemma \ref{lem:class1}. Hence $\chi_q(\zeta)=\left(e^{2\pi i /6}\right)^{b_q}$ and $\chi_q$ is nontrivial.
Define the automorphic character $\rho = (\chi\mu_1, \mu_1^{-1})$ of $U_1(\Q)^2$, where $\mu_1 = \mu|_{U_1(\A)}$ is the restriction from $GL_1(\A_E)$ to $U_1(\A)$ of $\mu$ from Definition \ref{defn:mu}.
Then $\rho$ and the associated automorphic character $\eta=\eta(\rho)$ of $M^\times/\Q$  satisfies the following: 
(i) $\rho_\infty(x) = (x^{-1},x)$ for any $x \in U_1(\R)$, (ii) $\eta_p^{K^*_p \cap P^*_p} \ne 0$ for any $p\ne 3,q$, (iii) $\eta_3^{K^*_3(C) \cap P^*_3} \ne 0$, and (iv) $\eta_q^{K^*_q(q) \cap P^*_q} \ne 0$.
Define the adelic representation $\pi = \pi'^s(\rho_\infty) \otimes \bigotimes_p \pi^n(\rho_p) \in \Pi'(\rho)$.
By Proposition \ref{prop:phi-global-p}, $\epsilon(1/2,\phi) = -1$, where $\phi = \phi(\rho) = \mu^2 \bar{\mu}^{-1} \chi \bar{\chi}^{-1}$, we get that from Theorem \ref{thm:Rogawski-A-packets}, $\pi$ is automorphic.
By Theorem \ref{thm:A-Ram}, we get that $\pi$ is non-Ramanujan.
By Proposition \ref{prop:archtrivial} and property (i), $\pi'^s(\rho_\infty)$ is the trivial representation of $G_\infty$, hence $\pi_\infty^{K_\infty} \ne 0$ for $K_\infty = G_\infty$.
By Proposition \ref{prop:level-spherical} and property (ii),  $\pi^n(\rho_p)^{K^*_p} \ne 0$, for any $p\ne 3,q$.
By Proposition \ref{prop:level-K3C} and property (iii), $\pi^n(\rho_3)^{K^*_3(C)} \ne 0$.
By Proposition \ref{prop:level-Langlands} and property (iv), $\pi^n(\rho_q)^{K^*_q(q)} \ne 0$.
Combining all of the above, we get that $\pi \in \mA(K')$, hence $\mA(K')$ is non-Ramanujan for $K' = K_\infty \cdot K_3(C) \cdot K_q(q) \cdot \prod_{p\ne 3,q} K_p$.
\end{proof}

In the Proposition below we show by global means that two  local supercuspidal representations do not have non-zero $K_3(C)$-invariant vectors. This is precisely the missing piece we need to complete the proof of Theorem \ref{thm:Ram-Eis-kq}.
\begin{prop}\label{cor:supercuspidal}   Let $\rho_3=(\rho_{1,3},\rho_3')$ where either
\begin{enumerate}
\item\label{case1} $b(\rho_{1,3})=4$,  $\rho'_3(\zeta)=\zeta,$ and $\rho_{1,3},\rho_3'|_{1+3\mathcal O_3}\equiv 1$, or
\item \label{case2} $b(\rho_{1,3})=1$, $\rho'_3(\zeta)=\zeta^{-1},$ and $\rho_{1,3},\rho_3'|_{1+3\mathcal O_3}\equiv 1$.
\end{enumerate}   
Then $\pi^s(\rho_3)^{K_3(C)}=0$. 
\end{prop}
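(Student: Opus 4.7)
The plan is to prove the proposition by contradiction via globalization, embedding the local supercuspidal $\pi^s(\rho_3)$ as the $3$-component of an automorphic representation whose level would violate the triviality result behind Corollary \ref{cor:Ram-Eis}(2). The first step is to extend the given $\rho_3 = (\rho_{1,3}, \rho'_3)$ to a global automorphic character $\rho = (\rho_1, \rho')$ of $U_1(\Q)^2$ unramified outside $\{3, \infty\}$. Using $\mu_1 := \mu|_{U_1(\A)}$ with $\mu$ as in Definition \ref{defn:mu} (so that $b(\mu_{1,3}) = 1$ and $b(\mu_{1,\infty}) = -1$), in Case (\ref{case1}) the choice $\rho_1 = \mu_1^4, \rho' = \mu_1$ gives $(b(\rho_{1,3}), b(\rho'_3)) = (4, 1)$ together with $\rho_{1,\infty}(x) = x^2$ and $\rho'_\infty(\beta) = \beta^{-1}$, matching case (1) of Proposition \ref{prop:archtrivial}; in Case (\ref{case2}) the analogous choice $\rho_1 = \mu_1, \rho' = \mu_1^{-1}$ realizes $\rho_{1,\infty}(x) = x^{-1}$ and $\rho'_\infty(\beta) = \beta$. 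In both cases $\pi'^s(\rho_\infty)$ is the trivial representation of $G_\infty = U(3)$, and $\rho_{1,3}, \rho'_3$ are trivial on $1 + 3\mO_3$ (inherited from $\mu_{\sqrt{-3}}$).

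Since $\rho_1$ is unramified outside $\{3, \infty\}$ and $\rho_{1,3}|_{1+3\mO_3} = 1$, the map $\alpha \mapsto \alpha/\bar{\alpha}$ sends $1 + 3\mO_{\sqrt{-3}}$ into $1 + (\sqrt{-3})^3\mO_{\sqrt{-3}}$, yielding $c(\rho_{1,E,\sqrt{-3}}) \leq 2$. Proposition \ref{prop:phi-global-unram} then gives $\varepsilon(1/2, \phi) = 1$ for $\phi = \mu \rho_{1,E}$. Setting
\[
\pi := \pi'^s(\rho_\infty) \otimes \pi^s(\rho_3) \otimes \bigotimes_{p \neq 3} \pi^n(\rho_p) \in \Pi'(\rho),
\]
one has $n'(\pi) = 1$, and Theorem \ref{thm:Rogawski-A-packets} yields
\[
m(\pi) = \tfrac{1}{2}\bigl(1 + \varepsilon(1/2, \phi)(-1)^{1 + n'(\pi)}\bigr) = 1,
\]
so $\pi$ is automorphic.

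Now assume for contradiction that $\pi^s(\rho_3)^{K_3(C)} \neq 0$. Then $\pi \in \mA_G(K')$ for $K' := K_\infty \cdot K_3(C) \cdot \prod_{p \neq 3} K_p$, since $\pi_\infty = \mathbf{1}$ is trivially $K_\infty$-invariant, each $\pi^n(\rho_p)$ for $p \neq 3$ is $K^*_p$-invariant by Proposition \ref{prop:level-spherical} (the relevant $\eta_p$ being unramified), and $\pi_3$ is $K_3(C)$-invariant by hypothesis. However, Corollary \ref{cor:eisenstein-compact} gives $G(\A) = G(\Q) \cdot K'$, so $L^2(G(\Q)\backslash G(\A))^{K'}$ is spanned by constants, forcing $\mA_G(K') = \{\mathbf{1}\}$ exactly as in the proof of Corollary \ref{cor:Ram-Eis}(2). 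This contradicts $\pi \neq \mathbf{1}$, which holds because $\pi^s(\rho_3)$ is infinite-dimensional.

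The main obstacle I anticipate is the verification of the technical compatibilities needed to feed the machinery: checking that the chosen powers of $\mu_1$ realize the prescribed local characters at both $3$ and $\infty$, that the conductor bound $c(\rho_{1,E,\sqrt{-3}}) \leq 2$ really holds (so that Proposition \ref{prop:phi-global-unram} applies and produces $\varepsilon(1/2,\phi) = 1$), and that the unramified Langlands quotients $\pi^n(\rho_p)$ for $p \neq 3$ indeed carry $K^*_p$-fixed vectors. Each of these reduces to a direct but delicate unwinding of the data in Sections \ref{automorphic:CFT} and \ref{automorphic:level}, after which the argument is a clean application of Rogawski's multiplicity formula combined with strong approximation at the Eisenstein level.
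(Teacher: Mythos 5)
Your overall strategy is the same as the paper's: globalize $\rho_3$, place the supercuspidal $\pi^s(\rho_3)$ at $3$ inside a member $\pi$ of an A-packet with $n'(\pi)=1$ and $\varepsilon(1/2,\phi)=1$, invoke Theorem \ref{thm:Rogawski-A-packets} to get $m(\pi)=1$, and then use $G(\A)=G(\Q)\cdot K'$ (Corollary \ref{cor:eisenstein-compact}, i.e.\ the content behind Corollary \ref{cor:Ram-Eis}(2)) together with $\pi_v^{K_v}\ne 0$ for $v\ne 3$ to force $\pi^s(\rho_3)^{K_3(C)}=0$. Your Case (2) choice $\rho=(\mu_1,\mu_1^{-1})$ is exactly the paper's and goes through.

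Case (1), however, contains a genuine error. You take $\rho_1=\mu_1^4$, but since $\mu_{1,\infty}(x)=x^{-1}$ on the whole circle group $U_1(\R)$, this gives $\rho_{1,\infty}(x)=x^{-4}$, not $x^2$: the congruence $-4\equiv 2\ (\mathrm{mod}\ 6)$ only controls the value at $\zeta$ (the invariant $b(\rho_{1,v})\in\Z/6\Z$ of Definition \ref{defn:phi-b}), and does not determine the archimedean character, which is an honest integer power. Consequently the hypotheses of Proposition \ref{prop:archtrivial} are not satisfied, so $\pi'^s(\rho_\infty)$ is a nontrivial finite-dimensional representation of $U(3)$; then hypothesis (1) of Proposition \ref{prop:phi-global-unram} fails as cited, and, more damagingly, $\pi_\infty^{K_\infty}=0$ automatically (as $K_\infty=G_\infty$ and $\pi_\infty$ is irreducible and nontrivial), so even assuming $\pi^s(\rho_3)^{K_3(C)}\ne 0$ you cannot place $\pi$ in $\mA_G(K')$ and no contradiction with $\mA_G(K')=\{\mathbf{1}\}$ is obtained: Case (1) of the proposition remains unproved by your argument. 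The repair is exactly the paper's choice: take $\rho_1=\mu_1^{-2}$ (with $\rho'=\mu_1$), which gives $\rho_{1,\infty}(x)=x^{2}$, $\rho'_\infty(\beta)=\beta^{-1}$, and still $\rho_{1,3}(\zeta)=\zeta^{-2}=\zeta^{4}$ with triviality on $1+3\mO_3$; with this substitution the rest of your argument runs verbatim.
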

\begin{proof}
First we note that by Lemma \ref{lem:u1} $U_1(\Q_3)\cong U_1(\Z_3,3)\times \langle \zeta \rangle$, thus if a local character on $U_1(\Z_3)$ is trivial on $U_1(\Z_3,3)$ then it is uniquely determined by where it sends $\zeta$. Hence  characters that satisfy the properties described in \eqref{case1} and \eqref{case2} are uniquely defined on $U_1(\Q_3)$.
We now construct automorphic characters $\rho=(\rho_1,\rho')$  of $U_1(\Q)^2$ that at the prime $3$ are equal to the $\rho_3$ in the statement of the corollary.

For case \eqref{case1}
let   $\rho=(\mu_1^{-1}\times \mu_1^{-1}, \mu_1)$. Then 
\begin{enumerate}
\item $\rho_{1,\infty}(x)=x^2$, $\rho_\infty'(x)=x^{-1}$ 
\item $\rho_{1,3}(\zeta)=\zeta^4$, $\rho_3'(\zeta)=\zeta$, $\rho_{1,3}|_{1+3\mathcal O_3}\equiv 1, \rho_3'|_{1+3\mathcal O_3}\equiv 1$ so this is indeed the $\rho_3$ in \eqref{case1} above. 
\item $\rho_p^{K^*_p \cap P^*_p} \ne 0$ for any $p\ne 3$
\end{enumerate}

For case \eqref{case2} let $\rho=(\mu_1, \mu_1^{-1})$. Then 
\begin{enumerate}
\item $\rho_{1, \infty}(x)=(x)^{-1}$, $\rho'_\infty(x)={x}$, 
\item $\rho_{1,3}(\zeta)=\zeta$, $\rho'_3(\zeta)=\zeta^{-1}$, $\rho_{1,3}|_{1+3\mathcal O_3}\equiv 1, \rho_3'|_{1+3\mathcal O_3}\equiv 1$ so this is indeed the $\rho_3$ in \eqref{case2} above,
\item $\rho_p^{K^*_p \cap P^*_p} \ne 0$ for any $p\ne 3$
\end{enumerate}

In both cases  $\rho$ is automorphic, by Proposition \ref{prop:archtrivial} $\pi'^s(\rho_\infty)$ is the trivial representation,  and by Proposition \ref{prop:phi-global-unram}, $\epsilon(1/2, \phi)=1$.
Denote $\pi = \pi'^s(\rho_\infty) \otimes \bigotimes_{v\neq \infty, 3} \pi^n(\rho_v)\otimes \pi^s(\rho_3)$.
By Theorem \ref{thm:Rogawski-A-packets}, $m(\pi)=1$, i.e. $\pi$ is automorphic. 
Let $K'=\otimes_{v\neq 3}K_v \otimes K_3(C)$. Then by Corollary \ref{cor:Ram-Eis}, Part (2), $\pi^{K'}=0$ but by Proposition \ref{prop:level-spherical} $\pi_v^{K_v}\neq 0$ for $v\nmid 3$, hence we must have $\pi^s(\rho_3)^{K_3(C)}=0$.
\end{proof}

\begin{proof}[Proof of Theorem \ref{thm:Ram-Eis-kq}] 
Throughout this proof let $b_v = b(\rho_{1,v}) \in \Z/6\Z$ be as in Definition \ref{defn:phi-b}. Assume in contradiction that there exists $\pi=\otimes_v \pi_v \in \mA(K')$ that is non-Ramanujan. 
Then by definition of   $\mA(K')$,  $\pi_v^{K_v'}\neq 0$ for each $v$ and by Theorem \ref{thm:A-Ram} there exists an automorphic character $\rho$ of $U_1(\Q)^2$ such that  $\pi_v \in \Pi'(\rho_v)$, i.e., for each place $v$, $\pi_v=\pi^n(\rho_v)$ or $\pi^s(\rho_v)$. 

For $v$ split in $E$, $\Pi'(\rho_v)=\{\pi^n(\rho_v)\}$. For $v$ not split in $E$, by Lemma \ref{lem:level-supercuspidal} $\pi^s(\rho_v)^{\boldsymbol{I}^*_v} = 0$ and hence $\pi^s(\rho_v)^{K^*_v}=0$.  
Thus for $v\neq 3, \infty$ we must have $\pi_v(\rho_v)=\pi^n(\rho_v)$. By Proposition \ref{prop:level-spherical} for $v\neq 3$, $\pi^n(\rho_v)^{K^*_v} \ne 0$ if and only if $\eta_v^{K_v^*\cap P_v^*}\neq 0$ so for $v\neq 3, q, \infty$, $\rho_v$ must be unramified. 

At $\infty$, $\pi_\infty=\pi'^s(\rho_\infty)$ and by Proposition \ref{prop:archtrivial} at $\infty$ we must have either
\begin{enumerate}
\item $\rho_{1,\infty}(\frac{\alpha}{\bar \alpha})=(\frac{\alpha}{\bar \alpha})^2$, i.e., $b_\infty=2$, and $\rho'_\infty(\beta)= \beta^{-1}$, or
\item $\rho_{1, \infty}(\frac{\alpha}{\bar \alpha})=(\frac{\alpha}{\bar \alpha})^{-1}$, i.e., $b_\infty=5$, and $\rho'_\infty(\beta)=\beta$.
\end{enumerate}
We  have shown we must have
$$\pi = \pi'^s(\rho_\infty) \otimes \bigotimes_{v\neq \infty, 3} \pi^n(\rho_v)\otimes \pi_3.$$

By Proposition \ref{prop:level-Langlands}, $\pi^n(\rho_q)^{K'_q}\neq 0$ if and only if $\eta_q^{K_q'\cap P_q^*}\neq 0$.  Since $q$ is split in $E$,  $U_1(\Q_q)\cong \Q_q^\times$ and $\Z_q^\times/(1+q\Z_q)\cong \F_q^\times$. We fix a choice of $\beta \in \Z_q^\times$ such that its image in $\F_q^\times$ is a generator there and $\beta^{(q-1)/6}\equiv \zeta \mod q$. 
Recall our restriction that $q\equiv 1\mod{12}$, hence we can let $\xi=\beta^{(q-1)/12}\in E_q^\times$ . Then $\xi^2=\zeta$  and $\diag(\xi, \xi^{-2}, \xi)\in K_q'$. We note that $\eta_q(\xi, \xi^{-2}, \xi)=\rho_{1,q}(\xi^2)\rho_q'(1)=\rho_{1,q}(\zeta)=\zeta^{b_q}$. We also note that $\diag(1,\zeta, 1)\in K_q'$ and  $\eta_q(1,\zeta, 1)=\rho_q'(\zeta)$.  Hence $\pi^n(\rho_q)^{K'_q}\neq 0 \Rightarrow b_q=0$ and $\rho_q'(\zeta)=1$. 

By Theorem \ref{thm:Rogawski-A-packets}, $m(\pi)=1$ if and only if either 
$\epsilon(1/2,\phi)=-1$ and $\pi_3=\pi^n(\rho_3)$
or
 $\epsilon(1/2,\phi)=1$ and $\pi_3=\pi^s(\rho_3)$.
By Conjecture \ref{conj:level-packet} if $\pi_3^{K_3(C)^*}\neq 0$ then $\pi^n(\rho_3)^{K_3(C)^*}\neq 0$ and   by Proposition \ref{prop:level-K3C}, $\eta_3^{K_3(C)^*\cap P^*_3} \ne 0$. 
By Lemma \ref{lem:K3C}, $\eta_3^{K_3^*(C)\cap P_3^*}\neq 0$ if and only if $\rho_3=(\rho_{1,3}, \rho'_3)$ is trivial on $U_1(\Z_3,3)^2$ and $\eta_3(diag(\zeta^a, \zeta^a,\zeta^a))=1$ for $a=0, 2, 4$. For $a$ even, $\eta_3(diag(\zeta^a, \zeta^a,\zeta^a))=\mu_3(\zeta^a)\rho_{1,3}(\zeta^{2a})\rho_3'(\zeta^{3a})=\zeta^{a(1+2b_3)}$ and this equals $1$ for even values of $a$ if and only if $b_3=1$ or $4$. Hence $\pi_3^{K_3(C)^*}\neq 0$ implies $\rho_3=(\rho_{1,3}, \rho'_3)$ is trivial on $U_1(\Z_3,3)^2$
and $b_3=1$ or $4$.
By Propositions \ref{prop:phi-global-unram} and \ref{prop:phi-global-p} we now have $m(\pi)=1$ if and only if $\pi_3=\pi^s(\rho_3)$ and
$(b_3, b_q, b_\infty)\in \{(1, 0, -1), (4,0,2)\}$.

If $(b_3, b_q, b_\infty)=(1, 0, -1),$  then we must have that $\rho'_\infty(\beta)=\beta$ and since we must have $\pi^s(\rho_3)^{K_3(C)}\neq 0$ then by Proposition \ref{cor:supercuspidal} $\rho'_3(\zeta)\neq \zeta^{-1}$. For $\rho'$ to be automorphic we need $\rho'(\zeta)=1$ and $\rho'(\zeta)=\rho_\infty'(\zeta)\rho_3'(\zeta)
\rho_q'(\zeta)=\zeta \rho_3'(\zeta)\rho_q'(\zeta)$ so we must have $\rho'_q(\zeta)\neq 1$ but then $\pi_q^{K_q'}=0$.

If $(b_3, b_q, b_\infty)=(4, 0, 2),$  then we must have that $\rho'_\infty(\beta)=\beta^{-1}$ and by Proposition \ref{cor:supercuspidal} $\rho'_3(\zeta)\neq \zeta$. For $\rho'$ to be automorphic we need $\rho'(\zeta)=1$ and $\rho'(\zeta)=\rho_\infty'(\zeta)\rho_3'(\zeta)
\rho_q'(\zeta)=\zeta^{-1} \rho_3'(\zeta)\rho_q'(\zeta)$ so we must have $\rho'_q(\zeta)\neq 1$ but then $\pi_q^{K_q'}=0$.
Hence we can conclude that $\pi^{K'}=0$ so $\pi \not \in \mA(K')$. 
\end{proof}

In the proposition below we prove the following special cases of Conjecture \ref{conj:level-packet}. 

\begin{prop} \label{prop:conj}
Let $G = U_3$ and $p$ be a prime which is inert in $E$.
Let $\rho_p = (\rho_{1,p},\rho_p')$ be a  character of $U_1(\Q_p)^2$, and let $K'_p$  equal either one of the two cases below
\begin{enumerate}
\item $K'_p=K_p(p^m) = G(\Z_p,p^m)$ where $m \in \N$ and $p>10$, or
\item $K'_p=\boldsymbol{I}_p(p^e)$.
\end{enumerate}
Then
\[
\pi^n(\rho_p)^{K'_p}= 0 \qquad \Rightarrow \qquad \pi^s(\rho_p)^{K'_p}= 0.
\]
\end{prop}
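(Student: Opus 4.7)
The plan is to treat cases (1) and (2) by fundamentally different methods, as hinted at the end of Section \ref{automorphic:level}. The case (2) implication will follow from Moy--Prasad depth theory combined with Proposition \ref{prop:depth-preservation}, while case (1) will require Rogawski's endoscopic character identities together with Ferrari's generalized fundamental lemma.

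For case (2) with $K'_p = \boldsymbol{I}_p(p^e)$: the key is that $\pi^{\boldsymbol{I}_p(p^e)} \neq 0$ is equivalent to $d(\pi) < e$ by definition of the Moy--Prasad depth, so the desired implication is equivalent to $d(\pi^n(\rho_p)) = d(\pi^s(\rho_p))$. The proof of Proposition \ref{prop:depth-preservation} shows exactly this: via Gelbart--Rogawski's realization of $\Pi(\rho_p)$ as a family of local theta lifts $\{\omega(\psi,\gamma,\chi)\}_\psi$ (where $(\gamma,\chi)$ depend only on $\rho_p$) and Pan's depth-preservation theorem for the theta correspondence, one obtains $d(\pi) = \max\{d(\rho_p), d(\chi_0)\}$ for every $\pi \in \Pi(\rho_p)$, where $\chi_0$ is the discrepancy character between the Gelbart--Rogawski and Pan splittings. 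The right-hand side is manifestly symmetric in $\pi^n$ and $\pi^s$, so the two depths coincide; this holds unconditionally (without the hypothesis $d(\rho) \geq d(\omega)$ required for the cleaner statement of Proposition \ref{prop:depth-preservation}). Consequently $\pi^n(\rho_p)^{\boldsymbol{I}_p(p^e)}$ and $\pi^s(\rho_p)^{\boldsymbol{I}_p(p^e)}$ vanish simultaneously.

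For case (1) with $K'_p = K_p(p^m)$ and $p>10$: here depth alone is insufficient, since the filtration by principal congruence subgroups $K_p(p^m)$ is not captured by the Iwahori filtration used in Moy--Prasad theory. The approach will follow the strategy of Marshall \cite{Marshall2014Endoscopycohomologygrowth} and Gerbelli-Gauthier \cite{Gerbelli-Gauthier2019Growthcohomologyarithmetic}. We start from Rogawski's local endoscopic character identity for the A-packet $\{\pi^n(\rho_p), \pi^s(\rho_p)\}$,
\[
\tr \pi^n(\rho_p)(\phi) - \tr \pi^s(\rho_p)(\phi) = \tr \rho_p(\phi^H),
\]
holding for test functions $\phi \in C_c^\infty(G_p)$ with matching transfer $\phi^H$ on the endoscopic group $H(\Q_p) = U_2(\Q_p) \times U_1(\Q_p)$. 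Taking $\phi = \mathrm{vol}(K_p(p^m))^{-1} \mathbf{1}_{K_p(p^m)}$ converts $\tr \pi(\phi)$ into $\dim \pi^{K_p(p^m)}$. Ferrari's generalization of the fundamental lemma \cite{Ferrari2007Theoremedelindice}, applicable under the mild residue characteristic condition $p > 10$, produces an explicit matching $\phi^H$ supported on a principal congruence subgroup $H'_m \subseteq H(\Q_p)$ of predictable level. Unfolding gives $\tr \rho_p(\phi^H) = c \cdot \dim \rho_p^{H'_m}$ for a positive volume constant $c$, hence
\[
\dim \pi^n(\rho_p)^{K_p(p^m)} - \dim \pi^s(\rho_p)^{K_p(p^m)} = c \cdot \dim \rho_p^{H'_m}.
\]
If $\dim \rho_p^{H'_m} = 0$ (the generic situation) the hypothesis on $\pi^n$ gives the conclusion immediately. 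In the case $\rho_p|_{H'_m} = 1$, we will combine the above with the stable character identity $\tr \pi^n(\phi) + \tr \pi^s(\phi) = \tr \pi^{\mathrm{st}}(\phi^{\mathrm{st}})$ and the level analysis for $\pi^n$ in Proposition \ref{prop:level-Langlands}; the latter translates $\pi^n(\rho_p)^{K_p(p^m)} = 0$ into the vanishing of $\eta_p^{K_p(p^m) \cap P^*_p}$, which will force the stable contribution to vanish as well, yielding $\pi^s(\rho_p)^{K_p(p^m)} = 0$.

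The principal obstacle will be the bookkeeping in case (1): precisely tracking Rogawski's sign conventions, verifying the hypotheses of Ferrari's fundamental lemma for $\mathbf{1}_{K_p(p^m)}$, computing $\phi^H$ explicitly enough to identify $H'_m$ and the constant $c$, and carrying through the combined analysis of the stable and endoscopic identities in the degenerate case $\rho_p|_{H'_m} = 1$. The bound $p > 10$ will come from the residue characteristic hypotheses needed for both the fundamental lemma and the depth analysis to interact cleanly with the level computation from Section \ref{automorphic:level}.
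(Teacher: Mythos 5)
Your case (2) argument is essentially the paper's proof verbatim: the implication is reduced to $d(\pi^n(\rho_p))=d(\pi^s(\rho_p))$, which is extracted from the proof of Proposition \ref{prop:depth-preservation} (Gelbart--Rogawski's realization of the packet as theta lifts plus Pan's depth preservation, giving $d(\pi)=\max\{d(\rho_p),d(\chi)\}$ for every member of the packet). Nothing to add there.

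For case (1) you have the right ingredients (the endoscopic character identity for the packet $\{\pi^n(\rho_p),\pi^s(\rho_p)\}$, Ferrari's fundamental lemma for $\mathbf{1}_{K_p(p^m)}$ with transfer supported on the principal congruence subgroup $K_p^H(p^m)$ of $H=U_2\times U_1$, and Proposition \ref{prop:level-Langlands}), but the final assembly has a gap. You split into the ``generic'' case $\dim\rho_p^{H'_m}=0$ and the ``degenerate'' case $\rho_p|_{H'_m}=1$, and in the latter you only gesture at combining a stable character identity with Proposition \ref{prop:level-Langlands} to ``force the stable contribution to vanish''; that step is neither worked out nor is it the right move. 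The point you are missing is that the degenerate case is vacuous under your hypothesis: Proposition \ref{prop:level-Langlands} is an \emph{equivalence} for $K'_p=K_p(p^m)$, so $\pi^n(\rho_p)^{K_p(p^m)}=0$ forces $\eta_p^{K_p(p^m)\cap P^*_p}=0$, and since $\mu_p$ is unramified (and $|\alpha|_p=1$ on the relevant torus elements) this is the same as $\rho_p^{K_p^H(p^m)}=0$, i.e.\ the right-hand side of the character identity vanishes \emph{always} under the hypothesis. This is exactly how the paper argues: with $f=\mathbf{1}_{K_p^*(p^m)}$ and $f^H=p^{-4m}\mathbf{1}_{K_p^H(p^m)}$ a transfer pair (Ferrari), Rogawski's identity reads $\tr\pi^n(\rho_p)(f)+\tr\pi^s(\rho_p)(f)=\rho_p(f^H)=0$, and since each trace equals $\vol(K_p(p^m))\dim\pi^{K_p(p^m)}\geq 0$, both vanish. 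Note also that the paper's identity carries a plus sign, not the minus sign you wrote; with the correct normalization this is immaterial once the endoscopic side is shown to vanish, but as written your ``generic case'' bookkeeping relies on the sign, so you should fix the reference point (Rogawski, Cor.\ 12.7.4, as used in Marshall) rather than leave it to chance.
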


\begin{proof} 
(1) Let $G_p = G(\Q_p)$ and 
\[H_p = \left\lbrace \bmx a & 0 & b \\ 0 & \beta& 0 \\ c& 0 & d \emx \in G_p \;:\; \bmx a& b \\ c & d \emx \in U_2(\Q_p), \; \beta \in U_1(\Q_p)  \right\rbrace.\]
Identify the character $\rho_p$  on $U_1(\Q_p)^2$ with the character on $H_p$ via $\rho_p \bmx a & 0 & b \\ 0 & \beta& 0 \\ c& 0 & d \emx=\rho_{1,p}(ad-bc)\rho'_p((ad-bc)\beta)$.
We shall use the notations of \cite{Marshall2014Endoscopycohomologygrowth}, to denote by $C_c^\infty(G_p, \omega)$ the space of compactly supported functions $f$ on $G_p$ satisfying $f(zg) = \omega(z) f(g)$ for any $g\in G_p$ and $z \in Z(G_p)$, and similarly for $C_c^\infty(H_p, \omega\mu^{-1})$.
For an admissible representation $\pi_p$ of $G_p$ and $f \in C_c^\infty(G_p, \omega)$, denote the trace class operator $\pi_p(f) = \int_{G_p} f(g) \pi_p(g)dg$, where $dg$ is the Haar measure which gives $K_p$ measure $1$.
Note that $\tr (\pi_p(\b1_{K'_p})) = \vol(K'_p)\dim(\pi_p^{K'_p})$, and in particular 
\begin{equation}\label{eq:dim-tr}
\tr (\pi_p(\b1_{K'_p})) = 0 \qquad \Leftrightarrow \qquad \pi_p^{K'_p} = 0.
\end{equation}
Say that $f \in C_c^\infty(G_p, \omega)$ and $f^H\in C_c^\infty(H_p, \omega\mu^{-1})$ form a transfer pair if the unstable orbital integrals of $f$ match the stable orbital integrals of $f^H$. 
For more details see \cite[(4.9.1)]{Rogawski1990Automorphicrepresentationsunitary}.
Then by \cite[Thm.\ 3.2.3]{Ferrari2007Theoremedelindice} (see also  \cite[Prop.\ 2]{Marshall2014Endoscopycohomologygrowth} and \cite[Lem.\ 7.1.2]{Gerbelli-Gauthier2019Growthcohomologyarithmetic}), $f = {\b1_{K_p^*(p^m)}}$ and $f^H = p^{-4m}{\b1_{K^H_p(p^m)}}$ are a transfer pair where $K^H_p(p^m) = U_2(\Z_p,p^m)\times U_1(\Z_p,p^m)$.   
Hence by \cite[Cor.\ 12.7.4]{Rogawski1990Automorphicrepresentationsunitary} (see also \cite[(5)]{Marshall2014Endoscopycohomologygrowth}),
\begin{equation}\label{eq:tr-A-packet}
\tr(\pi^n(\rho_p)(\b1_{K_p(p^m)})) + \tr(\pi^s(\rho_p)(\b1_{K_p(p^m)}))=\rho_p(p^{-4m}{\b1_{K_p^H(p^m)}}).
\end{equation}
By Proposition \ref{prop:level-Langlands}, if $\pi^n(\rho_p)^{K_p^*(p^m)} = 0$ then $\eta_p^{K_p^*(p^m)  \cap P^*_p} = 0$.
Since $\mu_p$ is unramified and $|\alpha |_p=1$, this is equivalent to $\rho_p^{K_p^H(p^m)} = 0$ and therefore $\rho_p(p^{-4m} \b1_{K_p^H(p^m)}) = 0$. 
Combined with equations \eqref{eq:dim-tr} and \eqref{eq:tr-A-packet} we get that $\tr(\pi^s(\rho_p)(\b1_{K_p^*(p^m)} ) = 0$, i.e. $\pi^s(\rho_p)^{K_p^*(p^m)} = 0$ as claimed.

(2) If $\pi^n(\rho_p)^{\boldsymbol{I}_p(p^e)} = 0$ then $d(\pi^n(\rho_p))\geq t$ so by Proposition \ref{prop:depth-preservation}, $d(\pi^s(\rho_p))\geq t$ and hence by definition $\pi^s(\rho_p)^{\boldsymbol{I}_p(p^e)} = 0$.
\end{proof}

\subsection{Sarnak-Xue Density Hypothesis} \label{automorphic:SXDH}

Let $H = U_2 \times U_1$ be the unique proper elliptic endoscopic group of $G^*$, the quasi-split inner form of $G$. 
Let $\mA^F_{H,\mu}$ be the set of $1$-dimensional automorphic representations of $H$ with central character $\mu$.
For each $\rho \in \mA^F_{H,\mu}$, let $\Pi(\rho)$ be its corresponding Rogawski A-packet of $G$.
Denote
\[
V_A(N) := \bigoplus_{\rho \in \mA^F_{H,\mu}} \bigoplus_{\pi \in \Pi(\rho) \cap \mA_{G,\b1}} \pi^{K(N)} \leq \bigoplus_{\pi \in \mA_{G,\b1}} \pi^{K(N)} = V(N).
\]

We shall use the following asymptotic notations:
Let $f$ and $g$ be two positive real valued functions in the variable $N$. 
Denote $f(N) \lesssim g(N)$ if for any $\e>0$ there exists $C_\e>0$ such that $f(N) \leq C_\e \cdot g(N)^{1+\e}$ for any $N$, and denote $f(N) \asymp g(N)$ if $f(N) \lesssim g(N)$ and $g(N) \lesssim f(N)$.

Therefore the Sarnak-Xue Density Hypothesis claims that
\[
\dim V_A(N)  \lesssim \dim V(N)^{1/2},
\]
and Theorem \ref{thm:SXDH} claims that
\[
\dim V_A(N)  \lesssim \dim V(N)^{3/8}.
\]
Note that $\dim V(N) =  \mbox{vol}(K(N)) \asymp N^{\dim G/Z} = N^8$.

\begin{proof}[Proof of Theorem \ref{thm:SXDH}]
For any $v$, fix a Haar measure on $G_v = G(\Q_v)$ which gives $K_v$ measure  $1$, and consider their product as a Haar measure on $G(\A)$ which gives $K(1)$ measure $1$.
Then for any representation $\pi$ of $G(\A)$ with trivial central character, 
\[
\tr(\pi(1_{K(N)})) = \mbox{vol}(K(N)) \cdot \dim (\pi^{K(N)}) \asymp N^{-8} \dim (\pi^{K(N)}),
\]
and similarly for any representation $\sigma$ of $H(\A)$ with central character $\mu$, 
\[
\tr(\sigma(1_{K^H(N)})) = \mbox{vol}(K^H(N)) \cdot \dim (\sigma^{K(N)}) \asymp N^{-4} \dim (\pi^{K^H(N)}).
\]
Therefore 
\begin{align*}
\dim V_A(N) &= \sum_{\rho \in \mA^F_{H,\mu}} \sum_{\pi \in \Pi(\rho) \cap \mA_{G,\b1}} \dim (\pi^{K(N)}) \\&= N^8 \sum_{\rho \in \mA^F_{H,\mu}} \sum_{\pi \in \Pi(\rho) \cap \mA_{G,\b1}}\prod_v \tr(\pi_v(1_{K_v(N)})).
\end{align*}
Note that $\tr(\pi_v(1_{K_v})) \leq 1$ for any $v\not \in S$, and $\max_{v \in S} \tr(\pi_v(1_{K_v})) \lesssim 1$. 
Then
\begin{align*}
\dim V_A(N) &\lesssim N^8 \cdot \sum_{\rho \in \mA^F_{H,\mu}} \sum_{\pi \in \Pi(\rho)} \prod_{\ell \mid N} \tr(\pi_\ell(1_{K_\ell(N)})) 
\\&= N^8 \sum_{\rho \in \mA^F_{H,\mu}}  \prod_{\ell \mid N} (\sum_{\pi_\ell \in \Pi(\rho_\ell)} \tr(\pi_\ell(1_{K_\ell(N)}) ) .
\end{align*}
By the endoscopic character relation \cite[Thm.\ 3.2.3]{Ferrari2007Theoremedelindice},  \cite[Prop.\ 2]{Marshall2014Endoscopycohomologygrowth}, \cite[Lem.\ 7.1.2]{Gerbelli-Gauthier2019Growthcohomologyarithmetic}, for any $\ell \mid N$, 
\[
\sum_{\pi_\ell \in \Pi(\rho_\ell)} \tr(\pi_\ell(1_{K_\ell(N)}) = \ell^{-2 \ord_\ell(N)} \tr(\rho_\ell(1_{K^H_\ell(N)}) ).
\]
Therefore
\[
\dim V_A(N) \lesssim N^6 \sum_{\rho \in \mA^F_{H,\mu}} \tr(\rho(1_{K^H(N)}) ) \asymp N^2 \sum_{\rho \in \mA^F_{H,\mu}} \dim (\rho^{K^H(N)}).
\]
Note that $\mA^F_{H,\mu}$ is the set of Hecke characters of $H$ with central character $\mu$, hence
\[
\sum_{\rho \in \mA^F_{H,\mu}} \dim (\rho^{K^H(N)}) = |\{\rho \in \mA^F_{H,\mu} \,:\, \rho^{K^H(N)} \ne 0 \}| \asymp N.
\]
Combined we get $\dim V_A(N) \lesssim N^3 \asymp \dim V(N)^{3/8}$, as claimed.
\end{proof}

\section{Ramanujan Bigraphs and Applications} \label{sec:applications}

Let $E$ be an imaginary quadratic extension of $\Q$, $p$ a prime inert in $E$, and $\Phi\in GL_3(E)$ a definite Hermitian form, such that $p\nmid\mathrm{disc}\,\Phi$, or equivalently, $\Phi\in GL_3(\mO_{E_p})$. We denote by $g^{\#}=\Phi^{-1}g^{*}\Phi$ the corresponding Hermitian involution. In this section we return to denote by $G$ the projective unitary group scheme $G=PU_3(E,\Phi)$.
We denote $G_p = G(\Q_p)$, $K_p = G(\Z_p)$, $\B_p$ the $(p^3+1,p+1)$-biregular Bruhat-Tits tree of $G_p$ and $\B_p^{hs}$ the set of hyperspecial vertices. 
We identify $\B_p^{hs}$ with $G_p/K_p$, with $K_p$ stabilizing the vertex $v_0$. 

For $g\in U_3(E,\Phi)(\Q_p)$, define the \emph{level} of $g$ to be 
\[
\ell(g) = -2\min_{i,j}\ord_pg_{ij} = 2\min\left\{ t\,\middle|\,p^t\cdot g\in M_3(\mO_{E_p})\right\}. 
\]
In particular, if $g\in U_3(E,\Phi)(\Z[1/p])$, then $\ell(g)$ is the minimal $\ell$ such that $p^{\ell/2}g\in M_3(\mO_{E})$.
Since $Z(G_p)=\{ \alpha I\,|\,\alpha\in U_1^E(\Q_p)\}$ and $U_1^E(\Q_p) = U_1^E(\Z_p)\le \mO_{E_p}^\times$ (due to the fact that $p$ is inert), we have $\ell(zg)=\ell(g)$ for any $z\in Z(G_p)$, hence the level is well defined on $G_p=PU_3\left(\Q_p\right)$.
Finally the level relates to the graph distance in $\B_p$ by 
\begin{equation} \label{eq:dist-level}
\forall h\in G_p,\qquad \dist_{\B_p}(hv_0,gv_0) = \dist_{\B_p}(v_0,h^{-1}gv_0) = \ell(h^{-1}g).
\end{equation}

This is proved in \cite[Prop.\ 3.3]{Evra2018RamanujancomplexesGolden} for $E=\Q[\sqrt{-1}]$ and $\Phi=I$, but up to replacing $g^{*}$ by $g^{\#}$, the proof follows verbatim to the general case. 

\begin{rem}
It is sometimes convenient to work with the larger group scheme of
\textit{unitary similitudes}, which is defined, using the notations of Section \ref{subsec:unitarygroups}, by
\[
GU_{3}\left(E,\Phi\right)(R)=\left\{ g\in GL_{3}\left(\mO_{E}\otimes_{\mO_{F}}R\right)\,\middle|\,g^{*}\Phi g=\lambda_{g}\Phi\text{ for some }\lambda_{g}\in R^{\times}\right\} .
\]
For example, the matrices described in \eqref{eq:set-Sp} below live naturally in the
group $GU_{3}(E,\Phi)(\Z[1/p])$. The distance equation \eqref{eq:dist-level}
remains true upon defining the
level of a similitude matrix $g\in GU_{3}(E,\Phi)(\mathbb{Q}_{p})$ by $\ell(g)=\ord_{p}(\lambda_{g})-2\min_{i,j}\ord_{p}g_{ij}$.
We remark that any similitude matrix can be scaled to be unitary, so that the projective similitude group coincides with the
projective unitary group; indeed, in any \textit{odd} dimension
$d$, for $g\in GU_{d}(E,\Phi)(R)$ we have $\frac{\lambda_{g}^{(d-1)/2}}{\det g}g\in U_{d}(E,\Phi)(R)$,
so that $g\mapsto\frac{\lambda_{g}^{(d-1)/2}}{\det g}g$ induces an
isomorphism $PGU_{d}(E,\Phi)\cong PU_{d}(E,\Phi)$.
\end{rem}

It follows from \eqref{eq:dist-level} that if $\Lambda\leq G(\Z[1/p])$ acts simply-transitively on $\B_p^{hs}$ then $S=\{ s\in\Lambda\,|\,\ell(s)=2\} $ takes $v_0$ once to each closest hyperspecial vertex. 
Note that $ps\in M_3\left(\mO_{E}\right)$ for any $s\in S$. 
To reconstruct the tree as a Cayley bigraph we need to determine when do $s,s'\in S$ take $v_0$ to neighbors of a common non-hyperspecial vertex. 

\begin{prop} \label{prop:nbs-crit} 
If $s,s'\in G(\Z[1/p])$ with $\ell(s),\ell(s')=2$, then 
\[
\dist(sv_0,s'v_0)\leq 2 \qquad \Leftrightarrow \qquad p\mid p^2s^{\#}s'.
\]
\end{prop}

\begin{proof}
Note first that $p^2s^{\#}s'$ is in $M_3(\mO_E)$. Now, by \eqref{eq:dist-level} and the fact that $s^{-1}=s^{\#}$ for $\Phi$-unitary matrices, we have
\begin{align*}
\dist\left(sv_0,s'v_0\right)&=\dist\left(v_0,s^{-1}s'v_0\right)=\dist\left(v_0,s^{\#}s'v_0\right)=\ell\left(s^{\#}s'\right) \\
& =-2\min_{i,j}\ord_p\left(s^{\#}s'\right)_{ij}=4-2\min_{i,j}\ord_p\left(p^2s^{\#}s'\right)_{ij}
\end{align*}
so that $\dist\left(sv_0,s'v_0\right)\leq 2$ if and only if $p\mid p^2s^{\#}s'$. 
\end{proof}


\subsection{Eisenstein case}

We now prove the main theorem for the Eisenstein case ($\EE$), and the differences in the other three cases ($\GG,\MM,\CC$) will be explained in the next subsection. 

\begin{thm}\label{thm:main-Eis}
Let $p,q$ be primes with $p\equiv2\Mod3$, $q\notin\left\{ 3,p\right\} $, and $\omega=\tfrac{-1+\sqrt{-3}}2$.
Let
\begin{equation}
S_p:=\Bigg\{ g\in M_3\left(\Z\left[\omega\right]\right)\,\Bigg|\,{g^{*}g=p^2I,\;g\text{ is not scalar},\atop g\equiv\left(\begin{smallmatrix}1 & * & *\\
* & 1 & *\\
* & * & 1
\end{smallmatrix}\right)\Mod3}\Bigg\},\label{eq:set-Sp}
\end{equation}
and let $S_p=\bigsqcup_iS_p^i$ be the partition induced by the equivalence relation 
\[
g\sim h\text{ if and only if }p\mid g^{*}h.
\]
Denote 
\[
\mathbf{G}_q:=\begin{cases}
PSL_3\left(\F_q\right) & q\equiv1\Mod3\\
PSU_3\left(\F_q\right) & q\equiv2\Mod3,
\end{cases}\quad\text{and}\quad S_{p,q}^i:=S_p^i\Mod{q}\overset{{\scriptscriptstyle (\star)}}{\subseteq}\mathbf{G}_q
\]
where $(\star)$ implies mapping $\omega$ to a root of $x^2+x+1$ in $\F_q$ or in $\F_{q^2}$ according to $q\Mod3$.
The Cayley bigraphs
\[
X_{\EE}^{p,q}=CayB\left(\mathbf{G}_q,\left\{ S_{p,q}^i\right\} _i\right)
\]
(see Definition \ref{def:biCay}) satisfy:
\begin{enumerate}
\item $X_{\EE}^{p,q}$ is an adj-Ramanujan $(p^3\!+\!1,p\!+\!1)$-regular
bigraph, with left side of size 
\[
\left|L_{X_{\EE}^{p,q}}\right|=\left|\mathbf{G}_q\right|=\begin{cases}
\left|PSL_3(q)\right|=\tfrac13\left(q^{8}-q^{6}-q^{5}+q^3\right) & q\equiv1\Mod3\\
\left|PSU_3(q)\right|=\tfrac13\left(q^{8}-q^{6}+q^{5}-q^3\right) & q\equiv2\Mod3.
\end{cases}
\]
\item $X_{\EE}^{p,q}$ is \uline{non}-Ramanujan: $\pm ip^{3/2}$
is in $\Spec B_{X_{\EE}^{p,q}}$, while $\rho\left(B_{\T_{p^3\!+\!1,p\!+\!1}}\right)=p$.
\item $X_{\EE}^{p,q}$ satisfies Sarnak-Xue density. In fact, $\Spec_0(B_{X_{\EE}^{p,q}})\subseteq\Spec B_{\T_{p^3\!+\!1,p\!+\!1}}\cup\left\{ \pm ip^{3/2}\right\} $,
and for any $\varepsilon>0$ there is $C_{\varepsilon}>0$ (not depending on $p$ and $q$) such that the multiplicity of $\pm ip^{3/2}$ in $\Spec B_{X_{\EE}^{p,q}}$ is bounded by  $C_{\varepsilon}\left|X_{\EE}^{p,q}\right|^{3/8+\varepsilon}$.\footnote{In fact, for Sarnak-Xue $C_{\varepsilon}\left|X_{\EE}^{p,q}\right|^{1/2+\varepsilon}$ would have been enough.}
\item The group $\mathbf{G}_q$ acts on the set
\[
Y^q:=\begin{cases}
\mathbb{P}^2(\F_q) & q\equiv1\Mod3\\
\left\{ v\in\mathbb{P}^2(\F_q[\omega])\,\middle|\,v^{*}\!\cdot\!v=0\right\}  & q\equiv2\Mod3,
\end{cases}
\]
and the Schreier bigraphs $Y_{\EE}^{p,q}=SchB\left(Y^{q},\left\{ S_{p,q}^i\right\} _i\right)$ are (fully) Ramanujan.
\item The girth of $X_{\EE}^{p,q}$ if larger than $2\log_pq$ (in the language of Section \ref{sec:Combinatorics}, $X_{\EE}^{p,q}$ has $6$-logarithmic girth).
\item \label{enu:X_E-Bounded-cutoff}The family $\left\{ X_{\EE}^{p,q}\right\} _q$ satisfies bounded cutoff: for any $0<\varepsilon<\frac{p^{3/2}}{6}$, the total-variation mixing time $t_{\star}$ of non-backtracking random walk on the edges of $X_{\EE}^{p,q}$ satisfies 
\[
\frac{1}{2}\log_{p}N-\frac{1}{2}\log_{p}\left(\tfrac1{\varepsilon}\right)<t_{1-\varepsilon}<t_{\varepsilon}<\frac{1}{2}\log_{p}N+2\log_{p}\left(\tfrac1{\varepsilon}\right)+3,
\]
where $N=N_{X_{\EE}^{p,q}}$ is the number of edges in $X_{\EE}^{p,q}$.
\item (Diameter) For $\varepsilon<\min\left(1,\frac{p^{3/2}}{6}\right)$ and $\ell\geq\frac{1}{2}\log_{p}N+2\log_{p}\left(\tfrac1{\varepsilon}\right)+3$, for any $e\in E_{X_{\EE}^{p,q}}$ we have
\[
\left|\left\{ e'\in E_{X_{\EE}^{p,q}}\,\middle|\,{\text{there is a non-backtracking path}\atop \text{of length \ensuremath{\ell} from \ensuremath{e} to \ensuremath{e'}}}\right\} \right|\geq\left(1-\varepsilon\right)N.
\]
Furthermore, for any two directed edges $e_1,e_2$ in $X_{\EE}^{p,q}$ there is a non-backtracking path from $e_1$ to $e_2$ of length at most $\log_{p}N+10$.
\end{enumerate}
\end{thm}

We remark that the case of $p=2$ has some special additional features - see Section \ref{subsec:eisenstein-lattice-explicit} for an explicit description of $S_2$ (called there $S$) and its action on the Bruhat-Tits tree of $G_2$.

\begin{proof}
\begin{enumerate}
\item For $p$ which is inert in $E=\Q\left(\sqrt{-3}\right)$, let $\Lambda_{\mE}^p$ be the lattice defined in Theorem \ref{thm-simply-transitive}. 
Let $\Lambda_{\mE}^p\left(q\right)=\Lambda_{\mE}^p\cap K_q\left(q\right)$, where $K_q(q)=\ker\left(G\left(\Z_q\right)\rightarrow G\left(\F_q\right)\right)$, which is a congruence subgroup in $PSU_3\left(\Q_p\right)$.
By Corollary \ref{cor:SA-classical} 
\[
\Lambda_{\EE}^p(q)\backslash\Lambda_{\EE}^p \cong \begin{cases} PSU_3\left(\F_q\right) & q\equiv2\Mod3\\ PSL_3\left(\F_q\right) & q\equiv1\Mod3 \end{cases}\quad = \; \mathbf{G}_q.
\]
In Theorem \ref{thm-simply-transitive} we show that $\Lambda_{\EE}^p$
acts simply-transitively on the left side of the $(p^3+1,p+1)$-biregular tree $\B_p$. 
We note that $s\mapsto p^{-1}s$ maps $S_p$ bijectively to the elements of level $2$ in $\Lambda_{\EE}^p$, and that by Proposition \ref{prop:nbs-crit}, for each $1\leq i\leq p^3+1$ the vertices $\left\{ sv_0\,\middle|\,s\in S_p^i\right\}$ share a common neighbor which we call $v_i$. 
Therefore, by Theorem \ref{thm:biCay-tree} we have $CayB\left(\Lambda_{\EE}^p,\left\{ S_p^i\right\} _i\right)\cong\B_p$, and furthermore, that
\begin{align*}
X_{\EE}^{p,q} & =CayB\left(\mathbf{G}_q,\left\{ S_{p,q}^i\right\} _i\right)\cong CayB\left(\Lambda_{\EE}^p\left(q\right)\backslash\Lambda_{\EE}^p,\left\{ S_{p,q}^i\right\} _i\right)\\ & =\Lambda_{\EE}^p\left(q\right)\backslash CayB\left(\Lambda_{\EE}^p,\left\{ S_p^i\right\} _i\right)\cong\Lambda_{\EE}^p\left(q\right)\backslash\B_p.
\end{align*}
Let $K'(q)=G(\R\widehat{\Z}^{3,q})K_3(C)K_q(q)$ where $K_3(C)=\left\{ g\in G\left(\Z_3\right)\,\middle|\,g\equiv\left(\begin{smallmatrix}1 & * & *\\
* & 1 & *\\
* & * & 1
\end{smallmatrix}\right)\Mod3\right\} $ (as in (\ref{eq:K3C})), and note that $\Lambda_{\EE}^p\left(q\right)=K'^p\left(q\right)\cap G\left(\Q\right)$ and $X_{\EE}^{p,q}=X_{K'(q)}^p$.
By Theorem \ref{thm:A-Ram} we get that $\mA_{G}\left(K'\left(q\right)\right)$
is A-Ramanujan, and by Theorem \ref{thm:ram-global-crit}(i) it follows
that $X_{\EE}^{p,q}$ is adj-Ramanujan. 
\item We prove this separately for $q\leq5$ and $q\geq5$. Let $q=2$, and $K'=G(\R\widehat{\Z}^{3,q})K_3(C)$, which satisfies $G(\Q)K'=G(\A)$ by Corollary \ref{cor:adelic-simply-transitive}. If $X_{\EE}^{p,q}$ was Ramanujan, then by Theorem \ref{thm:ram-global-crit}(ii) with $K''=K'\left(2\right)\trianglelefteq K'$,
$\mA_{G}\left(K'\left(2\right)\right)$
was Ramanujan at $p$, so that every automorphic $\pi$ of level $K'\left(2\right)$
was tempered or one-dimensional at $p$. By Corollary
\ref{cor:A-type}, this implies that $\pi$ is not in a global $A$-packet, hence by
Theorem \ref{thm:A-Ram}, $\pi$ is one-dimensional or tempered at
all unramified places, namely when $\ell\neq2,3$. In particular,
we obtain that every $\pi\in\mA_{G}\left(K'\left(2\right)\right)$
is tempered or one-dimensional at $5$, and thus $X_{\EE}^{5,2}$
is Ramanujan by Theorem \ref{thm:ram-global-crit}(i). However, this
is false by an explicit computation of $X_{\EE}^{5,2}$, whose
non-backtracking spectrum is shown in Figure \ref{fig:Eis-graphs}.
Thus $X_{\EE}^{p,2}$ is non-Ramanujan for any $p\neq2,3$.
For $q=5$, the same proof holds using the explicit computation
of $X_{\EE}^{2,5}$ (Figure \ref{fig:Eis-graphs}) as well
as by the general case below.

For any $q\geq5$, Theorem \ref{thm:NonRam-Eis}(2)
shows that $\mA_{G}\left(K'\left(q\right)\right)$ is non-Ramanujan,
and Theorem \ref{thm:ram-global-crit}(ii) with $K''=K'(q)$ shows that $X_{\EE}^{p,q}$
is non-Ramanujan for any $p\neq3,q$. More precisely, there exists
$\pi\in\mA_{G}\left(K'\left(q\right)\right)$ which sits in
a global $A$-packet, hence by Corollary \ref{cor:A-type}, $\pi_p=\pi^{n}(\rho_p)$ has Satake parameter $-p$
for any $p\neq3,q$, so by Proposition \ref{prop:Bspec_reps} it contributes the eigenvalues
$\pm ip^{3/2}$ to the non-backtracking spectrum.
\item By Equation (\ref{eq:exces-autom}) and Corollary \ref{cor:A-type}
we get 
\[
\mE\left(X_{K'\left(q\right)}^p\right)\leq\sum_{{\pi\in\mA_{G}\left(K'\left(q\right)\right)\atop \mathrm{Sat}(\pi_p)=-p}}\dim\pi^{K'\left(q\right)}=\sum_{\pi\in\mA_{G}^{A}\left(K'\left(q\right)\right)}\dim\pi^{K'\left(q\right)}=\dim V_{A}\left(q\right),
\]
where $\mA_{G}^{A}\left(K'\left(q\right)\right)$ is identified
with $\mA_{U_3^{E,\Phi},\mathbf1}^{A}\left(K'\left(q\right)\right)$
the set of $A$-type, level $K'\left(q\right)$, automorphic representations
of $U\left(E,\Phi\right)$, with trivial central character. By Theorem
\ref{thm:SXDH}, for any $\varepsilon>0$ there exists $C_{\varepsilon}$,
such that 
\[
\dim V_{A}\left(q\right)\leq C_{\varepsilon}\dim V\left(q\right)^{3/8+\varepsilon}.
\]
The claim now follows from the fact that 
\[
\dim V\left(q\right)=\left|G\left(\Q\right)\backslash G\left(\A\right)/K'\left(q\right)\right|=\left|G\left(\Q\right)\backslash G\left(\Q\right)K'/K'\left(q\right)\right|
\]
\[
=\left|\Lambda_{\EE}^p\left(q\right)\backslash G_p/K_p\right|=\left|L_{X_{\EE}^{p,q}}\right|\leq\left|X_{\EE}^{p,q}\right|
\]
\item The group $\mathbf{G}_q$ acts transitively on $Y_q$; fix $v_0\in Y_q$
and take $H_q=\mathrm{Stab}_{\mathbf{G}_q}\left(v_0\right)$,
so that $Y^{q}\cong H_q\backslash\mathbf{G}_q$ as a $\mathbf{G}_q$-set.
Let $K_q\left[q\right]=\left\{ k\in K_q\,\middle|\,k\Mod{q}\in H_q\right\} $,
and $K'[q]=G(\widehat{\Z}^{3,q}\R)K_3\left(C\right)K_q\left[q\right]$.
Observe that 
\[
\Lambda_{\EE}^p\left[q\right]:=K'^p\left[q\right]\cap G\left(\Q\right)=\left\{ g\in\Lambda_{\EE}^p\,\middle|\,g\Mod{q}\in H_q\right\} ,
\]
so that $Y^{q}\cong\Lambda_{\EE}^p\left[q\right]\backslash\Lambda_{\EE}^p$
as $\Lambda_{\EE}^p$-sets, and all together 
\begin{align*}
Y_{\EE}^{p,q} & =SchB\left(Y^{q},\left\{ S_{p,q}^i\right\} _i\right)\cong SchB\left(H_q\backslash\mathbf{G}_q,\left\{ S_{p,q}^i\right\} _i\right)\\&\cong SchB\left(\Lambda_{\EE}^p\left[q\right]\backslash\Lambda_{\EE}^p,\left\{ S_{p,q}^i\right\} _i\right)\\
 & =\Lambda_{\EE}^p\left[q\right]\backslash CayB\left(\Lambda_{\EE}^p,\left\{ S_p^i\right\} _i\right)\cong\Lambda_{\EE}^p\left[q\right]\backslash\B_p.
\end{align*}
By Theorem \ref{thm:ram-global-crit}(i), $Y_{E}^{p,q}$ is Ramanujan
if $\mA_{G}\left(K'\left[q\right]\right)$ is Ramanujan, and
the latter follows from the fact that $H_q$ contains a Borel subgroup
of $\mathbf{G}_q$, so that $K_q\left[q\right]$ contains an Iwahori
subgroup of $K_q$, and we can apply Corollary \ref{cor:Ram-Eis}
to $\mA_{G}\left(K'\left[q\right]\right)$.
\item If $\gamma=\mathrm{girth}(X^{p,q}_\EE)$ then there exists a non-backtracking path of length $\gamma$ in $\B_p$ which descends to a closed cycle modulo $\Lambda_{\EE}^p\left(q\right)$. As $\Lambda_{\EE}^p$ acts transitively on $\B_p^{hs}$, we can assume this path starts at $v_0$, so that it ends in $gv_0$ for $g$ satisfying both $g\in\Lambda_{\EE}^p\left(q\right)$ and $g^{*}g=p^{\gamma}I$. This implies that $p^{\gamma}=\sum_{j=1}^3N_{\Q[\omega]/\Q}(a_{1j})$
with $a_{12},a_{23}\in q\Z[\omega]$, and $a_{11}\in1+3q\Z[\omega]$.
If $a_{11}=1$ then either $a_{12}\neq0$ or $a_{13}\neq0$, so we
must have $p^{\gamma}\geq q^2+1$, and if $a_{11}\neq1$ then $p^{\gamma}\geq9q^2-6q+1$,
which is the smallest norm in $1+\left(3q\Z[\omega]\backslash\{0\}\right)$,
belonging to $-3q+1$. Thus we have
\[
\gamma>\log_pq^2=2\log_{\sqrt{Kk}}q^2=\tfrac2{4}\log_{\sqrt{Kk}}q^{8}>\tfrac2{5}\log_{\sqrt{Kk}}N
\]
for $q$ large enough, since $N=\left(p^3+1\right)\left|\mathbf{G}_q\right|$,
and $\left|\mathbf{G}_q\right|<q^{8}$.
\item The graphs $X_{\EE}^{p,q}$ are left-transitive (being Cayley
bigraphs), and by (1,3,5) they are adj-Ramanujan, have 6-logarithmic
girth, and for $\varepsilon>0$ satisfy $\mE\left(X_{\EE}^{p,q}\right)\leq C_{\varepsilon}\left|L_{X_{\EE}^{p,q}}\right|^{3/8+\epsilon}<N^{\delta}$
where $\delta=\frac2{1+\log_p(p^3)}=\frac12$ and $N$ is large enough. Theorem \ref{thm:cutoff-density}(2)
yields that for $\varepsilon\leq m^{\frac{\delta}{\delta-1}}\sqrt{K}=\frac{p^{3/2}}{6}$
we have
\[
t_{\varepsilon}-\log_{\sqrt{Kk}}N<\frac1{\delta}\log_{\sqrt{Kk}}\frac{K}{\varepsilon^2}=2\log_{p^2}\frac{p^3}{\varepsilon^2}=4\log_{\sqrt{Kk}}\left(\frac1{\varepsilon}\right)+3,
\]
and the lower bound on $t_{1-\varepsilon}$
holds for any $(K\!+\!1,k\!+\!1)$-bigraph by Theorem \ref{thm:cutoff-NBRW}.
\item Let $\ell\geq\frac{1}{2}\log_{p}N+2\log_{p}\left(\tfrac1{\varepsilon}\right)+3$ and \[Z_{e}=\left\{ e'\in E_{X_{\EE}^{p,q}}\,\middle|\,{\text{there is a non-backtracking path}\atop \text{of length \ensuremath{\ell} from \ensuremath{e} to \ensuremath{e'}}}\right\}. \]
Recall the notation of Subsection \ref{subsec:Cutoff} and assume
w.l.o.g.\ that $e\in\overrightarrow{LR}$. Since $\mathrm{supp}\left(\mathbf{p}_{e}^{\ell}\right)\subseteq Z_{e}$
we have by (6) and the definition of $\left\Vert \cdot\right\Vert _{TV}$
that
\[
1-\tfrac{\left|Z_{e}\right|}{N_{X_{\EE}^{p,q}}}=\mathbf{u}\left(\overrightarrow{LR}\backslash Z_{e}\right)=\left|\mathbf{p}_{e}^{\ell}\left(\overrightarrow{LR}\backslash Z_{e}\right)-\mathbf{u}\left(\overrightarrow{LR}\backslash Z_{e}\right)\right|\leq\left\Vert \mathbf{p}_{e}^{\ell}-\mathbf{u}\right\Vert _{TV}<\varepsilon,
\]
so that $|Z_e|\geq(1-\varepsilon)N$. In particular, at $\ell=\log_{\sqrt{Kk}}N_{X_{\EE}^{p,q}}+5$
we have $\left\Vert \mathbf{p}_{e}^{\ell}-\mathbf{u}\right\Vert _{TV}<\frac12$,
which implies that $Z_{e_1}$ must intersect with $\left\{ \overleftarrow{e}\,\middle|\,e\in Z_{\overleftarrow{e_2}}\right\} $
(where $\overleftarrow{e}$ indicates the opposite directed edge),
from which it follows that there is a non-backtracking path of length
$2\ell$ from $e_1$ to $e_2$.
\end{enumerate}
\end{proof}

\subsection{Gauss, Mumford and CMSZ cases}\label{subsec:Mum-CMSZ}

In this subsection we address the graphs arising from the Mumford
and CMSZ lattices, which give fully Ramanujan graphs. The Gauss lattice
which was studied in \cite{Evra2018RamanujancomplexesGolden} gives
adj-Ramanujan bigraphs as in the case of the Eisenstein lattice, and if Conjecture
\ref{conj:level-packet} holds for $E=\Q[i]$, $i=\sqrt{-1}$,
$p=2$ and $K'_2=K_2\left(C\right)=\left\{ g\in G\left(\Z_2\right)\,\middle|\,g\equiv\left(\begin{smallmatrix}1 & * & *\\
* & 1 & *\\
* & * & 1
\end{smallmatrix}\right)\Mod{2+2i}\right\} $, then they are fully Ramanujan.
\begin{thm}
\label{thm:main-Mum} Let $p,q,E,\Phi$ be either as in the Mumford
case or the CMSZ case:

\begin{tabular}{|c||c|c|}
\hline 
 & Mumford ($\MM$)  & CMSZ ($\CC$) \tabularnewline 
\hline 
 $\mO_{E}$ & $\Z[\lambda],\lambda=\frac{-1+\sqrt{-7}}2$ & $\Z[\eta]$,$\eta=\frac{1-\sqrt{-15}}2$
\tabularnewline 
\hline
 $p$ & $p\equiv3,5,6\Mod7$ & $p\equiv7,11,13,14\Mod{15}$ 
\tabularnewline
\hline
 $q$ & $q\notin\left\{ 2,7,p\right\} $ & $q\notin\left\{ 3,5,p\right\} $ 
\tabularnewline
\hline
  $\Phi$ &
$\begin{pmatrix}3 & \overline{\lambda} & \overline{\lambda}\\
\lambda & 3 & \overline{\lambda}\\
\lambda & \lambda & 3
\end{pmatrix}$ & $\begin{pmatrix}10 & -2(\eta+2) & \eta+2\\ -2(\bar{\eta}+2) & 10 & -2(\eta+2)\\ \bar{\eta}+2 & -2(\bar{\eta}+2) & 10
\end{pmatrix}$
\tabularnewline
\hline 
\end{tabular}

Let 
\[
S_p:=\Bigg\{ g\in M_3\left(\mO_{E}\right)\,\Bigg|\,{g^{*}\Phi g=p^2\Phi,\;g\text{ is not scalar},\atop g\Mod{M}\in H}\Bigg\},
\]
where $M=\lambda$ and $H$ is the group of upper-triangular matrices
in the Mumford case, and $M=\left(3,1+\eta\right)$ and $H$ is as
in (\ref{eq:CMSZ_H}) in the CMSZ case. Let $S_p=\bigsqcup_iS_p^i$
be the partition induced by $g\sim h\text{ if and only if }p\mid g^{\ast}h$. Let $\mathbf{G}_q$
be $PSL_3(\F_q)$ when $q$ splits in $E$ and $PSU_3\left(\F_q\right)$
otherwise, and let $S_{p,q}^i$ denote the image of $S_p^i$
in $\mathbf{G}_q$. Then the Cayley bigraph
\[
X^{p,q}=CayB\left(\mathbf{G}_q,\left\{ S_{p,q}^i\right\} _i\right)
\]
is a (fully) Ramanujan $(p^3\!+\!1,p\!+\!1)$-regular bigraph with
$10$-logarithmic girth, and hence satisfies the Bounded cutoff and
Diameter properties from Theorem \ref{thm:main-Eis}.
\end{thm}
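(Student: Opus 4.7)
The plan is to adapt the proof of Theorem \ref{thm:main-Eis} essentially verbatim, with the only substantive difference occurring at the step where the Ramanujan property is deduced. The structural reduction to a congruence quotient of the Bruhat-Tits tree, the girth estimate, and the bounded-cutoff/diameter consequences all proceed by the same mechanism as before; what changes is that the Mumford and CMSZ congruence conditions were designed precisely so that full Ramanujan (rather than merely adj-Ramanujan) can be extracted from Theorem \ref{thm:Ram-gen}(1).

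First I would establish the Cayley description. By Theorem \ref{thm-simply-transitive}($\MM$) and ($\CC$), the lattices $\Lambda_{\MM}^{p}$ and $\Lambda_{\CC}^{p}$ act simply-transitively on the hyperspecial vertices of the $(p^3\!+\!1,p\!+\!1)$-biregular tree $\mathcal{B}_p$. Via $s\mapsto p^{-1}s$, the set $S_p$ is identified with the level-$2$ elements of the lattice, and Proposition \ref{prop:nbs-crit} shows that two such elements take $v_0$ to neighbors of a common non-hyperspecial vertex exactly when $p\mid g^*h$, so that the equivalence classes $S_p^i$ parametrize the non-hyperspecial neighbors. Theorem \ref{thm:biCay-tree} then realizes $\mathcal{B}_p\cong CayB(\Lambda,\{S_p^i\})$, and setting $\Lambda(q)=\Lambda\cap K_q(q)$, together with the surjectivity given by Corollary \ref{cor:SA-classical}, yields $X^{p,q}\cong\Lambda(q)\backslash\mathcal{B}_p\cong CayB(\mathbf{G}_q,\{S_{p,q}^i\}_i)$.

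The crucial step is the Ramanujan property. The point is that for Mumford, $E=\Q[\sqrt{-7}]$ is ramified only at $7$, while the congruence condition defining $\Lambda_{\MM}^{p}$ is imposed at $\lambda\mid 2$; consequently, setting $K'(q)=G(\R\widehat{\Z}^{2,q})K_2(H)K_q(q)$, we have $K'_7=K_7$, which contains an Iwahori subgroup of the unitary group at the ramified prime $7$. Theorem \ref{thm:Ram-gen}(1) therefore gives that $\mA_G(K'(q))$ is Ramanujan in the global sense of Definition \ref{defn:Ram}; since our $p$ is inert in $E$ (hence unramified) and $K'_p=K_p$, this translates into Ramanujan at $p$, and Theorem \ref{thm:ram-global-crit}(i) then transfers to full Ramanujanness of $X^{p,q}$. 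The CMSZ case is identical with $\ell=5$ (ramified in $\Q[\sqrt{-15}]$) playing the role of $7$; the congruence at $M=(3,1+\eta)$ above $3$ leaves $K'_5=K_5$, which contains an Iwahori.

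The remaining steps are routine. For the girth, any non-central $g\in\Lambda(q)$ satisfies $g^*g=p^\gamma I$ with $\gamma=\mathrm{girth}(X^{p,q})$, together with the combined congruence conditions (the lattice-level condition modulo $M$, and the further reduction modulo $q$). A case-analysis of the norm identity $\sum_j N_{E/\Q}(g_{1j})=p^\gamma$, exploiting that below-diagonal entries of $g$ lie in an ideal of norm $\gtrsim q^2$, above-diagonal entries in $q\mO_E$, and diagonal entries in a coset $1+\mathfrak{a}$ with $N(\mathfrak{a})\gtrsim q^2$, forces $p^\gamma$ to be at least of order $q^2$; comparing with $\log_{\sqrt{Kk}}N\sim 4\log_p q$ gives the $10$-logarithmic girth claim for $q$ large. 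Given full Ramanujan plus $m$-logarithmic girth with $m=10$, the bounded cutoff and the diameter assertions follow mutatis mutandis from Theorem \ref{thm:bounded-cutoff} and the argument at the end of Theorem \ref{thm:main-Eis}(7). The principal technical obstacle in the plan is ensuring that the Iwahori condition at the ramified prime $\ell\ne m$ (where $m$ is the prime of the congruence) really does suffice to exclude endoscopic A-packet contributions for \emph{every} automorphic $\pi\in\mA_G(K'(q))$; this is precisely the content of Theorem \ref{thm:Ram-gen}(1), but it is exactly the reason the congruence conditions $H$ for $\Lambda_{\MM}^p$ and $\Lambda_{\CC}^p$ had to be chosen differently from those in \cite{mumford1979algebraic} and \cite{Kato2006ArithmeticstructureCMSZ}.
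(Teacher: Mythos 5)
Your main line is the same as the paper's: the Cayley identification via Theorem \ref{thm-simply-transitive}, Proposition \ref{prop:nbs-crit}, Theorem \ref{thm:biCay-tree} and Corollary \ref{cor:SA-classical}; full Ramanujanness by applying Theorem \ref{thm:ram-global-crit}(i) to $K'(q)$ with the Iwahori-type component at the congruence prime ($\boldsymbol{I}_2$, i.e.\ your $K_2(H)$, for Mumford, and $K_3(H)$ for CMSZ) and invoking Theorem \ref{thm:Ram-gen}(1) because $K'_r=K_r=G(\Z_r)$ at the ramified prime $r=7$ (resp.\ $r=5$); and bounded cutoff plus diameter exactly as in Theorem \ref{thm:main-Eis}. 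All of that matches the paper.

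The one step where you deviate is the girth bound, and as written it has a genuine flaw. You argue via the norm identity $\sum_j N_{E/\Q}(g_{1j})=p^\gamma$, but that identity is the Eisenstein-case relation coming from $g^*g=p^\gamma I$ with $\Phi=I$; for the Mumford and CMSZ lattices the elements satisfy $g^*\Phi g=p^\gamma\Phi$ with $\Phi\neq I$, so the first-column entries do not satisfy that sum-of-norms equation, and moreover the mod-$M$ condition here is an upper-triangular (Borel-type) condition, so the diagonal entries are not constrained to lie in $1+\mathfrak{a}$ with $N(\mathfrak{a})\gtrsim q^2$ as you assert. Consequently your claimed conclusion $p^\gamma\gtrsim q^2$ is not established (and the paper does not prove it either — this is precisely why the theorem claims only $10$-logarithmic girth here rather than the $6$-logarithmic girth of the Eisenstein case). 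The argument the paper uses is much simpler and is what you should substitute: from $g^*\Phi g=p^\gamma\Phi$ and $g\equiv I\Mod q$ with $\Phi\not\equiv0\Mod q$ one gets $p^\gamma\equiv1\Mod q$, hence $p^\gamma>q$, so $\gamma>\log_p q=\tfrac{2}{8}\log_{\sqrt{Kk}}q^8>\tfrac{2}{9}\log_{\sqrt{Kk}}N$ for $q$ large, which is exactly the $10$-logarithmic girth needed to run Theorem \ref{thm:bounded-cutoff} and the diameter argument.
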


\begin{proof}
Let $\Lambda^p$ be the lattice spanned by $S_p$ (which is either
$\Lambda_{\MM}^p$ or $\Lambda_{\CC}^p$). As
in the proof of Theorem \ref{thm:main-Eis} claim 1, we have a natural
identification $X^{p,q}\cong\Lambda^p\left(q\right)\backslash\B_p$.
To show it is fully Ramanujan we use again Theorem \ref{thm:ram-global-crit}(i)
applied to $K'\left(q\right)=G(\R\widehat{\Z}^{2,q})\boldsymbol{I}_2K_q\left(q\right)$
where $\boldsymbol{I}_2$ is the standard Iwahori in $G\left(\Z_2\right)$
in the Mumford case, and $K'\left(q\right)=G(\R\widehat{\Z}^{3,q})K_3\left(H\right)K_q\left(q\right)$
where $K_3\left(H\right)\leq G\left(\Z_3\right)$ corresponds
to (\ref{eq:CMSZ_H}) in the CMSZ case. In both cases, there is a
prime $r$ ramified in $E$ ($7$ for Mumford and $5$ for CMSZ) such
that $G\left(\Z_{r}\right)\subseteq K'\left(q\right)$, so
that by Theorem \ref{thm:Ram-gen}, $\mA_{G}\left(K'\left(q\right)\right)$
is Ramanujan. The bound on the girth follows from the fact that $g^{*}\Phi g=p^{\gamma}\Phi$,
$g\equiv I\Mod{q}$ and $\Phi\not\equiv0\Mod{q}$ implies $1\equiv p^{\gamma}\Mod{q}$,
which gives $q<p^{\gamma}$, hence $\gamma>\log_pq=2\log_{\sqrt{Kk}}q=\frac2{8}\log_{\sqrt{Kk}}q^{8}>\frac2{9}\log_{\sqrt{Kk}}N_{X_{M}^{p,q}}$. As $X^{p,q}$ are Ramanujan with logarithmic girth, we can now infer bouned cutoff directly from Theorem \ref{thm:bounded-cutoff}, and the diameter consequence follows exactly as for the Eisentein case.
\end{proof}

\begin{rem} 
Even if Conjecture \ref{conj:level-packet} does not hold for the Gauss case, that is, for $E=\Q[i]$, $p=2$ and $K'_2=K_2(C)$, the Cayley bigraphs $X_{\GG}^{p,q}=CayB\left(\mathbf{G}_q,\left\{ S_{p,q}^i\right\} _i\right)$ are still adj-Ramanujan, satisfy the SXDH, $10$-logarithmic girth,  Bounded cutoff and the Diameter properties from Theorem \ref{thm:main-Eis}.
\end{rem}

\subsection{Ramanujan and non-Ramanujan complexes} \label{subsec:complexes}

The results of this paper also give new explicit constructions
of Ramanujan complexes, as was done in \cite{Evra2018RamanujancomplexesGolden} using the Gauss lattice.
A new feature that we get here is the first explicit \emph{non-Ramanujan
}$\widetilde{A}_2$-complexes, obtained from the principal congruence subgroups
of the Eisenstein lattice. We briefly recall the relevant background,
and refer to \cite{Lubotzky2005a,Lubotzky2013,Evra2018RamanujancomplexesGolden}
for more details.

When the prime $p$ splits over $\Q[\omega]$, $\omega=\frac{-1+\sqrt{-3}}2$,
namely when $p\equiv1\Mod3$, the group $G_p=G\left(\Q_p\right)$
is isomorphic to $PGL_3\left(\Q_p\right)$, whose Bruhat-Tits
building $\B=\B_{3,p}$ is two dimensional and was described in Section \ref{subsec:biexp-example}. For a prime $q\neq p$ the quotient $X_{\EE}^{p,q}:=\Lambda_{\EE}^p\left(q\right)\backslash\B_p$
can be described as a Cayley complex of a finite group $\mathbf{G}_q$
(either $PU_3\left(\F_q\right)$, $PSU_3\left(\F_q\right)$, $PGL_3\left(\F_q\right)$ or $PSL_3\left(\F_q\right)$),
with respect to the generating set 
\[
S_p=\Bigg\{ g\in M_3\left(\Z\left[\omega\right]\right)\,\Bigg|\,{gg^{*}=pI,\;g\text{ is not scalar},\atop g\equiv\left(\begin{smallmatrix}1 & * & *\\
* & 1 & *\\
* & * & 1
\end{smallmatrix}\right)\Mod3}\Bigg\}
\]
reduced modulo $q$ (compare with (\ref{eq:set-Sp}), where $p$ is inert). 

Recall the Hecke operators $A_1$ and $A_2=A_1^*$ from Section \ref{subsec:biexp-example}. If $X=\Gamma\backslash\B$ is a Ramanujan complex, then we have
\[
\mathrm{Spec_0}\left(A_1|_{X}\right)\subseteq\mathrm{Spec}\left(A_1|_{\B_p}\right)=\left\{ p\left(\alpha+\beta+\overline{\alpha\beta}\right)\,\middle|\,\alpha,\beta\in\C,\ |\alpha|=|\beta|=1\right\} .
\]
Surprisingly, it was shown in \cite{kang2010zeta} that the other direction holds
as well: if $\mathrm{Spec}_0\left(A_1|_{X}\right)\subseteq\mathrm{Spec}\left(A_1|_{\B_p}\right)$
then $X$ is Ramanujan with respect to all geometric operators\footnote{The underlying reason is that every infinite-dimensional irreducible
unitary representation of $PGL_3(\Q_p)$ which is Iwahori-spherical
but not spherical happens to be tempered. This does not hold for $PGL_{4}$
and above \cite{kang2016riemann}.}. 

Unlike the case of rank-one $PU_3\left(\Q_p\right)$,
here there is a continuum of representations of $PU_3\left(\Q_p\right)\cong PGL_3\left(\Q_p\right)$
which can appear as the local factor at $p$ of an endoscopic lift
from $U_2\times U_1$. In general, $A_1$ acts on the unique
$K$-fixed vector in a principal series representation of $PGL_3\left(\Q_p\right)$
with Satake parameters $\left(z_1,z_2,z_3\right)$ by the scalar
$p\left(z_1+z_2+z_3\right)$. Denoting by $W_{\vec{z}}$ the
irreducible subrepresentation which contains this vector, the representations
which appear as local factors in A-packets are precisely $V_{\left(z\sqrt{p},z/\sqrt{p},z^{-2}\right)}$
for $z\in\C$ of norm $1$ (see e.g.\ \cite{kang2010zeta}).
The ``endoscopic spectrum'' of $A_1$ is thus the closed curve
\[
\mathfrak{E}_p:=\left\{ \Spec A_1\big|_{V_{\left(z\sqrt{p},z/\sqrt{p},z^{-2}\right)}^{K}}\,\middle|\,z\in S^1\right\} =\left\{ zp^{3/2}+\tfrac{p}{z^2}+z\sqrt{p}\,\middle|\,z\in S^1\right\} .
\]
Figure \ref{fig:A2-complexes} shows the nontrivial $A_1$-spectrum of some of the Eisenstein Cayley-complexes $X_{\EE}^{p,q}$, with the underlying group $\bold{G}_q$ indicated, and with the corresponding spectrum of $A_1$ on $\B$ in blue, and on the endoscopic curve $\mathfrak{E}_p$ in red.

\subsection{Golden gates} \label{subsec:Golden-gates}

The lattices constructed in this paper can be also used to give new constructions of \emph{Golden Gates}, which are optimal topological generators for unitary groups \cite{sarnak2015letter,Parzanchevski2018SuperGoldenGates,Evra2018RamanujancomplexesGolden}.
This was explored in details for the Gauss lattice $\Lambda_{\GG}^p$ in \cite{Evra2018RamanujancomplexesGolden}, so we only briefly explain this application here. 
Let $\Lambda_{*}^p$ be one of our lattices, and $\Gamma_{*}^p=PU_3(E,\Phi)\left(\Z[1/p]\right)$ as in Section \ref{sec:lattices}. 
We can observe the elements 
\[
S_p=\left\{ 1\neq g\in\Lambda_{*}^p\,\middle|\,g^{*}\Phi g= p'\Phi\right\},\qquad p'=\begin{cases}
p & p\text{ splits}\\
p^2 & p\text{ inert}
\end{cases} 
\]
as matrices in $PU_3(E,\Phi)(\R)$, which is isomorphic to the standard projective unitary group $PU(3)$, since $\Phi$ is definite. 
Any lattice $\Lambda_{*}^p\leq\Delta\leq\Gamma_{*}^p$ is of the form $G\left(\Q\right)\cap K$ for some $K=K^p\times G(\Z_p)\leq G(\widehat{\Z})$, and we say that $\Delta$ satisfies the Ramanujan property if every automorphic representation of $G/\Q$ of level $K$ is either one-dimensional or tempered. 
When $\Delta$ satisfies the Ramanujan property, words in $S_p$ cover $PU(3)$ in an almost optimal rate up to an element in the finite group $\Delta\cap G\left(\Z\right)$ \cite[Sec.\ 4]{Evra2018RamanujancomplexesGolden}. 
In addition, there is an algorithmic way to approximate elements in $PU(3)$ by elements in $\Delta$, and the word problem in $\Delta$ w.r.t.\ $S_p$ (up to $\Delta\cap G(\Z)$) is efficiently solvable: the Cayley graph (resp.\ bigraph) of $\left(\Lambda_{*}^p,S_p\right)$ is precisely the $1$-skeleton of the building $\B_p$, and each two vertices in the building are contained in a common apartment, where a shortest path can be quickly found. 
In the field of quantum computation, an element in $PU(3)$ represents a gate acting on a single qutrit, and the properties we have described imply that any gate can be approximated efficiently using the fundamental gates $S_p$ as building blocks.
When $S_p$ has both the almost optimal covering and an efficient solution to the word problem of $\Delta$, up to $D:=\Delta\cap G(\Z)$, we say that $S_p$ is a golden gate set of $PU(3)/D$.

\begin{prop} \label{prop:GG}
If $* \in \{\MM,\CC\}$, then $S_p$ is a golden gate set of $PU(3)$, and if $* \in \{ \EE, \GG \}$, then $S_p$ is a golden gate set of $PU(3)/G(\Z)$ (where $G(\Z)$ is computed in Lemma \ref{lem-(G,E,M,C)-(I)}).
\end{prop}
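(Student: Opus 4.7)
The proof follows the framework established in \cite[\S 2.2, \S 4--5]{Evra2018RamanujancomplexesGolden} for producing golden gates from simply-transitive arithmetic actions on Bruhat-Tits trees. Two inputs are required: (i) the Ramanujan property of the $p$-arithmetic lattice $\Lambda_*^p$, which via effective spherical equidistribution yields almost optimal covering rates by words in $S_p$; and (ii) a simply-transitive action of $\Lambda_*^p$ on (one color class of) the Bruhat-Tits building, which identifies the Cayley (bi)graph of $(\Lambda_*^p, S_p)$ with the 1-skeleton of $\mathcal{B}_p$ and reduces the word-problem to shortest-path finding in a single apartment.

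For input (i), the Ramanujan property of $\mathcal{A}_G(K_*^p)$ is verified case by case: in the Mumford case ($\MM$), the prime $7$ ramifies in $E=\Q[\sqrt{-7}]$ and $K_7$ is a maximal parahoric (Lemma \ref{lem:parahorics-ramified-odd-case}), hence contains an Iwahori, so Theorem \ref{thm:Ram-gen}(1) applies; in ($\CC$), the ramification of $3$ (or $5$) in $\Q[\sqrt{-15}]$ plays the same role; in ($\EE$), Corollary \ref{cor:Ram-Eis}(2) applies with the auxiliary hypothesis vacuous since $\mathrm{Ram}(K_\EE)=\{3\}$; and in ($\GG$), the Ramanujan property was established in \cite[Thm.\ 4.2]{Evra2018RamanujancomplexesGolden}. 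For input (ii), Theorem \ref{thm-simply-transitive} furnishes simply-transitivity of $\Lambda_*^p$ on the hyperspecial vertices; Theorem \ref{thm:biCay-tree} together with the level/distance identity \eqref{eq:dist-level} and Proposition \ref{prop:nbs-crit} then gives the desired Cayley description, and standard apartment decomposition in $\mathcal{B}_p$ provides an efficient navigation algorithm.

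It remains to identify the residual finite group $D$ which determines whether the almost-optimal covering is realized on $PU(3)$ itself or only on a coset space $PU(3)/D$. In the framework of \cite{Evra2018RamanujancomplexesGolden}, $D$ arises as the image in $PU(3)$ of the arithmetic stabilizer $G(\Z)$ of the base vertex $v_0$. For ($\MM$) and ($\CC$), Lemma \ref{lem-(G,E,M,C)-(I)} shows that $G(\Z) = \{\pm I\}$ for ($\CC$) and $G(\Z) \cong C_2 \times (C_3 \ltimes C_7)$ for ($\MM$), and the congruence conditions in Theorem \ref{thm-simply-transitive} together with the projective quotient kill this group entirely, so $D$ is trivial and one obtains a golden gate set for $PU(3)$. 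For ($\EE$) and ($\GG$), the finite group $G(\Z) = \Gamma$ (computed in Lemma \ref{lem-(G,E,M,C)-(I)}: $S_3 \ltimes C_6^3$ and $S_3 \ltimes C_4^3$ respectively) injects non-trivially into $PU(3)$ and its image stabilizes $v_0$ but acts non-trivially on small neighborhoods; consequently the covering is only optimal modulo $G(\Z)$, yielding a golden gate set for $PU(3)/G(\Z)$.

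The main obstacle is the last step: making precise the interaction between the finite group $G(\Z)$ and the Ramanujan equidistribution, so that the quotient target $PU(3)/D$ is correctly identified. This amounts to showing that the $G(\Z)$-orbits on $\Lambda_*^p$-words of a given length $n$ are the only source of overcounting, and that no further obstruction arises from the congruence condition or from the class-number analysis of Section \ref{subsec:lattices-transitive}. Once this bookkeeping is carried out along the lines of \cite[\S 5]{Evra2018RamanujancomplexesGolden}, the proposition follows.
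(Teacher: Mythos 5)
Your overall architecture (Ramanujan property $\Rightarrow$ almost optimal covering, simple transitivity $\Rightarrow$ navigation) is the same as the paper's, and your treatment of the Mumford and CMSZ cases is essentially correct: there the congruence condition sits at a prime different from the ramified prime invoked in Theorem \ref{thm:Ram-gen}(1) (note that for ($\CC$) one must use $5$, not $3$, since the level at $3$ is $K_3(H)$ and need not contain an Iwahori), so $\Lambda_{*}^{p}$ itself has the Ramanujan property and $D=\Lambda_{*}^{p}\cap G(\Z)=\{1\}$ by simple transitivity. The gap is in the Eisenstein and Gauss cases. You invoke Corollary \ref{cor:Ram-Eis}(2) to claim the Ramanujan property for $\Lambda_{\EE}^{p}$; but that corollary, and the class-number-one ``constant function'' argument behind it, concerns $\mA_G(K')$ with $K'_\infty=K_\infty=G(\R)$, i.e.\ automorphic representations whose archimedean component is trivial. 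The Ramanujan property relevant to golden gates is the one defined in Section \ref{subsec:Golden-gates}: temperedness (or one-dimensionality) at $p$ for \emph{every} automorphic representation of level $K=K^{p}\times G(\Z_p)\leq G(\widehat{\Z})$, with no restriction on $\pi_\infty$, because $L^2(PU(3)/D)$ decomposes over all archimedean types. The argument proving Corollary \ref{cor:Ram-Eis}(2) says nothing about representations with nontrivial $\pi_\infty$, so the Ramanujan property you need for $\Lambda_{\EE}^{p}$ is not established; likewise for $\Lambda_{\GG}^{p}$ it is only conjectural (see the paper's footnote), not ``established'' as you assert.

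Moreover, your explanation of the quotient by $G(\Z)$ is inconsistent with your first step: if $\Lambda_{\EE}^{p}$ did satisfy the Ramanujan property in the required sense, then since $\Lambda_{\EE}^{p}\cap G(\Z)=\{1\}$ (simple transitivity), the framework you set up would output a golden gate set for $PU(3)$ itself, not $PU(3)/G(\Z)$; the finite group $G(\Z)$, which stabilizes the base vertex of $\B_p$, plays no role once one works with $\Lambda$ rather than $\Gamma$, and ``acting non-trivially on small neighborhoods'' is not the mechanism producing the quotient. The correct mechanism, and the paper's proof, is that for ($\EE$) and ($\GG$) the Ramanujan input is only available for the larger lattice $\Gamma_{*}^{p}=PU_3(E,\Phi)(\Z[1/p])$, whose level at the ramified prime ($3$, resp.\ $2$) is the full $G(\Z_r)$, so that the Iwahori-at-a-ramified-prime criterion applies to all archimedean types; the price is $D=\Gamma_{*}^{p}\cap G(\Z)=G(\Z)$, which is exactly why these two cases yield golden gates only for $PU(3)/G(\Z)$. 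Your closing paragraph about bookkeeping of $G(\Z)$-orbits does not substitute for supplying the missing Ramanujan input at the correct level.
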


\begin{proof}
The efficient solution to the word problem follows from the fact that $\Lambda_*^p$ acts simply-transitively on $\B^{hs}$, as described above.
The almost optimal covering property follows from the Ramanujan property of $\Lambda_{*}^p$, when $* \in \{\MM,\CC\}$, and $\Gamma_{*}^p$, when $* \in \{ \GG,\EE\}$, which follows from \cite[Thm. 1.4]{Evra2018RamanujancomplexesGolden}.
\footnote{Conjecturally $\Lambda_{\GG}^p$ should also posses the Ramanujan property, see \cite{Evra2018RamanujancomplexesGolden}.}
\end{proof}

Considerations of error correction have prompted the search for Golden Gates which are elements of $PU(3)$ of finite order, and these were termed \emph{Super-Golden-Gates } in \cite{Parzanchevski2018SuperGoldenGates}.
Such gate sets for $PU(2)$ were constructed there by finding lattices which act simply-transitively on the edges of Bruhat-Tits trees of $PGL_2(\Q_2)$.
Several examples of super golden gates for $PU(3)$ arise from our lattices:

\begin{thm} \label{thm:super-GG-Eis}
Let $\sigma, \tau, A \in \Gamma_{\EE}^2$ be as in \eqref{eq:Eisentein-2-mats}, denote $\Sigma := \{\sigma, \tau, A \}$, $C := \langle \sigma, \tau \rangle$, $\Delta_{\EE}^2:=\left\langle \Sigma \right\rangle \leq \Gamma_{\EE}^2$ and let $S_2 \subset \Gamma_{\EE}^2$ as in Theorem \ref{thm:main-Eis}.
Then $\Delta_{\EE}^2 = C \ltimes \Lambda_{\EE}^2 \cong (\nicefrac{\Z}3)^2*\nicefrac{\Z}3$, $S^2 = \{c A^{\pm1} c^{-1} \,:\, c\in C \}$ and $\Delta_{\EE}^2$ acts simply-transitively on the edges of the Bruhat-Tits tree of $G(\Q_2)$. 
Furthermore, $\Sigma$ is a super golden gate for $PU(3)/D$, where $D = G(\Z) \cap \mathbf{I}_2$ for $\mathbf{I}_2 \leq G(\Z_2)$ an Iwahori subgroup and  $G(\Z) \cong S_3 \ltimes C_6^2$ by Lemma \ref{lem-(G,E,M,C)-(I)}.
\end{thm}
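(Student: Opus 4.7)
The plan is to build the theorem in four stages: check the basic orders and commutation relations of the generators, establish the semidirect product decomposition $\Delta^2_{\EE} = C \ltimes \Lambda^2_{\EE}$, prove simple transitivity on edges (which simultaneously yields the presentation $S_2 = \{cA^{\pm 1}c^{-1}\}$ and the free-product structure via Bass--Serre), and finally deduce the super golden gate property from Ramanujanness at the right level together with simple transitivity.

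First I would compute directly that $\sigma^3 = \tau^3 = I$ and that $\tau\sigma\tau^{-1} = \omega\sigma$, so that $\sigma$ and $\tau$ commute in $PU_3$; the eigenvalues of $2A$ are $\{2,2\omega,2\bar\omega\}$, whence $A^3 = I$ in $PU_3$ as well. Hence $C = \langle\sigma,\tau\rangle \cong (\Z/3)^2$ and $\langle A\rangle \cong \Z/3$. The equality $S_2 = \{cA^{\pm 1}c^{-1} : c\in C\}$ is then immediate from the formula \eqref{eq:Eisnetin-stars} once one observes that the partition $S_2 = \bigsqcup_i S^i$ of Theorem \ref{thm:main-Eis} matches the pairing $\{cAc^{-1}, cA^{-1}c^{-1}\}$ indexed by $c\in C$.

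For $\Delta^2_{\EE} = C \ltimes \Lambda^2_{\EE}$, Lemma \ref{lem-(E)} says $H\lhd G[3]$, so $\Lambda^2_{\EE}$ is normal in $\Gamma^2_{\EE}$ and in particular $C$ normalizes $\Lambda^2_{\EE}$. To see $C \cap \Lambda^2_{\EE} = \{1\}$, any non-trivial $\sigma^a\tau^b$ is either a $3$-cycle permutation matrix (with zero diagonal mod $3$) or $\diag(1,\omega^a,\omega^{2a})$, and in $\Z[\omega]/3 \cong \F_3[t]/(t^2)$ one has $\omega \equiv 1 + t \not\equiv 1$, so neither case meets the congruence of \eqref{eq:Eis_H}. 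Since $\Lambda^2_{\EE}$ acts simply-transitively on hyperspecial vertices and $S_2$ takes $v_0$ to all its second neighbors, $S_2$ generates $\Lambda^2_{\EE}$ by Theorem \ref{thm:biCay-tree}, and together with $\{\sigma,\tau\}$ it generates $\Delta^2_{\EE} = C \ltimes \Lambda^2_{\EE}$.

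Simple transitivity on edges is the geometric heart. Because $\sigma,\tau\in K_2$, $C$ is contained in $\mathrm{Stab}_\Delta(v_0)$; conversely, $\mathrm{Stab}_{\Lambda^2_{\EE}}(v_0) = \{1\}$, so $|\mathrm{Stab}_\Delta(v_0)| = 9 = \deg(v_0)$. Using the commutation of $\sigma,\tau$ in $PU_3$, conjugation by $\sigma^{a'}\tau^{b'}$ shifts $A^{\pm}_{a,b}$ to $A^{\pm}_{a+a',b+b'}$, so $C$ acts simply transitively on the $9$ edges out of $v_0$, giving simple transitivity on all edges. Since $A$ has order $3$ and no inverted edge, it fixes some non-hyperspecial vertex; by construction that vertex is $v_1$, and comparing orders forces $\mathrm{Stab}_\Delta(v_1) = \langle A\rangle$. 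The standard Bass--Serre structure theorem for a group acting on a tree simply transitively on edges with trivial edge stabilizer then gives $\Delta^2_{\EE} \cong \mathrm{Stab}_\Delta(v_0) * \mathrm{Stab}_\Delta(v_1) \cong (\Z/3)^2 * \Z/3$.

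For the last claim, the super golden gate property follows by the same template as Proposition \ref{prop:GG}, with three ingredients: (i) $\sigma,\tau,A$ have finite order, giving the ``super''; (ii) the simply-transitive action on edges of $\mathcal{B}_2$ produces, via geodesic descent in an apartment, an efficient algorithm solving the word problem in $\Delta^2_{\EE}$ up to $D = G(\Z)\cap\mathbf{I}_2$ (the stabilizer of the base edge $e_0$); and (iii) almost-optimal covering of $PU(3)/D$ follows from Ramanujanness of $\mA_G(K')$ at $p=2$ for $K' = G(\R)\cdot\mathbf{I}_2\cdot K_3(C)\cdot\prod_{p\neq 2,3}K_p$. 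The main obstacle is the last point: $X^{2,q}_{\EE}$ is only adj-Ramanujan (Theorem \ref{thm:main-Eis}(2)), so Ramanujanness cannot come directly from the Cayley bigraphs; however, passing from the vertex-stabilizer level $K_2$ to the edge-stabilizer level $\mathbf{I}_2$ at the prime $p=2$ (which is exactly what replacing $\Lambda^2_{\EE}$ by $\Delta^2_{\EE}$ corresponds to) places us in the hypothesis of Corollary \ref{cor:Ram-Eis}(2), since $\mathbf{I}_2$ contains an Iwahori and $K_3(C)$ is exactly the allowed level at $3$. This upgrades adj-Ramanujan to Ramanujan and completes the proof.
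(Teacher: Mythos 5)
Your proposal is correct and follows essentially the same route as the paper's proof: the order and commutation checks giving $C\cong(\Z/3)^2$ and $S_2=\{cA^{\pm1}c^{-1}:c\in C\}$, the decomposition $\Delta_{\EE}^{2}=C\ltimes\Lambda_{\EE}^{2}$, simple transitivity on edges (stabilizers $C$ at $v_0$ and $\langle A\rangle$ at $v_1$) with Bass--Serre yielding $(\Z/3)^2*\Z/3$, and the super-golden-gate property from finite orders, edge-transitive navigation, and a Ramanujan input at an Iwahori-at-$2$ level. The one (harmless) divergence is that citation: the paper identifies $L^2(PU(3)/D)$ with the automorphic space of level $G(\widehat{\Z}^{2})\mathbf{I}_2$ --- full $K_3$ at the ramified prime $3$ --- and applies Theorem \ref{thm:Ram-gen}(1) directly, whereas you work at the finer level with $K_3(C)$ at $3$ via Corollary \ref{cor:Ram-Eis}(2); this is also valid (and in fact a stronger input than needed), since every representation with invariant vectors at the coarser level also appears at the finer one.
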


We note that if $\Delta_{\EE}^2$ had the Ramanujan property, then $\Sigma$ was a super-golden gate set for $PU(3)$.\footnote{Alternatively, one can proceed as in Section \ref{sec:Combinatorics} and replace the Ramanujan property by a density hypothesis to obtain optimal covering for $PU(3)$ itself.}

\begin{proof}
It is easy to check that  $\sigma, \tau, A$ are all of order $3$, that $C \cong (\nicefrac{\Z}3)^2$, that $A \in \Lambda_{\EE}^2$, that $C$ normalizes $\Lambda_{\EE}^2$ and that $\ell(c \gamma c^-1) = \ell(\gamma)$ for any $\gamma \in \Lambda_{\EE}^2$ and $c\in C$.
This implies $\{ c A^{\pm1} c^{-1} \,:\,c \in C \} = S_2$ and $\Delta_{\EE}^2 = C \ltimes \Lambda_{\EE}^2$, so in particular $\Delta_{\EE}^2$ is a congruence subgroup.
The group $C$ acts simply-transitively on the neighbors of $v_0$, whereas $A$ rotates the three edges leaving a neighbor of $v_0$, and since  $\Lambda_{\EE}^2$ acts transitively on hyperspecial vertices in $\B_2$, we get that $\Delta_{\EE}^2$ acts simply-transitively on all the edges, and by Bass-Serre theory we get $\Delta_{\EE}^2\cong(\nicefrac{\Z}3)^2*\nicefrac{\Z}3$.
To show that $\Sigma$ is a super golden gate for $PU(3)/D$ note that
\[
L^2(PU(3)/D) \cong  L^2(\Gamma_{\EE}^2 \backslash (G(\R) \times G(\Q_2)))^{\mathbf{I}_2} \cong L^2 (G(\Q) \backslash G(\A) )^{G(\widehat{\Z}^2)\mathbf{I}_2},
\]
and the elements of $\Sigma$ act on this space by elements in the Iwahori algebra of $G(\Q_2)$. 
The Ramanujan property holds by Theorem \ref{thm:Ram-gen}(1) which implies the almost optimal covering property. 
Navigation is achieved by the simply-transitive action on the edges.
Hence $\Sigma$ is a super golden gate for $PU(3)/D$.
\end{proof}

We give two more examples of super golden gates (the arguments are similar to previous construction and are left as an exercise for the reader): 

\begin{itemize}

\item Consider the Eisenstein case $G = PU_3(\Q[\omega],I)$ at the ramified prime $p=3$. 
Then $\Gamma_{\EE}^3 = G(\Z[1/3])$ acts transitively on the edges of the Bruhat-Tits building $\B_3$ of the ramified group $G(\Q_3)$, which is a $4$-regular tree, but it has no subgroup which acts simply-transitively on the edges of $\B_3$.
Consider the elements $\sigma = \bsmx & & 1\\ & 1 & \\ -1 & & \esmx$, $\tau_1 = \bsmx \overline{\omega} & -\omega & -\omega\\ -\omega & \overline{\omega} & \omega \\ -\overline{\omega} & \overline{\omega} & 1 \esmx$, $\tau_2 = \bsmx -\overline{\omega} & \omega & 1\\ \omega & -\overline{\omega} & -1\\
1 & -1 & -1 \esmx$ in $\Gamma_{\EE}^3 = G(\Z[1/3])$, and denote $\Sigma = \{\sigma, \tau_1, \tau_2\}$, $C = \langle \tau_1, \tau_2 \rangle$ and $\Delta = \langle \Sigma \rangle$.
The element $\sigma$ is of order $4$ and rotates the four edges containing a vertex $v_0$, the group $C$ acts transitively on the edges containing a neighbor of $v_0$, and $C \cong Q_8$, the quaternion group. 
It follows that $\Delta$ acts transitively (but not freely) on the edges of $\B_3$, and $\Sigma$ is a super-golden gate set for $PU(3)/D$, where $D = \Delta \cap \mathbf{I}_3$ for $\mathbf{I}_3 \leq G(\Z_3)$ an Iwahori subgroup. 

\item Consider the Mumford case $G = PU_3(\Q[\lambda],\Phi)$ at the split prime $p=2$. 
In \cite{mumford1979algebraic} it is shown that $\Gamma_{\MM}^2 = G(\Z[1/2])$ acts simply-transitively on the pointed triangles of the Bruhat-Tits building $\B_2$ of $G(\Q_2) \cong PGL_3(\Q_2)$.
Consider the elements $\sigma = \bsmx 1 &  & \overline{\lambda}\\ & & -1\\ & 1 & -1 \esmx$ and $\varsigma = \bsmx & \overline{\lambda} \\ & & \overline{\lambda} \\ \lambda \esmx$ in $\Gamma_{\MM}^2$, and denote $\Sigma := \{ \sigma, \varsigma\}$.
Both $\sigma$ and $\varsigma$ are of order $3$, and their action on the building $\B_2$ is such that $\sigma$ fixes an edge $e_0$ and
rotates the three triangles which contain $e_0$, whereas $\varsigma$
rotates one of these triangles around itself. 
This implies that $\langle \Sigma \rangle$ acts transitively on the pointed triangles in $\B_2$, so by Mumford's result we have $\langle \Sigma \rangle = \Gamma_{\MM}^2$.
We get that $\Sigma$ is a super-golden-gates set of $PU(3)$: 
The argument is similar to the above proof where
\[
L^2(PU(3)) \cong L^2(\Gamma_{\MM}^2\backslash(G(\R)\times G(\Q_2)))^{\mathbf{I}_2} \cong L^2(G(\Q)\backslash G(\A))^{G(\widehat{\Z}^2)\mathbf{I}_2},
\]
and the Ramanujan property holds by \cite[Thm. 1.4]{Evra2018RamanujancomplexesGolden} since $K_7 \subseteq G(\widehat{\Z}^2)\mathbf{I}_2$.
\end{itemize}

\subsection{Optimal strong approximation}\label{subsec:optimal-SA}

We note that the diameter property of Theorems \ref{thm:main-Eis} and \ref{thm:main-Mum} can be formulated as an optimal strong approximation property, or optimal lifting property, for our $p$-arithmetic lattices $\Lambda\leq G(\Z[1/p])\leq PSU_3(\Q_p)$, which act simply transitive on $\B_p^{hs}$. 
Let us first recall the optimal strong approximation property, which was first discovered in \cite{Sarnak2015Appendixto2015} for the arithmetic group $SL_2(\Z)$, in our context of $p$-arithmetic groups (see also \cite{Golubev2023SarnaksDensityConjecture}).

The $p$-arithmetic group $\Lambda$ is equipped for any unramified prime $q\ne p$ with a natural homomorphism, $\Lambda\rightarrow\mathbf{G}_q$, $g\mapsto g\Mod{q}$, where $\mathbf{G}_q=PSL_3(\F_q)$ when $q$ splits and $\mathbf{G}_q=PSU_3(\F_q)$ when $q$ is inert. 
By strong approximation (Corollary \ref{cor:SA-classical}), these modulo maps are onto, i.e. $\Lambda\Mod{q}=\mathbf{G}_q$.

Recall the level function, $\ell\,:\,\Lambda \rightarrow \N$, $\ell(g) = -2\min_{i,j}\ord_p g_{ij}$, and for $r \in 2\N$, denote by $B(r) \subset \Lambda$ the ball of radius $r$ according to this metric. 
Since $\Lambda$ acts simply transitively on the hyperspecial vertices of a $(p^3+1,p+1)$-regular tree $\B_p$ and since $\ell(g) = \dist_{\B_p}(v_0,gv_0)$, then $|B(r)| = p^{2r} + p^{2r-3}$.
In particular, by a simple union bound we get that if $B(r)\Mod{q}=\mathbf{G}_q$, or even $|B(r)\Mod{q}|= (1-\epsilon)|\mathbf{G}_q|$ for a fixed $\epsilon>0$, then $r$ is of size at least $\log_{p^2}|\mathbf{G}_q|$.

In recent years a strengthening of the strong approximation property for the arithmetic group $\Lambda$, called super strong approximation (which applies also for non arithmetic groups), states that the Cayley graphs $\left\{ \mbox{Cay}\left(\mathbf{G}_q,S\Mod{q}\right)\right\} _q$ form a family of expanders, for any fixed generating set $S\subset\Lambda$.
Take $S=\left\{ s\in\Lambda\,\middle|\,\ell\left(s\right)=2\right\}$ and note that the diameter of the Cayley graph $\mbox{Cay}\left(\mathbf{G}_q,S\Mod{q}\right)$ is the minimal $\ell$, such that $B\left(2\ell\right)\Mod{q}=\mathbf{G}_q$.
Since expander graphs have logarithmic diameter, we get that $B\left(C\cdot\log_{p^2}|\mathbf{G}_q|\right)\Mod{q}=\mathbf{G}_q$, where $C$ is a constant which depends only on the expansion parameter
of these graphs.

Following \cite{Sarnak2015Appendixto2015, Ghosh2014MetricDiophantineapproximation}, we now define the notions
of optimal covering and almost covering exponents and the property
of optimal strong approximation, in the form suited for our $p$-arithmetic groups.

\begin{defn}
Define the covering exponent of $\Lambda$ to be
\[
\kappa\left(\Lambda\right)=\liminf_{q\rightarrow\infty}\left\{ \kappa\,:\,B\left(\kappa\cdot\log_{p^2}|\mathbf{G}_q|\right)\Mod{q}=\mathbf{G}_q\right\} .
\]
Define the almost covering exponent of $\Lambda$ to be
\[
\kappa_{\mu}(\Lambda)=\liminf_{\varepsilon\rightarrow 0}\liminf_{q\rightarrow\infty}\left\{ \kappa\,:\,|B\left(\kappa\cdot\log_{p^2}|\mathbf{G}_q|\right)\Mod{q}|=\left(1-\varepsilon\right)|\mathbf{G}_q|\right\} .
\]
Say that $\Lambda$ has the \emph{optimal strong approximation }property if $\kappa_{\mu}\left(\Lambda\right)=1$.
\end{defn}

Note that by the above simple union bound and the super strong approximation property we get that
\[
1\leq\kappa_{\mu}\left(\Lambda\right)\leq\kappa\left(\Lambda\right)\leq C.
\] 
We note that for $q$ large enough it is easy to show that $\kappa\left(\Lambda\right)\leq2\cdot\kappa_{\mu}\left(\Lambda\right)$.
Hence if $\Lambda$ has the optimal strong approximation property then $1\leq\kappa\left(\Lambda\right)\leq2$. 
It natural to hope that $\kappa\left(\Lambda\right)=1$, however this is too strong of a property to expect for, as was shown in \cite{Sarnak2015Appendixto2015} in the case of $SL_2\left(\Z\right)$. 
Here we shall focus on almost covering exponent exclusively, and we leave it as an open question to determine or give lower and upper bounds on $\kappa\left(\Lambda\right)$. 

We now show how the diameter property of Theorems \ref{thm:main-Eis} and \ref{thm:main-Mum} implies the optimal strong approximation property.

\begin{prop} \label{prop:OSA}
Let $\Lambda\leq PSU_3(\Q_p)$ be one of the $p$-arithmetic lattices constructed in Theorems \ref{thm:main-Eis} or \ref{thm:main-Mum}, where $p$ is an inert prime. 
Then for any small enough $\varepsilon > 0$, if $r=\log_{p^2}|\mathbf{G}_q|+4\log_{p^2}\left(\tfrac1{\varepsilon}\right)+3$, then $|B(r)\Mod{q}| \geq (1-\epsilon)|\mathbf{G}_q|$.
In particular, $\Lambda$ has the optimal strong approximation property.
\end{prop}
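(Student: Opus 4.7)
The plan is to reduce the optimal strong approximation property to the diameter statement of Theorem \ref{thm:main-Eis}(7) (and the corresponding statement implicit in Theorem \ref{thm:main-Mum}), via the natural identification of $B(r)\Mod q$ with the ball of graph-radius $r$ around $v_0$ in the quotient graph. The crucial fact, already used throughout Section \ref{sec:applications}, is that $\Lambda$ acts simply-transitively on the hyperspecial vertices $\B_p^{hs}$ of the Bruhat-Tits tree and $\Lambda(q)\backslash\B_p \cong X_{(\ast)}^{p,q}$. Since the level satisfies $\ell(g)=d_{\B_p}(v_0,gv_0)$ by \eqref{eq:dist-level}, the map $g\mapsto gv_0$ identifies $B(r)$ with the set of hyperspecial vertices of $\B_p$ within graph distance $r$ of $v_0$. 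Under the covering map $\B_p\twoheadrightarrow X_{(\ast)}^{p,q}$, paths lift uniquely, so $g_1\equiv g_2\Mod q$ exactly when $g_1v_0$ and $g_2v_0$ project to the same left vertex of $X_{(\ast)}^{p,q}$, and one obtains
\[
B(r)\Mod q \;=\; \bigl\{u\in L_{X_{(\ast)}^{p,q}}\,\bigm|\, d_{X_{(\ast)}^{p,q}}(v_0,u)\leq r\bigr\}.
\]

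The remaining step is to bound the right-hand side from below via the diameter statement. Fix a directed edge $e_0=v_0\to v_1$ of $X_{(\ast)}^{p,q}$, and set $\ell=r$. By Theorem \ref{thm:main-Eis}(7), provided $\varepsilon$ is small enough and $r\geq\log_{\sqrt{Kk}}N+4\log_{\sqrt{Kk}}(1/\varepsilon)+3$, the set $Z_{e_0}$ of directed edges reachable from $e_0$ by a non-backtracking path of length $r$ satisfies $|Z_{e_0}|\geq(1-\varepsilon)N$. Each left vertex has exactly $p^3+1$ outgoing directed edges; letting $V_{unc}\subseteq L$ denote the left vertices none of whose outgoing edges lies in $Z_{e_0}$, one has
\[
(|\mathbf{G}_q|-|V_{unc}|)(p^3+1)\;\geq\;|Z_{e_0}\cap \overrightarrow{LR}|\;\geq\;(1-\varepsilon)N \;=\;(1-\varepsilon)|\mathbf{G}_q|(p^3+1),
\]
hence $|V_{unc}|\leq \varepsilon |\mathbf{G}_q|$. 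Every covered left vertex (one not in $V_{unc}$) is, by construction, at graph distance at most $r$ from $v_0$, so $|B(r)\Mod q|\geq (1-\varepsilon)|\mathbf{G}_q|$.

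Finally one checks the arithmetic of the constants: $N=|\mathbf{G}_q|(p^3+1)$ gives $\log_{p^2}N = \log_{p^2}|\mathbf{G}_q|+\log_{p^2}(p^3+1)$, and the choice $r=\log_{p^2}|\mathbf{G}_q|+4\log_{p^2}(1/\varepsilon)+3$ in the proposition is sufficient for $q$ (hence $|\mathbf{G}_q|$) large enough, since the small slack in the additive constant is absorbed once one passes to the limit defining $\kappa_\mu$. Dividing by $\log_{p^2}|\mathbf{G}_q|$ and letting $q\to\infty$ followed by $\varepsilon\to 0$ yields $\kappa_\mu(\Lambda)\leq 1$; the matching lower bound $\kappa_\mu(\Lambda)\geq 1$ is the trivial union-bound argument recalled just before the statement. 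The same argument applies verbatim to the Mumford and CMSZ lattices, since Theorem \ref{thm:main-Mum} provides the analogous diameter statement.

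The main obstacle is not any deep additional input, but the bookkeeping translating from the edge-based reachability estimate (which is the natural output of the cutoff analysis in Section \ref{subsec:Cutoff}) to the vertex-based ball-covering statement. Once the covering-map identification of $B(r)\Mod q$ with an honest metric ball is made explicit, and the edge-to-vertex counting argument is carried out, the optimal constant $\kappa_\mu(\Lambda)=1$ falls out directly from Theorem \ref{thm:main-Eis}(7).
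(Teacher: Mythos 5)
Your argument is correct and follows the paper's own proof essentially verbatim: identify $B(r)\Mod q$ with the radius-$r$ ball around the identity vertex in $X^{p,q}$ via $\ell(g)=\dist_{\B_p}(v_0,gv_0)$, and deduce the vertex-covering bound from the edge-reachability (diameter) statement of Theorems \ref{thm:main-Eis}(7)/\ref{thm:main-Mum}. Your explicit edge-to-vertex counting and the remark on absorbing the additive slack $\log_{p^2}(p^3+1)$ in the limit defining $\kappa_\mu$ merely make precise steps the paper leaves implicit.
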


\begin{proof}
The diameter property of Theorems \ref{thm:main-Eis} and \ref{thm:main-Mum} states that for $\left(1-\epsilon\right)N$ of the $N$ directed edges of the Cayley bigraph $X^{p,q}=\mbox{CayB}(\mathbf{G}_q,\left\{ S_{p,q}^i\right\} _i)$, there is a non backtracking path of length at most $r$ from the fundamental directed edge. 
This in particular implies that for $\left(1-\varepsilon\right)|\mathbf{G}_q|$ of the $|\mathbf{G}_q|$ left vertices of $X^{p,q}$, there is a path of length at most $r$ from the identity vertex. 
Since $B\left(r\right)\Mod{q}$ is precisely the set of elements in $\mathbf{G}_q$ which have a path of length at most $r$ from the identity, we get the claim.
\end{proof}

\subsection{Picard modular surfaces}\label{subsec:Picard}

Recall that the arithmetic bigraphs considered in this paper are of the following form $X_p(q) = \Lambda^p(q) \backslash \B_p$, where $\B_p$ is the Bruhat-Tits tree of the $p$-adic group $G(\Q_p) = PU_3(\Q_p)$ and $\Lambda^p(q) = \{ g\in\Lambda\,:\,g\equiv I\mod q\}$ is the level $q$ congruence subgroup of a $p$-arithmetic subgroup $\Lambda\leq G(\Z[1/p])$, for some projective unitary group scheme $G$ over $\Z$, w.r.t. to a matrix algebra and a definite
Hermitian form (called type (I) in the introduction). 
These graphs are $p$-adic analogues of Picard modular surfaces, which are $2$-dimensional complex orbifolds (and for $q$ large actually manifolds) of the form $X(q)=\Lambda(q) \backslash \mathbb{B}$, where $\mathbb{B}$ is the open unit ball in $\C^2$ which is the symmetric space of the Lie group $G(\R) = PU(2,1)$ and $\Lambda(q) = \{ g\in\Lambda\,:\,g\equiv I\mod q\}$ is the level $q$ congruence subgroup of an arithmetic subgroup $\Lambda\leq G(\Z)$, for some projective unitary group scheme $G$ over $\Z$, w.r.t. to a matrix algebra and an indefinite Hermitian form. 

It is interesting to ask whether the results we proved in this paper translate in any meaningful way from the $p$-adic world to the real world, namely, what can we say about the Picard modular surfaces.
We note that two of our main results in the $p$-adic world were: 
\begin{description}
\item[(ST)] Constructing $p$-arithmetic groups $\Lambda^p$ which act simply-transitively on the hyperspecial vertices of the Bruhat-Tits building $\B_p$, for any unramified prime $p$.
\item[(R)] Proving the Ramanujan property for the bigraphs $X_p(q)=\Lambda^p(q) \backslash \B_p$, for $\Lambda^p$ as above, any inert prime $p$ and $q$ running over all integers coprime to a single ramified prime.
\end{description}

On the one hand, property (ST) shows that the origin vertex forms a fundamental domain for the action of $\Lambda^p$ on $\B_p^{hs}$, in particular showing that $\mbox{Vol}(\Lambda^p \backslash \B_p) = 1$, which in turn enables us to give a Cayley bigraph structure on the finite bigraphs $X_p(q)=\Lambda^p(q)\backslash\B_p$.
We expect that the analogue of property (ST) in the setting of the Picard modular surfaces would be that  the action of $\Lambda$ on $\B$ has a simple fundamental domain, and in particular $\mbox{Vol}(\Lambda \backslash \mathbb{B})$ is small. 
See for instance \cite{Stover}, where it is shown that for $\Lambda = PU_{2,1}(\Z[\omega])$, the principal arithmetic subgroup in the Eisenstein case w.r.t. to the indefinite Hermitian from $\Phi = \mbox{diag}\left(1,1-1\right)$, the Picard modular surface $X(1) = \Lambda \backslash \mathbb{B}$ has minimal volume among all possible non-compact quotients of $\mathbb{B}$.

On the other hand, following Rogawski, a possible analogue of property (R) for the Picard modular surfaces is the vanishing of their $1$-dimensional cohomology. 
In \cite[Thm.15.3.1]{Rogawski1990Automorphicrepresentationsunitary}, Rogawski proved such vanishing for Picard modular surfaces of type (II), i.e. $G=PU(D,\sigma)$ where $D$ is a division algebra and $\sigma$ an involution of the second type. 
The $p$-adic analogue of this result is that the finite quotients $X_p(q) = \Lambda^p(q)\backslash\B_p$ are Ramanujan for unitary groups of type (II), i.e. $G=PU(D,\sigma)$ as before and $\sigma$ a definite involution (see \cite{ballantine2000ramanujan,Ballantine2011Ramanujanbigraphsassociated,ballantine2015explicit}).
Below we prove such a result for Picard modular surfaces of type (I), using our previous analysis of Rogawski's work in Section \ref{section:automorphic}.

\begin{thm} \label{thm:Picard}
Let $G=PU_3(E,\Phi)$ be a projective unitary group scheme over $\Z$ of type (I), where $E$ is a quadratic imaginary field and $\Phi$ an indefinite Hermitian form. 
Let $\Lambda=G(\Z)$, the principal arithmetic subgroup of $G$, $\Lambda(q)$ the principal congruence subgroup of level $q$, call $\Gamma\leq G(\Z)$ a level $q$ congruence subgroup if it contains $\Lambda(q)$ and denote its corresponding Picard modular surface by $X(\Gamma) = \Gamma \backslash \mathbb{B}$.
Then for any $q$ coprime to the discriminant of $E$ and any level $q$ congruence subgroup $\Gamma$,
\[
H^1\left( X(\Gamma),\C \right) = 0.
\]
\end{thm}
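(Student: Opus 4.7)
The strategy is to reduce the vanishing statement to a local obstruction at a prime $p_0 \mid \mathrm{disc}(E)$, via Matsushima's formula and the Vogan-Zuckerman-Rogawski classification of representations of $PU(2,1)$ contributing to $H^1$. Concretely, I would first express $H^1(X(\Gamma), \C)$ in terms of automorphic representations, then show that only representations in global A-packets can contribute to $H^1$, and finally apply the local results of Section 6 to rule out these A-packets at the level $\Gamma$.

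Let $K_f = \prod_v K_v' \leq G(\A_f)$ be the compact open subgroup corresponding to $\Gamma$, with $K_v' = G(\Z_v)$ for all $v \nmid q$. Matsushima's formula, combined with strong approximation for $G$, gives a decomposition
\[
H^1(X(\Gamma), \C) \cong \bigoplus_\pi m(\pi) \cdot H^1(\mathfrak{g}, K_\infty; \pi_\infty) \otimes \pi_f^{K_f},
\]
where $\pi$ ranges over irreducible automorphic representations of $G(\A)$ with trivial central character. By the Vogan-Zuckerman classification specialized to $G_\infty \cong PU(2,1)$ (see Rogawski \cite[\S15.2]{Rogawski1990Automorphicrepresentationsunitary}), the only irreducible unitary $(\mathfrak{g}, K_\infty)$-modules $\pi_\infty$ satisfying $H^1(\mathfrak{g}, K_\infty; \pi_\infty) \neq 0$ are the non-tempered representations $\pi^n(\rho_\infty)$ appearing as the archimedean members of Rogawski's A-packets. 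Hence any $\pi$ contributing to $H^1$ must lie in some global A-packet $\Pi'(\rho)$; in particular, at every non-split finite place $v$ one has $\pi_v \in \{\pi^n(\rho_v), \pi^s(\rho_v)\}$.

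Now I would choose a prime $p_0$ dividing $\mathrm{disc}(E)$, which exists since $E \neq \Q$, and which satisfies $p_0 \nmid q$ by hypothesis, so that $K_{p_0}' = G(\Z_{p_0}) = K_{p_0}$ is the full maximal parahoric. Since the three-dimensional unitary group over a local field is unique up to isomorphism (Landherr-Jacobowitz), the local isomorphism $G(\Q_{p_0}) \cong G^*(\Q_{p_0})$ matches $K_{p_0}$ with the $K^*_{p_0}$ of Section 6. Proposition \ref{prop:level-spherical}(ii) then gives $\pi^n(\rho_{p_0})^{K_{p_0}} = 0$, since $p_0$ is ramified in $E$, and Lemma \ref{lem:level-supercuspidal} together with $\boldsymbol{I}_{p_0} \subset K_{p_0}$ gives $\pi^s(\rho_{p_0})^{K_{p_0}} = 0$. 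Therefore $\pi_{p_0}^{K_{p_0}} = 0$ for every member of every A-packet, contradicting $\pi_f^{K_f} \neq 0$, and so $H^1(X(\Gamma), \C) = 0$.

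The hard part will be step two, namely invoking Matsushima's formula and the Vogan-Zuckerman-Rogawski classification in a form suited to the indefinite adelic setting for $G$. The formula requires care with class number issues and with the precise translation between the principal congruence subgroup $\Lambda(q)$ and the compact open $K_f$ produced by strong approximation, while the cohomological classification at infinity must be cited in exactly the form applicable to $PU(2,1)$ (rather than, say, $GU(2,1)$ or $U(2,1)$ with nontrivial central character). The subsequent local step is more routine: once the identification $G(\Z_{p_0}) \cong K^*_{p_0}$ is in place, the ramification of the local class field character $\omega_{E_{p_0}/\Q_{p_0}}$ used in Proposition \ref{prop:level-spherical} and the Iwahori-sphericality obstruction for supercuspidals used in Lemma \ref{lem:level-supercuspidal} close the argument without additional input.
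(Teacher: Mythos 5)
Your proposal is correct and follows essentially the same route as the paper: Matsushima's formula, Rogawski's classification showing that only A-packet members can contribute to $H^1$, and then the local obstruction at a ramified prime $p_0\nmid q$ where $K_{p_0}'=G(\Z_{p_0})$ kills both $\pi^n(\rho_{p_0})$ (Proposition \ref{prop:level-spherical}(ii)) and $\pi^s(\rho_{p_0})$ (Lemma \ref{lem:level-supercuspidal}). If anything, your direct citation of these two local lemmas is slightly more precise than the paper's reference at that step, but the argument is the same.
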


\begin{proof}
By Matsushima's formula we have the following decomposition
\[
H^1\left(X(\Gamma),\C \right) \cong \bigoplus_{\pi}H^1\left(\mathfrak{g},K_{\infty};\pi_{\infty}\right)\otimes\pi_{f}^{K_{f}(\Gamma)},
\]
where $\pi=\pi_{\infty}\pi_{f}$ runs over all discrete automorphic
representations of $G$, $\mathfrak{g}$ is the Lie algebra of $G_{\infty} = G(\R) = PU(2,1)$, $K_{\infty} = P\left(U(2)\times U(1)\right)$ is a maximal compact subgroup of $G_{\infty}$, $H^1\left(\mathfrak{g},K_{\infty};\pi_{\infty}\right)$ is the $1$-dimensional $\left(\mathfrak{g},K_{\infty}\right)$-cohomology of $\pi_{\infty}$, $K_{f}(\Gamma)\leq K_{f}=G(\hat{\Z})$ an adelic congruence subgroup such that $\Gamma=G(\Q)\cap K_{f}(\Gamma)$ and $\pi_{f}^{K_{f}(\Gamma)}$ the space of $K_{f}(\Gamma)$-fixed vectors. 
By \cite[Prop. 15.2.1, Sec. 14.4, Thm. 13.3.6(c)]{Rogawski1990Automorphicrepresentationsunitary}, $\pi$ and all of its local factors sit in $A$-packets. 
Let $p$ be a ramified prime, and by assumption $p\nmid q$, hence $K_p=G(\Z_p)$ is the $p$-factor of $K_{f}(\Gamma)=\prod_{\ell}K_{\ell}(\Gamma)$.
By  Theorem \ref{thm:ram-global-crit}(i), $\sigma^{K_p}=0$ for any member $\sigma$ in an $A$-packet at a ramified prime, in particular $\pi_p^{K_p(\Gamma)}=0$, hence $\pi_{f}^{K_{f}(\Gamma)}=0$, and therefore
\[
\bigoplus_{\pi}H^1\left(\mathfrak{g},K_{\infty};\pi_{\infty}\right)\otimes\pi_{f}^{K_{f}(\Gamma)}=0.\qedhere
\]
\end{proof}

It is interesting to note that we get from the above Theorem a property that is in a certain sense opposite to the famous virtual Betti conjecture, in the special case of arithmetic non-uniform lattices of $PU(2,1)$.
Recall that the virtual Betti conjecture asks whether for any lattice $\Gamma$ of $PU(2,1)$ there is a cover of $X(\Gamma)=\Gamma\backslash\mathbb{B}$ with non-trivial first cohomology. 
By the above theorem, for a non-uniform arithmetic lattice $\Gamma$ of $PU(2,1)$, we can find an infinite tower of lattices $\Gamma_i$ commensurable to $\Gamma$ with trivial first cohomology. 
This is done by changing $K_{f}(\Gamma)=\prod_{\ell}K_{\ell}(\Gamma)$, such that $K_p(\Gamma)$ is replaced with $K_p$ for some prime $p$ which ramifies at $E$, and then one is free to consider smaller subgroups of $K_{\ell}(\Gamma)$ at any other place. 
If $K_p(\Gamma)=K_p$ for some ramified prime to begin with, then the resulting lattices $\Gamma_i$ will actually be subgroups of $\Gamma$, in which case we get that the virtual Betti conjecture cannot hold for such covers $X(\Gamma_i)$ of $X(\Gamma)$.

\bibliographystyle{amsalpha}
\bibliography{mybib}

\end{document}